\definecolor{blue}{RGB}{255,255,255}
\definecolor{linkgreen}{RGB}{33,100,60}
\definecolor{chilligreen}{RGB}{33,100,60}
\definecolor{chillired}{RGB}{140,00,10} 
\definecolor{chillidarkred}{RGB}{120,5,15}
\newcommand{\change}[1]{\textcolor{black}{#1}}
\newcommand{\fix}[1]{\textcolor{black}{#1}}
\newcommand{\cut}[1]{\textcolor{black}{#1}}
\renewcommand*{\backref}[1]{}
\renewcommand*{\backrefalt}[4]{%
	\ifcase #1%
	\or (Cited on page~#2.)%
	\else (Cited on pages~#2.)%
	\fi%
}
\theoremstyle{definition}
\newtheorem{definition}{Definition}[subsection]
\newaliascnt{question}{definition}
\newtheorem{question}[question]{Question}
\crefname{question}{Question}{Questions}
\newaliascnt{construction}{definition}
\newtheorem{construction}[construction]{Construction}
\crefname{construction}{Construction}{Constructions}
\newaliascnt{observation}{definition}
\crefname{observation}{Observation}{Observations}
\newaliascnt{conjecture}{definition}
\crefname{conjecture}{Conjecture}{Conjectures}
\newaliascnt{lemma}{definition}
\newtheorem{lemma}[lemma]{Lemma}
\crefname{lemma}{Lemma}{Lemmas}
\newaliascnt{fact}{definition}
\newtheorem{fact}[fact]{Fact}
\crefname{fact}{Fact}{Facts}
\newaliascnt{setting}{definition}
\crefname{setting}{Setting}{Settings}
\newaliascnt{notation}{definition}
\newtheorem{notation}[notation]{Notation}
\crefname{notation}{Notation}{Notations}
\newaliascnt{remark}{definition}
\newtheorem{remark}[remark]{Remark}
\crefname{remark}{Remark}{Remarks}
\newaliascnt{corollary}{definition}
\newtheorem{corollary}[corollary]{Corollary}
\crefname{corollary}{Corollary}{Corollaries}
\newaliascnt{theorem}{definition}
\newtheorem{theorem}[theorem]{Theorem}
\crefname{theorem}{Theorem}{Theorems}
\newaliascnt{proposition}{definition}
\newtheorem{proposition}[proposition]{Proposition}
\crefname{proposition}{Proposition}{Propositions}
\newaliascnt{example}{definition}
\newtheorem{example}[example]{Example}
\crefname{example}{Example}{Examples}
\newaliascnt{recollection}{definition}
\crefname{recollection}{Recollection}{Recollections}
\newaliascnt{folklore}{definition}
\crefname{folklore}{assumption}{assumptions} 
\Crefname{folklore}{Assumption}{Assumptions}
\newaliascnt{assumption}{definition}
\newtheorem{assumption}[assumption]{Assumption}
\crefname{assumption}{Assumption}{Assumptions}
\newcommand*\cocolon{%
        \nobreak
        \mskip6mu plus1mu
        \mathpunct{}%
        \nonscript
        \mkern-\thinmuskip
        {:}%
        \mskip2mu
        \relax
}
\newcommand{\Addresses}{{
  \bigskip
  \footnotesize

  R.~Quinn, \textsc{Utrecht Geometry Center, Universiteit Utrecht, The Netherlands}\par\nopagebreak
  \textit{E-mail address}: \texttt{r.quinn@uu.nl}

  \medskip

  Q.~Zhu, \textsc{Max Planck Institute for Mathematics, Bonn, Germany}\par\nopagebreak
  \textit{E-mail address}: \texttt{qzhu@mpim-bonn.mpg.de}

}}
\newtheorem{mainthm}{Theorem}
\newtheorem{mainex}[mainthm]{Example}
\newtheorem*{theorem*}{Theorem}
\newtheorem*{example*}{Example}
\newtheorem*{question*}{Question}
\newtheorem*{problem*}{Problem}
\newtheorem*{thmG*}{Theorem G}
\newcommand{\tb}{\textcolor{chillired}}
\newcommand{\coCart}[1]{\operatorname{coCart}\!\left(#1\right)}
\newcommand{\GL}{\operatorname{GL}}
\newcommand{\A}{\mathscr A}
\newcommand{\B}{\mathscr B}
\newcommand{\C}{\mathscr C}
\newcommand{\D}{\mathscr{D}}
\newcommand{\Ec}{\mathscr E}
\newcommand{\Sc}{\mathcal S}
\renewcommand{\P}{\mathscr{P}}
\renewcommand{\O}{\mathscr{O}}
\newcommand{\E}{\mathbb E}  
\newcommand{\F}{\mathbb F}
\newcommand{\R}{\mathbb R}
\renewcommand{\S}{\mathbb S}
\newcommand{\N}{\mathbb N}
\newcommand{\Z}{\mathbb{Z}}
\newcommand{\one}{\mathbb 1}
\newcommand{\Q}{\mathscr Q}
\newcommand{\LM}{\mathcal{LM}}
\newcommand{\inert}{\mathrm{int}}
\newcommand{\tmf}{\operatorname{tmf}}
\newcommand{\ku}{\operatorname{ku}}
\newcommand{\kR}{\operatorname{ku}_{\mathbb{R}}}
\newcommand{\KU}{\operatorname{KU}}
\newcommand{\KR}{\operatorname{KU}_{\mathbb{R}}}
\newcommand{\MU}{\operatorname{MU}}
\newcommand{\MW}{\operatorname{MW}}
\newcommand{\MUR}{{\MU_{\mathbb{R}}}}
\newcommand{\MWR}{{\MW_{\mathbb{R}}}}
\newcommand{\uZ}{\underline{\Z}}
\newcommand{\CP}{\mathbb{CP}}
\newcommand{\CPR}{\mathbb{CP}_{\mathbb{R}}}
\newcommand{\Spaces}{\mathcal{S}}
\newcommand{\Sp}{\mathbf{Sp}}
\newcommand{\Alg}{\mathbf{Alg}}
\newcommand{\Seg}{\mathbf{Seg}}
\newcommand{\Cat}{\mathbf{Cat}}
\newcommand{\Fbrs}{\mathbf{Fbrs}}
\newcommand{\Orb}{\mathbf{Orb}}
\newcommand{\Glo}{\mathbf{Glo}}
\newcommand{\LMod}{\mathbf{LMod}}
\newcommand{\Mon}{\mathbf{Mon}}
\newcommand{\Op}{\mathbf{Op}}
\newcommand{\Ab}{\mathbf{Ab}}
\newcommand{\PSh}{\mathbf{PSh}}
\newcommand{\AlgPatt}{\mathbf{AlgPatt}}
\newcommand{\NS}{\mathrm{NS}}
\newcommand{\BP}{\operatorname{BP}}
\newcommand{\BPR}{{\BP_{\mathbb{R}}}}
\newcommand{\BU}{\operatorname{BU}}
\newcommand{\BSU}{\operatorname{BSU}}
\newcommand{\BUR}{{\BU_{\mathbb{R}}}}
\newcommand{\BSUR}{{\BSU_{\mathbb{R}}}}
\newcommand{\BGL}{\operatorname{BGL}}
\newcommand{\KO}{\operatorname{KO}}
\newcommand{\HZ}{\mathrm{H}{\mathbb{Z}}}
\newcommand{\uHZ}{\mathrm{H}\underline{\mathbb{Z}}}
\newcommand{\gr}{\operatorname{gr}}
\newcommand{\Map}{\operatorname{Map}}
\newcommand{\Fun}{\operatorname{Fun}}
\newcommand{\Span}{\operatorname{Span}}
\newcommand{\St}{\operatorname{St}}
\newcommand{\pr}{\operatorname{pr}}
\newcommand{\const}{\operatorname{const}}
\newcommand{\Pic}{\operatorname{Pic}}
\newcommand{\Th}{\operatorname{Th}}
\newcommand{\Un}{\operatorname{Un}}
\newcommand{\Res}{\operatorname{Res}}
\newcommand{\Ind}{\operatorname{Ind}}
\newcommand{\Nat}{\operatorname{Nat}}
\newcommand{\Ar}{\operatorname{Ar}}
\newcommand{\el}{\mathrm{el}}
\newcommand{\core}{\mathrm{core}}
\renewcommand{\phi}{\varphi}
\newcommand{\Or}{\operatorname{Or}}
\newcommand{\map}{\operatorname{map}}
\newcommand{\Infl}{\operatorname{Infl}}
\newcommand{\gl}{\mathrm{gl}}
\newcommand{\op}{\mathrm{op}}
\newcommand{\lax}{\mathrm{lax}}
\newcommand{\id}{\mathrm{id}}
\newcommand{\Coind}{\operatorname{Coind}}
\newcommand{\ind}{\operatorname{ind}}
\newcommand{\res}{\operatorname{res}}
\newcommand{\gp}{\mathrm{gp}}
\newcommand{\BOP}{\operatorname{BOP}}
\newcommand{\BUP}{\operatorname{BUP}}
\newcommand{\Gr}{\operatorname{Gr}}
\newcommand{\MOP}{\operatorname{MOP}}
\DeclareMathOperator*{\colim}{colim}
\DeclareRobustCommand{\myuline}[2][0pt]{%
  \ifmmode
    \uline{\hphantom{#2}\kern-#1}%
    \kern#1%
    \mathllap{\mathpalette\my@cont@{#2}}%
  \else
    \uline{\phantom{#2}\kern-#1}%
    \kern#1%
    \llap{\contour{white}{#2}}%
  \fi
}
\newcommand{\ul}{\myuline}
\newcommand{\ol}{\overline}
\newcommand{\free}{\operatorname{free}}
\newcommand{\T}{\mathcal{T}}
\newcommand{\grp}{\mathrm{grp}}
\newcommand{\CST}{\operatorname{CST}}
\newcommand{\yo}{\text{\usefont{U}{min}{m}{n}\symbol{'210}}}
\DeclareFontFamily{U}{min}{}
\DeclareFontShape{U}{min}{m}{n}{<-> udmj30}{}
\newcommand{\my@cont@}[2]{\contour{white}{\mbox{$\m@th#1#2$}}}
\newcommand{\THR}{\mathrm{THR}}
\newcommand{\coker}{\operatorname{coker}}
\newcommand*{\redchili}{\textcolor{white}{\contour{chillired}{\faPepperHot}}}
\newcommand*{\greenchili}{\textcolor{white}{\contour{chilligreen}{\faPepperHot}}}
\begin{document}
	\title{\vspace{-0.5cm}\textbf{Multiplicative Equivariant Thom Spectra \& Structured Real Orientations}}
	\author{\textsc{Ryan Quinn \& Qi Zhu}}
    \date{} 
	\maketitle

            \hypersetup{linkcolor=chilligreen}

\vspace{-2em}
		\begin{abstract}
			 \noindent 
             For strongly even $\E_{\infty}^{C_2}$-rings $E$ we show that any homotopy ring map $\MU \to E^e$ lifts to an $\E_{\rho}$-map $\MU_{\R} \to E$.
             This refines the Hahn--Shi Real orientations of Lubin--Tate theories \(E_n\),
             the Hirzebruch level-\(n\) orientations of \(\tmf_1(n)\),
             and Quillen's idempotent to \(\E_\rho\)-maps.
             It allows us to provide the first structured version of $\BPR$ -- we show that it admits an $\E_{\rho}$-algebra structure. Furthermore, we extend these results to larger groups. In particular, for a finite group $C_2 \leq G$ the Hahn--Shi orientation $N_{C_2}^G \MUR \to E_n$ refines to a $\Coind_{C_2}^G \E_{\rho}$-map, and \(N^G_{C_2}\BPR\) admits a $\Coind_{C_2}^G \E_{\rho}$-algebra structure.
             \medskip \\Essential to this program is a robust theory of multiplicative equivariant Thom spectra, which we develop using parametrized higher algebra and fibrous patterns -- particularly we provide an equivariant version of Antolín-Camarena--Barthel's universal property for multiplicative Thom spectra and use this to deduce a multiplicative equivariant Thom isomorphism. We provide a number of categorical results of independent interest, most notably a distributive monoidal structure on parametrized left module categories.
		\end{abstract}
\hypersetup{linkcolor=chilligreen}
\vspace{-1em}
\begin{figure}[ht!]
\centering
\includegraphics[width=90mm]{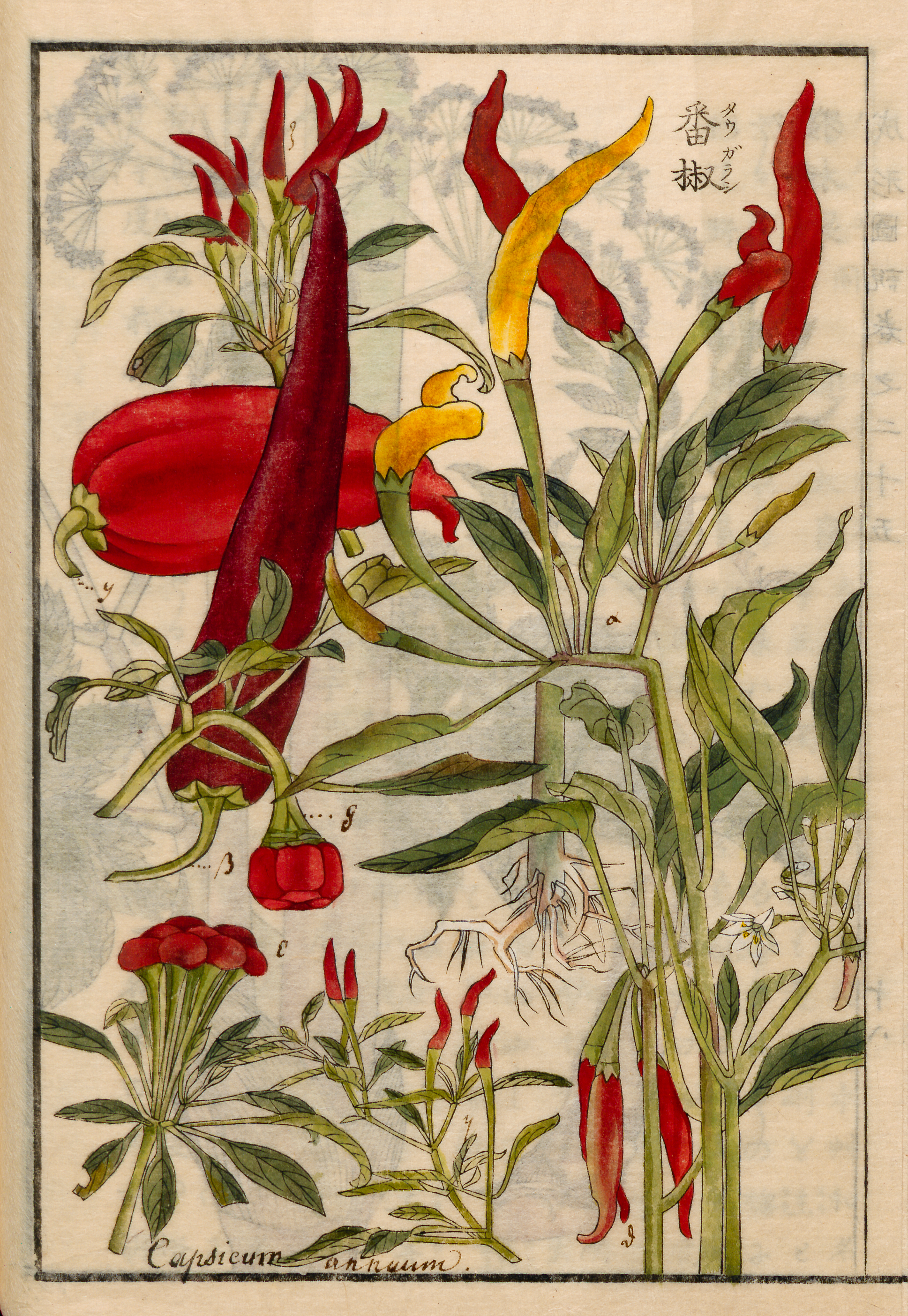}
\end{figure}
\vspace{-2em}

\begingroup
\renewcommand\thefootnote{}
\footnote{Illustration from the agricultural encyclopedia \emph{Seikei Zusetsu}, scan from \href{http://hdl.handle.net/1887.1/item:938310}{Leiden University Libraries}.}%
\footnote{\emph{Date}: \today}
\addtocounter{footnote}{-2}
\endgroup

\thispagestyle{empty} 
\setcounter{tocdepth}{2}    
\setcounter{secnumdepth}{4} 
\section*{Summary of Results}
We develop obstruction-theoretic methods to produce structured orientations. As a consequence, we are able to give a first answer to the open question about multiplicative structures on the Real Brown--Peterson spectrum and its norms. Let $\rho$ denote the regular representation of $C_2$.
\begin{theorem*}[\cref{thm:multiplication_on_BPR}]
    Let $C_2 \leq G$. There is a $\Coind_{C_2}^G \E_{\rho}$-algebra structure on $N_{C_2}^G \BPR$.
\end{theorem*}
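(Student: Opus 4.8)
The plan is to first treat the case $G = C_2$, producing an $\E_{\rho}$-algebra structure on $\BPR$ itself, and then pass to general $C_2 \leq G$ by applying the norm $N_{C_2}^{G}$. For the $C_2$-case the engine is the structured orientation theorem of the paper, applied to $E = \MUR_{(p)}$. First I would record that $\MUR_{(p)}$ is a strongly even $\E_{\infty}^{C_2}$-ring: it is a genuine commutative $C_2$-ring (Hu--Kriz, Hill--Hopkins--Ravenel), $p$-localisation is a smashing -- hence symmetric monoidal -- localisation and so preserves $\E_{\infty}^{C_2}$-structures, and the conditions defining strong evenness (vanishing of the odd underlying homotopy and the isomorphism $\pi_{*\rho}^{C_2} \xrightarrow{\ \sim\ } \pi_{2*}^{e}$) are exact and commute with restriction, hence pass from $\MUR$, where they are classical, to $\MUR_{(p)}$. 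Now apply the orientation theorem to the homotopy ring map $\MU \xrightarrow{\ \ell\ } \MU_{(p)} \xrightarrow{\ e_{Q}\ } \MU_{(p)} = (\MUR_{(p)})^{e}$, with $\ell$ the localisation and $e_{Q}$ Quillen's idempotent: this produces an $\E_{\rho}$-algebra map $\MUR \to \MUR_{(p)}$ with underlying homotopy ring map $e_{Q}\circ\ell$. Since $p$-localisation lifts to a smashing localisation of $\E_{\rho}$-algebras, the unit $\MUR \to \MUR_{(p)}$ exhibits $\MUR_{(p)}$ as an $\E_{\rho}$-algebra localisation, and so this map extends uniquely to an $\E_{\rho}$-algebra endomorphism $\widetilde e \colon \MUR_{(p)} \to \MUR_{(p)}$ whose underlying map is the classical Quillen idempotent $e_{Q}$.

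Next I would set $\BPR := \colim\bigl(\MUR_{(p)} \xrightarrow{\ \widetilde e\ } \MUR_{(p)} \xrightarrow{\ \widetilde e\ } \cdots\bigr)$, computed in $\Alg_{\E_{\rho}}(\Sp^{C_2}_{(p)})$. Crucially this needs no idempotent-coherence data: a sequential diagram in an $\infty$-category is determined, up to contractible choice, by the bare sequence of maps (the spine of $N(\N)$ is inner anodyne), so the single map $\widetilde e$ already defines the telescope, and the colimit only depends on $\widetilde e$ up to homotopy. Moreover the forgetful functor $\Alg_{\E_{\rho}}(\Sp^{C_2}_{(p)}) \to \Sp^{C_2}_{(p)}$ is monadic and the free $\E_{\rho}$-algebra monad preserves filtered colimits, so the underlying $C_2$-spectrum of $\BPR$ is the mapping telescope of $e_{Q}$ on $\MU_{(p)}$: its restriction to the trivial subgroup is the classical Brown--Peterson spectrum, and it is strongly even (strong evenness is preserved by filtered colimits) with $\pi_{*\rho}^{C_2}\BPR \cong \BP_{*}$. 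This identifies the telescope with the usual Real Brown--Peterson spectrum (consistent with Hu--Kriz, whose $\BPR$ is itself a telescope of a Real lift of Quillen's idempotent). By construction $\BPR$ carries an $\E_{\rho}$-algebra structure, and the colimit cone simultaneously refines the classical map $\MU_{(p)} \to \BP$ to an $\E_{\rho}$-map $\MUR_{(p)} \to \BPR$.

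For general $C_2 \leq G$ I would then apply the norm: using the multiplicative properties of $N_{C_2}^{G}$ supplied by the paper's theory of equivariant Thom spectra and fibrous patterns -- concretely, that $N_{C_2}^{G}$ carries $\E_{\rho}$-algebras in $\Sp^{C_2}$ to $\Coind_{C_2}^{G}\E_{\rho}$-algebras in $\Sp^{G}$, which is also what underlies the refined norm of the Hahn--Shi orientation -- the $\E_{\rho}$-algebra $\BPR$ yields a $\Coind_{C_2}^{G}\E_{\rho}$-algebra structure on $N_{C_2}^{G}\BPR$, as claimed. Alternatively one can rerun the telescope argument directly over $G$, applying the $G$-equivariant orientation theorem to the strongly even $\E_{\infty}^{G}$-ring $N_{C_2}^{G}\MUR_{(p)}$ and lifting $N_{C_2}^{G}e_{Q}$.

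Granting the orientation theorem, the main obstacle is the bookkeeping at the interface between the classical, homotopy-level construction of $\BPR$ and its $\infty$-categorical incarnation: one must be sure that lifting a single homotopy ring map -- rather than an idempotent together with all of its higher coherences -- genuinely suffices, which is exactly what the telescope reformulation buys, and that the resulting $\E_{\rho}$-algebra really is the established $\BPR$ and not a variant with the same underlying spectrum. The only substantial new ingredient beyond this excerpt's orientation theorem is the norm step, i.e.\ verifying that $N_{C_2}^{G}$ transports $\E_{\rho}$-structures to $\Coind_{C_2}^{G}\E_{\rho}$-structures; if that compatibility is not already in hand for non-commutative operads, establishing it is where the paper's parametrized higher algebra must do the heavy lifting.
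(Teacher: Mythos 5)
Your proposal is correct and follows essentially the same route as the paper: lift Quillen's idempotent to an $\E_{\rho}$-map $\MUR_{(2)}\to\MUR_{(2)}$ via the structured lifting theorem applied to the strongly even $\E_{\infty}^{C_2}$-ring $\MUR_{(2)}$, form $\BPR$ as the telescope (which, as you note, needs only the bare map since filtered colimits of algebras are computed on underlying objects), and then obtain the $\Coind_{C_2}^{G}\E_{\rho}$-structure on $N_{C_2}^{G}\BPR$ from the coinduced-operad formalism, which the paper indeed supplies (\cref{construction: Coind}) for general singly colored $H$-$\infty$-operads. The only cosmetic differences are that the paper works implicitly $2$-locally rather than at a general prime $p$, and it invokes the $2$-local version of the lifting theorem directly instead of your detour through the smashing localisation $\MUR\to\MUR_{(2)}$.
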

\noindent Prior to this work, not even the existence of an $\E_1$- or $\E_{\sigma}$-algebra structure was known on \(\BPR\).
One reason for the sparsity of results in this direction is the lack of computationally tractable equivariant obstruction theories. We obtain this structure on \(\BPR\) and its norms by developing a general obstruction theory for producing structured orientations.
\begin{theorem*}[\cref{thm: main lifting restated}]
    Let $E$ be a strongly even $\E_{\infty}^{C_2}$-ring spectrum. Then, any \(\E_2\)-ring map \(\MU\to E^e\) lifts uniquely to an \(\E_\rho\)-ring map \(\MUR\to E\). In particular, any homotopy ring map \(\MU\to E^e\) lifts uniquely to an \(\E_\rho\)-ring map.

\end{theorem*}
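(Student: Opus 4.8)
The plan is to deduce the theorem from the equivariant universal property of multiplicative Thom spectra proved earlier, which converts the lifting problem into an obstruction-theoretic one over a Real-cell structure on which the strong evenness hypothesis bites exactly. Recall that $\MUR$, carrying its $\E_\rho$-algebra structure, is the $\E_\rho$-Thom $C_2$-spectrum of the Real $J$-homomorphism $j_\R\colon\BUR\to\Pic(\S_{C_2})$ (an $\E_\rho$-map, built from $\oplus$ of Real vector bundles), and that restriction to the underlying (trivial subgroup) carries $\E_\rho$ to $\E_2$, $\BUR$ to $\BU$, $j_\R$ to the classical $J$-homomorphism $j$, and $\MUR$ to $\MU$. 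By the equivariant Antolín-Camarena--Barthel universal property, for any $\E_\rho$-ring $A$ in $\Sp^{C_2}$ the mapping space $\Map_{\Alg_{\E_\rho}(\Sp^{C_2})}(\MUR,A)$ is the space of $\E_\rho$-$\MUR$-orientations of $A$, i.e.\ of $\E_\rho$-nullhomotopies of the composite $\BUR\xrightarrow{j_\R}\Pic(\S_{C_2})\to\Pic(A)$; likewise $\Map_{\Alg_{\E_2}(\Sp)}(\MU,B)$ is the space of $\E_2$-$\MU$-orientations of a plain $\E_2$-ring $B$. Hence (i) is the statement that the first space is nonempty for $A=E$, and, applying $(-)^e$, part (ii) is the statement that the restriction map \[ r\colon\Map_{\Alg_{\E_\rho}(\Sp^{C_2})}(\MUR,E)\longrightarrow\Map_{\Alg_{\E_2}(\Sp)}(\MU,E^e) \] is an equivalence.

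To prove that $r$ is an equivalence, I would filter $\BUR$ by its Real-cell skeleta. Every $\BU(n)_\R$, hence $\BUR$ itself, is a $C_2$-CW complex built solely from cells of dimension $k\rho$: the Schubert cells of a complex flag manifold, relative to a conjugation-stable flag, are preserved by conjugation and are $C_2$-equivariantly the disks $D^{k\rho}$ (for $\BU(1)_\R=\CPR^{\infty}$ this is the familiar cell structure with $\CP^n/\CP^{n-1}=S^{n\rho}$), and this filtration restricts along $(-)^e$ to the usual even skeletal filtration of $\BU$. One then lifts it to the multiplicative level — an $\E_\rho$-cellular presentation of $j_\R$ (equivalently of $\MUR$ over $\S_{C_2}$) with cells in degrees $\{k\rho\}$, strictly compatible after $(-)^e$ with the corresponding $\E_2$-cellular presentation of $\MU$ — so that the two orientation spaces acquire compatible towers. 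At the $k$-th layer the fibre of $r$ is controlled by $\pi^{C_2}_{k\rho+i}E$ against $\pi^{e}_{2k+i}E$ for $i\in\{-1,0\}$, the degree $-1$ groups carrying the obstruction and the degree $0$ groups the indeterminacy, and the strong evenness hypothesis on $E$ supplies exactly the input needed here: $\pi^{C_2}_{k\rho-1}E=0$, $\pi^{e}_{2k-1}E=0$, and the restriction $\pi^{C_2}_{k\rho}E\to\pi^{e}_{2k}E$ is an isomorphism, for all $k$. Thus $r$ is a layerwise equivalence, hence an equivalence, which is (ii); and running the same tower over a basepoint rather than over the underlying orientation space re-proves (i) directly, in the spirit of Hahn--Shi's Atiyah--Hirzebruch argument for Real orientations.

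The remaining assertions follow formally. As $E$ is strongly even, $\pi_*E^e$ is concentrated in even degrees, so $E^e$ is complex orientable; hence $\Map_{\Alg_{\E_2}(\Sp)}(\MU,E^e)\neq\emptyset$, and by the equivalence $r$ so is $\Map_{\Alg_{\E_\rho}(\Sp^{C_2})}(\MUR,E)$, which is (i). For the final clause of (ii), one invokes the classical fact that for a ring spectrum $B$ with even homotopy the forgetful map from $\E_2$-ring maps $\MU\to B$ to homotopy ring maps $\MU\to B$ is an equivalence (evenness kills the higher coherence obstructions); composing with $r^{-1}$ then yields the unique $\E_\rho$-lift $\MUR\to E$ of any given homotopy ring map $\MU\to E^e$.

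The hard part is making the multiplicative Real-cellular obstruction theory of the second paragraph precise: producing the $\E_\rho$-parametrized ($C_2$-equivariant) cell presentation of $j_\R$ with cells exactly in the degrees $k\rho$, strictly compatible with that of $\MU$ under $(-)^e$, and computing the layers of $r$ finely enough to read off the strong-evenness input. This is exactly what the parametrized higher algebra, the fibrous-pattern formalism, and the equivariant form of the Antolín-Camarena--Barthel universal property are developed to deliver. A related subtlety to watch is that $E$ must be used only through its $\E_\rho$-structure, never a genuine $\E^{C_2}_\infty$- or normed structure: the $\E_\rho$-level obstruction groups involve only the $\rho$-graded homotopy $\pi^{C_2}_{*\rho\pm1}E$, which strong evenness controls, whereas an $\E^{C_2}_\infty$-refinement would require norm-twisted vanishing that strong evenness does not supply. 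Finally, the even-ring input used in the third paragraph, though classical, should be cited or reproven in the $\E_2$-form needed here.
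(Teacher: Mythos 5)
Your reduction of the statement to the restriction map $r$ between the two multiplicative orientation/mapping spaces is the right starting point, but the core of your argument contains a genuine gap: the ``$\E_\rho$-cellular presentation of $j_\R$ with cells in degrees $k\rho$, strictly compatible after $(-)^e$ with an $\E_2$-cellular presentation of $\MU$.'' The Real Schubert cells only give a space-level $C_2$-CW structure on $\BUR$ with cells $D^{k\rho}$; they do not give a cell presentation of $\MUR$ in the category of $\E_\rho$-rings (i.e.\ by free $\E_\rho$-cell attachments), and it is exactly such multiplicative cell structures that would make your layer-by-layer obstruction groups $\pi^{C_2}_{k\rho\pm 1}E$ meaningful for the \emph{multiplicative} lifting problem. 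Producing that presentation is essentially the equivariant-TAQ route (the Roytman strategy discussed in the paper), for which the required obstruction theory does not exist in the literature; the degrees you quote are those of Hahn--Shi's obstruction theory for plain (non-multiplicative) Real orientations. A second structural problem: the paper's comparison of mapping spaces (\cref{thm:abstract_lifting_clean_c2}) runs through the multiplicative Thom isomorphism (\cref{theorem: monoidal thom iso}), which needs an $\E_\rho$-orientation of $E$ to \emph{already exist}; so in the paper (i) is an input to (ii), whereas you derive (i) from (ii) and your proof of (ii) has no substitute for that orientation input. Finally, your warning that ``$E$ must be used only through its $\E_\rho$-structure, never a genuine $\E^{C_2}_\infty$-structure'' is backwards relative to what any version of this argument needs: the $\E_\infty^{C_2}$-structure is used essentially, both to deloop $\GL_1(E)$ to the genuine spectrum $\gl_1(E)$ in the recognition-principle step (\cref{thm: recognition}) and to obtain the $\E_\infty^{C_2}$-map $\MWR\to E$ by obstruction theory (\cref{theorem: C2 Einfty map from MW}).

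For contrast, the paper's route avoids any multiplicative cell structure on $\MUR$ itself. For (i) it uses the versal spectrum $\MWR$, the Thom spectrum over the Real Wilson space $\Omega^\infty\Sigma^\rho\MUR$: Angelini-Knoll--Kong--Quigley supply an $\E_\infty^{C_2}$-map $\MWR\to E$ for any strongly even $E$, and $\MUR$ is split off $\MWR$ as an $\E_\rho$-retract by lifting the Hahn--Raksit--Wilson idempotent (\cref{prop: MWR splitting}, via \cref{thm:idempotent_lifting}), using Hill--Hopkins' computation that the deloopings of Real Wilson spaces have $\uHZ$-homology free on $\rho$-cells. For (ii), one precomposes the given $\E_2$-map with $\MW\to\MU$, lifts the resulting map $\MW\to E^e$ uniquely to an $\E_\rho$-map $\MWR\to E$ by \cref{thm:abstract_lifting_clean_c2} (Thom isomorphism, recognition principle, delooping of $\gl_1(E)$, cohomological slice tower, Hill's freeness), and then precomposes with the splitting $\MUR\to\MWR$; uniqueness is also checked through $\MWR$. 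If you want to salvage your direct approach on $\BUR$, you would have to either construct the multiplicative cell presentation (equivariant TAQ) or, as the paper does, pass through $B^\rho\BUR\simeq\BSUR$ via the recognition principle and replace $\MUR$ by a Thom spectrum whose base has the needed freeness and which maps multiplicatively to $E$ for formal reasons.
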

\noindent This immediately refines several orientations of interest, answering a question in \cite[Remark 5.7]{gabe2025realsyntomiccohomology}. 
\begin{example*}[\cref{corollary: hahn-shi hirzebruch n}, \cref{corollary: Hahn--Shi orientation for G}, \cref{cor:adamsopsMUR}]
    Let $C_2 \leq G$.
    \begin{enumerate}[(i)]
        \item The Hahn--Shi Real orientations \(N_{C_2}^G \MUR \to E_n\) admit lifts to $\Coind_{C_2}^G\E_{\rho}$-maps.
        \item The Hirzebruch level-\(n\) genera \(\MUR\to \tmf_1(n)\) admit lifts to $\E_{\rho}$-maps. 
        \item The Burklund--Hahn--Levy--Schlank Adams operations \(\Psi^{\ell}\) on \(\MU_{(2)}\) admit lifts to \(\E_\rho\)-maps \({\MUR}_{(2)}\to {\MUR}_{(2)} \).
    \end{enumerate}
\end{example*}
\noindent Our main tool is a multiplicative version of the equivariant Thom isomorphism.
\begin{theorem*}[\cref{theorem: monoidal thom iso}]
    Let $G$ be a finite group and $V$ be a $G$-representation. Suppose that $R \to A$ is a map of $\E_{\infty}^G$-ring spectra and that $f \colon X \to \ul{\Pic}_G(R)$ is a map of $\E_V$-monoidal spaces. An $\E_V$-$A$-orientation of $f$ gives rise to a Thom isomorphism 
    \[ A \otimes_R \Th_G(f) \simeq A \otimes \Sigma_+^{\infty}X \]
    of $\E_V$-$A$-algebras.
\end{theorem*}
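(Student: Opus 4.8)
The plan is to deduce the Thom isomorphism formally from the equivariant multiplicative universal property of $\Th_G$ established above, by recognizing that an $\E_V$-$A$-orientation is precisely the data trivializing the twist after extension of scalars, and that the Thom spectrum of a trivial twist is computed tautologically. I work throughout in the parametrized setting over the orbit category $\mathcal O_G$, so that ``colimit'' abbreviates parametrized (indexed) colimit and ``$\E_V$-monoidal'' carries its genuine-equivariant meaning.

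First I would invoke the equivariant Antolín-Camarena--Barthel universal property to regard $\Th_G$ as an $\E_V$-monoidal functor from $\E_V$-monoidal spaces over $\ul{\Pic}_G(R)$ to $\E_V$-$R$-algebras, with $\Th_G(f)$ computed as the parametrized colimit of $X\xrightarrow{f}\ul{\Pic}_G(R)\hookrightarrow\ul{\Mod}_G(R)$. Extension of scalars $A\otimes_R(-)\colon\ul{\Mod}_G(R)\to\ul{\Mod}_G(A)$ is a $G$-symmetric monoidal left adjoint carrying $\ul{\Pic}_G(R)$ into $\ul{\Pic}_G(A)$; as a left adjoint it commutes with parametrized colimits, and as a $G$-symmetric monoidal functor it is in particular $\E_V$-monoidal, so it intertwines the two copies of $\Th_G$. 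Writing $\pi\colon\ul{\Pic}_G(R)\to\ul{\Pic}_G(A)$ for the induced $\E_V$-monoidal map, this produces a natural equivalence
\[
A\otimes_R\Th_G(f)\;\simeq\;\Th_G(\pi\circ f)
\]
of $\E_V$-$A$-algebras, where the right-hand Thom spectrum is now formed over $A$.

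Next I would unwind the orientation: an $\E_V$-$A$-orientation of $f$ is exactly a trivialization of $\pi\circ f$ as an $\E_V$-monoidal space over $\ul{\Pic}_G(A)$, i.e.\ an equivalence $\pi\circ f\simeq c$ with the constant map $c$ at the unit object $A$, the latter being $\E_V$-monoidal because it factors through the terminal $\E_V$-monoidal space $*$. Applying the functor of the previous step turns this into an equivalence $\Th_G(\pi\circ f)\simeq\Th_G(c)$ of $\E_V$-$A$-algebras. It then remains to identify $\Th_G(c)$: it is the parametrized colimit over $X$ of the constant local system at $A$, which as a spectrum is $A\otimes\Sigma^{\infty}_+X$ for the parametrized suspension-spectrum functor; moreover, since $c$ factors the unit of $\ul{\Pic}_G(A)$ through the terminal map $X\to *$, the $\E_V$-$A$-algebra structure placed on $\Th_G(c)$ by the functor of the first step is precisely the one on $A\otimes\Sigma^{\infty}_+X$ induced, through $\Sigma^{\infty}_+$ and extension of scalars, by the $\E_V$-monoidal structure on $X$. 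Concatenating the three equivalences proves the theorem.

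The formal skeleton above is short; the substance --- available by this point in the paper --- is that $\Th_G$ genuinely refines to an $\E_V$-monoidal, base-change-compatible functor of $\E_V$-monoidal spaces over $\ul{\Pic}_G(-)$, which in turn rests on parametrized Day convolution and on the distributive monoidal structure on parametrized left module categories. Granting that, the step I expect to demand the most care is the last identification $\Th_G(c)\simeq A\otimes\Sigma^{\infty}_+X$ at the level of $\E_V$-$A$-algebras: one must verify that $\Th_G$ carries the unit of the Day-convolution $\E_V$-structure on local systems over $X$ to $\Sigma^{\infty}_+X$ with its correct ring structure --- the one coming from the $\E_V$-monoidal structure on $X$, not merely its correct underlying spectrum --- and that extension of scalars preserves this. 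Everything else is a formal manipulation of $\E_V$-monoidal functors and parametrized colimits.
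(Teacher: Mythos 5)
Your proposal is correct and follows essentially the same route as the paper: base-change compatibility of the multiplicative Thom functor (phrased in the paper as operadic left Kan extension commuting with the left adjoint $A\otimes_R-$) gives $A\otimes_R\Th_G^{\otimes}(f)\simeq\Th_G^{\otimes}(\Ind_R^A\circ f)$, the orientation is read via \cref{corollary: characterization of orientations}(ii) as a nullhomotopy of that composite in $\Mon_{\P}(\Sc)$, and the Thom spectrum of the resulting constant map is identified with $A\otimes X^{\otimes}$ via \cref{example: thom along trivial}. The final identification you flag as delicate is exactly what the paper handles by factoring the null map through $\Pic_G(\one)$ and applying $\Ind_{\one}^A$.
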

\noindent We prove several results in parametrized higher algebra to obtain the above theorem. Most notably, we need good monoidal structures on parametrized left module categories.
\begin{theorem*}[\cref{theorem: LMod ---> O is O-monoidal}, \cref{theorem: LMod distributive}]
    Let $\ul{\C}^{\otimes}$ be a $G$-symmetric monoidal $G$-$\infty$-category and $\ul{\O}^{\otimes}$ be a $G$-$\infty$-operad, both satisfying certain conditions. Let $A \in \Alg_{\O \otimes \Infl_G \E_1}(\ul{\C})$.
    \begin{enumerate}[(i)]
        \item The pullback
        \begin{center}
    \begin{tikzcd}
        \ul{\LMod}_A^G(\ul{\C})^{\otimes} \arrow[r] \arrow[dr, phantom, very near start, "\lrcorner"] \arrow[d] & \ul{\Alg}_{\Infl_G \LM}(\ul{\C})^{\otimes} \arrow[d]
        \\ \ul{\O}^{\otimes} \arrow[r, "A", swap] & \ul{\Alg}_{\Infl_G \E_1}(\ul{\C})^{\otimes}
    \end{tikzcd}
\end{center}
    is an $\ul{\O}$-monoidal $G$-$\infty$-category.
        \item If $\ul{\C}^{\otimes}$ is $\ul{\O}$-distributive, then so is $\ul{\LMod}_A^G(\ul{\C})^{\otimes}$.
    \end{enumerate}
\end{theorem*}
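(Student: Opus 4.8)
The plan is to leverage the theory of parametrized higher algebra — specifically the interaction between $G$-$\infty$-operads, fibrous patterns, and the pullback presentation of left module categories — to reduce both statements to known structural facts about the non-parametrized (or rather, "inflated $\E_1$") case. First I would address (i). The key observation is that $\ul{\LMod}_A^G(\ul{\C})^{\otimes}$ is \emph{defined} as the displayed pullback, so the task is to show that the composite $\ul{\LMod}_A^G(\ul{\C})^{\otimes} \to \ul{\O}^{\otimes}$ is a cocartesian fibration of $G$-$\infty$-operads exhibiting an $\ul{\O}$-monoidal structure, i.e.\ that the fibers over each object of $\ul{\O}^{\otimes}$ are of the expected product form and that the cocartesian edges lie over the inert/active factorization correctly. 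Since the right-hand vertical map $\ul{\Alg}_{\Infl_G \LM}(\ul{\C})^{\otimes} \to \ul{\Alg}_{\Infl_G \E_1}(\ul{\C})^{\otimes}$ is itself an $\ul{\Alg}_{\Infl_G\E_1}(\ul{\C})$-monoidal — or at least $\ul{\O}$-monoidal after the relevant base change — fibration coming from the operad map $\Infl_G \E_1 \to \Infl_G \LM$, the stability of $\ul{\O}$-monoidal fibrations under pullback along $\ul{\O}$-monoidal functors does the job. Concretely: the inclusion $\Infl_G\E_1 \to \Infl_G\LM$ of the associative operad into the left-module operad is a map of $G$-$\infty$-operads, base-changing $\ul{\C}^{\otimes}$ along it and taking parametrized algebras produces $\ul{\Alg}_{\Infl_G\LM}(\ul{\C})^{\otimes}$ over $\ul{\Alg}_{\Infl_G\E_1}(\ul{\C})^{\otimes}$, and then pulling back along the $\ul{\O}$-algebra $A$ (viewed as an $\ul{\O}$-monoidal functor $\ul{\O}^{\otimes} \to \ul{\Alg}_{\Infl_G\E_1}(\ul{\C})^{\otimes}$ picking out the object $A$) yields the claim. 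The "certain conditions" in the hypothesis are precisely those guaranteeing that $\ul{\Alg}$ of a $G$-symmetric monoidal category against a $G$-$\infty$-operad is again suitably monoidal — presumably some presentability/compatibility-with-colimits condition on $\ul{\C}^{\otimes}$ plus the requirement that $\ul{\O}^{\otimes}$ be a reduced/unital $G$-$\infty$-operad so that the tensor $\ul{\O}\otimes\Infl_G\E_1$ makes sense — so I would invoke the ambient parametrized-algebra machinery (Lurie's $\infty$-operadic framework as ported to the $G$-setting, together with the distributivity results cited earlier) to legitimize each of these steps.

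For (ii), distributivity, I would argue that the relevant norm (indexed tensor) functors on $\ul{\LMod}_A^G(\ul{\C})^{\otimes}$ preserve parametrized colimits fiberwise, which is the content of $\ul{\O}$-distributivity. The strategy is to check this on the two "coordinates" of the pullback separately. The forgetful functor $\ul{\LMod}_A^G(\ul{\C}) \to \ul{\C}$ is conservative and — crucially — creates parametrized colimits (indexed colimits compute in the underlying category, since the left $A$-action is adjoint/limit-free data over colimits when $A \otimes -$ preserves the relevant colimits, which holds because $\ul{\C}^{\otimes}$ is $\ul{\O}$-distributive and in particular the monoidal structure is compatible with colimits). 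The norm functors on $\ul{\LMod}_A^G(\ul{\C})^{\otimes}$ are compatible with those on $\ul{\C}^{\otimes}$ via the (symmetric monoidal, colimit-preserving) forgetful functor, so distributivity of the source norms reduces to distributivity of $\ul{\C}^{\otimes}$, which is assumed. More carefully, one uses that the free-forgetful adjunction $\ul{\C} \rightleftarrows \ul{\LMod}_A^G(\ul{\C})$ is monoidal in the appropriate parametrized sense, that the free modules generate under parametrized colimits, and that norm functors commute with the free functor up to a base-change formula; then a density/colimit-preservation argument propagates distributivity from free modules to all modules.

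The main obstacle I anticipate is step (ii), and within it the precise verification that parametrized colimits in $\ul{\LMod}_A^G(\ul{\C})$ are computed in $\ul{\C}$ \emph{and interact correctly with the indexed tensor (norm) functors}. In the non-equivariant setting this is standard (left modules over an algebra in a presentably symmetric monoidal category form a presentably symmetric monoidal category, and $-\otimes_A-$ preserves colimits in each variable), but equivariantly the norms are genuinely new functors that are not colimit-preserving in the naive sense — they are only "distributive," meaning they convert indexed colimits into colimits indexed over a different diagram via a distributivity/exponential formula. So the heart of the argument is a parametrized Beck–Chevalley / distributivity compatibility: showing that for a left $A$-module the norm $N_H^K M$ carries its natural $N_H^K A$-module structure (hence, after restriction along the norm-of-unit map, an $A$-module structure compatible with the $\ul{\O}$-operations), and that this assignment preserves the indexed colimits witnessing distributivity. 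I would isolate this as a lemma about the interaction of the monadic forgetful functor with norm functors — likely already available in the paper's toolbox given the cited "distributive monoidal structure on parametrized left module categories" — and then the rest is formal. A secondary, more bookkeeping-heavy obstacle is checking that the pullback in (i) is taken in the correct category (e.g.\ of $G$-$\infty$-operads over $\ul{\O}^{\otimes}$, or of $\ul{\O}$-monoidal categories) so that the universal property of the pullback delivers the module structure with its monoidal refinement rather than merely a fiberwise statement; this is handled by noting all four corners are $\ul{\O}$-monoidal (or $\ul{\Alg}_{\Infl_G\E_1}(\ul{\C})$-monoidal, base-changed to $\ul{\O}$) and all maps $\ul{\O}$-monoidal, and invoking closure of $\ul{\O}$-monoidal fibrations under pullback.
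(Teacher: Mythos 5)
Your overall architecture matches the paper's — for (i) you pull back a coCartesian fibration along the map classified by $A$, and for (ii) you try to push everything down to $\ul{\C}$ through the forgetful functor — but in both parts the step you treat as given is exactly where the content lies. For (i), you assert that $\ul{\Alg}_{\Infl_G\LM}(\ul{\C})^{\otimes}\to\ul{\Alg}_{\Infl_G\E_1}(\ul{\C})^{\otimes}$ is already an $\ul{\O}$-monoidal (coCartesian) fibration ``coming from the operad map $\Infl_G\E_1\to\Infl_G\LM$.'' Equivariantly this is not automatic and is the main work of \cref{theorem: LMod ---> O is O-monoidal}: fiberwise it is Lurie's \cite[Lemma 4.5.3.6]{lurie2017ha}, but to promote it to a coCartesian fibration of $G$-symmetric monoidal categories one needs a criterion in the style of Haugseng--Melani--Safronov (\cref{corollary: criterion for map between symmetric monoidal categories to be cocartesian fibration}) and must check that restrictions, fiberwise tensor products, and norms preserve the coCartesian edges $(A,M)\to(B,B\otimes_A M)$. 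That check is a base-change computation with bar constructions ($N_H^K B\otimes_{N_H^K A}N_H^K M\simeq N_H^K(B\otimes_A M)$, and similarly for fiberwise tensors), and it is precisely here that the ``certain conditions'' enter: fiberwise geometric realizations exist and norms, restrictions, and tensoring with an object commute with them. Your proposal never verifies this, so the pullback argument has nothing to pull back.

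For (ii), the claim that ``the norm functors on $\ul{\LMod}_A^G(\ul{\C})^{\otimes}$ are compatible with those on $\ul{\C}^{\otimes}$ via the forgetful functor, so distributivity ... reduces to distributivity of $\ul{\C}^{\otimes}$'' is false as stated: the indexed tensor product of $A$-modules is not computed underlying. It factors as the underlying indexed tensor product (landing in modules over $\underline{\bigotimes}_{o'\to o}A$) followed by the relative tensor product $A\otimes_{\underline{\bigotimes}A}(-)$, and establishing this factorization (\cref{prop: indexed tensor products of LMod}) is the most technical ingredient of the paper's proof — your phrase ``norm functors commute with the free functor up to a base-change formula'' gestures at it but does not supply it. Once one has the factorization, the paper's argument is direct and avoids your density-over-free-modules step entirely: the forgetful functor strongly preserves and reflects $G$-colimits (\cref{prop: categorical properties of LMod}, which itself needs the projection formula supplied by distributivity of $\ul{\C}$), so the first half of the factorization sends the given parametrized colimit diagram to a colimit diagram, and the second half preserves it because it is a parametrized left adjoint. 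Your density argument, by contrast, risks circularity: to propagate distributivity from free modules to all modules you would need to commute the module norm past exactly the parametrized colimits whose preservation is being proven. So the missing ideas are concretely: the explicit identification of coCartesian edges and their preservation under norms (for (i)), and the factorization of the module-level indexed tensor product through the base-change left adjoint (for (ii)).
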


   \newpage
		\tableofcontents
		\newpage
\section{Introduction}

The \(K(n)\)-local sphere \(L_{K(n)}\S\) is one of the fundamental objects of study in chromatic homotopy theory.
By Devinatz--Hopkins \cite{devinatzHopkins2004homotopyfixedpoints}, there is an equivalence \(L_{K(n)}\S\simeq E_n^{h\mathbb{G}}\), where \(E_n\) is a height \(n\) Lubin--Tate theory, and \( \mathbb{G}\) is the extended Morava stabilizer group.
Hopkins--Miller \cite{rezk1998hopkinsmiller} observed that, by taking finite subgroups \(G \leq \mathbb{G}\), one can produce higher height versions of \(\KO\), i.e.~in the same way that \(\KO\) detects substantial \(v_1\)-periodic information, the spectra 
\[ 
\mathrm{EO}_n(G) = E_n^{hG} \]
detect substantial \(v_n\)-periodic information. For example, for \(n=1\), this is closely related to \(\KO\), and for \(n=2\) this is closely related to \(\mathrm{TMF}\).
Thus, the \emph{higher real \(K\)-theories} \(\mathrm{EO}_n\) contain rich information about the stable homotopy groups of spheres.

\medskip
\noindent
Hill--Hopkins--Ravenel's landmark work \cite{hillhopkinsravenel2009Kervaire} began as an attempt to compute $E_4^{hC_8}$.
They showed that $E_4^{hC_8}$ would detect the Kervaire classes \cite{hill2010arf}.
However, the action on \(E_n\) is obtained through the Hopkins--Miller Theorem, rendering the associated homotopy fixed point spectral sequence intractable.
Nonetheless, there was good evidence this action could be made tractable.
At height \(1\), the Real Conner--Floyd orientation $\MU_{\R} \to \KU_{\R}$ relates the formal-group-theoretic action to the geometric complex conjugation action.
In celebrated work, Hahn--Shi \cite{hahnRealOrientationsLubin2020} produced higher height analogues of the Real Conner-Floyd orientation, namely the \emph{Hahn--Shi Real orientations} $\MU_{\R} \to E_n$ (and normed versions thereof). They thereby provided the first computation of $E_n^{hC_2}$ for $n > 2$, and opened the door to applying the Hill--Hopkins--Ravenel program to understanding higher real \(K\)-theories \cite{Beaudry_2020, beaudryHillShiZeng2021modelsLubinTate,Heard2021,hill2023slice,Duan_2025,carrick2025higherrealktheoriesfinite, duan2025periodicityfinitecomplexityhigher}.

\medskip
\noindent
However, from a multiplicative perspective, Real orientations are still poorly understood.
As pointed out in \cite[Remark 5.7]{gabe2025realsyntomiccohomology}, it was previously not known whether $E_n$ or $\tmf_1(n)$ admit $\E_{\rho}$-$\MU_{\R}$-orientations.
Such orientations are, among others, essential for applications in Real trace methods: an \(\E_\sigma\)-algebra structure is required to define \(\mathrm{THR}({-})\), and an \(\E_\rho\)-\(\MUR\)-orientation allows one to define \(\mathrm{THR}({-}/\MUR)\).
One of the main goals of this article is to construct structured orientations of the aforementioned examples.

\medskip
\noindent
The sparsity of multiplicative results in the Real setting stems from a lack of computationally tractable equivariant obstruction theories.
Even from a theoretical point of view, most work thus far has focused on the fully commutative case via $\N_{\infty}$-operads.
This excludes many of the most important ring spectra from consideration: 
Moore spectra $\S/p^k$, Morava \(K\)-theories \(K(n)\), Ravenel's \(X(n)\) spectra, and the Brown--Peterson spectrum \(\BP\) all do not admit \(\mathbb{E}_\infty\)-ring structures \cite{rognes2008galois, lawson2018secondary, antolinbarthel2019thom, devalapurkar2024higherchromaticthom, senger2024BP}.
Consequently, equivariant multiplicative refinements of these are inaccessible through \(\mathbb{N}_\infty\)-operads.

\medskip 
\noindent
We are led to study equivariant versions of $\E_n$, and to develop tools to answer the following guiding problem:
\begin{problem*}
    For which $C_2$-representations $V$ does $\BPR$ admit an $\E_V$-algebra structure?
\end{problem*}
\begin{proof}[Partial Answer]
    It admits an $\E_{\rho}$-algebra structure (\cref{thm:multiplication_on_BPR}).
\end{proof}
\noindent This question has received much attention in the non-equivariant setting \cite{basterraMandell2013BP, chadwickmandell, lawson2018secondary, hahnWilson2022redshift, senger2024BP, devalapurkar2025examplesdiskalgebras}, leading to many advances about structured ring spectra. Nonetheless, the Real question remained completely open.

\medskip
\noindent
\emph{Equivariant little disk operads} $\E_V$ -- like the non-equivariant ones -- come in a hierarchy controlling the amount of equivariant commutativity of their associated algebras. In particular, they specify norms and associated coherence maps according to the given representation. The specific structure of such algebras was studied by Hill \cite{Hill22disks}. As an example, consider the $C_2$-sign representation $\sigma$. Then, an $\E_{\sigma}$-algebra $A^{\otimes}$ can be viewed as a $C_2$-spectrum $A \in \Sp_{C_2}$ with underlying $\E_1$-algebra $\Res_e^{C_2}A$ and a left module structure $N_e^{C_2} \Res_e^{C_2}A \otimes A \to A$ \cite[Section 7.1]{horev2019genuineequivariantfactorizationhomology}.
We will use this module structure to discuss nilpotence results for $\E_{\sigma}$-algebras (\cref{sec:nilpotence}).

\medskip
\noindent
\change{Attacking the question about structures on $\BPR$ leads us to structured versions of \emph{equivariant Thom spectra}.}
In $\infty$-categorical language, Ando--Blumberg--Gepner--Hopkins--Rezk \cite{abghr2014infty} give an elegant formulation of Thom spectra: Let $R$ be an $\E_{\infty}$-ring spectrum, $X$ be a space and $f\colon X \to \Pic(R)$ be a map of spaces. Then, its Thom spectrum is defined as an $\infty$-categorical colimit
\[ \Th(f) = \mathrm{M}f = \colim \left( X \xrightarrow{f} \Pic(R) \to  \LMod_R \right). \]
This can be made equivariant via the recent surge of \emph{parametrized higher category} theory \cite{barwick2016parametrizedhighercategorytheory, cnossen2023parametrizedstabilityuniversalproperty, hilman2024parametrisedpresentability, martiniWolf2024colimits, stewart2025equivariantoperadssymmetricsequences}. All the defining notions admit parametrized analogues, so we can define equivariant Thom spectra as $G$-colimits in the exact same fashion. 
\cut{This motivates using the toolkit of parametrized higher category theory to transport results about non-equivariant Thom spectra to equivariant ones.} 
\medskip \\This $\infty$-categorical formalism allows a more accessible treatment of multiplicative structures on Thom spectra,
although they were already classically studied by Lewis \cite{lewisMaySteinberger1986equivariant}. Concurrently, Ando--Blumberg--Gepner \cite{andoblumberggepner2018parametrized} and Antolín-Camarena--Barthel \cite{antolinbarthel2019thom} analyzed the compatibility of $\Th$ with multiplicative objects. Notably, Antolín-Camarena--Barthel proved a universal property of multiplicative Thom spectra through operadic left Kan extensions and used this to obtain a multiplicative Thom isomorphism.

\medskip
\noindent
Let $f\colon X \to \ul{\Pic}_G(R)$ be a map of $G$-spaces and $A$ be an $R$-algebra. Its Thom spectrum is informally a twisted version of $R \otimes X$, and a detwist after a base change is called orientation. More precisely, an \emph{$A$-orientation} of a map $f\colon X \to \ul{\Pic}_G(R)$ takes one of the following equivalent forms (\cref{corollary: characterization of orientations}): 
\begin{enumerate}[(i)]
        \item A lift
        \begin{center}
        \begin{tikzcd}
        &\GL_1(\ul{\Pic}_G(R)_{\downarrow A}) \arrow[d]
                \\ X \arrow[r, "f", swap] \arrow[ur, dashed] & \ul{\Pic}_G(R)
            \end{tikzcd}
        \end{center}
        of $f$.
        \item A nullhomotopy of the composite 
        \begin{center}
            \begin{tikzcd}
                X \arrow[r, "f"] & \ul{\Pic}_G(R) \arrow[r, "\Ind_R^A"] & \ul{\Pic}_G(A).
            \end{tikzcd}
        \end{center}
        in $\Sc_G$.
        \item An $R$-module map $\Th_G(f) \to A$ such that for every $x\colon * \to X$ the adjoint $A$-module map corresponding to the $R$-module map 
        \begin{center}
            \begin{tikzcd}
                \Th_G(f \circ x) \arrow[r] & \Th_G(f) \arrow[r] & A
            \end{tikzcd}
        \end{center} is an equivalence.
\end{enumerate}
The importance of orientations lies in the resulting Thom isomorphisms $A \otimes_R \Th_G(f) \simeq A \otimes X$.
Each of the above notions admit structured refinements, called \emph{structured orientations}, inducing structured versions of the Thom isomorphism. An interest in structured versions of the Thom isomorphism already started long ago, see Mahowald--Ray \cite{mahowaldray1981thomiso}.

\medskip
\noindent
A part of this article consists of mimicking the work of Antolín-Camarena--Barthel \cite{antolinbarthel2019thom} via parametrized higher category theory to lift their results to the equivariant setting. This will provide basic tools to develop an obstruction theory for structured equivariant orientations.

\subsection{Main Results \& Outline}
\subsubsection*{Foundations}
The main goal of our foundational work is to develop a robust theory of multiplicative equivariant Thom spectra that allows us to discuss orientation theory (\cref{sec: structured orientations}), and an accompanying multiplicative Thom isomorphism (\cref{theorem: monoidal thom iso}).
Since orientations are about base changes between ring spectra, a good theory of modules over rings is essential.
This motivates our work on an equivariant left module category $\LMod$, and its monoidal properties (\cref{sec:monoidal_LMod}). In the subsequent discussion we will be a little vague in giving the precise assumptions since those are often quite technical to state. The precise statements can be found in the body of the article.
\medskip \\There is a notion of inflated $G$-$\infty$-operads (\cref{construction: inflated operads}), that turns an $\infty$-operad $\O^{\otimes}$ into a $G$-$\infty$-operad $\Infl_G \O^{\otimes}$. Algebras for $\Infl_G \O^{\otimes}$ are simply levelwise $\O$-algebras. So given a $G$-symmetric monoidal $G$-$\infty$-category $\ul{\C}^{\otimes}$ and an $\Infl_G \E_1$-algebra $A$ therein, one can mimic the classical setting to define
\begin{center}
    \begin{tikzcd}
        \ul{\LMod}_A^G(\ul{\C}) \arrow[r] \arrow[dr, phantom, very near start, "\lrcorner"] \arrow[d] & \ul{\Alg}_{\Infl_G \LM}(\ul{\C}) \arrow[d]
        \\ * \arrow[r, "A", swap] & \ul{\Alg}_{\Infl_G \E_1}(\ul{\C})
    \end{tikzcd}
\end{center}
where $\LM$ is the classical left-module operad \cite[Section 4.2.1]{lurie2017ha}. Stewart extended the Boardman--Vogt tensor product to $G$-$\infty$-operads (\cref{theorem: omnibus Alg theorem}) and equipped the above algebra categories with $G$-symmetric monoidal structures.
So given a $G$-$\infty$-operad $\O^{\otimes} \in \Op_{G, \infty}$, a map $\O^{\otimes} \to  \ul{\Alg}_{\Infl_G \E_1}(\ul{\C})^{\otimes}$ of $G$-$\infty$-operads corresponds to an $\O^{\otimes} \otimes \Infl_G \E_1$-algebra in $\ul{\C}^{\otimes}$.

\medskip
\noindent 
We use this observation to study monoidal structures on $\LMod$.
The core difficulty lies in proving that $\LMod$ is distributive in the sense of Nardin--Shah (\cref{def: distributivity}), which says that the $G$-monoidal structure is compatible with $G$-colimits in a suitable sense.

\begin{mainthm}[\cref{theorem: LMod ---> O is O-monoidal}, \cref{theorem: LMod distributive}]
    Let $\ul{\C}^{\otimes}$ be a $G$-symmetric monoidal $G$-$\infty$-category and $\ul{\O}^{\otimes}$ be a $G$-$\infty$-operad, both satisfying certain conditions. Let $A \in \Alg_{\O \otimes \Infl_G \E_1}(\ul{\C})$.
    \begin{enumerate}[(i)]
        \item The pullback
        \begin{center}
    \begin{tikzcd}
        \ul{\LMod}_A^G(\ul{\C})^{\otimes} \arrow[r] \arrow[dr, phantom, very near start, "\lrcorner"] \arrow[d] & \ul{\Alg}_{\Infl_G \LM}(\ul{\C})^{\otimes} \arrow[d]
        \\ \ul{\O}^{\otimes} \arrow[r, "A", swap] & \ul{\Alg}_{\Infl_G \E_1}(\ul{\C})^{\otimes}
    \end{tikzcd}
\end{center}
    is an $\O$-monoidal $G$-$\infty$-category.
        \item If $\ul{\C}^{\otimes}$ is $\ul{\O}$-distributive, then so is $\ul{\LMod}_A^G(\ul{\C})^{\otimes}$.
    \end{enumerate}
\end{mainthm}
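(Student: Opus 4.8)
The plan is to deduce both statements from the pullback description of $\ul{\LMod}_A^G(\ul{\C})^{\otimes}$, using Stewart's omnibus theorem (\cref{theorem: omnibus Alg theorem}) on the parametrized Boardman--Vogt tensor product for (i), and a parametrized monadicity argument together with the hypothesised distributivity of $\ul{\C}^{\otimes}$ for (ii).

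For (i), I would first record that, since $\ul{\C}^{\otimes}$ is $G$-symmetric monoidal, restriction along the map of $G$-$\infty$-operads $\Infl_G \E_1 \to \Infl_G \LM$ exhibits the right-hand vertical functor $\ul{\Alg}_{\Infl_G \LM}(\ul{\C})^{\otimes} \to \ul{\Alg}_{\Infl_G \E_1}(\ul{\C})^{\otimes}$ as a cocartesian fibration --- this is part of, or an immediate consequence of, \cref{theorem: omnibus Alg theorem} --- whose fibre over a family of $\Infl_G \E_1$-algebras is the corresponding product of parametrized left-module categories, so that the Segal conditions hold over $\ul{\Alg}_{\Infl_G \E_1}(\ul{\C})^{\otimes}$. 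By the discussion preceding the statement, the datum $A \in \Alg_{\O \otimes \Infl_G \E_1}(\ul{\C})$ is precisely a map of $G$-$\infty$-operads $A\colon \ul{\O}^{\otimes} \to \ul{\Alg}_{\Infl_G \E_1}(\ul{\C})^{\otimes}$. Since a base change of a cocartesian fibration along any functor is again a cocartesian fibration, the pullback $\ul{\LMod}_A^G(\ul{\C})^{\otimes} \to \ul{\O}^{\otimes}$ is a cocartesian fibration, and because $A$ carries inert morphisms to inert morphisms the fibrewise Segal conditions are inherited by the pullback; hence $\ul{\LMod}_A^G(\ul{\C})^{\otimes}$ is $\ul{\O}$-monoidal, with fibre over a colour $o$ the category $\ul{\LMod}_{A(o)}(\ul{\C})$ and binary tensor product the relative tensor product $\otimes_A$.

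For (ii), recall that $\ul{\O}$-distributivity (\cref{def: distributivity}) asks for fibrewise $G$-cocompleteness together with the requirement that the binary tensor products and the $G$-indexed (``norm'') tensor products commute with $G$-colimits in the indexed sense. I would reduce each of these to the corresponding assertion for $\ul{\C}^{\otimes}$, which holds by hypothesis, along the conservative forgetful functor $U\colon \ul{\LMod}_A^G(\ul{\C}) \to \ul{\C}$. Concretely: $(a)$ $U$ is $G$-monadic with monad $A \otimes ({-})$, and this monad preserves $G$-colimits since tensoring in $\ul{\C}^{\otimes}$ does; by parametrized Barr--Beck this makes $\ul{\LMod}_A^G(\ul{\C})$ fibrewise $G$-cocomplete, with $G$-colimits computed through the monadic two-sided bar resolution in $\ul{\C}$. $(b)$ Under $U$, the relative tensor product $M \otimes_A N$ is the realisation of the two-sided bar construction $\mathrm{Bar}_{\bullet}(M, A, N)$ formed with $\otimes$ in $\ul{\C}$; since $\otimes$ preserves $G$-colimits separately in each variable and $G$-colimits commute with bar realisations, $\otimes_A$ preserves $G$-colimits in each variable. $(c)$ The norm functors of $\ul{\C}^{\otimes}$ are $G$-symmetric monoidal and, by distributivity of $\ul{\C}^{\otimes}$, preserve the sifted colimits appearing in bar resolutions; this gives identifications $N_K^H(M \otimes_A N) \simeq N_K^H M \otimes_{N_K^H A} N_K^H N$ and, combined with $(a)$ and $(b)$, shows that the indexed tensor products on $\ul{\LMod}_A^G(\ul{\C})$ fit into the Beck--Chevalley/distributivity squares demanded by \cref{def: distributivity}, since the corresponding squares for $\ul{\C}$ do and $U$, together with its norm-compatible variants, detects the relevant $G$-colimits.

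I expect step $(c)$ to be the main obstacle: upgrading the fibrewise, binary statements to genuine $G$-distributivity means controlling how the norm/indexed-tensor functors on module categories interact with $G$-indexed colimits, and this rests on coherently identifying norms of relative tensor products, and of module-valued colimits, with their counterparts in $\ul{\C}$ --- ultimately via the fact, a consequence of distributivity of $\ul{\C}^{\otimes}$, that norms commute with the two-sided bar construction. Making this identification coherent rather than merely objectwise is where the parametrized bookkeeping will be heaviest; once it is in place, assembling $(a)$--$(c)$ into the criterion of \cref{def: distributivity} should be formal.
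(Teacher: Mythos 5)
Your argument for (i) locates the whole difficulty in the claim that $\ul{\Alg}_{\Infl_G \LM}(\ul{\C})^{\otimes} \to \ul{\Alg}_{\Infl_G \E_1}(\ul{\C})^{\otimes}$ is a coCartesian fibration, and then dismisses that claim as ``part of, or an immediate consequence of'' \cref{theorem: omnibus Alg theorem} --- but it is not a consequence of that theorem, and this is exactly where the real content lies. Already non-equivariantly, $\Alg_{\LM}(\C) \to \Alg_{\E_1}(\C)$ is a coCartesian fibration only under hypotheses: the coCartesian edges are the relative tensor products $(A,M) \mapsto (B, B\otimes_A M)$, which exist and behave well only when $\C$ has geometric realizations over which the tensor product distributes (\cref{lemma: AlgLM ---> AlgE1 is coCartesian fibration}). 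Equivariantly one must in addition check that restrictions, fiberwise tensor products and norms carry these coCartesian edges to coCartesian edges, i.e.\ that base change along $A \to B$ commutes with all of these operations; this is precisely the content of the ``certain conditions'' in the statement (fiberwise geometric realizations with which norms, tensorings and restrictions commute), and the paper verifies it through an equivariant Haugseng--Melani--Safronov criterion (\cref{corollary: criterion for map between symmetric monoidal categories to be cocartesian fibration}) together with explicit bar-construction identifications such as $N_H^K B \otimes_{N_H^K A} N_H^K M \simeq N_H^K(B\otimes_A M)$. Since your outline never identifies these hypotheses or performs these checks, part (i) as written assumes what has to be proved.

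For (ii) your overall strategy --- reduce along the conservative forgetful functor to $\ul{\C}$, then use that base change is colimit-preserving --- matches the paper's, but the two points you defer are the theorem. First, $G$-cocompleteness of $\ul{\LMod}_A^G(\ul{\C})$ is more than fibrewise monadic cocompleteness: one needs indexed coproducts satisfying Beck--Chevalley, and since $\Ind_H^K A$ carries no natural algebra structure one cannot write down $\ind_H^K$ by hand; the paper produces it from presentability and then identifies it with the underlying induction using the projection formula, which is itself extracted from distributivity of $\ul{\C}^{\otimes}$ (\cref{prop: projection formula from distributivity}, \cref{prop: categorical properties of LMod}). Second, and more seriously, your step $(c)$ --- the coherent factorization of the indexed tensor product on modules as the underlying indexed tensor product in $\ul{\C}$ followed by base change along $A \otimes_{\underline{\bigotimes} A}(-)$ --- is the key lemma (\cref{prop: indexed tensor products of LMod}), and it is not formal: the natural comparison of unstraightenings fails to be a map of coCartesian fibrations, so the paper has to extract a partial naturality statement, valid exactly on the coCartesian edges it needs, by a delicate argument. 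You correctly flag this as the main obstacle and assert that the rest is formal once it is in place, which is fair; but as submitted the proposal leaves the central difficulty of both parts unaddressed.
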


\begin{proof}[Proof Sketch]
\hfill 
\begin{enumerate}[(i)]
    \item First, this pullback can be taken in $\Op_{G, \infty}$, so $\ul{\LMod}_A^G(\ul{\C})^{\otimes} \to \ul{\O}^{\otimes}$ is a map of $G$-$\infty$-operads. It remains to show that the left vertical arrow is a coCartesian fibration. Since coCartesian fibrations pull back, it suffices to show that the right vertical arrow is a coCartesian fibration. This is classical \cite[Lemma 4.5.3.6]{lurie2017ha} for each equivariant level
    \[ \ul{\Alg}_{\LM}(\ul{\C})^{\otimes}_H \to \ul{\Alg}_{\E_1}(\ul{\C})_H^{\otimes} \] 
    and to pass to the $G$-symmetric monoidal algebra categories we use a criterion by Haugseng--Melani--Safronov (\cref{corollary: criterion for map between symmetric monoidal categories to be cocartesian fibration}) for which we essentially need to check that norms, fiberwise tensor products and restrictions must preserve coCartesian edges.
    \item We equivariantize the following strategy, which henceforth becomes notationally much heavier. Consider the two colimit diagrams $I^{\triangleright}, J^{\triangleright} \to \LMod_A(\C)$ and the following diagram.
        \begin{center}
            \begin{tikzcd}
                (I \times J)^{\triangleright} \arrow[d, equal] \arrow[r] & I^{\triangleright} \times J^{\triangleright} \arrow[d, equal] \arrow[r] & \LMod_A(\C)^{\times 2} \arrow[r, "\otimes"] \arrow[d] & \LMod_{A \otimes A}(\C) \arrow[r, "A \otimes_{A \otimes A} -"] \arrow[d] & \LMod_A(\C)
                \\ (I \times J)^{\triangleright} \arrow[r] & I^{\triangleright} \times J^{\triangleright} \arrow[r] & \C^{\times 2} \arrow[r, "\otimes", swap]  & \C
            \end{tikzcd}
        \end{center}
        The last two functors of the top composite compose to the tensor product in $\LMod_A(\C)$ and we verify such a factorization for our equivariant setting (\cref{prop: indexed tensor products of LMod}). The main part of distributivity is to show that the top line is a colimit diagram. Distributivity of $\C^{\otimes}$ shows that the bottom line is a colimit diagram. Since the vertical forgetful functor reflects colimits, also the composite of the top three functors is a colimit diagram. But then, $A \otimes_{A \otimes A} -$ preserves this since it is a left adjoint.\qedhere
\end{enumerate}
\end{proof}

\noindent For us, this was the most difficult ingredient to get many tools from parametrized higher algebra running. 
Distributivity is required to use operadic left Kan extensions (\cref{theorem: operadic left Kan extension}). After setting up a suitable notion of $G$-Picard spaces (\cref{construction: Pic}), along with their $G$-monoidal structures, we can construct multiplicative equivariant Thom spectra. Start with a map $f\colon X^{\otimes} \to \ul{\Pic}_G^{\otimes}(A)$ between $\ul{\P}$-monoidal $G$-spaces for some (suitable) $G$-$\infty$-operad $\ul{\P}^{\otimes}$.\footnote{We write $(-)^{\otimes}$ to indicate that the monoidal structure is part of the data.} The operadic version of left Kan extensions then allows us to define
\begin{center}
        \begin{tikzcd}
            X^{\otimes} \arrow[r, "f"] \arrow[d] & \ul{\Pic}_{G}^{\otimes}(A) \arrow[r] & \ul{\LMod}_A^{G}(\ul{\Sp}_G)^{\otimes}
            \\ \ul{\P}^{\otimes} \arrow[urr, dashed, "\Th_{G}^{\otimes}(f)", swap, bend right]
        \end{tikzcd}
    \end{center}
which is a multiplicative enhancement of $G$-Thom spectra. In particular, this shows that a $\ul{\P}$-monoidal map $f\colon X^{\otimes} \to \ul{\Pic}_G^{\otimes}(A)$ gives rise to a $\ul{\P}$-algebra refinement of $\Th_G(f)$. This extension is essentially defined by a universal property, an equivariant version of Antolín-Camarena--Barthel's universal property.
In particular, this makes producing maps of out \(\Th_G^{\otimes}(f)\) tractable.
\begin{mainthm}[\cref{theorem: universal property of Th}] \label{intro: equivariant ACB}
    The functor 
    \[ \Th_G^{\otimes} \colon \Alg_{\ul{\P}}(\ul{\Sc}_G)_{/\ul{\Pic}_G^{\otimes}(A)} \to \Alg_{\ul{\P}}\left(\ul{\LMod}_A^G(\ul{\Sp}_G) \right) \] 
    admits an explicitly described right adjoint in terms of $G$-Picard spaces.
\end{mainthm}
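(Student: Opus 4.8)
The plan is to construct the right adjoint by hand as a "parametrized units" functor and then verify the adjunction via a chain of base-change identifications. First I would recall that, by the construction just given, $\Th_G^\otimes$ is obtained from operadic left Kan extension along $\ul{\Pic}_G^\otimes(A)\to \ul{\LMod}_A^G(\ul\Sp_G)^\otimes$; since this latter map is itself the composite of the inclusion $\ul{\Pic}_G^\otimes(A)\hookrightarrow \ul{\Mod}_A^G(\ul\Sp_G)^\otimes$ (hitting invertible objects) with nothing further, the candidate right adjoint should be the "$\ul\P$-monoidal $\ul\Pic_G$ of the slice". Concretely, I would define, for a $\ul\P$-algebra $B\in \Alg_{\ul\P}(\ul{\LMod}_A^G(\ul\Sp_G))$, the object
\[
\mathrm{gl}_1^\otimes(B) \;:=\; \ul{\Pic}_G^\otimes\bigl((\ul{\LMod}_A^G(\ul\Sp_G))_{\downarrow B}\bigr) \;\longrightarrow\; \ul{\Pic}_G^\otimes(A),
\]
the last map coming from the forgetful functor to $\ul{\LMod}_A^G(\ul\Sp_G)$ together with the augmentation to $A$; the candidate right adjoint sends $B$ to this object of $\Alg_{\ul\P}(\ul\Sc_G)_{/\ul\Pic_G^\otimes(A)}$. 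The key input making this well-defined is distributivity of $\ul{\LMod}_A^G(\ul\Sp_G)^\otimes$ (Theorem B), which guarantees that $\ul\Pic_G^\otimes$ of a slice category inherits a $\ul\P$-monoidal structure compatible with the projection.

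The adjunction itself I would prove by reducing to the non-parametrized / non-monoidal case through two orthogonal dévissage steps. First, unwind the definition of $\Alg_{\ul\P}$ as sections over $\ul\P^\otimes$ satisfying a Segal/cocartesian condition, and unwind a map $X^\otimes\to\ul\Pic_G^\otimes(A)$ over $\ul\P^\otimes$; a morphism $\Th_G^\otimes(f)\to B$ in $\Alg_{\ul\P}(\ul{\LMod}_A^G(\ul\Sp_G))$ is, by the defining universal property of the operadic left Kan extension (Theorem/\cref{theorem: operadic left Kan extension}), the same as a morphism $X^\otimes\to B$ over $\ul\Pic_G^\otimes(A)\to\ul{\LMod}_A^G(\ul\Sp_G)^\otimes$ in the lax/oplax sense appropriate to $\ul\P$-algebras. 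Second, I would show that such a morphism factors through the subcategory of invertible objects — this is exactly the orientation condition (iii) in \cref{corollary: characterization of orientations}, since for each point $x\colon *\to X$ the Thom spectrum $\Th_G(f\circ x)$ is an invertible $A$-module, hence its image in any slice $(\ul{\LMod}_A^G(\ul\Sp_G))_{\downarrow B}$ lands in the $\ul\Pic_G$ — and therefore the data is equivalently a map $X^\otimes\to \ul\Pic_G^\otimes((\ul{\LMod}_A^G(\ul\Sp_G))_{\downarrow B})$ over $\ul\Pic_G^\otimes(A)$, i.e.\ a map $f\to \mathrm{gl}_1^\otimes(B)$ in $\Alg_{\ul\P}(\ul\Sc_G)_{/\ul\Pic_G^\otimes(A)}$. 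Assembling these gives the natural equivalence of mapping spaces
\[
\Map_{\Alg_{\ul\P}(\ul{\LMod}_A^G(\ul\Sp_G))}\!\bigl(\Th_G^\otimes(f),\,B\bigr)\;\simeq\;\Map_{\Alg_{\ul\P}(\ul\Sc_G)_{/\ul\Pic_G^\otimes(A)}}\!\bigl(f,\,\mathrm{gl}_1^\otimes(B)\bigr),
\]
naturally in $f$ and $B$, which is the claimed adjunction; explicit description of the right adjoint is then read off from the formula for $\mathrm{gl}_1^\otimes$.

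The main obstacle I anticipate is the second dévissage step: verifying that the $\ul\P$-monoidal structure on $\ul\Pic_G^\otimes$ of a slice category is genuinely the right thing, i.e.\ that passing to the slice $(\ul{\LMod}_A^G(\ul\Sp_G))_{\downarrow B}$ commutes appropriately with the $\ul\P$-monoidal / $G$-symmetric monoidal structure and with forming $\ul\Pic_G$. This requires knowing that slicing a distributive $\ul\P$-monoidal $G$-$\infty$-category over a $\ul\P$-algebra again yields something to which $\ul\Pic_G^\otimes$ applies, with the norms computed correctly — essentially a compatibility of the parametrized $\ul\Pic_G^\otimes$ construction (\cref{construction: Pic}) with $\ul\P$-monoidal slices. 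I would handle this by checking it first on underlying objects (where it is the classical statement that $\mathrm{gl}_1$ of an augmented algebra is the right adjoint to Thomification, à la Antolín-Camarena--Barthel), then upgrading fiberwise over each orbit $G/H$ using that all constructions are defined $G$-objectwise, and finally assembling the $G$-symmetric monoidal coherence from the fact that both sides are corepresented by the same $\ul\P$-monoidal mapping-space functor. The parametrized bookkeeping — tracking norms, restrictions, and the Segal conditions through every step — is where the real work lies, but no new idea beyond distributivity and the operadic left Kan extension adjunction should be needed.
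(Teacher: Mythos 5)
Your overall skeleton -- reduce along the operadic left Kan extension adjunction and the universal property of the monoidal slice, then identify where the resulting lift of $f$ lands -- is the same as the paper's first step (\cref{prop: abstract antolin-camarena-barthel}). The gap is in your identification of the right adjoint. You propose $\mathrm{gl}_1^{\otimes}(B)$, the Picard $G$-space of the slice $\ul{\LMod}_A^G(\ul{\Sp}_G)^{\otimes}_{/B}$, and justify the factorization through it by the invertibility of each $\Th_G(f\circ x)$. But invertibility in the slice monoidal structure is a condition on the pair $(M, u\colon M\to B)$, not on $M$ alone: such a pair is invertible precisely when $M$ is invertible \emph{and} the adjoint $B$-module map $B\otimes_A M\to B$ is an equivalence. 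So your object is the orientation classifier -- in the body's notation $R\to A$ it is $\GL_1(\ul{\Pic}_G^{\otimes}(R)_{\downarrow A})\simeq \ul{\Pic}_G^{\otimes}(R)\times_{\ul{\Pic}_G^{\otimes}(A)}*$ of \cref{lemma: GL vs B} and \cref{corollary: GL1 Pic(R)A as pullback} -- and the adjunction you assert with it would say that \emph{every} $\ul{\P}$-algebra map $\Th_G^{\otimes}(f)\to B$ is an orientation. That only holds under the grouplikeness/connectivity hypotheses of \cref{lemma: grouplike orientation}, which the universal property does not assume; were your formula the adjoint, that lemma would be automatic. Concretely (non-equivariantly, $\ul{\P}$ inflated $\E_\infty$): for $f$ the trivial map on the discrete non-grouplike monoid $\N$ one has $\Th^{\otimes}(f)\simeq \S[t]$, and the algebra map $\S[t]\to B$ sending $t$ to $0$ (or any non-unit) corresponds to a lift of $f$ whose value at $t$ is the pair $(\S, 0\colon \S\to B)$, which is not invertible in the slice, so it does not factor through your $\mathrm{gl}_1^{\otimes}(B)$.

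The correct right adjoint is the comma object $\ul{\Pic}_G^{\otimes}(A)_{\downarrow B}$ of \cref{construction: Pic}, i.e.\ the pullback of the monoidal slice $\ul{\LMod}_A^G(\ul{\Sp}_G)^{\otimes}_{/B}$ along $\ul{\Pic}_G^{\otimes}(A)\hookrightarrow \ul{\LMod}_A^G(\ul{\Sp}_G)^{\otimes}$, whose objects are invertible modules equipped with an \emph{arbitrary} map to $B$. With this object the factorization you worried about is automatic: the composite of the lift with the projection to $\ul{\LMod}_A^G(\ul{\Sp}_G)^{\otimes}$ is $f$ followed by the inclusion of $\ul{\Pic}_G^{\otimes}(A)$, so the lift lands in the pullback by pullback pasting -- no compatibility of $\ul{\Pic}_G^{\otimes}$ with monoidal slices is needed, which dissolves the ``main obstacle'' you flag. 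This is exactly how the paper proves \cref{theorem: universal property of Th}: apply \cref{prop: abstract antolin-camarena-barthel}, unwind the mapping spaces of slice categories, paste against the defining pullback square of $\ul{\Pic}_G^{\otimes}(R)_{\downarrow A}$, and pass from fibrous patterns to $\Mon_{\P}(\Sc)$ since all objects involved are space-valued. If you replace ``Picard of the slice'' by this comma object, the rest of your argument goes through and essentially coincides with the paper's; your $\mathrm{gl}_1^{\otimes}(B)$ then reappears, correctly, as the classifier of $\P$-$B$-orientations in \cref{def: orientations}.
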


\noindent If $\Th_G^{\otimes}(f)$ is an  $R$-module Thom spectrum with $\ul{\P}$-algebra structure, then a $\ul{\P}$-$A$-orientation is a $\ul{\P}$-map $\Th_G^{\otimes}(f) \to A$ such that on all fibers the adjoint $A$-module map is an equivalence (\cref{corollary: characterization of orientations}). This generalizes the classical Thom class notion and such maps can be understood through the aforementioned universal property (\cref{intro: equivariant ACB}). So it does not require much more to massage this into the multiplicative Thom isomorphism.

\begin{mainthm}[\cref{theorem: monoidal thom iso}] \label{intro: Thom isomorphism}
    A $\ul{\P}$-$A$-orientation of the map $f$ gives rise to an equivalence $A \otimes_R \Th_G^{\otimes}(f) \simeq A \otimes \Sigma_+^{\infty} X^{\otimes}$ of $\ul{\P}$-$A$-algebras.
\end{mainthm}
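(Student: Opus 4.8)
The plan is to deduce the Thom isomorphism from the universal property of $\Th_G^\otimes$ (\cref{intro: equivariant ACB}) together with the already-established $\ul{\P}$-monoidal structure on $\ul{\LMod}_A^G(\ul{\Sp}_G)$ and its distributivity (\cref{theorem: LMod ---> O is O-monoidal}, \cref{theorem: LMod distributive}). First I would unwind what a $\ul{\P}$-$A$-orientation of $f\colon X^\otimes \to \ul{\Pic}_G^\otimes(R)$ provides: by the characterization in \cref{corollary: characterization of orientations} it is a $\ul{\P}$-algebra map $\Th_G^\otimes(f)\to A$ in $\ul{\LMod}_R^G(\ul{\Sp}_G)$ whose fiberwise adjoints are $A$-module equivalences. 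Base-changing along $R\to A$ — which is a $\ul{\P}$-monoidal functor $\ul{\LMod}_R^G(\ul{\Sp}_G)^\otimes \to \ul{\LMod}_A^G(\ul{\Sp}_G)^\otimes$ since extension of scalars is $\ul{\O}$-monoidal by the relevant part of our module results — we obtain a $\ul{\P}$-$A$-algebra map $A\otimes_R \Th_G^\otimes(f)\to A$.

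Next I would identify $A\otimes_R \Th_G^\otimes(f)$ with a Thom spectrum computed in $A$-modules. The key point is that $\Th_G^\otimes$ is, by construction, an operadic left Kan extension, i.e.\ a $G$-colimit of the composite $X^\otimes \xrightarrow{f} \ul{\Pic}_G^\otimes(R)\to \ul{\LMod}_R^G(\ul{\Sp}_G)^\otimes$; since base change $A\otimes_R(-)$ is a $\ul{\P}$-monoidal left adjoint (here distributivity of $\ul{\LMod}_A^G$ is what makes the operadic Kan extension along $\ul{\P}$ exist and be preserved), it commutes with this $G$-colimit. Hence $A\otimes_R\Th_G^\otimes(f)\simeq \Th_G^\otimes(\ul{\Ind}_R^A\circ f)$, the Thom spectrum of the composite $X^\otimes\xrightarrow{f}\ul{\Pic}_G^\otimes(R)\xrightarrow{\ul{\Ind}_R^A}\ul{\Pic}_G^\otimes(A)$, now formed internally to $\ul{\LMod}_A^G(\ul{\Sp}_G)$. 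But an $A$-orientation is precisely a nullhomotopy of $\ul{\Ind}_R^A\circ f$ in $\ul{\Sc}_G$ (characterization (ii) of \cref{corollary: characterization of orientations}), compatibly with the $\ul{\P}$-structure; such a nullhomotopy is a $\ul{\P}$-monoidal homotopy from $\ul{\Ind}_R^A\circ f$ to the constant map at the unit $A\in\ul{\Pic}_G^\otimes(A)$.

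Finally, I would invoke functoriality of $\Th_G^\otimes$ under $\ul{\P}$-monoidal homotopies of the input map: applying it to the homotopy from $\ul{\Ind}_R^A\circ f$ to $\const_A$ yields a $\ul{\P}$-$A$-algebra equivalence
\[
A\otimes_R \Th_G^\otimes(f)\;\simeq\;\Th_G^\otimes(\ul{\Ind}_R^A\circ f)\;\simeq\;\Th_G^\otimes(\const_A)\;\simeq\; A\otimes \Sigma_+^\infty X^\otimes,
\]
where the last identification is because the Thom spectrum of a constant map at the unit is just the $G$-colimit of the constant diagram, i.e.\ $A\otimes X^\otimes$, again with its $\ul{\P}$-structure inherited from the $G$-colimit. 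Tracking that each of these equivalences is a map of $\ul{\P}$-$A$-algebras is immediate from the fact that every functor in sight ($\Th_G^\otimes$, base change, the colimit comparison) is $\ul{\P}$-monoidal.

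I expect the main obstacle to be bookkeeping the $\ul{\P}$-monoidal coherence throughout — in particular verifying that $A\otimes_R(-)$ commutes with the operadic left Kan extension defining $\Th_G^\otimes$ as $\ul{\P}$-monoidal functors (not merely on underlying $G$-$\infty$-categories), which is exactly where distributivity of $\ul{\LMod}_A^G(\ul{\Sp}_G)$ from \cref{theorem: LMod distributive} is essential, and where the equivariant subtleties (norms interacting with the $\ul{\P}$-indexing and with relative tensor products) all concentrate. The fiberwise-equivalence hypothesis in the definition of orientation should only be needed to guarantee the nullhomotopy in (ii) exists as a genuine map of $G$-spaces; once that is in hand, the rest is formal manipulation of $G$-colimits and $\ul{\P}$-monoidal functors.
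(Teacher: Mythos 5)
Your proposal is correct and follows essentially the same route as the paper's proof: base change along $R\to A$ commutes with the (operadic) left Kan extension defining $\Th_G^{\otimes}$, so $A\otimes_R\Th_G^{\otimes}(f)\simeq\Th_G^{\otimes}(\Ind_R^A\circ f)$; then the orientation, via characterization (ii) of \cref{corollary: characterization of orientations}, trivializes $\Ind_R^A\circ f$, and the Thom object of the constant map at the unit is identified with $A\otimes X^{\otimes}$ exactly as in \cref{example: thom along trivial}. The coherence point you flag at the end is precisely what the paper disposes of with the observation that left Kan extensions commute with left adjoints, so there is no gap.
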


\noindent During the course of this categorical endeavour, we needed to prove a number of additional results, which we expect to be of independent interest. As such, we provide a monoidal structure on parametrized slice categories (\cref{sec:monoidal_slice}) through certain cotensors, put together results from the literature to give a weak version of parametrized monoidal straightening-unstraightening (\cref{theorem: microcosmic straightening-unstraightening}) and discuss monoidal equivariant versions of the $\GL_1$-functor, which universally takes the units of a multiplicative $G$-space (\cref{corollary: GL1 monoidal}).

\begin{remark} \label{remark: intro readability remark}
    For the sake of readability, we don't work in the highest possible generality. 
    \begin{enumerate}[(i)]
        \item Throughout the entire article, $G$ will always be a finite group. Our main reason for this is that the theory of parametrized higher algebra is not yet suitably worked out for non-finite groups. Once this changes, our results should go through mutatis mutandis.
        \item We always parametrize over $\Orb_G$ instead of a general parametrizing base $\infty$-category $\T$. If one is careful enough with the correct adjectives, like atomic orbital, existence of terminal objects, and so on, then our results also work in this higher generality.\footnote{In fact, this is how we wrote a preliminary draft before we decided to prioritize readability over generality.}
        \item We expect that the \cref{part:foundations} results are also true in a global equivariant context for normed global ring spectra by replacing the span pattern $\Span(\F_G)$ by a global analog. 
        \medskip \\The main difference lies in the distributivity of $\LMod$ since the Nardin--Shah setup of distributivity does not apply in the global setup. In this setting, there is a theory set up by Lenz--Linskens--Pützstück \cite{lenz2025normsequivarianthomotopytheory} who use drastically different language. Nonetheless, the same distributivity proof idea should apply here.
        \medskip \\The main work is already done by Lenz--Linskens--Pützstück who prove 
        that $\ul{\Sp}_{\gl}^{\otimes}$ is distributive \cite[Theorem 5.10]{lenz2025normsequivarianthomotopytheory}.
        So this already provides the theory of global Thom spectra over the global sphere spectrum $\S_{\gl}$.
    \end{enumerate}
\end{remark}

\subsubsection*{Applications}
For applications we mostly focus on Real equivariant homotopy theory, but we formulate a number of results for more general finite groups $G$ out of independent interest.
\medskip \\Let $X$ be a strongly even $C_2$-spectrum (\cref{def: strongly even C2}). Let $\rho$ be the regular representation of \(C_2\).
Strong evenness implies the restriction map $\res_e^{C_2}\colon \pi_{*\rho}^{C_2}(X) \to \pi_{2*}^e(X)$ is an isomorphism,
whence $C_2$-equivariant information can be deduced from underlying non-equivariant information.
We extend this notion to positively indexed towers of $C_2$-spectra demanding that the first term and all associated graded pieces are strongly even (\cref{def: strongly even C2}).
By an induction and limiting argument, we obtain a similar conclusion that the restriction map is an isomorphism of the limit of towers (\cref{prop:strongly_even_c2_tower}). 

\medskip \noindent Combined with a cohomological slice tower argument (\cref{construction: cohomological slice tower}), we obtain:
\newpage
\begin{mainthm}[\cref{prop:checkingSAHSS}] \label{mainthm: general lifting}
    Let \(E \in \Sp_{C_2}\) be strongly even and $X \in \Sp_{C_2}$. Suppose the following:
    \begin{enumerate}[(i)]
        \item The $C_2$-spectrum \(X \in \Sp_{C_2} \) is slice bounded below.
        \item There is an indexing set $I$ such that $\uHZ\otimes X\simeq \uHZ\otimes \bigoplus_{i\in I}S^{n_i\rho}$ for certain $n_i \in \N$ where every degree appears finitely often.
    \end{enumerate}
    Then, $\ul{\map}_{\ul{\Sp}_{C_2}}(X, E)$ is strongly even. In particular, the restriction map
    \[
    \res_e^{C_2}\colon  \pi^{C_2}_{*\rho}(\myuline{\map}_{\ul{\Sp}_{C_2}}(X,E))\to \pi_{2*}(\map_{\Sp}(X,E))
    \]
    is an isomorphism.
\end{mainthm}
\noindent Together with a Thom isomorphism computation, we deduce the following lifting result for structured orientations.
\begin{mainthm}[\cref{thm:abstract_lifting_clean_c2}] \label{intro: abstract lifting}
    Let $R$ be an $\E_{\infty}^{C_2}$-ring spectrum and $E$ be a strongly even $\E_{\infty}^{C_2}$-algebra in $\ul{\LMod}_R^{C_2}$. Suppose that $X$ is an $n\rho$-loop space with a map $f \colon X \to \ul{\Pic}_{C_2}(R)$ of $n\rho$-loop spaces. Suppose the following:
    \begin{enumerate}[(i)]
        \item There exists an \(\E_{n\rho}\)-\(R\)-algebra map \(\mathrm{M}f\to E\).
        \item There is an indexing set $I$ such that $\uHZ\otimes \mathrm{B}^{n\rho}X\simeq \uHZ\otimes \bigoplus_{i\in I}S^{n_i\rho}$ for certain $n_i \in \N$ where every degree appears finitely often.
    \end{enumerate}
    Then, the restriction map
    \[
    \res_e^{C_2}\colon \pi^{C_2}_{*\rho}\left(\myuline{\Map}_{\ul{\Alg}_{\E_{n \rho}}(\myuline{\LMod}^{C_2}_R)}(\mathrm{M}f,E) \right)\to \pi^{e}_{2*}\left(\myuline{\Map}_{\ul{\Alg}_{\E_{n \rho}}(\myuline{\LMod}^{C_2}_R)}(\mathrm{M}f,E) \right)
    \]
    is an isomorphism.
\end{mainthm}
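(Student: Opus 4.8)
The plan is to use the equivariant multiplicative Thom isomorphism to trade the a priori intractable $R$-module Thom spectrum $\mathrm{M}f$ for the group ring $E\otimes\Sigma^\infty_+X$, to rewrite the mapping space in terms of the units of $E$, and then to resolve $X$ by even cells so that \cref{prop:strongly_even_c2_tower} applies. \textbf{Step 1 (reduce to the units of $E$).} First note that the hypothesised $\E_{n\rho}$-$R$-algebra map $\mathrm{M}f\to E$ is automatically an $\E_{n\rho}$-$E$-orientation of $f$: by \cref{corollary: characterization of orientations}(iii) one must check that for each point $x\colon *\to X$ the adjoint $E$-module map $E\otimes_R\Th_{C_2}(f\circ x)\to E$ is an equivalence, but since $X$ is an $n\rho$-loop space it is grouplike, so $\pi_0 X$ acts transitively on its components and it suffices to check this on the unit component, where the map is (unital, hence) the identity. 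Thus \cref{theorem: monoidal thom iso} applies with $A=E$ and yields an equivalence $E\otimes_R\mathrm{M}f\simeq E\otimes\Sigma^\infty_+X$ of $\E_{n\rho}$-$E$-algebras. Base changing along $R\to E$, then using the symmetric monoidal adjunction $\Sigma^\infty_+\dashv\Omega^\infty$ together with grouplikeness of $X$, we obtain a natural equivalence of $C_2$-spaces
\[
\ul{\Map}_{\ul{\Alg}_{\E_{n\rho}}(\ul{\LMod}^{C_2}_R)}(\mathrm{M}f,E)\;\simeq\;\ul{\Map}_{\ul{\Alg}_{\E_{n\rho}}(\ul{\LMod}^{C_2}_E)}(E\otimes\Sigma^\infty_+X,E)\;\simeq\;\ul{\Map}_{n\rho\text{-loop}}\bigl(X,\ul{\GL}_1(E)\bigr),
\]
where $\ul{\GL}_1(E)$ is the $C_2$-space of units (\cref{corollary: GL1 monoidal}) and $\ul{\Map}_{n\rho\text{-loop}}$ denotes the $C_2$-space of $\E_{n\rho}$-monoidal maps. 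All functors here are defined over $\Orb_{C_2}$, so this equivalence is compatible with restriction to the underlying non-equivariant mapping spaces, for which the analogues hold with $E^e$, $\Res f$ in place of $E$, $f$.

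\textbf{Step 2 (resolve $X$ by even cells).} Since $E$ is $\E_\infty^{C_2}$, its units form a connective $C_2$-spectrum $\ul{\gl}_1(E)$ with $\ul{\GL}_1(E)=\Omega^\infty\ul{\gl}_1(E)$; passing to $n\rho$-fold deloopings therefore identifies the right-hand mapping space above with $\Omega^\infty$ of the mapping $C_2$-spectrum $F\coloneqq\ul{\map}\bigl(\Sigma^\infty B^{n\rho}X,\ \Sigma^{n\rho}\ul{\gl}_1(E)\bigr)$. The finiteness and evenness hypotheses on $X$ amount to the statement that $B^{n\rho}X$ admits a $C_2$-CW structure with finitely many cells in each dimension, all of the form $(C_2/C_2)_+\wedge D^{m\rho}$ with $m\geq n$ --- in particular no free cells and only $\rho$-multiples. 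Writing $B^{n\rho}X=\colim_m(B^{n\rho}X)^{(m)}$ for the skeletal filtration, with $(B^{n\rho}X)^{(m)}=\ast$ for $m<n$, and applying $\ul{\map}(\Sigma^\infty(-),\Sigma^{n\rho}\ul{\gl}_1(E))$, we obtain a tower $F\simeq\lim_m F^{(m)}$ with $F^{(n-1)}\simeq 0$, whose consecutive fibres, computed from the cofibre sequences $(B^{n\rho}X)^{(m-1)}\to(B^{n\rho}X)^{(m)}\to\bigvee(C_2/C_2)_+\wedge S^{m\rho}$, are finite products of copies of $\Sigma^{(n-m)\rho}\ul{\gl}_1(E)$.

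\textbf{Step 3 (apply strong evenness of towers).} Each shift $\Sigma^{(n-m)\rho}\ul{\gl}_1(E)$ is strongly even: on homotopy $\ul{\gl}_1(E)$ agrees with $E$ in positive degrees and is the units of $\pi_0$ in degree zero (and vanishes below, being connective), so strong evenness of $E$ --- which is stable under $\rho$-shifts and restricts to an isomorphism on units in degree zero --- makes $\res_e^{C_2}\colon\pi^{C_2}_{*\rho}\to\pi^e_{2*}$ an isomorphism on these shifts, and finite products of strongly even $C_2$-spectra are strongly even. Hence the tower $\{F^{(m)}\}$ is positively indexed with vanishing first term and strongly even associated graded, so \cref{prop:strongly_even_c2_tower} applies --- its inductive step being a five-lemma argument along the fibre sequences, and the limit being controlled by the Milnor sequence on which $\res_e^{C_2}$ is a levelwise isomorphism of inverse systems --- and gives that $\res_e^{C_2}\colon\pi^{C_2}_{*\rho}(F)\to\pi^e_{2*}(F)$ is an isomorphism. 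Combined with Step 1 and the identification $\pi^{C_2}_{*\rho}(\Omega^\infty F)=\pi^{C_2}_{*\rho}(F)$ in nonnegative degrees (both vanishing in negative degrees), this is the claim.

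\textbf{Main obstacle.} The two essential inputs are the Thom-isomorphism reduction of Step 1 --- without it $\mathrm{M}f$ is opaque --- and the correct reading of the evenness hypothesis in Step 2. A single \emph{free} cell $(C_2/e)_+\wedge D^{m\rho}$ in $B^{n\rho}X$ would contribute a coinduced fibre $\Coind_e^{C_2}$ of a non-equivariant spectrum, on which $\res_e^{C_2}$ is a diagonal rather than an isomorphism; excluding these (together with the evenness of the cell dimensions) is exactly the condition that aligns the equivariant obstruction theory with the non-equivariant one, and is where the hypothesis that $E$ be \emph{strongly} even genuinely bites. Everything downstream is routine bookkeeping with towers.
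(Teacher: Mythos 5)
Your Steps 1 and the delooping reduction are essentially the paper's own route (\cref{thm: abstract akward lifting general}): the grouplikeness of $X$ makes any $\E_{n\rho}$-$R$-algebra map $\mathrm{M}f\to E$ an orientation, the multiplicative Thom isomorphism then trades $\mathrm{M}f$ for $R\otimes\Sigma^\infty_+X$, and the $\GL_1$-adjunction together with the equivariant recognition principle identifies the mapping $C_2$-space with $\Omega^\infty\myuline{\map}_{\ul{\Sp}^{C_2}}(\Sigma^\infty \mathrm{B}^{n\rho}X,\Sigma^{n\rho}\gl_1(E))$. Where you genuinely diverge is the final computation: you filter the \emph{source} by an assumed $C_2$-CW structure on $\mathrm{B}^{n\rho}X$ with only cells of the form $D^{m\rho}$ and no free cells, whereas the paper filters the \emph{target} by the cohomological slice tower of $\Sigma^{n\rho}\gl_1(E)$ (\cref{construction: cohomological slice tower}) and treats the source purely homologically via Hill's freeness argument (\cref{lem:hills_freeness}, \cref{prop:checkingSAHSS}); both filtrations are then fed into \cref{prop:strongly_even_c2_tower} in the same way. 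Your version is a perfectly reasonable alternative when such a cell structure is available, and it makes the contribution of each cell very transparent.

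There are, however, two points to flag. First, the paper's ``finiteness and evenness conditions'' are the homological statement that $\uHZ\otimes \mathrm{B}^{n\rho}X$ is free of finite type on spheres $S^{n_i\rho}$; your cell-structure reading is strictly stronger, and it is only the homological condition that is verified in the intended applications (Hill--Hopkins on Real Wilson spaces, see \cref{rem: real wilson spaces}). An $\uHZ$-module splitting into $\rho$-multiple spheres does not obviously yield an honest equivariant cell decomposition with only even $\rho$-cells and no free cells, so as written your Step 2 proves a variant of the theorem under a stronger hypothesis; to recover the paper's statement you should either filter the target (slice tower plus Hill freeness, as the paper does) or otherwise argue that the homological hypothesis suffices. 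Second, strong evenness of $\gl_1(E)$ is not quite ``agrees with $E$ in positive degrees'': the delicate group is $\pi^{C_2}_{\rho-1}(\gl_1 E)=\pi^{C_2}_{\sigma}(\gl_1 E)$, where \cref{lem: homotopy of gl1} does not apply since $\sigma^{C_2}=0$; the paper's \cref{example:gl1(R)_strongly_even} handles this via the cofiber sequence $C_2/e_+\to S^0\to S^\sigma$, using both $\pi^e_1(E)=0$ and the fact that the restriction on $\pi_0$ is an isomorphism, and your argument should cite or reproduce that step. (You should also record that the underlying non-equivariant limit is even, which is a hypothesis of \cref{prop:strongly_even_c2_tower}; under your assumptions this follows from the ordinary Atiyah--Hirzebruch argument, as in the paper.)
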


\begin{proof}[Proof Sketch]
    After detwisting the left side of $\myuline{\Map}_{\ul{\Alg}_{\E_{n \rho}}(\myuline{\LMod}_R^{C_2})}(\mathrm{M}f,E)$ by the Thom isomorphism (\cref{intro: Thom isomorphism}), we can massage this term with a sequence of formal reformulations (\cref{thm: abstract akward lifting general}) giving rise to
    \[  \myuline{\Map}_{\ul{\Alg}_{\E_{n \rho}}(\myuline{\LMod}_R^{C_2})}(\mathrm{M}f, E) \simeq \Omega^\infty\myuline{\map}_{\ul{\Sp}_{C_2}}(\Sigma^\infty \mathrm{B}^{n\rho}X, \Sigma^{n\rho}\gl_1(E)) \]
    where $\ul{\map}_{\ul{\Sp}_{C_2}}$ denotes the $C_2$-mapping spectrum. Now, we can run a cohomological slice tower (\cref{mainthm: general lifting}).
\end{proof}

\noindent This is our main abstract lifting result, which we will mainly apply to the Real bordism spectrum $\MU_{\R}$ to study structured Real orientations. In particular, we upgrade Chadwick--Mandell's result about $\E_2$-orientations \cite[Theorem 1.2]{chadwickmandell} to the Real setting.

\begin{mainthm}[\cref{thm: main lifting restated}] \label{mainthm: lifting orientations}
    Let $E$ be a strongly even $\E_{\infty}^{C_2}$-ring spectrum. Then, any \(\E_2\)-ring map \(\MU\to E^e\) lifts uniquely to an \(\E_\rho\)-ring map \(\MUR\to E\). In particular, any homotopy ring map \(\MU\to E^e\) lifts uniquely to an \(\E_\rho\)-ring map.
\end{mainthm}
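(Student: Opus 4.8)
The plan is to obtain both statements from the abstract lifting theorem \cref{intro: abstract lifting}, applied to $\MUR$. Recall the Thom-spectrum presentation $\MUR\simeq\mathrm{M}f$ for the Real $J$-homomorphism $f\colon\BUR\to\ul{\Pic}_{C_2}(\S)$. Real Bott periodicity refines the double-loop structure on $\BU$ used by Chadwick--Mandell \cite{chadwickmandell} to a $\rho$-loop structure on $\BUR$, and refines $f$ to a map of $\rho$-loop spaces; so $\MUR=\mathrm{M}f$ is an $\E_\rho$-$\S$-algebra and we are in the setting of \cref{intro: abstract lifting} with $R=\S$ (hence $\ul{\LMod}^{C_2}_\S=\ul{\Sp}_{C_2}$) and $n=1$. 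Moreover $\BUR$ carries a $C_2$-CW structure with finitely many cells in each dimension, all in dimensions of the form $k\rho$ (the Real Schubert cells), restricting to the even integers $2k$; this is exactly the finiteness-and-evenness input that makes the cohomological slice tower behind \cref{intro: abstract lifting} collapse onto the strong-evenness hypothesis on $E$.

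For (ii) I would feed this into \cref{intro: abstract lifting} with $X=\BUR$, $f$ as above, and $E$ the given strongly even $\E_\infty^{C_2}$-ring regarded as a strongly even $\E_\infty^{C_2}$-algebra in $\ul{\LMod}^{C_2}_\S$. Its formal hypothesis that there be an $\E_\rho$-ring map $\MUR\to E$ is supplied by (i) below; what keeps this non-circular is that the detwisting step in the proof of \cref{intro: abstract lifting} uses only a \emph{module}-level Real orientation of $E$, which exists for every strongly even $E$ by the Hahn--Shi argument \cite{hahnRealOrientationsLubin2020}, so the mechanism runs regardless. The conclusion is that
\[\res_e^{C_2}\colon\pi^{C_2}_{*\rho}\bigl(\ul{\Map}_{\ul{\Alg}_{\E_\rho}(\ul{\Sp}_{C_2})}(\MUR,E)\bigr)\longrightarrow\pi^{e}_{2*}\bigl(\ul{\Map}_{\ul{\Alg}_{\E_\rho}(\ul{\Sp}_{C_2})}(\MUR,E)\bigr)\]
is an isomorphism for all $*\in\Z$; and since restriction to the underlying category sends $\MUR\mapsto\MU$, $E\mapsto E^e$, $\E_\rho\mapsto\E_2$, the right-hand side is $\pi_{2*}\Map_{\Alg_{\E_2}(\Sp)}(\MU,E^e)$. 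Evaluating at $*=0$ gives a bijection between homotopy classes of $\E_\rho$-ring maps $\MUR\to E$ and of $\E_2$-ring maps $\MU\to E^e$; surjectivity gives the lift of any $\E_2$-ring map, injectivity its uniqueness up to homotopy. Precomposing with Chadwick--Mandell's identification of homotopy ring maps $\MU\to E^e$ with $\E_2$-ring maps yields the final clause. This is \cref{thm: main lifting restated}.

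For (i): a strongly even $E$ has even underlying ring $E^e$, which is therefore complex orientable, so there is a homotopy ring map $\MU\to E^e$; by Chadwick--Mandell \cite{chadwickmandell} it refines to an $\E_2$-ring map, and by (ii) this lifts to an $\E_\rho$-ring map $u\colon\MUR\to E$. Finally $u$ is automatically an $\E_\rho$-$\MUR$-orientation in the sense of \cref{corollary: characterization of orientations}: as a ring map it sends the unit $\S\to\MUR$ to a unit of $E$, and $\BUR$ is connected, so the fibrewise condition holds at every point. Hence $E$ admits an $\E_\rho$-$\MUR$-orientation, which is \cref{cor: existence of Erho-MUR-orientations}. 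Alternatively one sees existence directly: after the detwist of \cref{intro: Thom isomorphism} the space $\ul{\Map}_{\ul{\Alg}_{\E_\rho}(\ul{\Sp}_{C_2})}(\MUR,E)$ is identified with $\Omega^\infty$ of a $C_2$-spectrum and is hence canonically pointed.

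The main obstacle is not this deduction but the geometric input feeding \cref{intro: abstract lifting}: producing the $\rho$-loop --- equivalently $\E_\rho$-monoidal --- structure on $f\colon\BUR\to\ul{\Pic}_{C_2}(\S)$ compatibly with the parametrized multiplicative framework, i.e.\ Real Bott periodicity carried out $\E_\rho$-monoidally and parametrized over $\Orb_{C_2}$, and then matching the $k\rho$-cell structure of $\BUR$ against the evenness hypothesis. Everything downstream --- Chadwick--Mandell, the Hahn--Shi strongly-even-implies-Real-orientable fact for the module detwist, and the bookkeeping identifying underlying mapping spaces and restricting operads --- is routine given the machinery assembled in the paper.
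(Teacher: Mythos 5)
Your deduction of (i) and (ii) is circular, and the step you offer to break the circle does not work. Part (ii) is obtained by feeding $X=\BUR$, $\mathrm{M}f=\MUR$ into \cref{thm:abstract_lifting_clean_c2}, whose hypothesis is the existence of an $\E_{\rho}$-$R$-algebra map $\mathrm{M}f\to E$; you supply this from (i), and (i) in turn is deduced from (ii). You claim the circle is broken because the detwisting step ``uses only a module-level Real orientation of $E$,'' which exists for strongly even $E$ by Hill--Meier/Hahn--Shi. That claim is false for the machinery as it stands: the first equivalence in \cref{thm: abstract akward lifting general} rests on \cref{corollary: chadwick-mandell}~(ii), which uses the \emph{multiplicative} Thom isomorphism $E\otimes_{\S}\MUR\simeq E\otimes\Sigma^\infty_+\BUR$ as $\E_{\rho}$-$E$-algebras (\cref{theorem: monoidal thom iso}), and that requires an $\E_{\rho}$-orientation --- by \cref{corollary: characterization of orientations} and grouplikeness of $\BUR$, precisely an $\E_{\rho}$-algebra map $\MUR\to E$, i.e.\ the object you are trying to construct. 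A homotopy-level Thom class only detwists at the level of modules and does not identify the $\E_{\rho}$-algebra mapping spaces; nor does your ``alternative'' pointedness argument help, since the identification with $\Omega^\infty$ of a mapping spectrum is itself downstream of the same detwist. This is exactly the difficulty the paper's proof is designed around: it first produces a structured orientation on a versal object, namely the $\E_{\infty}^{C_2}$-map $\MWR\to E$ coming from Angelini-Knoll--Kong--Quigley's $\rho$-cellular obstruction theory (\cref{theorem: C2 Einfty map from MW}), then lifts the Hahn--Raksit--Wilson idempotent to an $\E_{\rho}$-splitting $\MUR\to\MWR\to\MUR$ (here the lifting theorem is applied to \emph{self}-maps of $\MWR$, where the identity supplies the needed orientation), and finally transports maps along this retraction; both existence and uniqueness in (ii) are proved by running the lifting theorem for $\MWR$, never directly for $\MUR$.

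A secondary point: even granting an orientation, the finiteness/evenness input to \cref{thm:abstract_lifting_clean_c2} concerns the delooping $\mathrm{B}^{\rho}X$, which for $X=\BUR$ is $\BSUR$, not the Real Schubert cell structure on $\BUR$ that you invoke; you would need that $\uHZ\otimes\BSUR$ is free of finite type on cells in dimensions $k\rho$. The paper sidesteps this by working with $\MWR$, whose deloopings $\mathrm{B}^{m\rho}\Omega^\infty\Sigma^{\rho}\MUR\simeq\Omega^\infty\Sigma^{(m+1)\rho}\MUR$ are Real Wilson spaces with exactly the required $\uHZ$-homology by Hill--Hopkins (\cref{rem: real wilson spaces}). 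So while the overall shape of your argument (Thom isomorphism, recognition principle, strong evenness of the cohomological slice tower, Chadwick--Mandell at the end) matches the paper's strategy, applying it directly to $\MUR$ leaves both the orientation hypothesis and the evenness-of-the-delooping hypothesis unestablished, and the first of these is a genuine logical gap rather than a technicality.
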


\begin{proof}[Proof Sketch]
    Let $\ol{\gamma}\colon  \MU_{\R} \to \ku_{\R}$ be the Conner--Floyd Real orientation of $\ku_{\R}$. Then, we consider the $C_2$-Thom spectrum
        \[ \MWR = \Th_{C_2} \left(\Omega^{\infty} \Sigma^{\rho} \MU_{\R} \to \Omega^{\infty} \Sigma^{\rho} \ku_{\R} \simeq \BUR \to \ul{\Pic}_{C_2}(\ul{\Sp}_{C_2}) \right), \]
        which is a $C_2$-Thom spectrum over a Real Wilson space. Hahn--Raksit--Wilson and Angelini-Knoll--Kong--Quigley realize that $\MW_{\R}$ has a $\E_{\infty}^{C_2}$-$\rho$-cellular decomposition \cite[Theorem 5.10]{gabe2025realsyntomiccohomology}, so that by obstruction theory there exists an $\E_{\infty}^{C_2}$-map $\MW_{\R} \to E$ (see \cref{theorem: C2 Einfty map from MW}).
        \medskip \\Inspired by a decomposition of Hahn--Raksit--Wilson \cite[Theorem 3.2.10]{hahn2024motivicfiltrationtopologicalcyclic} of $\MW$ into copies of $\MU$, we split $\MU_{\R}$ off of $\MW_{\R}$ as an $\E_{\rho}$-retract by lifting the responsible idempotent (\cref{prop: MWR splitting}).
        \medskip \\Given an \(\E_2\)-ring map \(\MU\to E^e\), we may precompose it with the map \(\MW\to \MU\) from the aforementioned splitting. Results of Hill--Hopkins on the homology of Real Wilson spaces ensure the finiteness and evenness conditions of \cref{intro: abstract lifting}, so we may use it obtain an \(\E_\rho\)-lift \(\MWR\to E\) of $\MW \to E^e$. Precomposing this with \(\MUR\to \MWR\) from the splitting mentioned above then gives an \(\E_\rho\)-map \(\MUR\to E\) lifting the given \(\E_2\)-ring map \(\MU\to E^e\).
        \medskip \\The statement about lifting homotopy ring maps \(\MU\to E^e\) now follows since any such map uniquely lifts to an $\E_2$-map by Chadwick--Mandell \cite[Theorem 1.2]{chadwickmandell}.
\end{proof}

\noindent
Moreover, this also provides a Real version of the Chadwick--Mandell result \cite[Theorem 1.2]{chadwickmandell} for \(\E^{C_2}_\infty\)-rings (\cref{remark: Real version of Chadwick Mandell}), as any Real orientation $\MU_{\R} \to E$ forgets down to a homotopy ring map $\MU \to E^e$ which then lifts to an $\E_{\rho}$-map $\MU_{\R} \to E$.
\medskip \\Our theory can then be immediately applied to equip familiar examples with more structure.
\begin{mainex}[\cref{corollary: hahn-shi hirzebruch n}, \cref{cor:adamsopsMUR}]
    \hfill 
    \begin{enumerate}[(i)]
        \item The Hahn--Shi Real orientations \cite{hahnRealOrientationsLubin2020} of Lubin--Tate theories \(\MUR\to E_n\) admit lifts to $\E_{\rho}$-maps.
        \item The Hirzebruch level-\(n\) genera \cite{MeierHirzebruch} \(\MUR\to \tmf_1(n)\) admit lifts to $\E_{\rho}$-maps. 
        \item The Burklund--Hahn--Levy--Schlank \cite{BHLS} Adams operations on \(\MU_{(2)}\) admit lifts to \(\E_\rho\)-operations on \({\MUR}_{(2)}\).
    \end{enumerate}
\end{mainex}

\noindent Let $H \leq G$ and $\O^{\otimes}$ be an $H$-$\infty$-operad. If $A$ is an $\O$-ring spectrum, then $N_H^G A$ always obtains the structure of an $\Coind_H^G \O$-algebra by parametrized abstract nonsense (\cref{construction: Coind}). This allows us to enhance all our $C_2$-level results to larger groups (\cref{corollary: MUG orientations}).

\begin{mainex}[\cref{corollary: Hahn--Shi orientation for G}]
    Let $C_2 \leq G$.
    \begin{enumerate}[(i)]
        \item The Hahn--Shi Real orientations $\MU^{( \! ( G ) \! )} = N_{C_2}^G \MUR \to E_n$ refine to $\Coind_{C_2}^G \E_{\rho}$-maps.
        \item Let $n > 1$ and $G = (\Z/n)^{\times}$. Then, the Hirzebruch level-$n$ genera induce $\Coind_{C_2}^G \E_{\rho}$-maps $\MU^{( \! ( G ) \! )} \to \tmf_1(n)$.
    \end{enumerate}
\end{mainex}

\noindent Finally, we use our results to provide a first structured version of the Real Brown--Peterson spectrum $\BP_{\R}$. Classically, $\BP$ is constructed through Quillen's idempotent $\MU_{(2)} \to \MU_{(2)}$ \cite{quillen1969complexcobordism}. By our lifting results (\cref{mainthm: lifting orientations}), this provides an $\E_{\rho}$-refinement of the Real Quillen idempotent $\MU_{\R(2)} \to \MU_{\R(2)}$ \cite{araki1979orientations}. It splits off an $\E_{\rho}$-version of $\BP_{\R}$.

\begin{mainthm}[\cref{thm:multiplication_on_BPR}]\label{mainthm:BPR}
    Let $C_2 \leq G$. Then, there is an $\Coind_{C_2}^G\E_{\rho}$-algebra structure on $\BP^{( \! ( G ) \! )} = N_{C_2}^G \BP_{\R}$.
\end{mainthm}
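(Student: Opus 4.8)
The plan is to lift Quillen's idempotent construction to the $\E_\rho$-level and then run a Morita-style splitting inside $\Coind_{C_2}^G\E_\rho$-algebras. First I would recall the Real Quillen idempotent $e\colon \MU_{\R(2)}\to\MU_{\R(2)}$ of Araki, which is a homotopy ring endomorphism whose non-equivariant underlying map is the classical Quillen idempotent $\MU_{(2)}\to\MU_{(2)}$. By \cref{mainthm: lifting orientations}(ii), since $\MU_{\R(2)}$ is a strongly even $\E_\infty^{C_2}$-ring and $e^e\colon\MU_{(2)}\to\MU_{(2)}$ is in particular a homotopy (hence $\E_2$-) ring map, it lifts \emph{uniquely} to an $\E_\rho$-ring map $\widetilde e\colon\MU_{\R(2)}\to\MU_{\R(2)}$; this is exactly \cref{thm:idempotent_lifting}'s input. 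The uniqueness in \cref{mainthm: lifting orientations}(ii) is the crucial leverage: it will force $\widetilde e$ to inherit idempotency.

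Second, I would verify that $\widetilde e$ is genuinely idempotent as an $\E_\rho$-ring map, i.e.~that $\widetilde e\circ\widetilde e\simeq\widetilde e$ as maps of $\E_\rho$-$\MU_{\R(2)}$-algebras (or at least in a homotopy-coherent sense strong enough to split). Both $\widetilde e\circ\widetilde e$ and $\widetilde e$ are $\E_\rho$-ring maps $\MU_{\R(2)}\to\MU_{\R(2)}$ whose underlying non-equivariant maps agree (since $e\circ e\simeq e$ classically). Because $\MU_{\R(2)}$ is strongly even, two $\E_\rho$-ring maps out of $\MU_{\R(2)}$ with the same underlying $\E_2$-map must themselves agree — this is precisely the uniqueness clause of \cref{thm: main lifting restated}, applied with $E=\MU_{\R(2)}$. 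Hence $\widetilde e$ is an idempotent in the homotopy category of $\E_\rho$-$\MU_{\R(2)}$-algebras, and moreover the coherence needed for splitting comes for free from the same uniqueness, since the higher simplices of the idempotent datum are again $\E_\rho$-ring maps out of $\MU_{\R(2)}$ determined by their underlying maps.

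Third, I would split. An idempotent $\E_\rho$-ring endomorphism of $\MU_{\R(2)}$ yields, by splitting the corresponding idempotent in the $\infty$-category $\ul{\Alg}_{\E_\rho}(\ul{\Sp}^{C_2})_{\MU_{\R(2)}/}$ (idempotents split in any idempotent-complete $\infty$-category, and this one is presentable, hence idempotent-complete), an $\E_\rho$-ring $\BP_\R$ together with an $\E_\rho$-ring map $\MU_{\R(2)}\to\BP_\R$ realizing it as the colimit of the idempotent telescope. One checks its underlying spectrum is the classical $\BP$ by comparing with the non-equivariant splitting of Quillen's idempotent (the forgetful functor $\ul{\Alg}_{\E_\rho}(\ul{\Sp}^{C_2})\to\Alg_{\E_2}(\Sp)$ preserves the relevant colimit), and that its restriction is the classical $\BP_\R$ as an underlying $C_2$-spectrum by the analogous comparison with Araki's construction. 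Finally, to reach general $C_2\le G$, I would apply the norm: by \cref{construction: Coind}, $N_{C_2}^G$ sends $\E_\rho$-algebras to $\Coind_{C_2}^G\E_\rho$-algebras, so $N_{C_2}^G\BP_\R$ acquires a $\Coind_{C_2}^G\E_\rho$-algebra structure, and its underlying $G$-spectrum is $\BP^{(\!(G)\!)}=N_{C_2}^G\BP_\R$ by definition.

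The main obstacle I anticipate is the second step: upgrading ``same underlying map'' to ``equivalent as $\E_\rho$-maps'' with enough coherence to legitimately invoke idempotent-splitting in the $\infty$-categorical (not merely homotopy-categorical) sense. A naive appeal to $\pi_0$ of a mapping space is not enough; one needs the full strength of \cref{thm: main lifting restated}, namely that $\ul{\Map}_{\ul{\Alg}_{\E_\rho}(\ul{\LMod}^{C_2}_{\MU_{\R(2)}})}(\MU_{\R(2)},\MU_{\R(2)})$ has homotopy groups detected by restriction to the underlying $\E_2$-mapping space — which then shows the whole space of idempotent data (an object of a suitable $\Fun(\mathrm{Idem},-)$-category) is equivalent to its non-equivariant counterpart, so the classical splitting transports verbatim. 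Care is also needed to ensure $\MU_{\R(2)}$ itself satisfies the finiteness/evenness hypotheses of \cref{intro: abstract lifting} when $E=\MU_{\R(2)}$, which should follow from Hill--Hopkins-type computations on Real Wilson spaces already invoked in the proof of \cref{mainthm: lifting orientations}.
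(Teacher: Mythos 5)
Your overall strategy -- lift Quillen's idempotent to an $\E_\rho$-map via \cref{thm: main lifting restated}, extract an $\E_\rho$-structured $\BPR$ from it, and then norm up using \cref{construction: Coind} -- is the same as the paper's, and your observation that $\widetilde e\circ\widetilde e\simeq\widetilde e$ as $\E_\rho$-maps follows from the uniqueness clause of \cref{thm: main lifting restated} is correct. Where you diverge is in how $\BPR$ is extracted from $\widetilde e$, and that is where the gap sits. The paper never splits an idempotent: it simply forms the sequential colimit $\colim(\MUR\xrightarrow{\widetilde e}\MUR\xrightarrow{\widetilde e}\cdots)$ in $\E_\rho$-algebras and uses that filtered colimits of algebras are computed on underlying objects, so no coherence of $\widetilde e$ beyond the bare map is needed; idempotency plays no role in the construction itself, only in the classical identification of the underlying telescope with $\BP$ (and of the $C_2$-telescope with $\BPR$, via strong evenness).

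Your route instead requires $\widetilde e$ to be an idempotent in the $\infty$-categorical sense, and your justification for that upgrade does not work as stated. Knowing that $\E_\rho$-maps out of $\MUR$ are determined up to homotopy by their underlying $\E_2$-maps, or even that the restriction $\pi^{C_2}_{*\rho}\to\pi^e_{2*}$ of the mapping $C_2$-space is an isomorphism, does not make the space of $\mathrm{Idem}$-diagrams equivalent to its non-equivariant counterpart: the fixed-point mapping space is not equivalent to the underlying mapping space (only certain $\rho$-graded homotopy groups are controlled), so the classical coherent idempotent does not transport ``verbatim'', and the higher simplices of the $\mathrm{Idem}$-datum are not pinned down by $\pi_0$-level uniqueness. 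If you want the splitting route, the correct repair is exactly the mechanism of \cref{thm:idempotent_lifting}, used in the paper for the analogous step in \cref{prop: MWR splitting}: forget down to the stable category $\ul{\LMod}$, where a homotopy idempotent extends essentially uniquely to a coherent one (Lurie), and then use that limits and filtered colimits of $\E_\rho$-algebras are computed in $\ul{\LMod}$ to conclude that $\widetilde e$ is an idempotent of $\E_\rho$-algebras; after that your splitting and comparison with Araki's construction go through. The simplest fix, however, is to drop the splitting altogether and take the telescope, as the paper does.
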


\subsection{Relation to Other Work}
This article touches many areas that have been discussed by other authors, all of whom we have benefited from.
\begin{enumerate}[(i)]
    \item \emph{Multiplicative equivariant Thom spectra.} These were considered by Horev--Klang--Zou \cite{hahn2024equivariantnonabelianpoincareduality}. We fill in some details and in particular avoid a strong version of parametrized monoidal straightening-unstraightening (\cref{folklore}), which is stated as folklore in their article \cite[Theorem A.6.1]{hahn2024equivariantnonabelianpoincareduality}. We take the further step of proving a universal property of multiplicative equivariant Thom spectra generalizing the Antolín-Camarena--Barthel universal property \cite{antolinbarthel2019thom}. Furthermore, we consider left module spectra over more base rings than just the sphere spectrum $\S$, which takes a considerable amount of additional work. In more classical language, multiplicative equivariant Thom spectra were already studied by Lewis \cite[Section X.6]{lewisMaySteinberger1986equivariant}.
    \item \emph{Equivariant monoidal module categories.} Monoidal parametrized versions of module categories have been considered by other authors like Linskens--Nardin--Pol, Yang, and P\"utzst\"uck \cite{linskens2022global,  yang2025filteredhochschildkostantrosenbergtheoremreal, yang2025normedCp, pützstück2025globalpicardspectraborel}. Moreover, Blumberg--Hill studied equivariant module categories through models \cite{Blumberg_2020}. We make the effort of working over all finite groups, incorporating (almost) all $G$-operads and prove the hard fact of distributivity (\cref{theorem: LMod distributive}).
    \item \emph{Structured orientations.} Our \cref{mainthm: lifting orientations} is similar to a conjectured result by Roytman \cite[Anticipated Theorem]{Roytman2023} who outlined an approach and towards which he made partial progress. We take a different approach. Roytman's approach is to directly generalize the tools used in Chadwick--Mandell's paper \cite{chadwickmandell} to the equivariant setting, and then directly generalize Chadwick--Mandell's argument. That is, Roytman's strategy is to show that Real orientations can be lifted to \(\E_\rho\)-maps. This approach requires equivariant versions of $\mathrm{TAQ}$ which have not appeared in the literature.
        
    \medskip
    \noindent
    We instead prove that \(\E_2\)-maps \(\MU\to E^e\) lift to \(\E_\rho\)-maps \(\MUR\to E\), which ultimately also shows that Real orientations can be lifted to $\E_{\rho}$-maps by first applying Chadwick--Mandell's original theorem \cite[Theorem 1.2]{chadwickmandell} before applying our result. We take this approach for three main reasons:
    \begin{itemize}
        \item it generalizes to more than \(\E_\rho\)-maps, we get obstruction theory for lifting \(\E_{2n}\)-maps to \(\E_{n\rho}\)-maps,
        \item it generalizes to arbitrary equivariant \(R\)-module Thom spectra, in particular it does not rely on the close connection between Real orientations and homotopy commutative ring maps,
        \item all of the technical machinery required follows once we generalize the work of Antolín-Camarena--Barthel \cite{antolinbarthel2019thom} to the equivariant setting.
    \end{itemize}
    On the other hand, Roytman's conjecture differs from our \cref{mainthm: lifting orientations} by relaxing the requirement of an $\E_{\infty}^{C_2}$-structure to an $\E_{\rho}$-structure. 
    Given the equivariant TAQ needed for Roytman's strategy, the results of our paper can also be sharpened.
\end{enumerate}

\subsection{Overview}
We split our article into two parts: \cref{part:foundations} concerns the categorical framework of multiplicative equivariant Thom spectra via parametrized higher algebra; \cref{part:applications} applies this to obtain concrete results, mostly about structured Real orientations.
The reader who is predominately interested in the applications -- and willing to believe that equivariant multiplicative Thom spectra work `as expected' -- is free to skip directly to \cref{part:applications}.
For the convenience of the reader, we sketch a proof for the $\E_{\rho}$-structure on $\BP_{\R}$ at the beginning of \cref{part:applications} that accommodates those who skip \cref{part:foundations}.
It demonstrates the main ideas of \cref{part:applications} while avoiding certain technicalities that arise when studying general orientations.
\medskip \\In \cref{sec:recollections} we recall the main notions from parametrized higher algebra and set up notation. In particular, we define singly-coloured, inflated, and coinduced $G$-$\infty$-operads, and give an extensive overview of Nardin--Shah distributivity.
\medskip \\In \cref{sec:categorical_groundwork} we begin by providing a monoidal structure on parametrized slice $\infty$-categories and proceed by assembling results from the literature to give a (weak) monoidal version of parametrized straightening-unstraightening. Then, we do the hard work of setting up a robust monoidal version of equivariant left module categories and check Nardin--Shah distributivity. We end with a discussion of equivariant grouplike algebras in which we recall equivariant versions of the recognition theorem and work out suitable versions of $\GL_1$ and $\Pic$.
\medskip \\In \cref{sec:param_Thom} we discuss parametrized multiplicative Thom spectra and prove an equivariant version of the Antolín-Camarena--Barthel universal property of multiplicative Thom spectra. We then set up (multiplicative) orientation theory through a universal example and notably prove a multiplicative equivariant Thom isomorphism.
\medskip \\In \cref{sec:StronglyEvenTowers_CohomologicalSlice} we review and study strongly even spectra, which we extend to towers. Furthermore we set up and study a cohomological version of the equivariant slice tower. Using these, we set up our main obstruction theory to study structured orientations of equivariant Thom spectra by reducing the problem to a Bredon cohomology computation via the multiplicative Thom isomorphism.
\medskip \\In \cref{sec: Real orienations} we specialize to the Real setting for our main applications. We begin by producing $\E_{\rho}$-orientations for strongly even $\E_{\infty}^{C_2}$-rings through Real Wilson spaces.
Such orientations allow us to use the Thom isomorphism to produce preferred lifts of orientations.
In particular, we show that underlying complex orientations $\MU \to E^e$ of strongly even $\E_{\infty}^{C_2}$-ring spectra lift to $\E_{\rho}$-Real orientations $\MU_{\R} \to E$.
We apply this to equip examples from the literature with more structure: the Hahn--Shi orientations, the Hirzebruch level-$n$ genera, and the Adams operations on $\MU$.
Furthermore, we use this to refine Quillen's idempotent whence we construct an $\E_{\rho}$-algebra structure on $\BP_{\R}$. For $C_2 \leq G$ we enhance each of those to $\Coind_{C_2}^G \E_{\rho}$.
\medskip \\In \cref{sec:factorization_and_EV} we give additional applications. We give a formula for relative $\THR$ of equivariant Thom spectra, rephrase equivariant versions of the Hopkins--Mahowald theorem in terms of $\E_V$-quotients, and end with a curiosity related to the Hopkins--Mahowald theorem in the context of nilpotence, namely that $\MU_{\R}$ detects nilpotence for $\E_{\sigma}$-rings and that $\uHZ$ detects nilpotence for $\E_{\sigma} \otimes \E_{\infty}$-rings.
\medskip \\In \cref{sec: more equivariant higher algebra} we spell out that restrictions and norms of $G$-symmetric monoidal $G$-$\infty$-categories are symmetric monoidal functors. We conclude by explaining the existence of an $\E_{\infty}^{C_2}$-structured Real $J$-homomorphism $J_{\R}\colon  \BUR \to \ul{\Pic}_{C_2}(\ul{\Sp}_{C_2})$ via forthcoming work of Brink--Lenz.
\medskip \\In \cref{sec: more homological algebra} we record a proof of an equivariant cohomological UCT while relaxing a finiteness assumption. Moreover, we discuss a result to obtain even cohomology from even homology with suitable finiteness conditions.

\subsection{Notations \& Conventions}

We collect notational conventions here that we will use throughout the whole article.
\begin{enumerate}[(1)]
    \item Throughout, $G$ will always be a finite group.\footnote{The finiteness can be negotiable (\cref{remark: intro readability remark}).}
    \item We denote by $\rho$ the regular representation of $C_2$. More generally, $\rho_G$ is the regular representation of $G$. Moreover, we write $\sigma$ for the $1$-dimensional $C_2$-sign representation.
    \item Underlined categories are $G$-$\infty$-categories, e.g. $\ul{\Sp}_G$ is the $G$-$\infty$-category which on level $H \leq G$ is $\Sp_H$.
    \item Let $X,Y \in \Sp_G$. We  denote by $\ul{\map}_G(X, Y)$ the mapping $G$-spectrum refining the mapping $G$-space $\ul{\Map}_G(X, Y)$. We also write $\ul{\map}_{\ul{\Sp}_G}(X, Y)$ and $\ul{\Map}_{\ul{\Sp}_G}(X, Y)$.
    \item The $G$-$\infty$-operad $\E_{\infty}^G$ is the terminal $G$-$\infty$-operad. So $\E_{\infty}^G$-algebras admit all norms -- they are also known as normed algebras, ultracommutative algebras, or \(G\)-\(\E_\infty\)-algebras.
    \item Let $H \leq G$ be a subgroup. In any parametrized/equivariant setup, we denote by $\Res_H^G$ the restriction functor and by $\Ind_H^G$ resp.~$\Coind_H^G$ the left resp.~right adjoint -- this equivalently also appears as indexed coproducts resp.~products via parametrized colimits resp.~limits.
\end{enumerate}
See also the \nameref{ION}.
\section*{Acknowledgments}
We are grateful to Max Blans, Emma Brink, Christian Carrick, Markus Hausmann, Kaif Hilman, Tobias Lenz, Catherine Li, Jiacheng Liang, Lennart Meier, Sven van Nigtevecht, Maxime Ramzi, Jaco Ruit, and Jan Steinebrunner for helpful conversations on this project. We would like to thank Christian, Kaif, Lennart, and  Markus for helpful feedback on an early draft.
\medskip \\Furthermore, we are grateful to the participants and organizers of the European Talbot 2025 for a \emph{magical} week, and particularly for the chili gong show in which we first presented our results to the public. \redchili \greenchili 
\medskip \\RQ is funded by the NUI Travelling Studentship. QZ is supported by the Max
Planck Institute for Mathematics (MPIM) in Bonn and is thankful for its hospitality and financial support.

\newpage
\part{Foundations}
\label{part:foundations}
\begin{description}
    \item[\cref{sec:recollections}] This section recalls the main notions from parametrized higher algebra and sets up the notation.

    In \cref{sec:param_operads} we recall the notion of $G$-$\infty$-operads in the language of fibrous patterns due to Barkan--Haugseng--Steinebrunner but also mention underlying Nardin--Shah parametrized $\infty$-operads. We define singly colored $G$-$\infty$-operads.

    In \cref{sec:inflated_operads} we recall Stewart's construction of inflated $G$-$\infty$-operads and the algebras that they corepresent. Moreover, we discuss coinduced operads and the way they yield the natural structure on norms of algebras.

    In \cref{sec:distributivity} we give an extensive overview of Nardin--Shah distributivity in parametrized higher algebra. We spell out a model-independent viewpoint of parametrized fibers via parametrized straightening-unstraightening, spell out a projection formula as a consequence of distributivity, and recall operadic left Kan extensions.

    \item[\cref{sec:categorical_groundwork}] This is the main technical section of the article in which we assemble and prove all the categorical tools we will need.

    In \cref{sec:monoidal_slice} we describe a monoidal structure on parametrized slice $\infty$-categories in the large generality of fibrous patterns. We give a universal property of this construction.

    In \cref{sec:microcosmic} we assemble results from the literature to give a (weak) monoidal version of parametrized straightening-unstraightening.

    In \cref{sec:monoidal_LMod} we define parametrized left module categories and endow them with a suitably monoidal structure. The proof of this relies on an equivariant version of a result by Haugseng--Melani--Safronov to check that a functor of $\O$-monoidal $\infty$-categories is a coCartesian fibration. We then do the hard work of checking that $\LMod_A(-)$ preserves Nardin--Shah distributivity.

    In \cref{sec:Pic_and_grouplike} we first give an overview of equivariant grouplike algebras, notably equivariant versions of the recognition theorems by Cnossen--Haugseng--Lenz--Linskens, Guillou--May, and Juran; in particular, we spell out the interplay of the delooping functor with taking infinite loop spaces $\Omega^{\infty}$. We then work out an equivariantly multiplicative version of the $\GL_1$ functor and finally define equivariant Picard spaces together with a monoidal structure on them. We end by discussing a comma category $\Pic_G^{\otimes}(R)_{\downarrow A}$ involving Picard spaces and describe $\GL_1 (\Pic_G^{\otimes}(R)_{\downarrow A} )$.

    \item[\cref{sec:param_Thom}] This section combines the categorical work to discuss parametrized Thom spectra and the orientation theory that they control.

    In \cref{sec:universal_property_of_Thom} we recall the definition of parametrized Thom spectra and go on by giving several possibilities of multiplicatively enhancing this construction -- namely via Day convolution and via operadic left Kan extension. We use the operadic left Kan extension viewpoint to give an equivariant version of the Antolín-Camarena--Barthel universal property of multiplicative Thom spectra.

    In \cref{sec:abstract_orientation_theory} we set up (multiplicative) orientation theory through a universal example. We then prove some results about multiplicative orientations, most notably a multiplicative Thom isomorphism.
\end{description}

\section{Recollections on Parametrized Higher Algebra}\label{sec:recollections}

\subsection{Language of Parametrized Operads}\label{sec:param_operads}
Our article will build on the language of parametrized higher algebra \cite{nardinshah2022equivarianttopos} and fibrous patterns \cite{barkanhaugsengsteinebrunner2024envelopesalgebraicpatterns}. Parametrized higher category theory works for (potentially nice enough) base $\infty$-categories $\T$, but we will restrict to the $G$-orbit category $\Orb_G$ for a finite group $G$ for the sake of readability. We will begin by recalling some vocabulary while reminding the reader that the Barkan--Haugseng--Steinebrunner fibrous pattern language is supposed to be a (vast) generalization of $\infty$-operads. Please feel free to skip this and return to it to look up vocabulary.
\begin{enumerate}[(1)]
    \item Recall that the \emph{$G$-orbit category} $\Orb_G$ is the full subcategory of the $\infty$-category of $G$-spaces $\Sc_G$ spanned by the transitive $G$-sets $\{G/H \}_{H \leq G}$. A $G$-$\infty$-category is a functor $\Orb_G^{\op} \to \Cat_{\infty}$, or equivalently a coCartesian fibration over $\Orb_G^{\op}$ by straightening-unstraightening: $\Cat_{G, \infty} = \Fun(\Orb_G^{\op}, \Cat_{\infty}) \simeq \coCart{\Orb_G^{\op}}$.
    \item An \emph{algebraic pattern} is an $\infty$-category $\O$ equipped with a factorization system of so-called \emph{inert} and \emph{active} morphisms and a collection of \emph{elementary objects} \cite{chuHaugseng2021homotopycoherent}.  For $o \in \O$ we write $\O_{o/}^{\el} = \O^{\el} \times_{\O^{\inert}} \O_{o/}^{\inert}$ where the superscripts stand for elementary objects resp. inert edges. We write $\AlgPatt$ for the $\infty$-category of algebraic patterns.
    \item Let $\O \in \AlgPatt$ and $\C \in \Cat_{\infty}$. A functor $F \colon \O \to \C$ is a \emph{Segal $\O$-object} in $\C$ if for every $o \in \O$ the induced functor
    \begin{center}
        \begin{tikzcd}
            (\O_{o/}^{\el})^{\triangleleft} \arrow[r] & \O \arrow[r, "F"] & \C
        \end{tikzcd}
    \end{center}
    is a limit diagram. We write $\Seg_{\O}(\C)$ for the $\infty$-category of Segal $\O$-objects in $\C$ \cite{chuHaugseng2021homotopycoherent}.
    \item Let $\O \in \AlgPatt$. A \emph{fibrous $\O$-pattern} is a functor $\P \to \O$ admitting all coCartesian lifts of inert edges satisfying another condition akin to the classical $\infty$-operad conditions \cite[Definition 4.1.2]{barkanhaugsengsteinebrunner2024envelopesalgebraicpatterns}. We write $\Fbrs(\O)$ for the $\infty$-category of fibrous $\O$-patterns.
    \item Let $\O \in \AlgPatt$ and $\P, \Q \in \Fbrs(\O)$. Then, we write $\Alg_{\P/\O}(\Q) = \Fun_{\Fbrs(\O)}(\P, \Q)$. We write $\Alg_{\P}(\Q) = \Alg_{\P/\O}(\Q)$.
    \item There is an algebraic pattern structure on $\Span(\F_{G})$, which is an example of a span pattern. Inert resp. actives are backwards resp. forward maps and the elementary objects are those objects in $\Orb_G$ \cite[Definition 3.2.6]{barkanhaugsengsteinebrunner2024envelopesalgebraicpatterns}. This will be the most relevant algebraic pattern for us.
    \item We denote by $\Op_{G, \infty} = \Fbrs(\Span(\F_G))$ the $\infty$-category of $G$-$\infty$-operads.
    \item Let $\O^{\otimes} \in \Op_{G, \infty}$, then we write $\Mon_{\O}(\Cat_{\infty})$ for the subcategory of $\Fbrs(\O^{\otimes})$ spanned by the coCartesian fibrations with morphisms the ones preserving coCartesian edges. This is equivalent to $\Seg_{\O}(\Cat_{\infty})$.
    \item Given an $\O$-monoidal $\infty$-category $\C^{\otimes} \to \O^{\otimes}$ and a map $o \to o'$, then we will denote by $\bigotimes_{o \to o'}\colon  \C^{\otimes}_o \to \C^{\otimes}_{o'}$ the induced functor.
\end{enumerate}
We will study many aspects of our paper in the language of patterns, which was not the original way parametrized higher algebra was phrased. Rather, Nardin--Shah \cite{nardinshah2022equivarianttopos} define $G$-$\infty$-operads as coCartesian fibrations over a certain $G$-$\infty$-category $\myuline{\F}_{G, *}$ with certain operad conditions. On the other hand \cite[Proposition 5.2.14]{barkanhaugsengsteinebrunner2024envelopesalgebraicpatterns} shows that it is an equivalent notion, allowing us to pull back to the parametrized world along a map $\myuline{\F}_{G, *} \to \Span(\F_{G})$. Suitable comparison results can be found in the literature \cite{barkanhaugsengsteinebrunner2024envelopesalgebraicpatterns, pützstück2024parametrized, stewart2025equivariantoperadssymmetricsequences}. While we prefer to work with patterns, there are some parts where we cannot avoid working in Nardin--Shah's formalism without potentially expending much more effort in comparison results -- most notably when discussing distributivity. In such cases, will decorate objects in the Nardin--Shah formalism with a superscript $(-)^{\NS}$, e.g.~we will write $\Op_{G, \infty}^{\NS}$ and $\Mon_{\myuline{\O}}^{\NS}(\ul{\Cat}_{G, \infty})$.
\medskip \\Before writing out terminologies of underlying Nardin--Shah objects, we first define singly $G$-colored $G$-$\infty$-operads because we will need this to state the definition that follows.

\begin{definition}
    A $G$-$\infty$-operad $\O^{\otimes} \to \Span(\F_G)$ \tb{has a single $G$-color/is singly $G$-colored} if the projection map 
    \[ \O^{\otimes} \times_{\Span(\F_{G})} \Orb_G^{\op} \to \Orb_G^{\op} \]
    is an equivalence.
\end{definition}

\noindent In particular, we obtain a map $\Orb_G^{\op} \to \O^{\otimes}$ via the projection map. Moreover, there is a map $\myuline{\F}_{G, *} \to \Span(\F_{G})$.

\begin{construction}
    Let $\O^{\otimes} \in \Op_{G, \infty}, \C^{\otimes} \in \Mon_{\O}(\Cat_{\infty})$ and $A^{\otimes} \in \Alg_{\O}(\C)$.
    \begin{enumerate}[(i)]
        \item The underlying Nardin--Shah $G$-$\infty$-operad of $\O^{\otimes}$ is $\tb{\myuline{\O}^{\otimes}} = \O^{\otimes} \times_{\Span(\F_{G})} \myuline{\F}_{G, *} \to \myuline{\F}_{G, *}$.
        \item The underlying $\myuline{\O}$-monoidal $G$-$\infty$-category of $\C^{\otimes}$ is $\tb{\myuline{\C}^{\otimes}} = \C^{\otimes} \times_{\Span(\F_{G})} \myuline{\O}^{\otimes} \to \myuline{\O}^{\otimes}$ with the structure map $\myuline{\O}^{\otimes} \to \myuline{\F}_{G, *} \to \Span(\F_{G})$ to define the fiber product on the left.
        \item The underlying $G$-$\infty$-category of $\C^{\otimes}$ is $\tb{\myuline{\C}} = \C^{\otimes} \times_{\Span(\F_{G})} \Orb_G^{\op} \to \Orb_G^{\op}$. 
        \item Suppose now that $\O^{\otimes}$ has a single $G$-color and let $A^{\otimes}\colon  \O^{\otimes} \to \C^{\otimes}$ be an $\O$-algebra. Its underlying $G$-object is given by 
        \[ \tb{\myuline{A}} \colon \Orb_G^{\op} \simeq \O^{\otimes} \times_{\Span(\F_G)} \Orb_G^{\op} \to \C^{\otimes} \times_{\Span(\F_G)} \Orb_G^{\op} \simeq \myuline{\C}. \]
    \end{enumerate}
    Note for example that $\myuline{\C} \to \Orb_G^{\op}$ is really a $G$-$\infty$-category because $\C^{\otimes} \to \O^{\otimes}$ has all inert lifts and we are pulling back to a part of the inerts. By inspection, you can take underlying objects in various orders, e.g.~ $\myuline{\C}$ is also the underlying $G$-$\infty$-category of $\myuline{\C}^{\otimes}$.
    \medskip \\Moreover, we will also underline objects that already come from the parametrized higher algebra world.
\end{construction}
\noindent Usually, the Nardin--Shah and Barkan--Haugseng--Steinebrunner formalisms will be interchangable and don't make a crucial difference in our arguments. 
\medskip \\We end this subsection with a result on singly-colored $G$-$\infty$-operads.

\begin{lemma} \label{lemma: underlying T-category for single T-color}
    Let $\O^{\otimes} \in \Op_{G, \infty}$ have a single $G$-color and $\C^{\otimes} \in \Mon_{\O}(\Cat_{\infty})$. Then, its underlying $G$-$\infty$-category can also be described as $\myuline{\C} \simeq \C^{\otimes} \times_{\O^{\otimes}} \Orb_G^{\op}$.
\end{lemma}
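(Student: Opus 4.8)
The statement to prove is \cref{lemma: underlying T-category for single T-color}: for a singly $G$-colored $G$-$\infty$-operad $\O^{\otimes}$ and $\C^{\otimes} \in \Mon_{\O}(\Cat_{\infty})$, the underlying $G$-$\infty$-category $\ul{\C} = \C^{\otimes} \times_{\Span(\F_G)} \Orb_G^{\op}$ is equivalently computed as the pullback $\C^{\otimes} \times_{\O^{\otimes}} \Orb_G^{\op}$, where the map $\Orb_G^{\op} \to \O^{\otimes}$ is the one induced (via the single-color hypothesis) from the projection $\O^{\otimes} \times_{\Span(\F_G)} \Orb_G^{\op} \xrightarrow{\sim} \Orb_G^{\op}$.

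\medskip
\noindent
\emph{Approach.} The plan is to write both sides as iterated pullbacks and compare using the pasting law for pullback squares. By the single-$G$-color hypothesis, the map $p \colon \O^{\otimes} \times_{\Span(\F_G)} \Orb_G^{\op} \to \Orb_G^{\op}$ is an equivalence, and the structure map $\Orb_G^{\op} \to \O^{\otimes}$ is defined as $p^{-1}$ followed by the projection to $\O^{\otimes}$; in particular this structure map is, up to equivalence, the inclusion of the pullback $\O^{\otimes} \times_{\Span(\F_G)} \Orb_G^{\op}$ into $\O^{\otimes}$, sitting over the inclusion $\Orb_G^{\op} \hookrightarrow \Span(\F_G)$ of the elementary objects (as inert-edge data).

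\medskip
\noindent
\emph{Key steps.} First I would record the defining square
\begin{center}
\begin{tikzcd}
\O^{\otimes} \times_{\Span(\F_G)} \Orb_G^{\op} \arrow[r] \arrow[d] \arrow[dr, phantom, very near start, "\lrcorner"] & \O^{\otimes} \arrow[d] \\
\Orb_G^{\op} \arrow[r] & \Span(\F_G)
\end{tikzcd}
\end{center}
in which, by hypothesis, the left vertical map is an equivalence, so the top horizontal map is identified with the structure map $\Orb_G^{\op} \to \O^{\otimes}$. Next I would form the pullback of $\C^{\otimes} \to \O^{\otimes}$ along this top map; by the pasting law, the composite square
\begin{center}
\begin{tikzcd}
\C^{\otimes} \times_{\O^{\otimes}} \left(\O^{\otimes} \times_{\Span(\F_G)} \Orb_G^{\op}\right) \arrow[r] \arrow[d] & \C^{\otimes} \arrow[d] \\
\O^{\otimes} \times_{\Span(\F_G)} \Orb_G^{\op} \arrow[r] \arrow[d] & \O^{\otimes} \arrow[d] \\
\Orb_G^{\op} \arrow[r] & \Span(\F_G)
\end{tikzcd}
\end{center}
exhibits the top-left corner as $\C^{\otimes} \times_{\Span(\F_G)} \Orb_G^{\op} = \ul{\C}$. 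Finally, since the middle left vertical map is an equivalence, the top-left corner is also canonically equivalent to $\C^{\otimes} \times_{\O^{\otimes}} \Orb_G^{\op}$ (base change along an equivalence), which is exactly the asserted description. One should remark, as the paper does after the Construction, that the result is genuinely a $G$-$\infty$-category because $\C^{\otimes} \to \O^{\otimes}$ has all coCartesian lifts of inert edges and we are restricting to $\Orb_G^{\op}$, which lies in the inert part; this is what makes the pulled-back functor a coCartesian fibration over $\Orb_G^{\op}$.

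\medskip
\noindent
\emph{Main obstacle.} This is essentially a bookkeeping argument via the pasting law, so there is no deep obstacle; the only point requiring care is to pin down precisely which map $\Orb_G^{\op} \to \O^{\otimes}$ is being used and to confirm that it agrees, up to the equivalence supplied by the single-color hypothesis, with the canonical inclusion of the fiber-product. I would also double-check that the identification is natural enough to be an equivalence of $G$-$\infty$-categories (not merely of underlying $\infty$-categories), which again follows since all maps in sight are compatible with the projections down to $\Orb_G^{\op}$ and preserve the relevant coCartesian edges.
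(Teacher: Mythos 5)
Your proof is correct and takes essentially the same route as the paper: both arguments come down to pullback pasting applied to the square expressing the single-$G$-color hypothesis together with the defining pullback of $\myuline{\C}$. The only cosmetic difference is that you apply pasting in the composition direction (and then transport along the equivalence $\O^{\otimes} \times_{\Span(\F_G)} \Orb_G^{\op} \simeq \Orb_G^{\op}$), whereas the paper applies the cancellation direction directly; the content is identical.
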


\begin{proof}
    This is by pullback pasting applied to
    \begin{center}
        \begin{tikzcd}
            \myuline{\C} \arrow[r] \arrow[d] \arrow[dr, phantom, very near start, "\lrcorner"] & \C^{\otimes} \arrow[d]
            \\ \Orb_G^{\op} \arrow[dr, phantom, very near start, "\lrcorner"] \arrow[r] \arrow[d, equal] & \O^{\otimes} \arrow[d]
            \\ \Orb_G^{\op} \arrow[r] & \Span(\F_{G})
        \end{tikzcd}
    \end{center}
    where the bottom square is a pullback square since $\O^{\otimes}$ has a single $G$-color and the composite square is a pullback by definition. Thus, we deduce that the top square is a pullback square by pullback pasting.
\end{proof}

\subsection{Inflated \& Coinduced Operads}\label{sec:inflated_operads}
We already have many interesting operads $\O^{\otimes}$ from classical non-equivariant mathematics and wish to bump up these to inflated $G$-operads $\Infl_G \O^{\otimes}$. Algebras over $\Infl_G \O^{\otimes}$ should at each level be classical algebras over $\O^{\otimes}$. So with $\O^{\otimes} = \E_1^{\otimes}$ this will for example allow us to speak about a structure with a coherently associative multiplication on each level.

\begin{construction} \label{construction: inflated operads}
    By functoriality of $\Span$, the functor $i\colon\F \to \F_{G}, \ \myuline{1} \mapsto G/G$ induces another functor $i_*\colon  \Span(\F) \to \Span(\F_{G})$. The associated pullback functor $i^*\colon  \Op_{G, \infty} \to \Op_{\infty}$ then admits a left adjoint
    \[ \tb{\Infl_{G}} = i_!\colon  \Op_{ \infty} \simeq \Fbrs(\Span(\F)) \to \Fbrs(\Span(\F_{G})) \simeq \Op_{G, \infty}, \]
    called \tb{inflation}. For brevity, we will often omit the inflation functor from the notation. For example, there are the classical $\infty$-operads $\E_1^{\otimes}$ and $\LM^{\otimes}$, which we will view as $G$-$\infty$-operads $\E_1^{\otimes} = \Infl_{G} \E_1^{\otimes}$ and $\LM^{\otimes} = \Infl_{G} \LM^{\otimes}$ by inflation.
\end{construction}

\begin{proof}[Checks]
    Here are the technical fibrous pattern checks that we need to do to obtain the above functor.
    \begin{enumerate}[(i)]
        \item Stewart slightly generalizes \cite[Corollary 4.2.3]{barkanhaugsengsteinebrunner2024envelopesalgebraicpatterns} and gives a criterion for the existence of a left adjoint \cite[Proposition 2.23]{stewart2025equivariantoperadssymmetricsequences}. We check those, i.e.~we observe that $\Span(\F_{G})$ is soundly extendable \cite[Lemma A.8]{stewart2025equivariantoperadssymmetricsequences} and that $i_*$ is a Segal morphism, which can be checked straight from the definition \cite[Proposition A.19]{stewart2025equivariantoperadssymmetricsequences}. 
        \item Stewart also has a construction of the inflation functor \cite[below Proposition 3.22]{stewart2025equivariantoperadssymmetricsequences}, which also has right adjoint $i^*$, so these inflation functors agree.\qedhere
    \end{enumerate}
\end{proof}

\begin{remark}
    Consider the functor $p\colon\F_{G} \to \F, \ G/H \mapsto \myuline{1}$ for every $H \leq G$. This induces a functor $p^*\colon  \Op_{\infty} \to \Op_{G, \infty}$ which is given by pulling back along $\Span(\F_{G}) \to \Span(\F)$. This gives another way to construct $G$-$\infty$-operads from $\infty$-operads but one does not obtain inflated operads. For example, the $G$-$\infty$-operad $p^*\E_k^{\otimes}$ is what we believe should be called the \emph{$\E_k^G$-operad}.
\end{remark}

\begin{question}
    Is there a good theory of $\N_k$-operads similar to that of $\N_{\infty}$-operads \cite{blumberghill2015operadic}? We believe that in such a theory $\E_k^{G, \otimes} = p^* \E_k^{\otimes}$ should be the one leading to the most norms on algebras, and hence should be called the $\E_k^G$-operad.
\end{question}

\begin{theorem}[{\cite[Theorem D]{stewart2025equivariantoperadssymmetricsequences}}]
    Let $\O^{\otimes} \in \Op_{G, \infty}$ and $\C \in \Mon_{\Span(\F_G)}(\Cat_{\infty})$. There exists \label{theorem: omnibus Alg theorem} a functor 
    \[ \otimes\colon  \Op_{G, \infty} \times \Op_{G, \infty} \to \Op_{G, \infty} \] 
    refining the Boardman--Vogt tensor product such that:
    \begin{enumerate}[(i)]
        \item For $\O^{\otimes} \in \Op_{G, \infty}$ the functor $- \otimes \O^{\otimes}\colon  \Op_{G, \infty} \to \Op_{G, \infty}$ admits a right adjoint $\tb{\Alg_{\O}(-)^{\otimes}}$ refining the $G$-$\infty$-category of $\O$-algebras.
        \item We have $\Alg_{\O}(\C)^{\otimes} \in \Mon_{\Span(\F_G)}(\Cat_{\infty})$.
        \item If $\O^{\otimes}$ has a single $G$-color, then the forgetful functor $\Alg_{\O}(\C)^{\otimes} \to \C^{\otimes}$ is $G$-symmetric monoidal.
        \item Let $\O^{\otimes} \in \Op_{\infty}$ and $H \leq G$. Then, $\Alg_{\Infl_{G}\O}(\C)^{\otimes}_H \simeq \Alg_{\O}(\C_H)$ naturally.
    \end{enumerate}
\end{theorem}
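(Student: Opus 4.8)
The plan is to generalise Lurie's construction of the Boardman--Vogt tensor product of \(\infty\)-operads and its universal property \cite[\S 2.2.5]{lurie2017ha} from \(\mathrm{Fin}_*\) to the pattern \(\Span(\F_G)\), and then to read off the four clauses. First I would realise \(\Op_{G,\infty}=\Fbrs(\Span(\F_G))\) as an accessible localisation \(L\colon\mathcal{X}\to\Op_{G,\infty}\) of a presentable \(\infty\)-category \(\mathcal{X}\) of ``generalised \(G\)-preoperads'' over \(\Span(\F_G)\). On \(\mathcal{X}\) there is a manifest closed symmetric monoidal structure \(\boxtimes\), built by a parametrised wreath/assembly construction (the analogue of Lurie's \(\wr\)) out of the combinatorics of \(\Span(\F_G)\), designed so that maps out of \(\mathcal{P}\boxtimes\mathcal{Q}\) encode bilinear maps — reflecting the intended identity \(\Alg_{\mathcal{P}\otimes\mathcal{Q}}(-)\simeq\Alg_{\mathcal{P}}(\Alg_{\mathcal{Q}}(-))\). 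Using that \(\Span(\F_G)\) is soundly extendable (\cite[Lemma A.8]{stewart2025equivariantoperadssymmetricsequences}, already invoked in the checks above) one shows that \(L\) is compatible with \(\boxtimes\), i.e.\ \(L(X\boxtimes Y)\simeq L(LX\boxtimes LY)\); hence \(\otimes:=L\circ(-\boxtimes-)\) descends to a presentably symmetric monoidal structure on \(\Op_{G,\infty}\) refining the Boardman--Vogt tensor product, with unit the trivial \(G\)-\(\infty\)-operad \(\triv_G^{\otimes}\). As each \(-\otimes\O^{\otimes}\) then preserves colimits between presentable \(\infty\)-categories, the adjoint functor theorem produces a right adjoint, which I name \(\Alg_{\O}(-)^{\otimes}\); this gives (i) once the underlying \(G\)-\(\infty\)-category is identified.

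Next I would identify this right adjoint and prove (ii). The currying built into \(\boxtimes\) survives localisation as a natural equivalence \(\Alg_{\mathcal{P}\otimes\O}(\C)\simeq\Alg_{\mathcal{P}}(\Alg_{\O}(\C))\); pulling back along \(\Orb_G^{\op}\hookrightarrow\Span(\F_G)\) (that is, evaluating at the orbits \(G/H\)) then identifies the underlying \(G\)-\(\infty\)-category of \(\Alg_{\O}(\C)^{\otimes}\) with the \(G\)-\(\infty\)-category of \(\O\)-algebras in \(\C\), completing (i). For (ii) one must check that if \(\C^{\otimes}\in\Mon_{\Span(\F_G)}(\Cat_\infty)\) — a coCartesian Segal object — then so is \(\Alg_{\O}(\C)^{\otimes}\): the Segal condition at an object of \(\Span(\F_G)\) reduces, by an interchange-of-limits argument, to the Segal condition for \(\C^{\otimes}\), while the required coCartesian lifts are produced pointwise from the \(G\)-symmetric monoidal structure of \(\C^{\otimes}\). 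This is the parametrised incarnation of \cite[Theorem 2.2.5.1]{lurie2017ha}.

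For (iii): if \(\O^{\otimes}\) is singly \(G\)-coloured then, essentially by definition, there is an essentially unique map of \(G\)-\(\infty\)-operads \(\triv_G^{\otimes}\to\O^{\otimes}\) out of the unit (a choice of \(G\)-colour). Applying the right adjoints \(\Alg_{(-)}(\C)^{\otimes}\) turns this into the forgetful functor \(\Alg_{\O}(\C)^{\otimes}\to\Alg_{\triv_G}(\C)^{\otimes}\simeq\C^{\otimes}\); that it preserves coCartesian edges over \(\Span(\F_G)\) — equivalently, that the \(G\)-symmetric monoidal structure on \(\Alg_{\O}(\C)\) is the pointwise one — follows from the pointwise construction of \(\Alg_{\O}(\C)^{\otimes}\) in the previous step, so the forgetful functor is \(G\)-symmetric monoidal. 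For (iv): recall \(\Infl_G=i_!\) along \(i\colon\F\to\F_G,\ \underline 1\mapsto G/G\), with right adjoint \(i^*\) (\cref{construction: inflated operads}); feeding this adjunction into the universal property above and restricting to the level-\(H\) monoidal structure — the one cut out by \(\F\to\Span(\F_G),\ S\mapsto\coprod_S G/H\) — identifies \(\Alg_{\Infl_G\O}(\C)^{\otimes}_H\) with \(\Alg_{\O}(\C_H)\), naturally in all variables; this is the precise form of the slogan that algebras over an inflated operad are computed levelwise.

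The main obstacle is the first step: producing the pretensor \(\boxtimes\) on preoperads and, above all, showing that the fibrous-pattern localisation is compatible with it. This is exactly where the passage from \(\mathrm{Fin}_*\) to \(\Span(\F_G)\) bites, since the wreath construction must be made to interact correctly with the inert--active factorisation on \(\Span(\F_G)\), which now also encodes the norms; it is what forces the soundness hypotheses of \cite{barkanhaugsengsteinebrunner2024envelopesalgebraicpatterns, stewart2025equivariantoperadssymmetricsequences} and takes up the bulk of the work. The second genuinely nontrivial point is the Segal-condition verification for \(\Alg_{\O}(\C)^{\otimes}\) in (ii); by contrast, (iii) and (iv) are bookkeeping once the universal property is in place.
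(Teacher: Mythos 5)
The paper does not prove this statement at all: it is imported wholesale as Stewart's Theorem D \cite{stewart2025equivariantoperadssymmetricsequences}, so there is no in-paper argument to compare yours against — you are in effect re-proving a substantial external theorem. Your overall plan (exhibit $\Op_{G,\infty}=\Fbrs(\Span(\F_G))$ as an accessible localisation of a presentable category of preoperads carrying a pretensor $\boxtimes$ built from the monoidal structure of $\Span(\F_G)$, show the localisation is compatible, obtain $\Alg_{\O}(-)^{\otimes}$ by the adjoint functor theorem, and then read off (ii)--(iv) from the currying equivalence) is the standard route, and it is broadly consistent with how such results are established in the fibrous-pattern literature.

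The genuine gap is that the entire mathematical content of the theorem is concentrated in exactly the step you defer: constructing $\boxtimes$ so that it encodes bilinearity over $\Span(\F_G)$ (i.e.\ so that $\Alg_{\P\otimes\O}\simeq\Alg_{\P}(\Alg_{\O})$ after localisation) and proving that the fibrous-pattern localisation is compatible with it, where the inert--active structure now carries the norms. Naming this as ``the main obstacle'' does not discharge it, so what you have is an outline, not a proof. Beyond that, three steps are under-justified even granting the construction: in (ii), the coCartesian lifts of $\Alg_{\O}(\C)^{\otimes}\to\Span(\F_G)$ over \emph{active} maps include norm maps $G/H\to G/K$, so ``produced pointwise from $\C^{\otimes}$'' is not an argument — this is precisely where the parametrised case departs from \cite[Theorem 2.2.5.1]{lurie2017ha} and needs a genuine construction of indexed tensor products of algebras; in (iii), $G$-symmetric monoidality of the forgetful functor requires showing that these norms of $\O$-algebras are computed on underlying objects (fibrewise tensors alone do not suffice); and in (iv) you need a base-change compatibility $\Res_H^G\Infl_G\simeq\Infl_H$ together with the $i_!\dashv i^*$ adjunction to identify the fibre over $G/H$, which your sketch glosses over. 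None of these would obviously fail, but as written they are claims, not proofs.
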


\noindent Let $\O^{\otimes}, \P^{\otimes} \in \Op_{G, \infty}$. The adjunction in part (i) in particular shows 
\[ \Alg_{\O \otimes \P}(-)^{\otimes} \simeq \Alg_{\O}(\Alg_{\P}(-))^{\otimes} \]
by a Yoneda argument.
\medskip \\Let us end this subsection with a discussion on coinduced $G$-$\infty$-operads. Its purpose is that for $H \leq G$ with $\O^{\otimes} \in \Op_{H, \infty}$ and an $\O$-algebra $A$, its norm $N_H^G A$ naturally obtains an $\Coind_H^G \O$-algebra structure. The idea already appeared in \cite[Section 6.2]{blumberghill2015operadic} which is phrased in parameterized language in \cite{stewart2025tensorproductsequivariantcommutative}.
\begin{construction} \label{construction: Coind}
    Let $H \leq G$. 
    \begin{enumerate}[(i)]
        \item Postcomposing by the functor $\Res_H^G: \Span(\F_G) \to \Span(\F_H)$ induces the restriction map $\Res_H^G \colon \Op_{G,\infty} \to \Op_{H, \infty}$. This admits a right adjoint $\tb{\Coind_H^G} \colon \Op_{H, \infty} \to \Op_{G, \infty}$  by some parametrized higher category theory \cite[Section 1.3.1]{stewart2025tensorproductsequivariantcommutative}. It is the \tb{coinduction functor} of equivariant operads.
        \item Consider a singly $H$-colored $\O^{\otimes} \in \Op_{H, \infty}$ and the counit $\varepsilon \colon \Res_H^G \Coind_H^G \O^{\otimes} \to \O^{\otimes}$. Then, there is a commutative diagram of symmetric monoidal functors \cite[Construction 1.40]{stewart2025tensorproductsequivariantcommutative}
        \begin{center}
            \begin{tikzcd}
                \Alg_{\O}(\Res_H^G \C) \arrow[r, "\varepsilon^*"] \arrow[d] & \Alg_{\Res_H^G \Coind_H^G \O}(\Res_H^G \C) \arrow[r, "N_H^G"] \arrow[d] & \Alg_{\Coind_H^G\O}(\C) \arrow[d]
                \\ \C_H \arrow[r, equal] & \C_H \arrow[r, "N_H^G", swap] & \C_G
            \end{tikzcd}
        \end{center}
        The symmetric monoidality is by \cref{lemma: Res N sym mon} and the right square is via \cref{theorem: omnibus Alg theorem} \textcolor{chilligreen}{(iii)}. To apply the latter, we need that $\Coind_H^G \O^{\otimes}$ is singly $G$-colored, which follows from the next result (\cref{lemma: coinduction of singly colored}).
    \end{enumerate}
\end{construction}

\begin{lemma} \label{lemma: coinduction of singly colored}
    Let $H \leq G$ and consider a singly $H$-colored $\O^{\otimes} \in \Op_{H, \infty}$. Then, the $G$-$\infty$-operad $\Coind_H^G \O^{\otimes} \in \Op_{G, \infty}$ is singly $G$-colored.
\end{lemma}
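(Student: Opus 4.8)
The plan is to unwind the definition of singly $G$-colored directly, and verify that the projection $\Coind_H^G \O^{\otimes} \times_{\Span(\F_G)} \Orb_G^{\op} \to \Orb_G^{\op}$ is an equivalence. The key point is that coinduction of $G$-$\infty$-operads is computed by a right Kan extension along $\Res_H^G \colon \Span(\F_G) \to \Span(\F_H)$, or more precisely via the right adjoint $\Coind_H^G$ to restriction; and the objects of $\Orb_G^{\op}$ sitting inside $\Span(\F_G)$ are exactly the elementary objects of the span pattern, which restrict to (disjoint unions of) elementary objects of $\Span(\F_H)$. So I would first identify the fiber of $\Coind_H^G \O^{\otimes}$ over a generic $G/K \in \Orb_G^{\op} \subseteq \Span(\F_G)$ with an appropriate limit of fibers of $\O^{\otimes}$ over the $H$-orbits appearing in $\Res_H^G(G/K)$.

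More concretely, the first step is to recall the pointwise formula: for $\O^\otimes \to \Span(\F_H)$ a $G$-$\infty$-operad, one expects the coCartesian fibration $\Coind_H^G\O^\otimes \to \Span(\F_G)$ to have fiber over $s \in \Span(\F_G)$ given by $\Map_{\Span(\F_H)}(\Res_H^G(-), \O^\otimes)$-type sections, i.e. a limit over the relevant comma category; one can extract this from the description in \cite[Section 1.3.1]{stewart2025tensorproductsequivariantcommutative}, or alternatively argue abstractly as follows. Restriction $\Res_H^G\colon \Op_{G,\infty}\to \Op_{H,\infty}$ preserves the terminal object and more relevantly commutes with passing to underlying $G$-/$H$-$\infty$-categories via the compatible maps $\Orb_G^{\op}\to \Span(\F_G)$ and $\Orb_H^{\op}\to\Span(\F_H)$ (one checks $\Res_H^G$ sends $G/K$ to the $H$-set $\Res_H^G(G/K)$, a coproduct of $H$-orbits). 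Then for $\P^\otimes \in \Op_{G,\infty}$, the mapping space $\Map_{\Op_{G,\infty}}(\P^\otimes, \Coind_H^G\O^\otimes) \simeq \Map_{\Op_{H,\infty}}(\Res_H^G\P^\otimes, \O^\otimes)$; applying this with $\P^\otimes$ running over suitable test operads (e.g. the operads corepresenting "an object over $G/K$", namely $\Res$/$\Ind$ of the unit operad) lets me compute the fiber $(\Coind_H^G\O^\otimes)_{G/K}$.

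The second step is to carry out this computation: the fiber over $G/K$ should come out to be $\prod$ (or a limit) of fibers $\O^\otimes_{H/(H\cap gKg^{-1})}$ indexed by the double coset decomposition $H\backslash G/K$, and since each of these fibers is contractible by the hypothesis that $\O^\otimes$ is singly $H$-colored, the fiber $(\Coind_H^G\O^\otimes)_{G/K}$ is contractible. Finally I would upgrade this pointwise statement to the statement that $\Coind_H^G\O^\otimes\times_{\Span(\F_G)}\Orb_G^{\op}\to\Orb_G^{\op}$ is an equivalence: a map of $\infty$-categories which is a coCartesian (indeed, a left) fibration with contractible fibers is an equivalence, or one invokes that being singly $G$-colored is detected on underlying $G$-$\infty$-categories and a functor of $G$-$\infty$-categories that is a levelwise equivalence is an equivalence.

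I expect the main obstacle to be pinning down the precise pointwise formula for $\Coind_H^G$ on fibers cleanly — i.e.\ making the double-coset bookkeeping rigorous without getting lost in the fibrous-pattern/Nardin--Shah translation — rather than any conceptual difficulty; once the fibers are identified, contractibility is immediate from the hypothesis. A cleaner route, if available, is to avoid fiber computations entirely and argue purely formally: single $G$-colouredness of $\O^\otimes \in \Op_{G,\infty}$ is equivalent to the map from the "unit" $G$-$\infty$-operad (the one with $\Orb_G^{\op}$ as total space) admitting a section over $\O^\otimes$ in a way exhibiting the underlying $G$-category as trivial; since $\Res_H^G$ of the unit $G$-operad is the unit $H$-operad and $\Coind$ is right adjoint to $\Res$, the universal property transports single-colouredness from $\O^\otimes$ over $H$ to $\Coind_H^G\O^\otimes$ over $G$ directly. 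I would try this formal argument first and fall back on the explicit fiber computation only if the adjunction bookkeeping does not close up.
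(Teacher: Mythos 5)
Your main route (compute the fibers of $\Coind_H^G\O^{\otimes}$ over each $G/K$ via a double-coset decomposition and observe they are contractible) is morally fine, but the decisive step is exactly the one you leave open: identifying the fibers of the coinduced operad, i.e.\ knowing that $\Coind_H^G$ of operads is compatible with passing to underlying parametrized categories (equivalently, that the pointwise/right-Kan-extension formula you "expect" actually holds in the fibrous-pattern setting). That compatibility is the whole content of the lemma's proof in the paper: one cites Stewart's Section~1.3.1 for a functor $\ul{\Op}_{G,\infty}\to\ul{\Cat}_{G,\infty}$ taking underlying $K$-$\infty$-categories levelwise and \emph{strongly preserving parametrized limits}, so that the square relating $\Coind_H^G$ on operads and on categories commutes; then singly $H$-colored means the underlying $H$-category is the terminal object of $\Cat_{H,\infty}$, and $\Coind_H^G$, being a right adjoint, sends it to the terminal object $\Orb_G^{\op}$ of $\Cat_{G,\infty}$. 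No fiber or double-coset bookkeeping is needed once you have that commutation; conversely, without it your fiber formula is an assertion, not a proof, and your own write-up flags this as the unproved obstacle. (A smaller caveat: the fibers over orbits are $\infty$-categories, not spaces, so probing them with free/test operads corepresenting a color only sees the space of colors, i.e.\ the core, which is not enough to conclude the fiber is a trivial category.)

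Your proposed ``cleaner formal route'' does not close this gap as stated. Mapping out of the unit/trivial $G$-operad and using $\Res\dashv\Coind$ only computes the $G$-space of $G$-colors of $\Coind_H^G\O^{\otimes}$, i.e.\ the space of sections of its underlying $G$-category, and shows that this is contractible; but contractibility of the space of sections (or of the core) is strictly weaker than the underlying $G$-category being terminal, which is what singly $G$-colored demands (already non-equivariantly: a one-object category with no nontrivial invertible morphisms has contractible core but is not trivial). The correct formal argument is the one the paper gives: transport terminality itself through the $\Coind$--compatible underlying-category functor, rather than transporting mapping spaces out of the unit operad. So: right general instinct, but the key input (commutation of $\Coind_H^G$ with the underlying-category functor, from Stewart) must be invoked or proved, and the adjunction shortcut you sketch in its place does not suffice.
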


\begin{proof}
    By the arguments in \cite[Section 1.3.1]{stewart2025tensorproductsequivariantcommutative} there is a functor $\ul{\Op}_{G, \infty} \to \ul{\Cat}_{G, \infty}$ which on level $K \leq G$ takes the underlying $K$-$\infty$-category and strongly preserves parametrized limits. Thus, the diagram
    \begin{center}
        \begin{tikzcd}
            \Op_{H, \infty} \arrow[d] \arrow[r, "\Coind_H^G"] & \Op_{G, \infty} \arrow[d]
            \\ \Cat_{H, \infty} \arrow[r, "\Coind_H^G", swap] & \Cat_{G, \infty}
        \end{tikzcd}
    \end{center}
    commutes. The left arrow sends $\O^{\otimes}$ to the terminal object by the singly colored condition, so the composite sends to the terminal object because $\Coind_H^G$ is a right adjoint. In particular, $\Coind_H^G \O^{\otimes}$ is sent to $\Orb_G^{\op}$, i.e.~is singly $G$-colored.
\end{proof}

\begin{example}
    Let $H \leq G$. Then, $\Coind_H^G \E_{\infty}^H \simeq \E_{\infty}^G$ because right adjoints preserve terminal objects. 
\end{example}

\subsection{Distributivity of Parametrized Monoidal Structures}\label{sec:distributivity}
One essential categorical tool in this article to realize multiplicative Thom spectra and their universal properties is through operadic left Kan extensions, which is roughly a lax monoidal version of extending by colimits. Such a notion should naturally require a compatibility of the monoidal structure with colimits. More specifically, the monoidal structure should distribute over colimits. The notion of distributivity was initiated by Nardin in his PhD thesis \cite{nardin2017stabilitydistributivity} and further worked out by Nardin--Shah \cite{nardinshah2022equivarianttopos}. 
\medskip \\For the convenience of the reader, we will now give an exposition of this theory. On the way, we spell out the relation of parametrized fibers with parametrized straightening-unstraightening (\cref{prop: parametrized fiber as image of straightening}) and prove a projection formula assuming distributivity (\cref{prop: projection formula from distributivity}). This is one of the sections where we are forced to work with the \cite{nardinshah2022equivarianttopos} formalism instead of the \cite{barkanhaugsengsteinebrunner2024envelopesalgebraicpatterns} formalism because distributivity in this generality is only worked out there.
\begin{remark}
    Recently, Lenz--Linskens--Pützstück \cite{lenz2025normsequivarianthomotopytheory} gave another treatment of distributivity in the context of normed categories, which e.g.~also captures examples in global equivariant homotopy theory. This is not possible with \cite{nardinshah2022equivarianttopos} since the global orbit category (for finite groups) $\Glo$ is not atomic orbital. On the other hand, the \cite{lenz2025normsequivarianthomotopytheory} version is also more restrictive in the sense that only normed setups are allowed, so in the equivariant world only the $\N_\infty$-operads are permitted and not all $G$-$\infty$-operads, which is the reason we are using \cite{nardinshah2022equivarianttopos}. It is expected that the \cite{lenz2025normsequivarianthomotopytheory} distributivity agrees with \cite{nardinshah2022equivarianttopos} in the settings where both make sense but there is currently no known proof of this \cite[Remark 3.15]{lenz2025normsequivarianthomotopytheory}.
\end{remark}
\noindent Non-equivariantly, a monoidal $\infty$-category $(\C, \otimes)$ is called \emph{distributive/monoidally cocomplete} \cite[Definition 2.4]{antolinbarthel2019thom} if the tensor product $- \otimes -$ commutes with colimits in each variable. A shorter way of phrasing this is to demand for two functors $F\colon I \to \C$, $G\colon J \to \C$ and their associated colimit diagrams $I^{\triangleright}, J^{\triangleright} \to \C$ that the composition
\begin{center}
    \begin{tikzcd}
        (I \times J)^{\triangleright} \arrow[r] & I^{\triangleright} \times J^{\triangleright} \arrow[r] & \C \times \C \arrow[r, "\otimes"] & \C 
    \end{tikzcd}
\end{center}
is a colimit diagram. Indeed, the cone point of this composite is $\colim_I{F} \otimes \colim_J{G}$ while the composite restricted to $I \times J$ is $F \otimes G$, so this composite being a colimit diagram means 
\[ \colim_{I} \colim_J F \otimes G \simeq \colim_I F \otimes \colim_J G. \] 
Nardin realized that suitably parametrizing this leads to a slick parametrized version of distributivity. To state his definition we first need a suitable source category for indexed tensor products. 
\medskip \\To define this, we need a parametrized version of fibers, which we will define as the images of parametrized straightening functors. 

\begin{proposition}[{\cite[Proposition 8.3]{barwick2016parametrizedhighercategorytheory}}] \label{prop: parametrized straightening}
    Let $\myuline{\C} \in \Cat_{G, \infty}$, then there is an equivalence
    \[ \tb{\myuline{\St}}\colon  \coCart{\myuline{\C}} \simeq \Map_{\Cat_{\infty}}(\myuline{\C}, \Cat_{\infty}) \simeq \Map_{\Cat_{G, \infty}}\left(\myuline{\C}, \myuline{\Cat}_{G, \infty} \right)    \]
    induced by the classical straightening-unstraightening equivalence and the forgetful-cofree adjunction. The inverse is denoted by $\tb{\myuline{\Un}}$.
\end{proposition}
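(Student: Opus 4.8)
\textbf{Plan of proof for \cref{prop: parametrized straightening}.}
The statement breaks into two equivalences composed: the classical straightening–unstraightening $\coCart{\myuline{\C}} \simeq \Map_{\Cat_\infty}(\myuline{\C}, \Cat_\infty)$, and then an identification $\Map_{\Cat_\infty}(\myuline{\C}, \Cat_\infty) \simeq \Map_{\Cat_{G,\infty}}(\myuline{\C}, \myuline{\Cat}_{G,\infty})$. The first is nothing more than an instance of Lurie's straightening equivalence applied to the $\infty$-category $\myuline{\C}$ (which, recall, is just an $\infty$-category equipped with a functor to $\Orb_G^{\op}$), so I would simply cite \cite{lurie2009htt} (or the $\infty$-categorical reformulation in \cite{gepnerHaugseng2015enriched, mazelgee2019model}) here without reproving it; all the content is in the second step.

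The plan for the second step is to invoke the forgetful–cofree adjunction between $\Cat_\infty$ and $\Cat_{G,\infty} = \Fun(\Orb_G^{\op}, \Cat_\infty)$. Concretely, the forgetful functor $\Cat_{G,\infty} \to \Cat_\infty$ (evaluation, or rather the functor sending a $G$-$\infty$-category to its total space — here I need to be a little careful about which forgetful functor, but it is the one right adjoint to $\const$, i.e.\ $\myuline{\D} \mapsto \lim_{\Orb_G^{\op}} \myuline{\D}$, equivalently the $\infty$-category of sections of the associated coCartesian fibration over $\Orb_G^{\op}$, or its opposite) has a right adjoint, the \emph{cofree} $G$-$\infty$-category functor $\Cat_\infty \to \Cat_{G,\infty}$, sending $\mathcal{E}$ to the functor $\Orb_G^{\op}\ni G/H \mapsto \Fun(\Orb_{G/H}^{\op}, \mathcal{E})$ (equivalently $\Fun((\Orb_G)_{/(G/H)}, \mathcal{E})$). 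Applying this cofree construction to $\mathcal{E} = \Cat_\infty$ yields precisely $\myuline{\Cat}_{G,\infty}$, the $G$-$\infty$-category of $G$-$\infty$-categories; this is the standard definition (see \cite{barwick2016parametrizedhighercategorytheory, shah2021thesis}). The adjunction then supplies the natural equivalence
\[
  \Map_{\Cat_\infty}(\myuline{\C}, \Cat_\infty) \;\simeq\; \Map_{\Cat_{G,\infty}}(\myuline{\C}, \myuline{\Cat}_{G,\infty}),
\]
where on the left $\myuline{\C}$ is regarded via the forgetful functor as a plain $\infty$-category and on the right as a $G$-$\infty$-category via its structure functor to $\Orb_G^{\op}$. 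Composing the two equivalences and taking the resulting functor to be $\myuline{\St}$, with inverse $\myuline{\Un}$, finishes the proof. I would then remark that under this correspondence a coCartesian fibration $\myuline{p}\colon \myuline{\mathcal{E}} \to \myuline{\C}$ over $\myuline{\C}$ is sent to the functor whose value at an object $x$ (in the fiber over $G/H \in \Orb_G$) records the ``parametrized fiber'' $\myuline{\mathcal{E}}_x$, a $G/H$-$\infty$-category — this is exactly the description that will be used to define parametrized fibers in the sequel.

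\textbf{Main obstacle.} The genuinely delicate point is matching up the two incarnations of $\myuline{\Cat}_{G,\infty}$ — the one obtained abstractly as the cofree $G$-$\infty$-category on $\Cat_\infty$ versus the hands-on description $G/H \mapsto \Cat_{G/H,\infty}$ — and checking that the adjunction counit/unit is compatible with the classical straightening so that the composite really is an equivalence of the asserted form (rather than merely an equivalence of spaces/mapping objects for a different functor). This is precisely the content of \cite[Proposition 8.3]{barwick2016parametrizedhighercategorytheory}, so in practice I would cite it; but if pressed to reprove it, the bookkeeping of restriction functors $(G/H \leftarrow G/K)$ acting on $\Fun(\Orb_{G/H}^{\op}, \Cat_\infty)$ versus fiberwise pullback of coCartesian fibrations is the only nontrivial verification, and it is routine but notationally heavy. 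Everything else is a formal consequence of the two adjunctions.
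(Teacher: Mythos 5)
Your overall route is the same as the paper's: the paper offers no independent proof of this statement, it simply cites \cite[Proposition 8.3]{barwick2016parametrizedhighercategorytheory}, and the statement itself already records the two-step construction you use (classical straightening over the total category $\myuline{\C}$, followed by the forgetful--cofree adjunction, together with the observation that $\myuline{\Cat}_{G,\infty}$ is the cofree $G$-$\infty$-category on $\Cat_\infty$, i.e.\ has value $\Fun(\Orb_H^{\op},\Cat_\infty)=\Cat_{H,\infty}$ at $G/H$).

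There is, however, one concrete confusion in your write-up that would break the argument if taken literally: you identify the left adjoint of the cofree functor as ``the right adjoint to $\const$, i.e.\ $\myuline{\D}\mapsto\lim_{\Orb_G^{\op}}\myuline{\D}$, equivalently the $\infty$-category of sections.'' That is a different functor. The functor paired with the cofree construction $\mathcal{E}\mapsto\bigl(G/H\mapsto\Fun(\Orb_H^{\op},\mathcal{E})\bigr)$ is the \emph{total-category} functor $\coCart{\Orb_G^{\op}}\to\Cat_\infty$ (the Grothendieck construction, i.e.\ a lax colimit), which is a left adjoint; the limit of the straightening is instead the $\infty$-category of \emph{coCartesian sections} (the ``$G$-fixed points''), is not adjoint to the cofree functor, and is not the total category. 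Had you used it, the second equivalence would read $\Fun\bigl(\lim_{\Orb_G^{\op}}\myuline{\C},\Cat_\infty\bigr)\simeq\Map_{\Cat_{G,\infty}}(\myuline{\C},\myuline{\Cat}_{G,\infty})$, whose left-hand side is not $\coCart{\myuline{\C}}$, so the composite would not have the asserted form. The correct choice is forced by your first step: classical straightening is taken over the total category $\myuline{\C}$, so the same object must appear as the source of the mapping object in the middle term, and the universal property needed is exactly $\Fun^{\cocart}_{/\Orb_G^{\op}}\bigl(\myuline{\C},\ul{\mathcal{E}}\bigr)\simeq\Fun(\myuline{\C},\mathcal{E})$ for the cofree $\ul{\mathcal{E}}$. (Minor related slip: the fiberwise formula should be $\Fun\bigl(((\Orb_G)_{/(G/H)})^{\op},\mathcal{E}\bigr)\simeq\Fun(\Orb_H^{\op},\mathcal{E})$; your parenthetical drops the $\op$.) With the forgetful functor fixed to be the total-category functor, the rest of your argument, including the identification of $\myuline{\Cat}_{G,\infty}$ with the cofree object and the description of the resulting straightening on objects as parametrized fibers, is correct and matches the cited source.
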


\begin{definition}\label{def: parameterized fiber}
    Let $F\colon \myuline{\C} \to \myuline{\D}$ be a coCartesian fibration of $G$-$\infty$-categories and $d \in \myuline{\D}$ lying over $G/H \in \Orb_G^{\op}$. Then, we call $\tb{\myuline{\C}_{\myuline{d}}} = \myuline{\St}(F)(d) \in \Cat_{H, \infty}$ the \tb{parametrized fiber} over $d$.
\end{definition}

\begin{proposition} \label{prop: parametrized fiber as image of straightening}
    Let $F\colon \myuline{\C} \to \myuline{\D}$ be a coCartesian fibration of $G$-$\infty$-categories and $d \in \myuline{\D}$ lying over $G/H \in \Orb_G^{\op}$. Then, there is a pullback square
    \begin{center}
        \begin{tikzcd}      \myuline{\C}_{\myuline{d}} \arrow[r] \arrow[d] \arrow[dr, phantom, very near start, "\lrcorner"] & \myuline{\C} \arrow[d, "F"]
            \\ \Orb_H^{\op} \arrow[r] & \myuline{\D}
        \end{tikzcd}
    \end{center}
    where the bottom arrow is classified by $d \in \myuline{\D}_H$ by the Yoneda Lemma \cite[Lemma 2.2.7]{cnossen2023parametrizedstabilityuniversalproperty}.
\end{proposition}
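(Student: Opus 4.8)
The plan is to identify the parametrized fiber $\ul{\C}_{\ul{d}}$, which is by definition obtained by applying the parametrized straightening functor $\ul{\St}(F)$ to the object $d \in \ul{\D}_H$, with the pullback of $F$ along the point $\Orb_H^{\op} \to \ul{\D}$ classifying $d$. The key observation is that everything in sight is compatible with the classical (non-parametrized) straightening-unstraightening equivalence, so I would reduce the statement to the classical fact that the fiber of a coCartesian fibration over an object is computed by evaluating the straightening functor at that object, combined with the Yoneda lemma identification of objects of $\ul{\D}_H$ with maps $\Orb_H^{\op} \to \ul{\D}$ in $\Cat_{G,\infty}$ (or rather, via the slice, in the appropriate category of $G$-$\infty$-categories over $\Orb_H^{\op}$).

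Concretely, first I would recall from \cref{prop: parametrized straightening} that $\ul{\St}(F) \colon \ul{\C} \to \ul{\Cat}_{G,\infty}$ is the composite of the classical straightening of the underlying coCartesian fibration with the forgetful-cofree adjunction unit, so that evaluating at $d$ lands in $\Cat_{H,\infty}$ via the identification $(\ul{\Cat}_{G,\infty})_{G/H} \simeq \Cat_{H,\infty}$. Next, I would use that an object $d \in \ul{\D}_H$ is the same data as a map $\Orb_H^{\op} \to \ul{\D}$ of $G$-$\infty$-categories by the parametrized Yoneda lemma \cite[Lemma 2.2.7]{cnossen2023parametrizedstabilityuniversalproperty}; this is the bottom arrow of the claimed square. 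Then I would invoke the basic compatibility of straightening with pullback: straightening sends the pullback of a coCartesian fibration along a functor to the precomposite of the straightening with that functor. Applying this to the pullback of $F$ along $\Orb_H^{\op} \to \ul{\D}$ yields a coCartesian fibration over $\Orb_H^{\op}$ whose straightening is the constant-on-one-object diagram picking out $\ul{\St}(F)(d) = \ul{\C}_{\ul{d}}$; since straightening over $\Orb_H^{\op}$ valued at the point recovers the $H$-$\infty$-category itself, the pullback is $\ul{\C}_{\ul{d}}$. Conversely the claimed square, being a pullback along the point classifying $d$, has total space with exactly this straightening, so the two agree.

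The main obstacle I anticipate is bookkeeping the various straightening equivalences at the right levels — the classical one over $\ul{\D}$ (viewed just as an $\infty$-category), the parametrized refinement of \cref{prop: parametrized straightening}, and the one over $\Orb_H^{\op}$ — and checking that the forgetful-cofree adjunction in \cref{prop: parametrized straightening} interacts correctly with restriction along $\Orb_H^{\op} \to \ul{\D}$, i.e.\ that passing to the $H$-equivariant slice is compatible with taking the parametrized fiber. This is essentially the statement that the square of straightening functors relating $\coCart{\ul{\D}}$, $\coCart{\Orb_H^{\op}}$, $\Map(\ul{\D},\ul{\Cat}_{G,\infty})$ and $\Map(\Orb_H^{\op},\ul{\Cat}_{G,\infty}) \simeq \Cat_{H,\infty}$ commutes, which follows formally from naturality of straightening-unstraightening in the base but requires care to state precisely. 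Once that naturality square is in hand, the pullback square is immediate, and the identification of the bottom arrow with the one classified by $d$ is exactly the parametrized Yoneda lemma already cited.
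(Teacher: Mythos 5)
Your proposal is correct and takes essentially the same route as the paper: the paper also identifies the pullback $\ul{\C}\times_{\ul{\D}}\Orb_H^{\op}$ with $\ul{\St}(F)(d)=\ul{\C}_{\ul{d}}$ by combining compatibility of straightening with base change along the map classified by $d$ with the commutativity of exactly the square you flag as the crux, namely the agreement of the Yoneda-evaluation identification $\Map_{\Cat_{G,\infty}}(\Orb_H^{\op},\ul{\Cat}_{G,\infty})\simeq\Cat_{H,\infty}$ with the unstraightening identification $\coCart{\Orb_H^{\op}}=\Cat_{H,\infty}$, which the paper imports from \cite[Remark 2.2.15]{cnossen2023parametrizedstabilityuniversalproperty} rather than rederiving from naturality. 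One small imprecision: the restricted straightening $\Orb_H^{\op}\to\ul{\Cat}_{G,\infty}$ is not a constant diagram (its values at $H/K$ are the restrictions of $\ul{\C}_{\ul{d}}$); it is merely determined, via the parametrized Yoneda lemma, by its value at the tautological object, which is the formulation your final clause correctly uses.
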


\begin{proof}
    The diagram
    \begin{center}
        \begin{tikzcd}
            \Map_{\Cat_{\infty}}\left(\Orb_H^{\op},\Cat_{\infty}\right) \arrow[r, "\simeq"] \arrow[d] & \Map_{\Cat_{G, \infty}}\left(\Orb_H^{\op}, \myuline{\Cat}_{G, \infty} \right) \arrow[d, "\text{Yoneda}"]  
            \\ \coCart{\Orb_H^{\op}}  \arrow[r, equal] & \Cat_{H, \infty}
        \end{tikzcd}
    \end{center}
    commutes by \cite[Remark 2.2.15]{cnossen2023parametrizedstabilityuniversalproperty}. Let $T\colon  \myuline{\C} \times_{\myuline{\D}} \Orb_H^{\op} \to \Orb_H^{\op}$ be the pullback of $F$. Then, the above diagram evaluated at $\St(T)$ becomes
    \begin{center}
        \begin{tikzcd}
            \St(T) \arrow[r, mapsto] \arrow[d, mapsto] & \myuline{\St}(T) \arrow[d, mapsto]
            \\ G \arrow[r, no head, "\simeq", swap] & \myuline{\St}(T)(G/H) = \myuline{\C}_{\myuline{d}}
        \end{tikzcd}
    \end{center}
    where $\myuline{\St}(G/H) \simeq \myuline{\C}_{\myuline{d}}$ uses that $\Orb_H^{\op} \to \myuline{\D}$ is classified by $d \in \myuline{\D}_H$.
\end{proof}

\begin{remark}
    In particular, we recover Nardin-Shah's notion of parametrized fibers \cite[Notation 2.3.1]{nardinshah2022equivarianttopos} at least in the setting when $F\colon \myuline{\C} \to \myuline{\D}$ is a coCartesian fibration. They define
    \[ \myuline{\C}_{\myuline{d}} = * \times_{\myuline{\D}} \Ar^{\mathrm{cocart}}(\myuline{\D}) \times_{\myuline{\D}} \myuline{\C} \]
    which we identify with the aforementioned pullback (\cref{prop: parametrized fiber as image of straightening}) using the equivalence $\Ar^{\mathrm{cocart}}(\myuline{\D}) \simeq \myuline{\D} \times_{\Orb_G^{\op}} \Ar(\Orb_G^{\op})$, see \cite[Lemma 2.23]{shah2022parametrizedhighercategorytheory}.
\end{remark}

\noindent We will in particular need this in the setting of $\O$-monoidal $G$-$\infty$-categories where the Segal conditions allow us to give a more convenient description of those parametrized fibers.

\begin{theorem}[{\cite[Theorem 2.3.3]{nardinshah2022equivarianttopos}}] \label{theorem: equivariant Segal conditions}
    Let $\myuline{\O}^{\otimes} \in \Op_{G, \infty}^{\NS}$ and $\myuline{\C}^{\otimes} \in \Mon_{\myuline{\O}}^{\NS}(\Cat_{G, \infty})$. Let $o \in \myuline{\O}^{\otimes}$ be an object over $[f\colon U \to G/H] \in \myuline{\F}_{G, *}$ with $H \leq G$. Let $U \simeq \coprod_i G/H_i$ with $H_i \leq G$ be an orbit decomposition and suppose that $o$ corresponds to $(o_i)_i$ under the equivalence $\myuline{\C}^{\otimes}_U \simeq \prod_i \myuline{\C}^{\otimes}_{H_i}$. Then, there is an equivalence
    \[ \myuline{\O}_{\myuline{o}}^{\otimes} \simeq f_* \coprod_i \myuline{\O}^{\otimes}_{\myuline{o}_i} \]
    of $H$-$\infty$-categories, equivalently, we coinduce up each of the $\myuline{\C}_{\myuline{o}_i}^{\otimes}$ to a $H$-$\infty$-category and take their product.
\end{theorem}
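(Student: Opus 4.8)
The plan is to read the formula off the Segal condition that \emph{defines} an $\myuline{\O}$-monoidal $G$-$\infty$-category, after passing to parametrized straightening. Being $\myuline{\O}$-monoidal, $\myuline{\C}^{\otimes}\to\myuline{\O}^{\otimes}$ is in particular a coCartesian fibration of $G$-$\infty$-categories, so by \cref{prop: parametrized straightening} it is classified by a Segal $\myuline{\O}$-object $F\colon\myuline{\O}^{\otimes}\to\myuline{\Cat}_{G,\infty}$, and by \cref{prop: parametrized fiber as image of straightening} the parametrized fiber $\myuline{\C}^{\otimes}_{\myuline{o}}$ is exactly the value $F(o)$, viewed as an $H$-$\infty$-category since $o$ lies over $G/H$. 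So the whole statement reduces to computing $F(o)$ from the values of $F$ at the ``unary'' colors $o_i$. (The version with $\myuline{\O}^{\otimes}$ in place of $\myuline{\C}^{\otimes}$ follows the same script, with $F$ replaced by the $G$-$\infty$-category--valued Segal object underlying the structure map of $\myuline{\O}^{\otimes}$.)

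First I would make the diagram of elementary objects under $o$ explicit. Write $o$ over $[f\colon U\to G/H]\in\myuline{\F}_{G,*}$, let $\bar U$ be the fibre of $U\to G/H$ over $eH$ — a finite $H$-set — and pick an $H$-orbit decomposition $\bar U\simeq\coprod_i H/K_i$, i.e.\ $U\simeq\coprod_i G/K_i$ with the evident maps to $G/H$; this is the decomposition realizing $\myuline{\C}^{\otimes}_U\simeq\prod_i\myuline{\C}^{\otimes}_{K_i}$ and the colors $o_i$ over $[\mathrm{id}\colon G/K_i\to G/K_i]$. The inert morphisms from $o$ into elementary objects of $\myuline{\F}_{G,*}$ are organized $H$-equivariantly by the orbits of $\bar U$, each orbit $U_i$ giving a canonical inert map $o\to o_i$. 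The Segal condition for $F$ at $o$ says that $F(o)$ is the (parametrized) limit of $F$ over $(\myuline{\O}^{\otimes})_{o/}^{\el}$, and the crux is that this indexing $H$-$\infty$-category is the one associated to the finite $G$-set $U$, namely $(\Orb_G)_{/U}$ over $G/H$, so that the Segal limit becomes the indexed limit along $f$: $F(o)\simeq f_*\bigl(\coprod_i F(o_i)\bigr)$. Unwinding $f_*$ for the fold-type map $\coprod_i G/K_i\to G/H$ then yields exactly the ``coinduce each $\myuline{\C}^{\otimes}_{\myuline{o}_i}=\myuline{\C}^{\otimes}_{K_i}$ from $K_i$ up to $H$ and take the product'' description.

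Finally I would upgrade this pointwise identification to an equivalence natural in $o$: the canonical inert maps $o\to o_i$ assemble, using the functoriality of $F$ from \cref{prop: parametrized straightening} together with base change for $f_*$ along maps of $\myuline{\F}_{G,*}$, into a comparison map $\myuline{\C}^{\otimes}_{\myuline{o}}\to f_*\coprod_i\myuline{\C}^{\otimes}_{\myuline{o}_i}$ of $H$-$\infty$-categories that is functorial in $o$, and it then suffices to check it is an equivalence, which is the pointwise computation above. I expect the main obstacle to be combinatorial rather than homotopy-theoretic: setting up the $G$-$\infty$-category incarnation of the finite $G$-set $U$, verifying that $(\myuline{\O}^{\otimes})_{o/}^{\el}$ is $H$-equivariantly that category so the Segal limit genuinely computes $f_*$, and matching Nardin--Shah's indexed limit $f_*$ with the explicit coinduce-and-product operation — all compatibly with $\myuline{\C}^{\otimes}_U\simeq\prod_i\myuline{\C}^{\otimes}_{K_i}$ and with the naturality in $o$. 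Once that dictionary is in place, the content is just ``Segal condition $=$ limit over elementaries''.
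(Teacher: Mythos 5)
Your sketch cannot be compared against an in-paper argument, because the paper does not prove this statement: it is recalled verbatim from \cite[Theorem 2.3.3]{nardinshah2022equivarianttopos} and used as a black box. (As a side remark, the displayed formula in the paper contains typos -- the $\myuline{\O}$'s should be $\myuline{\C}$'s, and the decomposition should be of $\myuline{\O}^{\otimes}_U$, as the later use in \cref{def: distributivity} makes clear; your reading of the intended statement is the correct one.)

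On its own merits, your outline is the standard way to see the result and the overall architecture is sound: by \cref{prop: parametrized straightening} and \cref{prop: parametrized fiber as image of straightening} the parametrized fiber $\myuline{\C}^{\otimes}_{\myuline{o}}$ is the value $F(o)$ of the straightening, and the content is the identification $F(o)\simeq f_*\coprod_i F(o_i)$. The one step where your sketch risks circularity is the sentence ``the Segal limit becomes the indexed limit along $f$''. In the Nardin--Shah formalism the Segal condition is a \emph{fiberwise} one: for every object $o'$ of $\myuline{\O}^{\otimes}$ it gives an equivalence of ordinary fibers $\myuline{\C}^{\otimes}_{o'}\simeq\prod_j\myuline{\C}^{\otimes}_{o'_j}$, involving only finite products in $\Cat_{\infty}$; the claim that the value of the parametrized straightening is computed by an \emph{indexed} limit (coinduction along $f$) is exactly the theorem, so it cannot be invoked as the definition. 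The non-circular route -- which your ``combinatorial'' caveat gestures at but does not carry out -- is to evaluate both sides at each $H/L\in\Orb_H$: the left side is the ordinary fiber at the inert pushforward $\Res^H_L o$, which the fiberwise Segal condition identifies with the product of fibers over the orbits of $U\times_{G/H}G/L$; the right side agrees by the pointwise formula for $f_*$, i.e.\ the double-coset decomposition of $\Res^H_L\Coind^H_{H_i}$; and the comparison map assembled from the inert coCartesian edges $o\to o_i$ matches the two naturally in $H/L$, because straightening sends coCartesian edges over $\Orb_G^{\op}$ to the restriction functors. Writing out that pointwise-plus-naturality check is the entire content of the theorem; with it supplied, your sketch becomes a proof.
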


\begin{definition} \label{def: indexed tensor product}
    Let $\myuline{\O}^{\otimes} \in \Op_{G, \infty}^{\NS}$ and $\myuline{\C}^{\otimes} \in \Mon_{\myuline{\O}}^{\NS}(\Cat_{G, \infty})$ and let $U \to G/H$ be some map in $\F_{G}$ corresponding to a map in $\myuline{\F}_{G, *}$ over $G/H$. Let $o \to o'$ be a coCartesian lift of that map. Then, we associate to it an \tb{indexed tensor product} functor 
    \[ \tb{\underline{\bigotimes}_{o \to o'}} = \myuline{\St}(\myuline{\C}^{\otimes})(o \to o')\colon  \myuline{\C}^{\otimes}_{\myuline{o}} \to \myuline{\C}^{\otimes}_{\myuline{o}'} \]
    of $H$-$\infty$-categories.
\end{definition}
\noindent Now, we have the language to recall the definition of distributivity.

\begin{definition}[{\cite[Definition 3.2.3, 3.2.4]{nardinshah2022equivarianttopos}}] \label{def: distributivity}
    Let $\myuline{\O}^{\otimes} \in \Op_{G, \infty}^{\NS}$.
    \begin{enumerate}[(i)]
        \item Let $f\colon U \to V$ be a map in $\F_{G}$ as well as $\myuline{\C} \in \Cat_{(\Orb_G)_{/U}, \infty}$ and $\myuline{\D} \in \Cat_{(\Orb_G)_{/V}, \infty}$. A $(\Orb_G)_{/V}$-functor $F\colon f_*\myuline{\C} \to \myuline{\D}$ is called \tb{distributive} if for every pullback square
        \begin{center}
            \begin{tikzcd}
                U' \arrow[r, "f'"] \arrow[d, "g'", swap] \arrow[dr, phantom, very near start, "\lrcorner"] & V' \arrow[d, "g"]
                \\ U \arrow[r, "f", swap] & V
            \end{tikzcd}
        \end{center}
        in $\F_G$ and every $(\Orb_G)_{/U'}$-colimit diagram $p\colon \myuline{I}^{\myuline{\triangleright}} \to g'^* \myuline{\C}$ the $(\Orb_G)_{/V'}$-functor
        \begin{center}
            \begin{tikzcd}
                (f'_* \myuline{I})^{\myuline{\triangleright}} \arrow[r] & f'_*(\myuline{I}^{\myuline{\triangleright}}) \arrow[r, "f'_* p"] & f_*' g'^* \myuline{\C} \arrow[r, no head, "{\simeq, \mathrm{BC}}"] & g^* f_* \myuline{\C} \arrow[r, "g^* F"] & g^* \myuline{\D}
            \end{tikzcd}
        \end{center}
        is an $(\Orb_G)_{/V'}$-colimit-diagram.
        \item Let $\myuline{\C}^{\otimes} \in \Mon_{\myuline{\O}}^{\NS}(\Cat_{G, \infty})$ and suppose that for all $H \leq G$ and $o' \in \myuline{\O}^{\otimes}_H$ the parametrized fiber $\myuline{\C}_{\myuline{o}'}^{\otimes} \in \Cat_{H, \infty}$ is $H$-cocomplete. Then, $\myuline{\C}^{\otimes}$ is said to be \tb{($\myuline{\O}$-)distributive} if for every coCartesian lift $o \to o'$ of a map in $\myuline{\F}_{G, *}$ over some $G/H \in \Orb_G$ corresponding to $f\colon U \to G/H$ the functor $\underline{\bigotimes}_{o \to o'}\colon  \myuline{\C}_{\myuline{o}}^{\otimes} \to \myuline{\C}_{\myuline{o}'}^{\otimes}$ is distributive.
    \end{enumerate}
\end{definition}

\noindent Part (ii) is well-defined: If $U \simeq \coprod_i G/H_i$ with $H_i \leq G$ is an orbit decomposition, then \cref{theorem: equivariant Segal conditions} provides an equivalence $\myuline{\C}_{\myuline{o}}^{\otimes} \simeq f_* \coprod_i \myuline{\C}_{\myuline{o}_i}^{\otimes}$, so the source of $\underline{\bigotimes}_{o \to o'}\colon  \myuline{\C}_{\myuline{o}}^{\otimes} \to \myuline{\C}_{\myuline{o}'}^{\otimes}$ is really coinduced up as demanded in (i).
\medskip \\For an (ordinary) symmetric monoidal $\infty$-category the distributivity of 
\[ \bigotimes_{\langle 2 \rangle \to \langle 1 \rangle} = - \otimes -\colon  \C^{\otimes} \times \C^{\otimes} \to \C^{\otimes} \] unravels as exactly the classical distributivity notion mentioned in the introduction of this subsection.

\begin{theorem}[{\cite[Corollary 3.28]{nardin2017stabilitydistributivity}}] \label{theorem: nardin Sp is distributive}
    There is a (unique) distributive $G$-symmetric monoidal structure on $\myuline{\Sp}_G$.
\end{theorem}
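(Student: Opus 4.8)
The plan is to identify $\ul{\Sp}_G$ with the parametrized stabilization $\mathrm{Stab}_G(\ul{\Sc}_G)$ of the $G$-$\infty$-category of $G$-spaces, and then to deduce the statement formally, the key structural input being that $\ul{\Sp}_G$ is an idempotent object of $(\Pr^L_G,\otimes_G)$ — the $\infty$-category of $G$-presentable $G$-$\infty$-categories with $G$-colimit-preserving functors and the parametrized tensor product. Modulo the identification of $\CAlg(\Pr^L_G)$ with the $\infty$-category of distributive $G$-symmetric monoidal $G$-presentable $G$-$\infty$-categories and $G$-colimit-preserving $G$-symmetric monoidal functors (\cite{nardin2017stabilitydistributivity,nardinshah2022equivarianttopos}), both existence and uniqueness then reduce to the fact that an idempotent object of a symmetric monoidal $\infty$-category carries an essentially unique commutative-algebra structure (\cite[Section 4.8.2]{lurie2017ha}).

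For existence I would first recall from \cite{nardin2017stabilitydistributivity} that $\ul{\Sp}_G$ is $G$-presentable and is the $G$-stabilization of $\ul{\Sc}_G$: it is the universal $G$-presentable $G$-stable $G$-$\infty$-category under $\ul{\Sc}_G$, with universal functor $\Sigma^{\infty}_+\colon \ul{\Sc}_G\to\ul{\Sp}_G$. Equip $\ul{\Sc}_G$ with its Cartesian $G$-symmetric monoidal structure (finite $G$-products, norms included); since $\ul{\Sc}_G$ is a $G$-$\infty$-topos this structure is distributive, so $\ul{\Sc}_G^{\times}$ is a commutative algebra in $(\Pr^L_G,\otimes_G)$ — in fact its unit object. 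As $G$-stabilization is a smashing localization, $\ul{\Sp}_G\otimes_G(-)$ computes it, the unit map $\ul{\Sc}_G^{\times}\to\ul{\Sp}_G$ is a map in $\CAlg(\Pr^L_G)$, and $\ul{\Sp}_G$ thereby acquires a distributive $G$-symmetric monoidal structure (the parametrized smash product) for which $\Sigma^{\infty}_+$ is $G$-symmetric monoidal and whose fiberwise restriction along $H\le G$ is the usual smash product of genuine $H$-spectra.

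For uniqueness, smashingness also gives $\ul{\Sp}_G\otimes_G\ul{\Sp}_G\simeq\mathrm{Stab}_G(\ul{\Sp}_G)\simeq\ul{\Sp}_G$, since $\ul{\Sp}_G$ is already $G$-stable; hence $\ul{\Sp}_G$ is idempotent in $(\Pr^L_G,\otimes_G)$, and the fiber over $\ul{\Sp}_G$ of the forgetful functor $\CAlg(\Pr^L_G)\to\Pr^L_G$ is contractible (\cite[Section 4.8.2]{lurie2017ha}). Under the identification recalled above — and noting that any distributive $G$-symmetric monoidal structure on the already $G$-presentable $\ul{\Sp}_G$ automatically refines it to an object of $\CAlg(\Pr^L_G)$, distributivity being precisely the compatibility of the indexed tensor products with $G$-colimits demanded by \cref{def: distributivity} — this contractible fiber \emph{is} the space of distributive $G$-symmetric monoidal structures on $\ul{\Sp}_G$. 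So the structure is unique.

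\emph{Main obstacle.} The real content, both due to Nardin, is (a) that the \emph{genuine} $\ul{\Sp}_G$, rather than the Borel variant $\Sp\otimes\ul{\Sc}_G$, is the correct parametrized stabilization and that this stabilization is smashing — resting on the subtlety that $G$-stability forces the orbit-indexed parametrized (co)products to be biproducts, hence the representation spheres to be invertible — and (b) the identification $\CAlg(\Pr^L_G)\simeq\{\text{distributive }G\text{-symmetric monoidal }G\text{-presentable categories}\}$, which is exactly where Nardin--Shah distributivity earns its keep. Granting these, everything else is the formal idempotent-algebra manipulation above, modulo routine accessibility bookkeeping to see that a distributive structure on $\ul{\Sp}_G$ really lands in $\Pr^L_G$.
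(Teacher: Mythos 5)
The paper does not prove this statement at all --- it is imported verbatim from Nardin, so there is no internal proof to compare against; your sketch is, in outline, the argument of the cited source itself (the parametrized GGN-style argument: $\myuline{\Sp}_G$ is the $G$-stabilization of $\myuline{\Sc}_G$, stabilization is smashing, hence $\myuline{\Sp}_G$ is idempotent in $G$-presentable $G$-$\infty$-categories, and idempotents carry essentially unique commutative algebra structures). So the route is the right one and the two nontrivial inputs you isolate --- smashingness of $G$-stabilization and the dictionary between algebra objects in presentables and distributive $G$-symmetric monoidal structures --- are exactly where the real work lies.

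One point deserves more care than your write-up gives it. A distributive $G$-symmetric monoidal structure in the sense of \cref{def: distributivity} includes the indexed tensor products (norms) $N_H^K$, and these are \emph{not} encoded by an ordinary commutative algebra object of the underlying symmetric monoidal $\infty$-category $\Pr^L_G$ with $\otimes_G$; they are encoded by a $G$-commutative ($\E_\infty^G$-, i.e.\ normed) algebra in the $G$-$\infty$-category $\ul{\Pr}^L_G$ equipped with its own indexed tensor products. Correspondingly, the uniqueness step cannot simply quote \cite[Section 4.8.2]{lurie2017ha}: one needs the parametrized analogue of the idempotent-algebra theorem (uniqueness of normed algebra structures on idempotent objects), which is part of what Nardin actually establishes before deducing his Corollary 3.28. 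As stated, your argument would only produce and pin down the fiberwise smash products and the restriction functors, not the norms. Since you explicitly defer the identification of algebras in $G$-presentables with distributive structures to Nardin and Nardin--Shah, this is an imprecision rather than a fatal gap, but if you intend the sketch to stand on its own you should replace $\CAlg(\Pr^L_G)$ and HA 4.8.2 by their parametrized counterparts and note that the latter requires proof.
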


\begin{example} \label{example: S Cat distributive}
    There exist $G$-cartesian $G$-symmetric monoidal structures $\myuline{\Sc}_{G}^{\times}$ and $\myuline{\Cat}_{G, \infty}^{\times}$ which are distributive \cite[Proposition 3.2.5]{nardinshah2022equivarianttopos}.
\end{example}

\begin{lemma} \label{lemma: distributivity along pullback}
    Let $\myuline{\C}^{\otimes} \to \myuline{\F}_{G, *}$ be a distributive $G$-symmetric monoidal $\infty$-category, as well as $\ul{\O}^{\otimes} \in \Op_{G, \infty}^{\NS}$. Then, the pullback 
    \begin{center}
        \begin{tikzcd}
            \myuline{\C}^{\otimes} \times_{\myuline{\F}_{G, *}} \myuline{\O}^{\otimes} \arrow[r] \arrow[d] \arrow[dr, phantom, very near start, "\lrcorner"] & \myuline{\C}^{\otimes} \arrow[d]
            \\ \myuline{\O}^{\otimes} \arrow[r] & \myuline{\F}_{G, *}
        \end{tikzcd}
    \end{center}
    is $\myuline{\O}$-distributive. 
\end{lemma}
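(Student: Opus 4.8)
The plan is to reduce the claim to the definition of distributivity (\cref{def: distributivity}) by unwinding what an indexed tensor product functor of the pullback looks like and showing it inherits distributivity from $\ul{\C}^{\otimes}$. The key observation is that pullback along $\ul{\O}^{\otimes}\to\ul{\F}_{G,*}$ is compatible with taking parametrized fibers: for an object $o\in\ul{\O}^{\otimes}_H$ lying over $[f\colon U\to G/H]\in\ul{\F}_{G,*}$, the parametrized fiber of $\ul{\C}^{\otimes}\times_{\ul{\F}_{G,*}}\ul{\O}^{\otimes}$ over $o$ agrees with the parametrized fiber $\ul{\C}^{\otimes}_{\ul{u}}$ of $\ul{\C}^{\otimes}$ over the image $u$ of $o$ in $\ul{\F}_{G,*}$. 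This is immediate from \cref{prop: parametrized fiber as image of straightening} together with pullback pasting, since the pullback-square defining the parametrized fiber of the composite $\ul{\C}^{\otimes}\times_{\ul{\F}_{G,*}}\ul{\O}^{\otimes}\to\ul{\O}^{\otimes}$ over $o$, when further pasted with the pullback square defining the fiber product, gives exactly the pullback square defining the parametrized fiber of $\ul{\C}^{\otimes}\to\ul{\F}_{G,*}$ over $u$.

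First I would record that $\ul{\C}^{\otimes}\times_{\ul{\F}_{G,*}}\ul{\O}^{\otimes}\to\ul{\O}^{\otimes}$ is indeed an $\ul{\O}$-monoidal $G$-$\infty$-category: it is a coCartesian fibration because coCartesian fibrations are stable under pullback, and the Segal conditions over $\ul{\O}^{\otimes}$ follow from those of $\ul{\C}^{\otimes}$ over $\ul{\F}_{G,*}$ because $\ul{\O}^{\otimes}\to\ul{\F}_{G,*}$ preserves inert edges and the relevant elementary-object diagrams, so limits are computed fiberwise and the pullback of a Segal object is Segal. Next, given a coCartesian lift $o\to o'$ in $\ul{\C}^{\otimes}\times_{\ul{\F}_{G,*}}\ul{\O}^{\otimes}$ of a map in $\ul{\F}_{G,*}$ over $G/H$ corresponding to $f\colon U\to G/H$, I would identify the indexed tensor product $\ul{\bigotimes}_{o\to o'}$ with the corresponding indexed tensor product $\ul{\bigotimes}_{u\to u'}$ of $\ul{\C}^{\otimes}$ under the fiber identifications above — this is because straightening commutes with pullback, so the functor induced on parametrized fibers by the coCartesian lift $o\to o'$ is just the functor induced by the image coCartesian lift $u\to u'$ in $\ul{\C}^{\otimes}$.

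Once these identifications are in place, the distributivity of $\ul{\bigotimes}_{o\to o'}$ is literally the distributivity of $\ul{\bigotimes}_{u\to u'}$, which holds by the hypothesis that $\ul{\C}^{\otimes}$ is distributive. The well-definedness condition in \cref{def: distributivity}(ii) — that the source of $\ul{\bigotimes}_{o\to o'}$ is a coinduced-and-producted category — transports along the fiber identification, and the $H$-cocompleteness of the parametrized fibers is inherited since they literally agree with the (cocomplete) parametrized fibers of $\ul{\C}^{\otimes}$.

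**The main obstacle** I anticipate is bookkeeping rather than conceptual: one must be careful that the notion of parametrized fiber used in \cref{def: distributivity} (phrased via $\ul{\St}$, as in \cref{def: parameterized fiber}) genuinely matches up under pullback, and that the Beyond/base-change isomorphism built into the definition of distributivity (the $\mathrm{BC}$ arrow in \cref{def: distributivity}(i)) is compatible with the pullback identification. This amounts to checking naturality of straightening with respect to pullback along $\ul{\O}^{\otimes}\to\ul{\F}_{G,*}$, which is standard but needs to be stated with enough care that the two colimit-diagram conditions — the one for $\ul{\C}^{\otimes}$ and the one for the pullback — are seen to be the same diagram. No new ideas beyond \cref{prop: parametrized fiber as image of straightening} and stability of coCartesian fibrations under pullback should be required.
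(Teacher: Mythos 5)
Your proposal is correct and follows essentially the same route as the paper: show the pullback is an $\ul{\O}$-monoidal $G$-$\infty$-category because coCartesian fibrations are stable under pullback, then observe that its parametrized fibers (and hence the indexed tensor products) are identified with parametrized fibers of $\ul{\C}^{\otimes} \to \ul{\F}_{G,*}$, so distributivity transfers verbatim. The paper's proof is just a terser version of this same argument; your extra bookkeeping about straightening commuting with pullback is the content the paper leaves implicit.
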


\begin{proof}
    First, this pullback can be taken in $\Op_{G, \infty}$ since the maps involved are maps of $G$-$\infty$-operads. Moreover, we pull back a coCartesian fibration. Thus, $\myuline{\C}^{\otimes} \times_{\myuline{\F}_{G, *}} \myuline{\O}^{\otimes}$ is indeed an $\myuline{\O}$-monoidal $G$-$\infty$-category. So now we can discuss distributivity. 
    \medskip \\The point is that distributivity is a notion that concerns the parametrized fibers but the parametrized fibers of $\myuline{\C}^{\otimes} \times_{\myuline{\F}_{G, *}} \myuline{\O}^{\otimes} \to \myuline{\O}^{\otimes}$ can be computed as certain parametrized fibers of $\myuline{\C}^{\otimes} \to \myuline{\F}_{G, *}$ and so the distributivity of $\myuline{\C}^{\otimes}$ yields the $\myuline{\O}$-distributivity of $\myuline{\C}^{\otimes} \times_{\myuline{\F}_{G}} \myuline{\O}^{\otimes}$.
\end{proof}

\noindent The following projection formula result will be important to analyze the existence of indexed coproducts in parametrized left module categories (\cref{prop: categorical properties of LMod}). This will in particular recover the classical projection formula for equivariant spectra using that $\myuline{\Sp}_G^{\otimes}$ is distributive (\cref{theorem: nardin Sp is distributive}). 

\begin{proposition}\label{prop: projection formula from distributivity}
    Let $\myuline{\C}^{\otimes} \in \Mon_{\myuline{\F}_{G, *}}^{\NS}(\Cat_{G, \infty})$ be distributive and $H \leq K \leq G$. For $A \in \myuline{\C}_{K}^{\otimes}$ and $X \in \myuline{\C}_H^{\otimes}$ we have 
    $\Ind_H^{K}(\Res_H^{K}A \otimes X) \simeq A \otimes \Ind_H^{K}X$.
\end{proposition}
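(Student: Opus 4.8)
The plan is to obtain this projection formula as a direct specialization of the distributivity of the binary tensor product (\cref{def: distributivity}), evaluated at a well-chosen base change and parametrized colimit diagram. First I would record the two inputs that make the argument run. Since $\uC^{\otimes}$ is distributive, all of its parametrized fibers — in particular $\uC_L$ for $L\le K$ — are parametrized-cocomplete, so the indexed coproduct $\Ind_H^K\colon \uC_H\to \uC_K$ exists; and it is computed as a $K$-parametrized colimit: $\Ind_H^K X$ is the $K$-colimit of the $K$-functor $\delta_X\colon \ul{K/H}\to \uC|_K$ classified by $X$ under the equivalence $\Fun_K(\ul{K/H},\uC|_K)\simeq \uC_H$, where $\ul{K/H}$ is the $K$-$\infty$-category of the $K$-set $K/H$ (this equivalence is the translate of the adjunction $\Ind_H^K\dashv\Res_H^K$ on $\Cat_{K,\infty}$, under which the constant-diagram functor $\uC_K\to\Fun_K(\ul{K/H},\uC|_K)$ becomes $\Res_H^K$).

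Next I would fix the instance of distributivity. The binary tensor product of $\uC^{\otimes}$ is the indexed tensor product $\underline{\bigotimes}_{o\to o'}$ (\cref{def: indexed tensor product}) attached to a coCartesian lift of the fold map $G/G\sqcup G/G\to G/G$, so by hypothesis the associated $(\Orb_G)_{/(G/G)}$-functor $\otimes\colon \uC|_G\times\uC|_G\to\uC|_G$ is distributive. I would apply \cref{def: distributivity}(i) to this functor along the base change $g\colon G/K\to G/G$: the pullback square then has top edge the fold map $(G/K)\sqcup(G/K)\to G/K$, and after base change the functor $\otimes$ becomes precisely the binary tensor product $\otimes_K$ of the restricted $K$-symmetric monoidal structure (whose level-$L$ component, for $L\le K$, is $\otimes_L$); verifying these identifications is an unwinding of the equivariant Segal conditions (\cref{theorem: equivariant Segal conditions}).

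Into \cref{def: distributivity}(i) I would then feed the pair $p=(p_1,p_2)$ of $K$-colimit diagrams with $p_1$ the trivial diagram constant at $A\in\uC_K$ (shape $*_K$, colimit $A$) and $p_2$ the colimit diagram extending $\delta_X$ (shape $\ul{K/H}$, colimit $\Ind_H^K X$). Pushing forward along the fold map multiplies the two shapes, and since $*_K$ is terminal this leaves $\ul{K/H}$; unwinding the composite in \cref{def: distributivity}(i), it is the $K$-indexed diagram $j\mapsto A\otimes p_2(j)$ over $\ul{K/H}^{\triangleright}$ with cone point $A\otimes\Ind_H^K X$. Distributivity asserts this is a $K$-colimit diagram, so $A\otimes\Ind_H^K X\simeq\colim_{\ul{K/H}}\bigl((A\otimes-)\circ\delta_X\bigr)$. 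To finish, I would identify the right-hand side: the $K$-endofunctor $A\otimes-$ of $\uC|_K$ has level-$L$ component $\Res_L^K A\otimes_L-$, and the equivalence $\Fun_K(\ul{K/H},-)\simeq(-)_H$ intertwines it with $\Res_H^K A\otimes_H-$; hence $(A\otimes-)\circ\delta_X$ is classified by $\Res_H^K A\otimes_H X$, so its $K$-colimit is $\Ind_H^K(\Res_H^K A\otimes X)$. Combining the two displays yields the claim.

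I expect the only real difficulty to be the bookkeeping in the middle step: reading off from \cref{def: distributivity}(i) exactly which base change and which shape categories to use, and checking — via the explicit description of indexed tensor products (\cref{def: indexed tensor product}, \cref{theorem: equivariant Segal conditions}) — that pushing the constant-$A$ diagram forward along the fold map and then base-changing along $G/K\to G/G$ really does implement ``tensor with $\Res_H^K A$'' at level $H$. No new hard input beyond distributivity is needed; as a consistency check, taking $\uC^{\otimes}=\ul{\Sp}_G^{\otimes}$ (\cref{theorem: nardin Sp is distributive}) recovers the classical equivariant projection formula.
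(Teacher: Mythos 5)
Your argument is correct and is essentially the paper's own proof: both apply distributivity of the binary indexed tensor product to the pair consisting of the constant diagram at $A$ and the $K$-colimit diagram over $\ul{K/H}\simeq\Orb_H^{\op}$ computing $\Ind_H^K X$, and then identify the restricted diagram with the one classified by $\Res_H^K A\otimes X$. The only cosmetic differences are that the paper invokes distributivity for the fold map over $G/K$ with a trivial base change (rather than base-changing the fold over $G/G$ along $G/K\to G/G$) and verifies in-line, via a Yoneda/adjunction computation, the identification of the $K$-colimit over $\Orb_H^{\op}$ with $\Ind_H^K$, which you cite as standard.
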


\begin{proof}
    We consider $\Orb_H^{\op} \to \Orb_K^{\op}$ as an $K$-$\infty$-category. By the Yoneda Lemma, the objects $A, X$ correspond to $K$-functors
    \[ \myuline{A}\colon  \Orb_K^{\op} \to \myuline{\C}_{\myuline{K}} \quad \text{and} \quad \myuline{X}\colon  \Orb_H^{\op} \to \myuline{\C}_{\myuline{K}}. \]
    Taking the corresponding colimit cones we obtain colimit diagrams $(\Orb_K^{\op} )^{\myuline{\triangleright}}, (\Orb_H^{\op} )^{\myuline{\triangleright}} \to \myuline{\C}_{\myuline{K}}$. For distributivity (\cref{def: distributivity}) we consider the pullback square
    \begin{center}
        \begin{tikzcd}
            G/K \amalg G/K \arrow[r] \arrow[d, equal] \arrow[dr, phantom, very near start, "\lrcorner"]  & G/K \arrow[d, equal]
            \\ G/K \amalg G/K \arrow[r] & G/K
        \end{tikzcd}
    \end{center}
    in $\F_{G}$ and the parametrized colimit diagram
    \[ p\colon  \left(\Orb_K^{\op}, \Orb_H^{\op} \right)^{\myuline{\triangleright}} \to \myuline{\C}_{\underline{G/K \amalg G/K}}^{\otimes} \simeq \left(\myuline{\C}_{\myuline{K}}, \myuline{\C}_{\myuline{K}} \right). \]
    By distributivity the composite
    \begin{center}
        \begin{tikzcd}
            \left(\Orb_K^{\op} \times \Orb_H^{\op} \right)^{\myuline{\triangleright}} \arrow[r] & \Orb_K^{\myuline{\triangleright},\op} \times \Orb_H^{\myuline{\triangleright}, \op} \arrow[r] & \myuline{\C}_{\myuline{K}}^{\otimes} \times \myuline{\C}_{\myuline{K}}^{\otimes} \arrow[r, "\otimes"] & \myuline{\C}_{\myuline{K}}^{\otimes}
        \end{tikzcd}
    \end{center}
    is a $K$-colimit diagram which unravelled means
    \[ \underset{\Orb_K^{\op} \times \Orb_H^{\op}}{\myuline{\colim}} ( \myuline{A} \otimes \myuline{X}) \simeq \underset{\Orb_K^{\op}}{\myuline{\colim}} \ \myuline{A} \otimes \underset{\Orb_H^{\op}}{\myuline{\colim}} \ \myuline{X}. \]
    where $\ul{\colim}$ denotes $K$-colimits. For the rest of the proof we want to verify that evaluating this on $K/K \in \Orb_K$ precisely yields the desired formula $\Ind_H^{K}(\Res_H^{K}A \otimes X) \simeq A \otimes \Ind_H^{K}X$.
    \medskip \\Let us discuss the right side. By definition, the functors
    \[ \underset{\Orb_H^{\op}}{\myuline{\colim}}\colon  \myuline{\Fun}_{K}\left(\Orb_H^{\op}, \myuline{\C}_{\myuline{K}}^{\otimes} \right) \to \myuline{\C}_{\myuline{K}}^{\otimes} \quad \text{and} \quad \underset{\Orb_K^{\op}}{\myuline{\colim}}\colon  \myuline{\Fun}_{K} \left(\Orb_K^{\op}, \myuline{\C}_{\myuline{K}}^{\otimes} \right) \to \myuline{\C}_{\myuline{K}}^{\otimes} \]
    are the $K$-left adjoints to the precomposition by the trivial map \cite[Definition 3.1.1]{hilman2024parametrisedpresentability}. In particular, the second functor is equivalent to the identity functor which explains the equivalence $\myuline{\colim}_{\Orb_K^{\op}} \myuline{A} \simeq \myuline{A}$. We now check that $\myuline{\colim}_{\Orb_H^{\op}}$ evaluated on $K/K$ is $\Ind_{H}^{K}$. To make sense of this, we first of all compute the source of the functor as
    \[ \myuline{\Fun}_{K} \left(\Orb_H^{\op}, \myuline{\C}_{\myuline{K}}^{\otimes} \right)_{K} \simeq \Fun_{K} \left(\Orb_H^{\op} \times \Orb_K^{\op}, \myuline{\C}_{\myuline{K}} \right) \simeq \Fun_{K} \left(\Orb_H^{\op}, \myuline{\C}_{\myuline{K}}^{\otimes} \right)\]
    by \cite[Corollary 2.2.9]{cnossen2023parametrizedstabilityuniversalproperty}, so we are looking at the functor
    \[ \underset{\Orb_H^{\op}}{\myuline{\colim}}\colon  \Fun_{K} \left(\Orb_H^{\op}, \myuline{\C}_{\myuline{K}}^{\otimes} \right) \to \myuline{\C}_{K}^{\otimes} \]
    whose right adjoint is informally described by sending $Y \in \myuline{\C}_{K}^{\otimes}$ to the natural transformation $(\const {\Res_L^{K} Y})_{L \leq K}$. Thus, we compute
    \begin{align*}
        \Map_{\myuline{\C}_{K}^{\otimes}} \left( \underset{\Orb_H^{\op}}{\myuline{\colim}} \ \myuline{X}, Y \right) &\simeq \Map_{\Fun_K\left(\Orb_H^{\op}, \myuline{\C}_{\myuline{K}}^{\otimes} \right)} \left(\myuline{X}, (\const {\Res_L^{K}Y})_{L \leq K} \right)
        \\ &= \Map_{\Nat \left(\Map_{\Orb_K}(-, K/H), \myuline{\C}_{\myuline{K}}^{\otimes} \right)} \left(\myuline{X}, (\const {\Res_L^{K}Y})_{L \leq K} \right)
        \\ &\simeq \Map_{\myuline{\C}_H^{\otimes}}(X, \Res_H^{K}Y)
    \end{align*}
    by the Yoneda Lemma. By adjunction we conclude $\myuline{\colim}_{\Orb_H^{\op}} \ \myuline{X} \simeq \Ind_H^{K}X$. This demystifies the terms on the right side of the projection formula. 
    \medskip \\For the left side, an adjunction argument allows us to first compute a colimit over $\Orb_K^{\op}$ and then over $\Orb_H^{\op}$. The $\Orb_K^{\op}$-part only takes care of $\ul{A}$ which is unchanged as in the previous paragraph. However, we must apply $\Res_H^G$ to view $\ul{A} \otimes \ul{X}$ as an $\Orb_H^{\op}$-diagram and then the effect of $\ul{\colim}_{\Orb_H^{\op}}$ is $\Ind_H^K$ as demonstrated above.
\end{proof}

\noindent Let us end this subsection by recalling a parametrized version of operadic left Kan extensions.

\begin{theorem}[{\cite[Proposition 4.3.3, Theorem 4.3.4]{nardinshah2022equivarianttopos}}] \label{theorem: operadic left Kan extension}
    Consider $\myuline{\O}^{\otimes} \in \Op_{G, \infty}^{\NS}$ as well as $\myuline{\C}^{\otimes}, \myuline{\D}^{\otimes} \in \Mon_{\myuline{\O}}^{\NS}(\ul{\Cat}_{G, \infty})$ with structure map $p\colon  \myuline{\C}^{\otimes} \to \myuline{\O}^{\otimes}$. Let $\myuline{\D}^{\otimes}$ be $\myuline{\O}$-distributive.
    \begin{enumerate}[(i)]
        \item The restriction functor sits in an adjunction 
    \begin{center}
        \begin{tikzcd}
        \Alg_{\C/\O}(\D)  \arrow[r, "p_!" {name = L}, shift left = 1.5] & \Alg_{\O/\O}(\D) \arrow[l, "p^*" {name = R}, shift left = 1.5]
        \arrow[phantom,from=L,to=R,"\scriptscriptstyle\tiny\bot"]
        \end{tikzcd}
    \end{center}
        The left adjoint $p_!$ is called \tb{operadic left Kan extension}.
        \item Let $\myuline{\O}^{\otimes}$ have a single $G$-color and $F \in \Alg_{\C/\O}(\D)$. Then, the underlying $G$-object of $p_! F$ is computed as the $G$-colimit of $F|_{\myuline{\C}}\colon  \myuline{\C} \to \myuline{\D}$.
    \end{enumerate}
\end{theorem}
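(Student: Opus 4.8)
Since the statement is cited verbatim from Nardin--Shah, the honest answer is that one proves it by following their argument, which is itself the parametrized analogue of Lurie's theory of operadic left Kan extensions (\cite[\S 3.1.3]{lurie2017ha}); I will outline the shape of that argument. The central ingredient is the notion of a \emph{parametrized operadic colimit diagram} in $\ul{\D}^{\otimes}$: a parametrized cone which remains a parametrized colimit diagram after tensoring with any object and restricting to any fiber $\ul{\D}_{\ul{o}'}$. The first step is to observe that $\ul{\O}$-distributivity of $\ul{\D}^{\otimes}$ (\cref{def: distributivity}) is precisely the hypothesis that makes such operadic colimits exist and be computable fiberwise: each fiber $\ul{\D}_{\ul{o}'}$ is $H$-cocomplete by assumption, and distributivity says the indexed tensor functors $\underline{\bigotimes}_{o \to o'}$ preserve indexed colimits, so one can glue the fiberwise colimits together while retaining the Segal/inertness conditions that make the result an $\ul{\O}$-algebra.

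Granting operadic colimits, part (i) is the parametrized form of Lurie's pointwise existence criterion: the operadic left Kan extension of $F \colon \ul{\C}^{\otimes} \to \ul{\D}^{\otimes}$ along $p$ exists and is computed pointwise, with $(p_! F)(o)$ the parametrized operadic colimit of $F$ pulled back along the projection from the parametrized active slice $\ul{\C}^{\otimes} \times_{\ul{\O}^{\otimes}} (\ul{\O}^{\otimes})^{\act}_{/o}$ --- a colimit which exists by the first step. Checking that this pointwise assignment is a functor right adjoint to $p^{*}$ is then a comparison of mapping spaces, using that the parametrized active slices carry the expected $G$-cofinal subcategories. This is where the full Nardin--Shah toolkit (parametrized slice categories, parametrized cofinality, the parametrized pointwise-Kan-extension formalism) is genuinely needed, and where I would simply invoke \cite{nardinshah2022equivarianttopos}.

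For part (ii), suppose $\ul{\O}^{\otimes}$ has a single $G$-color, so that the color objects form a copy of $\Orb_G^{\op} \hookrightarrow \ul{\O}^{\otimes}$ and the underlying $G$-object of $p_! F$ is the restriction of the pointwise formula of (i) to these objects. Here the parametrized active slice over a color simplifies, and I would reduce the operadic colimit computing $(p_! F)(o)$ to an ordinary parametrized colimit in two moves: a $G$-cofinality argument identifying the arity-one part $\ul{\C} \hookrightarrow \ul{\C}^{\otimes} \times_{\ul{\O}^{\otimes}} (\ul{\O}^{\otimes})^{\act}_{/o}$ as cofinal (its parametrized coslices admit initial objects, given by the indexed tensor products of the indexing families), together with the observation that an operadic colimit of a diagram valued in arity-one objects is just the underlying parametrized colimit --- once more a consequence of distributivity, since it forces tensoring with a fixed object to preserve that colimit. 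Composing the two reductions shows that the underlying $G$-object of $p_! F$ is $\ul{\colim}$ of $F|_{\ul{\C}} \colon \ul{\C} \to \ul{\D}$, which is the form in which we use the theorem --- for instance to realize $G$-Thom spectra as $G$-colimits.

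The main obstacle is entirely in the first two steps: the delicate point is showing that $\ul{\O}$-distributivity is both necessary and sufficient for the pointwise colimit formula to land in $\ul{\O}$-algebras and to be adjoint to $p^{*}$, which amounts to redoing Lurie's intricate \S 3.1.3 with indexed colimits throughout. It is reassuring for our purposes that part (ii) --- the only piece we invoke directly --- follows formally and cheaply once the pointwise formula of (i) has been established.
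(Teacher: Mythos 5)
Your proposal is correct and matches the paper's treatment: the paper does not reprove this result either, but simply quotes it from \cite{nardinshah2022equivarianttopos}, noting only that part (ii) follows by combining the cited results with the parametrized cofinality/colimit identification of \cite[Proposition 10.8]{shah2022parametrizedhighercategorytheory} -- which is precisely the role of the cofinality reduction you sketch for (ii). So your outline (operadic colimits made available by distributivity, the pointwise formula over active slices, and the arity-one cofinality argument in the single-color case) is the same route, just spelled out rather than cited.
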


\begin{proof}
    Slightly more detailed, part (ii)  combines the cited results with \cite[Proposition 10.8]{shah2022parametrizedhighercategorytheory}.
\end{proof}

\noindent In particular, this shows that the parametrized colimit of a lax $\myuline{\O}$-monoidal functor inherits an $\myuline{\O}$-algebra structure.

\section{Categorical Groundwork}\label{sec:categorical_groundwork}

\subsection{Monoidality of Parametrized Slice Categories}\label{sec:monoidal_slice}
Lurie uses simplicial methods to construct a monoidal structure on slice $\infty$-categories \cite[2.2.2.1]{lurie2017ha} which Antolín-Camarena--Barthel characterized by a universal property in \cite[Lemma 2.12]{antolinbarthel2019thom}. 
\medskip \\We generalize this to the pattern setting through a model-independent approach. In particular, this yields a notion in parametrized higher category theory. For $G$-symmetric monoidal $G$-$\infty$-categories and $G$-commutative algebras this has already been claimed in the literature without proof \cite[Section A.5]{hahn2024equivariantnonabelianpoincareduality}. We are grateful to Jan Steinbrunner for suggesting the following to us.
\medskip \\Classically for an $\infty$-category $\C$, the over-slice of $\C$ over some object $c \in \C$ is the pullback
\begin{center}
    \begin{tikzcd}
        \C_{/c} \arrow[r] \arrow[d] \arrow[dr, phantom, very near start, "\lrcorner"] & \C^{[1]} \arrow[d, "t"]
        \\ * \arrow[r, "c", swap] & \C 
    \end{tikzcd}
\end{center}
This can be generalized to a higher algebraic setting by replacing the cotensor construction $\C^{[1]} = \Fun([1], \C)$ in $\Cat_{\infty}$ by one in $\coCart{\F_*}$ and by replacing $c \in \C$ by an algebra. More generally, we can perform all of this for fibrous patterns, which is the purpose of this section.
\medskip \\This subsection uses the language of patterns more intensely than in all other sections and the reader can freely skip it by assuming the existence of a monoidal structure on $G$-$\infty$-categories (\cref{prop: slice is O-monoidal}) and the universal properties (\cref{theorem: universal property of monoidal slice}, \cref{prop: abstract antolin-camarena-barthel}) it comes with.
\medskip \\The main ingredient to carry out the generalization to fibrous patterns is the cotensor which is well-studied in a more general setting, as we will now recall from \cite[Section 5.3]{barkanhaugsengsteinebrunner2024envelopesalgebraicpatterns}. Let $\B \in \Cat_{\infty}$ and $\B_0 \subset \B$ be a wide subcategory. Then, $\tb{\Cat_{\infty/\B}^{\B_0\text{-cocart}}}$ denotes the subcategory of $\Cat_{\infty/\B}$ having coCartesian lifts over $\B_0$ and maps preserving those.

\begin{proposition}[{\cite[Construction 5.3.1, Proposition 5.3.2, Proposition 5.3.11(i)]{barkanhaugsengsteinebrunner2024envelopesalgebraicpatterns}}] \label{prop: omnibus tensor-cotensor}
    Consider $\B \in \Cat_{\infty}$ and a wide subcategory $\B_0 \subset \B$ as well as $\C \in \Cat_{\infty}$ and $(\Ec \to \B) \in \Cat_{\infty/\B}^{\B_0\text{-cocart}}$.
    \begin{enumerate}[(i)]
        \item Then, $\Cat_{\infty/\B}^{\B_0\text{-cocart}} \in \LMod_{\Cat_{\infty}}(\Cat_{\infty}^{\times})$ with tensoring given by
        \[ \Cat_{\infty} \times \Cat_{\infty/\B}^{\B_0\text{-cocart}} \to \Cat_{\infty/\B}^{\B_0\text{-cocart}}, \ (\C, \Ec \to \B) \mapsto (\Ec \times \C \to \B). \]
        \item The functor $- \otimes (\Ec \to \B)\colon  \Cat_{\infty} \to \Cat_{\infty/\B}^{\B_0\text{-cocart}}$ is left adjoint to $\Fun_{/\B}^{\B_0\text{-cocart}}(\Ec \to \B, -)$.
        \item The functor $\C \otimes -\colon  \Cat_{\infty/\B}^{\B_0\text{-cocart}} \to \Cat_{\infty/\B}^{\B_0\text{-cocart}}$ is left adjoint to a cotensoring, with the cotensor of $\C \in \Cat_{\infty}$ and $\Ec \to \B$ given by
        \[ \tb{\Ec_{/\B}^{\C}} = \Fun(\C, \Ec) \times_{\Fun(\C, \B)} \B \to \B \]
        where the pullback is formed along the constant diagram $\B \to \Fun(\C, \B)$.
        \item Let $\O \in \AlgPatt, \P \in \Fbrs(\O)$ and $\C \in \Cat_{\infty}$. Then, the cotensor $\P_{/\O}^{\C}$ in $\Cat_{\infty/\O}^{\text{int-cocart}}$ is again a fibrous pattern.
        \item Let $\O$ be a sound pattern, $\D^{\otimes} \in \Seg_{\O}(\Cat_{\infty})$, $\C \in \Cat_{\infty}$. Then, $(\D^{\otimes})^{\C}_{/\O} \in \Seg_{\O}(\Cat_{\infty})$.
    \end{enumerate}
\end{proposition}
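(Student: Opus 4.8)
Parts (i)--(iv) are exactly the cited results of \cite{barkanhaugsengsteinebrunner2024envelopesalgebraicpatterns} — the $\Cat_\infty$-module structure and tensoring of Construction~5.3.1, the tensor--hom and tensor--cotensor adjunctions of Proposition~5.3.2, and the preservation of fibrous patterns under cotensoring of Proposition~5.3.11(i) — so the only thing requiring an argument is part (v). The plan is to deduce it formally from the explicit cotensor formula of (iii) together with the fact that $\Fun(\C,-)\colon \Cat_\infty\to\Cat_\infty$, being right adjoint to $(-)\times\C$, preserves limits.

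First I would record that $(\D^\otimes)^\C_{/\O}\to\O$ is a coCartesian fibration, not merely a fibrous $\O$-pattern. By the formula of (iii) it is the pullback of $\Fun(\C,\D^\otimes)\to\Fun(\C,\O)$ along the constant-diagram functor $\O\to\Fun(\C,\O)$; since $\D^\otimes\in\Seg_\O(\Cat_\infty)$ its structure map $\D^\otimes\to\O$ is a coCartesian fibration, hence so is $\Fun(\C,\D^\otimes)\to\Fun(\C,\O)$ by the standard stability of coCartesian fibrations under $\Fun(\C,-)$ (with pointwise coCartesian morphisms), and coCartesian fibrations are stable under pullback. Next I would identify the classifying functor: the fibre of $(\D^\otimes)^\C_{/\O}$ over $o\in\O$ is $\Fun(\C,\D^\otimes_o)$, and coCartesian transport along $o\to o'$ is computed pointwise, so if $G\colon\O\to\Cat_\infty$ classifies $\D^\otimes\to\O$ then $\Fun(\C,-)\circ G$ classifies $(\D^\otimes)^\C_{/\O}\to\O$. (One may also see this from (i)--(ii): tensoring with $\C$ corresponds under straightening to postcomposition with $(-)\times\C$, so its right adjoint, the cotensor of (iii), corresponds to postcomposition with the right adjoint $\Fun(\C,-)$.) Finally, since $G$ is a Segal $\O$-object, each diagram $(\O_{o/}^{\el})^{\triangleleft}\to\O\xrightarrow{G}\Cat_\infty$ is a limit diagram, and applying the limit-preserving functor $\Fun(\C,-)$ keeps it a limit diagram; hence $\Fun(\C,-)\circ G\in\Seg_\O(\Cat_\infty)$, i.e.\ $(\D^\otimes)^\C_{/\O}\in\Seg_\O(\Cat_\infty)$.

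The only point I would treat with some care is the compatibility of the cotensor with straightening used in the two middle steps: namely that $(\D^\otimes)^\C_{/\O}$ is an honest coCartesian fibration over $\O$ whose classifying functor is $\Fun(\C,-)\circ G$. This reduces to the two classical stability properties of coCartesian fibrations (under $\Fun(\C,-)$ and under pullback) applied to the formula of (iii); no further use of soundness of $\O$ is needed here, soundness having already entered through (iv). Granting this, (v) is immediate from limit-preservation of $\Fun(\C,-)$, so I do not expect a serious obstacle: the substantive content of the proposition lies entirely in the cited parts (i)--(iv).
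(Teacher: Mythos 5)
Your proposal is correct, and parts (i)--(iv) are handled exactly as in the paper (by citation to Barkan--Haugseng--Steinebrunner). For part (v), however, you take a genuinely different route. The paper's proof is a one-liner: by (iii) the cotensor $(\D^{\otimes})^{\C}_{/\O}\to\O$ is a coCartesian fibration, by (iv) it is a fibrous $\O$-pattern, and for a \emph{sound} pattern a coCartesian fibration which is fibrous is classified by a Segal $\O$-object (this is the cited Observation 4.1.10 of \cite{barkanhaugsengsteinebrunner2024envelopesalgebraicpatterns}); so soundness is exactly what converts ``fibrous $+$ coCartesian'' into ``Segal.'' You instead identify the straightening of the cotensor as $\Fun(\C,-)\circ G$, where $G$ classifies $\D^{\otimes}\to\O$, using the explicit pullback formula of (iii) together with the stability of coCartesian fibrations under $\Fun(\C,-)$ and under pullback, and then conclude by limit-preservation of the right adjoint $\Fun(\C,-)$. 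This is valid: with the paper's definition of $\Seg_{\O}(\Cat_{\infty})$ as functors satisfying the limit condition over $(\O^{\el}_{o/})^{\triangleleft}$, your argument lands exactly on the target, and you are right that soundness is not needed for this step (it is only needed if one wants to re-express the conclusion in terms of fibrousness, as the paper does, or to invoke the equivalence $\Mon_{\O}(\Cat_{\infty})\simeq\Seg_{\O}(\Cat_{\infty})$). What your route buys is an explicit description of the cotensor under straightening (postcomposition with $\Fun(\C,-)$), which is a useful fact in its own right; what the paper's route buys is brevity and the avoidance of any straightening bookkeeping, at the cost of leaning on the soundness hypothesis and the comparison result from BHS.
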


\begin{proof}
    Part (v) follows from (iii) and (iv) since $(\D^{\otimes})^{\C}_{/\O} \to \O$ is a fibrous pattern and a coCartesian fibration, so it is a Segal $\O$-$\infty$-category given the soundness \cite[Observation 4.1.10]{barkanhaugsengsteinebrunner2024envelopesalgebraicpatterns}.
\end{proof}

\noindent With this cotensor we may first define a slice construction in great generality of which in particular the fibrous pattern version of it will be of interest to us in this paper.

\begin{definition} \label{def: slice pattern}
    Let $\O \in \Cat_{\infty}$ and $(\D \to \O) \in \Cat_{\infty/\O}$ and $A\colon \O \to \D$ in $\Cat_{\infty}$. Then, we define $\tb{\D_{/A}} = \D^{[1]}_{/\O} \times_{\D} \O \to \O$, the \tb{(over)-slice $\infty$-category} of $\D$ over $A$ as the pullback
    \begin{center}
        \begin{tikzcd}
            \D_{/A} \arrow[r] \arrow[dr, phantom, very near start, "\lrcorner"] \arrow[d] & {\D_{/\O}^{[1]}} \arrow[d, "\mathrm{target} \circ \pr_{\Ar(\D)}"]
            \\ \O \arrow[r, "A", swap] & \D
        \end{tikzcd}
    \end{center}
    in $\Cat_{\infty}$ where the left leg is the structure map.
\end{definition}

\begin{lemma} \label{lemma: fibers of slice}
    Let $\O \in \Cat_{\infty}$ and $(\D \to \O) \in \Cat_{\infty/\O}$ and $A\colon \O \to \D$ in $\Cat_{\infty}$. For $o \in \O$ we have $(\D_{/A})_o \simeq (\D_o)_{/A(o)}$.
\end{lemma}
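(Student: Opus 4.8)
The plan is to compute the fibre by a short chain of pullback pastings, using the explicit cotensor formula from \cref{prop: omnibus tensor-cotensor} and the fact that $\Fun([1],-)$, being a right adjoint, preserves pullbacks.

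First I would unwind \cref{def: slice pattern}. Since $\D_{/A}=\D^{[1]}_{/\O}\times_{\D}\O$ has structure map the projection onto the $\O$-factor, pasting the defining pullback square along $\{o\}\hookrightarrow\O$ yields
\[
(\D_{/A})_o\;\simeq\;\D^{[1]}_{/\O}\times_{\D}\bigl(\O\times_{\O}\{o\}\bigr)\;=\;\D^{[1]}_{/\O}\times_{\D}\{A(o)\},
\]
where $\{A(o)\}\to\D$ is the restriction of $A$ and the map $\D^{[1]}_{/\O}\to\D$ is the target map $\mathrm{target}\circ\pr_{\Ar(\D)}$ of \cref{def: slice pattern}.

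Next I would push this pullback into the fibre $\D_o=\D\times_{\O}\{o\}$. Since $A$ is a section of $\D\to\O$, the object $A(o)$ lies over $o$, so $\{A(o)\}\to\D$ factors through $\D_o$, giving
\[
(\D_{/A})_o\;\simeq\;\bigl(\D^{[1]}_{/\O}\times_{\D}\D_o\bigr)\times_{\D_o}\{A(o)\}.
\]
The key point I would invoke here is that the composite $\D^{[1]}_{/\O}\xrightarrow{\ \mathrm{target}\ }\D\to\O$ coincides with the structure map $\D^{[1]}_{/\O}\to\O$ (both merely record the image in $\O$ of the underlying arrow, which is forced to be constant); one more pasting then identifies $\D^{[1]}_{/\O}\times_{\D}\D_o$ with the fibre $(\D^{[1]}_{/\O})_o$. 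Plugging in $\D^{[1]}_{/\O}=\Fun([1],\D)\times_{\Fun([1],\O)}\O$ and using that $\Fun([1],-)$ preserves pullbacks together with $\Fun([1],\{o\})\simeq\{o\}$ identifies $(\D^{[1]}_{/\O})_o$ with $\Fun([1],\D_o)$, under which the target map becomes $\mathrm{ev}_1\colon\Fun([1],\D_o)\to\D_o$.

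Assembling these identifications gives $(\D_{/A})_o\simeq\Fun([1],\D_o)\times_{\D_o}\{A(o)\}$, which is exactly $(\D_o)_{/A(o)}$ by \cref{def: slice pattern} applied with base $*$ (for which $(\D_o)^{[1]}_{/*}=\Fun([1],\D_o)$). I expect the only genuine difficulty to be bookkeeping: nearly every category in sight carries both a map to $\D$ and a map to $\O$, and the argument goes through only if one consistently tracks which is being used and applies the compatibility between $\mathrm{target}$ and the structure map at exactly the right step. (Dropping the section hypothesis on $A$, the same chain instead identifies $(\D_{/A})_o$ with $(\D_{\pi A(o)})_{/A(o)}$, where $\pi\colon\D\to\O$ is the structure map.)
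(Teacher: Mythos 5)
Your proof is correct and is essentially the paper's argument: the same pullback-pasting chain driven by the explicit cotensor formula and the fact that $\Ar(-)=\Fun([1],-)$ preserves pullbacks, merely reorganized so that the identification $\Ar(\D)\times_{\Ar(\O)}\Ar(\{o\})\simeq\Ar(\D_o)$ enters through the fibre $(\D^{[1]}_{/\O})_o$ rather than through the paper's intermediate objects $\D_{/A(o)}$ and $\O_{/o}$ (with $\O\times_{\Ar(\O)}\O_{/o}\simeq *$). Your explicit use of $A$ being a section (so that $A(o)\in\D_o$) is likewise implicit in both the statement and the paper's proof, so flagging it---and recording the general form $(\D_{\pi A(o)})_{/A(o)}$---is a harmless refinement rather than a deviation.
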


\begin{proof}
    Recall that $(\D_{/A})_o$ denotes the fiber of $\D_{/A} \to \O$ at $o \in \O$. Consider the following diagram:
    \begin{center}
        \begin{tikzcd}
            (\D_{/A})_o \arrow[r] \arrow[dr, phantom, very near start, "\lrcorner"] \arrow[d] & \D_{/\O}^{[1]} \arrow[dr, phantom, very near start, "\lrcorner"] \arrow[r] \arrow[d] & \O \arrow[d]
            \\ \D_{/A(o)} \arrow[r] \arrow[dr, phantom, very near start, "\lrcorner"]\arrow[d] & \Ar(\D) \arrow[d] \arrow[r] & \Ar(\O)
            \\ * \arrow[r, "A(o)", swap] & \D
        \end{tikzcd}
    \end{center}
    Here, the top right square is a pullback square by construction of the cotensor (\cref{prop: omnibus tensor-cotensor} \textcolor{chilligreen}{(iii)}), the bottom square is a pullback by definition of slice categories and the left composite rectangle is a pullback square by definition of $(\D_{/A})_o$. In particular, the top left square is a pullback square by pullback pasting. 
    \medskip \\Thus, the top composite pullback rectangle yields $(\D_{/A})_o \simeq \D_{/A(o)} \times_{\Ar(\O)} \O$. We further factor $\D_{/A(o)} \to \Ar(\O)$ through $\O_{/o}$ and with it factor the pullback into two steps as follows:
    \begin{center}
        \begin{tikzcd}
            (\D_{/A})_o \simeq \D_{/A(o)} \times_{\Ar(\O)} \O \arrow[dr, phantom, very near start, "\lrcorner"]  \arrow[r] \arrow[d] & \O \times_{\Ar(\O)} \O_{/o} \arrow[r] \arrow[d] \arrow[dr, phantom, very near start, "\lrcorner"]  & \O \arrow[d] \arrow[dd, bend left = 60, equal]
            \\ \D_{/A(o)} \arrow[r] & \O_{/o} \arrow[r] \arrow[d] \arrow[dr, phantom, very near start, "\lrcorner"] & \Ar(\O) \arrow[d]
            \\ & * \arrow[r, "o", swap] & \O
        \end{tikzcd}
    \end{center}
    The bottom square is also a pullback by construction and the right composite arrow is $\id_{\O}$. Thus, $\O \times_{\Ar(\O)} \O_{/o} \simeq *$. Therefore,
    \begin{align*}
        (\D_{/A})_o &\simeq \D_{/A(o)} \times_{\O_{/o}} *
        \\ &\simeq (\Ar(\D) \times_{\D} *) \times_{\Ar(\O) \times_{\O} *} (\Ar(*) \times_* *)
        \\ &\simeq (\Ar(\D) \times_{\Ar(\O)} \Ar(*)) \times_{\D \times_{\O} *} (* \times_* *)
        \\ &\simeq \Ar(\D_o) \times_{\D_o} *
        \\ &\simeq (\D_o)_{/A(o)}
    \end{align*}
    as desired.
\end{proof}

\begin{example} \label{example: classical parametrized slice}
    Let $\myuline{\D} \to \Orb_G^{\op}$ be a $G$-$\infty$-category and $d\colon  \Orb_G^{\op} \to \myuline{\D}$ be an $G$-object therein. Then, $\myuline{\D}_{/d}$ recovers the usual parametrized slice \cite[Notation 4.29]{shah2022parametrizedhighercategorytheory} and for $H \leq G$ we obtain $(\myuline{\D}_{/d})_H \simeq (\myuline{\D}_H)_{/d_H}$ by the previous Lemma (\cref{lemma: fibers of slice}).
\end{example}

\noindent The most important example for us will be slicing Segal $\O$-$\infty$-categories over an algebra. We now show that these remain $\O$-Segal.

\begin{proposition} \label{prop: slice is O-monoidal}
    Let $\O^{\otimes} \in \AlgPatt$ with $\D^{\otimes} \in \Fbrs(\O^{\otimes})$ and $A \in \Alg_{\O}(\D)$.
    \begin{enumerate}[(i)]
        \item Then, $\D^{\otimes}_{/A} \to \O^{\otimes}$ is also a fibrous pattern.
        \item If $\D^{\otimes} \to \O^{\otimes}$ is furthermore a coCartesian fibration, then so is $\D^{\otimes}_{/A} \to \O^{\otimes}$.
    \end{enumerate}
    In particular, if $\O^{\otimes}$ is soundly extendable and $\D^{\otimes} \in \Seg_{\O}(\Cat_{\infty})$, then $\D_{/A}^{\otimes} \in \Seg_{\O}(\Cat_{\infty})$.
\end{proposition}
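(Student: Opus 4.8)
The plan is to verify the two fibrous-pattern axioms for $\D^{\otimes}_{/A} \to \O^{\otimes}$ directly, leveraging the cotensor machinery of \cref{prop: omnibus tensor-cotensor}. First I would recall that $\D^{\otimes}_{/A}$ was built as the pullback
\[
    \D^{\otimes}_{/A} = (\D^{\otimes})^{[1]}_{/\O^{\otimes}} \times_{\D^{\otimes}} \O^{\otimes},
\]
so the strategy splits into two observations. By \cref{prop: omnibus tensor-cotensor}(iv), the cotensor $(\D^{\otimes})^{[1]}_{/\O^{\otimes}}$ is again a fibrous $\O^{\otimes}$-pattern, since $[1]\in\Cat_\infty$ and $\D^{\otimes}\in\Fbrs(\O^{\otimes})$. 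It then suffices to check that the class of fibrous patterns over a fixed algebraic pattern is stable under the relevant pullbacks, and that the map $\D^{\otimes}_{/A}\to\O^{\otimes}$ inherits the inert-coCartesian lifts. The first step I would carry out is: the target projection $\mathrm{target}\circ\pr\colon (\D^{\otimes})^{[1]}_{/\O^{\otimes}}\to\D^{\otimes}$ is a map over $\O^{\otimes}$, and $A\colon \O^{\otimes}\to\D^{\otimes}$ is a map of fibrous patterns (indeed it is a section, being an $\O$-algebra); pulling back a fibrous pattern along a map of fibrous patterns again yields a fibrous pattern. This is essentially the statement that $\Fbrs(\O^{\otimes})$ has pullbacks computed in $\Cat_{\infty/\O^{\otimes}}$ and that these are detected on inert-coCartesian lifts, which is \cite[Section 4.1]{barkanhaugsengsteinebrunner2024envelopesalgebraicpatterns}.

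For part (i) concretely: an object of $\D^{\otimes}_{/A}$ over $o\in\O^{\otimes}$ is a morphism $X\to A(o)$ in $\D^{\otimes}$ lying over $\id_o$. Given an inert edge $o\to o'$ in $\O^{\otimes}$, I would lift it by taking the inert-coCartesian lift $X\to X'$ of $o\to o'$ in $\D^{\otimes}$ (which exists since $\D^{\otimes}$ is fibrous) together with the edge $A(o)\to A(o')$ obtained by applying $A$ — which is inert-coCartesian since $A$ is an algebra map, hence preserves inert-coCartesian edges. The induced square provides the coCartesian lift in the cotensor, and its target component lands on $A(o')$ as required. The remaining fibrous-pattern condition (the Segal-type condition on active-to-elementary factorizations, \cite[Definition 4.1.2]{barkanhaugsengsteinebrunner2024envelopesalgebraicpatterns}) follows because limits defining it are computed componentwise in the pullback, using that both $(\D^{\otimes})^{[1]}_{/\O^{\otimes}}$ and $\O^{\otimes}$ satisfy it and the relevant functors preserve the limit diagrams. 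I expect the main obstacle to be bookkeeping here: carefully checking that the ``generalised Segal/operad condition'' for a fibrous pattern is preserved under the pullback along $A$, which amounts to showing a certain square of mapping spaces is cartesian — this uses that $A$ being an $\O$-algebra means it is a Segal morphism, so it interacts correctly with the $\O^{\el}_{o/}$ limit cones.

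For part (ii): if $\D^{\otimes}\to\O^{\otimes}$ is additionally a coCartesian fibration (i.e.\ all lifts exist, not only inert ones), then by \cref{prop: omnibus tensor-cotensor}(iii) the cotensor $(\D^{\otimes})^{[1]}_{/\O^{\otimes}}\to\O^{\otimes}$ is likewise a coCartesian fibration (it is a right adjoint applied to $[1]\otimes-$, and one checks all lifts exist). Pulling back along $A\colon \O^{\otimes}\to\D^{\otimes}$ preserves coCartesian fibrations, and the pullback over $\O^{\otimes}$ of $\D^{\otimes}_{/A}\to\D^{\otimes}\times_{\O^\otimes}... $ — more precisely, $\D^{\otimes}_{/A}\to\O^{\otimes}$ is the composite/pullback of coCartesian fibrations, hence itself one. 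Concretely, a lift of an arbitrary edge $o\to o'$ is produced exactly as in part (i) but now using the coCartesian (not merely inert-coCartesian) lift of $X\to X'$ in $\D^{\otimes}$; the compatibility with $A(o)\to A(o')$ still holds because $A$ is a functor. The ``In particular'' clause is then immediate: if $\O^{\otimes}$ is soundly extendable and $\D^{\otimes}\in\Seg_{\O}(\Cat_\infty)$, then $\D^{\otimes}\to\O^{\otimes}$ is both fibrous and a coCartesian fibration, so by (i) and (ii) the same holds of $\D^{\otimes}_{/A}\to\O^{\otimes}$, and by \cite[Observation 4.1.10]{barkanhaugsengsteinebrunner2024envelopesalgebraicpatterns} (a fibrous coCartesian fibration over a sound pattern is Segal) we conclude $\D^{\otimes}_{/A}\in\Seg_{\O}(\Cat_\infty)$. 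This last deduction mirrors exactly the proof of \cref{prop: omnibus tensor-cotensor}(v) and requires no new input.
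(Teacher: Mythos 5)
Your overall route is the same as the paper's: realize $\D^{\otimes}_{/A}$ as the pullback of the cotensor $(\D^{\otimes})^{[1]}_{/\O^{\otimes}}$ along $A$, use \cref{prop: omnibus tensor-cotensor} plus stability of $\Fbrs(\O^{\otimes})$ under pullbacks, and deduce the ``in particular'' from fibrous $+$ coCartesian $+$ sound extendability. However, there are two places where your abstract shortcuts, as stated, skip exactly the work the paper has to do.

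First, in (i): pullbacks in $\Fbrs(\O^{\otimes})$ are computed in $\Cat_{\infty}$ only when both legs of the cospan are \emph{morphisms} of fibrous patterns, i.e.\ preserve inert-coCartesian edges. For $A$ this is automatic, but for the target projection $(\D^{\otimes})^{[1]}_{/\O^{\otimes}} \to \D^{\otimes}$ it is not; saying it ``is a map over $\O^{\otimes}$'' does not give this. Establishing it is the bulk of the paper's proof of (i): one identifies the coCartesian lifts of inert edges in the cotensor as pointwise lifts (source and target components are each inert-coCartesian in $\D^{\otimes}$), which requires producing the connecting edge $d_1' \to d_2'$ and checking both the commutativity of the relevant square and the universal property, all via the uniqueness clause of coCartesianness. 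Your concrete paragraph contains the right germ (inert lift of the source, $A(o)\to A(o')$ being inert-coCartesian since $A$ is an algebra map), but it needs to be promoted to this verification; once you have it, the separate worry about the Segal-type mapping-space condition being ``preserved componentwise'' is unnecessary, since the pullback-stability lemma handles it wholesale.

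Second, in (ii): \cref{prop: omnibus tensor-cotensor}(iii) gives coCartesian lifts for the cotensor \emph{over $\O^{\otimes}$}, but the pullback defining $\D^{\otimes}_{/A}$ is taken along the target projection \emph{to $\D^{\otimes}$}. So the clean ``pullback of a coCartesian fibration'' argument needs the target projection itself to be a coCartesian fibration, which you neither state nor prove -- and your sentence visibly breaks down at exactly this point. Either prove that (its coCartesian edges are the squares whose source component is coCartesian in $\D^{\otimes}$), or do what the paper does: construct the lift of an arbitrary edge $o \to o'$ by taking the coCartesian lift $d \to d'$ in $\D^{\otimes}$ and factoring $d \to A(o) \to A(o')$ through it to get $d' \to A(o')$ over $\id_{o'}$, and then verify the universal property of this candidate edge (again two uniqueness arguments, including commutativity of the comparison square). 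Your plan names the construction but omits the verification, which is where the actual content of (ii) lies. The ``in particular'' step via soundness is fine and matches the paper.
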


\begin{proof}
    The 'in particular' part is a consequence of \cite[Remark 4.2.7]{barkanhaugsengsteinebrunner2024envelopesalgebraicpatterns}. 
    \medskip \\In the course of this proof we will need to find coCartesian edges of certain functors. For the convenience of the reader, we will write informal proofs in terms of diagrams which are formalized through mapping spacifying the argument.
    \begin{enumerate}[(i)]
        \item Pullbacks in $\Fbrs(\O^{\otimes})$ which are computed in $\Cat_{\infty}$ \cite[Lemma 4.1.13]{barkanhaugsengsteinebrunner2024envelopesalgebraicpatterns}, so we consider the pullback square
        \begin{center}
            \begin{tikzcd}
                \D_{/A}^{\otimes} \arrow[r] \arrow[d] \arrow[dr, phantom, very near start, "\lrcorner"] & (\D^{\otimes})^{[1]}_{/\O^{\otimes}} \arrow[d]
                \\ \O^{\otimes} \arrow[r, "A", swap] & \D^{\otimes}
            \end{tikzcd}
        \end{center}
        Since $A$ is an $\O$-algebra, the bottom map is a map of fibrous patterns. Thus, it suffices to show that the right vertical map is one as well. 
        \medskip \\Let $o \to o'$ be an inert map in $\O^{\otimes}$ and $(d_1 \to d_2)$ be a map in $(\D^{\otimes})_{/\O^{\otimes}}^{[1]}$. Let $d_1 \to d_1'$ and $d_2 \to d_2'$ be coCartesian lifts of $o \to o'$, which exist because $\D^{\otimes} \in \Fbrs(\O^{\otimes})$. Then, there is a unique factorization
        \begin{center}
            \begin{tikzcd}
                & d_1' \arrow[dr, dashed, "\exists !"] & & & o' \arrow[dr, equal]
                \\ d_1 \arrow[ur] \arrow[r] & d_2 \arrow[r] & d_2' & o \arrow[r, equal] \arrow[ur] & o \arrow[r] & o'
            \end{tikzcd}
        \end{center}
        lying over the diagram on the right. In particular, the dashed arrow lies over $\id_{o'}$. Thus, we have constructed an edge $(d_1 \to d_2) \to (d_1' \to d_2')$, which we claim to be a coCartesian lift. Indeed, consider 
        \begin{center}
            \begin{tikzcd}
                & (d_1' \to d_2') \arrow[dr, dashed] & & & o' \arrow[dr]
                \\ (d_1 \to d_2) \arrow[rr] \arrow[ur] & & (d_1'' \to d_2'') & o \arrow[rr] \arrow[ur] & & o''
            \end{tikzcd}
        \end{center}
        with the left diagram lying over the right. We need to show that there exists a unique dashed arrow. The arrows $d_1' \to d_1''$ resp. $d_2' \to d_2''$ exist uniquely for the $(-)_1$ resp. the $(-)_2$ parts of the diagrams. So we need to show that
        \begin{center}
            \begin{tikzcd}
                d_1' \arrow[r] \arrow[d] & d_2' \arrow[d]
                \\ d_1'' \arrow[r] & d_2''
            \end{tikzcd}
        \end{center}
        commutes. For this consider the following left diagram lying over the right diagram.
        \begin{center}
            \begin{tikzcd}
                & d_1' \arrow[dr, dashed, "\exists !"] & & & o' \arrow[dr]
                \\ d_1 \arrow[r] \arrow[ur] & d_1'' \arrow[r] & d_2'' & o \arrow[ur] \arrow[r] & o'' \arrow[r, equal] & o''
            \end{tikzcd}
        \end{center}
        By the universal property of the coCartesian edge $d_1 \to d_1'$ there exists a unique dashed arrow but each of the two composites in our square above fits. Thus, the square must commute.
        \medskip \\This shows that the coCartesian lift of an inert edge of $(\D^{\otimes})_{/\O^{\otimes}}^{[1]} \to \O^{\otimes}$ agrees with pointwise coCartesian lifts of $\D^{\otimes} \to \O^{\otimes}$. In particular, $(\D^{\otimes})_{/\O^{\otimes}}^{[1]} \to \D^{\otimes}$ must preserve inert edges, i.e. is a map in $\Fbrs(\O^{\otimes})$.
        \item We compute the coCartesian lift over every edge.
        \medskip \\Let $o \to o'$ be a map in $\O^{\otimes}$ and $(d \to A(o))$ lie over $o$. Let $d \to d'$ be a coCartesian lift of $o \to o'$ starting in $d$ provided by the coCartesian fibration $\D^{\otimes} \to \O^{\otimes}$ and let $A(o) \to A'$ be the coCartesian lift of $o \to o'$ starting in $A(o)$. By the universal property of coCartesian fibrations there is a unique factorization
        \begin{center}
            \begin{tikzcd}
                & A' \arrow[dr, dashed, "\exists !"] & & & o' \arrow[dr, "o'"]
                \\ A(o) \arrow[ur] \arrow[rr, "A(o \to o')", swap] & & A(o') & o \arrow[rr] \arrow[ur] & & o'
            \end{tikzcd}
        \end{center}
        where the left diagram lies over the right. By the universal property of coCartesian edges, there is a unique factorization 
        \begin{center}
            \begin{tikzcd}
                & d' \arrow[dr, dashed, "\exists !"] & & & o' \arrow[dr, equal]
                \\ d \arrow[ur] \arrow[r] & A(o) \arrow[r] & A' & o \arrow[r, equal] \arrow[ur] & o \arrow[r] & o'
            \end{tikzcd}
        \end{center}
        where the left diagram lies over the right. Altogether, this provides a map
        \begin{center}
            \begin{tikzcd}
                d \arrow[d] \arrow[rr] & & d' \arrow[dl] \arrow[d, dashed]
                \\ A(o) \arrow[r] & A' \arrow[r] & A(o'),
            \end{tikzcd}
        \end{center}
        which we claim to be the coCartesian lift of $o \to o'$ starting in $(d \to A(o))$. Note that the diagram commutes by construction. Moreover, the bottom map is $A(o \to o')$ by definition, so this is really a map in the slice category.
        \medskip \\Indeed, consider
        \begin{center}
            \begin{tikzcd}
                & o' \arrow[dr]
                \\ o \arrow[rr] \arrow[ur] & & o''
            \end{tikzcd}
        \end{center}
        and some $(d \to A(o)) \to (d'' \to A(o''))$. We need to find a unique factorization
        \begin{center}
            \begin{tikzcd}
                & (d' \to A' \to A(o')) \arrow[dr, dashed]
                \\ (d \to A(o)) \arrow[ur] \arrow[rr] & & (d'' \to A(o''))
            \end{tikzcd}
        \end{center}
        lying over that previous triangle. By the universal property of the coCartesian lift $d \to d'$, there is a unique map $d' \to d''$ making the $d \to d' \to d''$ part commute. Moreover, $A(o') \to A(o'')$ must be $A(o' \to o'')$ for this to be a map in the slice category. By construction it also makes the $A(o) \to A(o') \to A(o'')$ part commute. It remains to check that
        \begin{center}
            \begin{tikzcd}
                d' \arrow[r] \arrow[d] & A(o') \arrow[d]
                \\ d'' \arrow[r] & A(o'')
            \end{tikzcd}
        \end{center}
        commutes. For this we consider the diagram
        \begin{center}
            \begin{tikzcd}
                & d' \arrow[dr, dashed, "\exists !"] & & & o' \arrow[dr]
                \\ d \arrow[ur] \arrow[r] & d'' \arrow[r] & A(o'') & o \arrow[ur] \arrow[r] & o'' \arrow[r, equal] & o''
            \end{tikzcd}
        \end{center}
        where the left diagram lies over the right.
        \medskip \\By all the commutative diagrams already provided in that triangle of arrows above, we see that both ways of going the square are allowed for the dashed arrow. By uniqueness it follows that they agree. \qedhere
    \end{enumerate}
\end{proof}

\begin{corollary}
    Let $\O^{\otimes} \in \Op_{G, \infty}$ and $\D^{\otimes} \in \Mon_{\O}(\Cat_{\infty})$ with $A \in \Alg_{\O}(\D)$. Let $o \to o'$ be a map in $\O^{\otimes}$ and $(X_o \to A(o)) \in (\D_o^{\otimes})_{/A(o)}$, then the $(o \to o')$-indexed tensor product in $\D_{/A}^{\otimes}$ is given by 
    \[ \bigotimes_{o \to o'} (X_o \to A(o)) \simeq \left( \bigotimes_{o \to o'} X_o \to \bigotimes_{o \to o'} A(o) \xrightarrow{\mu} A(o') \right) \]
    where $\mu$ is the algebra multiplication.
\end{corollary}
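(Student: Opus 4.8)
The plan is to deduce this immediately from the explicit description of coCartesian edges of $\D^{\otimes}_{/A} \to \O^{\otimes}$ produced in the proof of \cref{prop: slice is O-monoidal}(ii). By that proposition, since $\D^{\otimes}\to\O^{\otimes}$ is a coCartesian fibration (as $\D^{\otimes}\in\Mon_{\O}(\Cat_{\infty})$), so is $\D^{\otimes}_{/A}\to\O^{\otimes}$; hence the indexed tensor product $\bigotimes_{o\to o'}$ is, by definition, the associated pushforward functor on fibers. Using \cref{lemma: fibers of slice} to identify $(\D^{\otimes}_{/A})_o\simeq(\D_o^{\otimes})_{/A(o)}$, so that $(X_o\to A(o))$ is genuinely an object lying over $o$, it then suffices to exhibit one coCartesian lift of $o\to o'$ with source $(X_o\to A(o))$ and read off its target.

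First I would recall the construction from \cref{prop: slice is O-monoidal}(ii): pick a coCartesian lift $X_o\to d'$ of $o\to o'$ in $\D^{\otimes}$ and a coCartesian lift $A(o)\to A'$ of $o\to o'$ in $\D^{\otimes}$; the universal property of the latter yields a unique map $A'\to A(o')$ over $\id_{o'}$ factoring $A(o\to o')$, and the universal property of the former yields a unique map $d'\to A'$ over $\id_{o'}$ factoring $X_o\to A(o)\to A'$. The resulting object $(d'\to A'\to A(o'))$, with the evident edge out of $(X_o\to A(o))$, is the sought coCartesian lift. Unwinding the definition of $\bigotimes_{o\to o'}$ in $\D^{\otimes}$ now identifies $d'=\bigotimes_{o\to o'}X_o$ and $A'=\bigotimes_{o\to o'}A(o)$, while the map $d'\to A'$ is precisely the image of the morphism $X_o\to A(o)$ under the functor $\bigotimes_{o\to o'}\colon\D^{\otimes}_o\to\D^{\otimes}_{o'}$, by functoriality of coCartesian pushforward.

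The only point demanding care — and really the sole content beyond bookkeeping — is identifying $A'\to A(o')$ with the algebra multiplication $\mu$. This is essentially definitional: the $\O$-algebra $A\colon\O^{\otimes}\to\D^{\otimes}$ carries $o\to o'$ to a morphism $A(o)\to A(o')$ over $o\to o'$, and factoring this morphism through the coCartesian edge $A(o)\to\bigotimes_{o\to o'}A(o)$ gives exactly the structure map $\mu\colon\bigotimes_{o\to o'}A(o)\to A(o')$ encoding the $\O$-algebra multiplication of $A$ along $o\to o'$; the factorization appearing in the construction above agrees with it by uniqueness of the (coCartesian, fiberwise) factorization. Assembling these identifications, the target of the coCartesian lift is $\bigl(\bigotimes_{o\to o'}X_o\to\bigotimes_{o\to o'}A(o)\xrightarrow{\mu}A(o')\bigr)$, which is the claimed formula. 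I do not anticipate any genuine obstacle: once \cref{prop: slice is O-monoidal} is available, this is a short unwinding of definitions.
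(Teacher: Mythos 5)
Your proposal is correct and takes essentially the same route as the paper: the paper's proof simply observes that the coCartesian edges of $\D^{\otimes}_{/A} \to \O^{\otimes}$ were described explicitly in the proof of \cref{prop: slice is O-monoidal}, so the indexed tensor product can be read off. You merely spell out the unwinding (identifying the fiber via \cref{lemma: fibers of slice} and the factorization map with the algebra structure map $\mu$), which the paper leaves implicit.
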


\begin{proof}
    Recall that $\bigotimes_{o \to o'} (X_o \to A(o))$ denotes the coCartesian edge starting in $(X_o \to A(o))$ lying over $o \to o'$. In the proof of \cref{prop: slice is O-monoidal} we described the coCartesian edges of $\D^{\otimes}_{/A}$, so the indexed tensor product can be immediately read off.
\end{proof}

\begin{example}
    More concretely, if $\C^{\otimes} \in \Mon_{\Span(\F_{G})}(\Cat_{\infty})$ and $A \in \Alg_{\Span(\F_G)}(\C)$, then the indexed product is described in the following more familiar fashion:
    \begin{enumerate}[(i)]
        \item Let $\nabla\colon G/H \amalg G/H \to G/H$ be the fold map and $X \to A_H, Y \to A_H$ be maps in $\C_H$. Then, 
        \[ \bigotimes_{\nabla} (X \to A_H, Y \to A_H) \simeq (X \otimes Y \to A_H \otimes A_H \to A_H). \]
        \item Let $H \leq K$ and $G/H \to G/K$ be the projection map and $X \to A_H$ be a map in $\C_H$. Then,
        \[ \bigotimes_{G/H \to G/K} (X \to A_H) \simeq \left( N_H^K X \to N_H^K A_H \to A_K \right) \]
        where $N_H^K$ is given by the indexed tensor product $\bigotimes_{G/H \to G/K}$.
    \end{enumerate}
    In particular, this is the usual tensor product on slice categories for $G = e$ which is exemplified in (i).
\end{example}

\begin{proposition}
    Let $\O^{\otimes} \in \Op_{G, \infty}$ with $\D^{\otimes} \in \Mon_{\O}(\Cat_{\infty})$ and $A \in \Alg_{\O}(\D)$. Then, the underlying $G$-$\infty$-category of $\D^{\otimes}_{/A}$ is the slice $G$-$\infty$-category $\ul{\D_{/A}}$.
\end{proposition}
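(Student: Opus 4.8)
The plan is to unwind both sides into pullbacks and then show that the slice construction of \cref{def: slice pattern} is compatible with base change of the base category. By construction the underlying $G$-$\infty$-category of $\D^{\otimes}_{/A}$ is the pullback $\D^{\otimes}_{/A}\times_{\Span(\F_G)}\Orb_G^{\op}$, and since \cref{prop: slice is O-monoidal} exhibits $\D^{\otimes}_{/A}$ as a $G$-$\infty$-operad via $\D^{\otimes}_{/A}\to\O^{\otimes}\to\Span(\F_G)$, pullback pasting rewrites this as $\D^{\otimes}_{/A}\times_{\O^{\otimes}}\bigl(\O^{\otimes}\times_{\Span(\F_G)}\Orb_G^{\op}\bigr)$. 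So it suffices to base change the slice along $\O^{\otimes}\times_{\Span(\F_G)}\Orb_G^{\op}\to\O^{\otimes}$ and recognise the output.

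The heart of the argument is a general base-change lemma for slices: for any functor $u\colon\O'\to\O$ of $\infty$-categories, any section $A\colon\O\to\D$ of $p\colon\D\to\O$, and the induced data $\D':=\D\times_{\O}\O'$ and $A'\colon\O'\to\D'$, there is a canonical equivalence $\D_{/A}\times_{\O}\O'\simeq\D'_{/A'}$ over $\O'$. I would prove this directly from \cref{def: slice pattern} together with the explicit pullback formula for the cotensor in \cref{prop: omnibus tensor-cotensor}\,(iii): because $\Fun([1],-)$ preserves pullbacks, one gets $(\D')^{[1]}_{/\O'}\simeq\D^{[1]}_{/\O}\times_{\O}\O'$, and then, using $p\circ A=\id_{\O}$, a routine chase of two or three pasted pullback squares identifies $\D'_{/A'}=(\D')^{[1]}_{/\O'}\times_{\D'}\O'$ with $\D_{/A}\times_{\O}\O'$.

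Applying this lemma with $\O'=\O^{\otimes}\times_{\Span(\F_G)}\Orb_G^{\op}$, pullback pasting gives $\D'\simeq\D^{\otimes}\times_{\Span(\F_G)}\Orb_G^{\op}=\ul{\D}$, while $A'$ is the underlying $G$-object $\ul{A}$ of $A$ — when $\O^{\otimes}$ is singly $G$-coloured the second factor is $\Orb_G^{\op}$ itself (cf.\ \cref{lemma: underlying T-category for single T-color}), so $\ul{A}$ is literally a $G$-object of $\ul{\D}$. Hence $\ul{\D^{\otimes}_{/A}}\simeq(\ul{\D})_{/\ul{A}}$, which by \cref{example: classical parametrized slice} is exactly the parametrized slice $G$-$\infty$-category $\ul{\D_{/A}}$, and one reads off from the construction that the equivalence lies over $\ul{\D}$ and over $\Orb_G^{\op}$. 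For a general (not necessarily singly-coloured) $\O^{\otimes}$ the same identity holds with $\ul{\D_{/A}}$ understood as the evident relative slice of $\ul{\D}$ over the underlying $\ul{\O}$-object of $A$.

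I expect the main obstacle to be precisely the base-change compatibility of the cotensor and slice in the second paragraph: it is conceptually routine given \cref{prop: omnibus tensor-cotensor}\,(iii), but the bookkeeping of the several structure maps — to $\O^{\otimes}$ on the one hand and to $\Span(\F_G)$ on the other — and the check that the algebra appearing in the base-changed slice really is the honest underlying $G$-object of $A$ require care. A fallback I would keep in reserve is a fibrewise verification: \cref{lemma: fibers of slice} identifies the fibre of $\D^{\otimes}_{/A}$ over an orbit $G/H$ with $(\D_H)_{/A_H}$, which by \cref{example: classical parametrized slice} is also the fibre of $(\ul{\D})_{/\ul{A}}$ there; but upgrading these fibrewise equivalences to an equivalence of $G$-$\infty$-categories essentially reproduces the base-change argument, so I would present the base-change proof as the primary one.
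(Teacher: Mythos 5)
Your proof is correct and follows essentially the same route as the paper's: both unwind the slice as a pullback of the cotensor, use that $\Fun([1],-)$ preserves pullbacks, and rearrange pasted pullback squares to identify the base change to $\Orb_G^{\op}$ with the parametrized slice of $\ul{\D}$ over $\ul{A}$. The only difference is organizational: you factor the computation through a general base-change lemma for slices along $\O'\to\O$ (of which \cref{lemma: fibers of slice} is the fiberwise instance), whereas the paper carries out the same pullback manipulations directly in a single chain of equivalences.
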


\begin{proof}
    For the sake of the following computation, let us write $S = \Span(\F_G)$. We omit writing the map to $\Orb_G^{\op}$ and perform in $\coCart{\Orb_G^{\op}}$ the following computation:
    \begin{align*}
        \D_{/A}^{\otimes} \times_{S} \Orb_G^{\op} &\simeq \left((\D^{\otimes})_{/S}^{[1]} \times_{\D^{\otimes}} S) \right) \times_{S} \Orb_G^{\op}
        \\ &\simeq \left((\D^{\otimes})^{[1]}_{/S} \times_{S} \Orb_G^{\op} \right) \times_{\D^{\otimes} \times_{S} \Orb_G^{\op}} \left( S \times_{S} \Orb_G^{\op} \right)
        \\ &\simeq \left((\D^{\otimes})^{[1]}_{/S} \times_{S} \Orb_G^{\op} \right) \times_{\ul{\D}} \Orb_G^{\op}
        \\ &\simeq \left( \left(\Ar( \D^{\otimes}) \times_{\Ar(  S)} S  \right)\times_{S} \Orb_G^{\op} \right) \times_{\ul{\D}} \Orb_G^{\op}
        \\ &\simeq \left( \Ar(  \D^{\otimes}) \times_{\Ar(  S)}  \Orb_G^{\op} \right) \times_{\ul{\D}} \Orb_G^{\op}
        \\ &\simeq \left( \Ar(  \D^{\otimes}) \times_{\Ar(  S)} \Ar(  \Orb_G^{\op}) \times_{\Ar(  \Orb_G^{\op})} \Orb_G^{\op} \right) \times_{\ul{\D}} \Orb_G^{\op}
        \\ &\simeq \left( \Ar(  \D^{\otimes} \times_{S} \Orb_G^{\op})  \times_{\Ar(  \Orb_G^{\op})} \Orb_G^{\op} \right) \times_{\ul{\D}} \Orb_G^{\op}
        \\ &\simeq \left( \Ar(  \ul{\D})  \times_{\Ar(  \Orb_G^{\op})} \Orb_G^{\op} \right) \times_{\ul{\D}} \Orb_G^{\op}
        \\ &\simeq \ul{\D}^{[1]}_{/\Orb_G^{\op}} \times_{\ul{\D}} \Orb_G^{\op},
    \end{align*}
    which is the classical parametrized slice (\cref{example: classical parametrized slice}).
\end{proof}

\noindent So we are really putting an $\O$-monoidal structure on $\myuline{\D_{/A}}$.

\begin{theorem}[Universal Property of Monoidal Slice] \label{theorem: universal property of monoidal slice} Let  $\O^{\otimes} \in \AlgPatt, \C^{\otimes}, \D^{\otimes} \in \Fbrs(\O^{\otimes})$ as well as $A \in \Alg_{\O}(\D)$ and $F \in \Fun_{\Fbrs(\O^{\otimes})}(\C^{\otimes}, \D^{\otimes})$. Let $p\colon \C^{\otimes} \to \O^{\otimes}$ denote the structure map. Then, there are equivalences:
    \begin{enumerate}[(i)]
        \item $\Fun_{\Fbrs(\O^{\otimes})}(\C^{\otimes}, \D_{/A}^{\otimes}) \simeq \Fun_{\Fbrs(\O^{\otimes})}(\C^{\otimes}, \D^{\otimes})_{/A \circ p}$.
        \item $\Map_{\Fun_{\Fbrs(\O^{\otimes})}(\C^{\otimes}, \D^{\otimes})}(F, A \circ p) \simeq \Map_{\Fbrs(\O^{\otimes})_{/\D^{\otimes}}}(\C^{\otimes}, \D^{\otimes}_{/A})$. 
    \end{enumerate}
\end{theorem}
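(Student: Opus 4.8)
The plan is to prove (i) by unwinding \cref{def: slice pattern} into a pullback of fibrous $\O^{\otimes}$-patterns, pushing the internal hom $\Fun_{\Fbrs(\O^{\otimes})}(\C^{\otimes},-)$ through it, and recognizing the leftover term via the cotensor adjunction of \cref{prop: omnibus tensor-cotensor}; part (ii) will then follow by passing to mapping spaces. Throughout write $\K\coloneqq\Fun_{\Fbrs(\O^{\otimes})}(\C^{\otimes},\D^{\otimes})$. For (i), I would first note that $\D_{/A}^{\otimes}$ is by \cref{def: slice pattern} the pullback of $(\D^{\otimes})^{[1]}_{/\O^{\otimes}}\xrightarrow{\mathrm{ev}_1}\D^{\otimes}\xleftarrow{\,A\,}\O^{\otimes}$, with $\mathrm{ev}_1=\mathrm{target}\circ\pr_{\Ar(\D^{\otimes})}$; by \cref{prop: slice is O-monoidal}(i) and \cref{prop: omnibus tensor-cotensor}(iv) all four corners are fibrous $\O^{\otimes}$-patterns, and since pullbacks in $\Fbrs(\O^{\otimes})$ are computed in $\Cat_{\infty}$ \cite[Lemma 4.1.13]{barkanhaugsengsteinebrunner2024envelopesalgebraicpatterns}, this is a pullback there. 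Since $\Fun_{\Fbrs(\O^{\otimes})}(\C^{\otimes},-)$ is an internal hom it preserves the pullback, and using that $\O^{\otimes}$ is terminal in $\Fbrs(\O^{\otimes})$ this gives
\[
\Fun_{\Fbrs(\O^{\otimes})}(\C^{\otimes},\D_{/A}^{\otimes})\;\simeq\;\Fun_{\Fbrs(\O^{\otimes})}\!\bigl(\C^{\otimes},(\D^{\otimes})^{[1]}_{/\O^{\otimes}}\bigr)\times_{\K}\{A\circ p\},
\]
where $A\circ p$ is the image of the unique map $\C^{\otimes}\to\O^{\otimes}$ under post-composition with $A$.

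Next I would identify $\Fun_{\Fbrs(\O^{\otimes})}(\C^{\otimes},(\D^{\otimes})^{[1]}_{/\O^{\otimes}})$ with $\Ar(\K)$. By \cref{prop: omnibus tensor-cotensor}(i),(iii), $\Cat_{\infty/\O^{\otimes}}^{\inert\text{-cocart}}$ is tensored and cotensored over $\Cat_{\infty}$ with $[1]\otimes\C^{\otimes}=\C^{\otimes}\times[1]$ and $[1]$-cotensor $(\D^{\otimes})^{[1]}_{/\O^{\otimes}}$, so enriching this adjunction yields $\Fun_{\Fbrs(\O^{\otimes})}(\C^{\otimes},(\D^{\otimes})^{[1]}_{/\O^{\otimes}})\simeq\Fun([1],\K)=\Ar(\K)$. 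The point to check here is that a functor $\C^{\otimes}\times[1]\to\D^{\otimes}$ over $\O^{\otimes}$ preserving coCartesian lifts of inert edges is the same datum as an arrow of $\K$: the $[1]$-direction lies over identities of $\O^{\otimes}$ and imposes no condition, while the two endpoints must be inert-preserving. Under this identification $\mathrm{ev}_1$ corresponds to the target functor $\Ar(\K)\to\K$, because $\{1\}\hookrightarrow[1]$ induces $(\D^{\otimes})^{[1]}_{/\O^{\otimes}}\to(\D^{\otimes})^{\{1\}}_{/\O^{\otimes}}\simeq\D^{\otimes}$, which is exactly $\mathrm{target}\circ\pr_{\Ar(\D^{\otimes})}$. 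Combining, $\Fun_{\Fbrs(\O^{\otimes})}(\C^{\otimes},\D_{/A}^{\otimes})\simeq\Ar(\K)\times_{\K}\{A\circ p\}=\K_{/A\circ p}$, which is (i).

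For (ii) I would deduce it from (i) by taking cores. The mapping space $\Map_{\K}(F,A\circ p)$ is the fibre of $(\mathrm{ev}_0,\mathrm{ev}_1)\colon\Ar(\K)\to\K\times\K$ over $(F,A\circ p)$; imposing only $\mathrm{ev}_1=A\circ p$ reproduces $\Fun_{\Fbrs(\O^{\otimes})}(\C^{\otimes},\D_{/A}^{\otimes})$ as in (i), and the remaining condition $\mathrm{ev}_0=F$ — where $\mathrm{ev}_0$ corresponds to post-composition with the domain projection $s\coloneqq\mathrm{ev}_0\colon\D_{/A}^{\otimes}\to\D^{\otimes}$ — cuts this down, on cores, to the fibre over $F$ of $s_{*}\colon\Map_{\Fbrs(\O^{\otimes})}(\C^{\otimes},\D_{/A}^{\otimes})\to\Map_{\Fbrs(\O^{\otimes})}(\C^{\otimes},\D^{\otimes})$. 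Viewing $\D_{/A}^{\otimes}$ as an object of $\Fbrs(\O^{\otimes})_{/\D^{\otimes}}$ via $s$ and $\C^{\otimes}$ via $F$, this fibre is by definition $\Map_{\Fbrs(\O^{\otimes})_{/\D^{\otimes}}}(\C^{\otimes},\D_{/A}^{\otimes})$; that the intermediate fibre products may be formed before or after passing to cores follows from $\core$ commuting with limits together with the identity $\Ar(\K)\times_{\K\times\K}\core(\K\times\K)=\core(\Ar(\K))$.

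The hard part will be the bookkeeping needed to run the cotensor adjunction of \cref{prop: omnibus tensor-cotensor} inside the $\inert$-cocart world and then restrict to fibrous patterns: one must confirm that $\Fun_{\Fbrs(\O^{\otimes})}(\C^{\otimes},-)$ really preserves the pullback defining $\D_{/A}^{\otimes}$, that forming $\C^{\otimes}\times[1]$ does not enlarge the space of maps beyond $\Ar(\K)$ (the inert-cocart condition being insensitive to the $[1]$-direction is the crux), and that the assorted source/target/evaluation maps coming from \cref{def: slice pattern}, from the cotensor, and from $\Ar(-)$ are matched consistently. Once these identifications are pinned down, (i) and (ii) are formal.
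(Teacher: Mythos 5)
Your proposal is correct and follows essentially the same route as the paper's proof: for (i) you unwind the defining pullback of $\D^{\otimes}_{/A}$, use that $\Fun_{\Fbrs(\O^{\otimes})}(\C^{\otimes},-)$ preserves limits, and apply the tensor--cotensor adjunction of \cref{prop: omnibus tensor-cotensor} (enriched to functor categories) to identify the cotensor term with $\Ar(\K)$, exactly as in the paper; for (ii) you likewise deduce the statement from (i) by identifying the fibre of the slice projection over $F$ with the mapping space in $\Fbrs(\O^{\otimes})_{/\D^{\otimes}}$, which is the paper's argument phrased through cores.
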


\begin{proof}
    \hfill 
    \begin{enumerate}[(i)]
        \item We compute
        \begin{align*}
        \hspace{-19.82pt}\Fun_{\Fbrs(\O^{\otimes})}(\C^{\otimes}, \D_{/A}^{\otimes}) &\simeq \Fun_{\Fbrs(\O^{\otimes})}\left(\C^{\otimes}, (\D^{\otimes})_{/\O^{\otimes}}^{[1]} \times_{\D^{\otimes}} \O^{\otimes} \right)
        \\ &\simeq \Fun_{\Fbrs(\O^{\otimes})}\left(\C^{\otimes}, (\D^{\otimes}_{/\O})^{[1]} \right) \times_{\Fun_{\Fbrs(\O^{\otimes})}(\C^{\otimes}, \D^{\otimes})} \Fun_{\Fbrs(\O^{\otimes})}(\C^{\otimes}, \O^{\otimes})
        \\ &\simeq \Fun_{\Fbrs(\O^{\otimes})}(\C^{\otimes} \times [1], \D^{\otimes}) \times_{\Fun_{\Fbrs(\O^{\otimes})}(\C^{\otimes}, \D^{\otimes})} *
        \\ &\simeq \Fun_{\Fbrs(\O^{\otimes})}(\C^{\otimes}, \D^{\otimes})^{[1]} \times_{\Fun_{\Fbrs(\O^{\otimes})}(\C^{\otimes}, \D^{\otimes})} *
        \\ &\simeq \Fun_{\Fbrs(\O^{\otimes})}(\C^{\otimes}, \D^{\otimes})_{/A \circ p}.
    \end{align*}
    where we have used the tensor-cotensor adjunction (\cref{prop: omnibus tensor-cotensor}) a number of times. In the second equivalence we used that $\Fun_{\Fbrs(\O^{\otimes})}(\C^{\otimes}, -)$ is a right adjoint (\cref{prop: omnibus tensor-cotensor} \textcolor{chilligreen}{(ii)}) and thus preserves limits. In the third equivalence, we used a Yoneda-type argument to argue that the tensor-cotensor adjunction enriched to functor categories and the fourth equivalence is also by a Yoneda argument.
        \item Using (i) we obtain
        \begin{align*}
            \Map_{\Fun_{\Fbrs(\O)}(\C^{\otimes}, \D^{\otimes})}(F, A \circ p) &\simeq \{F \} \times_{\Fun_{\Fbrs(\O)}(\C^{\otimes}, \D^{\otimes})} \Fun_{\Fbrs(\O)}(\C^{\otimes}, \D^{\otimes})_{/A \circ p}
            \\ &\simeq \{F \} \times_{\Fun_{\Fbrs(\O)}(\C^{\otimes}, \D^{\otimes})} \Fun_{\Fbrs(\O)}(\C^{\otimes}, \D^{\otimes}_{/A})
            \\ &\simeq \Map_{\Fbrs(\O^{\otimes})_{/\D^{\otimes}}}(\C^{\otimes}, \D^{\otimes}_{/A})
        \end{align*}
        as desired.
    \end{enumerate}
\end{proof}

\begin{remark}
    Let $\O^{\otimes} \in \Op_{\infty}, \D^{\otimes} \in \Mon_{\O}(\Cat_{\infty})$ and $A \in \Alg_{\O}(\D)$. Then, our monoidal slice construction $\D^{\otimes}_{/A}$ agrees with Lurie's slice construction $\D_{/A_{\O}}$ in \cite[Notation 2.2.2.3]{lurie2017ha} because our universal property (\cref{theorem: universal property of monoidal slice} \textcolor{chilligreen}{(ii)}) specializes to a universal property that Lurie's monoidal slice also satisfies for ordinary $\infty$-operads \cite[Lemma 2.12]{antolinbarthel2019thom}.
\end{remark}

\noindent We end this section by giving a relation between operadic left Kan extensions and the slice monoidal structure. This is a generalization of \cite[Theorem 2.13]{antolinbarthel2019thom} and is the abstract version of the universal property of multiplicative equivariant Thom spectra that is to come in later sections (\cref{theorem: universal property of Th}).

\begin{proposition} \label{prop: abstract antolin-camarena-barthel}
    Let $\ul{\O}^{\otimes} \in \Op_{G, \infty}^{\NS}$ with $\ul{\C}^{\otimes}, \ul{\D}^{\otimes} \in \Mon_{\ul{\O}}^{\NS}(\ul{\Cat}_{G, \infty})$  and $A \in \Alg_{\ul{\O}}(\ul{\D})$. Let $p\colon \ul{\C}^{\otimes} \to \ul{\O}^{\otimes}$ denote the structure map and $F \in \Alg_{\ul{\C}/\ul{\O}}(\ul{\D})$. Suppose that $\ul{\D}^{\otimes}$ is $\ul{\O}$-distributive. Then, there is an equivalence 
    \[ \Map_{\Alg_{\ul{\O}}(\ul{\D})}(p_! F, A) \simeq \Map_{\Fbrs(\ul{\O}^{\otimes})_{/\ul{\D}^{\otimes}}}(\ul{\C}^{\otimes}, \ul{\D}_{/A}^{\otimes}) \]
    where we endow $\ul{\D}^{\otimes}_{/A}$ with the slice monoidal structure.
\end{proposition}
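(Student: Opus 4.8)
The plan is to unwind the left-hand mapping space in two moves, each supplied by a tool assembled above, and then read off the right-hand side. Since $\ul{\D}^{\otimes}$ is $\ul{\O}$-distributive, \cref{theorem: operadic left Kan extension} applies and yields the operadic left Kan extension adjunction $p_!\dashv p^*$ between $\Alg_{\ul{\C}/\ul{\O}}(\ul{\D})$ and $\Alg_{\ul{\O}/\ul{\O}}(\ul{\D}) = \Alg_{\ul{\O}}(\ul{\D})$; this is the only place the distributivity hypothesis enters, namely to guarantee that $p_!$ exists. Evaluating this adjunction on the pair $(F, A)$ and recording that $p^*$ is restriction along the structure map $p$, so that $p^*A = A\circ p$, gives
\[ \Map_{\Alg_{\ul{\O}}(\ul{\D})}(p_! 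F, A) \simeq \Map_{\Alg_{\ul{\C}/\ul{\O}}(\ul{\D})}(F, A\circ p). \]

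For the second move I would rewrite $\Alg_{\ul{\C}/\ul{\O}}(\ul{\D}) = \Fun_{\Fbrs(\ul{\O}^{\otimes})}(\ul{\C}^{\otimes}, \ul{\D}^{\otimes})$ and feed $F$, $A$ and $p$ into part (ii) of the universal property of the monoidal slice (\cref{theorem: universal property of monoidal slice}), obtaining
\[ \Map_{\Fun_{\Fbrs(\ul{\O}^{\otimes})}(\ul{\C}^{\otimes}, \ul{\D}^{\otimes})}(F, A\circ p) \simeq \Map_{\Fbrs(\ul{\O}^{\otimes})_{/\ul{\D}^{\otimes}}}(\ul{\C}^{\otimes}, \ul{\D}^{\otimes}_{/A}). \]
Here $\ul{\D}^{\otimes}_{/A}$ already sits in $\Mon_{\ul{\O}}(\Cat_\infty)$ with its slice monoidal structure by \cref{prop: slice is O-monoidal} (only the $\ul{\O}$-monoidality of $\ul{\D}^{\otimes}$ is needed for this, not distributivity). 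Concatenating the two displayed equivalences is exactly the claimed equivalence, so no further computation is required.

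The one point that needs care -- and the main, if mild, obstacle -- is that \cref{theorem: operadic left Kan extension} is phrased in the Nardin--Shah formalism whereas \cref{theorem: universal property of monoidal slice} is phrased for fibrous patterns, so the two displayed equivalences must be seen to live over a common set of objects. I would handle this with the comparison results recalled in \cref{sec:param_operads}: a Nardin--Shah $G$-$\infty$-operad corresponds to a fibrous $\Span(\F_G)$-pattern, which -- being fibrous over the sound pattern $\Span(\F_G)$ -- in particular carries an algebraic pattern structure, so that \cref{theorem: universal property of monoidal slice} applies with $\O^{\otimes} = \ul{\O}^{\otimes}$; under the same dictionary $\ul{\O}$-monoidal $G$-$\infty$-categories correspond to coCartesian fibrous $\ul{\O}^{\otimes}$-patterns, $\ul{\O}$-algebras to inert-preserving sections, and both the $\Alg$ mapping spaces and the slice construction agree on the two sides. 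As a consistency check, taking $G = e$ and $\ul{\O}^{\otimes}$ an ordinary $\infty$-operad recovers \cite[Theorem 2.13]{antolinbarthel2019thom}.
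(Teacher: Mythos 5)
Your proposal is correct and follows essentially the same two-step argument as the paper: first the $p_!\dashv p^*$ adjunction from the operadic left Kan extension theorem, then part (ii) of the universal property of the monoidal slice. Your additional remark about reconciling the Nardin--Shah and fibrous-pattern formalisms is a point the paper elides, and it is handled appropriately.
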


\begin{proof}
    We compute
    \begin{align*}
        \Map_{\Alg_{\ul{\O}}(\ul{\D})}(p_! F, A) &\simeq \Map_{\Alg_{\ul{\C}/\ul{\O}}(\ul{\D})}(F, A \circ p)
        \\ &\simeq \Map_{\Fbrs(\ul{\O}^{\otimes})_{/\ul{\D}^{\otimes}}}(F, \ul{\D}_{/A}^{\otimes} \to \ul{\D}^{\otimes})
    \end{align*}
    by operadic left Kan extension (\cref{theorem: operadic left Kan extension}) and the universal property of the monoidal slice category (\cref{theorem: universal property of monoidal slice}).
\end{proof}

\noindent Colloquially, a map $p_! F \to A$ corresponds to a lift
\begin{center}
        \begin{tikzcd}
            & \ul{\D}_{/A}^{\otimes} \arrow[d]
            \\ \ul{\C}^{\otimes} \arrow[r, "F", swap] \arrow[ur, dashed] & \ul{\D}^{\otimes}
        \end{tikzcd}
    \end{center}
over $\ul{\O}^{\otimes}$. This characterizes the operadic left Kan extension $p_! F$.

\subsection{Microcosmic Parametrized Monoidal Straightening-Unstraightening}\label{sec:microcosmic}
Let $X \in \Sc_{G}$, then (parametrized) straightening-unstraightening (\cref{prop: parametrized straightening}) gives rise to an equivalence $(\myuline{\Sc}_{G})_{/X} \simeq \myuline{\PSh}_{G}(X)$. For $\O^{\otimes} \in \Op_{G, \infty}$ and $X^{\otimes} \in \Alg_{\myuline{\O}}(\myuline{\Sc}_{G})$, both sides naturally enhance to $\O$-monoidal $\infty$-categories, namely via the slice monoidal structure (\cref{prop: slice is O-monoidal}) and (parametrized) Day convolution (\cref{theorem: omnibus day convolution}).
\medskip \\One naturally expects the equivalence to enhance to an $\O$-monoidal equivalence. Indeed, this statement has already been stated in the literature without proof \cite[Theorem A.6.1]{hahn2024equivariantnonabelianpoincareduality}. We will not need a result as strong as this, and will put together a weaker statement on a \emph{microcosmic} level with terminology inspired by \cite{ramzi2022monoidalgrothendieckconstructioninftycategories}. 
\medskip \\Here, the word 'microcosmic' essentially points to the result on the level of algebras. We don't claim much originality in this subsection, the claim will follow by putting together a number of results from the literature. Indeed, the following result from Stewart should already be viewed as a microcosmic parametrized monoidal straightening-unstraightening result. We also learned towards the end of the preparation of this article that our presentation is very similar to Cnossen's \cite[Proposition A.3]{cnossen2023twistedambidexterityequivarianthomotopy}, although we phrase our discussion in the more general setting of $G$-$\infty$-operads.

\begin{proposition}[{\cite[Corollary 1.52]{stewart2025tensorproductsequivariantcommutative}}] \label{prop: Alg(...) = Mon(...)}
    Let $\O^{\otimes} \in \Op_{G, \infty}$. There exist cartesian $G$-symmetric monoidal structures on $G$-$\infty$-categories with $G$-products for which there are equivalences
    \[ \Alg_{\myuline{\O}}(\myuline{\Cat}_{G, \infty}) \simeq \Mon_{\O}(\Cat_{\infty}) \quad \text{and} \quad \Alg_{\myuline{\O}}(\myuline{\Sc}_{G}) \simeq \Mon_{\O}(\Sc) \]
    where $\myuline{\Cat}_{G, \infty}$ and $\myuline{\Sc}_{G}$ are endowed with the cartesian $G$-symmetric monoidal structure.
\end{proposition}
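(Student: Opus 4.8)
The plan is to deduce both displayed equivalences from two ingredients: the parametrized analogue of the statement that algebras valued in a Cartesian symmetric monoidal $\infty$-category are the same as product-preserving (Segal) diagrams, and the cofreeness of $\ul{\Cat}_{G,\infty}$ and $\ul{\Sc}_G$ as $G$-$\infty$-categories. The two equivalences are proven identically, so I would treat the $\ul{\Cat}_{G,\infty}$ case and recover the $\ul{\Sc}_G$ case by substituting $\Sc$ for $\Cat_\infty$ throughout (using that $\Sc$ has finite products).

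First I would fix the Cartesian $G$-symmetric monoidal structures $\ul{\Cat}_{G,\infty}^\times$ and $\ul{\Sc}_G^\times$, whose existence is already available (\cref{example: S Cat distributive}); these are the $G$-symmetric monoidal structures whose indexed tensor products $\bigotimes_{U \to V}$ are the indexed products of \cref{sec:distributivity}, i.e.\ fibrewise finite products together with the norms $\Coind_H^K$ along orbit projections. The main step is then the equivariant ``Cartesian $=$ Segal'' principle: for any $\ul{\O}^\otimes \in \Op_{G,\infty}^{\NS}$ there is a natural equivalence between $\Alg_{\ul{\O}}(\ul{\Cat}_{G,\infty}^\times)$ and the $\infty$-category of \emph{parametrized Segal $\ul{\O}$-objects} in $\ul{\Cat}_{G,\infty}$, namely those $G$-functors $\ul{\O}^\otimes \to \ul{\Cat}_{G,\infty}$ carrying the parametrized inert--elementary cones over each object of $\ul{\O}^\otimes$ to parametrized limit cones. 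This is the equivariant upgrade of \cite[Propositions 2.4.1.7 and 2.4.2.5]{lurie2017ha}, and in the $G$-parametrized setting it is (a reformulation of) \cite[Corollary 1.52]{stewart2025tensorproductsequivariantcommutative}; I would reproduce its proof, the crux being to match indexed tensor products in $\ul{\Cat}_{G,\infty}^\times$ with indexed products so that the lax constraints of an $\ul{\O}$-algebra become exactly the Segal-descent conditions.

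Next I would invoke cofreeness: $\ul{\Cat}_{G,\infty}$ is the cofree $G$-$\infty$-category on $\Cat_\infty$, so by (the functor-category enhancement of) the forgetful--cofree adjunction used in \cref{prop: parametrized straightening} there is a natural equivalence $\uFun_G(\ul{\C}, \ul{\Cat}_{G,\infty}) \simeq \Fun(\ul{\C}, \Cat_\infty)$ for every $\ul{\C} \in \Cat_{G,\infty}$, with $\ul{\C}$ viewed as a plain $\infty$-category on the right. Applying this with $\ul{\C} = \ul{\O}^\otimes$, I would check that this equivalence sends parametrized Segal $\ul{\O}$-objects to Segal objects for the algebraic pattern $\O^\otimes$ over $\Span(\F_G)$: under the comparison $\Op_{G,\infty} \simeq \Op_{G,\infty}^{\NS}$ (\cite[Proposition 5.2.14]{barkanhaugsengsteinebrunner2024envelopesalgebraicpatterns}) one has $\ul{\O}^\otimes \simeq \O^\otimes \times_{\Span(\F_G)} \ul{\F}_{G,*}$, and the parametrized inert--elementary cones over objects of $\ul{\O}^\otimes$ are precisely the cones $((\O^{\otimes})_{o/}^{\el})^{\triangleleft}$ computing the pattern Segal condition, since the elementaries of $\Span(\F_G)$ are the orbits. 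Hence $\Seg_{\ul{\O}}(\ul{\Cat}_{G,\infty}) \simeq \Seg_{\O^\otimes}(\Cat_\infty) = \Mon_\O(\Cat_\infty)$ (the last equality recalled in \cref{sec:param_operads}); composing with the previous step gives $\Alg_{\ul{\O}}(\ul{\Cat}_{G,\infty}^\times) \simeq \Mon_\O(\Cat_\infty)$, and the same argument over $\Sc$ gives the second equivalence.

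The hard part will be the first step -- setting up the Cartesian $G$-symmetric monoidal structure together with its universal property and verifying that its indexed tensor products are the indexed products, which is where the norms enter and where the parametrized bookkeeping is heaviest. The cofreeness step is essentially formal, but still requires the routine-but-delicate identification of the parametrized Segal condition with the pattern Segal condition, i.e.\ an explicit unwinding of the algebraic-pattern structure on $\Span(\F_G)$.
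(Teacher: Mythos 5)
The paper does not prove this proposition at all: it is imported wholesale from Stewart \cite[Corollary 1.52]{stewart2025tensorproductsequivariantcommutative}, so there is no internal argument to compare your proposal against. Judged on its own terms, your outline follows the natural template: (i) a parametrized ``Cartesian equals Segal'' principle identifying $\Alg_{\ul{\O}}(\ul{\Cat}_{G,\infty}^{\times})$ with parametrized Segal $\ul{\O}$-objects, (ii) cofreeness of $\ul{\Cat}_{G,\infty}$ and $\ul{\Sc}_G$ (the same adjunction underlying \cref{prop: parametrized straightening}) to descend to nonparametrized Segal objects, and (iii) the identification $\Seg_{\O^{\otimes}}(\Cat_{\infty})\simeq \Mon_{\O}(\Cat_{\infty})$, which the paper already builds into its definitions in \cref{sec:param_operads}. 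This is a sensible reconstruction of how such a statement is established.

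Two steps are thinner than you acknowledge. First, step (i) is not an auxiliary lemma: as you say yourself, it \emph{is} a reformulation of the cited result, so ``I would reproduce its proof'' defers exactly the content at stake -- the universal property of the Cartesian $G$-symmetric monoidal structure of \cref{example: S Cat distributive} (that $\ul{\O}$-algebras in it are precisely parametrized-product-preserving, i.e.\ Segal, data) is the whole theorem, and nothing in your sketch supplies it beyond naming where the bookkeeping happens. Second, step (iii) is not a matter of matching elementary cones on the nose: after applying cofreeness you land in functors out of the total $\infty$-category $\ul{\O}^{\otimes}\simeq \O^{\otimes}\times_{\Span(\F_G)}\ul{\F}_{G,*}$, which is genuinely different from $\O^{\otimes}$, since $\ul{\F}_{G,*}\to\Span(\F_G)$ is far from an equivalence. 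Transporting the Segal condition across this comparison needs a strong-Segal-morphism/soundly-extendable-pattern argument as in \cite[Section 5.2]{barkanhaugsengsteinebrunner2024envelopesalgebraicpatterns}, together with the compatibility of the cofree adjunction with parametrized limit cones (indexed products in a cofree $G$-$\infty$-category are computed by right Kan extension). Both ingredients exist in the literature, so your route can be completed, but as written those are precisely the places where the actual work lives, and your proposal passes over them.
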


\noindent For $\O^{\otimes} = \E_{\infty}^G$ this has also already appeared in \cite[Proposition A.23]{hilman2024mcduff}.

\begin{theorem}[{\cite[Section 3]{nardinshah2022equivarianttopos}}] \label{theorem: omnibus day convolution}
    Consider $\O^{\otimes} \in \Op_{G, \infty}$ as well as $\C^{\otimes}, \D^{\otimes} \in \Mon_{\O}(\Cat_{\infty})$. Suppose that $\D^{\otimes}$ is $\ul{\O}$-distributive. Then, there exists an $\O$-monoidal $\infty$-category $\tb{\Fun^{\O}(\C, \D)^{\otimes}}$ whose underlying $G$-$\infty$-category is $\myuline{\Fun}_{G}(\C, \D)$ such that 
    \[ \Alg_{\myuline{\O}/\myuline{\O}}\left(\myuline{\Fun}^{\O}(\C, \D) \right) \simeq \Alg_{\ul{\C}/\ul{\O}}(\ul{\D}). \]
\end{theorem}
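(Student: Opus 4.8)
The plan is to transport the classical Day-convolution construction of Glasman and Lurie \cite[Section 2.2.6]{lurie2017ha} to the parametrized setting, with parametrized left Kan extensions (as in \cref{theorem: operadic left Kan extension}) in place of ordinary ones. The $\ul{\O}$-distributivity hypothesis on $\ul{\D}^{\otimes}$ plays exactly the role that monoidal cocompleteness plays classically: it is what makes the relevant $G$-left Kan extensions commute with the indexed tensor products of $\ul{\D}$, so that the construction lands in coCartesian fibrations and not merely in fibrous patterns.

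First I would construct a candidate fibration $\widetilde{\Fun}^{\O}(\C, \D)^{\otimes} \to \O^{\otimes}$ by the relative functor-category construction, characterized by
\[ \Map_{/\O^{\otimes}}\!\left(K,\ \widetilde{\Fun}^{\O}(\C, \D)^{\otimes}\right) \simeq \Map_{/\O^{\otimes}}\!\left(K \times_{\O^{\otimes}} \C^{\otimes},\ \D^{\otimes}\right) \]
for $K \to \O^{\otimes}$, and then cut out the full subcategory $\Fun^{\O}(\C, \D)^{\otimes}$ spanned by those ``tuples of functors'' carrying inert edges to inert edges, exactly as in Lurie's construction. A direct inspection shows that the restriction of this construction to the underlying $G$-$\infty$-category is the parametrized functor category $\ul{\Fun}_G(\C, \D)$, which settles that part of the claim.

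The main work is to show $\Fun^{\O}(\C, \D)^{\otimes} \to \O^{\otimes}$ is a coCartesian fibration, i.e.\ is $\O$-monoidal. A coCartesian lift of a morphism $o \to o'$ lying over $[f\colon U \to G/H]$, starting at a tuple of functors, should be given by an indexed left Kan extension: externally tensor the given functors along $\ul{\bigotimes}_{o \to o'}$ in $\ul{\D}$, then left-Kan-extend along the corresponding indexed tensor functor of $\ul{\C}$; the existence of this Kan extension uses that the parametrized fibers of $\ul{\D}^{\otimes}$ are $H$-cocomplete, which is part of \cref{def: distributivity}. Verifying that this candidate really is a coCartesian edge --- in particular that it is stable under base change along arbitrary pullbacks in $\F_G$ --- is where $\ul{\O}$-distributivity of $\ul{\D}^{\otimes}$ enters: distributivity says precisely that the indexed tensor products commute with the $G$-colimits computing these Kan extensions, which is what makes the pointwise formula assemble into a genuine coCartesian lift (the bookkeeping is parallel to the proof of \cref{prop: projection formula from distributivity}). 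I expect this to be the main obstacle; it is the parametrized analogue of the nontrivial classical fact that Day convolution into a distributive target is again monoidal, and it is essentially the content of \cite[Section 3]{nardinshah2022equivarianttopos}.

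Finally, the identification $\Alg_{\ul{\O}/\ul{\O}}\!\left(\ul{\Fun}^{\O}(\C, \D)\right) \simeq \Alg_{\ul{\C}/\ul{\O}}(\ul{\D})$ follows by unwinding the universal property with $K = \O^{\otimes}$: an $\O$-algebra in $\Fun^{\O}(\C, \D)^{\otimes}$ is an inert-preserving section $\O^{\otimes} \to \Fun^{\O}(\C, \D)^{\otimes}$ over $\O^{\otimes}$, which by the defining formula is a map $\C^{\otimes} \simeq \O^{\otimes} \times_{\O^{\otimes}} \C^{\otimes} \to \D^{\otimes}$ over $\O^{\otimes}$, and the inert-preservation condition on the section translates, through the subcategory inclusion that defines $\Fun^{\O}(\C, \D)^{\otimes}$, exactly into the condition cutting out $\Alg_{\ul{\C}/\ul{\O}}(\ul{\D}) \subseteq \Fun_{/\O^{\otimes}}(\C^{\otimes}, \D^{\otimes})$. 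Checking that this equivalence is natural in the evident variables then completes the argument.
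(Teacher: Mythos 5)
The paper itself contains no proof of this statement—it is imported directly from Nardin--Shah \cite[Section 3]{nardinshah2022equivarianttopos}—so the only comparison available is with that cited construction, and your sketch follows essentially the same route: the relative functor-category (norm) construction characterized by the mapping-space formula, distributivity of $\ul{\D}^{\otimes}$ supplying the parametrized Kan extensions that furnish coCartesian lifts, and the identification of $\O$-algebras by unwinding inert-preserving sections over $\O^{\otimes}$. I see no genuine gap; the outline matches the argument in the source.
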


\noindent For example, $\myuline{\Sc}_{G}^{\times}$ is distributive (\cref{example: S Cat distributive}), so also its pullback along $\O^{\otimes} \to \Span(\F_{G})$ by \cref{lemma: distributivity along pullback}, so for $\O^{\otimes} \in \Op_{G, \infty}$ and $X \in \Mon_{\O}(\Sc)$ we find $\tb{\PSh_{G}^{\O}(X)^{\otimes}} \in \Mon_{\O}(\Sc)$ whose underlying $G$-$\infty$-category is $\myuline{\PSh}_{G}(X)$.

\begin{corollary} \label{theorem: microcosmic straightening-unstraightening}
    Let $\O^{\otimes} \in \Op_{G, \infty}$
    \begin{enumerate}[(i)]
        \item  Let $\C^{\otimes} \in \Mon_{\O}(\Cat_{\infty})$. Then, $\Alg_{\myuline{\O}/\myuline{\O}}\left(\myuline{\Fun}^{\O}(\myuline{\C}, \myuline{\Cat}_{G, \infty}) \right) \simeq \Mon_{\C}(\Cat_{\infty})$.
        \item Let $X^{\otimes} \in \Mon_{\O}(\Sc)$. Then, $\Alg_{\ul{\O}/\ul{\O}} (\ul{\PSh}_{G}^{\O}(X) ) \simeq \Mon_X(\Sc) \simeq \Mon_{\O}(\Sc)_{/X^{\otimes}}$.
    \end{enumerate}
    
\end{corollary}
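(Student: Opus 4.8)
The plan is to deduce both statements by chaining together the already-established results: the microcosmic Day convolution result (\cref{theorem: omnibus day convolution}), the identification of algebras-in-cartesian-categories with Segal objects (\cref{prop: Alg(...) = Mon(...)}), and -- for the second part -- classical parametrized straightening-unstraightening (\cref{prop: parametrized straightening}). For (i), I would first observe that $\myuline{\Cat}_{G, \infty}^{\times}$ is $\ul{\O}$-distributive: it is distributive as a $G$-symmetric monoidal category by \cref{example: S Cat distributive}, and pulling back along $\O^{\otimes} \to \Span(\F_G)$ preserves distributivity by \cref{lemma: distributivity along pullback}. Hence \cref{theorem: omnibus day convolution} applies with $\D^{\otimes} = \myuline{\Cat}_{G, \infty}^{\times}$, giving $\Alg_{\myuline{\O}/\myuline{\O}}\bigl(\myuline{\Fun}^{\O}(\myuline{\C}, \myuline{\Cat}_{G, \infty})\bigr) \simeq \Alg_{\ul{\C}/\ul{\O}}(\myuline{\Cat}_{G, \infty})$. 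Now $\Alg_{\ul{\C}/\ul{\O}}(\myuline{\Cat}_{G, \infty})$ is precisely $\Alg_{\ul{\C}}(\myuline{\Cat}_{G, \infty})$, which by \cref{prop: Alg(...) = Mon(...)} applied to the $G$-$\infty$-operad $\C^{\otimes}$ is $\Mon_{\C}(\Cat_{\infty})$. Here one must check that the cartesian $G$-symmetric monoidal structure on $\myuline{\Cat}_{G,\infty}$ is the one used in \cref{prop: Alg(...) = Mon(...)}, which it is by construction.

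For (ii), I would run the identical argument with $\myuline{\Sc}_G^{\times}$ in place of $\myuline{\Cat}_{G,\infty}^{\times}$: it is distributive by \cref{example: S Cat distributive}, so its pullback along $\O^{\otimes}\to\Span(\F_G)$ is $\ul{\O}$-distributive by \cref{lemma: distributivity along pullback}, and as noted immediately before the corollary this produces $\ul{\PSh}_G^{\O}(X)^{\otimes} \in \Mon_{\O}(\Sc)$. Then \cref{theorem: omnibus day convolution} with $\C^{\otimes} = X^{\otimes}$ and $\D^{\otimes} = \myuline{\Sc}_G^{\times}$ gives
\[
\Alg_{\ul{\O}/\ul{\O}}\bigl(\ul{\PSh}_G^{\O}(X)\bigr) \simeq \Alg_{X^{\otimes}/\ul{\O}}(\myuline{\Sc}_G) = \Alg_{X^{\otimes}}(\myuline{\Sc}_G),
\]
and \cref{prop: Alg(...) = Mon(...)} applied to the $G$-$\infty$-operad $X^{\otimes}$ identifies the right-hand side with $\Mon_X(\Sc)$. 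The last equivalence $\Mon_X(\Sc) \simeq \Mon_{\O}(\Sc)_{/X^{\otimes}}$ I would obtain by noting that a $G$-commutative-monoid-object in $X^{\otimes}$-spaces is the same as an $X^{\otimes}$-monoidal category living over $X^{\otimes}$ whose fibers are spaces, i.e. an object of $\Mon_{\O}(\Sc)$ equipped with a map of $\O$-monoidal categories to $X^{\otimes}$; more precisely, one can invoke (classical, parametrized) monoidal straightening in the form already used to define Day convolution, or simply cite that slicing $\Mon_{\O}(\Sc)$ over the algebra $X^{\otimes}$ recovers $X$-monoidal spaces, which is the space-level shadow of \cref{theorem: universal property of monoidal slice}.

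I expect the only genuinely delicate point -- and hence the main obstacle -- to be verifying that the various cartesian $G$-symmetric monoidal structures appearing in \cref{theorem: omnibus day convolution}, \cref{prop: Alg(...) = Mon(...)}, and the pullback construction genuinely agree, so that the equivalences compose on the nose rather than merely abstractly; this is the sort of bookkeeping that is easy to state and tedious to nail down, which is exactly why the paper flags that it ``doesn't claim much originality'' here and that the result follows by assembling known statements. Everything else is a formal chain of the cited equivalences.
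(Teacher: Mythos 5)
Your proposal is correct and follows essentially the same route as the paper: apply the universal property of Day convolution (\cref{theorem: omnibus day convolution}), identify the resulting relative $\ul{\O}$-algebras with absolute ones, and finish with \cref{prop: Alg(...) = Mon(...)}, checking distributivity exactly as the paper does in the surrounding text. The only cosmetic difference is in the final equivalence $\Mon_X(\Sc) \simeq \Mon_{\O}(\Sc)_{/X^{\otimes}}$ of (ii), which the paper justifies simply by noting that for space-valued fibrous patterns every edge is automatically coCartesian and automatically preserved, rather than by the monoidal-slice universal property you gesture at; your description of the objects is right, so this is an imprecision of citation rather than a gap.
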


\begin{proof}
    \hfill 
    \begin{enumerate}[(i)]
        \item By the universal property of Day convolution (\cref{theorem: omnibus day convolution}) we compute 
    \begin{align*}
        \Alg_{\myuline{\O}/\myuline{\O}}\left(\ul{\Fun}^{\O}(\myuline{\C}, \myuline{\Cat}_{G, \infty}) \right) &\simeq \Alg_{\myuline{\C}/\myuline{\O}}(\myuline{\O}^{\otimes} \times_{\myuline{\F}_{G, *}} \myuline{\Cat}_{G, \infty}^{\times})
        \\ &\simeq \Alg_{\myuline{\C}}\left(\myuline{\Cat}_{G, \infty} \right)
        \\ &\simeq \Mon_{\C}(\Cat_{\infty})
    \end{align*}
    where we use \cref{prop: Alg(...) = Mon(...)} in the last step. Note in the first equivalence that we take the $\O$-monoidal version of $\ul{\Cat}_{G, \infty}$. Then, the second equivalence is for example via  $\Fbrs(\ul{\O}^{\otimes}) \simeq \Fbrs(\ul{\F}_{G, *})_{/\O^{\otimes}}$ \cite[Corollary 4.1.17]{barkanhaugsengsteinebrunner2024envelopesalgebraicpatterns}.

    \item By the universal property of Day convolution (\cref{theorem: omnibus day convolution}) and abusing $X \simeq X^{\op}$ we can thus compute 
    \begin{align*}
        \Alg_{\myuline{\O}/\myuline{\O}}\left(\myuline{\PSh}_{G}^{\O}(X) \right) &\simeq \Alg_{\myuline{X}/\myuline{\O}}(\myuline{\O}^{\otimes} \times_{\myuline{\F}_{G, *}}\myuline{\Sc}_{G}^{\times}) 
        \\ &\simeq \Alg_{\myuline{X}}(\myuline{\Sc}_{G}) 
        \\ &\simeq \Mon_{X}(\Sc)
    \end{align*}
    where we use \cref{prop: Alg(...) = Mon(...)} in the last step. Moreover, we get $\Mon_X(\Sc) \simeq \Mon_{\O}(\Sc)_{/X^{\otimes}}$ because over spaces every edge is coCartesian.\qedhere
    \end{enumerate}
\end{proof}

\begin{corollary} \label{corollary: real microcosmic}
    Let $\O^{\otimes} \in \Op_{G, \infty}$ and $X^{\otimes} \in \Mon_{\Span(\F_G)}(\Sc)$. Then, 
    \[ \Alg_{\ul{\O}}(\ul{\PSh}^{\O}_G(X)) \simeq \Alg_{\ul{\O}}(\ul{\Sc}^G_{/X}) \] 
    where $\ul{\Sc}^G_{/X}$ is endowed with the slice monoidal structure (\cref{prop: slice is O-monoidal}).
\end{corollary}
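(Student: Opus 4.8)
The plan is to identify both sides with the slice $\infty$-category $\Mon_{\O}(\Sc)_{/X^{\otimes}}$, where (following the abuse of notation in the statement) $X^{\otimes}$ also denotes the restriction of the given $G$-symmetric monoidal $G$-space along the structure map $p\colon \O^{\otimes}\to\Span(\F_G)$ — this restriction is precisely what makes $\ul{\PSh}^{\O}_G(X)$ meaningful in the first place. First I would record that $\ul{\Sc}_G^{\times}$ is distributive (\cref{example: S Cat distributive}), so that its pullback along $p$ is $\O$-distributive (\cref{lemma: distributivity along pullback}) and the parametrized Day convolution $\ul{\PSh}^{\O}_G(X)^{\otimes}$ exists (\cref{theorem: omnibus day convolution}); this places us exactly in the situation of \cref{theorem: microcosmic straightening-unstraightening}, which immediately gives
\[
  \Alg_{\ul{\O}}(\ul{\PSh}^{\O}_G(X)) = \Alg_{\ul{\O}/\ul{\O}}(\ul{\PSh}^{\O}_G(X)) \simeq \Mon_X(\Sc) \simeq \Mon_{\O}(\Sc)_{/X^{\otimes}}.
\]
So the left-hand side is handled by an already-proven result.

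For the right-hand side I would invoke the universal property of the monoidal slice (\cref{theorem: universal property of monoidal slice}(i)) with ambient algebraic pattern $\Span(\F_G)$, with $\C^{\otimes} = \O^{\otimes}$ (a fibrous $\Span(\F_G)$-pattern, since $\O^{\otimes}\in\Op_{G,\infty} = \Fbrs(\Span(\F_G))$), with $\D^{\otimes} = \ul{\Sc}_G^{\times}$, and with the algebra $A = X^{\otimes}\in\Alg_{\Span(\F_G)}(\ul{\Sc}_G)$. Since $\ul{\Sc}^G_{/X} = (\ul{\Sc}_G^{\times})_{/X^{\otimes}}$ is itself a fibrous $\Span(\F_G)$-pattern — in fact $G$-symmetric monoidal (\cref{prop: slice is O-monoidal}, using that $\Span(\F_G)$ is soundly extendable) — we have $\Alg_{\ul{\O}}(\ul{\Sc}^G_{/X}) = \Fun_{\Fbrs(\Span(\F_G))}(\O^{\otimes},(\ul{\Sc}_G^{\times})_{/X^{\otimes}})$, and the universal property rewrites this as
\[
  \Fun_{\Fbrs(\Span(\F_G))}(\O^{\otimes},\ul{\Sc}_G^{\times})_{/X^{\otimes}\circ p} = \Alg_{\ul{\O}}(\ul{\Sc}_G)_{/X^{\otimes}\circ p}.
\]
Then I would apply \cref{prop: Alg(...) = Mon(...)} to identify $\Alg_{\ul{\O}}(\ul{\Sc}_G)\simeq\Mon_{\O}(\Sc)$; under this equivalence $X^{\otimes}\circ p$ — the precomposition with $p$ of the section classifying $X^{\otimes}$ — corresponds precisely to the $\O$-monoidal restriction of $X^{\otimes}$, which is the same object that entered the left-hand side. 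Comparing the two displays then yields the asserted equivalence $\Alg_{\ul{\O}}(\ul{\PSh}^{\O}_G(X))\simeq\Alg_{\ul{\O}}(\ul{\Sc}^G_{/X})$.

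I expect no genuine obstacle here: the two substantive inputs — monoidality of the slice and the universal property of Day convolution — have already been established, and what remains is bookkeeping. The two things to be careful about are, first, that the monoidal slice really is a bona fide fibrous $\Span(\F_G)$-pattern, so that $\Fun_{\Fbrs(\Span(\F_G))}(\O^{\otimes},-)$ of it genuinely computes $\Alg_{\ul{\O}}(-)$; and second, that the two occurrences of $X^{\otimes}$ appearing in the slice categories above are literally the same object of $\Mon_{\O}(\Sc)$ (i.e. the restriction along $p$ of the given algebra). Both are straightforward unwindings of definitions. One could alternatively verify the equivalence first on mapping spaces via \cref{prop: abstract antolin-camarena-barthel} and then promote it, but the route above produces the categorical equivalence directly.
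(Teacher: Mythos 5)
Your argument is correct and is essentially the paper's own proof: the paper also combines \cref{theorem: microcosmic straightening-unstraightening}, \cref{prop: Alg(...) = Mon(...)}, and the universal property of the monoidal slice (\cref{theorem: universal property of monoidal slice}(i)) to identify both sides with $\Alg_{\ul{\O}}(\ul{\Sc}^G)_{/X^{\otimes}}$, i.e.\ $\Mon_{\O}(\Sc)_{/X^{\otimes}}$. The only cosmetic difference is that the paper starts its chain from $\Alg_{\ul{\O}}(\ul{\PSh}_G(X))$ and therefore first records that Day convolution is compatible with pullback along $\ul{\O}^{\otimes}\to\ul{\F}_{G,*}$, an identification you bypass by reading the left-hand side directly as $\Alg_{\ul{\O}/\ul{\O}}(\ul{\PSh}^{\O}_G(X))$.
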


\begin{proof}
    Let us first remark that $\ul{\PSh}_G(X)^{\otimes} \times_{\F_{G,*}} \ul{\O}^{\otimes} \simeq \ul{\PSh}^{\O}(X)^{\otimes}$ where the second term implicitly uses $X^{\otimes} \times_{\F_{G,*}} \ul{\O}^{\otimes}$, i.e. the underlying $\ul{\O}$-monoidal space of $X^{\otimes}$. This follows by the explicit construction of Day convolution through norms \cite[Definition 3.1.6]{nardinshah2022equivarianttopos} and one can compare them through the universal property of these norms \cite[Proposition 3.1.7]{nardinshah2022equivarianttopos}.
    \medskip \\With this in our arsenal, we can write out
    \begin{align*}
        \Alg_{\ul{\O}} \left(\ul{\PSh}_G(X) \right) &\simeq \Alg_{\ul{\O}/\ul{\O}}\left(\ul{\PSh}_G(X)^{\otimes} \times_{\F_{G,*}} \ul{\O}^{\otimes} \right) 
        \\ &\simeq \Alg_{\ul{\O}/\ul{\O}}(\ul{\PSh}_G^{\O}(X)) 
        \\ &\simeq \Alg_{\ul{\O}}(\ul{\Sc}^G)_{/X^{\otimes}} 
        \\ &\simeq \Alg_{\ul{\O}}(\ul{\Sc}^G_{/X})
    \end{align*}
    where we use the previous result (\cref{theorem: microcosmic straightening-unstraightening}) and the universal property of the slice monoidal structure (\cref{theorem: universal property of monoidal slice} \textcolor{chilligreen}{(i)}).
\end{proof}

\noindent This is really the result that should be called microcosmic straightening-unstraightening. It is the equivalence you obtain by applying $\Alg_{\O}$ to a stronger macrocosmic straightening-unstraightening $\ul{\PSh}_G^{\O}(X)^{\otimes} \simeq (\ul{\Sc}^G_{/X})^{\otimes}$.

\subsection{Monoidality of Parametrized Left Module Categories}\label{sec:monoidal_LMod}
We equivariantize Lurie's $\infty$-category of left modules $\LMod_A(\C)$ via parametrized higher algebra to $\LMod_A^G(\C)$. By adapting a criterion about coCartesian fibrations of Haugseng--Melani--Safronov \cite[Lemma A.1.8]{haugsengmelanisafronov2022shiftedisotropic} to the parametrized setting we endow $\LMod_A^G(\C)$ with a multiplicative structure. We then spend much effort in showing that this multiplicative structure is distributive in the sense of Nardin--Shah (\cref{def: distributivity}) culminating in the most technical proof of this article. Throughout, we are careful to make this work for more $G$-$\infty$-operads than only the terminal one.
\medskip \\Let us first recall parts of the classical setting. Let $\C^{\otimes}$ be a symmetric monoidal $\infty$-category. There is a preferred map between $\infty$-operads $\E_1 \to \LM$ such that for $A \in \Alg_{\E_1}(\C)$ the pullback
\begin{center}
    \begin{tikzcd}
        \LMod_A(\C) \arrow[r] \arrow[d] \arrow[dr, phantom, very near start, "\lrcorner"] & \Alg_{\LM}(\C) \arrow[d]
        \\ * \arrow[r, "A", swap] & \Alg_{\E_1}(\C)
    \end{tikzcd}
\end{center}
is the $\infty$-category of left $A$-modules in $\C$ \cite[Definition 4.2.1.13]{lurie2017ha}. Recall that $\Alg_{\LM}(\C)$ is the $\infty$-category with objects $(R,M)$ encoding an $\E_1$-algebra $R$ that acts on some object $M \in \C$. Observe that $\LMod_A(\C)$ is the fiber of a coCartesian fibration:
\begin{lemma}[Lurie] \label{lemma: AlgLM ---> AlgE1 is coCartesian fibration}
    Let $\C^{\otimes}$ be a symmetric monoidal $\infty$-category whose underlying $\infty$-category $\C$ admits geometric realizations and whose tensor product distributes over these.
    \begin{enumerate}[(i)]
        \item Then, $\Alg_{\LM}(\C) \to \Alg_{\E_1}(\C)$ is a coCartesian fibration.
        \item A coCartesian edge over $A \to B$ starting in $(A, M)$ is given by $(A,M) \to (B, B \otimes_A M)$.
    \end{enumerate} 
\end{lemma}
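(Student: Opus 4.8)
The statement is precisely \cite[Lemma 4.5.3.6]{lurie2017ha}, applied to the symmetric monoidal $\infty$-category $\C^{\otimes}$, so the plan is to recall Lurie's argument. Write $q\colon \Alg_{\LM}(\C) \to \Alg_{\E_1}(\C)$ for the functor induced by restriction along $\E_1 \to \LM$; its fibre over $A$ is, by definition, $\LMod_A(\C)$. The first observation is that a morphism of $\Alg_{\LM}(\C)$ lying over $f\colon A \to B$ with source $(A, M)$ is the same datum as a map $M \to N$ of $A$-modules, where $N \in \LMod_B(\C)$ is viewed as an $A$-module by restriction along $f$. So the goal is to exhibit an \emph{initial} such map, show it is $q$-coCartesian, and identify it with $(A, M) \to (B, B \otimes_A M)$.

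\textbf{Construction of the candidate edge.} I would form the relative tensor product via the two-sided bar construction
\[ B \otimes_A M \;=\; \colim_{[n] \in \Delta^{\op}} \bigl( B \otimes A^{\otimes n} \otimes M \bigr), \]
which exists because $\C$ admits geometric realizations. Since the tensor product of $\C$ distributes over geometric realizations, the forgetful functor $\LMod_B(\C) \to \C$ creates them; hence this bar object promotes to a simplicial object of $\LMod_B(\C)$ whose realization $B \otimes_A M$ is a left $B$-module with underlying object computed in $\C$. The canonical map out of the $0$-simplices gives a map $M \to B \otimes_A M$ of $A$-modules, hence a morphism $\alpha\colon (A, M) \to (B, B \otimes_A M)$ in $\Alg_{\LM}(\C)$ over $f$. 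This already pins down the candidate in (ii).

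\textbf{Universal property.} It remains to check that $\alpha$ is $q$-locally coCartesian, equivalently that extension of scalars $B \otimes_A ({-})\colon \LMod_A(\C) \to \LMod_B(\C)$ is left adjoint to restriction $f^{*}$ with $\alpha$ the unit at $M$. I would verify this by a bar-resolution argument: resolving $M$ by the free simplicial object $[n] \mapsto A^{\otimes(n+1)} \otimes M$ in $\LMod_A(\C)$ and using distributivity to commute $\Map_{\LMod_B(\C)}(B \otimes_A ({-}), N)$ past the colimit reduces the claim to free modules $M = A \otimes X$, for which $B \otimes_A (A \otimes X) \simeq B \otimes X$ and the chain $\Map_{\LMod_B}(B \otimes X, N) \simeq \Map_{\C}(X, N) \simeq \Map_{\LMod_A}(A \otimes X, f^{*}N)$ is immediate. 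This proves (ii) and shows $q$ is a locally coCartesian fibration. Finally, to upgrade to a coCartesian fibration one invokes the standard criterion that a locally coCartesian fibration whose locally coCartesian morphisms are closed under composition is coCartesian: for $A \xrightarrow{f} B \xrightarrow{g} C$ this reduces to the associativity equivalence $C \otimes_B (B \otimes_A M) \simeq C \otimes_A M$ of left $C$-modules, compatibly with the edges $\alpha$, which follows from a double bar construction and a cofinality argument for the diagonal $\Delta^{\op} \to \Delta^{\op} \times \Delta^{\op}$, again using distributivity. I expect the genuine obstacle to be exactly this last layer of bookkeeping — making the bar construction $\infty$-functorial and establishing its universal property and associativity rigorously — which is where in practice one defers to the operadic-colimit machinery of \cite[Sections 4.4 and 4.5]{lurie2017ha} rather than reproving it by hand.
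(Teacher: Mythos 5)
Your proposal is correct and matches the paper's route: the paper's entire proof is the citation \cite[Lemma 4.5.3.6, Proposition 4.6.2.17]{lurie2017ha}, which is exactly what you invoke, and your bar-construction sketch (local coCartesian edge as the unit of the extension-of-scalars adjunction, reduction to free modules, composability via associativity of relative tensor products) is a faithful outline of how those results are proved there. Nothing more is needed, since you correctly defer the genuinely technical operadic-colimit bookkeeping to Lurie rather than claiming to redo it.
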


\begin{proof}
    This is \cite[Lemma 4.5.3.6, Proposition 4.6.2.17]{lurie2017ha}.
\end{proof}

\noindent Lurie then proves the existence of a monoidal structure in suitable setups \cite[Corollary 4.8.5.20]{lurie2017ha} and goes on with life.

\begin{remark}
    This is not exactly what Lurie does in \cite[Definition 4.2.1.13]{lurie2017ha}. His approach allows for more general inputs than a (symmetric) monoidal $\infty$-category $\C^{\otimes}$ but in case the input is a (symmetric) monoidal $\infty$-category $\C$, then his construction is equivalent to the one claimed above.
    \medskip \\In that case, Lurie takes the same pullback square but with $\Fun^{\lax}_{/\E_1}(\LM^{\otimes}, \C^{\otimes} \times_{\F_*} \E_1^{\otimes})$ instead of $\Alg_{\LM}(\C)$ \cite[Example 4.2.1.16]{lurie2017ha}. We show that these are actually equivalent. Consider the composite of pullback squares
    \begin{center}
        \begin{tikzcd}
            \Fun_{/\E_1}^{\lax}(\LM^{\otimes}, \C^{\otimes} \times_{\F_*} \E_1^{\otimes}) \arrow[r] \arrow[d] \arrow[dr, phantom, very near start, "\lrcorner"] & \Fun_{/\F_*}^{\lax}(\LM^{\otimes}, \C^{\otimes} \times_{\F_*} \E_1^{\otimes}) \arrow[r] \arrow[d] \arrow[dr, phantom, very near start, "\lrcorner"] & \Fun_{/\F_*}^{\lax}(\LM^{\otimes}, \C^{\otimes}) \arrow[d]
            \\ * \arrow[r] & \Fun_{/\F_*}^{\lax}(\LM^{\otimes}, \E_1^{\otimes}) \arrow[r] & \Fun_{/\F_*}^{\lax}(\LM^{\otimes}, \F_*) 
        \end{tikzcd}
    \end{center}
    where the left pullback is for example \cite[Definition 2.1.3.1]{lurie2017ha}. The right square is a pullback square since $\Fun_{\Op_{\infty}}(\LM^{\otimes}, -)$ preserves pullbacks. We conclude that the composite rectangle is a pullback diagram and since $\Fun_{/\F_*}^{\lax}(\LM^{\otimes}, \F_*) \simeq *$, we obtain
    \[ \Fun_{/\E_1}^{\lax}(\LM^{\otimes}, \C^{\otimes} \times_{\F_*} \E_1^{\otimes}) \simeq \Fun_{/\F_*}^{\lax}(\LM^{\otimes}, \C^{\otimes}) = \Alg_{\LM}(\C) \]
    demonstrating that our exposition agrees with Lurie's.
\end{remark}

\noindent Our idea is to refine the pullback square in two ways, namely in a parametrized direction and in a higher algebra direction.

\begin{construction} \label{construction: LMod}
    Let $\C^{\otimes} \in \Mon_{\Span(\F_{G})}(\Cat_{\infty})$ with $\O^{\otimes} \in \Op_{G, \infty}$ and $A \in \Alg_{\O \otimes \E_1}(\C)$ corresponding to a map $\O^{\otimes} \to \Alg_{\E_1}(\C)^{\otimes}$ in $\Op_{G, \infty}$ by adjunction (\cref{theorem: omnibus Alg theorem}). Furthermore, note that $\Alg_{\LM}(\C)^{\otimes} \to \Alg_{\E_1}(\C)^{\otimes}$ is a map in $\Op_{G, \infty}$ since they have the pointwise monoidal structure (\cref{theorem: omnibus Alg theorem}). We denote by $\tb{\LMod_A^{G}(\C)^{\otimes}}$ the pullback
    \begin{center}
        \begin{tikzcd}
            \LMod_A^{G}(\C)^{\otimes} \arrow[r] \arrow[d] \arrow[dr, phantom, very near start, "\lrcorner"] & \Alg_{\LM}(\C)^{\otimes} \arrow[d]
            \\ \O^{\otimes} \arrow[r] & \Alg_{\E_1}(\C)^{\otimes}
        \end{tikzcd}
    \end{center}
    taken in $\Op_{G, \infty}$.
\end{construction}

\noindent For $H \leq G$, this recovers a fiberwise left module construction 
\[ \LMod_A^{G}(\C)^{\otimes}_H \simeq \LMod_{A_H}(\C_H) \] 
by \cref{theorem: omnibus Alg theorem} \textcolor{chilligreen}{(iii)} whose restrictions are computed in the underlying $G$-$\infty$-category of $\C^{\otimes}$ by another application of \cref{theorem: omnibus Alg theorem} \textcolor{chilligreen}{(iii)}, so in particular, the underlying $G$-$\infty$-category $\myuline{\LMod}_A^{G}(\C)^{\otimes}$ can be viewed as a $G$-parametrized version of Lurie's left module category.
\begin{remark}
    There are already other parametrized treatments of module categories, for example by Linskens--Nardin--Pol \cite[Appendix A]{linskens2022global} or Pützstück \cite[Section 5]{pützstück2025globalpicardspectraborel}. Both of these references have global equivariant applications in mind and in particular focus on the fully commutative ('ultracommutative' or 'normed') setup, while our intention is to give a general $\O$-monoidal treatment in a non-global setup. However, the ideas in \cite{linskens2022global} are quite close to our treatment.
\end{remark}
\noindent We will now demonstrate that our construction endows $\LMod_A^G(\C)$ with an $\O$-monoidal structure. Its proof depends on the following result generalizing \cite[Lemma 1.10]{ramzi2022monoidalgrothendieckconstructioninftycategories} which is based on \cite[Lemma A.1.8]{haugsengmelanisafronov2022shiftedisotropic}. We also took inspiration from \cite[Lemma B.6]{reutter2025enrichedinftycategoriesmarkedmodule}.

\begin{lemma}
    \label{lemma: criterion for map between Segal objects to be a coCartesian fibration} Let $\O \in \Op_{G, \infty}$ with $\C^{\otimes}, \D^{\otimes} \in \Mon_{\O}(\Cat_{\infty})$ and let $F\colon \C^{\otimes} \to \D^{\otimes}$ be a map in $\Mon_{\O}(\Cat_{\infty})$. Suppose the following two conditions:
    \begin{enumerate}[(i)]
        \item For every $H \leq G$ and $o \in \O_H^{\otimes}$ the induced functor $F_o\colon  \C_o^{\otimes} \to \D_o^{\otimes}$ is a coCartesian fibration.
        \item Let $o \to o'$ be a map in $\O^{\otimes}$ such that $o'$ lies over an orbit. Then, $\bigotimes_{o \to o'}\colon  \C_o^{\otimes} \to \C_{o'}^{\otimes}$ sends $F_o$-coCartesian edges to $F_{o'}$-coCartesian edges.
    \end{enumerate}
    Then, $\C^{\otimes} \to \D^{\otimes}$ is a coCartesian fibration.
\end{lemma}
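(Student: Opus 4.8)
The plan is to reduce the parametrized statement to the non-parametrized criterion of Haugseng--Melani--Safronov \cite[Lemma A.1.8]{haugsengmelanisafronov2022shiftedisotropic} (or rather its $\infty$-categorical reformulation \cite[Lemma 1.10]{ramzi2022monoidalgrothendieckconstructioninftycategories}) applied fiberwise over $\Orb_G^{\op}$, together with a check that coCartesianness is a ``pointwise'' condition detected on each orbit. Concretely, recall that a map $F\colon \C^{\otimes} \to \D^{\otimes}$ of $G$-$\infty$-operads (which by hypothesis is a map of coCartesian fibrations over $\O^{\otimes}$, i.e.~a map in $\Mon_{\O}(\Cat_{\infty})$) is a coCartesian fibration if and only if it admits all coCartesian lifts; and since everything in sight fibers over $\Span(\F_G)$, an edge $e$ in $\C^{\otimes}$ is coCartesian for $F$ if and only if it admits a factorization as an inert edge followed by an active edge each of which is $F$-coCartesian — this is the standard reduction for maps of $\infty$-operads, and it reduces the problem to (a) inert edges and (b) active edges lying over a single orbit. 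The inert edges are handled automatically because $F$ is a map of fibrous patterns, so $F$-coCartesian lifts of inerts exist and are detected on elementary objects via the Segal condition; this is where I would invoke that both $\C^{\otimes}$ and $\D^{\otimes}$ are Segal $\O$-objects. For the active edges over an orbit $G/H$, such an edge of $\C^{\otimes}$ sits over a map $U \to G/H$ in $\F_G$, and via the equivariant Segal condition (\cref{theorem: equivariant Segal conditions}) the relevant functor is precisely $\bigotimes_{o \to o'}\colon \C_o^{\otimes} \to \C_{o'}^{\otimes}$, where $\C_o^{\otimes} \simeq f_* \coprod_i \C_{o_i}^{\otimes}$.

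First I would set up the fiberwise picture: for each $H \leq G$ and each $o \in \O_H^{\otimes}$, condition (i) says $F_o\colon \C_o^{\otimes} \to \D_o^{\otimes}$ is a coCartesian fibration, which gives us coCartesian lifts ``vertically'' (within a fixed fiber of $\O^{\otimes}$). Next I would observe that an arbitrary edge $\alpha$ of $\C^{\otimes}$ over an active edge $o \to o'$ (with $o'$ over an orbit) can be factored, using the coCartesian fibration $\C^{\otimes} \to \O^{\otimes}$, as $\alpha \simeq \alpha'' \circ \alpha'$ where $\alpha'$ is a $(\C^{\otimes}\to\O^{\otimes})$-coCartesian lift of $o \to o'$ (so $\alpha' = \bigotimes_{o\to o'}$ applied to the source) and $\alpha''$ is a vertical edge in $\C_{o'}^{\otimes}$. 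Then $\alpha$ is $F$-coCartesian if and only if: $\alpha'$ maps to an $F_{o'}$-coCartesian edge — which is exactly what condition (ii) buys us, since $\bigotimes_{o\to o'}$ preserves $F$-coCartesian edges, and the source of $\alpha'$ can be arranged (by first taking a fiberwise coCartesian lift) to already be $F$-coCartesian in $\C_o^{\otimes}$ — and $\alpha''$ is $F_{o'}$-coCartesian, which holds by condition (i) since $F_{o'}$ is a coCartesian fibration. Assembling: given a starting object $x \in \C^{\otimes}$ over $o$ and a target $d' \in \D^{\otimes}$ over $o'$ with $F(x) \to d'$ a chosen edge, one first takes the $F_o$-coCartesian lift within $\C_o^{\otimes}$ (condition (i)), then applies $\bigotimes_{o\to o'}$ (condition (ii) ensures $F$-coCartesianness is preserved), then composes with the fiberwise $F_{o'}$-coCartesian lift over the remaining vertical part; a two-out-of-three / pasting argument for coCartesian edges shows the composite is the desired $F$-coCartesian lift.

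The main obstacle I expect is the bookkeeping needed to verify that \emph{every} edge of $\C^{\otimes}$ — not just inert ones and active ones over an orbit — is covered by this factorization, i.e.~that the reduction ``inert then active'' together with ``active edges split along orbit decompositions'' really is exhaustive and compatible with the Segal equivalences $\C_o^{\otimes}\simeq f_*\coprod_i \C_{o_i}^{\otimes}$. In particular one must check that the equivalence in \cref{theorem: equivariant Segal conditions} is natural enough to identify the ambient coCartesian-lift problem with a product/coinduction of the per-orbit lift problems, and that condition (ii) is stated for precisely the class of maps ($o'$ over an orbit) that arises. This is genuinely the ``parametrized'' content: in the non-equivariant proof of \cite[Lemma 1.10]{ramzi2022monoidalgrothendieckconstructioninftycategories} the analogous step is the observation that $\bigotimes$ over $\langle n\rangle \to \langle 1\rangle$ preserves coCartesian edges, and here we need its indexed-tensor analogue over all $U \to G/H$, which the Segal condition reduces to the orbit case — but making that reduction precise (including the role of the norms $f_*$) is where the care lies. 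Once the factorization is established, the conclusion that $F$ is a coCartesian fibration is formal from the uniqueness-up-to-equivalence of coCartesian lifts and the fact that being a coCartesian fibration can be checked by exhibiting all lifts.
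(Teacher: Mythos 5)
You correctly identify the key external input — the Haugseng--Melani--Safronov criterion \cite[Lemma A.1.8]{haugsengmelanisafronov2022shiftedisotropic} (cf.\ \cite[Lemma 1.10]{ramzi2022monoidalgrothendieckconstructioninftycategories}) — but your proposal then drifts away from it and effectively tries to re-derive it, and that is where the problems lie. The paper's proof is a direct black-box application of that criterion to $F\colon \C^{\otimes}\to\D^{\otimes}$ over $\O^{\otimes}$ (not ``fiberwise over $\Orb_G^{\op}$''): its conditions (1) and (2) — both structure maps are coCartesian fibrations over $\O^{\otimes}$ and $F$ preserves coCartesian edges — are exactly the standing hypothesis that $F$ is a map in $\Mon_{\O}(\Cat_{\infty})$, and its conditions (3) and (4) are verified for \emph{arbitrary} $o,o'$ from your (i) and (ii) by the plain Segal decomposition of ordinary fibers: for $o\in\O^{\otimes}_X$ with $X\simeq\coprod_i G/H_i$ one has $\C^{\otimes}_o\simeq\prod_i\C^{\otimes}_{o_i}$, a product of coCartesian fibrations is a coCartesian fibration, and coCartesianness of an edge of a product is detected componentwise, so condition (4) for a general target $o'$ reduces by projection to maps with orbit target, which is exactly what (ii) supplies. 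Note that only ordinary fibers and finite products enter here; the coinduction $f_*$ of \cref{theorem: equivariant Segal conditions} concerns \emph{parametrized} fibers and plays no role, so the ``naturality of the norms'' issue you single out as the main obstacle is not actually the obstacle, while the reduction you defer as delicate is precisely the short verification that constitutes the proof.

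The hands-on lift construction you sketch instead is not correct as stated. To lift an edge $F(x)\to d'$ of $\D^{\otimes}$ over $o\to o'$, the standard composite is: take the $(\C^{\otimes}\to\O^{\otimes})$-coCartesian edge $x\to\bigotimes_{o\to o'}x$, then a fiberwise $F_{o'}$-coCartesian lift of the remaining vertical edge; your first step, ``take the $F_o$-coCartesian lift within $\C^{\otimes}_o$,'' has no edge of $\D^{\otimes}_o$ to lift, because the given edge does not factor as vertical-then-horizontal. Moreover, your claim that $F$-coCartesian lifts over inert edges exist ``automatically'' from the fibrous-pattern structure is unjustified — fibrous patterns give coCartesian lifts of inerts relative to the base $\Span(\F_G)$, not relative to $F$ — and the assertion that the horizontal piece of the composite is $F$-coCartesian is exactly the nontrivial content of the HMS lemma (its proof is where condition (4) is used), so it cannot be waved through with condition (ii) alone. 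If you want a by-hand proof you would essentially have to reproduce that argument; the efficient route, and the one the paper takes, is to cite the criterion and check its four hypotheses as above.
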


\begin{proof}
    The criterion given in \cite[Lemma A.1.8]{haugsengmelanisafronov2022shiftedisotropic} specialized to our setting states that the following conditions altogether imply that $\C^{\otimes} \to \D^{\otimes}$ is a coCartesian fibration.
    \begin{enumerate}[(1)]
        \item The functors $\C^{\otimes} \to \O^{\otimes}$ and $\D^{\otimes} \to \O^{\otimes}$ are coCartesian fibrations.
        \item The functor $F\colon  \C^{\otimes} \to \D^{\otimes}$ preserves coCartesian edges over $\O^{\otimes}$.
        \item For each $o \in \O^{\otimes}$ the induced map $F_o\colon  \C_o^{\otimes} \to \D_o^{\otimes}$ is a coCartesian fibration.
        \item Let $o \to o'$ be a map in $\O^{\otimes}$. Then, the induced map $\C_o^{\otimes} \to \C_{o'}^{\otimes}$ takes $F_o$-coCartesian edges to $F_{o'}$-coCartesian edges.
    \end{enumerate}
    Parts (1) and (2) are already contained in our assumptions. So let us confirm (3) and (4).
    \begin{enumerate}
        \item[(3)] Suppose $o \in \O_X^{\otimes}$ where we have an orbit decomposition $X \simeq \coprod_i G/H_i$ with $H_i \leq G$. In that regard, $\O_X^{\otimes} \simeq \prod_i \O_{H_i}^{\otimes}$ and $o$ corresponds to some tuple $(o_i)_i$ under this correspondence. Then, the map becomes 
        \[ F_o\colon  \C_o^{\otimes} \simeq \prod_i \C_{o_i}^{\otimes} \xrightarrow{\prod_i F_{o_i}} \prod_i \D_{o_i}^{\otimes} \simeq \D_o^{\otimes} \]
        which is a product of coCartesian fibrations by (i) and thus itself a coCartesian fibration.
        \item[(4)] Suppose $o' \in \O_Y^{\otimes}$ with orbit decompositions $Y \simeq \prod_j G/K_j$. Then, $o'$ correspond to $(o'_j)_j$ and the map in question becomes
        \[ \C_o^{\otimes} \to \C_{o'}^{\otimes} \to \prod_{j} \C_{o_j}^{\otimes}. \]
        So it suffices that the component maps $\C_o^{\otimes} \to \C_{o_j}^{\otimes}$ send $F_o$-coCartesian edges to $F_{o_j}$-coCartesian edges which is assumption (ii).
    \end{enumerate}
    So we win.
\end{proof}

\begin{corollary} \label{corollary: criterion for map between symmetric monoidal categories to be cocartesian fibration}
    Let $\C^{\otimes}, \D^{\otimes} \in \Mon_{\Span(\F_{G})}(\Cat_{\infty})$ and consider a map $F\colon \C^{\otimes} \to \D^{\otimes}$ in $\Mon_{\Span(\F_G)}(\Cat_{\infty})$. Suppose the following two conditions:
    \begin{enumerate}[(i)]
        \item For every $H \leq G$ the induced functor $F_H\colon  \C_H^{\otimes} \to \D_H^{\otimes}$ is a coCartesian fibration.
        \item Norms, fiberwise tensor products and restrictions preserve coCartesian edges, i.e. for $H \leq K \leq G$ the maps
        \[ \Res_H^K\colon  \C_K^{\otimes} \to \C_H^{\otimes}, \ \otimes\colon  \C_H^{\otimes} \times \C_H^{\otimes} \to \C_H^{\otimes}, \ N_H^K\colon  \C_H^{\otimes} \to \C_K^{\otimes} \]
        preserve coCartesian edges over the respective fibers of $\D^{\otimes}$.
    \end{enumerate}
    Then, $\C^{\otimes} \to \D^{\otimes}$ is a coCartesian fibration.
\end{corollary}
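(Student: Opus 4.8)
The plan is to obtain this as a direct application of the general criterion \cref{lemma: criterion for map between Segal objects to be a coCartesian fibration} to the terminal $G$-$\infty$-operad $\O^{\otimes} = \Span(\F_G) = \E_{\infty}^G$, for which $\Mon_{\Span(\F_G)}(\Cat_{\infty})$ is exactly the $\infty$-category of $G$-symmetric monoidal $\infty$-categories and $F\colon \C^{\otimes}\to\D^{\otimes}$ is an honest map of $\Span(\F_G)$-monoidal $\infty$-categories. All that then remains is to match the two hypotheses of that Lemma against (i) and (ii) in the present statement.

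For the Lemma's first hypothesis: since the structure map of $\Span(\F_G)$ as a $G$-$\infty$-operad over itself is the identity, the unique object of $\Span(\F_G)$ lying over an orbit $G/H$ is $G/H$ itself, and the corresponding fiber $\C^{\otimes}_{G/H}$ is the $H$-level $\C_H$ of the underlying $G$-$\infty$-category; the induced functor on it is precisely $F_H\colon \C_H^{\otimes}\to\D_H^{\otimes}$, which is a coCartesian fibration by hypothesis (i). The passage to fibers over an arbitrary finite $G$-set $U=\coprod_i G/H_i$, where by the Segal conditions (\cref{theorem: equivariant Segal conditions}) the fiber is the product $\prod_i \C_{H_i}$ and the induced map is the product $\prod_i F_{H_i}$ of coCartesian fibrations, is already carried out inside the proof of \cref{lemma: criterion for map between Segal objects to be a coCartesian fibration}, so nothing extra is required here.

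The substantive work is the Lemma's second hypothesis: for every morphism $o\to o'$ of $\Span(\F_G)$ with $o'=G/K$ an orbit, the indexed tensor product $\bigotimes_{o\to o'}\colon \C^{\otimes}_o\to\C^{\otimes}_{o'}$ must carry $F_o$-coCartesian edges to $F_{o'}$-coCartesian edges. Such a morphism is a span $U \xleftarrow{p} W \xrightarrow{q} G/K$, and its coCartesian pushforward in $\C^{\otimes}$ factors as restriction along the backward leg $p$ followed by the indexed tensor along the forward leg $q$. On orbit decompositions, restriction along $p$ is, up to reindexing and projection, a product of restriction functors $\Res$, each possibly precomposed with a conjugation equivalence since the orbits of $W$ are only subconjugate to those of $U$. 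For the forward leg, writing $W\simeq G\times_K W'$ with $W'=W\times_{G/K}\{eK\}$ the fiber over the basepoint --- a finite $K$-set --- and invoking the Segal conditions, $\bigotimes_q$ is identified with the $W'$-indexed tensor product of the $K$-symmetric monoidal $\infty$-category underlying $\C^{\otimes}$, which decomposes in turn as an iterated fiberwise tensor product in $\C_K$ of the norms $N_L^K$ indexed by the orbits $K/L$ of $W'$, now with $L\leq K$ on the nose. Every kind of functor occurring here preserves $F$-coCartesian edges: the restrictions $\Res$, the norms $N_L^K$, and the fiberwise tensor products by hypothesis (ii); and the conjugations because they are equivalences compatible with the $G$-functor $F$, so $F_H$ and $F_{gHg^{-1}}$ are conjugate under these equivalences and the conjugation is a map of coCartesian fibrations. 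A composite of functors preserving coCartesian edges again preserves coCartesian edges, so $\bigotimes_{o\to o'}$ does, which is the Lemma's second hypothesis.

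With both hypotheses of \cref{lemma: criterion for map between Segal objects to be a coCartesian fibration} verified, it follows that $\C^{\otimes}\to\D^{\otimes}$ is a coCartesian fibration. The only delicate point is the bookkeeping in the previous paragraph --- recording exactly how the indexed tensor product along an arbitrary span with orbit target unwinds into restrictions, conjugations, norms, and fiberwise tensor products --- but this is routine equivariant combinatorics rather than a genuine obstacle, the substantive content having already been isolated in \cref{lemma: criterion for map between Segal objects to be a coCartesian fibration}.
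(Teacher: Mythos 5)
Your proposal is correct and follows essentially the same route as the paper: both reduce the statement to \cref{lemma: criterion for map between Segal objects to be a coCartesian fibration} applied with $\O^{\otimes}=\Span(\F_G)$, and verify its second hypothesis by decomposing the pushforward along an arbitrary span with orbit target into restrictions (backward leg) and iterated fiberwise tensors of norms (forward leg), all of which preserve coCartesian edges by (ii). The paper is merely terser, handling the degenerate generators (units from $\emptyset\to G/H$, maps to the point, and backward fold maps giving diagonals) as explicitly listed automatic cases, which your "reindexing and projection" bookkeeping covers implicitly.
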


\begin{proof}
     We want to check the conditions in \cref{lemma: criterion for map between Segal objects to be a coCartesian fibration}. Note here that $\bigoplus_{G/K \leftarrow G/H} = \Res_H^K$. 
     An induction argument reduces allows us to only demand fiberwise tensor products of two objects. We furthermore note that the following are automatic:
    \begin{itemize}
            \item The forwards maps $\emptyset \to G/H$ induce units $\one\colon  * \to \C_{H}^{\otimes}$ and this certainly preserves coCartesian edges since it sends $\id_*$ to $\id_{\one}$.
            \item The backwards maps $G/H \leftarrow \emptyset$ induce maps $\C_H^{\otimes} \to *$ which preserves coCartesian edges since every edge in $*$ is $\id_*$-coCartesian.
            \item The backwards fold 
            maps $G/H \leftarrow \coprod_i G/H$ induce diagonal maps $\C_H^{\otimes} \to \prod_i \C_H^{\otimes}$ which hence preserve coCartesian edges.
        \end{itemize}
        This is why there is no need to furthermore demand these in (ii).   
\end{proof}

\begin{theorem} \label{theorem: LMod ---> O is O-monoidal}
    Let $\C^{\otimes} \in \Mon_{\Span(\F_G)}(\Cat_{\infty})$ whose underlying $G$-$\infty$-category has fiberwise geometric realizations and whose norms, fiberwise tensorings with one object and restrictions commute over these fiberwise geometric realizations. Let $\O^{\otimes} \in \Op_{G, \infty}$ and $A \in \Alg_{\O \otimes \E_1}(\C)$. Then, $\LMod_A^{G}(\C)^{\otimes} \to \O^{\otimes}$ is an $\O$-monoidal $\infty$-category.
\end{theorem}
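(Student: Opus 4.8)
The plan is to reduce the statement to a single coCartesian-fibration check, following the two-step refinement (parametrize, then pass to higher algebra) that motivated \cref{construction: LMod}. By that construction $\LMod_A^G(\C)^\otimes$ is defined by a pullback square formed in $\Op_{G,\infty}$; since pullbacks of fibrous patterns are computed in $\Cat_\infty$, the structure map $\LMod_A^G(\C)^\otimes\to\O^\otimes$ is a morphism of $G$-$\infty$-operads, and it is an $\O$-monoidal $G$-$\infty$-category precisely when it is, in addition, a coCartesian fibration (recall that $\Mon_\O(\Cat_\infty)$ is the subcategory of fibrous $\O^\otimes$-patterns spanned by the coCartesian fibrations). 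The defining square exhibits $\LMod_A^G(\C)^\otimes\to\O^\otimes$ as the pullback of the forgetful functor $\Alg_{\LM}(\C)^\otimes\to\Alg_{\E_1}(\C)^\otimes$ along $A\colon\O^\otimes\to\Alg_{\E_1}(\C)^\otimes$, and coCartesian fibrations are stable under pullback, so it suffices to prove that
\[ \Alg_{\LM}(\C)^\otimes\longrightarrow\Alg_{\E_1}(\C)^\otimes \]
is a coCartesian fibration.

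Both sides lie in $\Mon_{\Span(\F_G)}(\Cat_\infty)$ and the forgetful functor is a morphism there (\cref{theorem: omnibus Alg theorem}), so we may invoke the Haugseng--Melani--Safronov-type criterion \cref{corollary: criterion for map between symmetric monoidal categories to be cocartesian fibration}. The fiberwise hypothesis is exactly \cref{theorem: omnibus Alg theorem} (iv) together with Lurie's \cref{lemma: AlgLM ---> AlgE1 is coCartesian fibration}: on $H$-levels the functor becomes the classical forgetful functor $\Alg_{\LM}(\C_H)\to\Alg_{\E_1}(\C_H)$, which is a coCartesian fibration since $\C_H$ has geometric realizations over which the tensor product distributes — this is part of our hypothesis on $\C^\otimes$.

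It remains to verify the compatibility hypothesis: norms, fiberwise binary tensor products and restrictions preserve coCartesian edges. By \cref{lemma: AlgLM ---> AlgE1 is coCartesian fibration} (ii) a coCartesian edge of $\Alg_{\LM}(\C_H)\to\Alg_{\E_1}(\C_H)$ over $A\to B$ starting at $(A,M)$ is $(A,M)\to(B,\,B\otimes_A M)$, where $B\otimes_A M$ is the geometric realization of the two-sided bar construction $\mathrm{Bar}_\bullet(B,A,M)$. The operations $\Res_H^K$, $N_H^K$ and $-\otimes-$ on $\Alg_{\LM}(\C)^\otimes$ are induced levelwise by the corresponding operations on $\C^\otimes$ applied objectwise to such pairs; each is symmetric monoidal (\cref{lemma: Res N sym mon} for the restrictions and the norms) and, by hypothesis, commutes with fiberwise geometric realizations. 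Hence each carries $\mathrm{Bar}_\bullet(B,A,M)$ to the corresponding bar construction of the images and commutes with its realization; concretely $\Res_H^K(B\otimes_A M)\simeq \Res_H^K B\otimes_{\Res_H^K A}\Res_H^K M$, and likewise $N_H^K(B\otimes_A M)\simeq N_H^K B\otimes_{N_H^K A} N_H^K M$ and $(B\otimes_A M)\otimes(B'\otimes_{A'}M')\simeq(B\otimes B')\otimes_{A\otimes A'}(M\otimes M')$. This says precisely that these operations send the coCartesian edge $(A,M)\to(B,\,B\otimes_A M)$ to the coCartesian edge over the image of $A\to B$, so the compatibility hypothesis holds and \cref{corollary: criterion for map between symmetric monoidal categories to be cocartesian fibration} yields that $\Alg_{\LM}(\C)^\otimes\to\Alg_{\E_1}(\C)^\otimes$ is a coCartesian fibration, completing the proof.

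The main obstacle is this last compatibility check: one must pin down Lurie's coCartesian edges explicitly as bar resolutions and then confirm that each of the three families of structure functors interacts correctly with them. This is exactly where the technical hypotheses on $\C^\otimes$ — existence of fiberwise geometric realizations together with the commutation of norms, restrictions and one-sided tensorings with them — are used, and it is why the criterion \cref{corollary: criterion for map between symmetric monoidal categories to be cocartesian fibration} is isolated beforehand; the harder distributivity statement of the companion theorem then builds on the coCartesian fibration established here.
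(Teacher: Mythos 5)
Your proposal is correct and follows essentially the same route as the paper: reduce by pullback stability to showing $\Alg_{\LM}(\C)^{\otimes}\to\Alg_{\E_1}(\C)^{\otimes}$ is a coCartesian fibration, verify the fiberwise condition via Lurie's lemma, and check that norms, restrictions and fiberwise tensors preserve the explicit coCartesian edges $(A,M)\to(B,B\otimes_A M)$ by a bar-construction argument using the commutation hypotheses, concluding with \cref{corollary: criterion for map between symmetric monoidal categories to be cocartesian fibration}. The only cosmetic difference is that for the binary tensor case the paper is slightly more careful to note the bar-realization identification uses one-variable-at-a-time preservation of geometric realizations applied repeatedly, but your argument amounts to the same thing.
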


\begin{proof}
    It suffices to prove that $\LMod_A^{G}(\C)^{\otimes} \to \O^{\otimes}$ is a coCartesian fibration. Since coCartesian fibrations pull back, it suffices to show that $\Alg_{\LM}(\C)^{\otimes} \to \Alg_{\E_1}(\C)^{\otimes}$ is a coCartesian fibration.
    \medskip \\To do so we will check the criteria in \cref{corollary: criterion for map between symmetric monoidal categories to be cocartesian fibration}.
    \begin{enumerate}[(i)]
        \item By \cref{theorem: omnibus Alg theorem} \textcolor{chilligreen}{(i)}, for $H \leq G$ we have
        \[ \Alg_{\LM}(\C)^{\otimes}_H \simeq \Alg_{\LM}(\C_H) \to \Alg_{\E_1}(\C_H)\simeq \Alg_{\E_1}(\C)^{\otimes}_H. \]
        Using the assumption that fiberwise tensorings commute with fiberwise geometric realizations it follows that this is a coCartesian fibration from \cref{lemma: AlgLM ---> AlgE1 is coCartesian fibration}.
        \item We now have to show that norms, restrictions and fiberwise tensor products preserve coCartesian edges. We do this via the explicit description of coCartesian edges (\cref{lemma: AlgLM ---> AlgE1 is coCartesian fibration} \textcolor{chilligreen}{(ii)}).
        \begin{itemize}
            \item Fiberwise tensor products: Let $H \leq G$ and consider a coCartesian edge 
            \[ ((A_1, M_1), (A_2, M_2)) \to ((B_1, B_1 \otimes_{A_1} M_1), (B_2, B_2 \otimes_{A_2} M_2)) \]
            in $\Alg_{\LM}(\C)^{\otimes}_H \times \Alg_{\LM}(\C)^{\otimes}_H$ over $(A_1, A_2) \to (B_1, B_2)$ in $\Alg_{\E_1}(\C)^{\otimes}_H \times \Alg_{\E_1}(\C)^{\otimes}_H$. We wish to show that 
            \begin{align*}
                (A_1, M_1) \otimes (A_2, M_2) &\simeq (A_1 \otimes A_2, M_1 \otimes M_2)
                \\ &\to \left(B_1 \otimes B_2, (B_1 \otimes_{A_1} M_1) \otimes (B_2 \otimes_{A_2} M_2) \right)
                \\ &\simeq (B_1, B_1 \otimes_{A_1} M_1) \otimes (B_2, B_2 \otimes_{A_2} M_2)
            \end{align*}
            is a coCartesian edge over $A_1 \otimes A_2 \to B_1 \otimes B_2$. For this, we need that the natural map
            \[ (B_1 \otimes B_2) \otimes_{A_1 \otimes A_2} (M_1 \otimes M_2) \to (B_1 \otimes_{A_1} M_1) \otimes (B_2 \otimes_{A_2} M_2) \]
            is an equivalence. This follows by writing out the bar construction and using (repeatedly) that tensoring with one object preserves geometric realizations and is symmetric monoidal.
            \item Norms:
            Let $H \leq G$ and consider a coCartesian edge 
            \[ (A, M) \to (B, B \otimes_A M) \] 
            in $\Alg_{\LM}(\C)^{\otimes}_H$ over $A \to B$ in $\Alg_{\E_1}(\C)_H^{\otimes}$. We wish to show that
            \[ (N_H^{G} A, N_H^{G}M) \simeq N_H^{G}(A, M) \to N_H^{G}(B, B \otimes_A M) \simeq (N_H^{G}B, N_H^{G}(B \otimes_A M)) \]
            is a coCartesian edge over $N_H^{G}A \to N_H^{G}B$. For this, we need that the natural map $N_H^{G}B \otimes_{N_H^{G}A} N_H^{G}M \to N_H^{G}(B \otimes_A M)$ is an equivalence. But that's true because $N_H^{G}$ is symmetric monoidal (\cref{lemma: Res N sym mon}) and commutes with geometric realizations by assumption. 
            \item Restrictions: This  is the exact same argument as for norms using \cref{lemma: Res N sym mon} again.
        \end{itemize}
    \end{enumerate}
    This finishes the proof.
\end{proof}

\begin{remark} \label{remark: AlgLM ---> AlgE1 is cartesian fibration}
    The same strategy shows that $\Alg_{\LM}(\C)^{\otimes} \to \Alg_{\E_1}(\C)^{\otimes}$ is a cartesian fibration.
\end{remark}

\begin{corollary} \label{corollary: functoriality of LMod in A}
    Let $\C^{\otimes} \in \Mon_{\Span(\F_{G})}(\Cat_{\infty})$ and $\O^{\otimes} \in \Op_{G, \infty}$. Then, taking left modules enhances to a functor
    \[ \LMod_{(-)}^{G}(\C)^{\otimes}\colon  \Alg_{\O \otimes \E_1}(\C) \to \Alg_{\O}(\Cat_{\infty}) \]
    where the functoriality is through relative tensor products.
\end{corollary}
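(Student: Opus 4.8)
The strategy is to straighten the coCartesian fibration $q\colon \Alg_{\LM}(\C)^{\otimes}\to\Alg_{\E_1}(\C)^{\otimes}$ produced inside the proof of \cref{theorem: LMod ---> O is O-monoidal} and then precompose with the algebra maps. Recall from the Yoneda remark following \cref{theorem: omnibus Alg theorem} that $\Alg_{\O\otimes\E_1}(\C)\simeq\Alg_{\O}(\Alg_{\E_1}(\C))$, so an object $A$ of the source is the datum of a map $\ol A\colon\O^{\otimes}\to\Alg_{\E_1}(\C)^{\otimes}$ in $\Op_{G,\infty}$, and a morphism $A\to B$ is a natural transformation $\ol A\to\ol B$ through such maps. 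Let $\chi\colon\Alg_{\E_1}(\C)^{\otimes}\to\Cat_{\infty}$ denote the straightening of $q$; by \cref{lemma: AlgLM ---> AlgE1 is coCartesian fibration}~\textcolor{chilligreen}{(ii)} its fibrewise covariant transport along an edge $A\to B$ is the relative tensor product functor $M\mapsto B\otimes_AM$.

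Postcomposition with $\chi$ gives a functor $\Fun(\O^{\otimes},\Alg_{\E_1}(\C)^{\otimes})\to\Fun(\O^{\otimes},\Cat_{\infty})$, which I restrict to the full subcategory $\Alg_{\O}(\Alg_{\E_1}(\C))=\Fun_{\Fbrs(\Span(\F_{G}))}(\O^{\otimes},\Alg_{\E_1}(\C)^{\otimes})$. Since pullbacks in $\Op_{G,\infty}=\Fbrs(\Span(\F_{G}))$ are computed in $\Cat_{\infty}$, the construction in \cref{construction: LMod} gives $\LMod_A^{G}(\C)^{\otimes}=\O^{\otimes}\times_{\Alg_{\E_1}(\C)^{\otimes}}\Alg_{\LM}(\C)^{\otimes}$; by naturality of straightening under base change this is exactly the unstraightening over $\O^{\otimes}$ of the composite $\chi\circ\ol A$. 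By \cref{theorem: LMod ---> O is O-monoidal} this unstraightening is an $\O$-monoidal $G$-$\infty$-category, i.e.\ $\chi\circ\ol A$ lies in $\Seg_{\O}(\Cat_{\infty})\subseteq\Fun(\O^{\otimes},\Cat_{\infty})$, and under the equivalences $\Seg_{\O}(\Cat_{\infty})\simeq\Mon_{\O}(\Cat_{\infty})\simeq\Alg_{\O}(\Cat_{\infty})$ (the latter by \cref{prop: Alg(...) = Mon(...)} for the cartesian structure) I may regard $\chi\circ\ol A$ as an object of $\Alg_{\O}(\Cat_{\infty})$. Thus postcomposition with $\chi$ factors through $\Alg_{\O}(\Cat_{\infty})$, and I set
\[ \LMod_{(-)}^{G}(\C)^{\otimes}\colon\ \Alg_{\O\otimes\E_1}(\C)\simeq\Alg_{\O}(\Alg_{\E_1}(\C))\xrightarrow{\ \chi\circ(-)\ }\Alg_{\O}(\Cat_{\infty}). \]

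On a morphism $A\to B$, i.e.\ $\ol A\to\ol B$, the induced map $\LMod_A^{G}(\C)^{\otimes}\to\LMod_B^{G}(\C)^{\otimes}$ over $\O^{\otimes}$ restricts on the fibre over each $o\in\O^{\otimes}$ to the covariant transport of $q$ along $\ol A(o)\to\ol B(o)$, which by \cref{lemma: AlgLM ---> AlgE1 is coCartesian fibration}~\textcolor{chilligreen}{(ii)} is precisely the relative tensor product; this is the content of the claim that the functoriality is through relative tensor products. The only inputs beyond \cref{theorem: LMod ---> O is O-monoidal} are the base-change compatibility of straightening --- so that $\chi\circ\ol A$ really unstraightens to $\LMod_A^{G}(\C)^{\otimes}$ --- and the identification $\Alg_{\O}(\Cat_{\infty})\simeq\Seg_{\O}(\Cat_{\infty})$, so I do not anticipate a genuine obstacle here; the substantive work --- showing $\LMod_A^G(\C)^{\otimes}$ is $\O$-monoidal at all --- was already carried out in the preceding theorem.
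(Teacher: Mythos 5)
Your proposal is correct, and it rests on the same two pillars as the paper's own proof --- the coCartesian fibration $q\colon \Alg_{\LM}(\C)^{\otimes}\to\Alg_{\E_1}(\C)^{\otimes}$ established in \cref{theorem: LMod ---> O is O-monoidal}, together with straightening --- but the packaging differs. The paper applies the microcosmic monoidal straightening of \cref{prop: Alg(...) = Mon(...)} to $q$ itself: since $q$ is both a coCartesian fibration and a map of $G$-symmetric monoidal categories, it corresponds to a lax $G$-symmetric monoidal functor $\ul{\Alg}_{\E_1}(\C)^{\otimes}\to\ul{\Cat}_{G,\infty}^{\times}$, and the corollary follows by applying $\Alg_{\O}(-)$, which makes both the Segal condition and the functoriality in $A$ completely formal. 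You instead straighten $q$ only as a plain coCartesian fibration, check objectwise that $\chi\circ\ol{A}$ is Segal by quoting the $\O$-monoidality of $\LMod_A^{G}(\C)^{\otimes}$, and pass through $\Seg_{\O}\simeq\Mon_{\O}\simeq\Alg_{\O}$; this avoids invoking the monoidal straightening of $q$ at the cost of the extra objectwise verification and the by-hand identification of morphisms, so the two arguments are equivalent in substance. Two minor imprecisions, neither fatal: $\Alg_{\O}(\Alg_{\E_1}(\C))$ is not a \emph{full} subcategory of $\Fun(\O^{\otimes},\Alg_{\E_1}(\C)^{\otimes})$ --- its objects and morphisms live over $\Span(\F_{G})$ and must respect inert coCartesian lifts --- though this does not affect your construction, since factoring through the full subcategory $\Seg_{\O}(\Cat_{\infty})$ only requires objects to land there; and \cref{lemma: AlgLM ---> AlgE1 is coCartesian fibration} is a non-equivariant statement, so the identification of the transport with relative tensor products has to be made levelwise via \cref{theorem: omnibus Alg theorem}~(iii), exactly as in the proof of \cref{theorem: LMod ---> O is O-monoidal}, whose standing hypotheses on $\C^{\otimes}$ you (like the paper) implicitly carry along.
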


\begin{proof}
    We have seen that $\Alg_{\LM}(\C)^{\otimes} \to \Alg_{\E_1}(\C)^{\otimes}$ is a coCartesian fibration and symmetric monoidal (\cref{theorem: LMod ---> O is O-monoidal}).  By parametrized microcosmic monoidal straightening-unstraightening (\cref{prop: Alg(...) = Mon(...)}) this corresponds to a lax $G$-symmetric monoidal functor $\myuline{\Alg}_{\E_1}(\C)^{\otimes} \to \myuline{\Cat}_{G, \infty}^{\times}$. We win by applying $\Alg_{\O}(-)$.
\end{proof}

\noindent Those conditions required in this theorem will be carried around in most of the subsequent results so that we have an $\O$-monoidal structure on $\LMod_A^{G}(\C)$, so let us introduce a terminology for it.
\begin{definition} \label{def: O-module datum}
    Let $\O^{\otimes} \in \Op_{G, \infty}$. An \tb{$\O$-module datum} consists of a pair
    \[ \left(\C^{\otimes} \in \Mon_{\Span(\F_G)}(\Cat_{\infty}), \ A \in \Alg_{\O \otimes \E_1}(\C) \right) \] 
    such that the underlying $G$-$\infty$-category of $\C^{\otimes}$ has fiberwise geometric realizations and whose norms, fiberwise tensorings with one object and restrictions commute over these fiberwise geometric realizations.
\end{definition}
\noindent So the content of \cref{theorem: LMod ---> O is O-monoidal} is that an $\O$-module datum $(\C^{\otimes}, A)$ gives rise to an $\O$-monoidal $\infty$-category $\LMod_A^G(\C)^{\otimes}$. 
\medskip \\The device running the Thom spectrum engine later is operadic left Kan extensions (\cref{theorem: operadic left Kan extension}), which depends on a distributivity property (\cref{def: distributivity}), so we will spend the rest of the remaining section showing that the our left module category is $\O$-distributive. Since distributivity is treated in the setting of $G$-$\infty$-categories, we will work with the underlying $G$-symmetric monoidal $G$-$\infty$-categories in the rest of the section.
\medskip \\To talk about distributivity one already needs the existence of colimits, which is the main content of the following result. In particular, this needs a certain projection formula condition for which we recall that it is implied by distributivity (\cref{prop: projection formula from distributivity}).
\begin{proposition} \label{prop: categorical properties of LMod}
    Let $(\C^{\otimes}, A)$ be an $\E_0$-module datum. 
    \begin{enumerate}[(i)]
        \item The forgetful functor $\myuline{\LMod}_A^{G}(\C) \to \myuline{\C}$ is conservative.
        \item Let $A \to B$ be a map of $\E_1$-algebras. Then, the levelwise relative tensor products assemble into a $G$-left adjoint $B \otimes_A -\colon  \myuline{\LMod}_A^{G}(\C) \to \myuline{\LMod}_B^{G}(\C)$.
        \item Let $\myuline{\C}$ be $G$-presentable and levelwise distributive. Assume moreover that projection formulas hold, i.e. for $H \leq K \leq G$ and objects $c \in \C_H, c' \in \C_{K}$ the preferred map
        \[ \Ind_{H}^{K} \left(\Res_H^{K}c' \otimes c \right) \to c' \otimes \Ind_H^{K}c \]
        is an equivalence. Then, the $G$-$\infty$-category $\myuline{\LMod}_A^{G}(\C)$ is $G$-presentable and the $G$-functor $\myuline{\LMod}_A^{G}(\C) \to \myuline{\C}$ strongly preserves and strongly reflects $G$-colimits.
    \end{enumerate}
\end{proposition}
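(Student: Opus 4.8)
The plan is to treat the three parts in order, as each builds on the previous, reducing everything to the analogous classical statements of Lurie combined with the structural results already established about $\ul{\LMod}_A^G(\C)$. For part (i), conservativity is a fiberwise condition on a $G$-functor, so it suffices to check that each $\LMod_{A_H}(\C_H) \to \C_H$ is conservative; this is \cite[Corollary 4.2.3.2]{lurie2017ha}. (One must also remark that the $G$-functor $\ul{\LMod}_A^G(\C) \to \ul{\C}$ is indeed a $G$-functor, which follows from \cref{corollary: functoriality of LMod in A} together with the identification of restrictions of coCartesian edges used in the proof of \cref{theorem: LMod ---> O is O-monoidal}.) Since conservativity of a $G$-functor is detected on underlying $\infty$-categories of all fibers, we are done.

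For part (ii), given a map $A \to B$ of $\E_1$-algebras (i.e.\ of $\Infl_G\E_1$-algebras), the relative tensor product $B \otimes_{A_H} (-)\colon \LMod_{A_H}(\C_H) \to \LMod_{B_H}(\C_H)$ exists levelwise by \cite[Section 4.6.2]{lurie2017ha} provided $\C_H$ has the requisite geometric realizations, which is part of the $\E_0$-module datum hypothesis; it is left adjoint to the forgetful functor $\LMod_{B_H}(\C_H) \to \LMod_{A_H}(\C_H)$. These assemble into a $G$-functor because they are exactly the coCartesian pushforward along $A \to B$ in the coCartesian fibration $\Alg_{\LM}(\C)^{\otimes} \to \Alg_{\E_1}(\C)^{\otimes}$ (\cref{lemma: AlgLM ---> AlgE1 is coCartesian fibration}(ii)), pulled back along $\O^{\otimes} \to \Alg_{\E_1}(\C)^{\otimes}$; equivalently, one obtains the functor from \cref{corollary: functoriality of LMod in A} evaluated on the edge $A \to B$. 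To see it is a $G$-\emph{left adjoint} (not merely a fiberwise left adjoint), one invokes the criterion that a $G$-functor which is a fiberwise left adjoint and whose right adjoints commute with restriction is a $G$-left adjoint; the mate/Beck--Chevalley check here is that restriction commutes with the forgetful functors $\LMod_{B_K}(\C_K) \to \LMod_{A_K}(\C_K)$, which is immediate since both are computed in $\ul{\C}$.

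For part (iii), $G$-presentability of $\ul{\LMod}_A^G(\C)$ follows by the same argument as in the non-parametrized case \cite[Corollary 4.2.3.7]{lurie2017ha}: each fiber $\LMod_{A_H}(\C_H)$ is presentable since $\C_H$ is, and one checks the remaining axioms of $G$-presentability (fiberwise presentability plus the restriction functors admitting left adjoints) — the left adjoints to restriction on module categories are built from the free--forgetful and induction data, and this is precisely where the projection formula hypothesis enters, to identify $\Ind_H^K$ of an $A_H$-module with an $A_K$-module structure via $\Ind_H^K(\Res_H^K A_K \otimes m) \simeq A_K \otimes \Ind_H^K m$. That the forgetful $G$-functor strongly preserves $G$-colimits amounts to: it preserves fiberwise colimits (true since forgetful functors of module categories preserve all colimits, \cite[Corollary 4.2.3.5]{lurie2017ha}) and it commutes with the indexed colimits $\Ind_H^K$ (the Beck--Chevalley square comparing the left adjoints of restriction on $\LMod$ and on $\C$ commutes, again by the projection formula). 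Strong reflection of $G$-colimits then follows from strong preservation together with conservativity (part (i)), since a conservative functor between $G$-cocomplete $G$-categories that strongly preserves $G$-colimits also strongly reflects them.

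The main obstacle I anticipate is part (iii): specifically, verifying that the indexed-coproduct functors $\Ind_H^K$ on $\ul{\LMod}_A^G(\C)$ are well-defined and are computed compatibly with those on $\ul{\C}$. This is exactly the content the authors flag — the projection formula is needed to equip $\Ind_H^K$ of a module with a module structure over the restricted-then-normed algebra, and then further to see this agrees, after the relative tensor up along $N_H^K A_H \to A_K$ implicit in the $\E_1$-algebra structure, with the expected $A_K$-module. Organizing the bookkeeping so that ``$\ul{\LMod}_A^G(\C) \to \ul{\C}$ strongly preserves $G$-colimits'' reduces cleanly to (a) fiberwise preservation and (b) the single Beck--Chevalley square for $\Ind_H^K$ is the delicate part; everything else is a direct transport of Lurie's results through the fiberwise identifications already secured in \cref{theorem: LMod ---> O is O-monoidal}.
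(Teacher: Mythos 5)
Parts (i) and (ii) are fine and essentially follow the paper's route: conservativity is checked fiberwise via Lurie, and the relative tensor products are the coCartesian pushforwards in $\Alg_{\LM}(\C)^{\otimes} \to \Alg_{\E_1}(\C)^{\otimes}$, with the $G$-adjunction criterion reducing to the fact that the fiberwise right adjoints (restriction of scalars) assemble compatibly with the parametrized restrictions -- the paper does this via the cartesian straightening of \cref{remark: AlgLM ---> AlgE1 is cartesian fibration}, and your "computed in $\ul{\C}$" argument is an acceptable variant of the same point.

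In part (iii), however, there is a genuine gap in how you obtain the indexed coproduct functors. You propose to \emph{construct} the left adjoints $\ind_H^K\colon \LMod_{A_H}(\C_H)\to \LMod_{A_K}(\C_K)$ by hand "from the free--forgetful and induction data", using the projection formula to endow $\Ind_H^K M$ with an $A_K$-module structure. The projection formula only hands you an informal action map $A_K\otimes \Ind_H^K M \simeq \Ind_H^K(\Res_H^K A_K\otimes M)\to \Ind_H^K M$; promoting this to a coherent left-module structure, functorially in $M$, is a nontrivial coherence problem (one cannot even factor through $\LMod_{\Ind_H^K A_H}$, since $\Ind_H^K$ is not lax monoidal). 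The paper explicitly flags that it could not carry out this hand construction and instead proceeds in the opposite order: first establish $G$-presentability of $\ul{\LMod}_A^G(\C)$ from fiberwise presentability together with the fact that the restriction functors preserve colimits (the forgetful functors create them and $\ul{\C}$ is $G$-cocomplete), so that $\ind_H^K$ exists abstractly by the adjoint functor theorem; the projection formula is then used only \emph{a posteriori}, to verify that $\ind_H^K$ is computed underlying. That verification is also missing from your sketch: it requires the observation that $\LMod_{A_H}(\C_H)$ is generated under sifted colimits by free modules, a Yoneda computation identifying $\ind_H^K(A_H\otimes c)\simeq A_K\otimes\Ind_H^K c$, and then the projection formula to compare with $\Ind_H^K U(A_H\otimes c)$. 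Once that is in place, your remaining steps (Beck--Chevalley via conservativity, strong preservation fiberwise plus on inductions, strong reflection from preservation and conservativity) agree with the paper. So the skeleton is right, but as written the construction of $\ind_H^K$ -- the step the whole proposition hinges on -- is asserted rather than proved, and the route you indicate for it is precisely the one the paper explains does not obviously work.
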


\begin{proof}
    \hfill 
    \begin{enumerate}[(i)]
        \item Conservativity is a levelwise statement where it is \cite[Corollary 4.2.3.2]{lurie2017ha}.
        \item The coCartesian fibration $\myuline{\Alg}_{\LM}(\C) \to \myuline{\Alg}_{\E_1}(\C)$ (see \cref{theorem: LMod ---> O is O-monoidal}) straightens to a $G$-functor $\myuline{\Alg}_{\E_1}(\C) \to \myuline{\Cat}_{G, \infty}$ and is the straightened version of the $\LMod$ construction. So the map $A \to B$ yields a $G$-functor, which we denote by
        \[ B \otimes_A -\colon  \myuline{\LMod}_A^{G}(\C) \to \myuline{\LMod}_B^{G}(\C). \] Levelwise, this is the classical relative tensor products, i.e. by $B_H \otimes_{A_H} -$ on the level $H \leq G$. So it is a relative adjunction over $\Orb_G^{\op}$ with levelwise right adjoint given by restriction, since this is levelwise so \cite[Proposition 7.3.2.6]{lurie2017ha}. To check that it is a $G$-adjunction we still need to check that the right adjoints assemble into a $G$-functor \cite[Corollary 2.2.7]{hilman2024parametrisedpresentability}, but they do because they come from the cartesian straightening of the cartesian fibration $\myuline{\Alg}_{\LM}(\C) \to \myuline{\Alg}_{\E_1}(\C)$ (see \cref{remark: AlgLM ---> AlgE1 is cartesian fibration}).
        \item We begin with $G$-cocompleteness, which is equivalent to showing that $\myuline{\LMod}_A^{G}(\C)$ is fiberwise cocomplete, that restrictions preserve fiberwise colimits and that restrictions admit left adjoints satisfying the Beck--Chevalley condition \cite[Theorem 3.1.9]{hilman2024parametrisedpresentability}. We first note that the assumptions are enough to conclude that $\myuline{\LMod}_A^{G}(\C)$ is fiberwise presentable \cite[Corollary 4.2.3.7(1)]{lurie2017ha} and in particular fiberwise cocomplete.
        \medskip \\Now let $H \leq K \leq G$, then $\LMod_{A_{K}}(\C_{K}) \to \LMod_{A_H}(\C_H)$ preserves colimits because the forgetful functor to $\C_{K}$ resp. $\C_H$ creates colimits \cite[Corollary 4.2.3.7(2)]{lurie2017ha} but $\C_{K} \to \C_H$ preserves colimits by $G$-cocompleteness of $\myuline{\C}$. Altogether, $\myuline{\LMod}_A^G(\C)$ is $G$-presentable \cite[Theorem 6.1.2]{hilman2024parametrisedpresentability}.
        \medskip \\The same argument shows that $\LMod_{A_{K}}(\C_{K}) \to \LMod_{A_H}(\C_H)$ also preserves limits. The adjoint functor theorem thus ensures a left adjoint $\ind_H^{K}\colon  \LMod_{A_H}(\C_H) \to \LMod_{A_{K}}(\C_{K})$. We will first show that it is computed underlying, i.e. that the square
        \begin{center}
            \begin{tikzcd}
                \LMod_{A_H}(\C_H) \arrow[r, "\ind_H^{K}"] \arrow[d, "U", swap] &\LMod_{A_{K}}(\C_{K}) \arrow[d, "U"]
                \\ \C_H \arrow[r, "\Ind_H^{K}", swap] & \C_{K}
            \end{tikzcd}
        \end{center}
        commutes. Since $\LMod_{A_H}(\C_H)$ is generated by free $A_H$-modules under sifted colimits \cite[Proposition 4.7.3.14]{lurie2017ha} and all functors in this square preserve colimits, it suffices to check this on free $A_H$-modules. So let $c \in \C_H$ and $A_H \otimes c \in \LMod_{A_H}(\C_H)$ be the free $A_H$-module on $c$. Let $M \in \LMod_{A_{K}}(\C_{K})$. For the sake of the following computation, let us introduce the notation $\res_H^K\colon  \LMod_{A_K}(\C_K) \to \LMod_{A_H}(\C_H)$. Then, we compute 
        \begin{align*}
            \Map_{\LMod_{A_{K}}(\C_{K})}(\ind_H^{K}(A_H \otimes_{\one} c), M) &\simeq \Map_{\LMod_{A_H}(\C_H)}(A_H \otimes_{\one} c, \res_H^{K} M)
            \\ &\simeq \Map_{\C_H}(c, U\res_H^{K}M)
            \\ &\simeq \Map_{\C_H}(c, \Res_H^{K} UM)
            \\ &\simeq \Map_{\C_{K}}(\Ind_H^{K}c, UM)
            \\ &\simeq \Map_{\LMod_{A_{K}}(\C_{K})}(A_{K} \otimes_{\one} \Ind_H^{K}c, M).
        \end{align*}
        Thus, we discover $\ind_H^{K}(A_H \otimes c) \simeq A_{K} \otimes \Ind_H^{K}c$. By 
        the projection formula we may thus write down a chain of natural equivalences
        \begin{align*}
            U\ind_H^{K}(A_H \otimes_{\one} c) &\simeq U(A_{K} \otimes_{\one} \Ind_H^{K}c) 
            \\ &\simeq U A_{K} \otimes U \Ind_{H}^{K}c
            \\ &\simeq UA_K \otimes \Ind_K^H Uc
            \\ &\simeq (UA_H)_K \otimes \Ind_K^H Uc
            \\ &\simeq \Ind_H^{K}(UA_H \otimes Uc) 
            \\ &\simeq \Ind_H^{K}U(A_H \otimes_{\one} c).
        \end{align*}
        Since restrictions and inductions of left module categories are computed underlying, we deduce that the Beck--Chevalley maps are computed underlying where it is an equivalence because $\C$ is $G$-cocomplete. On the other hand, the forgetful functor is conservative by (i), so the Beck--Chevalley maps for $\myuline{\LMod}_A^G$ are also equivalences.
        \medskip \\To see that $\myuline{\LMod}_A^{G}(\C) \to \myuline{\C}$ strongly preserves $G$-colimits, we need to see that this is fiberwise the case and that induction is computed underlying. The fiberwise part is \cite[Corollary 4.2.3.7(2)]{lurie2017ha} again and the induction part was the commutative diagram above.
        \medskip \\Since the forgetful functor strongly preserves $G$-colimits and is conservative, it also strongly reflects $G$-colimits \cite[Lemma 2.2.12]{hilman2024parametrisednoncommutativemotivesequivariant}. \qedhere
    \end{enumerate}
\end{proof}

\begin{remark}
    \hfill 
    \begin{enumerate}[(i)]
        \item We were not able to write down the indexed coproduct functor
    \[ \ind_H^{K}\colon  \LMod_{A_H}(\C_H) \to \LMod_{A_{K}}(\C_{K}) \]
    by hand and needed presentability to do so.
        \item One might try to factor it through $\LMod_{\Ind_H^{K}A}(\C_{K})$ by looking for natural maps induced by the adjunctions. However, this already fails because $\Ind_H^{K}$ is typically not lax symmetric monoidal (it would more naturally be oplax) and so $\Ind_H^{K}A$ does not even naturally obtain an algebra structure.
    \medskip \\Informally, given some $A_H$-module $M$, we want an action of $A_{K}$ on $\Ind_H^{K}M$. Inducing up our given action yields a map
    \[ \Ind_H^{K}(A_H \otimes M) \to \Ind_H^{K}M \]
    which is not yet of the form $A_{K} \otimes \Ind_H^{K}M \to \Ind_H^{K}M$ but that's where the projection formula comes to the rescue, providing us with a map
    \begin{center}
        \begin{tikzcd}
            A_H \otimes \Ind_H^K M \arrow[r, "\simeq", no head] & \Ind_H^K (A_H \otimes M) \arrow[r] & \Ind_H^K M.
        \end{tikzcd}
    \end{center}
    This is why we needed to assume the projection formula for this construction.
    \end{enumerate} 
\end{remark}

\noindent We now describe in full generality the indexed tensor product on left module categories. 

\begin{proposition} \label{prop: indexed tensor products of LMod}
    Let $\O^{\otimes} \in \Op_{G, \infty}$ have a single $G$-color and $(\C^{\otimes}, A)$ an $\O$-module datum. Consider a coCartesian lift $o' \to o$ in $\myuline{\O}^{\otimes}$ of a map in $\myuline{\F}_{G, *}$ over some $G/H \in \Orb_G$ corresponding to $f\colon U \to G/H$. 
    \medskip \\If $U \simeq \coprod_{i=1}^n G/H_i$ with $H_i \leq G$ is an orbit decomposition, then the map $\underline{\bigotimes}_{o' \to o}$ factors as
    \begin{center}
        \begin{tikzcd}
             \myuline{\LMod}_{A}^{G}(\C)_{\myuline{o}'}^{\otimes} \arrow[r] \arrow[d] & \myuline{\LMod}_{\underline{\bigotimes}_{U \to G/H} \myuline{A}^{\otimes}(o')}^{G}(\C)_{\myuline{H}}  \arrow[rr, "A \otimes_{\underline{\bigotimes}_{o' \to o} A} -"] \arrow[d] & & \myuline{\LMod}_A^{G}(\C)_{\myuline{H}}^{\otimes}
            \\ \myuline{\C}_{\myuline{o}'}^{\otimes} \arrow[r, "\underline{\bigotimes}_{o' \to o}", swap] & \myuline{\C}_{\myuline{o}}^{\otimes}
        \end{tikzcd}
    \end{center}
    where the left square commutes.
\end{proposition}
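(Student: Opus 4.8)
The statement asserts a factorization of the indexed tensor product functor $\underline{\bigotimes}_{o' \to o}$ on $\myuline{\LMod}_A^G(\C)^{\otimes}$ through an intermediate left module category, together with the claim that the resulting left-hand square (over the forgetful functor to $\myuline{\C}^{\otimes}$) commutes. My plan is to build the factorization by recalling how indexed tensor products decompose for $\Alg_{\LM}$ and $\Alg_{\E_1}$, transport this through the pullback defining $\myuline{\LMod}_A^G(\C)^{\otimes}$, and then identify the two resulting stages as the claimed functors.

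First I would unwind the map $f\colon U \to G/H$ with orbit decomposition $U \simeq \coprod_{i=1}^n G/H_i$ into its two primitive pieces: the fiberwise multiplication (the fold map $\coprod_i G/H \to G/H$, giving iterated binary tensor products) and the norms (the projections $G/H_i \to G/H$). By \cref{theorem: equivariant Segal conditions} the source $\myuline{\LMod}_A^G(\C)^{\otimes}_{\myuline{o}'}$ is $f_* \coprod_i \myuline{\LMod}_A^G(\C)^{\otimes}_{\myuline{o}'_i}$, and after applying the indexed tensor product on $\myuline{\C}^{\otimes}$ we land in $\myuline{\LMod}_{(-)}^G(\C)$ over the algebra $\underline{\bigotimes}_{U \to G/H} \myuline{A}^{\otimes}(o')$. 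Concretely, here I would invoke the explicit description of coCartesian edges of $\Alg_{\LM}(\C)^{\otimes} \to \Alg_{\E_1}(\C)^{\otimes}$ worked out in the proof of \cref{theorem: LMod ---> O is O-monoidal}: on modules the indexed tensor product of a family $(A_i, M_i)$ is the indexed tensor product $(\underline{\bigotimes}_i A_i, \underline{\bigotimes}_i M_i)$, where the underlying object is computed as in \cref{theorem: omnibus Alg theorem}(iii), i.e.\ underlying in $\myuline{\C}$. This already identifies the first functor of the factorization (the top-left horizontal arrow) and shows the left square commutes, since the forgetful functor to $\myuline{\C}^{\otimes}$ takes $(\underline{\bigotimes}_i A_i, \underline{\bigotimes}_i M_i)$ to $\underline{\bigotimes}_i M_i = \underline{\bigotimes}_{o'\to o}(\text{underlying})$ by design.

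The second functor is the base change $A \otimes_{\underline{\bigotimes}_{o'\to o} A} -$ along the algebra multiplication map $\underline{\bigotimes}_{o' \to o}\myuline{A}^{\otimes}(o') \to A(o) \simeq A$, which exists because $A$ is an $\O \otimes \E_1$-algebra and hence the structure map $\myuline{\O}^{\otimes} \to \myuline{\Alg}_{\E_1}(\C)^{\otimes}$ of $\myuline{\LMod}_A^G(\C)^{\otimes}$ carries the coCartesian lift $o' \to o$ to precisely this multiplication. This base-change functor is exactly the relative tensor product functoriality of \cref{prop: categorical properties of LMod}(ii) (or \cref{corollary: functoriality of LMod in A}); composing the two stages recovers $\underline{\bigotimes}_{o'\to o}$ because the coCartesian lift in $\myuline{\LMod}_A^G(\C)^{\otimes}$ over $o' \to o$ factors, by the general pullback formula for coCartesian edges, as ``first form the indexed tensor in $\Alg_{\LM}$, then push forward along the algebra map into the fiber over $o$'' — and pushing forward along an algebra map is relative tensor product.

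The main obstacle is the bookkeeping around \cref{theorem: equivariant Segal conditions} and naturality: one must check that the coinduction-and-product description of the parametrized fibers is compatible with the factorization of $f$ into folds and projections, so that ``$\myuline{\LMod}$ of the indexed tensor of $\myuline{A}$'' is the correct intermediate category and the identification of the base-change map with the algebra multiplication is the one induced by $A$'s structure. I expect this to be a matter of carefully threading the equivalences $\myuline{\LMod}_A^G(\C)^{\otimes}_{\myuline{o}'} \simeq f_*\coprod_i \myuline{\LMod}_A^G(\C)^{\otimes}_{\myuline{o}'_i}$ through the pullback square of \cref{construction: LMod}, rather than any genuinely new input; the commutativity of the left square then follows formally since every functor in sight is compatible with the forgetful functor to $\myuline{\C}^{\otimes}$, which is monoidal by \cref{theorem: omnibus Alg theorem}(iii).
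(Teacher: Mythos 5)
Your factorization step follows the paper's route: you factor the edge $\myuline{A}^{\otimes}(o')\to\myuline{A}^{\otimes}(o)$ in $\myuline{\Alg}_{\E_1}(\C)^{\otimes}$ through the coCartesian lift of $U\to G/H$, identify the first stage with an indexed tensor product of pairs (algebra, module) and the second stage with base change along the multiplication map (as in \cref{lemma: AlgLM ---> AlgE1 is coCartesian fibration} and \cref{prop: categorical properties of LMod}), and then want to compare with the underlying indexed tensor product in $\myuline{\C}^{\otimes}$ using that the forgetful functor is $G$-symmetric monoidal. That skeleton is correct and is what the paper does.

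The gap is the commutativity of the left square, which you dismiss as ``by design''/formal. The first functor of the factorization is, by construction, the pushforward for the fibration $q\colon \myuline{\Alg}_{\LM}(\C)^{\otimes}\to\myuline{\Alg}_{\E_1}(\C)^{\otimes}$ along the edge $\alpha\colon \myuline{A}^{\otimes}(o')\to\underline{\bigotimes}_{U\to G/H}\myuline{A}^{\otimes}(o')$, whereas the explicit formula $(A_i,M_i)\mapsto(\underline{\bigotimes}A_i,\underline{\bigotimes}M_i)$ and its compatibility with the forgetful functor (\cref{theorem: omnibus Alg theorem}) describe the pushforward for the composite $pq\colon \myuline{\Alg}_{\LM}(\C)^{\otimes}\to\myuline{\F}_{G,*}$ along $U\to G/H$. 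The left square is exactly the assertion that these two pushforwards agree \emph{naturally} on the fiber over $\myuline{A}^{\otimes}(o')$, and this is not formal: the would-be comparison $(\id,q)\colon \myuline{\Alg}_{\LM}(\C)^{\otimes}\to\myuline{\Alg}_{\LM}(\C)^{\otimes}\times_{\myuline{\F}_{G,*}}\myuline{\Alg}_{\E_1}(\C)^{\otimes}$ is \emph{not} a map of coCartesian fibrations (a $q$-coCartesian edge over a fiberwise algebra map, i.e.\ a relative tensor product, is not $pq$-coCartesian), so there is no global natural transformation of straightenings whose naturality square would produce your commuting square. What does hold is partial naturality over $p$-coCartesian edges of $\myuline{\Alg}_{\E_1}(\C)^{\otimes}$, which the paper extracts by pulling back to $[1]$ and invoking \cite[Proposition 2.4.1.3]{lurie2009htt}: an edge lying over a $p$-coCartesian edge is $q$-coCartesian if and only if it is $pq$-coCartesian. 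Your proposal never supplies this step, and the input you cite from the proof of \cref{theorem: LMod ---> O is O-monoidal} (that norms and fiberwise tensors \emph{preserve} $q$-coCartesian edges) is a different statement and does not substitute for it. Once you add the identification of $q$- and $pq$-coCartesian edges over $\alpha$, made functorial as above, your argument closes and the pasting with the monoidality square for the forgetful functor gives the claim; without it, the commutativity of the left square remains unproved.
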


\begin{proof}[Proof Sketch]
Since the proof of this result is notationally quite heavy, we will first sketch an argument of this result in the classical setting of symmetric monoidal $\infty$-categories for the convenience of the reader. In the actual proof we will then parametrize all ingredients that appear.
\medskip \\Let $\C^{\otimes} \to \F_*$ be a symmetric monoidal $\infty$-category and $A^{\otimes} \in \Alg_{\E_{\infty}}(\C)$, then the special case for $\langle 2 \rangle \to \langle 1 \rangle$ is
\begin{center}
    \begin{tikzcd}
        \LMod_A(\C) \times \LMod_A(\C) \arrow[r] \arrow[d] & \LMod_{A \otimes A}(\C) \arrow[rr, "A \otimes_{A \otimes A}-"] \arrow[d] & & \LMod_A(\C)
        \\ \C \times \C \arrow[r, "\otimes", swap] & \C
    \end{tikzcd}
\end{center}
To obtain the top factorization we note that
\[ \St \left(\LMod_A(\C)^{\otimes} \to \F_* \right) \simeq \left( \F_* \xrightarrow{A^{\otimes}} \Alg_{\E_1}(\C)^{\otimes} \to \Cat_{\infty} \right) \]
and the top composite is the effect of this functor on $\langle 2 \rangle \to \langle 1 \rangle$. This is first sent to $(A,A) \to A$ in $\Alg_{\E_1}(\C)^{\otimes}$ which we can factor through the coCartesian lift $(A,A) \to A \otimes A \to A$. That already recover the top line of the diagram. The second map induces a relative tensor product functor (\cref{lemma: AlgLM ---> AlgE1 is coCartesian fibration}) and we are left to check that the first map
\[ \LMod_A(\C) \times \LMod_A(\C) \to \LMod_{A \otimes A}(\C) \]
is compatible with the underlying tensor product. It suffices to check that
\begin{center}
    \begin{tikzcd}
        \LMod_A(\C) \times \LMod_A(\C) \arrow[r] \arrow[d] & \LMod_{A \otimes A}(\C) \arrow[d]
        \\ \Alg_{\LM}(\C) \times \Alg_{\LM}(\C) \arrow[r] & \Alg_{\LM}(\C)
    \end{tikzcd}
\end{center} 
commutes because $\Alg_{\LM}(\C)^{\otimes} \to \C^{\otimes}$ is symmetric monoidal. Naively, our idea is that the square should be the naturality square of some natural transformation between suitably chosen functors $\Alg_{\E_1}(\C)^{\otimes} \to \Cat_{\infty}$ evaluated at $(A, A) \to A \otimes A$. In reality, our choices won't give naturality squares for all maps in $\Alg_{\E_1}(\C)^{\otimes}$ but there will be some partial naturality, in particular for those $(A,A) \to A$ that we need! We will check this via some coCartesian edge techniques.
\end{proof}
\begin{proof}[Proof of \cref{prop: indexed tensor products of LMod}]
    By applying $- \times_{\Span(\F_{G})} \myuline{\F}_{G, *}$ to the defining pullback square of $\LMod_A^{G}(\C)^{\otimes}$ we obtain the square
    \begin{center}
        \begin{tikzcd}
            \myuline{\LMod}_A^{G}(\C)^{\otimes} \arrow[r] \arrow[d] \arrow[dr, phantom, very near start, "\lrcorner"] & \myuline{\Alg}_{\LM}(\C)^{\otimes}\arrow[d, "q"]
            \\ \myuline{\O}^{\otimes} \arrow[r, "\myuline{A}^{\otimes}", swap] & \myuline{\Alg}_{\E_1}(\C)^{\otimes}
        \end{tikzcd}
    \end{center}
    In particular,
    \[ \myuline{\St} \left(\myuline{\LMod}_A^{G}(\C)^{\otimes} \to \myuline{\O}^{\otimes} \right) \simeq \left( \myuline{\O}^{\otimes} \xrightarrow{\myuline{A}^{\otimes}} \myuline{\Alg}_{\E_1}(\C)^{\otimes} \xrightarrow{\myuline{\St}(q)} \myuline{\Cat}_{\T, \infty} \right) \]
    and the map $\underline{\bigotimes}_{o' \to o}$ that we want to understand is the image of $o' \to o$ under this functor, so 
    \[ \underline{\bigotimes}_{o' \to o} \simeq \myuline{\St}(q)(\myuline{A}^{\otimes}(o') \to \myuline{A}^{\otimes}(o)). \]
    We 
    factor $\myuline{A}^{\otimes}(o') \to \myuline{A}^{\otimes}(o)$ over a the lift of $U \to G/H$ along $\myuline{\Alg}_{\myuline{\E}_1}(\C)^{\otimes} \to \myuline{\F}_{G, *}$ depicted as follows:
    \begin{center}
        \begin{tikzcd}
            & \underline{\bigotimes}_{U \to G/H} \myuline{A}^{\otimes}(o') \arrow[dr] & & & G/H \arrow[dr, equal]
            \\ \myuline{A}^{\otimes}(o') \arrow[ur] \arrow[rr] & & \myuline{A}^{\otimes}(o) & U \arrow[rr] \arrow[ur] & & G/H
        \end{tikzcd}
    \end{center}
    By applying $\myuline{\St}(q)$ on the left square the map $\underline{\bigotimes}_{o' \to o}$ factors as the composite
    \begin{center}
        \begin{tikzcd}
            \myuline{\LMod}_A^{G}(\C)^{\otimes}_{\myuline{o}'} \arrow[r] & \myuline{\LMod}_{\underline{\bigotimes}_{U \to G/H} \myuline{A}^{\otimes}(o')}^{G}(\C)_{\myuline{H}} \arrow[r] & \myuline{\LMod}_A^{G}(\C)^{\otimes}_{\myuline{o}}
        \end{tikzcd}
    \end{center}
    by functoriality where the last term is also equivalent to the $H$-$\infty$-category $\myuline{\LMod}_{\myuline{A}^{\otimes}(o)}^{G}(\C)_{\myuline{H}}$ since $\O^{\otimes}$ has a single $G$-color. In particular, the second map is the effect of functoriality of $\myuline{\Alg}_{\E_1}(\C) \to \myuline{\Cat}_{G, \infty}$ on a map of algebras over the same $G/H \in \Orb_G^{\op}$ which is given by levelwise relative tensor products.
    \medskip \\Now about the first map. Consider the following functors: 
    \begin{center}
        \begin{tikzcd}
            \myuline{\Alg}_{\LM}(\C)^{\otimes} \arrow[d, "q", swap]
            \\ \myuline{\Alg}_{\E_1}(\C)^{\otimes} \arrow[d, "p", swap] \arrow[r, "\Phi"] & \myuline{\Cat}_{G, \infty}
            \\ \myuline{\F}_{G, \infty} \arrow[r, "\Psi", swap] & \myuline{\Cat}_{G, \infty}
        \end{tikzcd}
    \end{center}
    where $\Phi$ and $\Psi$ are the respective parametrized straightenings, i.e. $\Phi = \myuline{\St}(q)$ and $\Psi = \myuline{\St}(pq)$. Ideally,\footnote{In the end, we will not get a natural transformation but will still get some partial naturality.} we would now like
    to demonstrate the existence of a $G$-natural transformation\footnote{I.e.~a map in the $\infty$-category $\Fun_{G}(\Phi, \Psi p)$. See \cite[Remark 2.2.3]{cnossen2023parametrizedstabilityuniversalproperty} for an unravelled version of this notion.} $\Phi \Rightarrow \Psi p$ because naturality with respect to $\myuline{A}^{\otimes}(o') \to \underline{\bigotimes}_{U \to G/H} \myuline{A}^{\otimes}(o')$ then yields a commutative diagram
    \begin{center}
        \begin{tikzcd}
            \myuline{\LMod}_A^{G}(\C)_{\myuline{o}'}^{\otimes} \arrow[r] \arrow[d] & \myuline{\LMod}_{\underline{\bigotimes}_{U \to t} \myuline{A}^{\otimes}(o')}^{G}(\C)_{\myuline{H}} \arrow[d]
            \\ \myuline{\Alg}_{\LM}(\C)^{\otimes}_{\myuline{U}} \arrow[r, "\underline{\bigotimes}_{U \to G/H}", swap] & \myuline{\Alg}_{\LM}(\C)_{\myuline{H}}^{\otimes}
        \end{tikzcd}
    \end{center}
    and moreover there is a commutative diagram
    \begin{center}
        \begin{tikzcd}
            \myuline{\Alg}_{\LM}(\C)^{\otimes}_{\myuline{U}} \arrow[r, "\underline{\bigotimes}_{U \to G/H}"] \arrow[d] & \myuline{\Alg}_{\LM}(\C)^{\otimes}_{\myuline{H}} \arrow[d]
            \\ \myuline{\C}_{\myuline{U}}^{\otimes} \arrow[r, "\underline{\bigotimes}_{U \to G/H}", swap] & \myuline{\C}_{\myuline{H}}^{\otimes}
        \end{tikzcd}
    \end{center}
    because $\Alg_{\LM}(\C)^{\otimes} \to \C^{\otimes}$ is $G$-symmetric monoidal (\cref{theorem: omnibus Alg theorem}). Pasting these two squares yields the desired factorization.
    \medskip \\To find a natural transformation $\Phi \Rightarrow \Psi p$ it would be enough to give a map $\myuline{\Un}(\Phi) \to \myuline{\Un}(\Psi p)$ in $\coCart{\myuline{\Alg}_{\E_1}(\C)^{\otimes}}$. By the behaviour of (parametrized) unstraightenings with compositions, we obtain a pullback square
    \begin{center}
        \begin{tikzcd}
            \myuline{\Un}(\Psi p) \arrow[r] \arrow[d] \arrow[dr, phantom, very near start, "\lrcorner"] & \myuline{\Un}(\Psi) \arrow[d]
            \\ \myuline{\Alg}_{\E_1}(\C)^{\otimes} \arrow[r, "p", swap] & \myuline{\F}_{\T, *}
        \end{tikzcd}
    \end{center}
    where we really use that we know this for the ordinary unstraightenings and reduce it to that case because the parametrized unstraightening of a functor to $\myuline{\Cat}_{G, \infty}$ corresponds to the unstraightening of the corresponding functor to $\Cat_{\infty}$ (see \cref{prop: parametrized straightening}). By definition, we have $\myuline{\Un}(\Psi) \simeq \myuline{\Alg}_{\LM}(\C)^{\otimes}$, so 
    \[ \myuline{\Un}(\Psi p) \simeq \myuline{\Alg}_{\LM}(\C)^{\otimes} \times_{\myuline{\F}_{\T, *}} \myuline{\Alg}_{\E_1}(\C)^{\otimes}. \]
    Thus, we are considering the map
    \begin{center}
        \begin{tikzcd}
            \myuline{\Alg}_{\LM}(\C)^{\otimes} \arrow[rr, "{(\id, q)}"] \arrow[dr, "q", swap] & & \myuline{\Alg}_{\LM}(\C)^{\otimes} \times_{\myuline{\F}_{G, *}} \myuline{\Alg}_{\E_1}(\C)^{\otimes} \arrow[dl, "\pr_2"]
            \\ & \myuline{\Alg}_{\E_1}(\C)^{\otimes}
        \end{tikzcd}
    \end{center}
    which -- as it turns out -- is only a functor over $\myuline{\Alg}_{\E_1}(\C)^{\otimes}$, but not a map of coCartesian fibrations, meaning that this does not induce a natural transformation $\Phi \Rightarrow \Psi p$. Nonetheless, we are still able to salvage some naturality squares out of this, namely exactly with respect to the maps $\myuline{A}^{\otimes}(o') \to \underline{\bigotimes}_{U \to G/H} \myuline{A}^{\otimes}(o')$; these are precisely the ones we need.
    \medskip \\We pull back this diagram along $\phi\colon [1] \to \myuline{\Alg}_{\E_1}(\C)^{\otimes}$ where we pick out any $p$-coCartesian edge in $\myuline{\Alg}_{\E_1}(\C)^{\otimes}$. We claim now that 
    {\small \begin{center}
        \begin{tikzcd}
            \myuline{\Alg}_{\LM}(\C)^{\otimes} \times_{\myuline{\Alg}_{\E_1}(\C)^{\otimes}} {[1]} \arrow[rr, "{\widetilde{(\id, q)}}"] \arrow[dr, "\widetilde{q}", swap] & & \myuline{\Alg}_{\LM}(\C)^{\otimes} \times_{\myuline{\F}_{G, *}} \myuline{\Alg}_{\E_1}(\C)^{\otimes} \times_{\myuline{\Alg}_{\E_1}(\C)^{\otimes}} {[1]}  \arrow[dl, "\widetilde{\pr}_2"]
            \\ & {[1]} 
        \end{tikzcd}
    \end{center} }
    \noindent is a map of coCartesian fibrations. Indeed, consider a $\widetilde{q}$-coCartesian edge and we wish to show that its image under $\widetilde{(\id, q)}$ is $\widetilde{\pr}_2$-coCartesian. Equivalently \cite[Proposition 2.4.1.3(1)]{lurie2009htt}, pick a $q$-coCartesian edge $x \to y$ over $\phi$ and we wish to show that $(x,qx) \to (y,qy)$ is $\pr_2$-coCartesian. Note that $\pr_2$ is the projection map of a pullback square
    \begin{center}
        \begin{tikzcd}
            \myuline{\Alg}_{\LM}(\C)^{\otimes} \times_{\myuline{\F}_{G, *}} \myuline{\Alg}_{\E_1}(\C)^{\otimes} \arrow[dr, phantom, very near start, "\lrcorner"] \arrow[r] \arrow[d, "\pr_2", swap] & \myuline{\Alg}_{\LM}(\C)^{\otimes} \arrow[d, "pq"]
            \\ \myuline{\Alg}_{\E_1}(\C)^{\otimes} \arrow[r] & \myuline{\F}_{G, *}
        \end{tikzcd}
    \end{center}
    and being a coCartesian edge can be checked before pulling back \cite[Proposition 2.4.1.3(1)]{lurie2009htt}. In other words, we are asking if $x\to y$ is $pq$-coCartesian. By assumption, $x \to y$ is $q$-coCartesian and moreover $q(x \to y) = \phi$ is $p$-coCartesian. Thus, $x \to y$ is $pq$-coCartesian \cite[Proposition 2.4.1.3(3)]{lurie2009htt}.
    \medskip \\This yields naturality for $p$-coCartesian edges and in particular gives the naturality for the commutative squares that we wanted.
\end{proof}

\begin{example}
    Let $R \in \Alg_{\E_{\infty}^G}(\ul{\Sp}_G)$ and $M \in \LMod_{R^H}(\Sp_H)$. If we write $N_R^{H \to G}$ for the norm in $\ul{\LMod}_R(\ul{\Sp}_G)^{\otimes}$, then 
    \[ M \otimes_{R^H} M \simeq R^H \otimes_{R^H \otimes R^H} (M \otimes M) \quad \text{and} \quad N_R^{H \to G}M \simeq R \otimes_{N_H^G R} N_H^G M. \]
    are special cases of \cref{prop: indexed tensor products of LMod}.
\end{example}

\noindent Now, we can finally prove distributivity of $\LMod$. We are grateful to Kaif Hilman for suggesting the proof strategy.

\begin{theorem} \label{theorem: LMod distributive}
    Let $\O^{\otimes} \in \Op_{G, \infty}$ have a single $G$-color and $(\C^{\otimes}, A)$ be an $\O$-module datum. Suppose that $\myuline{\C}^{\otimes}$ is distributive and $G$-presentable. Then, $\myuline{\LMod}_A^{G}(\C)^{\otimes} \to \myuline{\O}^{\otimes}$ is $\O$-distributive.
\end{theorem}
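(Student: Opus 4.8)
The plan is to follow exactly the strategy sketched after Theorem~B in the introduction, now that all the ingredients have been assembled. Recall that $\O$-distributivity (\cref{def: distributivity}) is a condition on the indexed tensor product functors $\underline{\bigotimes}_{o'\to o}\colon \myuline{\LMod}_A^G(\C)_{\myuline o'}^{\otimes}\to \myuline{\LMod}_A^G(\C)_{\myuline o}^{\otimes}$ attached to coCartesian lifts $o'\to o$ of maps in $\myuline{\F}_{G,*}$ over some $G/H\in\Orb_G$. First I would record that the parametrized fibers of $\myuline{\LMod}_A^G(\C)^{\otimes}\to\myuline{\O}^{\otimes}$ are $H$-cocomplete: this is \cref{prop: categorical properties of LMod}\,(iii), whose hypotheses (namely $G$-presentability of $\myuline\C$, levelwise distributivity, and the projection formula) are available since $\myuline\C^{\otimes}$ is assumed distributive and $G$-presentable and since distributivity implies the projection formula by \cref{prop: projection formula from distributivity}. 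So the definition of $\O$-distributivity makes sense here, and it remains to check that each $\underline{\bigotimes}_{o'\to o}$ is a distributive functor in the sense of \cref{def: distributivity}\,(i).

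Next I would invoke \cref{prop: indexed tensor products of LMod} to factor $\underline{\bigotimes}_{o'\to o}$, for an orbit decomposition $U\simeq\coprod_{i=1}^n G/H_i$ of the source of $f\colon U\to G/H$, as
\[
\myuline{\LMod}_A^G(\C)_{\myuline o'}^{\otimes} \longrightarrow \myuline{\LMod}_{\underline{\bigotimes}_{U\to G/H}\myuline A^{\otimes}(o')}^G(\C)_{\myuline H} \xrightarrow{\ A\otimes_{\underline{\bigotimes}_{o'\to o}A}-\ } \myuline{\LMod}_A^G(\C)_{\myuline H}^{\otimes},
\]
compatibly with the analogous factorization of $\underline{\bigotimes}_{o'\to o}$ on $\myuline\C^{\otimes}$ through the forgetful functors. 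The second map is a relative-tensor-product functor, hence a $G$-left adjoint by \cref{prop: categorical properties of LMod}\,(ii), so it preserves $H$-colimits; it therefore suffices to prove that the composite of the first map with the forgetful functor to $\myuline\C_{\myuline o}^{\otimes}$ is distributive. By the commuting square in \cref{prop: indexed tensor products of LMod} this composite equals $\underline{\bigotimes}_{o'\to o}^{\C}\circ U$, where $U\colon \myuline{\LMod}_A^G(\C)_{\myuline o'}^{\otimes}\to\myuline\C_{\myuline o'}^{\otimes}$ is the forgetful functor and $\underline{\bigotimes}_{o'\to o}^{\C}$ is the indexed tensor product of $\myuline\C^{\otimes}$.

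Now the argument is the one in the proof sketch of Theorem~B, with $I^{\triangleright}$, $J^{\triangleright}$ replaced by a single $(\Orb_G)_{/U'}$-colimit diagram $p\colon\myuline I^{\myuline\triangleright}\to g'^*(\myuline{\LMod}_A^G(\C)_{\myuline o'})$ as in \cref{def: distributivity}\,(i) for the relevant pullback square $U'\to V'$ over $U\to G/H$. I want to show that $(f'_*\myuline I)^{\myuline\triangleright}\to\dots\to g^*(\myuline{\LMod}_A^G(\C)_{\myuline o})$ is a $(\Orb_G)_{/V'}$-colimit diagram. The forgetful functor $U$ strongly preserves and strongly reflects $G$-colimits (\cref{prop: categorical properties of LMod}\,(iii)), and restrictions/inductions along the pullback square are computed underlying, so $U$ applied to the candidate diagram is the corresponding diagram for $\underline{\bigotimes}_{o'\to o}^{\C}$ applied to $U\circ p$; this is a colimit diagram precisely because $\myuline\C^{\otimes}$ is distributive. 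Since $U$ reflects $G$-colimits, the original diagram is a colimit diagram, proving that $\underline{\bigotimes}_{o'\to o}^{\C}\circ U$ is distributive, hence so is $\underline{\bigotimes}_{o'\to o}$ after reattaching the relative-tensor left adjoint. I expect the main obstacle to be bookkeeping rather than conceptual: one must carefully track that the base-change/distributivity square in \cref{def: distributivity}\,(i) is compatible with the factorization of \cref{prop: indexed tensor products of LMod}, i.e.\ that pulling back along $g\colon V'\to V$ and $g'\colon U'\to U$ commutes with the forgetful functors and with the splitting of $\underline{\bigotimes}_{o'\to o}$ into a "norm part" and a "relative tensor part"; once this compatibility is in place, reflection of $G$-colimits by the (parametrized) forgetful functor does all the real work.
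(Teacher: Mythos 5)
Your proposal is correct and follows essentially the same route as the paper's proof: factor the indexed tensor product via \cref{prop: indexed tensor products of LMod}, handle the relative-tensor stage as a parametrized left adjoint, and handle the first stage by transferring the colimit diagram along the forgetful functor (which strongly preserves and reflects $G$-colimits by \cref{prop: categorical properties of LMod}, using \cref{prop: projection formula from distributivity} to secure its hypotheses) and invoking distributivity of $\myuline{\C}^{\otimes}$. The only cosmetic difference is that the paper records the compatibility of this factorization with the base changes $g^*, g'^*, f'_*$ in one explicit commutative diagram induced by the forgetful functors, which is exactly the bookkeeping you flagged at the end.
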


\begin{proof}
    Let $H \leq G$ and $o \in \O^{\otimes}_H$. We then have $\myuline{\LMod}_A^{G}(\C)^{\otimes}_{\myuline{o}} \simeq \myuline{\LMod}_A^{G}(\C)_{\myuline{H}}$ since $\O^{\otimes}$ has a single $G$-color (\cref{lemma: underlying T-category for single T-color}) and this is $H$-cocomplete by \cref{prop: categorical properties of LMod} \textcolor{chilligreen}{(ii)} and \cref{prop: projection formula from distributivity}. 
    \medskip \\Now again let $H \leq G$ and $\alpha\colon  o' \to o$ in $\myuline{\O}^{\otimes}$ be a coCartesian lift of a map in $\myuline{\F}_{G, *}$ over $G/H$ corresponding to $f\colon U \to G/H$. We need to show that the associated pushforward $H$-functor $\underline{\bigotimes}_{o' \to o}\colon  \myuline{\LMod}_A^{G}(\C)_{\myuline{o}'}^{\otimes} \to \myuline{\LMod}_A^{G}(\C)_{\myuline{o}}^{\otimes}$ is distributive. For this, consider any pullback square
    \begin{center}
        \begin{tikzcd}
            U' \arrow[r, "f'"] \arrow[d, "g'", swap] \arrow[dr, phantom, very near start, "\lrcorner"] & V' \arrow[d, "g"]
            \\ U \arrow[r, "f", swap] & G/H
        \end{tikzcd}
    \end{center}
    in $\F_{G}$ and $p\colon  \myuline{I}^{\myuline{\triangleright}} \to g'^* \myuline{\LMod}_A^{G}(\C)_{\myuline{o}'}^{\otimes}$ be a $(\Orb_G)_{/U'}$-colimit diagram. Let $U \simeq \coprod_i G/H_i$ with $H_i \leq G$ be an orbit decomposition, so $\myuline{\LMod}_A^{G}(\C)_{\myuline{o}'}^{\otimes} \simeq f_* \coprod_i \myuline{\LMod}_A^{G}(\C)_{\myuline{H}_i}$ by \cref{theorem: equivariant Segal conditions} and using that $\O^{\otimes}$ only has a single $G$-color. We need to prove that the composite
    {\small \begin{center}
        \begin{tikzcd}
            (f_*' \myuline{I})^{\myuline{\triangleright}} \arrow[r] & f'_*(\myuline{I}^{\myuline{\triangleright}}) \arrow[r] & f'_* g'^* \coprod_i \myuline{\LMod}_A^{G}(\C)_{\myuline{H}_i}^{\otimes} \arrow[r, no head, "\simeq"] & g^* f_* \coprod_i \myuline{\LMod}_A^{G}(\C)_{\myuline{H}_i}^{\otimes} \arrow[r] & g^* \myuline{\LMod}_A^{G}(\C)_{\myuline{o}}^{\otimes}
        \end{tikzcd}
    \end{center} }
    \noindent is a colimit diagram. The last map in the above composite as
    {\small \begin{center}
        \begin{tikzcd}
            g^* f_* \myuline{\LMod}_A^{G}(\C)_{\myuline{o'}}^{\otimes} \arrow[r] & g^*\myuline{\LMod}_{\underline{\bigotimes}_{o' \to o} A(o')}^{G}(\C)_{\myuline{H}} \arrow[r] & g^* \myuline{\LMod}_A^{G}(\C)_{\myuline{H}}^{\otimes}
        \end{tikzcd}
    \end{center} }
    \noindent by \cref{prop: indexed tensor products of LMod}.
    \medskip \\Then, there is a commutative diagram
    {\small \begin{center}
        \begin{tikzcd}
            (f_*' \myuline{I})^{\myuline{\triangleright}} \arrow[r] & f'_*(\myuline{I}^{\myuline{\triangleright}}) \arrow[r] & f'_* g'^* \coprod_i \myuline{\LMod}_A^{G}(\C)_{\myuline{H}_i}^{\otimes} \arrow[r, no head, "\simeq"] \arrow[d] & g^* \myuline{\LMod}_A^{G}(\C)_{\myuline{o'}}^{\otimes} \arrow[r] \arrow[d] & g^*\myuline{\LMod}_{\underline{\bigotimes}_{o' \to o}}^{G}(\C)_{\myuline{H}}^{\otimes} \arrow[d]
            \\ (f_*' \myuline{I})^{\myuline{\triangleright}} \arrow[u, equal] \arrow[r] & f_*'(\myuline{I}^{\myuline{\triangleright}}) \arrow[u, equal] \arrow[r] & f'_* g'^* \myuline{\C}_{\myuline{o'}}^{\otimes} \arrow[r, no head, "\simeq", swap] & g^* \myuline{\C}_{\myuline{o'}}^{\otimes} \arrow[r] & g^* \myuline{\C}_{\myuline{H}}^{\otimes}
        \end{tikzcd}
    \end{center} }
    \noindent induced by the forgetful functors by \cref{prop: indexed tensor products of LMod}. Since the forgetful functor strongly preserves colimits (\cref{prop: categorical properties of LMod} \textcolor{chilligreen}{(ii)}) and by assumption $\myuline{I}^{\myuline{\triangleright}} \to g'^* \myuline{\LMod}_A^{G}(\C)_{\myuline{o}'}^{\otimes}$ is a $(\Orb_G)_{/U}$-colimit diagram, we deduce that 
    \begin{center}
        \begin{tikzcd}
            \myuline{I}^{\myuline{\triangleright}} \arrow[r] & g'^* \myuline{\LMod}_A^{G}(\C)_{\myuline{o'}}^{\otimes} \arrow[r] & \myuline{\C}_{\myuline{o'}}^{\otimes} 
        \end{tikzcd}
        \end{center}is also a $(\Orb_G)_{/U'}$-colimit diagram, so since $\myuline{\C}^{\otimes}$ is $\myuline{\O}$-distributive, it follows that the bottom line of the above diagram is a colimit diagram. On the other hand, the forgetful functor $\myuline{\LMod}_A^{G}(\C)^{\otimes} \to \myuline{\C}^{\otimes}$ strongly reflects colimits (\cref{prop: categorical properties of LMod}), with which the upper line is also a colimit diagram.
    \medskip \\To get 
    back to our desired composite, we are only left to postcompose the top line by the map $g^*\ul{\LMod}_{\bigotimes_{i=1}^n N_{H_i}^H A_{H_i}}^{G}(\C)_{\myuline{H}}^{\otimes} \to g^* \ul{\LMod}_A^{G}(\C)_{\myuline{H}}^{\otimes}$, which is a $(\Orb_G)_{/V'}$-left adjoint (\cref{prop: categorical properties of LMod} \textcolor{chilligreen}{(ii)}) and thus strongly preserves $(\Orb_G)_{/V'}$-colimits.
\end{proof}

\noindent Let us enhance our terminology of $\O$-module data for the sake of brevity.

\begin{definition} \label{def: distributive module datum}
    A \tb{distributive module datum} consists of a triple $(\O^{\otimes}, \C^{\otimes}, A)$ such that $\O^{\otimes} \in \Op_{G, \infty}$ with a single $G$-color and $(\C^{\otimes}, A)$ is an $\O$-module datum (\cref{def: O-module datum}) with distributive and $G$-presentable $\ul{\C}^{\otimes}$.
\end{definition}

\noindent So the content of \cref{theorem: LMod distributive} is precisely that distributive module data yield distributive $\LMod$ constructions.

\subsection{Parametrized Grouplike Spaces \& Parametrized Picard Spaces}\label{sec:Pic_and_grouplike}
Grouplike monoidal $G$-spaces will be star players in this article.
It's the language to phrase Picard spaces in which will be an ingredient in the definition of Thom spectra and it is also involved in the recognition theorem, which will allow us to run computational arguments. We will recall and set the foundations in this subsection.
\medskip \\We generalize \cite[Definition 7.1]{andoblumberggepner2018parametrized} and define:

\begin{definition} \label{def: grouplike}
    \hfill
    \begin{enumerate}[(i)]
        \item An $\E_1$-$G$-space is \tb{grouplike} if it is grouplike at every level.
        \item Let $\O^{\otimes} \in \Op_{G, \infty}$ and $\eta\colon  \Infl_{G}\E_1^{\otimes} \to \O^{\otimes}$ be a map in $\Op_{G, \infty}$. Let $X \in \Alg_{\O}(\myuline{\Sc}_{G})$, then $X$ is \tb{grouplike} (with respect to $\eta$) if $\eta^* X$ is a grouplike $\E_1$-$G$-space.
    \end{enumerate}
    \noindent We write $\tb{\myuline{\Alg}_{\O}^{\gp}(\myuline{\Sc}_{G})} \subseteq \myuline{\Alg}_{\O}(\myuline{\Sc}_{G})$ for the full subcategory of (levelwise) grouplike $\O$-monoidal spaces.\footnote{See \cite[Definition 2.2.1]{nardinshah2022equivarianttopos} for a definition of parametrized algebra categories.}
\end{definition}

\noindent Note that we are not yet talking about $G$-subcategories. We will verify that it forms a $G$-subcategory later (\cref{lemma: Alggp G-subcategory}).

\begin{remark} \label{remark: branko grouplike}
    Let $V$ be a finite-dimensional real $G$-representation. Juran considers a notion of grouplike algebras that allows more $\E_V$-operads than we are considering here \cite[Definition 2.6]{juran2025genuineequivariantrecognitionprinciple}. If $A \in \Alg_{\E_V}(\myuline{\Sc}_G)$, then $A^H \in \Alg_{\E_{\dim{V^H}}}(\Sc)$ and Juran calls $A$ grouplike if $\pi_0(A^H)$ is a group for all $H$ such that $\dim{V^H} \geq 1$. So unlike us, he allows algebras which need not admit a levelwise monoid structure. On the other hand, we allow slightly more general $G$-$\infty$-operads and not just the $\E_V$-operads.
\end{remark}

\noindent Classically, grouplike multiplicative spaces were studied by May with regards to his recognition theorem \cite{may1972iteratedloopspaces}. Recently, such results were also proven in the equivariant setting, which we now briefly recall before moving to more categorical matters.

\begin{theorem}[{\cite[Theorem A]{cnossen2024normedequivariantringspectra}, \cite[Theorem A]{juran2025genuineequivariantrecognitionprinciple}}, \cite{Guillou2012EquivariantIL}]\label{thm: recognition}
    \hfill 
    \begin{enumerate}[(i)]
        \item There is a \(G\)-symmetric monoidal equivalence of \(G\)-$\infty$-categories $
    \myuline{\Sp}^{\otimes}_{G, \geq 0}\simeq \myuline{\Alg}^{\gp}_{\E^G_\infty}(\Sc_G)^{\otimes}$
    which is given levelwise by $\Omega^{\infty}$.
    \item Let $V$ be a finite-dimensional real $G$-representation and \(X, Y \in \Alg_{\E_V}^{\gp}(\myuline{\Sc}_{G}) \) in the sense of Juran (\cref{remark: branko grouplike}).
    Then, there exists a functor $\mathrm{B}^V \colon \Alg_{\E_V}^{\gp}(\myuline{\Sc}_{G, *}) \to \Sc^{G}_{*}$ which induces an equivalence of \(G\)-spaces 
    \[ \myuline{\Map}_{\myuline{\Alg}^{\gp}_{\E_V}(\Sc_G)}(X,Y)\simeq \myuline{\Map}_{\myuline{\Sc}_{G, *}}(\mathrm{B}^V X,\mathrm{B}^V Y). \]
    \end{enumerate}
\end{theorem}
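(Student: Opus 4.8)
The statement is a recollection-style theorem that packages together results of Cnossen--Haugseng--Lenz--Linskens, Guillou--May, and Juran, so the plan is to \emph{cite} the sources rather than re-prove them, but to indicate how the pieces fit. For part (i), the plan is to invoke the equivariant recognition principle of \cite{cnossen2024normedequivariantringspectra} (their Theorem A), which establishes that the connective cover functor $\myuline{\Sp}^{\otimes}_{G,\geq 0}\to \myuline{\Alg}^{\gp}_{\E^G_\infty}(\Sc^G)^{\otimes}$ given levelwise by $\Omega^\infty$ is a $G$-symmetric monoidal equivalence. One should note that the $G$-symmetric monoidal structure on the target is the one coming from the $G$-cartesian structure on $\myuline{\Sc}_G$ restricted to grouplike normed algebras, and that the levelwise description is genuinely $\Omega^\infty_H$ on each level $H\leq G$; the grouplike condition of \cref{def: grouplike} matches the connectivity condition on the spectrum side since a connective $G$-spectrum $X$ has $\pi_0^H(X)$ an abelian \emph{group} for every $H$. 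The upgrade from an equivalence of underlying $G$-$\infty$-categories to a $G$-symmetric monoidal equivalence is exactly the content of their Theorem A, so there is nothing further to check.

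For part (ii), the plan is to cite Juran's equivariant recognition principle \cite{juran2025genuineequivariantrecognitionprinciple} (his Theorem A). The functor $\mathrm{B}^V$ is Juran's equivariant bar/classifying-space construction for $\E_V$-algebras; the key input is that for a grouplike $\E_V$-algebra $X$ in the sense of \cref{remark: branko grouplike}, the canonical map $X\to \Omega^V \mathrm{B}^V X$ is an equivalence of $G$-spaces, i.e.~$\mathrm{B}^V$ is a (partial) delooping inverse to $\Omega^V$. Granting this, the claimed equivalence of mapping $G$-spaces
\[
\myuline{\Map}_{\myuline{\Alg}^{\gp}_{\E_V}(\Sc^G)}(X,Y)\simeq \myuline{\Map}_{\myuline{\Sc}_{G,*}}(\mathrm{B}^V X,\mathrm{B}^V Y)
\]
follows formally: $\mathrm{B}^V$ is fully faithful on grouplike objects because it admits $\Omega^V$ as a one-sided inverse up to equivalence on the grouplike locus, and one reads off the mapping-space identification at each level $H\leq G$ from $\mathrm{B}^{\dim V^H}$ on the fixed points, then assembles these into a statement about $G$-spaces using that $(-)^H$ commutes with the relevant mapping spaces. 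The pointedness (the subscript $*$ on $\myuline{\Sc}_{G,*}$ and on $\myuline{\Alg}^{\gp}_{\E_V}(\Sc^G_*)$) is bookkeeping: a grouplike $\E_V$-space has a canonical basepoint (the unit), and $\mathrm{B}^V$ lands in pointed $G$-spaces by construction.

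The main obstacle — and the reason this is phrased as a \emph{recollection} rather than developed in detail — is the mismatch of conventions between the cited sources and between their notion of grouplike and the one fixed in \cref{def: grouplike}. Juran's grouplike condition (only requiring $\pi_0$ of $A^H$ to be a group when $\dim V^H\geq 1$) is a priori weaker than demanding a levelwise monoid structure, as \cref{remark: branko grouplike} emphasizes; so when applying part (ii) to objects of $\myuline{\Alg}^{\gp}_{\E_V}(\myuline{\Sc}_G)$ as defined in the excerpt, one must observe that our objects \emph{do} carry a levelwise (hence $\pi_0$-group) structure, so they are in particular grouplike in Juran's sense and his theorem applies. The other subtlety is that for \emph{non-fixed} subgroups, i.e.~$H$ with $V^H=0$, the operad $\E_{\dim V^H}=\E_0$ imposes no deloopable structure and the classifying space on that level is trivial/pointed; one should remark that the mapping-space equivalence is still correct there because both sides reduce to the mapping space of pointed $H$-spaces $\Map_*(X^H,Y^H)$ (with $\mathrm{B}^0$ the identity on pointed spaces), so no information is lost. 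Beyond these compatibility remarks, the theorem is a direct quotation of \cite{cnossen2024normedequivariantringspectra, juran2025genuineequivariantrecognitionprinciple, Guillou2012EquivariantIL}.
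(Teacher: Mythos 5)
Your proposal matches the paper's treatment: the theorem is stated purely as a recollection with citations to Cnossen--Haugseng--Lenz--Linskens, Guillou--May, and Juran, and no proof is given in the paper beyond those references. Your added remarks on reconciling the grouplike conventions (\cref{def: grouplike} versus \cref{remark: branko grouplike}) are consistent with how the paper uses the result, so there is nothing to correct.
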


\begin{lemma} \label{lem:delooping_suspension}
    Let $E \in \Sp^{G}_{\geq 0}$ and $V$ be a $G$-representation. Then, $\mathrm{B}^{V} \Omega^{\infty}E \simeq \Omega^{\infty} \Sigma^{V}E$.
\end{lemma}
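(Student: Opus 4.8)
\textbf{Proof plan for \cref{lem:delooping_suspension}.}
The plan is to exploit the recognition equivalence of \cref{thm: recognition}\,(ii) together with the $\E_V$-algebra structure on connective $G$-spectra. First I would observe that for $E \in \Sp^G_{\geq 0}$, the space $\Omega^\infty E$ carries an $\E_V$-$G$-algebra structure (in fact an $\E_\infty^G$-structure, which restricts to an $\E_V$-structure along any map $\E_V \to \E_\infty^G$), and it is grouplike in the sense of Juran since all its homotopy Mackey functors are groups. Thus $\mathrm{B}^V\Omega^\infty E$ is defined. The key point is to identify the composite functor $\mathrm{B}^V \circ \Omega^\infty\colon \Sp^G_{\geq 0} \to \Sc^G_*$ with $\Omega^\infty\circ \Sigma^V$.

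The main step is to characterize $\mathrm{B}^V$ by a universal property -- namely that it is right adjoint, up to the recognition equivalence, to taking $V$-fold loops on pointed $G$-spaces, or more precisely that for $X \in \Alg^{\gp}_{\E_V}(\myuline{\Sc}_{G,*})$ and $Z \in \Sc^G_*$ one has $\myuline{\Map}_{\Sc^G_*}(\mathrm{B}^V X, Z) \simeq \myuline{\Map}_{\Alg^{\gp}_{\E_V}}(X, \Omega^V Z)$, where $\Omega^V Z$ carries its canonical (grouplike, once we pass to the image, or after group completion) $\E_V$-structure coming from the little $V$-disks action. Granting this, I would compute, for any $Z \in \Sc^G_*$ arising as $\Omega^\infty$ of a connective $G$-spectrum (which suffices to test equivalences between $0$-connective-type objects, or one reduces to this case by a connectivity argument),
\[
\myuline{\Map}_{\Sc^G_*}(\mathrm{B}^V\Omega^\infty E, Z) \simeq \myuline{\Map}_{\Alg^{\gp}_{\E_V}}(\Omega^\infty E, \Omega^V Z).
\]
Now $\Omega^V Z \simeq \Omega^\infty \Sigma^{-V} \widetilde{Z}$ when $Z = \Omega^\infty \widetilde{Z}$, and by \cref{thm: recognition}\,(i) (the $\E_\infty^G$ recognition equivalence, which one uses to compare $\E_V$-mapping spaces between infinite loop spaces with spectrum-level mapping spaces after smashing with $\E_V \to \E_\infty^G$, or more directly via the adjunction $\Sigma^\infty_+ \dashv \Omega^\infty$ and the stable nature of the source) this mapping space is computed as $\myuline{\Map}_{\Sp^G}(\Sigma^V E, \widetilde{Z}) \simeq \myuline{\Map}_{\Sc^G_*}(\Omega^\infty \Sigma^V E, Z)$. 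Since this holds naturally in $Z$ ranging over a class of objects detecting equivalences (e.g.\ all $\Omega^\infty$ of $0$-coconnective Postnikov sections suffices, or one invokes that both sides are infinite loop spaces and tests against $\Sigma^\infty$ of orbits), the Yoneda lemma gives $\mathrm{B}^V\Omega^\infty E \simeq \Omega^\infty\Sigma^V E$.

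The hard part will be pinning down the precise universal property of $\mathrm{B}^V$ and the compatibility of the $\E_V$-structures: one must check that the $\E_V$-structure on $\Omega^\infty E$ inherited from the $\E_\infty^G$-structure agrees (as an $\E_V$-algebra, functorially in $E$) with the one for which the recognition/delooping machinery applies, and that $\Omega^V$ of a pointed $G$-space, equipped with its little-$V$-disks $\E_V$-structure, becomes grouplike after the relevant group completion so that \cref{thm: recognition}\,(ii) applies on the nose. An alternative, perhaps cleaner route avoiding these bookkeeping issues: use that $\mathrm{B}^V$ is, by construction in \cite{juran2025genuineequivariantrecognitionprinciple}, a partial delooping built from the $\E_V$-bar construction, so that for a \emph{fully} $\E_\infty^G$-grouplike input like $\Omega^\infty E$ it agrees with the composite $\Sigma^V(-)$ on the spectrum level followed by $\Omega^\infty$ -- this is essentially the statement that the $V$-fold bar construction of an infinite loop space computes the $V$-fold suspension, which follows by comparing with \cref{thm: recognition}\,(i) levelwise and using that $\Sigma^\infty_+ \Omega^\infty E \to E$ is the counit of a stable adjunction. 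I expect either route to reduce, after the structural identifications, to a one-line consequence of the two recognition theorems.
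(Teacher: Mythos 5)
Your overall strategy -- a Yoneda argument combining the recognition theorems with a suspension/loop adjunction -- is the same as the paper's, but two of your middle identifications do not hold in the categories where you take the mapping spaces, and this is exactly where the argument breaks. First, the identification $\Map_{\Alg^{\gp}_{\E_V}}(\Omega^\infty E,\Omega^V Z)\simeq \Map_{\Sp^G}(\Sigma^V E,\widetilde Z)$ (for $Z=\Omega^\infty\widetilde Z$) is false: \cref{thm: recognition}\,(i) identifies \emph{$\E_\infty^G$-grouplike} maps between infinite loop $G$-spaces with maps of connective spectra, whereas $\E_V$-algebra maps form a strictly larger space in general (already non-equivariantly, $\E_n$-maps $\Omega^\infty A\to\Omega^\infty B$ correspond to $n$-fold loop maps $\Omega^\infty\Sigma^nA\to\Omega^\infty\Sigma^nB$, not to stable maps). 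The only tool for computing $\E_V$-mapping spaces here is \cref{thm: recognition}\,(ii), whose use would be circular since it requires knowing $\mathrm{B}^V\Omega^\infty E$ -- the very thing being computed; neither ``smashing with $\E_V\to\E_\infty^G$'' nor $\Sigma^\infty_+\dashv\Omega^\infty$ repairs this. Moreover the final rewrite $\Map_{\Sp^G}(\Sigma^V E,\widetilde Z)\simeq \Map_{\Sc^G_*}(\Omega^\infty\Sigma^V E,Z)$ is also false (it would need $\Sigma^\infty\Omega^\infty\simeq\mathrm{id}$), so your chain never expresses both sides by the same formula in $Z$. Second, the universal property you posit for $\mathrm{B}^V$ against \emph{arbitrary} pointed $G$-spaces is not what \cref{thm: recognition}\,(ii) provides (it only gives full faithfulness of $\mathrm{B}^V$); establishing it would require identifying the essential image via a $V$-connective-cover argument, and your fallback that ``both sides are infinite loop spaces'' is again circular for $\mathrm{B}^V\Omega^\infty E$. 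Finally, $\Omega^\infty$ of connective spectra do not detect equivalences among arbitrary pointed $G$-spaces, so the Yoneda step as set up in $\Sc^G_*$ is not justified.

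The paper's proof avoids all of this by never leaving $\Alg^{\gp}_{\E_\infty^G}(\myuline{\Sc}_G)$: by \cref{thm: recognition}\,(i) the objects $\Omega^\infty X$ with $X\in\Sp^G_{\geq 0}$ exhaust that category, so the Yoneda argument applies on the nose, and the chain only uses the delooping/loop adjunction there, the commutation $\Omega^V\Omega^\infty X\simeq\Omega^\infty\Omega^V X$, and the adjunction $\Sigma^V\dashv\tau_{\geq 0}\Omega^V$ on $\Sp^G_{\geq 0}$. If you keep every mapping space in grouplike $\E_\infty^G$-algebras (equivalently, connective $G$-spectra via recognition), your outline becomes essentially the paper's proof; as written, with mapping spaces in $\Sc^G_*$ and $\Alg^{\gp}_{\E_V}$, it does not go through.
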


\begin{proof}
    First, we note the adjunction
    \begin{center}
        \begin{tikzcd}
            \Sp^G_{\geq 0} \arrow[r, "" {name = L}, shift left = 1.5, hookrightarrow] & \Sp^G \arrow[l, "\tau_{\geq 0}" {name = R}, shift left = 1.5] \arrow[r, "\Sigma^V" {name = L2}, shift left = 1.5] & \Sp^G \arrow[l, shift left = 1.5, "\Omega^V" {name = R2}]
        \arrow[phantom,from=L,to=R,"\scriptscriptstyle\tiny\bot"]
        \arrow[phantom,from=L2,to=R2,"\scriptscriptstyle\tiny\bot"]
        \end{tikzcd}
    \end{center}
    which restricts to an adjunction $\Sigma^V \dashv \tau_{\geq 0} \Omega^V$ on $\Sp_{\geq 0}^G$. Equipped with this, we perform a Yoneda argument: Let $X \in \Sp^G_{\geq 0}$, then
    \begin{align*}
        \Map_{\Alg_{\E_{\infty}^G}^{\gp}(\myuline{\Sc}_G)}(\mathrm{B}^{V} \Omega^{\infty}E, \Omega^{\infty}X) &\simeq \Map_{\Alg_{\E_{\infty}^G}^{\gp}(\myuline{\Sc}_G)}(\Omega^{\infty}E, \Omega^{V} \Omega^{\infty}X)
        \\ &\simeq \Map_{\Alg_{\E_{\infty}^G}^{\gp}(\myuline{\Sc}_G)}(\Omega^{\infty}E,  \Omega^{\infty} \Omega^{V}X)
        \\ &\simeq \Map_{\Sp^{G}_{\geq 0}}(E, \tau_{\geq 0}\Omega^{V}X)
        \\ &\simeq \Map_{\Sp^{G}_{\geq 0}}(\Sigma^{V}E, X)
        \\ &\simeq \Map_{\Alg_{\E_{\infty}^G}^{\gp}(\myuline{\Sc}_G)}(\Omega^{\infty}\Sigma^{V}E, \Omega^{\infty}X)
    \end{align*}
    where we use the recognition theorem (\cref{thm: recognition}) to argue that $\Omega^{\infty}X$ for $X \in \Sp^G_{\geq 0}$ hits all of $\Alg_{\E_{\infty}^G}^{\gp}(\Sc_G)$.
\end{proof}

\begin{example} \label{example: equivariant Bott periodicity ku}
    By (Real) equivariant Bott periodicity \cite{Atiyah66Real} we have
    \[
    \Omega^\infty \kR \simeq \Z\times \BUR,\quad
    \Omega^\infty \Sigma^\rho \kR \simeq \BUR,\quad
    \Omega^\infty \Sigma^{2\rho} \kR \simeq \BSUR,
    \]
    and hence \( \mathrm{B}^\rho \BUR\simeq \BSUR\).
\end{example}

\noindent Let us now move to categorical matters.

\begin{lemma} \label{lemma: Alggp G-subcategory}
    The subcategory $\myuline{\Alg}_{\O}^{\gp}(\myuline{\Sc}_{G}) \subseteq \myuline{\Alg}_{\O}(\myuline{\Sc}_{G})$ is a $G$-subcategory.
\end{lemma}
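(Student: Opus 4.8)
The plan is to show that $\myuline{\Alg}_{\O}^{\gp}(\myuline{\Sc}_{G}) \subseteq \myuline{\Alg}_{\O}(\myuline{\Sc}_{G})$ is stable under the restriction functors, since a full subcategory of a $G$-$\infty$-category determined levelwise is automatically a $G$-subcategory once we know the restrictions land back inside. Concretely, recall that $\myuline{\Alg}_{\O}(\myuline{\Sc}_{G})$ as a $G$-$\infty$-category has value $\Alg_{\Res_H^G \O}(\myuline{\Sc}_H)$ on $G/H$ and, for $K \leq H$, its structure map is the restriction $\Alg_{\Res_H^G \O}(\myuline{\Sc}_H) \to \Alg_{\Res_K^G \O}(\myuline{\Sc}_K)$ induced both by restricting the operad and by the forgetful functor $\myuline{\Sc}_H \to \myuline{\Sc}_K$ on underlying $G$-spaces. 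So I must check: if $X \in \Alg_{\Res_H^G\O}(\myuline{\Sc}_H)$ is grouplike (in the sense of \cref{def: grouplike}, i.e.\ $\eta^*X$ is levelwise grouplike for the map $\eta\colon \Infl_H \E_1^{\otimes} \to \Res_H^G\O^{\otimes}$), then its restriction to a $K$-algebra is again grouplike.

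First I would unwind the definition of grouplikeness to a genuinely levelwise condition. By \cref{def: grouplike}, grouplikeness of an $\E_1$-$G$-space means grouplike at each level, i.e.\ for each subgroup the value is a grouplike $\E_1$-space of ordinary spaces, which is a condition purely on $\pi_0$ (the monoid $\pi_0$ is a group). The key compatibility is that the map $\Infl_H \E_1^{\otimes} \to \Res_H^G\O^{\otimes}$ is compatible with further restriction: $\Res_K^H$ of an inflated operad is again inflated, $\Res_K^H \Infl_H \E_1 \simeq \Infl_K \E_1$, and the square relating $\eta$ for $H$ and for $K$ commutes. Hence $\eta_K^*(\Res_K^H X) \simeq \Res_K^H(\eta_H^* X)$ as $\E_1$-$K$-spaces. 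Now the underlying $\E_1$-space of $\Res_K^H(\eta_H^*X)$ at a subgroup $L \leq K$ is just the underlying $\E_1$-space of $\eta_H^*X$ at $L \leq K \leq H$ — restriction of $G$-spaces does not change the value at a given subgroup. Since $\eta_H^*X$ was grouplike, that value is a grouplike $\E_1$-space, so $\Res_K^H(\eta_H^*X)$ is levelwise grouplike, i.e.\ $\Res_K^H X$ is grouplike.

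The remaining routine point is that the inclusion is \emph{full} (so that once objects are preserved, all the structure — morphism spaces, coCartesian edges over $\Orb_G^{\op}$ — is inherited), which is immediate from the definition as a full subcategory levelwise, together with the fact that a levelwise-full subcategory closed under the restriction functors assembles to a full $G$-subcategory; this is the standard characterization (e.g.\ the value of the would-be $G$-subcategory on $G/H$ is $\myuline{\Alg}_{\Res_H^G\O}^{\gp}(\myuline{\Sc}_H)$ and the structure maps are the restrictions of those of $\myuline{\Alg}_{\O}(\myuline{\Sc}_G)$, which land inside by the previous paragraph). I do not expect any real obstacle here; the only thing to be slightly careful about is the naturality of $\eta$ under restriction of groups, i.e.\ that the chosen maps $\Infl_H\E_1^{\otimes} \to \Res_H^G\O^{\otimes}$ are themselves the restrictions of a single map over $G$, which is exactly the input data of \cref{def: grouplike}(ii); granting that, the argument is a one-line reduction to the non-parametrized statement that restriction of $G$-spaces preserves the property of being levelwise grouplike.
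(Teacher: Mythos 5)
Your proposal is correct and follows essentially the same route as the paper: both reduce the claim to showing that the restriction functors preserve grouplike algebras, by checking that restriction commutes with pullback along $\eta$ (so $\eta^*(\Res^K_H X)\simeq \Res^K_H(\eta^* X)$) and then using that grouplikeness is a levelwise condition unaffected by restriction. The only cosmetic difference is that you additionally spell out the (routine) fullness point, which the paper leaves implicit.
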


\begin{proof}
    On the level $H \leq G$ the $G$-$\infty$-category $\myuline{\Alg}_{\O}(\myuline{\Sc}_{G})$ is given by $\Alg_{\myuline{\O}_{\myuline{H}}}(\myuline{\Sc}_{H})$ \cite[Definition 2.2.1]{nardinshah2022equivarianttopos} \cite[Corollary 2.2.11(3)]{cnossen2023parametrizedstabilityuniversalproperty}. We must check that the restriction functor preserves grouplike algebras. For this, we observe for $H \leq K \leq G$ that the square
    \begin{center}
        \begin{tikzcd}
            \Alg_{\myuline{\O}_{\myuline{K}}}(\myuline{\Sc}_{K}) \arrow[r] \arrow[d, "\eta^*", swap] & \myuline{\Alg}_{\myuline{\O}_{\myuline{H}}}(\myuline{\Sc}_{H}) \arrow[d, "\eta^*"]
            \\ \Alg_{\Infl_{K} \E_1}(\myuline{\Sc}_{K}) \arrow[r] & \Alg_{\Infl_{H} \E_1}(\myuline{\Sc}_{H})
        \end{tikzcd}
    \end{center}
    commutes, at least evaluated on objects. Indeed, if $A\colon  \myuline{\O}_{\myuline{K}}^{\otimes} \to \myuline{\Sc}_{K}^{\times}$ is an object in the top left, then going around the square is given by the two equivalent formulations
    \[ \left( \Infl_{K} \E_1|_{\Orb_H^{\op}} \to \myuline{\O}_{\myuline{K}}^{\otimes}|_{\Orb_H^{\op}} \to \myuline{\Sc}_{K}|_{\Orb_H^{\op}} \right) \simeq \left( \Infl_{K} \E_1 \to \myuline{\O}_{\myuline{K}}^{\otimes} \to \myuline{\Sc}_{K} \right)|_{\Orb_H^{\op}}. \]
    On the other hand, the restriction of a grouplike algebra is grouplike again since being grouplike is a levelwise condition.
\end{proof}

\begin{lemma} \label{lemma: Alggp presentable}
    The $G$-$\infty$-category $\myuline{\Alg}_{\Infl_{G}\E_1}^{\gp}(\myuline{\Sc}_{G})$ is $G$-presentable.
\end{lemma}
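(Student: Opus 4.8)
The plan is to identify $\myuline{\Alg}_{\Infl_{G}\E_1}^{\gp}(\myuline{\Sc}_{G})$ with a $G$-$\infty$-category that is manifestly $G$-presentable, using the work set up earlier in this section together with the recognition theorem. First I would recall that a $G$-$\infty$-category is $G$-presentable if and only if it is fiberwise presentable, its restriction functors preserve fiberwise colimits, and the restrictions admit left adjoints satisfying Beck--Chevalley (this is the criterion of Hilman cited in the proof of \cref{prop: categorical properties of LMod}). Since $\myuline{\Alg}_{\Infl_{G}\E_1}^{\gp}(\myuline{\Sc}_{G})$ is a $G$-subcategory of $\myuline{\Alg}_{\Infl_{G}\E_1}(\myuline{\Sc}_{G})$ by \cref{lemma: Alggp G-subcategory}, and on level $H$ it is $\Alg_{\E_1}^{\gp}(\myuline{\Sc}_H) = \Alg_{\E_1}^{\gp}(\Sc_H)$ by \cref{theorem: omnibus Alg theorem}(iv), the fiberwise part reduces to the classical fact that grouplike $\E_1$-algebras in an $\infty$-topos form a presentable $\infty$-category; indeed $\Alg_{\E_1}^{\gp}(\Sc_H)$ is an accessible localization of $\Alg_{\E_1}(\Sc_H)$, which is presentable since $\Sc_H$ is.

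The key structural input I would use to streamline this is the connective stabilization / recognition philosophy: grouplike $\E_1$-monoids are connective spectra for the $\E_1$-truncated theory, but more to the point, $\myuline{\Alg}_{\Infl_{G}\E_1}^{\gp}(\myuline{\Sc}_{G})$ sits as a full $G$-subcategory of $\myuline{\Alg}_{\Infl_G\E_1}(\myuline{\Sc}_G)$ closed under the relevant limits and (filtered, sifted) colimits. So the cleanest route is: (1) show $\myuline{\Alg}_{\Infl_G\E_1}(\myuline{\Sc}_G)$ is $G$-presentable --- this follows because $\myuline{\Sc}_G$ is $G$-presentable (it is the prototypical example, being $G$-cartesian closed and the free $G$-cocompletion of a point) and $\Alg$-categories for a singly-coloured $G$-$\infty$-operad over a $G$-presentable $G$-$\infty$-category are again $G$-presentable, the forgetful functor being a $G$-right adjoint that is conservative and preserves sifted $G$-colimits; (2) show the grouplike subcategory is a $G$-accessible $G$-localization, i.e.\ exhibit the inclusion as having a $G$-left adjoint (group-completion) and verify the subcategory is closed under $G$-filtered $G$-colimits. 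Grouplikeness is detected fiberwise on $\pi_0$, hence is a condition cut out by requiring certain maps (shearing maps) to be equivalences, and such conditions are stable under filtered colimits in each fiber and compatible with restriction; the group completion left adjoint exists fiberwise (classical) and assembles into a $G$-functor because it is the fiberwise left adjoint to a $G$-functor, so one invokes the parametrized adjoint functor criterion as in the proof of \cref{prop: categorical properties of LMod}(iii).

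Putting these together: $\myuline{\Alg}_{\Infl_G\E_1}^{\gp}(\myuline{\Sc}_G)$ is a $G$-accessible $G$-reflective $G$-subcategory of the $G$-presentable $\myuline{\Alg}_{\Infl_G\E_1}(\myuline{\Sc}_G)$, hence is itself $G$-presentable (parametrized localizations of $G$-presentable $G$-$\infty$-categories are $G$-presentable, by Hilman's theory). I expect the main obstacle to be purely organizational rather than conceptual: namely, checking that the fiberwise group-completion functors and the fiberwise filtered-colimit-closure assemble coherently into statements about the $G$-$\infty$-category --- i.e.\ verifying the Beck--Chevalley condition for the left adjoints to restriction on the grouplike subcategory, and that restriction functors preserve the reflective localization. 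Both follow once one knows the non-grouplike statement and that grouplikeness is a restriction-stable fiberwise condition, but spelling out the Beck--Chevalley square requires knowing that group completion commutes with the indexed coproducts $\Ind_H^K$, which in turn reduces to the fact that $\Ind_H^K$ on $\myuline{\Sc}_G$ preserves the shearing-equivalence condition --- a consequence of $\myuline{\Sc}_G^\times$ being distributive (\cref{example: S Cat distributive}) so that $\Ind_H^K$ interacts well with the monoidal structure.
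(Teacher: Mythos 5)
Your overall strategy is sound, but it is a genuinely different route from the paper's. The paper does not present the grouplike objects as a $G$-reflective localization via group completion; instead it checks Hilman's criterion directly: (a) $G$-cocompleteness, by showing the grouplike algebras are closed under all $G$-colimits taken in $\myuline{\Alg}_{\Infl_G\E_1}(\myuline{\Sc}_G)$ -- fiberwise closure is Lurie's classical remark, and closure under the indexed coproducts $\Ind_H^K$ is proved by identifying $\Alg_{\E_1}(\Sc_H)$ with $\Fun(\Orb_H^{\op},\Alg_{\E_1}(\Sc))$ (the monoidal structure being pointwise cartesian), so that $\Ind_H^K$ on algebras is a left Kan extension whose pointwise values are colimits of grouplike $\E_1$-spaces, hence grouplike; and (b) fiberwise presentability, via the $\infty$-categorical reflection principle applied to the inclusion, which preserves fiberwise limits and colimits because it levelwise has both adjoints. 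What your localization route buys is that, done formally, you never need closure under $\Ind_H^K$ at all: if the fiberwise group completions assemble into a $G$-left adjoint to the inclusion of \cref{lemma: Alggp G-subcategory} (they commute with restriction, being computed pointwise), then indexed coproducts in the grouplike subcategory can be taken as $L\circ\Ind_H^K\circ i$ and Beck--Chevalley is inherited formally from the ambient category. The cost is that you must actually assemble the reflector into a $G$-functor and locate (or prove) a parametrized statement that accessible $G$-reflective subcategories of $G$-presentable $G$-$\infty$-categories are $G$-presentable; the paper sidesteps both by checking cocompleteness and fiberwise presentability against Hilman's criterion directly.

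One step in your write-up does not hold up as stated: you claim that Beck--Chevalley reduces to group completion commuting with $\Ind_H^K$, and that this follows because ``$\Ind_H^K$ on $\myuline{\Sc}_G$ preserves the shearing-equivalence condition'' as a consequence of distributivity of $\myuline{\Sc}_G^{\times}$ (\cref{example: S Cat distributive}). The indexed coproduct of \emph{algebras} is not computed on underlying $G$-spaces (indeed $\Ind_H^K$ is not lax monoidal), so preservation of a shearing condition by the underlying $\Ind_H^K$ is not the relevant statement, and distributivity does not directly give that $\Ind_H^K$ of a grouplike algebra is grouplike. If you want that closure statement, argue as the paper does through the functor-category description and the pointwise left Kan extension formula; if you want to avoid it, commit fully to the formal localization argument sketched above, where only compatibility of the reflector with restriction is needed.
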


\begin{proof}
    We need to show that it is $G$-cocomplete and fiberwise presentable \cite[Theorem 6.1.3(7)]{hilman2024parametrisedpresentability}.
    \begin{itemize}
        \item $G$-cocompleteness: Since $\myuline{\Sc}_{G}^{\times}$ is distributive, we deduce that $\myuline{\Alg}_{\Infl_{G}\E_1}(\myuline{\Sc}_{G})$ is $G$-cocomplete \cite[Theorem 5.1.4]{nardinshah2022equivarianttopos}. We show that $G$-colimits of grouplike algebras in $\myuline{\Alg}_{\Infl_{G}\E_1}(\myuline{\Sc}_{G})$ are grouplike again.
        \medskip \\On the fiber $H \leq G$ we are looking at $\Alg_{\E_1}^{\gp}(\Sc_H) \subseteq \Alg_{\E_1}(\Sc_H)$ by \textbf{\ref{theorem: omnibus Alg theorem}} \textcolor{chilligreen}{(iii)}. The fiberwise colimit statement is \cite[Remark 5.2.6.9]{lurie2017ha}, so we still need to discuss indexed coproducts. For $H \leq K \leq G$ this is the left adjoint of $\Res_H^K\colon  \Alg_{\E_1}(\Sc_K) \to \Alg_{\E_1}(\Sc_H)$, also known as $\Ind_H^K$. Since the symmetric monoidal structure on $\Sc_H$ resp. $\Sc_K$ is given by the pointwise cartesian symmetric monoidal structure, this is equivalently the functor
        \[ \Res_H^K\colon  \Fun \left(\Orb_K^{\op}, \Alg_{\E_1}(\Sc) \right) \to \Fun \left(\Orb_H^{\op}, \Alg_{\E_1}(\Sc) \right) \]
        induced by precomposition by $j\colon  \Orb_H^{\op} \to \Orb_K^{\op}$. So its left adjoint $\Ind_H^K$ is given by left Kan extension. Let $A \in \Alg_{\E_1}^{\gp}(\Sc_H) \simeq \Fun \left(\Orb_H^{\op}, \Alg_{\E_1}^{\gp}(\Sc) \right)$ and $L \leq K$. Using the left Kan extension formula we compute
        \[ (\Ind_H^K A)_L \simeq \colim \left( j_{\downarrow L} \to \Orb_K^{\op} \xrightarrow{A} \Alg_{\E_1}(\Sc) \right), \]
        which is a colimit of grouplike $\E_1$-spaces and thus itself grouplike \cite[Remark 5.2.6.9]{lurie2017ha}. Lastly, $\Res_H^K\colon  \Alg_{\E_1}^{\gp}(\Sc_K) \to \Alg_{\E_1}^{\gp}(\Sc_H)$ preserves colimits because colimits of functor categories are computed pointwise.
        \medskip \\Altogether, we have shown $G$-cocompleteness \cite[Theorem 3.1.9]{hilman2024parametrisedpresentability}.
        \item Fiberwise presentability: Since $\myuline{\Sc}_G$ is distributive and fiberwise presentable, we may deduce that $\myuline{\Alg}_{\Infl_G \E_1}(\myuline{\Sc}_G)$ is also fiberwise presentable \cite[Theorem 5.1.4]{nardinshah2022equivarianttopos}. Moreover, the inclusion 
        \[ \myuline{\Alg}_{\Infl_G \E_1}^{\gp}(\myuline{\Sc}_G) \hookrightarrow \myuline{\Alg}_{\Infl_G \E_1}(\myuline{\Sc}_G) \] 
        fiberwise preserves colimits and limits because it classically levelwise admits left and right adjoints. Thus, by the $\infty$-categorical reflection principle \cite[Theorem 6.2]{ragimov2022inftycategoricalreflectiontheoremapplications}, $\myuline{\Alg}_{\Infl_{G}\E_1}^{\gp}(\myuline{\Sc}_{G})$ is also fiberwise presentable.
    \end{itemize}
    We're done.
\end{proof}

\begin{proposition} \label{prop: inclusion grouplike}
    The inclusion
    \[ \myuline{\Alg}_{\Infl_{G}\E_1}^{\gp}(\myuline{\Sc}_{G}) \hookrightarrow \myuline{\Alg}_{\Infl_{G}\E_1}(\myuline{\Sc}_{G}). \]
    is a $G$-symmetric monoidal $G$-left adjoint.
\end{proposition}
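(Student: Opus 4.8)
The plan is to establish the two constituent parts of the claim separately: that the inclusion is a morphism of $G$-symmetric monoidal $G$-$\infty$-categories, and that it is a $G$-left adjoint; their conjunction is precisely the assertion. For the first part, I would show that the full $G$-subcategory $\ul{\Alg}^{\gp}_{\Infl_G\E_1}(\ul{\Sc}_G) \subseteq \ul{\Alg}_{\Infl_G\E_1}(\ul{\Sc}_G)$ of \cref{lemma: Alggp G-subcategory} is closed under the entire $G$-symmetric monoidal structure, so that the coCartesian fibration $\ul{\Alg}_{\Infl_G\E_1}(\ul{\Sc}_G)^{\otimes} \to \Span(\F_G)$ restricts to a coCartesian fibration over $\Span(\F_G)$ on the full subcategory spanned fiberwise by (tuples of) grouplike algebras. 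This simultaneously exhibits $\ul{\Alg}^{\gp}_{\Infl_G\E_1}(\ul{\Sc}_G)^{\otimes}$ as a well-defined $G$-symmetric monoidal $G$-$\infty$-category and the inclusion, being the restriction of the identity fibration, as a $G$-symmetric monoidal $G$-functor.

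The closure statement is checked on the generating structure maps. By \cref{theorem: omnibus Alg theorem}~(iii), applied to the singly $G$-colored operad $\Infl_G\E_1$, the active structure maps are computed on underlying $G$-spaces, where $\ul{\Sc}_G$ carries the cartesian $G$-symmetric monoidal structure; so it suffices to see that grouplike $\E_1$-$G$-spaces are stable under: (i) restriction, which is \cref{lemma: Alggp G-subcategory}; (ii) the binary tensor product, which for the cartesian structure is the levelwise product of $\E_1$-spaces, and a product of grouplike $\E_1$-spaces is grouplike since $\pi_0$ of a product of $\E_1$-spaces is the product of their $\pi_0$'s; and (iii) the norm maps, since in the cartesian $\ul{\Sc}_G$ an indexed tensor product along a map of orbits is an indexed product, whose value at any subgroup is a finite product of fixed-point spaces of the input, hence grouplike whenever the input is. (The remaining inert maps — projections onto factors and unit maps — are harmless.)

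For the second part, by a criterion of Hilman \cite[Corollary 2.2.7]{hilman2024parametrisedpresentability} it suffices to check that the inclusion is fiberwise a left adjoint and that the resulting fiberwise right adjoints satisfy the Beck--Chevalley condition with respect to restriction. Fiberwise over $H \leq G$ the inclusion is the classical $\Alg^{\gp}_{\E_1}(\Sc_H) \hookrightarrow \Alg_{\E_1}(\Sc_H)$, which admits a right adjoint given by the \emph{invertible components} functor $A \mapsto A^{\times}$, the full sub-$\E_1$-space of $A$ on the path components that are units in $\pi_0 A$: any $\E_1$-algebra map out of a grouplike algebra sends invertible elements to invertible elements, so factors uniquely through $A^{\times}$, and $A^{\times}$ is grouplike because $\pi_0(A^{\times}) = (\pi_0 A)^{\times}$ is a group. (Alternatively, existence of the fiberwise right adjoint follows from \cref{lemma: Alggp presentable} together with the adjoint functor theorem.) The Beck--Chevalley condition holds because restriction of $\E_1$-$G$-algebras is computed levelwise on fixed-point spaces and does not change which components are invertible, so $\Res_H^K(A^{\times}) \simeq (\Res_H^K A)^{\times}$ naturally; hence the functors $A \mapsto A^{\times}$ assemble into a $G$-right adjoint and the inclusion is a $G$-left adjoint.

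I expect the main obstacle to be step~(iii) of the first part — pinning down the norms in the cartesian $G$-symmetric monoidal structure on $\ul{\Sc}_G$ explicitly enough (as indexed products built from fixed-point spaces) to conclude that they preserve grouplikeness. Once this is in place, the $G$-symmetric monoidality of the inclusion and the assembly of the units functors into a $G$-right adjoint are routine.
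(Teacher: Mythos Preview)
Your proof is correct. The $G$-symmetric monoidality argument is essentially the paper's: both check closure of the grouplike subcategory under the generating structure maps, with the norm case handled by recognizing that in the cartesian $G$-symmetric monoidal structure on $\ul{\Sc}_G$ norms are indexed products, whose fixed points decompose (via the double coset formula, which the paper makes explicit) into finite products of fixed-point spaces of the input.

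For the $G$-left adjoint you take a genuinely different route. The paper argues abstractly: the inclusion was already shown to be $G$-cocontinuous in the course of proving \cref{lemma: Alggp presentable}, and both sides are $G$-presentable, so the parametrized adjoint functor theorem supplies the $G$-right adjoint. You instead construct the fiberwise right adjoints directly as the invertible-components functors $A\mapsto A^\times$ and verify the Beck--Chevalley condition by observing that restriction is levelwise and cannot alter which components are invertible. Your approach is more elementary and immediately identifies the right adjoint as (what the paper subsequently names) $\GL_1$; the paper's approach is shorter given its prior investment in presentability, but then needs a separate remark to identify the adjoint explicitly.
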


\begin{proof}
    In the proof of the previous result (\cref{lemma: Alggp presentable}) we have seen that this inclusion functor is a $G$-cocontinuous. By $G$-presentability (\cref{lemma: Alggp presentable}) we can invoke the adjoint functor theorem \cite[Theorem 6.2.1]{hilman2024parametrisedpresentability} to obtain a right adjoint. We still need to check that the inclusion is $G$-symmetric monoidal.
    \medskip \\Fiberwise this follows because $\pi_0$ commutes with finite products. We need to discuss indexed products, so let $H \leq K \leq G$ and $X \in \Alg_{\E_1}^{\gp}(\Sc_H) \simeq \Fun \left(\Orb_H^{\op}, \Alg_{\E_1}^{\gp}(\Sc) \right)$ and we wish to show that $\Coind_H^K X$ is grouplike as well. Let $L \leq K$. We compute
    \begin{align*}
        \pi_0^L \Coind_H^K X &\cong \pi_0 \Map_{\Sc} \left(*, (\Coind_H^K X)^L \right)
        \\ &\cong \pi_0 \Map_{\Sc_L}(*, \Res_L^K \Coind_H^K X)
        \\ &\cong \pi_0 \Map_{\Sc_K}(\Ind_L^K *, \Coind_H^K X)
        \\ &\cong \pi_0 \Map_{\Sc_H}(\Res_H^K \Ind_L^K *, X)
    \end{align*}
    By the double coset formula, $\Res_H^K \Ind_L^K *$ is a finite coproduct of orbits, so the above term is a finite product of equivariant $\pi_0$'s of $X$, which is grouplike.
\end{proof}

\begin{remark}
    While 
    \cite[Lemma 7.2]{andoblumberggepner2018parametrized} is stated for a general (coherent) $\infty$-operad $\O$, the proof a priori only works for $\E_k$-operads. They cite \cite[Remark 5.1.3.5]{lurie2012ha}\footnote{This is \cite[Remark 5.2.6.9]{lurie2017ha}.} only works for $\E_k$-operads. Moreover, they state that products of invertible objects remain invertible, but it is not clear why this should be true since the operations induced by $\O$ need not be compatible with the $\E_1$-operations that are picked out. Nonetheless, this argument still works for $\E_n$-operads by working slightly harder.
\end{remark}

\begin{definition} \label{def: GL1}
    We will denote the $G$-right adjoint by $\tb{\GL_1}\colon  \myuline{\Alg}_{\Infl_{G}\E_1}(\myuline{\Sc}_{G}) \to \myuline{\Alg}_{\Infl_{G}\E_1}^{\gp}(\myuline{\Sc}_{G})$.
\end{definition}

\begin{remark}
    Since $\GL_1$ is fiberwise, say for $H \leq G$, a right adjoint 
    \[ \Fun \left(\Orb_H^{\op}, \Alg_{\E_1}(\Sc) \right) \to \Fun \left(\Orb_H^{\op}, \Alg_{\E_1}^{\gp}(\Sc) \right), \] 
    it is induced by the classical $\GL_1\colon  \Alg_{\E_1}(\Sc) \to \Alg_{\E_1}^{\gp}(\Sc)$ \cite[Lemma 7.2]{andoblumberggepner2018parametrized}. As such, it fiberwise takes the subalgebra of tensor-invertible objects.
\end{remark}

\begin{corollary} \label{corollary: GL1 monoidal}
    Let $\O^{\otimes} \in \Op_{G, \infty}$. Then, the functor $\GL_1$ induces a $G$-right adjoint 
    \[ \GL_1\colon  \ul{\Alg}_{\O \otimes \Infl_{G}\E_1}(\myuline{\Sc}_{G}) \to \ul{\Alg}_{\O \otimes \Infl_{G}\E_1}^{\gp}(\myuline{\Sc}_{G}) \]
    where being grouplike is with respect to the map $\Infl_{G} \E_1 \to \O^{\otimes} \otimes \Infl_{G}\E_1$ induced by the unit of the Stewart's Boardman--Vogt monoidal structure. 
\end{corollary}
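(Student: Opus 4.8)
The plan is to deduce this corollary formally from the already-established $G$-symmetric monoidal $G$-adjunction
\[ \ul{\Alg}_{\Infl_{G}\E_1}^{\gp}(\myuline{\Sc}_{G}) \hookrightarrow \ul{\Alg}_{\Infl_{G}\E_1}(\myuline{\Sc}_{G}) \]
of \cref{prop: inclusion grouplike}, by applying the functor $\Alg_{\O}(-)$ and identifying the resulting categories via the Boardman--Vogt adjunction. First I would recall that by \cref{theorem: omnibus Alg theorem}\textcolor{chilligreen}{(i)} the functor $\Alg_{\O}(-)^{\otimes}$ is (the parametrized enhancement of) the right adjoint to $-\otimes\O^{\otimes}$, and that applying it to the $G$-symmetric monoidal left adjoint of \cref{prop: inclusion grouplike} produces a $G$-symmetric monoidal functor
\[ \ul{\Alg}_{\O}\!\left(\ul{\Alg}_{\Infl_{G}\E_1}^{\gp}(\myuline{\Sc}_{G}) \right) \to \ul{\Alg}_{\O}\!\left(\ul{\Alg}_{\Infl_{G}\E_1}(\myuline{\Sc}_{G}) \right), \]
since $\Alg_{\O}(-)$ is a right adjoint on the level of $G$-$\infty$-operads and $\Mon$-categories and hence preserves $G$-symmetric monoidal adjunctions (this uses that $\ul{\Sc}_G^\times$ and its grouplike subcategory are distributive, so that the relevant Day convolution / operadic structures exist). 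Then I would use the iterated-algebra identity $\Alg_{\O\otimes\P}(-)^{\otimes}\simeq\Alg_{\O}(\Alg_{\P}(-))^{\otimes}$ recorded just after \cref{theorem: omnibus Alg theorem} (the Yoneda argument for the Boardman--Vogt tensor) with $\P^{\otimes}=\Infl_G\E_1^{\otimes}$ to rewrite both sides: the target becomes $\ul{\Alg}_{\O\otimes\Infl_G\E_1}(\myuline{\Sc}_G)$, and the source becomes $\ul{\Alg}_{\O\otimes\Infl_G\E_1}^{\gp}(\myuline{\Sc}_G)$ once one checks the grouplikeness bookkeeping below.

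The second ingredient is to pin down what ``grouplike'' means after applying $\Alg_{\O}(-)$, i.e.\ to see that $\Alg_{\O}(\ul{\Alg}_{\Infl_G\E_1}^{\gp}(\myuline{\Sc}_G))$ really is the full subcategory of $\ul{\Alg}_{\O\otimes\Infl_G\E_1}(\myuline{\Sc}_G)$ whose objects are grouplike with respect to the composite $\Infl_G\E_1^{\otimes}\to\O^{\otimes}\otimes\Infl_G\E_1^{\otimes}$ (the map coming from the unit of Stewart's Boardman--Vogt structure on the $\O^{\otimes}$-factor). Here I would argue that an $\O\otimes\Infl_G\E_1$-algebra in $\ul{\Sc}_G$ is the same as an $\O$-algebra in $\ul{\Alg}_{\Infl_G\E_1}(\ul{\Sc}_G)$, and that such a thing lands in the grouplike subcategory $\ul{\Alg}_{\Infl_G\E_1}^{\gp}(\ul{\Sc}_G)$ precisely when, after restricting the underlying $\Infl_G\E_1$-structure, it is levelwise grouplike — which by \cref{lemma: Alggp G-subcategory} and the levelwise characterization of grouplikeness (\cref{def: grouplike}) is exactly grouplikeness with respect to the stated map $\eta\colon\Infl_G\E_1^{\otimes}\to\O^{\otimes}\otimes\Infl_G\E_1^{\otimes}$. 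Since $\ul{\Alg}_{\Infl_G\E_1}^{\gp}(\ul{\Sc}_G)\hookrightarrow\ul{\Alg}_{\Infl_G\E_1}(\ul{\Sc}_G)$ is a full $G$-subcategory (\cref{lemma: Alggp G-subcategory}), the induced functor on $\O$-algebras is again fully faithful, exhibiting the source as the claimed full subcategory.

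Finally, to get the \emph{$G$-right adjoint} rather than just a $G$-symmetric monoidal functor, I would note that $\Alg_{\O}(-)$ applied to a $G$-adjunction yields a $G$-adjunction — the left adjoint (here the inclusion) induces a $G$-cocontinuous functor on $\O$-algebras, and by $G$-presentability of $\ul{\Alg}_{\O\otimes\Infl_G\E_1}(\ul{\Sc}_G)$ (which follows from $G$-presentability of $\ul{\Alg}_{\Infl_G\E_1}(\ul{\Sc}_G)$, \cref{lemma: Alggp presentable}, together with distributivity of $\ul{\Sc}_G^\times$ and the general fact that $\O$-algebras in a $G$-presentable distributive $G$-symmetric monoidal $G$-$\infty$-category are again $G$-presentable via \cite[Theorem 5.1.4]{nardinshah2022equivarianttopos} and \cite[Theorem 6.2.1]{hilman2024parametrisedpresentability}) one obtains the desired $G$-right adjoint, which is the asserted $\GL_1$ for $\O\otimes\Infl_G\E_1$-algebras, compatible fiberwise with the classical $\GL_1$. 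I expect the main obstacle to be the bookkeeping in the second step — carefully matching the map $\Infl_G\E_1^{\otimes}\to\O^{\otimes}\otimes\Infl_G\E_1^{\otimes}$ produced by Stewart's Boardman--Vogt unit with the restriction-of-underlying-$\E_1$-structure map implicit in \cref{def: grouplike}, and checking that the iterated-algebra equivalence $\Alg_{\O}(\Alg_{\Infl_G\E_1}(-))\simeq\Alg_{\O\otimes\Infl_G\E_1}(-)$ is compatible with passing to grouplike subcategories — rather than any of the adjoint-functor-theorem inputs, which are routine given the earlier sections.
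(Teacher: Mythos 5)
Your proposal is correct and follows essentially the same route as the paper: transport the $G$-symmetric monoidal adjunction of \cref{prop: inclusion grouplike} through $\Alg_{\O}(-)$ via the Boardman--Vogt universal property of \cref{theorem: omnibus Alg theorem}, and then identify $\Alg_{\O}\bigl(\ul{\Alg}^{\gp}_{\Infl_G\E_1}(\ul{\Sc}_G)\bigr)$ with the grouplike $(\O\otimes\Infl_G\E_1)$-algebras by unwinding the levelwise definition of grouplikeness, which is exactly the paper's commutative-square argument in different words. The only divergence is your optional adjoint-functor-theorem detour for right adjointness, which is both redundant (the paper gets the adjunction on algebra categories directly from doctrinal adjunction, i.e.\ the inclusion being $G$-symmetric monoidal forces $\GL_1$ to be $G$-lax symmetric monoidal, citing Stewart) and under-justified as stated, since you neither verify $G$-cocontinuity of the induced inclusion on $\O$-algebras nor $G$-presentability/distributivity of the grouplike side at the $\O$-algebra level.
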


\begin{proof}
    By \cref{prop: inclusion grouplike} the left adjoint of $\GL_1\colon  \myuline{\Alg}_{\Infl_{G}\E_1}(\myuline{\Sc}_{G}) \to \myuline{\Alg}_{\Infl_{G}\E_1}^{\gp}(\myuline{\Sc}_{G})$ is $G$-symmetric monoidal, so $\GL_1$ inherits a $G$-lax symmetric monoidal structure \cite[Corollary D.5]{stewart2025tensorproductsequivariantcommutative}. In particular, we can apply $\Alg_{\O}(-)$ and we conclude by the universal property of Stewart's Boardman--Vogt tensor product (\cref{theorem: omnibus Alg theorem} \textcolor{chilligreen}{(i)}) to obtain a $G$-right adjoint
    \[ \GL_1\colon  \ul{\Alg}_{\O \otimes \Infl_{G} \E_1}(\myuline{\Sc}_{G}) \to \ul{\Alg}_{\O}\left(\myuline{\Alg}_{\Infl_{G}\E_1}^{\gp}(\myuline{\Sc}_{G}) \right). \]
    Let us still identify $\Alg_{\O}\left(\myuline{\Alg}_{\Infl_{G}\E_1}^{\gp}(\myuline{\Sc}_{G}) \right)$. Consider the commutative diagram 
    \begin{center}
        \begin{tikzcd}
            \Alg_{\O} \left( \myuline{\Alg}_{\Infl_{G}\E_1}(\myuline{\Sc}_{G}) \right) \arrow[r, "\simeq"] & \Alg_{\O \otimes \Infl_{G}\E_1}(\myuline{\Sc}_{G})
            \\ \Alg_{\Infl_{G} \E_0}\left(\myuline{\Alg}_{\Infl_{G}\E_1}(\myuline{\Sc}_{G}) \right) \arrow[r, "\simeq", swap] \arrow[u] & \Alg_{\Infl_{G} \E_1}(\myuline{\Sc}_{G}) \arrow[u]
        \end{tikzcd}
    \end{center}
    Then, $\Alg_{\O}\left(\myuline{\Alg}^{\gp}_{\Infl_{G}\E_1}(\myuline{\Sc}_{G}) \right)$ correspond to those objects in the top left corner of the square such that pulling back along the left vertical map yields a grouplike object. By commutativity of the diagram this corresponds to an object in the top right corner of the square such that pulling back along the right vertical map yields a grouplike object. In other words,
    \[ \Alg_{\O}\left(\myuline{\Alg}_{\Infl_{G}\E_1}^{\gp}(\myuline{\Sc}_{G}) \right) \simeq \Alg_{\O \otimes \Infl_{G}\E_1}^{\gp}(\myuline{\Sc}_{G}), \]
    so we win.
\end{proof}

\begin{remark} \label{remark: gl1}
    \hfill 
    \begin{enumerate}[(i)]
        \item This result is already new for the classical case $G = *$ and $\infty$-operads $\O$ such that $\O \otimes \E_1$ is not an $\E_n$-operad. 
        \item On the other hand, Juran also discusses $\GL_1$ for his notion of grouplike $G$-spaces \cite[Proposition 4.1]{juran2025genuineequivariantrecognitionprinciple} and $\E_V$-operads.
        \item If $R \in \Alg_{\E_{\infty}^G}(\myuline{\Sp}_G)$, then $\GL_1 R$, by definition, deloops to $\tb{\gl_1 R} \in \Sp^G_{\geq 0}$ by the recognition theorem (\cref{thm: recognition}).
    \end{enumerate}
\end{remark}

\noindent Here is a folklore computation of the homotopy groups of $\GL_1 R$.

\begin{lemma}\label{lem: homotopy of gl1}
    Let \(R \in \Alg_{\E_{1}}(\myuline{\Sc}_G) \) and let $V$ be a $G$-representation with $V^G \neq 0$. Then,
    \[ \pi^G_0(\GL_1(R)) \cong \pi^G_0(R)^\times \quad \text{and} \quad \pi^G_V(\GL_1(R)) \cong \pi^G_V(R). \]
\end{lemma}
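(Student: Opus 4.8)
The plan is to bootstrap everything from the remark following \cref{def: GL1}, which records that $\GL_1$ is computed levelwise by the classical functor $\GL_1\colon \Alg_{\E_1}(\Sc) \to \Alg_{\E_1}^{\gp}(\Sc)$ sending an $\E_1$-space $A$ to the union of the path components of $A$ that are invertible in the monoid $\pi_0(A)$. Concretely, writing $R^H := R(G/H)$ with its induced $\E_1$-structure, the underlying $G$-space of $\GL_1 R$ has $H$-value $\GL_1(R^H) \subseteq R^H$, and the counit $\GL_1 R \to R$ is levelwise the inclusion $\GL_1(R^H) \hookrightarrow R^H$ of a union of path components. Since the unit $1 \in \pi_0(R^H)$ is invertible, this union contains the basepoint component; and since the inclusion of a union of path components is $(-1)$-truncated, the map $\GL_1 R \to R$ is a monomorphism in $\Sc_G$ (monomorphisms being detected levelwise).

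For the first identity I would evaluate the underlying $G$-space of $\GL_1 R$ at the terminal orbit $G/G$, obtaining $(\GL_1 R)^G \simeq \GL_1(R^G)$, so that $\pi_0^G(\GL_1 R) \cong \pi_0(\GL_1(R^G)) \cong \pi_0(R^G)^\times = \pi_0^G(R)^\times$, where the last two steps are the classical computation of the units space together with the identification $\pi_0^G(R) = \pi_0(R^G)$.

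For the second identity, injectivity of $\pi_V^G(\GL_1 R) \to \pi_V^G(R)$ is immediate: since $\GL_1 R \to R$ is a monomorphism, so is the induced map on pointed mapping spaces $\Map_{\Sc_{G,*}}(S^V, \GL_1 R) \to \Map_{\Sc_{G,*}}(S^V, R)$, and a monomorphism of spaces is injective on $\pi_0$. For surjectivity I would take a pointed $G$-map $g\colon S^V \to R$ and show it factors through $\GL_1 R$; because $\GL_1 R \to R$ is a monomorphism it suffices to check that the pullback $S^V \times_R \GL_1 R \to S^V$ is an equivalence, and this is detected fixed-pointwise, i.e.\ one checks that each $g^H\colon (S^V)^H = S^{V^H} \to R^H$ has image in $\GL_1(R^H)$. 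This is exactly where the hypothesis $V^G \neq 0$ enters, and it is the only real content of the argument: a $G$-fixed vector is $H$-fixed, so $V^G \subseteq V^H$ and hence $\dim V^H \geq \dim V^G \geq 1$ for every $H \leq G$; therefore $(S^V)^H$ is path-connected, and the pointed map $g^H$ has image in the basepoint component of $R^H$, which is contained in $\GL_1(R^H)$. This yields the factorization, hence surjectivity, and together with injectivity the isomorphism $\pi_V^G(\GL_1 R) \cong \pi_V^G(R)$.

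The step I expect to need the most care — the ``main obstacle'', modest as it is — is phrasing the levelwise factorization cleanly in $\Sc_G$ (via the pullback $S^V \times_R \GL_1 R$, using that $(-1)$-truncatedness of $\GL_1 R \to R$ and path-connectedness of $(S^V)^H$ are both detected fixed-pointwise); everything else is a direct transcription of the classical facts $\pi_0(\GL_1 A) = \pi_0(A)^\times$ and $\pi_n(\GL_1 A) \cong \pi_n(A)$ for $n \geq 1$ through the levelwise description of the parametrized $\GL_1$.
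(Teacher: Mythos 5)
Your proposal is correct and follows essentially the same route as the paper: both arguments rest on the fact that $\GL_1 R \to R$ is levelwise the inclusion of the invertible path components (containing the unit component), so that $\pi_0^G(\GL_1 R) \cong \pi_0^G(R)^\times$, and on the observation that $V^G \neq 0$ forces $V^H \neq 0$ for every $H \leq G$, making each $S^{V^H}$ connected so that based maps (and homotopies) into $R$ automatically land in $\GL_1 R$. Your monomorphism/pullback packaging of the factorization is just a cleaner formalization of the paper's direct comparison of compatible systems of based maps $S^{V^H} \to (\GL_1 R)^H$.
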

\begin{proof}
    Consider the pullback square of \(G\)-spaces
    \[
    \begin{tikzcd}
        \GL_1(R)\ar[r]\ar[d] \arrow[dr, phantom, very near start, "\lrcorner"] & \Omega^\infty R \ar[d]\\
        \myuline{\pi}_0(R)^\times \ar[r] &\myuline{\pi}_0(R).
    \end{tikzcd}
    \]
    In particular, $\GL_1(R)^G$ consists of the the connected components of $(\Omega^{\infty}R)^G$ given by $\pi_0^G(R)^{\times}$. This yields the first part of the statement.
    \medskip \\For the other part we need to argue that
    \[ \pi_V^G(\GL_1 R, 1) \cong \pi_V^G(\Omega^{\infty}R, 1) \cong \pi_V^G(R). \]
    Indeed, the elements in the first group can be written as a compatible system of based continuous maps $S^{V^H} \to (\GL_1 R)^H$ for all $H \leq G$ up to based homotopy. Since $V^G \neq 0$, we infer $V^H \neq 0$ for all $H \leq G$, so $S^{V^H}$ is connected. On the other hand, $(\GL_1 R)^H$ is a union of connected components of $(\Omega^{\infty}R)^H$. Since we are considering based maps with the same basepoint in the target, we deduce that the groups must be the same.
\end{proof}

\begin{lemma} \label{lemma: core monoidal}
    Let $\O^{\otimes} \in \Op_{G, \infty}$ and $\C^{\otimes} \in \Mon_{\O}(\Cat_{\infty})$. Then, the $\O$-monoidal structure of $\C^{\otimes}$ restricts onto an $\myuline{\O}$-monoidal structure on $\myuline{\C}^{\core}$.
\end{lemma}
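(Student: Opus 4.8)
The plan is to obtain the $\ul{\O}$-monoidal structure by postcomposing the straightening of $\C^{\otimes}$ with the core functor and checking that nothing is lost. By item (8) of the recollections, $\Mon_{\O}(\Cat_{\infty})\simeq\Seg_{\O}(\Cat_{\infty})$, so $\C^{\otimes}\to\O^{\otimes}$ corresponds to a Segal $\O^{\otimes}$-object $F\colon\O^{\otimes}\to\Cat_{\infty}$. Let $(-)^{\core}\colon\Cat_{\infty}\to\Cat_{\infty}$ be the functor sending an $\infty$-category to its maximal subgroupoid; it is right adjoint to $\infty$-groupoid completion, hence preserves all limits. Set $F^{\core}:=(-)^{\core}\circ F$. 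For each $o\in\O^{\otimes}$ the Segal cone at $o$ built from $F$ is a limit diagram, and since $(-)^{\core}$ preserves limits its image under $(-)^{\core}$ is again a limit diagram; thus $F^{\core}\in\Seg_{\O}(\Cat_{\infty})=\Mon_{\O}(\Cat_{\infty})$, and unstraightening it produces a coCartesian fibration $(\C^{\core})^{\otimes}\to\O^{\otimes}$, i.e.\ an $\O$-monoidal $G$-$\infty$-category.

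Next I would exhibit the sense in which this is a \emph{restriction} of the $\O$-monoidal structure on $\C^{\otimes}$. The counit $(-)^{\core}\Rightarrow\id_{\Cat_{\infty}}$ is a levelwise fully faithful natural transformation; postcomposing with $F$ gives a natural transformation $F^{\core}\Rightarrow F$, which unstraightens to a map $(\C^{\core})^{\otimes}\to\C^{\otimes}$ in $\Mon_{\O}(\Cat_{\infty})$ that on each fiber is the inclusion $(\C^{\otimes}_o)^{\core}\hookrightarrow\C^{\otimes}_o$ of the maximal subgroupoid. Moreover, since any functor of $\infty$-categories preserves equivalences, each indexed tensor product $\bigotimes_{o\to o'}\colon\C^{\otimes}_o\to\C^{\otimes}_{o'}$ restricts to a functor on cores, and by naturality of $(-)^{\core}$ this restriction is precisely the indexed tensor product $F^{\core}(o\to o')$ of $(\C^{\core})^{\otimes}$; so the monoidal operations of $\C^{\otimes}$ literally restrict to those of $(\C^{\core})^{\otimes}$.

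Finally I would identify the underlying $G$-$\infty$-category. Taking underlying $G$-$\infty$-categories amounts to restricting the straightening along the inclusion of colours $\O^{\otimes}\times_{\Span(\F_{G})}\Orb_G^{\op}\hookrightarrow\O^{\otimes}$ (equivalently, pulling $\C^{\otimes}\to\O^{\otimes}$ back along that inclusion), and this restriction of $F$ straightens $\ul{\C}$. Since $(-)^{\core}$ commutes with restriction of functors, the underlying $G$-$\infty$-category of $(\C^{\core})^{\otimes}$ is $(-)^{\core}$ applied to $\ul{\C}$ levelwise, i.e.\ exactly $\ul{\C}^{\core}$. The only point requiring care, and the place I would expect to spend the most words, is this last compatibility with passing to underlying objects (through the composite of coCartesian fibrations $\C^{\otimes}\to\O^{\otimes}\to\Span(\F_{G})\hookleftarrow\Orb_G^{\op}$); everything else is formal, the real content being that the core construction is functorial and limit-preserving.
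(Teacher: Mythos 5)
Your proposal is correct, but it takes a genuinely different route from the paper's. You work at the level of straightenings: using $\Mon_{\O}(\Cat_{\infty})\simeq\Seg_{\O}(\Cat_{\infty})$ you postcompose the Segal functor $F\colon\O^{\otimes}\to\Cat_{\infty}$ with the limit-preserving core endofunctor (your adjunction justification is fine; even more directly, $(-)^{\core}$ is right adjoint to the inclusion $\Sc\subseteq\Cat_{\infty}$), check the Segal conditions survive, and then use the counit $F^{\core}\Rightarrow F$ to make the word ``restricts'' literal, fiberwise the inclusion of the maximal subgroupoid compatibly with all indexed tensor products. The paper instead stays entirely in the parametrized-algebra picture: the core induces a $G$-right adjoint $(-)^{\ul{\core}}\colon\ul{\Cat}_{G,\infty}\to\ul{\Sc}_{G}$, which preserves parametrized products and is therefore $G$-symmetric monoidal for the cartesian structures by a result of Stewart, so applying $\Alg_{\ul{\O}}(-)$ (via $\Alg_{\ul{\O}}(\ul{\Cat}_{G,\infty})\simeq\Mon_{\O}(\Cat_{\infty})$, \cref{prop: Alg(...) = Mon(...)}) immediately endows $\ul{\C}^{\core}$ with an $\ul{\O}$-monoidal structure. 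Your route is more elementary and self-contained (no cofree-$G$-category or Stewart input) and, unlike the paper's proof, it explicitly exhibits $(\C^{\core})^{\otimes}$ as a sub-$\O$-monoidal category of $\C^{\otimes}$, which is what later constructions such as \cref{construction: Pic} actually use; what it buys less cheaply is exactly the point you flag at the end: identifying the underlying $G$-object as $\ul{\C}^{\core}$ and matching the output with the paper's formulation as an object of $\Alg_{\ul{\O}}(\ul{\Sc}_{G})$ (note your fibration has space fibers, so this translation is again \cref{prop: Alg(...) = Mon(...)}), whereas in the paper's approach this identification is built in from the start.
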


\begin{proof}
    The functor $(-)^{\core}\colon  \Cat_{\infty} \to \Sc$ induces a $G$-right adjoint $(-)^{\myuline{\core}}\colon \myuline{\Cat}_{\infty, G} \to \myuline{\Sc}_{G}$ on the corresponding cofree $G$-$\infty$-categories \cite[Example 2.3.3]{cnossen2023parametrizedstabilityuniversalproperty}. Thus, it preserves parametrized products and hence corresponds to a $G$-symmetric monoidal functor $\myuline{\Cat}_{\infty, G}^{\times} \to \myuline{\Sc}_{G}^{\times}$ by \cite[Corollary A.21]{stewart2025tensorproductsequivariantcommutative}. This induces a functor
    \[ (-)^{\myuline{\core}}\colon  \Alg_{\myuline{\O}}\left(\myuline{\Cat}_{\infty, G}^{\times} \right) \to \Alg_{\myuline{\O}}(\myuline{\Sc}_{G}^{\times}) \]
    which endows $\myuline{\C}^{\myuline{\core}}$ with an $\myuline{\O}$-monoidal structure.
\end{proof}

\begin{construction} \label{construction: Pic}
    Let $\O^{\otimes} \in \Op_{G, \infty}$ and $\C^{\otimes} \in \Mon_{\O \otimes \E_1}(\Cat_{\infty})$. 
    \begin{enumerate}[(i)]
        \item We denote by $\tb{\myuline{\Pic}_{G}(\C)} = \GL_1(\myuline{\C}^{\myuline{\core}})$ the \tb{Picard $G$-space} of $\C$.
        \medskip \\In particular, it is a $G$-space and enhances to an $(\O \otimes \E_1)$-submonoidal $G$-$\infty$-category $\myuline{\Pic}_G^{\otimes}(\C) \subseteq \myuline{\C}^{\otimes}$ by the hard work of this subsection (\cref{corollary: GL1 monoidal}, \cref{lemma: core monoidal}). 
        \item Let $\P^{\otimes} \to \O^{\otimes} \otimes \E_1^{\otimes}$ be a map in $\Op_{G, \infty}$ and $A \in \Alg_{\O \otimes \E_1}(\C)$. We define $\tb{\Pic_{G}^{\otimes}(\C)_{\downarrow A}}$ as the pullback 
    \begin{center}
        \begin{tikzcd}
            \ul{\Pic}_{G}^{\otimes}(\C)_{\downarrow A} \arrow[r] \arrow[d] \arrow[dr, phantom, very near start, "\lrcorner"] & \ul{\C}^{\otimes}_{/A} \arrow[d]
            \\ \ul{\Pic}_{G}^{\otimes}(\C) \arrow[r] & \ul{\C}^{\otimes}
        \end{tikzcd}
    \end{center}
    in $\Mon_{\O \otimes \E_1}^{\NS}(\Cat_{\infty})$.
    \item Let $(\C^{\otimes}, A)$ be an $(\O \otimes \E_1)$-module datum. We write $\tb{\ul{\Pic}_{G}^{\otimes}(A)} = \ul{\Pic}_{G}^{\otimes}(\LMod_A(\C))$ and likewise for similar variants. 
    \end{enumerate}
    If we wish to use the Barkan--Haugseng--Steinebrunner formalism of equivariant higher algebra, then we will omit the underlines.
\end{construction}

\noindent Part (ii) is merely an example of a (monoidal enhancement) of a comma $\infty$-category but we single out this specimen because it takes a crucial role in our subsequent discussions of Thom spectra. Since this discussion will often involve left module categories, we shorten the notation in (iii), but we note that $\C$ is implicit in the notation.

\begin{remark}
    Pützstück has also constructed monoidal structures on parametrized Picard spaces, most notably on versions appearing in global equivariant homotopy theory \cite{pützstück2025globalpicardspectraborel}. He focuses on the ultracommutative setting while we discuss general $G$-$\infty$-operads.
\end{remark}

\begin{lemma} \label{lemma: pullback Pic and downarrow}
    Let $\O^{\otimes} \in \Op_{G, \infty}$ and $(\C^{\otimes}, A)$ be an $(\O \otimes \E_1)$-module datum and consider a map $\P^{\otimes} \to \O^{\otimes} \otimes \E_1^{\otimes}$ in $\Op_{G, \infty}$. Let $R \to A$ be a map in $\Alg_{\P \otimes \E_1}(\C)$. Then, there is a pullback square
    \begin{center}
        \begin{tikzcd}
            \ul{\Pic}_G^{\otimes}(R)_{\downarrow A} \arrow[r, "\Ind_R^A"] \arrow[d] \arrow[dr, phantom, very near start, "\lrcorner"] & \ul{\Pic}^{\otimes}_G(A)_{\downarrow A} \arrow[d]
            \\ \ul{\Pic}_G^{\otimes}(R) \arrow[r, "\Ind_R^A", swap] & \ul{\Pic}_G^{\otimes}(A)
        \end{tikzcd}
    \end{center}
    in $\Mon_{\P}^{\NS}(\Cat_{\infty})$.
\end{lemma}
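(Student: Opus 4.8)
The plan is to isolate one base-change-versus-slice identification at the level of left module categories and then paste pullback squares. Throughout I would pass freely between the Barkan--Haugseng--Steinebrunner and Nardin--Shah formalisms, and use that pullbacks in $\Mon_{\P}^{\NS}(\Cat_\infty)$ — like those of fibrous patterns, \cite[Lemma 4.1.13]{barkanhaugsengsteinebrunner2024envelopesalgebraicpatterns} — are computed underlying, so a commuting square is a pullback as soon as it is so after forgetting the monoidal structure. First I would record the structure of the base-change functor: since $(\C^{\otimes},A)$ is an $(\O\otimes\E_1)$-module datum and $R\to A$ is in particular a map of $\E_1$-algebras, \cref{prop: categorical properties of LMod}~\textcolor{chilligreen}{(ii)} makes $\Ind_R^A = A\otimes_R-\colon \ul{\LMod}_R^{G}(\C)\to\ul{\LMod}_A^{G}(\C)$ a $G$-left adjoint with $G$-right adjoint the restriction $\Res_A^R$ along $R\to A$; by \cref{corollary: functoriality of LMod in A} this upgrades to a $\P$-monoidal functor $\ul{\LMod}_R^{G}(\C)^{\otimes}\to\ul{\LMod}_A^{G}(\C)^{\otimes}$, and (since $\Res_A^R$ is lax monoidal, hence a morphism of $G$-$\infty$-operads, with $\P^{\otimes}$-linear unit and counit) the adjunction $\Ind_R^A\dashv\Res_A^R$ refines to an adjunction relative to $\P^{\otimes}$. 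Being strong monoidal, $\Ind_R^A$ preserves $\otimes$-invertible objects, so it restricts to $\Ind_R^A\colon \ul{\Pic}_G^{\otimes}(R)\to\ul{\Pic}_G^{\otimes}(A)$ through which $\ul{\Pic}_G^{\otimes}(R)\hookrightarrow\ul{\LMod}_R^{G}(\C)^{\otimes}\xrightarrow{\Ind_R^A}\ul{\LMod}_A^{G}(\C)^{\otimes}$ factors. Write $\one_A$ for the unit algebra of $\ul{\LMod}_A^{G}(\C)^{\otimes}$ (i.e.\ $A$ acting on itself) and $A_R$ for $A$ with its $R$-algebra structure; then $\Res_A^R\one_A\simeq A_R$, and $A_R$, $\one_A$ are the algebras over which the left-hand and right-hand slices in the lemma are formed.

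The heart of the argument is to show that
\[
\begin{tikzcd}
\ul{\LMod}_R^{G}(\C)^{\otimes}_{/A_R} \arrow[r] \arrow[d] \arrow[dr, phantom, very near start, "\lrcorner"] & \ul{\LMod}_A^{G}(\C)^{\otimes}_{/\one_A} \arrow[d]\\
\ul{\LMod}_R^{G}(\C)^{\otimes} \arrow[r,"\Ind_R^A",swap] & \ul{\LMod}_A^{G}(\C)^{\otimes}
\end{tikzcd}
\]
is a pullback in $\Mon_{\P}^{\NS}(\Cat_\infty)$, the top map being induced by $\Ind_R^A$ together with the counit $\Ind_R^A A_R = \Ind_R^A\Res_A^R\one_A\to\one_A$. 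To establish this I would test against an arbitrary $p\colon \mathcal{F}^{\otimes}\to\P^{\otimes}$ over $\ul{\LMod}_R^{G}(\C)^{\otimes}$ via its structure map $\iota$: by the universal property of the monoidal slice (\cref{theorem: universal property of monoidal slice}~\textcolor{chilligreen}{(ii)}) the left leg corepresents $\Map_{\Fun_{\Fbrs(\P^{\otimes})}(\mathcal{F}^{\otimes},\ul{\LMod}_R^{G}(\C)^{\otimes})}(\iota,A_R\circ p)$; post-composing with the $\P^{\otimes}$-relative adjunction $\Ind_R^A\dashv\Res_A^R$ gives an adjunction on relative functor categories, and rewriting $A_R\circ p\simeq \Res_A^R\circ\one_A\circ p$ identifies this with $\Map_{\Fun_{\Fbrs(\P^{\otimes})}(\mathcal{F}^{\otimes},\ul{\LMod}_A^{G}(\C)^{\otimes})}(\Ind_R^A\circ\iota,\ \one_A\circ p)$, which by the same universal property is the corepresentable for $\ul{\LMod}_R^{G}(\C)^{\otimes}\times_{\ul{\LMod}_A^{G}(\C)^{\otimes}}\ul{\LMod}_A^{G}(\C)^{\otimes}_{/\one_A}$ over $\ul{\LMod}_R^{G}(\C)^{\otimes}$. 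Naturality in $\mathcal{F}^{\otimes}$ then identifies the canonical comparison functor with an equivalence, so the square is a pullback.

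To conclude I would paste. By \cref{construction: Pic}, $\ul{\Pic}_G^{\otimes}(R)_{\downarrow A}=\ul{\Pic}_G^{\otimes}(R)\times_{\ul{\LMod}_R^{G}(\C)^{\otimes}}\ul{\LMod}_R^{G}(\C)^{\otimes}_{/A_R}$. Substituting the key square and cancelling the common base $\ul{\LMod}_R^{G}(\C)^{\otimes}$ gives $\ul{\Pic}_G^{\otimes}(R)_{\downarrow A}\simeq\ul{\Pic}_G^{\otimes}(R)\times_{\ul{\LMod}_A^{G}(\C)^{\otimes}}\ul{\LMod}_A^{G}(\C)^{\otimes}_{/\one_A}$; inserting the intermediate pullback over $\ul{\Pic}_G^{\otimes}(A)$ — legitimate because $\ul{\Pic}_G^{\otimes}(R)\to\ul{\LMod}_A^{G}(\C)^{\otimes}$ factors through $\ul{\Pic}_G^{\otimes}(A)$ by the first paragraph — and recognising $\ul{\Pic}_G^{\otimes}(A)\times_{\ul{\LMod}_A^{G}(\C)^{\otimes}}\ul{\LMod}_A^{G}(\C)^{\otimes}_{/\one_A}=\ul{\Pic}_G^{\otimes}(A)_{\downarrow A}$, I obtain $\ul{\Pic}_G^{\otimes}(R)_{\downarrow A}\simeq\ul{\Pic}_G^{\otimes}(R)\times_{\ul{\Pic}_G^{\otimes}(A)}\ul{\Pic}_G^{\otimes}(A)_{\downarrow A}$ over $\ul{\Pic}_G^{\otimes}(R)$ and compatibly with the evident comparison maps, which is the assertion. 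I expect the main obstacle to be the key square — concretely, verifying that $\Ind_R^A\dashv\Res_A^R$ really is an adjunction relative to $\P^{\otimes}$ (not merely on underlying $G$-$\infty$-categories), so that post-composition yields the needed adjunction on relative functor categories; once that is in place the remaining steps are routine manipulations of iterated pullbacks.
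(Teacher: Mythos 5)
Your overall strategy is the paper's: both proofs hinge on the slice--adjunction identification $\C_{/Rd}\simeq \D_{/d}\times_{\D}\C$ for an adjunction $L\dashv R$, applied to $\Ind_R^A\dashv\Res_A^R$ with $d$ the unit algebra of $\ul{\LMod}_A^G(\C)^{\otimes}$, followed by the same pasting with the defining pullbacks of $\ul{\Pic}_G^{\otimes}(R)_{\downarrow A}$ and $\ul{\Pic}_G^{\otimes}(A)_{\downarrow A}$. Where you diverge is in how the monoidal refinement of this key step is handled. The paper first uses commutativity (and $\P$-monoidality of $\Ind_R^A$, \cref{corollary: functoriality of LMod in A}) to produce the comparison map $\ul{\Pic}_G^{\otimes}(R)_{\downarrow A}\to \ul{\Pic}_G^{\otimes}(A)_{\downarrow A}\times_{\ul{\Pic}_G^{\otimes}(A)}\ul{\Pic}_G^{\otimes}(R)$ in $\Mon_{\P}^{\NS}(\Cat_{\infty})$, and then checks it is an equivalence on \emph{underlying} $\infty$-categories, where only the plain (parametrized, non-monoidal) adjunction fact is needed. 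You instead prove the module-level square is a pullback monoidally, via the universal property of the monoidal slice (\cref{theorem: universal property of monoidal slice}) together with the claim that $\Ind_R^A\dashv\Res_A^R$ refines to an adjunction relative to $\P^{\otimes}$ (so that postcomposition induces adjunctions on the relative functor categories). That relative-adjunction claim -- which you rightly flag as the main obstacle -- is exactly the kind of parametrized lax-monoidal statement you have not established and which the paper deliberately avoids; note that your own opening observation, that equivalences and pullbacks in $\Mon_{\P}^{\NS}(\Cat_{\infty})$ are detected after forgetting the monoidal structure, already lets you bypass it: once the comparison map exists monoidally, it suffices to verify the square underlying, which is the paper's route. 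So your argument is sound in outline and would be complete either by supplying the relative adjunction (plausible via a doctrinal-adjunction argument in the parametrized setting, but it needs proof) or, more economically, by downgrading your ``heart'' to an underlying-category check.
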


\begin{proof}
    Note that $\Ind_R^A$ is $\P$-monoidal (\cref{corollary: functoriality of LMod in A}). The commutativity of the square defines a comparison map 
    \[ \ul{\Pic}_G^{\otimes}(R)_{\downarrow A} \to \ul{\Pic}_G^{\otimes}(A)_{\downarrow A} \times_{\ul{\Pic}_G^{\otimes}(A)} \ul{\Pic}_G^{\otimes}(R) \]
    in $\Mon_{\P}^{\NS}(\Cat_{\infty})$. It thus suffices to check that this is an equivalence on underlying $\infty$-categories. Consider the commutative diagram
    \begin{center}
        \begin{tikzcd}
            \ul{\Pic}_G(R)_{\downarrow A} \arrow[r] \arrow[d] \arrow[dr, phantom, very near start, "\lrcorner"] & \ul{\LMod}_{R/A}^G \arrow[d] \arrow[r, "\Ind_R^A"] & \ul{\LMod}_{A/A}^G \arrow[d]
            \\ \ul{\Pic}_G(R) \arrow[r] & \ul{\LMod}_R^G \arrow[r, "\Ind_R^A", swap] & \ul{\LMod}_A^G
        \end{tikzcd}
    \end{center}
    The right square is a pullback square by a parametrized version of a general category theory result: If $L\colon  \C \leftrightarrows \D\colon  R$ is an adjunction, then the composite rectangle
    \begin{center}
        \begin{tikzcd}
            \C_{/Rd} \arrow[r, "L"] \arrow[d] & \D_{/LRd} \arrow[r, "\epsilon_d"] \arrow[d] & \D_{/d} \arrow[d]
            \\ \C \arrow[r] & \D \arrow[r, equal] & \D 
        \end{tikzcd}
    \end{center}
    is a pullback square.\footnote{Here are two ways of checking this: 
    \begin{enumerate}
        \item Check that the comparison map $\C_{/Rd} \to \D_{/d} \times_{\D} \C$ is essentially surjective and fully faithful. 
        \item Unstraighten both sides of $\Map_{\C}(-, Rd) \simeq \Map_{\D}(L-, d)$.
    \end{enumerate}} 
    \medskip \\Thus, we obtain
    \begin{align*}
        \ul{\Pic}_G(R)_{\downarrow A} &\simeq \ul{\LMod}^G_{A/A} \times_{\ul{\LMod}_A^G} \ul{\Pic}_G(R) 
        \\ &\simeq \ul{\LMod}_{A/A}^G \times_{\ul{\LMod}_A^G} \ul{\Pic}_G(A) \times_{\ul{\Pic}_G(A)} \ul{\Pic}_G(R)
        \\ &\simeq \ul{\Pic}_G(A)_{\downarrow A}  \times_{\ul{\Pic}_G(A)} \ul{\Pic}_G(R)
    \end{align*}
    as desired.
\end{proof}

\begin{corollary} \label{corollary: GL1 Pic(R)A as pullback}
    Let $\O^{\otimes} \in \Op_{G, \infty}$ and $(\C^{\otimes}, A)$ be an $(\O \otimes \E_1)$-module datum and consider a map $\P^{\otimes} \to \O^{\otimes} \otimes \E_1^{\otimes}$ in $\Op_{G, \infty}$. Let $R \to A$ be a map in $\Alg_{\P \otimes \E_1}(\C)$. Then, 
    \[ \GL_1(\ul{\Pic}_G^{\otimes}(R)_{\downarrow A}) \simeq \ul{\Pic}_G^{\otimes}(R) \times_{\ul{\Pic}_G^{\otimes}(A)} * \] 
    in $\Mon_{\P}(\Sc)$.
\end{corollary}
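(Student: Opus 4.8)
The statement to prove is
\[ \GL_1(\ul{\Pic}_G^{\otimes}(R)_{\downarrow A}) \simeq \ul{\Pic}_G^{\otimes}(R) \times_{\ul{\Pic}_G^{\otimes}(A)} * \]
in $\Mon_{\P}(\Sc)$, where $*$ denotes the unit $\P$-monoidal $G$-space. The plan is to reduce this immediately to the pullback description of $\ul{\Pic}_G^{\otimes}(R)_{\downarrow A}$ obtained in \cref{lemma: pullback Pic and downarrow}, and then to compute $\GL_1$ of that pullback. First I would observe that \cref{lemma: pullback Pic and downarrow} gives a pullback square in $\Mon_{\P}^{\NS}(\Cat_{\infty})$ exhibiting $\ul{\Pic}_G^{\otimes}(R)_{\downarrow A}$ as $\ul{\Pic}_G^{\otimes}(A)_{\downarrow A} \times_{\ul{\Pic}_G^{\otimes}(A)} \ul{\Pic}_G^{\otimes}(R)$. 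The key point is then that $\GL_1$, being a $G$-right adjoint by \cref{corollary: GL1 monoidal}, preserves limits, and in particular this pullback; so I would be left to identify $\GL_1$ applied to each of the three corners $\ul{\Pic}_G^{\otimes}(A)_{\downarrow A}$, $\ul{\Pic}_G^{\otimes}(A)$, and $\ul{\Pic}_G^{\otimes}(R)$.

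The second and third of these are easy: $\ul{\Pic}_G^{\otimes}(R)$ and $\ul{\Pic}_G^{\otimes}(A)$ are already grouplike by construction (they are themselves defined by applying $\GL_1$ in \cref{construction: Pic}), so $\GL_1$ is the identity on them. The heart of the matter is the claim $\GL_1(\ul{\Pic}_G^{\otimes}(A)_{\downarrow A}) \simeq *$, i.e.\ that the $\P$-monoidal comma $G$-space of objects of $\ul{\Pic}_G^{\otimes}(A)$ equipped with a map to the unit $A$ has, after passing to its maximal grouplike sub-$G$-space, only the identity object $(A \xrightarrow{\id} A)$ (together with its higher structure) as its sole point. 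Intuitively this is because a map $L \to A$ in $\ul{\LMod}_A^G$ with $L$ invertible and which is moreover invertible in the comma-monoidal structure forces $L \simeq A$ via an equivalence, since being $\otimes$-invertible in the slice means in particular being an equivalence to the unit object $A$. I would make this precise by working fiberwise: on each level $H \le G$ the $\P$-monoidal structure on $\ul{\Pic}_G^{\otimes}(A)_{\downarrow A}$ restricts via the inflated $\E_1$ to a monoidal structure whose unit is $(A_H \xrightarrow{\id} A_H)$, and an object of this comma category is $\otimes$-invertible only if the underlying object of $\ul{\Pic}_G(A)$ is invertible \emph{and} the structure map is an equivalence — but an equivalence onto $A_H$ from an invertible object is forced, and the space of such is contractible (it is the space of automorphisms-and-equivalences, which deformation-retracts onto the point $\id$). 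More cleanly, I would note that the forgetful $G$-functor $\ul{\Pic}_G^{\otimes}(A)_{\downarrow A} \to \ul{\Pic}_G^{\otimes}(A)$ together with the structure map to $\ul{\C}^{\otimes}_{/A}$ exhibits the grouplike core of the comma category as the fiber of $\GL_1(\ul{\Pic}_G^{\otimes}(A)) \to \GL_1(\ul{\Pic}_G^{\otimes}(A))$ over the unit along the identity, which is $*$.

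Assembling: applying $\GL_1$ to the pullback square of \cref{lemma: pullback Pic and downarrow} and using that it preserves limits gives
\[ \GL_1(\ul{\Pic}_G^{\otimes}(R)_{\downarrow A}) \simeq \GL_1(\ul{\Pic}_G^{\otimes}(A)_{\downarrow A}) \times_{\GL_1(\ul{\Pic}_G^{\otimes}(A))} \GL_1(\ul{\Pic}_G^{\otimes}(R)) \simeq * \times_{\ul{\Pic}_G^{\otimes}(A)} \ul{\Pic}_G^{\otimes}(R), \]
which is the asserted formula (the pullback of $\P$-monoidal $G$-spaces is again in $\Mon_{\P}(\Sc)$, so the equivalence lives in $\Mon_{\P}(\Sc)$ as claimed). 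The main obstacle I anticipate is precisely the identification $\GL_1(\ul{\Pic}_G^{\otimes}(A)_{\downarrow A}) \simeq *$: one must be careful that the monoidal slice structure's unit really is $(A \xrightarrow{\id} A)$ — which follows from the explicit description of indexed tensor products on $\ul{\C}^{\otimes}_{/A}$ given in the corollary after \cref{prop: slice is O-monoidal} — and that $\GL_1$ here, being the $\E_1$-grouplike-core with respect to the correct inflated-$\E_1$ map into $\P \otimes \Infl_G \E_1$, genuinely contracts this slice. Everything else is formal manipulation of adjoint-limit-preservation and pullback pasting.
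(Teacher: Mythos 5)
Your proposal is correct and matches the paper's proof: both apply $\GL_1$ (a limit-preserving colocalization which fixes the already-grouplike corners $\ul{\Pic}_G^{\otimes}(R)$ and $\ul{\Pic}_G^{\otimes}(A)$) to the pullback square of \cref{lemma: pullback Pic and downarrow}, thereby reducing everything to the contractibility $\GL_1(\ul{\Pic}_G^{\otimes}(A)_{\downarrow A})\simeq *$. For that last step the paper argues formally, applying $\GL_1\circ(-)^{\core}$ to the defining pullback square of \cref{construction: Pic} to identify $\GL_1(\ul{\Pic}_G^{\otimes}(A)_{\downarrow A})$ with $\ul{\Pic}_G^{\otimes}(A)_{/A}\simeq *$, which is exactly the observation you make fiberwise — that $\otimes$-invertibility in the slice (two-sided, so the structure map acquires both a left and a right inverse) forces the map $L\to A$ to be an equivalence, and the space of such is contractible.
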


\begin{proof}
    The functor $\GL_1$ is a Bousfield colocalization, so applying it to the pullback in the previous Lemma (\cref{lemma: pullback Pic and downarrow}) yields 
    \[ \GL_1(\ul{\Pic}_G^{\otimes}(R)_{\downarrow A}) \simeq \ul{\Pic}_G^{\otimes}(R) \times_{\ul{\Pic}_G^{\otimes}(A)} \GL_1(\ul{\Pic}_G^{\otimes}(A)_{\downarrow A}). \]
    Since $\ul{\Pic}_G^{\otimes}(A)_{\downarrow A}$ is already a space, we have $
        \GL_1(\ul{\Pic}_G^{\otimes}(A)_{\downarrow A}) \simeq \GL_1 \left((\ul{\Pic}_G^{\otimes}(A)_{\downarrow A})^{\core} \right)$ and applying $\GL_1 \circ (-)^{\core}$ to the defining pullback square of $\ul{\Pic}_G^{\otimes}(A)_{\downarrow A}$ yields a pullback square
        \begin{center}
            \begin{tikzcd}
                \GL_1(\ul{\Pic}_G^{\otimes}(A)_{\downarrow A}) \arrow[r] \arrow[dr, phantom, very near start, "\lrcorner"] \arrow[d] & \GL_1 \core(\ul{\LMod}^G_A(\C)_{/A}^{\otimes}) \arrow[d]
                \\ \GL_1 (\ul{\Pic}_G^{\otimes}(A)) \arrow[r] & \GL_1 \core(\ul{\LMod}_A^G(\C)^{\otimes})
            \end{tikzcd}
        \end{center}
        which unravels to
        \begin{center}
            \begin{tikzcd}
                \GL_1(\ul{\Pic}_G^{\otimes}(A)_{\downarrow A}) \arrow[r] \arrow[dr, phantom, very near start, "\lrcorner"] \arrow[d] & \ul{\Pic}_G^{\otimes}(A)_{/A} \arrow[d]
                \\ \ul{\Pic}_G^{\otimes}(A) \arrow[r] & \ul{\Pic}_G^{\otimes}(A) 
            \end{tikzcd}
        \end{center}
        so $\GL_1(\ul{\Pic}_G^{\otimes}(A)_{\downarrow A}) \simeq \ul{\Pic}_G^{\otimes}(A)_{/A} \simeq *$.
\end{proof}

\section{Multiplicative Parametrized Thom Spectra}\label{sec:param_Thom}

\subsection{Setup \& Universal Property of Multiplicative Parametrized Thom Spectra}\label{sec:universal_property_of_Thom}
We have finally assembled all ingredients to equivariantize the universal property of multiplicative Thom spectra from Antolín-Camarena--Barthel \cite{antolinbarthel2019thom}. Once the language is set up, our proofs are essentially the same as in \cite[Section 3.1]{antolinbarthel2019thom}, but we will write them out for the convenience of the reader (\cref{theorem: universal property of Th}, \cref{corollary: Ind of Th}). Let us first recall a parametrized definition of (multiplicative) equivariant Thom spectra and then prove a parametrized version of the Antolín-Camarena--Barthel universal property.
\medskip \\A parametrized version of the Ando--Blumberg--Gepner--Hopkins--Rezk \cite{abghr2014infty} Thom spectrum definition gives:

\begin{definition} \label{def: parametrized thom spectrum}
    Let $A \in \Alg_{\E_1}(\C)$ and suppose that $\C$ is $G$-presentable, then we define the \tb{parametrized Thom object} functor $\tb{\Th_G}\colon  \myuline{\Sc}_{G/\ul{\Pic}_G(A)} \to \myuline{\LMod}_A^G(\C)$ as the parametrized Yoneda extension 
    \begin{center}
        \begin{tikzcd}
            \ul{\Pic}_{G}(A) \arrow[r] \arrow[d, hookrightarrow, "\yo_G", swap] & \myuline{\LMod}_A^{G}(\C)
            \\ \myuline{\PSh}_G(\ul{\Pic}_G(A)) \arrow[ur, dashed, "\Th_{G}", swap]
        \end{tikzcd}
    \end{center}
    using $\myuline{\Sc}_{G/\ul{\Pic}_{G}(A)} \simeq \myuline{\PSh}_{G}(\ul{\Pic}_{G}(A))$ by parametrized straightening-unstraightening (\cref{prop: parametrized straightening}).
\end{definition}

\begin{remark}
    \hfill 
    \begin{enumerate}[(i)]
        \item Thom spectra are also often denoted by $\Th_G(f) = \tb{\mathrm{M}f}$, motivated by classical examples such as $\mathrm{MO}, \MU, \mathrm{MSpin}, \cdots$ (and their equivariant versions). We will use this notation in  \cref{part:applications}.
        \item For $H \leq G$ we recover the non-parametrized $H$-Thom spectrum functor 
        \[ \Sc_{H/\ul{\Pic}_{H}(A)} \to \LMod_{A_H}(\C_H), \]
        which is given by the parametrized colimit
        \[ \Th_H(X \to \ul{\Pic}_{H}(A)) \simeq \myuline{\colim} \left(X \to \ul{\Pic}_{H}(A) \to \myuline{\LMod}_A^H(\C) \right), \]
        as demonstrated in \cite[Corollary 5.2.3]{hahn2024equivariantnonabelianpoincareduality}.
    \end{enumerate}
\end{remark} 

\begin{example} \label{example: thom along trivial}
    The following also enhance multiplicatively.
    \begin{enumerate}[(i)]
        \item Let $c\colon  X \to \ul{\Pic}_G(R)$ be equivalent to the trivial map picking out some $M \in \Pic_G(R)$. Its Thom object computed as the constant colimit, so 
        \[ \Th_G(c) \simeq  \ul{\colim}_X M = X \otimes M \simeq \Sigma_+^{\infty}X \otimes M. \]
        \item One computes 
        \[ \Th_G \left(X \times Y \to X \xrightarrow{f} \ul{\Pic}_G(A) \right) \simeq \Th_G(f) \otimes_A (A \otimes Y) \] 
        by symmetric monoidality of $\Th_G$ in the slice monoidal structure and (i). This does not need full parametrized (macrocosmic) monoidal straightening-unstraightening to pass from presheaves, instead we only need the classical statement \cite[Corollary 4.9]{ramzi2022monoidalgrothendieckconstructioninftycategories} at the $G$-level.
    \end{enumerate}
\end{example}

\noindent We will now mimick the Thom spectrum definition (\cref{def: parametrized thom spectrum}) multiplicatively.

\begin{construction} \label{construction: multiplicative thom spectra}
    Let $(\O^{\otimes} \otimes \E_1^{\otimes}, \C^{\otimes}, A)$ be a distributive module datum (\cref{def: distributive module datum}). Then we define the $(\O \otimes \E_1)$-monoidal \tb{multiplicative $G$-Thom object} functor 
    \[ \tb{\Th_G^{\otimes}}\colon  \ul{\PSh}_G(\ul{\Pic}_G(A))^{\otimes} \to \ul{\LMod}_A^G(\C)^{\otimes} \] 
    as the multiplicative parametrized Yoneda extension
    \begin{center}
        \begin{tikzcd}
            \ul{\Pic}_{G}^{\otimes}(A) \arrow[r] \arrow[d, "\yo_G^{\otimes}", swap] & \ul{\LMod}_A^G(\C)^{\otimes}
            \\  \ul{\PSh}_{G}(\ul{\Pic}_{G}(A))^{\otimes} \arrow[ur, dashed, "\Th_{G}^{\otimes}", swap]
        \end{tikzcd}
    \end{center}
    via an $(\O \otimes \E_1)$-monoidal version of \cite[Corollary 6.0.12]{nardinshah2022equivarianttopos}\footnote{The result is stated for $G$-symmetric monoidal $G$-$\infty$-categories because they passed the adjunctions to $G$-commutative algebras. Passing to $\O$-algebras allows more general operads.} using distributivity of $\LMod_A^G(\C)^{\otimes}$ (see \cref{theorem: LMod distributive}).
\end{construction}

\noindent Since $\Th_G^{\otimes}$ is $G$-colimit preserving \cite[Corollary 6.0.12]{nardinshah2022equivarianttopos}, this extends the parametrized Thom spectrum construction (\cref{def: parametrized thom spectrum}).
\medskip \\We required an additional $\E_1$ to get $\Pic_G$ running (\cref{construction: Pic}), which is why we will always need $(\O \otimes \E_1)$ to get started. But once $\Pic_G^{\otimes}$ is constructed, we can restrict its multiplicative structure to potentially demand less structure from the other participating objects. That's why we will often carry around an $G$-$\infty$-operad map $\P^{\otimes} \to \O^{\otimes} \otimes \E_1^{\otimes}$.

\begin{corollary} \label{corollary: multiplicative thom spectrum}
    Let $(\O^{\otimes} \otimes \E_1^{\otimes}, \C^{\otimes}, A)$ be a distributive module datum and consider a map $\P^{\otimes} \to \O^{\otimes} \otimes \E_1^{\otimes}$ in $\Op_{G, \infty}$. There is a functor 
    \[ \Th_{G}^{\otimes}\colon  \Mon_{\P}(\Sc)_{/\Pic_{G}^{\otimes}(A)} \to \Alg_{\ul{\P}/\ul{\P}}(\ul{\LMod}_A(\C)) \to \Alg_{\ul{\P}}(\ul{\LMod}_A(\C)) \]
    extending the parametrized Thom spectrum functor $\Th_{G}$.
\end{corollary}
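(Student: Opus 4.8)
The plan is to obtain the functor in \cref{corollary: multiplicative thom spectrum} by assembling the three adjunction-type equivalences developed in the previous subsections and composing with the $(\O\otimes\E_1)$-monoidal functor $\Th_G^\otimes$ of \cref{construction: multiplicative thom spectra}. First I would unpack the notation: since $(\O^\otimes\otimes\E_1^\otimes,\C^\otimes,A)$ is a distributive module datum, \cref{theorem: LMod distributive} gives that $\ul{\LMod}_A^G(\C)^\otimes\to\ul{\O}^\otimes\otimes\ul{\E}_1^\otimes$ is $(\O\otimes\E_1)$-distributive, so restricting its monoidal structure along the map $\ul{\P}^\otimes\to\ul{\O}^\otimes\otimes\ul{\E}_1^\otimes$ makes $\ul{\LMod}_A(\C)^\otimes$ into a $\ul{\P}$-distributive $\ul{\P}$-monoidal $G$-$\infty$-category (distributivity restricts along operad maps, cf.~the reasoning in \cref{lemma: distributivity along pullback}). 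Similarly, $\ul{\Pic}_G^\otimes(A)\subseteq\ul{\LMod}_A(\C)^\otimes$ is an $(\O\otimes\E_1)$-submonoidal $G$-$\infty$-category, so it restricts to a $\ul{\P}$-monoidal $G$-space $\Pic_G^\otimes(A)\in\Mon_{\P}(\Sc)$.

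The key step is then the chain of equivalences
\[
\Alg_{\ul{\P}}\big(\ul{\PSh}_G^{\ul{\P}}(\Pic_G(A))\big)
\;\simeq\;
\Alg_{\ul{\P}}\big(\ul{\Sc}^G_{/\Pic_G(A)}\big)
\;\simeq\;
\Alg_{\ul{\P}}(\ul{\Sc}^G)_{/\Pic_G^\otimes(A)}
\;\simeq\;
\Mon_{\P}(\Sc)_{/\Pic_G^\otimes(A)},
\]
where the first equivalence is \cref{corollary: real microcosmic}, the second is the universal property of the slice monoidal structure (\cref{theorem: universal property of monoidal slice}\textcolor{chilligreen}{(i)}) applied levelwise, and the third is the identification $\Alg_{\ul{\P}}(\ul{\Sc}^G)\simeq\Mon_{\P}(\Sc)$ from \cref{prop: Alg(...) = Mon(...)}. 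This rewrites $\Mon_{\P}(\Sc)_{/\Pic_G^\otimes(A)}$ as $\Alg_{\ul{\P}}$ of the Day-convolution $\ul{\P}$-monoidal $G$-space $\ul{\PSh}_G^{\ul{\P}}(\Pic_G(A))$. Applying the functor $\Alg_{\ul{\P}/\ul{\P}}(-)$ to the $(\O\otimes\E_1)$-monoidal (hence $\ul{\P}$-monoidal after restriction) functor $\Th_G^\otimes\colon \ul{\PSh}_G(\Pic_G(A))^\otimes\to\ul{\LMod}_A^G(\C)^\otimes$ of \cref{construction: multiplicative thom spectra} then produces
\[
\Alg_{\ul{\P}/\ul{\P}}\big(\ul{\PSh}_G^{\ul{\P}}(\Pic_G(A))\big)\longrightarrow \Alg_{\ul{\P}/\ul{\P}}\big(\ul{\LMod}_A^G(\C)^\otimes\big)=\Alg_{\ul{\P}}(\ul{\LMod}_A(\C)),
\]
and postcomposing with the forgetful functor $\Alg_{\ul{\P}/\ul{\P}}\to\Alg_{\ul{\P}}$ (which here is simply the identification coming from $\Fbrs(\ul{\P}^\otimes)\simeq\Fbrs(\ul{\F}_{G,*})_{/\P^\otimes}$) gives the asserted
\[
\Th_G^\otimes\colon \Mon_{\P}(\Sc)_{/\Pic_G^\otimes(A)}\to\Alg_{\ul{\P}/\ul{\P}}(\ul{\LMod}_A(\C))\to\Alg_{\ul{\P}}(\ul{\LMod}_A(\C)).
\]

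Finally, I would check the claim that this extends the parametrized Thom spectrum functor $\Th_G$ of \cref{def: parametrized thom spectrum}: since $\Th_G^\otimes$ is $G$-colimit-preserving and its underlying $G$-functor agrees with the Yoneda extension defining $\Th_G$, forgetting the $\ul{\P}$-algebra structure recovers exactly the composite $X\to\ul{\Pic}_G(A)\to\ul{\LMod}_A^G(\C)$ followed by $\ul{\colim}$, i.e.~the original construction. The main obstacle I anticipate is not conceptual but bookkeeping: one must be careful that all the cited equivalences (\cref{corollary: real microcosmic}, \cref{prop: Alg(...) = Mon(...)}, \cref{theorem: universal property of monoidal slice}) are natural enough in the monoidal data to be spliced together into a single chain over $\ul{\P}^\otimes$, and that the restriction of distributivity from $\O\otimes\E_1$ to $\P$ along the operad map genuinely lands one in the hypotheses of the Day-convolution package (\cref{theorem: omnibus day convolution}) and of \cref{construction: multiplicative thom spectra}; verifying this compatibility of the restricted monoidal structures is the step most likely to require genuine care rather than a one-line citation.
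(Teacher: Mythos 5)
Your proposal is correct and follows essentially the same route as the paper: apply $\Alg_{\ul{\P}/\ul{\P}}$ to the $(\O\otimes\E_1)$-monoidal functor $\Th_G^{\otimes}$ of \cref{construction: multiplicative thom spectra}, identify the source with $\Mon_{\P}(\Sc)_{/\Pic_G^{\otimes}(A)}$, and postcompose with the forgetful functor. The only cosmetic difference is that you re-derive the source identification via \cref{corollary: real microcosmic}, \cref{theorem: universal property of monoidal slice}, and \cref{prop: Alg(...) = Mon(...)}, whereas the paper cites \cref{theorem: microcosmic straightening-unstraightening}(ii) directly, which packages exactly that chain.
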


\begin{proof}
    We apply $\Alg_{\ul{\P}/\ul{\P}}$ to $\Th_{G}^{\otimes}$ and use microcosmic straightening-unstraightening (\cref{theorem: microcosmic straightening-unstraightening}) to identify the left side. The second functor is the forgetful functor.
\end{proof}

\noindent This shows the monoidality in maps $X^{\otimes} \to \ul{\Pic}_G^{\otimes}(A)$, which is not immediate from the Day convolution monoidal structure on $\ul{\PSh}_G(\ul{\Pic}_G(A))^{\otimes}$.

\begin{remark}
    Our construction of the multiplicative parametrized Thom spectra is made to have good functoriality properties but there are other ways to obtain the multiplicative structure on parametrized Thom spectra. The following is one categorical level lower with the downside that it comes with less functoriality but with the upside of a different universal property, which is the one of particular interest to us in this article.
    \begin{enumerate}[(i)]
        \item Let $(\O^{\otimes} \otimes \E_1^{\otimes}, \C^{\otimes}, A)$ be a distributive module datum and let $\P^{\otimes} \to \O^{\otimes} \otimes \E_1^{\otimes}$ be a map in $\Op_{G, \infty}$. Given an $\P$-monoidal map $f\colon X^{\otimes} \to \ul{\Pic}_{G}^{\otimes}(A)$, we obtain the multiplicative Thom spectrum by operadic left Kan extension (\cref{theorem: operadic left Kan extension}) 
    \begin{center}
        \begin{tikzcd}
            X^{\otimes} \arrow[r, "f"] \arrow[d] & \ul{\Pic}_{G}^{\otimes}(A) \arrow[r] & \ul{\LMod}_A^{G}(\C)^{\otimes}
            \\ \ul{\P}^{\otimes} \arrow[urr, dashed, "\Th_{G}^{\otimes}(f)", swap, bend right]
        \end{tikzcd}
    \end{center}
    which is the technique used in \cite{antolinbarthel2019thom} to obtain an $(\O \otimes \E_1)$-algebra structure on $\Th_{G}(f)$.
    \item Mimicking \cite[Proposition 7.8]{carmelicnossenramziyanovski2025characters} with the language of parametrized higher category theory shows that the two monoidal structures on equivariant Thom spectra agree.
    \end{enumerate}
\end{remark}

\noindent Having set up the multiplicative equivariant Thom spectrum functor, we can now give a universal property, which is an equivariant version of \cite[Theorem 3.5]{antolinbarthel2019thom}.

\begin{remark}
    Let $\O^{\otimes} \in \Op_{G, \infty}$ and let $(\C^{\otimes}, R)$ be an $\O$-module datum. Then, consider an algebra $A \in \Alg_{\O}(\LMod_R)$. It comes with an $\O$-map from the initial object $R \to A$ by \cite[Theorem 5.2.11(i)]{nardinshah2022equivarianttopos}. 
\end{remark}

\begin{theorem} \label{theorem: universal property of Th}
    \fix{Let $(\O^{\otimes} \otimes \E_1^{\otimes}, \C^{\otimes}, R)$ be a distributive module datum, $\P^{\otimes} \to \O^{\otimes} \otimes \E_1^{\otimes}$ be a map in $\Op_{G, \infty}$ and $f \colon X^{\otimes} \to \Pic_G^{\otimes}(R)$ be a map in $\Mon_{\P}(\Sc)$. Let $A \in \Alg_{\P}(\LMod_R)$.} There is a natural equivalence 
    \[ \Map_{\Alg_{\P}(\LMod_R)}(\Th_G^{\otimes}(f), A) \simeq \Map_{\Mon_{\P}(\Sc)_{/\Pic_{G}^{\otimes}(R)}} \left(X, \Pic_G^{\otimes}(R)_{\downarrow A} \right), \]
    i.e. there is an adjunction $\Th_{G}^{\otimes}(-) \dashv \Pic_{G}^{\otimes}(\C)_{\downarrow (-)}$.
\end{theorem}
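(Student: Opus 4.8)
The plan is to chain together three adjunctions that have already been established in the foundational sections, so that the right-hand side emerges as the terminal object in a sequence of unwindings. Recall that $\Th_G^{\otimes}(f)$ is the operadic left Kan extension $p_! (\mathrm{incl} \circ f)$, where $p \colon X^{\otimes} \to \ul{\P}^{\otimes}$ is the structure map and $\mathrm{incl} \colon \ul{\Pic}_G^{\otimes}(R) \hookrightarrow \ul{\LMod}_R^G(\ul{\C})^{\otimes}$ is the canonical inclusion. The key input is that $\ul{\LMod}_R^G(\ul{\C})^{\otimes}$ is $\ul{\P}$-distributive: this follows from \cref{theorem: LMod distributive} together with \cref{lemma: distributivity along pullback}, using that $(\O^{\otimes} \otimes \E_1^{\otimes}, \C^{\otimes}, R)$ is a distributive module datum and that $\P^{\otimes} \to \O^{\otimes} \otimes \E_1^{\otimes}$ is a map of $G$-$\infty$-operads along which we may pull back.

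First I would apply the abstract Antolín-Camarena--Barthel statement (\cref{prop: abstract antolin-camarena-barthel}) with $\ul{\D}^{\otimes} = \ul{\LMod}_R^G(\ul{\C})^{\otimes}$, $\ul{\C}^{\otimes}$ there taken to be $X^{\otimes}$, and the algebra there taken to be $A$ viewed as an object of $\Alg_{\ul{\P}}(\ul{\LMod}_R^G(\ul{\C}))$ via the initial map $R \to A$. This yields
\[
\Map_{\Alg_{\ul{\P}}(\ul{\LMod}_R^G(\ul{\C}))}(\Th_G^{\otimes}(f), A) \simeq \Map_{\Fbrs(\ul{\P}^{\otimes})_{/\ul{\LMod}_R^G(\ul{\C})^{\otimes}}}\bigl(X^{\otimes}, \ul{\LMod}_R^G(\ul{\C})^{\otimes}_{/A}\bigr),
\]
where the slice carries the monoidal structure of \cref{prop: slice is O-monoidal}. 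Next I would observe that, since $f$ factors through $\ul{\Pic}_G^{\otimes}(R)$ and $X^{\otimes}$ is a $\ul{\P}$-monoidal space (so all its edges are coCartesian), a map over $\ul{\LMod}_R^G(\ul{\C})^{\otimes}$ lying over the given $f$ is the same as a map over $\ul{\Pic}_G^{\otimes}(R)$ into the pullback $\ul{\Pic}_G^{\otimes}(R)_{\downarrow A} = \ul{\Pic}_G^{\otimes}(R) \times_{\ul{\LMod}_R^G(\ul{\C})^{\otimes}} \ul{\LMod}_R^G(\ul{\C})^{\otimes}_{/A}$ of \cref{construction: Pic}(ii)--(iii). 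This is essentially a pullback-pasting argument on mapping spaces: slicing $\Map_{\Fbrs(\ul{\P}^{\otimes})_{/\ul{\LMod}_R^G(\ul{\C})^{\otimes}}}$ at $f$ and using that the right-hand factor of the pullback defining $\ul{\Pic}_G^{\otimes}(R)_{\downarrow A}$ is exactly $\ul{\LMod}_R^G(\ul{\C})^{\otimes}_{/A}$. Finally I would translate back from $\ul{\Fbrs}$-language to $\Mon_{\ul{\P}}(\Sc)$-language using \cref{corollary: real microcosmic} (or directly \cref{theorem: microcosmic straightening-unstraightening}), identifying $\Map_{\Mon_{\ul{\P}}(\Sc)_{/\ul{\Pic}_G^{\otimes}(R)}}(X, \ul{\Pic}_G^{\otimes}(R)_{\downarrow A})$ with the slice mapping space appearing after the previous step, since $\ul{\Pic}_G^{\otimes}(R)_{\downarrow A}$ is a $\ul{\P}$-monoidal space over $\ul{\Pic}_G^{\otimes}(R)$ (it is a pullback of spaces, hence a space, and inherits its $\ul{\P}$-monoidal structure from the slice monoidal structure on $\ul{\LMod}_R^G(\ul{\C})^{\otimes}_{/A}$).

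Naturality in $A$ is automatic from the adjunction formulation, so the cleanest packaging is to spell out the composite as the asserted adjunction $\Th_G^{\otimes}(-) \dashv \ul{\Pic}_G^{\otimes}(\ul{\C})_{\downarrow (-)}$. The main obstacle I anticipate is the bookkeeping in the middle step: carefully matching the monoidal slice $\ul{\LMod}_R^G(\ul{\C})^{\otimes}_{/A}$ against the comma construction $\ul{\Pic}_G^{\otimes}(R)_{\downarrow A}$ while keeping track of which category the various mapping spaces are formed in ($\Fbrs(\ul{\P}^{\otimes})$ versus $\Mon_{\ul{\P}}(\Sc)$ versus plain slices), and verifying that restricting along the inclusion $\ul{\Pic}_G^{\otimes}(R) \hookrightarrow \ul{\LMod}_R^G(\ul{\C})^{\otimes}$ does not lose information — i.e. that every $\ul{\P}$-algebra map out of $\Th_G^{\otimes}(f)$ into $A$ is detected on the Picard-space level. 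This last point is exactly where the characterization of orientations (\cref{corollary: characterization of orientations}) will later feed in, but for the bare statement it is just a matter of the pullback defining $\ul{\Pic}_G^{\otimes}(R)_{\downarrow A}$ together with \cref{theorem: universal property of monoidal slice}(i).
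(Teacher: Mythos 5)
Your proposal is correct and takes essentially the same route as the paper's proof: first the abstract Antolín-Camarena--Barthel adjunction (\cref{prop: abstract antolin-camarena-barthel}) to trade maps out of $\Th_G^{\otimes}(f)$ for maps into the monoidal slice $\ul{\LMod}_R^G(\C)^{\otimes}_{/A}$ over $f$, then pullback pasting against the defining square of $\ul{\Pic}_G^{\otimes}(R)_{\downarrow A}$, and finally passing from fibrous-pattern mapping spaces to $\Mon_{\P}(\Sc)$. The only cosmetic difference is in the last step, where the paper argues directly that space-valued fibrous patterns are automatically coCartesian fibrations whose maps preserve coCartesian edges, rather than invoking microcosmic straightening-unstraightening.
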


\begin{proof}
    We write $i\colon \Pic_{G}^{\otimes}(R) \to \LMod_R^{G}(\C)^{\otimes}$ and perform the following chain of natural equivalences: \begin{align*}
        \Map_{\Alg_{\P}(\LMod_R)}(\Th_{G}^{\otimes}(f), A) &\simeq \Map_{\Fbrs(\P^{\otimes})_{/\LMod_R^{G}(\C)^{\otimes}}}(X^{\otimes}, \LMod_R^{G}(\C)^{\otimes}_{/A})
        \\ &\simeq \Map_{\Fbrs(\P^{\otimes})}(X^{\otimes}, \LMod_R^{G}(\C)_{/A}^{\otimes}) \times_{\Map_{\Fbrs(\P^{\otimes})}(X, \LMod_R^{G}(\C)^{\otimes})} \{if \}
        \\ &\simeq \Map_{\Fbrs(\P^{\otimes})}(X^{\otimes}, \Pic_{G}^{\otimes}(R)_{\downarrow A}) \times_{\Map_{\Fbrs(\P^{\otimes})}(X, \Pic_{G}^{\otimes}(R))} \{f \}
        \\ &\simeq \Map_{\Mon_{\P}(\Sc)}(X^{\otimes}, \Pic_{G}^{\otimes}(R)_{\downarrow A}) \times_{\Map_{\Mon_{\P}(\Sc)}(X, \Pic_{G}^{\otimes}(R))} \{f \}
        \\ &\simeq \Map_{\Mon_{\P}(\Sc)_{/\Pic_{G}^{\otimes}(R)}}(X^{\otimes}, \Pic_{G}^{\otimes}(R)_{\downarrow A}).
    \end{align*}
    The first equivalence is \cref{prop: abstract antolin-camarena-barthel}. In the second and last equivalence we wrote out mapping spaces of slice $\infty$-categories. The third equivalence comes from pullback pasting
    \begin{center}
        \begin{tikzcd}
            \bullet \arrow[r] \arrow[d] \arrow[dr, phantom, very near start, "\lrcorner"] & \Map_{\Fbrs(\P^{\otimes})}(X^{\otimes}, \Pic_{G}^{\otimes}(R)_{\downarrow A}) \arrow[r] \arrow[d] \arrow[dr, phantom, very near start, "\lrcorner"] & \Map_{\Fbrs(\P^{\otimes})}(X^{\otimes}, \LMod_R^{G}(\C)_{/A}^{\otimes}) \arrow[d]
            \\ * \arrow[r, "f", swap] & \Map_{\Fbrs(\P^{\otimes})}(X^{\otimes}, \Pic_{G}^{\otimes}(R)) \arrow[r, "i_*", swap] & \Map_{\Fbrs(\P^{\otimes})}(X^{\otimes}, \LMod_R^{G}(\C)^{\otimes})
        \end{tikzcd}
    \end{center}
    where the right square is a pullback, hence the left one is a pullback if and only if the total rectangle is a pullback. The fourth equivalence is because the objects are space-valued fibrous patterns, so they are automatically coCartesian fibrations and maps automatically preserve coCartesian edges, which means that we may pass to $\Mon_{\P}(\Sc)$.
\end{proof}

\noindent Colloquially, a map $\Th_{G}^{\otimes}(f) \to A$ corresponds to a lift
\begin{center}
        \begin{tikzcd}
            & \Pic_{G}^{\otimes}(R)_{\downarrow A} \arrow[d]
            \\ X \arrow[r, "f", swap] \arrow[ur, dashed, "\exists !"] & \Pic_{G}^{\otimes}(R)
        \end{tikzcd}
    \end{center}
in $\Mon_{\P}(\Sc)$.

\begin{construction} \label{construction: Ind operads}
    Let $\O^{\otimes}, \Q^{\otimes} \in \Op_{G, \infty}$ and $\O^{\otimes} \to \Q^{\otimes}$ be a map in $\Op_{G, \infty}$. Let $\myuline{\C}^{\otimes}$ be a distributive $\Q$-monoidal $G$-$\infty$-category. Then, $\Res_{\ul{\O}}^{\ul{\Q}}\colon  \Alg_{\ul{\Q}}(\ul{\C}) \to \Alg_{\ul{\O}}(\ul{\C})$ preserves limits since limits are computed underlying \cite[Theorem 5.1.3]{nardinshah2022equivarianttopos}.  Moreover, these algebra categories are presentable by distributivity \cite[Theorem 5.1.4(4)]{nardinshah2022equivarianttopos}, so the adjoint functor theorem yields a left adjoint $\tb{\Ind_{\O}^{\Q}}\colon  \Alg_{\ul{\O}}(\ul{\C}) \to \Alg_{\ul{\Q}}(\ul{\C})$.
\end{construction}

\begin{corollary} \label{corollary: Ind commutes with Th}
    Let $(\O^{\otimes} \otimes \E_1^{\otimes}, \C^{\otimes}, R)$ be a distributive module datum and consider a map $\P^{\otimes} \to \O^{\otimes} \otimes \E_1^{\otimes}$ in $\Op_{G, \infty}$. Consider a map $f:X^{\otimes} \to \Pic_{G}^{\otimes}(R)$ in $\Mon_{\P}(\Sc)$ and another map $\P^{\otimes} \to \Q^{\otimes} \to \O^{\otimes} \otimes \E_1^{\otimes}$ in $\Op_{G, \infty}$. Then, there is a natural equivalence 
    \[ \Ind_{\P}^{\Q}(\Th_{G}^{\otimes}(f)) \simeq \Th_{G}^{\otimes}\left( \widetilde{f} \colon \Ind_{\P}^{\Q}X^{\otimes} \to \ul{\Pic}_G^{\otimes}(R) \right). \]
    where $\widetilde{f} \colon \Ind_{\P}^{\Q}X^{\otimes} \to \ul{\Pic}_G^{\otimes}(R)$ is the $\Q$-map associated to $X^{\otimes} \to \ul{\Pic}_G^{\otimes}(R)$.
\end{corollary}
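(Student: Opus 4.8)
The plan is to deduce this from the universal property of multiplicative parametrized Thom spectra (\cref{theorem: universal property of Th}) together with the adjunction defining $\Ind_{\P}^{\Q}$ (\cref{construction: Ind operads}), by a Yoneda argument. Concretely, I would fix an arbitrary $\Q$-algebra $A \in \Alg_{\ul{\Q}}(\ul{\LMod}_R(\ul{\C}))$ and compute $\Map_{\Alg_{\ul{\Q}}(\ul{\LMod}_R)}(\Ind_{\P}^{\Q}(\Th_G^{\otimes}(f)), A)$ in two ways, showing both are naturally equivalent to the mapping space out of $\Th_G^{\otimes}(\widetilde{f})$. Since $A$ is arbitrary, the claim follows by the Yoneda lemma.

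First I would unwind the left-hand side: by the $\Ind_{\P}^{\Q} \dashv \Res_{\P}^{\Q}$ adjunction this is $\Map_{\Alg_{\ul{\P}}(\ul{\LMod}_R)}(\Th_G^{\otimes}(f), \Res_{\P}^{\Q}A)$, and then by \cref{theorem: universal property of Th} this is $\Map_{\Mon_{\P}(\Sc)_{/\Pic_G^{\otimes}(R)}}(X^{\otimes}, \Pic_G^{\otimes}(R)_{\downarrow \Res_{\P}^{\Q}A})$. Here one needs that the comma construction $\Pic_G^{\otimes}(R)_{\downarrow(-)}$ is compatible with restriction of operads, i.e.\ that the underlying $\P$-monoidal space of $\Pic_G^{\otimes}(R)_{\downarrow A}$ (formed in $\Mon_{\Q}$) agrees with $\Pic_G^{\otimes}(R)_{\downarrow \Res_{\P}^{\Q}A}$ (formed in $\Mon_{\P}$); this holds because the slice monoidal structure is built from the cotensor, which is stable under pullback along $\ul{\P}^{\otimes} \to \ul{\Q}^{\otimes}$, exactly as in \cref{lemma: distributivity along pullback} and the arguments of \cref{sec:monoidal_slice}. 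On the right-hand side, $\widetilde{f}$ is by definition the $\Q$-map corresponding to $f$ under the adjunction $\Ind_{\P}^{\Q} \dashv \Res_{\P}^{\Q}$ applied to $\Mon_{-}(\Sc)_{/\Pic_G^{\otimes}(R)}$ (using that slicing commutes with these adjunctions since $\Pic_G^{\otimes}(R)$ is a $\Q$-monoidal space restricted to $\P$), so $\Map_{\Alg_{\ul{\Q}}(\ul{\LMod}_R)}(\Th_G^{\otimes}(\widetilde{f}), A) \simeq \Map_{\Mon_{\Q}(\Sc)_{/\Pic_G^{\otimes}(R)}}(\Ind_{\P}^{\Q}X^{\otimes}, \Pic_G^{\otimes}(R)_{\downarrow A})$ again by \cref{theorem: universal property of Th}, and this equals $\Map_{\Mon_{\P}(\Sc)_{/\Pic_G^{\otimes}(R)}}(X^{\otimes}, \Pic_G^{\otimes}(R)_{\downarrow \Res_{\P}^{\Q}A})$ by the $\Ind \dashv \Res$ adjunction on these sliced monoidal-space categories. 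Chaining these identifications gives the desired natural equivalence.

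The main obstacle I expect is the bookkeeping around base change: one must check carefully that the three operations in play---forming $\Pic_G^{\otimes}$, forming the $\downarrow A$ comma object, and slicing over $\Pic_G^{\otimes}(R)$---all commute with restriction of operads $\Res_{\P}^{\Q}$, and dually that $\Ind_{\P}^{\Q}$ on $\Mon(\Sc)_{/\Pic_G^{\otimes}(R)}$ is the expected thing (left Kan extension along $\ul{\P}^{\otimes} \to \ul{\Q}^{\otimes}$, computed underlying on $G$-spaces by \cref{theorem: operadic left Kan extension}(ii) since $\Sc^G$ is distributive). Granting these compatibilities---each of which follows from the pullback-stability of the cotensor and Day convolution already established in \cref{sec:monoidal_slice,sec:microcosmic}---the proof is a purely formal chain of adjunctions and the Yoneda lemma. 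Alternatively, and perhaps more cleanly, one can argue directly at the level of left adjoints: $\Th_G^{\otimes}(-)$ is (the composite of operadic left Kan extension with the forgetful functor, hence) a left adjoint, $\Ind_{\P}^{\Q}$ is a left adjoint, and both $\Th_G^{\otimes}(\Ind_{\P}^{\Q}(-))$ and $\Ind_{\P}^{\Q}(\Th_G^{\otimes}(-))$ are left adjoint to the same functor $A \mapsto \Pic_G^{\otimes}(R)_{\downarrow \Res_{\P}^{\Q}A}$, so they agree by uniqueness of adjoints---this is the version I would write up.
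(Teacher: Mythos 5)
Your proposal is correct and follows essentially the same route as the paper: a Yoneda argument combining the $\Ind_{\P}^{\Q}\dashv\Res_{\P}^{\Q}$ adjunction with two applications of the universal property of $\Th_G^{\otimes}$ (\cref{theorem: universal property of Th}), identifying the sliced mapping spaces exactly as in the paper's ``third equivalence.'' Your closing reformulation via uniqueness of left adjoints to $A\mapsto \ul{\Pic}_G^{\otimes}(R)_{\downarrow \Res_{\P}^{\Q}A}$ is just a repackaging of that same chain, so there is nothing substantively different to flag.
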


\begin{proof}
    We wish to employ a Yoneda argument and proceed by using the universal property (\cref{theorem: universal property of Th}). Indeed, we compute
    \begin{align*}
        \Map_{\Alg_{\E_V}(\LMod_R)} \left(\Ind_{\E_0}^{\E_V} \Th^{\otimes}_G(f), A \right) &\simeq \Map_{\Alg_{\E_0}(\LMod_R)}(\Th^{\otimes}_G(f), A)
        \\ &\simeq \Map_{\Mon_{\E_0}(\Sc)_{/\Pic_G^{\otimes}(R)}} \left(X^{\otimes}, \Pic_G^{\otimes}(R)_{\downarrow A} \right)
        \\ &\simeq \Map_{\Mon_{\E_V}(\Sc)_{/\Pic_G^{\otimes}(R)}}\left(\Ind_{\P}^{\Q}X^{\otimes}, \Pic_G^{\otimes}(R)_{\downarrow A} \right)
        \\ &\simeq \Map_{\Alg_{\E_V}(\LMod_R)}(\Th_G^{\otimes}(\ol{f}), A).
    \end{align*}
    The third equivalence is obtained by writing out the mapping spaces of slice categories and then applying the universal property of $\Ind_{\P}^{\Q}$ by noting that on the right side we have grouplike objects.
\end{proof}

\noindent We will give a slight variant of this result in the grouplike setting in the next subsection (\cref{corollary: Ind of Th}).

\subsection{Abstract Orientation Theory}\label{sec:abstract_orientation_theory}
By definition (\cref{def: parametrized thom spectrum}), a Thom object is a $G$-colimit of some map $X \to \ul{\Pic}_G(R)$ of $G$-spaces, which can be viewed as a twisted version of $X \otimes R$ (see \cref{example: thom along trivial}). Let $R \to A$ be another algebra map, along which we can base change 
\begin{center}
    \begin{tikzcd}
        X \arrow[r] & \ul{\Pic}_G(R) \arrow[r, "\Ind_R^A"] & \ul{\Pic}_G(A).
    \end{tikzcd}
\end{center} 
The base change could have had the effect of making this composite nullhomotopic, which ultimately leads to a detwisting of the Thom spectrum after base changing (\cref{example: thom along trivial}). The detwisting is realized by such a nullhomotopy, which is then rightfully called \emph{orientation}.
\medskip \\In this subsection, we will study the structure of multiplicative orientations along with the phenomena that it comes with -- importantly, the multiplicative Thom isomorphism (\cref{theorem: monoidal thom iso}). Throughout, we will always start with the same data, for which we introduce some additional terminology for the sake of brefity.
\begin{notation} \label{notation: orientation datum}
    An \tb{orientation datum} is a tuple $(\O^{\otimes}, \P^{\otimes}, \C, R, A, f)$ consisting of 
    \begin{itemize}
        \item a distributive module datum $(\O^{\otimes} \otimes \E_1^{\otimes}, \C^{\otimes}, R)$; see \cref{def: distributive module datum},
        \item a map $\P^{\otimes} \to \O^{\otimes} \otimes \E_1^{\otimes}$ in $\Op_{G, \infty}$,
        \item a map $f\colon X^{\otimes} \to \Pic_G^{\otimes}(R)$ in $\Mon_{\P}(\Sc)$,
        \item \fix{an algebra $A \in \Alg_{\P \otimes \E_1}(\C)$ with a $\P^{\otimes} \otimes \E_1^{\otimes}$-map $R \to A$.}\footnote{Here, $R$ was by definition $\O^{\otimes} \otimes \E_2^{\otimes}$ which we restrict to $\P^{\otimes} \otimes \E_1^{\otimes}$.}
    \end{itemize}
    \fix{In particular, $A \simeq R \otimes_R A \in \Alg_{\P}(\LMod_R)$.}
\end{notation}

\begin{remark}
    \fix{We warn the reader that the necessary data for the results of this section can be quite confusing, see also \cite[Remark 3.7]{antolinbarthel2019thom}. First, we start with an $(\O \otimes \E_2)$-ring $R$ out of two reasons: Taking $\LMod$ kills one copy of $\E_1$ and to run our theory of Picard spaces (\cref{corollary: GL1 monoidal}), we needed another copy of $\E_1$. Once we obtain these objects, we can freely restrict the operad structure which is why we consider a map $\P^{\otimes} \to \O^{\otimes} \otimes \E_1^{\otimes}$.
    \medskip \\Next, we demand an algebra $A \in \Alg_{\P \otimes \E_1}(\C)$ together with a $(\P \otimes \E_1)$-map $R \to A$. We needed to introduce an $\E_1$-again, so that the base change map $\Ind_R^A$ is a $\P$-map and this base change map is relevant for the theory of orientations. Note that this is different than the condition in \cref{theorem: universal property of Th} -- we only needed a weaker statement there because the base change functor $\Ind_R^A$ did not show up.}
\end{remark}

\noindent We initiate the study of orientations through a universal example, parametrizing a program from Antolín-Camarena--Barthel \cite[Section 3.2]{antolinbarthel2019thom}.

\begin{definition} \label{def: orientations}
    Let $(\O^{\otimes}, \P^{\otimes}, \C, R, A, f)$ be an orientation 
    datum. The space of \tb{$\P$-$f$-orientations} of $A$ is
    \[ \tb{\Or_A^{\P}(f)} = \Map_{\Mon_{\P}(\Sc)_{/\Pic_G^{\otimes}(R)}}\left(f, \GL_1(\Pic^{\otimes}(R)_{\downarrow A}) \to \Pic_G^{\otimes}(R) \right). \]
\end{definition}

\begin{remark}
    We warn that Antolín-Camarena--Barthel would call these $\P$-$A$-orientations of $f$, see \cite[Definition 3.14]{antolinbarthel2019thom}. If the source space of $f$ is grouplike, then we will see that an $\P$-$f$-orientation of $A$ is a $\P$-map $\mathrm{M}f \to A$, see \cref{lemma: grouplike orientation}.  For example, a map $\MU \to E$ is typically called $\MU$-orientation, so we have decided to adapt our terminology to this situation.
\end{remark}

\noindent We first show that this coincides with the approach from Antolín-Camarena--Barthel involving their $B(R, A)$ \cite[Definition 3.12]{antolinbarthel2019thom}.

\begin{lemma} \label{lemma: GL vs B}
    Let $(\O^{\otimes}, \P^{\otimes}, \C, R, A, f)$ be an orientation datum and $H \leq G$, then the underlying space of $\GL_1(\Pic^{\otimes}(R)_{\downarrow A})^H$ is the subgroupoid of $\Pic(R_H)_{\downarrow A_H}$ consisting of $R_H$-module maps $M \to A_H$ such that the adjoint $\Ind_{R_H}^{A_H}M \to A_H$ is an equivalence. 
\end{lemma}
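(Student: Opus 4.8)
The plan is to unwind the definitions on the left-hand side level by level and match them against the explicit description of the comma category on the right.

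First I would recall that, by definition, $\Pic_G^{\otimes}(R)_{\downarrow A}$ is the pullback of $\ul{\LMod}_R^G(\C)^{\otimes}_{/A} \to \ul{\LMod}_R^G(\C)^{\otimes}$ along $\ul{\Pic}_G^{\otimes}(R) \hookrightarrow \ul{\LMod}_R^G(\C)^{\otimes}$ (\cref{construction: Pic}). Evaluating at level $H \leq G$ and passing to underlying $\infty$-categories, using \cref{example: classical parametrized slice} and \cref{lemma: fibers of slice} to compute the fibers of the slice, one finds that the underlying $\infty$-groupoid of $\Pic_G^{\otimes}(R)_{\downarrow A}$ at level $H$ is the space of pairs $(M, \alpha)$ where $M \in \Pic(R_H)$ is an invertible $R_H$-module and $\alpha \colon M \to A_H$ is an $R_H$-module map (the slice being over the object $A_H \in \LMod_{R_H}(\C_H)$, whose image under the forgetful functor we are slicing). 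This identifies $(\Pic_G^{\otimes}(R)_{\downarrow A})^H$ with $\Pic(R_H)_{\downarrow A_H}$ in the notation of Antolín-Camarena--Barthel. Next, I would invoke the description of $\GL_1$ from the previous subsection: since $\GL_1 \colon \myuline{\Alg}_{\Infl_G\E_1}(\myuline{\Sc}_G) \to \myuline{\Alg}_{\Infl_G\E_1}^{\gp}(\myuline{\Sc}_G)$ is fiberwise the classical $\GL_1$ (\cref{def: GL1} and the following remark), which takes the subspace of tensor-invertible objects, the space $\GL_1(\Pic^{\otimes}(R)_{\downarrow A})^H$ is the subgroupoid of $\Pic(R_H)_{\downarrow A_H}$ spanned by those $(M, \alpha)$ that are invertible for the monoidal structure on the slice.

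The remaining point is to identify invertibility in the slice monoidal structure with the condition that the adjoint $A_H$-module map $\Ind_{R_H}^{A_H} M \to A_H$ is an equivalence. Here I would use \cref{corollary: GL1 Pic(R)A as pullback}, which gives $\GL_1(\ul{\Pic}_G^{\otimes}(R)_{\downarrow A}) \simeq \ul{\Pic}_G^{\otimes}(R) \times_{\ul{\Pic}_G^{\otimes}(A)} *$ in $\Mon_{\P}(\Sc)$, where the map $*\to \ul{\Pic}_G^{\otimes}(A)$ picks out the unit $A$ and the map $\ul{\Pic}_G^{\otimes}(R) \to \ul{\Pic}_G^{\otimes}(A)$ is $\Ind_R^A$. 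Evaluating this pullback at level $H$ and on underlying spaces: a point of $\GL_1(\ul{\Pic}_G^{\otimes}(R)_{\downarrow A})^H$ is an invertible $R_H$-module $M$ together with a path from $\Ind_{R_H}^{A_H} M = A_H \otimes_{R_H} M$ to the unit $A_H$ in $\Pic(A_H)$, i.e. an equivalence $\Ind_{R_H}^{A_H} M \xrightarrow{\sim} A_H$ of $A_H$-modules. Under the adjunction $\Ind_{R_H}^{A_H} \dashv \mathrm{res}$, such data corresponds exactly to an $R_H$-module map $M \to A_H$ whose $A_H$-linear adjoint is an equivalence — which is precisely the stated description of the subgroupoid of $\Pic(R_H)_{\downarrow A_H}$.

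The main obstacle I anticipate is purely bookkeeping rather than conceptual: one must be careful that the two descriptions of the point-set data agree — namely that the ``$\alpha \colon M \to A_H$ with invertible adjoint'' data coming from the comma/slice picture matches the ``$M$ plus nullhomotopy of $\Ind_{R_H}^{A_H}$'' data coming from \cref{corollary: GL1 Pic(R)A as pullback} — and that this matching is compatible with the maps down to $\ul{\Pic}_G^{\otimes}(R)$ so that it is an equivalence over $\Pic(R_H)$. Both descriptions are just the fiber of $\Ind_{R_H}^{A_H} \colon \Pic(R_H) \to \Pic(A_H)$ over the unit, fibered over $\Pic(R_H)$, so once the fibers of the slice category are correctly computed via \cref{lemma: fibers of slice} the identification is forced. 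I would present this by first doing the fiberwise unravelling, then citing \cref{corollary: GL1 Pic(R)A as pullback} for the invertibility reformulation, and finally translating through the $\Ind \dashv \mathrm{res}$ adjunction.
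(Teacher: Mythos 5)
Your proposal is correct and follows essentially the same route as the paper: the paper also reduces to a levelwise/classical statement and identifies $\GL_1(\Pic^{\otimes}(R)_{\downarrow A})$ with $\Pic(R)\times_{\Pic(A)} *$ via \cref{corollary: GL1 Pic(R)A as pullback}, then recognizes this as the desired subgroupoid. The only difference is that where the paper cites Antol\'in-Camarena--Barthel's Proposition 3.16 to identify this pullback with $B(R,A)$, you carry out that identification explicitly through the $\Ind_{R_H}^{A_H}\dashv \mathrm{res}$ adjunction, which is a fine (and slightly more self-contained) way to finish.
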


\noindent The latter is also called $B(R_H, A_H)$ in the language of Antolín-Camarena--Barthel \cite[Definition 3.12]{antolinbarthel2019thom}.

\begin{proof}
    Since this is a completely levelwise statement, it amounts to an entirely classical check $\GL_1(\Pic^{\otimes}(R)_{\downarrow A}) \simeq B(R, A)$ for the subgroup $H = e$. To do so, we use  the equivalence $\GL_1(\Pic^{\otimes}(R)_{\downarrow A}) \simeq \Pic(R) \times_{\Pic{A}} *$ from \cref{corollary: GL1 Pic(R)A as pullback}, which is $B(R, A)$, as noted in the proof of \cite[Proposition 3.16]{antolinbarthel2019thom}.
\end{proof}

\noindent The upshot is that the $\GL_1$-business allows us to bypass some technical checks in parametrized higher category theory such as $B(R, A)$ admitting a parametrized version which furthermore admits a parametrized multiplicative enhancement. All of this was already done by our hard work involving $\GL_1$. 
\medskip \\Let us now demonstrate that this recovers the classical interpretations of (multiplicative) orientations.

\begin{corollary} \label{corollary: characterization of orientations}
    Let $(\O^{\otimes}, \P^{\otimes}, \C, R, A, f)$ be an orientation datum. The following are equivalent characterizations of $\P$-$f$-orientations of $A$:
    \begin{enumerate}[(i)]
        \item A $\P$-lift
        \begin{center}
        \begin{tikzcd}
        &\GL_1(\Pic^{\otimes}_G(R)_{\downarrow A}) \arrow[d]
                \\ X^{\otimes} \arrow[r, "f", swap] \arrow[ur, dashed] & \Pic_G^{\otimes}(R)
            \end{tikzcd}
        \end{center}
        of $f$.
        \item A nullhomotopy of the composite 
        \begin{center}
            \begin{tikzcd}
                X^{\otimes} \arrow[r, "f"] & \Pic_G^{\otimes}(R) \arrow[r, "\Ind_R^A"] & \Pic_G^{\otimes}(A).
            \end{tikzcd}
        \end{center}
        in $\Mon_{\P}(\Sc)$.
        \item A map $\Th_G^{\otimes}(f) \to A$ in $\Alg_{\P}(\LMod_R)$ such that for every $x\colon * \to X$ the adjoint $A$-module map corresponding to the $R$-module map 
        \begin{center}
            \begin{tikzcd}
                \Th_G(f \circ x) \arrow[r] & \Th_G(f) \arrow[r] & A
            \end{tikzcd}
        \end{center} is an equivalence.
    \end{enumerate}
\end{corollary}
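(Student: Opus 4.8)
The strategy is to chain together the adjunction from the universal property of $\Th_G^\otimes$ (\cref{theorem: universal property of Th}), the identification of $\GL_1$ of the comma category as a pullback (\cref{corollary: GL1 Pic(R)A as pullback}), and the levelwise computation of $B(R,A)$ (\cref{lemma: GL vs B}), cycling through the three formulations $(i) \Leftrightarrow (ii) \Leftrightarrow (iii)$. First I would observe that $(i)$ is literally the definition of $\Or_A^\P(f)$ (\cref{def: orientations}): a point of the mapping space $\Map_{\Mon_\P(\Sc)_{/\Pic_G^\otimes(R)}}(f, \GL_1(\Pic^\otimes_G(R)_{\downarrow A}) \to \Pic_G^\otimes(R))$ is exactly a $\P$-monoidal lift of $f$ along the displayed map. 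So the content is to match this space with the spaces described in $(ii)$ and $(iii)$.

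For $(i) \Leftrightarrow (ii)$: apply \cref{corollary: GL1 Pic(R)A as pullback}, which gives $\GL_1(\Pic_G^\otimes(R)_{\downarrow A}) \simeq \Pic_G^\otimes(R) \times_{\Pic_G^\otimes(A)} *$ in $\Mon_\P(\Sc)$, where the map to $\Pic_G^\otimes(R)$ is the first projection and the map $\Pic_G^\otimes(R) \to \Pic_G^\otimes(A)$ is $\Ind_R^A$. Thus a lift of $f$ along the first projection of this pullback is the same datum as a map $f \colon X^\otimes \to \Pic_G^\otimes(R)$ together with a nullhomotopy of $\Ind_R^A \circ f$ — i.e. the mapping space into the pullback over $\Pic_G^\otimes(R)$ is $\Map_{/\Pic_G^\otimes(R)}(f, \Pic_G^\otimes(R) \times_{\Pic_G^\otimes(A)} *)$, and by the universal property of pullbacks this is the space of nullhomotopies of $\Ind_R^A \circ f$ in $\Mon_\P(\Sc)$. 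This is a purely formal manipulation of mapping spaces into a pullback in a slice category.

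For $(i) \Leftrightarrow (iii)$: apply \cref{theorem: universal property of Th} with $A$ replaced by the image of the $\P$-lift, but more directly, apply the universal property to get $\Map_{\Alg_\P(\LMod_R)}(\Th_G^\otimes(f), A) \simeq \Map_{\Mon_\P(\Sc)_{/\Pic_G^\otimes(R)}}(X, \Pic_G^\otimes(R)_{\downarrow A})$; then observe that a lift of $f$ along $\GL_1(\Pic_G^\otimes(R)_{\downarrow A}) \to \Pic_G^\otimes(R)_{\downarrow A}$ corresponds, under this equivalence, to those maps $\Th_G^\otimes(f) \to A$ whose associated map $X \to \Pic_G^\otimes(R)_{\downarrow A}$ factors through the subgroupoid $\GL_1$. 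Here is where \cref{lemma: GL vs B} is needed: it identifies, fiberwise over each $H \leq G$, the space $\GL_1(\Pic^\otimes(R)_{\downarrow A})^H$ with the subgroupoid $B(R_H, A_H)$ of $R_H$-module maps $M \to A_H$ for which the adjoint $\Ind_{R_H}^{A_H} M \to A_H$ is an equivalence. Since the adjunction of \cref{theorem: universal property of Th} sends a map $\Th_G^\otimes(f) \to A$ to the map $X^\otimes \to \Pic_G^\otimes(R)_{\downarrow A}$ whose value at $x \colon * \to X$ is the $R$-module $\Th_G(f \circ x)$ together with its structure map to $A$, the factorization through $\GL_1$ is precisely the condition that for every such $x$ the adjoint $A$-module map $A \otimes_R \Th_G(f \circ x) \to A$ is an equivalence — which is condition $(iii)$. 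Being grouplike/invertible is a condition (not structure) detected fiberwise, so the factorization, if it exists, is unique, and one gets an equivalence of spaces rather than merely a surjection.

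\textbf{Main obstacle.} The routine steps are the formal pullback manipulations; the genuinely delicate point is to check that, under the equivalence of \cref{theorem: universal property of Th}, the map $X^\otimes \to \Pic_G^\otimes(R)_{\downarrow A}$ adjoint to $\Th_G^\otimes(f) \to A$ really does have value $(\Th_G(f \circ x) \to A)$ at each point $x$ — i.e. that the $\P$-Thom-object construction is compatible with restriction along points $x \colon * \to X$ in the way the proof of the adjunction in \cref{prop: abstract antolin-camarena-barthel} predicts. This amounts to tracing through the operadic left Kan extension and the slice monoidal structure and using that $\Th_G$ is a $G$-colimit (so restricts correctly along $x$), together with \cref{lemma: GL vs B} to translate "invertible in the fiber" into "$B(R_H, A_H)$". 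I would handle this by reducing, via \cref{lemma: GL vs B}, to the levelwise statement, where it is the classical Antolín-Camarena--Barthel computation \cite[Proposition 3.16]{antolinbarthel2019thom}, and then noting that all the equivalences in play are $\P$-monoidal and natural, so they assemble over $\Orb_G^{\op}$.
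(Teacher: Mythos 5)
Your proposal is correct and follows essentially the same route as the paper's own (much terser) proof: (i) is \cref{def: orientations} by unravelling, (ii) follows from the pullback identification $\GL_1(\Pic_G^{\otimes}(R)_{\downarrow A}) \simeq \Pic_G^{\otimes}(R) \times_{\Pic_G^{\otimes}(A)} *$ of \cref{corollary: GL1 Pic(R)A as pullback}, and (iii) combines \cref{lemma: GL vs B} with the universal property \cref{theorem: universal property of Th}, exactly the ingredients you use. Your extra discussion of compatibility of the adjunction with restriction along points $x \colon * \to X$ is a detail the paper leaves implicit, and your reduction to the levelwise Antolín-Camarena--Barthel statement handles it adequately.
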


\begin{proof}
    \hfill 
    \begin{enumerate}[(i)]
        \item This is \cref{def: orientations} (after unravelling the mapping spaces of slice $\infty$-categories).
        \item This follows from $\GL_1(\Pic_G^{\otimes}(R)_{\downarrow A}) \simeq \Pic_G^{\otimes}(R) \times_{\Pic_G^{\otimes}(A)} *$ (see \cref{corollary: GL1 Pic(R)A as pullback}).
        \item This follows from our explicit description of $\GL_1(\Pic_G^{\otimes}(R)_{\downarrow A})$ (see \cref{lemma: GL vs B}) and the universal property of multiplicative parametrized Thom spectra (\cref{theorem: universal property of Th}).
    \end{enumerate}
\end{proof}

\noindent The description (ii) already featured in the intro of this subsection and (iii) lifts the classical notion of Thom classes.

\begin{lemma}
    Let $(\O^{\otimes}, \P^{\otimes}, \C, R, A, f)$ be \label{lemma: grouplike orientation} an orientation datum. Let $X \in \Mon_{\P}(\Sc)$. Suppose one of the following conditions.
    \begin{enumerate}[(i)]
        \item Assume that $X$ is levelwise connected.
        \item Assume that there is a map $\E_1^{\otimes} \to \O^{\otimes}$ and that $X$ is grouplike.
    \end{enumerate}
    Then, $\Or_A^{\P}(f) \simeq \Map_{\Alg_{\P}(\LMod_R(\C))}(\Th_G^{\otimes}(f), A)$.
\end{lemma}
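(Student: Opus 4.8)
The plan is to compare the space of $\P$-$A$-orientations, which by \cref{def: orientations} is a mapping space in the slice category $\Mon_{\P}(\Sc)_{/\Pic_G^{\otimes}(R)}$, with the mapping space $\Map_{\Alg_{\P}(\LMod_R(\C))}(\Th_G^{\otimes}(f),A)$ appearing in the universal property of multiplicative Thom spectra (\cref{theorem: universal property of Th}). Both of these are computed as fibers of mapping spaces over the point $\{f\}$: unravelling the slice mapping space, $\Or_A^{\P}(f)$ is the fiber over $f$ of the map $\Map_{\Mon_{\P}(\Sc)}(X,\GL_1(\Pic^{\otimes}(R)_{\downarrow A}))\to \Map_{\Mon_{\P}(\Sc)}(X,\Pic_G^{\otimes}(R))$ induced by the structure map, while by the chain of equivalences in the proof of \cref{theorem: universal property of Th}, $\Map_{\Alg_{\P}(\LMod_R)}(\Th_G^{\otimes}(f),A)$ is the fiber over $f$ of $\Map_{\Mon_{\P}(\Sc)}(X,\Pic_G^{\otimes}(R)_{\downarrow A})\to \Map_{\Mon_{\P}(\Sc)}(X,\Pic_G^{\otimes}(R))$. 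So it suffices to show that, for our $X$, postcomposition with the inclusion $\GL_1(\Pic^{\otimes}(R)_{\downarrow A})\to \Pic_G^{\otimes}(R)_{\downarrow A}$ induces an equivalence on the relevant fibers.

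First I would reduce to a statement about $X$ alone. The inclusion $j\colon \GL_1(\Pic^{\otimes}(R)_{\downarrow A})\hookrightarrow \Pic_G^{\otimes}(R)_{\downarrow A}$ sits over $\Pic_G^{\otimes}(R)$ (this is part of the data in \cref{def: orientations} and \cref{corollary: GL1 Pic(R)A as pullback}), and $\GL_1$ is, levelwise and hence parametrized, a full sub-$\P$-monoidal space: by \cref{prop: inclusion grouplike} / \cref{corollary: GL1 monoidal} it is the counit of a Bousfield colocalization, so $j$ is fully faithful. Thus the induced map on mapping spaces $\Map_{\Mon_{\P}(\Sc)}(X,\GL_1(\Pic^{\otimes}(R)_{\downarrow A}))\to \Map_{\Mon_{\P}(\Sc)}(X,\Pic_G^{\otimes}(R)_{\downarrow A})$ is the inclusion of those $\P$-monoidal maps $X^{\otimes}\to \Pic_G^{\otimes}(R)_{\downarrow A}$ which factor (necessarily uniquely) through the full subcategory $\GL_1(\Pic^{\otimes}(R)_{\downarrow A})$. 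Taking fibers over $f$, it therefore suffices to show: every $\P$-monoidal lift $\tilde f\colon X^{\otimes}\to \Pic_G^{\otimes}(R)_{\downarrow A}$ of $f$ automatically lands in $\GL_1(\Pic^{\otimes}(R)_{\downarrow A})$. Using the explicit description from \cref{lemma: GL vs B}, this amounts to checking that for every $H\le G$ and every point $x\colon \ast\to X^H$, the $R_H$-module map underlying $\tilde f(x)$ has the property that its $A_H$-adjoint $\Ind_{R_H}^{A_H}(-)\to A_H$ is an equivalence — i.e. that $\tilde f(x)$ lies in $B(R_H,A_H)$.

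Now the two hypotheses come in. In case (i), $X$ is levelwise connected, so for each $H$ the space $X^H$ has a single component; the composite $\ast\xrightarrow{1}X^H\xrightarrow{\tilde f}(\Pic_G^{\otimes}(R)_{\downarrow A})^H$ hits the unit, which maps to $R_H\xrightarrow{\mathrm{id}}A_H$ viewed as an object of $\Pic(R_H)_{\downarrow A_H}$ (because $\tilde f$ is $\P$-monoidal, hence unital, and the basepoint of $\Pic_G^{\otimes}(R)$ is $R$ with its unit map to $A$). Since $R_H\to A_H$ has the identity of $A_H$ as its $A_H$-adjoint, the unit lies in $B(R_H,A_H)$; and $B(R_H,A_H)\subseteq \Pic(R_H)_{\downarrow A_H}$ is a union of connected components (being the full subgroupoid on objects satisfying an equivalence condition, which is closed under equivalence), so by connectedness of $X^H$ the whole image of $\tilde f$ on $X^H$ lands in $B(R_H,A_H)$. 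In case (ii) there is a map $\E_1^{\otimes}\to \O^{\otimes}$ and $X$ is grouplike, so $X^H$ is a grouplike $\E_1$-space, i.e. every point is a unit for the induced multiplication up to homotopy; since $\tilde f$ is a monoidal (in particular $\E_1$-monoidal) map, it sends units to invertible objects of the monoidal space $(\Pic_G^{\otimes}(R)_{\downarrow A})^H$, and one checks that an invertible object of $\Pic(R_H)_{\downarrow A_H}$ is precisely a module $M$ with $M\simeq \ol{R_H}$ in $\Pic(R_H)$ whose structure map $M\to A_H$ has invertible $A_H$-adjoint — that is, an object of $B(R_H,A_H)$. Alternatively and more cleanly for (ii): $X$ grouplike means $f$ and any lift $\tilde f$ factor through $\GL_1$ of the target, since $\GL_1$ is a colocalization and $X$ already lies in the grouplike subcategory, so $\Map_{\Mon_\P(\Sc)}(X,-)$ does not see the difference between $\Pic_G^\otimes(R)_{\downarrow A}$ and its $\GL_1$. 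Either way the fibers over $f$ agree, and combining with the identifications of the two fibers above gives $\Or_A^{\P}(f)\simeq \Map_{\Alg_{\P}(\LMod_R(\C))}(\Th_G^{\otimes}(f),A)$.

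The main obstacle I anticipate is the bookkeeping in identifying an \emph{invertible} object of the comma-type space $\Pic_G^{\otimes}(R)_{\downarrow A}$ with an object of $B(R_H,A_H)$ — i.e. making precise, in the parametrized monoidal setting, that "grouplike lift of $f$" forces the $A$-adjoint condition levelwise, rather than this being immediate. In case (i) one must be a little careful that $B(R_H,A_H)$ is genuinely a union of path components of $\Pic(R_H)_{\downarrow A_H}$ (closure under equivalence is what does it), and in case (ii) one should make sure the monoidal structure on the comma space restricted to the basepoint component is the one for which the unit is $R_H\to A_H$; both are routine once the statement of \cref{corollary: GL1 Pic(R)A as pullback} and \cref{lemma: GL vs B} are in hand, but they are where the real content of the hypotheses (i) and (ii) is used.
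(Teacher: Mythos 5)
Your proof is correct and follows essentially the same route as the paper: use the universal property of $\Th_G^{\otimes}$ (\cref{theorem: universal property of Th}) to identify $\Map_{\Alg_{\P}(\LMod_R)}(\Th_G^{\otimes}(f),A)$ with the slice mapping space into $\Pic_G^{\otimes}(R)_{\downarrow A}$, and then show that every lift of $f$ automatically factors through $\GL_1(\Pic_G^{\otimes}(R)_{\downarrow A})$. The only difference is that you spell out the levelwise connectedness/grouplikeness argument by hand, whereas the paper observes that, after \cref{lemma: GL vs B}, this levelwise statement is exactly \cite[Lemma 3.15]{antolinbarthel2019thom}.
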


\begin{proof}
    By the universal property (\cref{theorem: universal property of Th}) of $\Th_{G}^{\otimes}$ we have
    \[ \Map_{\Alg_{\P}(\LMod_R(\C))}(\Th_{G}^{\otimes}(f), A) \simeq \Map_{\Mon_{\P}(\Sc)_{/\Pic_{G}^{\otimes}(R)}} \left(X, \Pic_G^{\otimes}(R)_{\downarrow A} \right) \]
    so we need to show that every map $X \to \Pic_{G}^{\otimes}(R)_{\downarrow A}$ factors through $\GL_1(\Pic^{\otimes}_G(R)_{\downarrow A})$ for grouplike $X$. This is a levelwise statement which with \cref{lemma: GL vs B} becomes \cite[Lemma 3.15]{antolinbarthel2019thom}.
\end{proof}

\noindent With this we can give a variation of \cref{corollary: Ind commutes with Th}.

\begin{corollary} \label{corollary: Ind of Th}
    Let $(\O^{\otimes} \otimes \E_1^{\otimes}, \C^{\otimes}, R)$ be a distributive module datum. Let $X$ be a (levelwise) connected pointed $G$-space and consider a map $f\colon X \to \ul{\Pic}_{G}(R)$ of pointed $G$-spaces. Suppose that there is a map $\E_V^{\otimes} \to \O^{\otimes} \otimes \E_1^{\otimes}$. Then, there is a natural equivalence 
    \[ \Ind^{\E_V}_{\E_0}\left(\Th_{G}^{\otimes}(f) \right) \simeq \Th_{G}^{\otimes}\left(\ol{f} \colon \Omega^V \Sigma^V X \to \ul{\Pic}_G(R) \right) \]
    where $\ol{f}$ is the $V$-fold loop map extension \cite[Theorem 3.15]{juran2025genuineequivariantrecognitionprinciple}.
\end{corollary}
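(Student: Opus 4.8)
The plan is to run a Yoneda argument using the universal property of multiplicative parametrized Thom spectra (\cref{theorem: universal property of Th}), exactly as in the proof of \cref{corollary: Ind commutes with Th}, but now exploiting the grouplike hypothesis to identify the relevant mapping spaces with orientation spaces. Fix an orientation datum in which $A \in \Alg_{\E_V}(\LMod_R)$ is arbitrary; it suffices to produce a natural equivalence of mapping spaces
\[
\Map_{\Alg_{\E_V}(\LMod_R)}\bigl(\Ind_{\E_0}^{\E_V}\Th_G^{\otimes}(f),\, A\bigr) \;\simeq\; \Map_{\Alg_{\E_V}(\LMod_R)}\bigl(\Th_G^{\otimes}(\ol f),\, A\bigr).
\]
First I would rewrite the left-hand side using the $\Ind_{\E_0}^{\E_V} \dashv \Res_{\E_0}^{\E_V}$ adjunction (\cref{construction: Ind operads}) as $\Map_{\Alg_{\E_0}(\LMod_R)}(\Th_G^{\otimes}(f), A)$, and then apply the universal property (\cref{theorem: universal property of Th}) to turn this into $\Map_{\Mon_{\E_0}(\Sc)_{/\Pic_G^{\otimes}(R)}}(X, \Pic_G^{\otimes}(R)_{\downarrow A})$.

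The key extra input, compared to \cref{corollary: Ind commutes with Th}, is that since $X$ is levelwise connected, every map $X \to \Pic_G^{\otimes}(R)_{\downarrow A}$ automatically factors through $\GL_1(\Pic_G^{\otimes}(R)_{\downarrow A})$ — this is precisely \cref{lemma: grouplike orientation}(i) (whose proof reduces levelwise to \cite[Lemma 3.15]{antolinbarthel2019thom}). Hence the above mapping space is identified with the orientation space $\Or_A^{\E_0}(f) = \Map_{\Mon_{\E_0}(\Sc)_{/\Pic_G^{\otimes}(R)}}(f, \GL_1(\Pic_G^{\otimes}(R)_{\downarrow A}) \to \Pic_G^{\otimes}(R))$. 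Next I would use the $V$-fold delooping/recognition equivalence of Juran (\cref{thm: recognition}(ii), cited via \cite[Theorem 3.15]{juran2025genuineequivariantrecognitionprinciple}): since $\GL_1(\Pic_G^{\otimes}(R)_{\downarrow A})$ is grouplike (it is a $\GL_1$, so lands in grouplike algebras), the adjunction $\Omega^V\Sigma^V \dashv \Omega^V$ on pointed $G$-spaces gives
\[
\Map_{\Mon_{\E_0}(\Sc)_{/\Pic_G^{\otimes}(R)}}\bigl(X, \GL_1(\Pic_G^{\otimes}(R)_{\downarrow A})\bigr) \simeq \Map_{\Mon_{\E_V}(\Sc)_{/\Pic_G^{\otimes}(R)}}\bigl(\Omega^V\Sigma^V X, \GL_1(\Pic_G^{\otimes}(R)_{\downarrow A})\bigr),
\]
compatibly with the structure maps to $\Pic_G^{\otimes}(R)$, so that $\ol f$ on the right is exactly the $V$-fold loop extension of $f$. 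Reversing the first two steps — applying the universal property \cref{theorem: universal property of Th} again and then adjunction — for the $\E_V$-map $\ol f$ yields $\Map_{\Alg_{\E_V}(\LMod_R)}(\Th_G^{\otimes}(\ol f), A)$, completing the chain.

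The main obstacle is bookkeeping the interaction between the grouplike-recognition adjunction and the relative (over-$\Pic_G^{\otimes}(R)$) monoidal structures: one must check that the $V$-fold loop extension $\ol f$ built abstractly from $f$ via Juran's recognition principle genuinely agrees, as a morphism in $\Mon_{\E_V}(\Sc)_{/\Pic_G^{\otimes}(R)}$, with the map appearing in \cref{corollary: Ind commutes with Th}, and that the factorization through $\GL_1$ is natural in $A$ so that the whole chain assembles into a natural equivalence. Concretely this means unwinding the slice mapping spaces as in the proof of \cref{theorem: universal property of Th} (a pullback-pasting argument) and verifying the factorization at each equivariant level, where it is the classical statement \cite[Lemma 3.15]{antolinbarthel2019thom}. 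Everything else is a formal assembly of results already established in the excerpt; no genuinely new computation is required.
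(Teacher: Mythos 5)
Your proposal is correct and follows essentially the same route as the paper's own proof: a Yoneda argument via the $\Ind_{\E_0}^{\E_V}$ adjunction, the universal property of $\Th_G^{\otimes}$, the connectedness/grouplikeness lemma to land in $\GL_1(\Pic_G^{\otimes}(R)_{\downarrow A})$, and Juran's $\Omega^V\Sigma^V$ universal property on the grouplike target before reversing the steps for $\ol f$. The only cosmetic difference is that you phrase the middle step through the orientation space $\Or_A^{\E_0}(f)$, which is exactly the content of the paper's \cref{lemma: grouplike orientation}.
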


\begin{proof}
    The proof is essentially the same as in \cref{corollary: Ind commutes with Th} besides the new ingredients coming from grouplikeness. We include the full proof for the convenience of the reader. 
    \medskip \\The equivariant approximation theorem \cite[Theorem 3.15]{juran2025genuineequivariantrecognitionprinciple} allows us to define $\ol{f}$ as stated. Let $A \in \Alg_{\E_V}(\LMod_R)$. We wish to employ a Yoneda argument and proceed by using the universal property (\cref{theorem: universal property of Th}). Indeed, we compute
    \begin{align*}
        \Map_{\Alg_{\E_V}(\LMod_R)} \left(\Ind_{\E_0}^{\E_V} \Th^{\otimes}_G(f), A \right) &\simeq \Map_{\Alg_{\E_0}(\LMod_R)}(\Th^{\otimes}_G(f), A)
        \\ &\simeq \Map_{\Mon_{\E_0}(\Sc)_{/\Pic_G^{\otimes}(R)}} \left(X, \GL_1(\Pic_G^{\otimes}(R)_{\downarrow A}) \right)
        \\ &\simeq \Map_{\Mon_{\E_V}(\Sc)_{/\Pic_G^{\otimes}(R)}}\left(\Omega^V \Sigma^V X, \GL_1(\Pic_G^{\otimes}(R)_{\downarrow A}) \right)
        \\ &\simeq \Map_{\Alg_{\E_V}(\LMod_R)}(\Th_G^{\otimes}(\ol{f}), A).
    \end{align*}
    The third equivalence is obtained by writing out the mapping spaces of slice categories and then applying the universal property of $\Omega^V \Sigma^V$ by noting that on the right side we have grouplike objects. The second and fourth equivalence use the grouplike/connectedness condition (\cref{lemma: grouplike orientation}).
\end{proof}

\noindent Furthermore, \cref{lemma: grouplike orientation} gives rise to a simple example, which is a surprisingly useful orientation, see \cref{corollary: lifting idempotents}.

\begin{example} \label{example: id orientation}
    Let $(\O^{\otimes}, \P^{\otimes}, \C, R, A, f)$ be an orientation datum. Suppose that $X^{\otimes}$ is grouplike. Then, $\id_{\Th_G^{\otimes}(f)}\colon  \Th_G^{\otimes}(f) \to \Th_G^{\otimes}(f)$ is a $\P$-$\Th_G^{\otimes}(f)$-orientation of $\Th_G^{\otimes}(f)$.
\end{example}

\begin{theorem} \label{theorem: monoidal thom iso}
    Let $(\O^{\otimes}, \P^{\otimes}, \C, R, A, f)$ be an orientation datum. A $\P$-$f$-orientation of $A$ gives rise to a Thom isomorphism
    \[ A \otimes_R \Th_G^{\otimes}(f) \simeq A \otimes \Sigma_+^{\infty} X^{\otimes} \]
    in $\Alg_{\P}(\LMod_A)$.
\end{theorem}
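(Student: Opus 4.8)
The plan is to deduce the multiplicative Thom isomorphism from the universal property of multiplicative Thom spectra (\cref{theorem: universal property of Th}) together with base change, following the Antolín-Camarena--Barthel strategy in \cite[Theorem 3.22]{antolinbarthel2019thom} but now in the parametrized setting. First I would observe that a $\P$-$A$-orientation of $f$, by \cref{corollary: characterization of orientations}\textcolor{chilligreen}{(ii)}, is precisely a nullhomotopy of the composite $X^{\otimes} \xrightarrow{f} \Pic_G^{\otimes}(R) \xrightarrow{\Ind_R^A} \Pic_G^{\otimes}(A)$ in $\Mon_{\P}(\Sc)$. In other words, after base change along $R \to A$ the map $\Ind_R^A \circ f$ factors (as a $\P$-monoidal map) through the point, i.e.\ it is equivalent to the trivial $\P$-monoidal map $X^{\otimes} \to * \to \Pic_G^{\otimes}(A)$ picking out the unit object $A \in \Pic_G(A)$.

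Next I would compute $A \otimes_R \Th_G^{\otimes}(f)$ by commuting the base change past the Thom construction. The key input is that $\Ind_R^A = A \otimes_R -\colon \LMod_R^G(\C) \to \LMod_A^G(\C)$ is a $\P$-monoidal $G$-left adjoint (\cref{corollary: functoriality of LMod in A}, \cref{prop: categorical properties of LMod}\textcolor{chilligreen}{(ii)}), and $\Th_G^{\otimes}$ is built as a (multiplicative) $G$-colimit extension along the $G$-monoidal Yoneda embedding (\cref{construction: multiplicative thom spectra}); hence the composite $A \otimes_R \Th_G^{\otimes}(f)$ is the multiplicative Thom functor for the target $A$ applied to $\Ind_R^A \circ f$, i.e.\ $A \otimes_R \Th_G^{\otimes}(f) \simeq \Th_G^{\otimes}\!\left(\Ind_R^A \circ f\colon X^{\otimes} \to \Pic_G^{\otimes}(A)\right)$ in $\Alg_{\P}(\LMod_A)$. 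Concretely this is because both sides corepresent the same functor on $\Alg_{\P}(\LMod_A)$: using \cref{theorem: universal property of Th} over the base $A$ together with the adjunction $\Ind_R^A \dashv \Res_R^A$ on algebra categories, $\Map_{\Alg_{\P}(\LMod_A)}(A \otimes_R \Th_G^{\otimes}(f), B) \simeq \Map_{\Alg_{\P}(\LMod_R)}(\Th_G^{\otimes}(f), \Res_R^A B) \simeq \Map_{\Mon_{\P}(\Sc)_{/\Pic_G^{\otimes}(R)}}(X^{\otimes}, \Pic_G^{\otimes}(R)_{\downarrow B})$, and one checks via \cref{lemma: pullback Pic and downarrow} that this is naturally $\Map_{\Mon_{\P}(\Sc)_{/\Pic_G^{\otimes}(A)}}(X^{\otimes}, \Pic_G^{\otimes}(A)_{\downarrow B})$, which by \cref{theorem: universal property of Th} again is $\Map_{\Alg_{\P}(\LMod_A)}(\Th_G^{\otimes}(\Ind_R^A \circ f), B)$.

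Then I would feed in the orientation: since $\Ind_R^A \circ f$ is equivalent, as a $\P$-monoidal map to $\Pic_G^{\otimes}(A)$, to the trivial map picking out the unit, \cref{example: thom along trivial}\textcolor{chilligreen}{(i)} (applied parametrized-monoidally, i.e.\ in the $(\O \otimes \E_1)$-monoidal Thom functor) gives $\Th_G^{\otimes}(\Ind_R^A \circ f) \simeq A \otimes X^{\otimes}$ as $\P$-algebras in $\LMod_A$, where $A \otimes X^{\otimes}$ denotes the free $A$-module $G$-colimit $\ul{\colim}_{X} A$ with its induced $\P$-algebra structure. Stringing the two equivalences together yields $A \otimes_R \Th_G^{\otimes}(f) \simeq A \otimes X^{\otimes}$ in $\Alg_{\P}(\LMod_A)$, as claimed.

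The main obstacle I anticipate is the bookkeeping for the first compatibility, namely that base change along $R \to A$ genuinely commutes with the multiplicative Thom construction at the level of $\P$-algebras, not merely on underlying $G$-spectra. On underlying objects this is immediate from $G$-colimit preservation of $\Ind_R^A$, but promoting it to a $\P$-monoidal equivalence requires either the corepresentability argument sketched above — which leans on \cref{theorem: universal property of Th}, \cref{corollary: functoriality of LMod in A} and \cref{lemma: pullback Pic and downarrow}, so one must check the naturality of all the identifications in $f$ and in the test algebra $B$ — or, alternatively, a direct argument that the operadic left Kan extension defining $\Th_G^{\otimes}$ is compatible with the $\P$-monoidal left adjoint $\Ind_R^A$. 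Both routes are formal given the machinery already developed, but care is needed to keep the operad $\P$, the ambient $(\O \otimes \E_1)$-structure, and the base change functor straight, exactly the kind of confusion warned about in \cref{notation: orientation datum}.
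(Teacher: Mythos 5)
Your proposal is correct and follows essentially the same route as the paper: identify the $\P$-$A$-orientation with a nullhomotopy of $\Ind_R^A \circ f$ via \cref{corollary: characterization of orientations}, commute the base change $A \otimes_R -$ past the multiplicative Thom construction, and then evaluate the Thom functor on the trivial map using \cref{example: thom along trivial}. The paper handles the base-change step with the one-line observation that (operadic) left Kan extensions commute with left adjoints — exactly the "direct argument" you list as your alternative — rather than your corepresentability detour, which would in addition need a mild generalization of \cref{lemma: pullback Pic and downarrow} to allow a general test algebra $B$ in place of $A$.
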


\begin{proof}
    Since left Kan extensions commute with left adjoints, we conclude 
    \[ A \otimes_R \Th_{G}^{\otimes}(f) \simeq \Th_{G}^{\otimes}(A \otimes_R f) \] 
    in $\Alg_{\P}(\LMod_A)$. On the other hand, an orientation is a nullhomotopy of $A \otimes_R f$ (see \cref{corollary: characterization of orientations} \textcolor{chilligreen}{(ii)}). We can write this null map as $\Ind_{\one}^R \circ c$ where $c\colon  X^\otimes \to \Pic_G(\one)$ is the null map. Thus, it is equivalent to 
    \[ A \otimes_R \Th_{G}^{\otimes}(f) \simeq \Th_{G}^{\otimes}(A \otimes_R f) \simeq \Ind_{\one}^A (\one \otimes \Sigma_+^{\infty}X^{\otimes}) \simeq A \otimes \Sigma_+^{\infty}X^{\otimes}. \]
    by \cref{example: thom along trivial}. 
\end{proof}

\begin{corollary} \label{corollary: chadwick-mandell}
    Let $(\O^{\otimes}, \P^{\otimes}, \C, R, A, f)$ be an orientation datum.
    \begin{enumerate}[(i)]
        \item There is a pullback square
        \begin{center}
            \begin{tikzcd}
                \Or_A^{\P}(f) \arrow[r] \arrow[d] \arrow[dr, phantom, very near start, "\lrcorner"] & * \arrow[d, "\const_A"]
                \\ * \arrow[r, "(R \otimes_A -)_* \circ f", swap] & \Map_{\Mon_{\P}(\Sc)}(X^{\otimes}, \Pic_{G}^{\otimes}(A))
            \end{tikzcd}
        \end{center}
        In particular, it is either empty or $\Omega \Map_{\P}(X^{\otimes}, \Pic_{G}^{\otimes}(A))$.
        \item Suppose that $X^{\otimes}$ is furthermore grouplike. Then, $\Map_{\Alg_{\P}(\LMod_R)}(\Th_{G}^{\otimes}(f), A)$ is empty or 
        \[ \Map_{\Alg_{\P}(\LMod_R)}(\Th_{G}^{\otimes}(f), A) \simeq \Map_{\Alg_{\P}(\LMod_R)}(R \otimes \Sigma_+^{\infty} X, A). \]
        This equivalence is also true for parametrized mapping spaces.
    \end{enumerate}
\end{corollary}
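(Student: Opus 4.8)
\textbf{Proof plan for \cref{corollary: chadwick-mandell}.}

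The plan is to deduce both parts formally from the multiplicative Thom isomorphism (\cref{theorem: monoidal thom iso}) together with the adjunction characterizing orientations (\cref{def: orientations}, \cref{corollary: characterization of orientations}). For part (i), I would start from the definition $\Or_A^{\P}(f) = \Map_{\Mon_{\P}(\Sc)_{/\Pic_G^{\otimes}(R)}}(f, \GL_1(\Pic^{\otimes}(R)_{\downarrow A}) \to \Pic_G^{\otimes}(R))$ and unravel the mapping space of the slice category. Using \cref{corollary: GL1 Pic(R)A as pullback}, which identifies $\GL_1(\Pic_G^{\otimes}(R)_{\downarrow A}) \simeq \Pic_G^{\otimes}(R) \times_{\Pic_G^{\otimes}(A)} *$, the orientation space becomes the space of nullhomotopies of the composite $X^{\otimes} \xrightarrow{f} \Pic_G^{\otimes}(R) \xrightarrow{\Ind_R^A} \Pic_G^{\otimes}(A)$ (this is exactly \cref{corollary: characterization of orientations}(ii)). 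Spelling out ``space of nullhomotopies of $g$'' as the pullback of $* \xrightarrow{g} \Map(X^\otimes, \Pic_G^\otimes(A)) \xleftarrow{\const} *$ gives precisely the claimed square, once I note that $\Ind_R^A$ on Picard spaces agrees with $(R \otimes_A -)_*$ — wait, more precisely it is the base-change $\Ind_R^A = A \otimes_R (-)$; I would write the map as $\Ind_R^A \circ f$ to match the earlier notation, and reconcile with the typo ``$(R \otimes_A -)_*$'' in the statement (it should be $\Ind_R^A = A\otimes_R(-)$ postcomposed with $f$). The ``in particular'' clause is the standard fact that a fiber of a based map $* \to Y$ is either empty (no nullhomotopy exists) or, when nonempty, is $\Omega Y$ based at that point.

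For part (ii), assuming $X^{\otimes}$ grouplike, I would first invoke \cref{lemma: grouplike orientation}(ii) to identify $\Or_A^{\P}(f) \simeq \Map_{\Alg_{\P}(\LMod_R)}(\Th_G^{\otimes}(f), A)$, so the emptiness dichotomy from (i) transfers immediately. For the positive case, I would use the adjunction $A \otimes_R (-) \dashv \text{forget}$ between $\Alg_{\P}(\LMod_R)$ and $\Alg_{\P}(\LMod_A)$ to write
\[
\Map_{\Alg_{\P}(\LMod_R)}(\Th_G^{\otimes}(f), A) \simeq \Map_{\Alg_{\P}(\LMod_A)}(A \otimes_R \Th_G^{\otimes}(f), A),
\]
then apply the Thom isomorphism $A \otimes_R \Th_G^{\otimes}(f) \simeq A \otimes X^{\otimes}$ of $\cref{theorem: monoidal thom iso}$ to replace the source by $A \otimes X^{\otimes}$, and finally undo the base-change adjunction in the other direction, landing at $\Map_{\Alg_{\P}(\LMod_R)}(R \otimes X, A)$. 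The statement about parametrized mapping spaces follows by running the same chain of $G$-adjunctions at the level of $\myuline{\Map}$, since $A \otimes_R (-)$ is a $G$-left adjoint (\cref{prop: categorical properties of LMod}) and the Thom isomorphism is an equivalence of $\P$-algebras, hence of underlying $G$-objects.

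The only mild subtlety — and the main thing to get right — is keeping the base rings straight: $\Th_G^{\otimes}(f)$ lives in $\Alg_{\P}(\LMod_R)$ while the Thom isomorphism naturally produces an equivalence in $\Alg_{\P}(\LMod_A)$, so I must be careful that every $\Map$ is taken in the correct category and that the two base-change adjunctions $\Alg_{\P}(\LMod_R) \rightleftarrows \Alg_{\P}(\LMod_A)$ are applied in the right direction; this is bookkeeping rather than a genuine obstacle. I expect no hard step here: part (i) is a direct unravelling of definitions plus \cref{corollary: GL1 Pic(R)A as pullback}, and part (ii) is \cref{theorem: monoidal thom iso} plus \cref{lemma: grouplike orientation} plus the free–forgetful adjunction, exactly as in the non-equivariant Chadwick--Mandell argument of Antol\'in-Camarena--Barthel \cite[Section 3]{antolinbarthel2019thom}.
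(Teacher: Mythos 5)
Your proposal is correct and follows essentially the same route as the paper: part (i) is the paper's pullback-pasting of the slice-category pullback defining $\Or_A^{\P}(f)$ with the pullback from \cref{corollary: GL1 Pic(R)A as pullback}, and part (ii) is exactly \cref{lemma: grouplike orientation} plus the Thom isomorphism (\cref{theorem: monoidal thom iso}) plus the base-change adjunction, which is all the paper's "adjunction argument" amounts to. Your remark on the map labelled $(R \otimes_A -)_*$ is also apt — the intended functor is the base change $\Ind_R^A = A \otimes_R (-)$ appearing in \cref{corollary: characterization of orientations}(ii).
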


\begin{proof}
    \hfill
    \begin{enumerate}[(i)]
        \item This follows immediately from pullback pasting
        \begin{center}
            \begin{tikzcd}
                \Or_A^{\P}(f) \arrow[r] \arrow[d] \arrow[dr, phantom, very near start, "\lrcorner"] & \Map_{\Mon_{\P}(\Sc)}(X^{\otimes}, \GL_1(\Pic^{\otimes}_G(R)_{\downarrow A})) \arrow[r] \arrow[d] \arrow[dr, phantom, very near start, "\lrcorner"] & * \arrow[d, "\const_A"]
                \\ * \arrow[r, "f", swap] & \Map_{\Mon_{\P}(\Sc)}(X^{\otimes}, \Pic_{G}^{\otimes}(R)) \arrow[r, "(R \otimes_A -)_*", swap] & \Map_{\Mon_{\P}(\Sc)}(X^{\otimes}, \Pic_{G}^{\otimes}(A))
            \end{tikzcd}
        \end{center}
        The left square is a pullback by definition of $\Or_A^{\P}(-)$ and the universal property of slice $\infty$-categories. The right square is a pullback by \cref{corollary: GL1 Pic(R)A as pullback}. Thus, the composite rectangle is a pullback.
        \item By \cref{lemma: grouplike orientation} we are analyzing the space of orientations. If there is one, then the monoidal Thom isomorphism (\cref{theorem: monoidal thom iso}) yields $A \otimes_R \Th_{G}^{\otimes}(f) \simeq A \otimes \Sigma_+^{\infty}X$, so an adjunction argument gives the desired mapping space equivalence.
    \end{enumerate}
\end{proof}

\newpage
\part{Applications}
\label{part:applications}
\begin{description}
    \item[\cref{sec:StronglyEvenTowers_CohomologicalSlice}] This section sets up the main organizational and computational prerequisites for our obstruction theory.
    
    In \cref{sec:strongly_even} we review Hill--Meier's notion of strongly even spectra and sharpen some of the basic results.

    In \cref{subsec:strongly_even_filtered} we extend this to the setting of towers of \(G\)-spectra.
    The expert is free to skip this section by agreeing that, under suitable conditions, the limit of a tower of strongly even spectra is strongly even.

    In \cref{sec:cohomological_slice} we study a cohomological version of the equivariant slice tower.
    We start with a quick review of the slice tower; then we set up the cohomological slice tower and finish with providing criteria to check when the cohomological slice tower is strongly even.

    In \cref{subsec:liftingorientations} we produce an obstruction theory for lifting non-equivariant \(\E_{n}\)-orienations to equivariant \(\E_V\)-orientations. This is our main obstruction theory for studying structured orientations of equivariant Thom spectra. The main idea in this section is that: the multiplicative Thom isomorphism, combined with the recognition principle, reduces the study of multiplicative orientations to Bredon computations of deloopings.



    
    \item[\cref{sec: Real orienations}] This section proves our main results. We apply the the obstruction theory developed in the previous sections ot the study of multiplicative Real orientations.

    In \cref{subsec:universal_orientation} we produce \(\E_\rho\)-orientations for strongly even \(\E_{\infty}^{C_2}\)-rings. We do this by producing an example in a versal case through Real Wilson spaces.

    In \cref{subsec:multiplicative_real_orientations}, armed with the existence of orientations from the previous section, we deploy our obstruction theory to prove \cref{mainthm: lifting orientations}, namely that underlying complex orientations $\MU \to E^e$ of strongly even $\E_{\infty}^{C_2}$-ring spectra lift to $\E_{\rho}$-Real orientations $\MU_{\R} \to E$. We use this to enhance various orientations of interest like the Hahn--Shi Real orientations of Lubin--Tate theory. For $C_2 \leq G$ we discuss an $\Coind_{C_2}^G \E_{\rho}$-structure on the normed versions.

    In \cref{subsec:multiplication_on_BPR}, we produce an \(\E_\rho\)-multiplication on \(\BPR\), proving \cref{mainthm:BPR} from the introduction.
    We also produce \(\E_\rho\)-Adams operations on \(\BPR\).
    
    \item[\cref{sec:factorization_and_EV}] This section considers applications of equivariant Thom spectra outside the realm of structured orientations. 

    In \cref{sec:factorization} we prove formulas for the equivariant factorization homology of equivariant Thom spectra. We work in the generality of \(R\)-module Thom spectra. In particular, this specializes to give formulas for relative Real Topological Hochschild Homology \(\THR({-}/R)\). 

    In \cref{sec:EV_quotients} we study \(\E_V\)-quotients via equivariant Thom spectra. In particular we rephrase Levy's equivariant Hopkins--Mahowald Theorem in terms of \(\E_V\)-quotients. 

    In \cref{sec:nilpotence} we prove some basic facts about nilpotence for \(\E_V\)-algebras.
    We show that \(\MUR\) detects nilpotence for \(\E_\sigma\)-rings. We show that \(\uHZ\) detects nilpotence for \(\E_\sigma\otimes \E_\infty\)-rings
\end{description}
\newpage 
\noindent As a guide to \cref{part:applications} we include a sketch proof of \cref{mainthm:BPR}.
Although \cref{mainthm:BPR} follows formally from \cref{mainthm: lifting orientations}, 
exhibiting the proof strategy directly for \cref{mainthm:BPR} helps to quarantine certain technical aspects from the key ideas.
We hope this also acts as a reasonable summary of the parameterized theory from \cref{part:foundations} for the reader who skipped directly to \cref{part:applications}.

\begin{thmG*}[\cref{thm:multiplication_on_BPR}]
    The $C_2$-spectrum $\BP_{\R}$ admits an $\E_{\rho}$-algebra structure.
    Let \(G\geq C_2\). The \(G\)-spectrum \(\BP^{(\!(G)\!)}\coloneqq N^G_{C_2}\BPR\) admits a $\Coind^G_{C_2}\E_{\rho}$-algebra structure
\end{thmG*}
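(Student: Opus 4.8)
The plan is to build $\BPR$ by splitting it off of $\MUR_{(2)}$ via an idempotent, exactly mirroring the classical construction of $\BP$ from $\MU_{(2)}$ via Quillen's idempotent, but now carried out in the world of $\E_\rho$-algebras. First I would recall Araki's Real Quillen idempotent: a homotopy ring map $e\colon \MUR_{(2)}\to\MUR_{(2)}$ whose underlying map is the classical Quillen idempotent $\MU_{(2)}\to\MU_{(2)}$. Since $\MUR_{(2)}$ is a strongly even $\E_\infty^{C_2}$-ring (which I would verify using the strongly even structure on $\MUR$ from \cref{def: strongly even C2} together with the fact that localization preserves strong evenness), \cref{mainthm: lifting orientations}(ii) applies: the underlying $\E_2$-ring map $\MU_{(2)}\to\MU_{(2)}$ — or rather, $e$ viewed as a homotopy ring map $\MU_{(2)}\to (\MUR_{(2)})^e$ — lifts uniquely to an $\E_\rho$-ring map $\tilde e\colon\MUR_{(2)}\to\MUR_{(2)}$.

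The next step is to check that $\tilde e$ is again idempotent, i.e.\ that $\tilde e\circ\tilde e\simeq\tilde e$ as $\E_\rho$-ring maps. Here I would invoke \cref{thm:idempotent_lifting}: the uniqueness clause in \cref{mainthm: lifting orientations}(ii) is the key, since $\tilde e\circ\tilde e$ and $\tilde e$ are two $\E_\rho$-lifts of the same homotopy ring map $e\circ e\simeq e$, hence are equivalent. Once $\tilde e$ is an idempotent in the homotopy category of $\E_\rho$-$\MUR_{(2)}$-algebras — or more precisely in a suitable $\infty$-categorical sense allowing one to split off a retract — one takes the colimit (telescope) of $\tilde e$ and defines $\BPR$ as the resulting $\E_\rho$-ring, an $\E_\rho$-retract of $\MUR_{(2)}$. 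One then checks on underlying spectra that this recovers the usual $\BPR$, which follows because the underlying idempotent is Quillen's and splitting colimits commute with the forgetful functor $\Alg_{\E_\rho}\to\Sp_{C_2}$ (as $\E_\rho$ has a single color and the underlying object of an operadic left Kan extension / colimit is computed underlying, cf.\ \cref{theorem: operadic left Kan extension}).

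For the normed statement, I would use \cref{construction: Coind}: given the $\E_\rho$-algebra $\BPR\in\Alg_{\E_\rho}(\Sp_{C_2})$ and the inclusion $C_2\leq G$, the norm $N_{C_2}^G\BPR$ automatically acquires a $\Coind_{C_2}^G\E_\rho$-algebra structure, since $N_{C_2}^G\colon\Alg_{\mathcal{O}}(\Sp_{C_2})\to\Alg_{\Coind_{C_2}^G\mathcal{O}}(\Sp_G)$ for any singly-$C_2$-colored $\mathcal{O}^\otimes$ (and $\E_\rho$ is singly colored). Applying this to $\mathcal{O}^\otimes=\E_\rho^\otimes$ yields the $\Coind_{C_2}^G\E_\rho$-algebra structure on $\BP^{(\!(G)\!)}=N_{C_2}^G\BPR$. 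It remains only to observe that $N_{C_2}^G$, being a symmetric monoidal functor that commutes with the relevant filtered colimit (the telescope defining $\BPR$) — here I would cite the fact that norms preserve the colimits appearing in idempotent splittings, e.g.\ via the distributivity of $\Sp_G^\otimes$ from \cref{theorem: nardin Sp is distributive} — carries the retract decomposition $\MUR_{(2)}\simeq\BPR\oplus(\ldots)$ to the expected one on $\MU^{(\!(G)\!)}_{(2)}$, so that the underlying $G$-spectrum is the standard $N_{C_2}^G\BPR$.

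The main obstacle I anticipate is the idempotent-splitting step in the $\E_\rho$-setting: one must make precise the sense in which a self-$\E_\rho$-map $\tilde e$ with $\tilde e^2\simeq\tilde e$ admits a splitting as $\E_\rho$-algebras. Classically (following Nikolaus, or the treatment of idempotents via $\mathrm{Idem}$-diagrams) an idempotent endomorphism in an $\infty$-category requires coherence data beyond $\tilde e^2\simeq\tilde e$; the content of \cref{thm:idempotent_lifting} is presumably exactly that the uniqueness in \cref{mainthm: lifting orientations}(ii) supplies this coherence (the space of $\E_\rho$-lifts being contractible, one gets a canonical $\mathrm{Idem}$-diagram). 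Getting this right — and ensuring the resulting splitting is compatible with the underlying classical splitting of $\BP$ off $\MU_{(2)}$ — is where the real care is needed; everything else is formal manipulation of the machinery already developed in Parts~\ref{part:foundations} and~\ref{part:applications}.
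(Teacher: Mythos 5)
Your proposal is correct and follows essentially the same route as the paper: lift the Real Quillen idempotent to an $\E_\rho$-map via \cref{thm: main lifting restated}, obtain $\BPR$ as the ($2$-local) colimit along its iterates, and apply \cref{construction: Coind} for the normed statement. The only divergence is that the paper's proof of \cref{thm:multiplication_on_BPR} sidesteps the coherent-idempotent subtlety entirely by taking the sequential colimit of the lifted map (filtered colimits of $\E_\rho$-algebras are computed on underlying $C_2$-spectra), and where coherence is genuinely needed (\cref{thm:idempotent_lifting}) it is supplied not by contractibility of the space of lifts, as you guess, but by stability of the underlying module category (a homotopy idempotent there extends to an idempotent) together with the fact that the relevant limits and filtered colimits in the algebra category are computed underlying.
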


\begin{proof}[Proof Sketch.]
The $\Coind^G_{C_2}\E_{\rho}$-algebra structure on \(\BP^{(\!(G)\!)}\) follows formally once we produce an $\E_{\rho}$-algebra structure on $\BP_{\R}$ (\cref{construction: Coind}). So we focus on \(\BPR\).

\medskip
\noindent 
The Real Brown--Peterson spectrum \(\BPR\) can be defined as a sequential colimit along iterates of a (Real) equivariant version of Quillen's idempotent \cite[Section 7]{araki1979orientations}.
Since filtered colimits of algebras are computed on underlying, to produce an \(\E_{\rho}\)-algebra structure on \(\BPR\), it suffices to lift Quillen's idempotent to an \(\E_{\rho}\)-algebra map \(\MUR\to \MUR\).

\medskip
\noindent
For the reader unfamiliar with parameterized higher category theory, for the purpose of this sketch, the most important aspect -- aside from the parameterized theory being essential to constructing a multiplicative equivariant Thom isomorphism (\cref{theorem: monoidal thom iso}) -- is that mapping spaces are replaced with mapping \(C_2\)-spaces.
In particular, the space of \(\E_{\rho}\)-algebra maps \(\MUR\to \MUR\) refines to a \(C_{2}\)-space 
\[\myuline{\Map}_{\myuline{\Alg}_{\E_{\rho}}(\myuline{\Sp}_{C_2})}(\MUR,\MUR);\]
the \(C_{2}\)-fixed points is precisely the space of \(\E_{\rho}\)-algebra maps \(\MUR\to\MUR\), i.e.
\[\myuline{\Map}_{\myuline{\Alg}_{\E_{\rho}}(\myuline{\Sp}_{C_2})}(\MUR,\MUR)^{C_2}\simeq {\Map}_{{\Alg}_{\E_{\rho}}(\myuline{\Sp}_{C_2})}(\MUR,\MUR);\]
the underlying non-equivariant space is precisely the space of \(\E_{2}\)-algebra maps \(\MU\to\MU\)
\[\myuline{\Map}_{\myuline{\Alg}_{\E_{\rho}}(\myuline{\Sp}_{C_2})}(\MUR,\MUR)^{e}\simeq {\Map}_{{\Alg}_{\E_{2}}({\Sp})}(\MU,\MU);\]
and the map induced be the inclusion of the fixed points
\[
\myuline{\Map}_{\myuline{\Alg}_{\E_{\rho}}(\myuline{\Sp}_{C_2})}(\MUR,\MUR)^{C_2}\to 
\myuline{\Map}_{\myuline{\Alg}_{\E_{\rho}}(\myuline{\Sp}_{C_2})}(\MUR,\MUR)^{e}
\]
is precisely the restriction map
\[
{\Map}_{{\Alg}_{\E_{\rho}}(\myuline{\Sp}_{C_2})}(\MUR,\MUR)\to {\Map}_{{\Alg}_{\E_{2}}({\Sp})}(\MU,\MU).
\]
This parameterized perspective motivates a natural strategy: lift Chadwick--Mandell's \(\E_2\)-version of Quillen's idempotent \(\MU\to \MU\) \cite[Theorem 1.2]{chadwickmandell} along the above restriction to an \(\E_\rho\)-map \(\MUR\to \MUR\).
In particular, to produce a such a lift, it suffices to show that the restriction map in the Mackey functor \(\myuline{\pi}_{0}\,\myuline{\Map}_{\myuline{\Alg}_{\E_{\rho}}(\myuline{\Sp}_{C_2})}(\MUR,\MUR)\) is a surjective; we will in fact show this is an isomorphism.

\medskip
\noindent
Using the multiplicative parameterized Thom isomorphism (\cref{theorem: monoidal thom iso}), we make this computation more tractable. We have the following equivalences of \(C_2\)-spaces:
\begin{align*}
    \myuline{\Map}_{\myuline{\Alg}_{\E_{\rho}}(\myuline{\Sp}_{C_2})}(\MUR,\MUR) & \simeq \myuline{\Map}_{\myuline{\Alg}_{\E_{\rho}}(\ul{\Sp}_{C_2})}(\Sigma^\infty_+\BUR,\MUR) \\
    & \simeq \myuline{\Map}_{\myuline{\Alg}_{\E_{\rho}}(\myuline{\Spaces_{C_2}})}(\BUR,\Omega^\infty\MUR) \\
    & \simeq \myuline{\Map}_{\myuline{\Alg}^{\grp}_{\E_{\rho}}(\myuline{\Spaces_{C_2}})}(\BUR,\GL_1\MUR).
\end{align*}
The last equivalence uses that \(\BUR\) is grouplike, and the adjunction involving \(\GL_1\) (\cref{prop: inclusion grouplike}). 
The reason this reduction is useful is we can now use equivariant loop space theory and the equivariant recognition principle \cite{Guillou2012EquivariantIL,cnossen2024normedequivariantringspectra,juran2025genuineequivariantrecognitionprinciple}.
We have further equivalences of \(C_2\)-spaces
\begin{align*}
\myuline{\Map}_{\myuline{\Alg}^{\grp}_{\E_{\rho}}(\myuline{\Spaces_{C_2}})}(\BUR,\GL_1\MUR) 
    & \simeq \myuline{\Map}_{\ul{\Spaces}^{C_2}_*}(\mathrm{B}^\rho\BUR,\mathrm{B}^\rho\GL_1\MUR) \\
    & \simeq \myuline{\Map}_{\ul{\Sp}_{C_2}}(\Sigma^\infty \mathrm{B}^\rho\BUR,\Sigma^\rho\gl_1\MUR) \\
    & \simeq \myuline{\Map}_{\ul{\Sp}_{C_2}}(\Sigma^\infty \BSUR,\Sigma^\rho\gl_1\MUR). 
\end{align*}
The first equivalence is the equivariant recognition principle (\cref{thm: recognition}). 
For the second, we use the equivalence between equivariant infinite loop spaces and connective equivariant spectra (\cref{thm: recognition}).
The third equivalence uses equivariant Bott periodicity (\cref{example: equivariant Bott periodicity ku}). 

\medskip
\noindent
In summary, we have reduced the problem to computing the \(\gl_1\MUR\)-cohomology of \(\BSUR\).
More precisely, to produce an \(\E_\rho\)-lift of Quillen's idempotent, it suffices to show that the restriction map 
\[
\res_e^{C_2}\colon \pi^{C_2}_{0}(\myuline{\map}_{\ul{\Sp}_{C_2}}(\Sigma^\infty \BSUR,\Sigma^{\rho}\gl_{1}\MUR))\to
\pi^{e}_{0}(\myuline{\map}_{\ul{\Sp}_{C_2}}(\Sigma^\infty \BSUR,\Sigma^{\rho}\gl_{1}\MUR))
\]
is surjective.
However, it is no harder, and in fact it is more suggestive of a strategy, to show that
the restriction maps
\[
\res_e^{C_2} \colon \pi^{C_2}_{*\rho}(\myuline{\map}_{\ul{\Sp}_{C_2}}(\Sigma^\infty \BSUR,\Sigma^{\rho}\gl_{1}\MUR))\to
\pi^{e}_{2*}(\myuline{\map}_{\ul{\Sp}_{C_2}}(\Sigma^\infty \BSUR,\Sigma^{\rho}\gl_{1}\MUR))
\]
are isomorphisms;
it suggests to approach this by showing \(\myuline{\map}_{\ul{\Sp}_{C_2}}(\Sigma^\infty \BSUR,\Sigma^{\rho}\gl_{1}(\MUR))\) is a strongly even \(C_2\)-spectrum in the sense of Hill--Meier (\cref{def: strongly even C2}).

\medskip
\noindent
Our goal is to reduce the claim that \(\myuline{\map}_{\ul{\Sp}_{C_2}}(\Sigma^\infty \BSUR,\Sigma^{\rho}\gl_{1}(\MUR))\) is strongly even to a combination known computations, namely: the fact that \(\MUR\) is strongly even; and that the Bredon cohomology of \(\BSUR\) is strongly even.
The two ideas we use to achieve this reduction are: setting up a cohomological version of the slice tower (\cref{sec:cohomological_slice}); and extending the Hill--Meier's notion of strongly even \(C_2\)-spectra to the setting of filtered, or equivalently towers of \(C_2\)-spectra (\cref{def: strongly even towers}).

\medskip
\noindent
The cohomological slice tower is a cohomogical version of the homological slice tower studied in Carrick--Hill--Ravenel \cite{carrick2024homologicalslice}. For \(\myuline{\map}_{\ul{\Sp}_{C_2}}(\Sigma^\infty \BSUR,\Sigma^\rho\gl_1\MUR)\), the corresponding cohomological slice tower (\cref{construction: cohomological slice tower}) is 
\[
\CST(\Sigma^{\infty} \BSUR; \Sigma^{\rho}\gl_1 \MUR) = \myuline{\map}_{\ul{\Sp}_{C_2}}(\Sigma^\infty \BSUR,P^{\leq \bullet} \Sigma^\rho\gl_1\MUR).
\]
By construction, the associated graded of this tower compute Bredon cohomology with coefficients in the slices of \(\MUR\). By extending Hill--Meier's notion of strongly evenness to towers of \(C_2\)-spectra (\cref{def: strongly even towers}), we show that, under certain evenness conditions, the limit of a tower of \(C_2\)-spectra is strongly even if its associated graded pieces are strongly even (\cref{prop:strongly_even_c2_tower}). Hence, applied to the cohomological slice tower, we indeed achieve the desired reduction and learn that \(\myuline{\map}_{\ul{\Sp}_{C_2}}(\Sigma^\infty \BSUR,\Sigma^{\rho}\gl_{1}(\MUR))\) is strongly even.

\medskip
\noindent
Thus, we have sketched that the restriction
\[
\pi^{C_2}_{*\rho} \myuline{\Map}_{\myuline{\Alg}_{\E_{\rho}}(\myuline{\Sp}_{C_2})}(\MUR,\MUR) \to 
\pi^e_{2*} \myuline{\Map}_{\myuline{\Alg}_{\E_{\rho}}(\myuline{\Sp}_{C_2})}(\MUR,\MUR)
\]
is an isomorphism, whence producing an \(\E_\rho\)-lift of Quillen's idempotent.
\end{proof}

\section{Strongly Even Towers, the Cohomological Slice Tower \& Lifting Orientations}\label{sec:StronglyEvenTowers_CohomologicalSlice}
The essential idea in this section is the following: In Real oriented homotopy theory, Hill--Meier's notion of strong eveness allows many (Real) equivariant statements to be deduced from underlying non-equivariant statements; by extending Hill--Meier's notion to the setting of towers of spectra, we can reduce equivariant obstruction theory to ordinary obstruction theory.
\medskip \\Although elementary, we expect the results here to be of independent interest, so we develop more than is strictly needed to prove \cref{mainthm:BPR} and \cref{mainthm: lifting orientations}.

\medskip 
\noindent
In the case one is only interested in \cref{mainthm:BPR} and \cref{mainthm: lifting orientations}, the expert is free to skip this section by agreeing that $\ul{\map}_{\ul{\Sp}_{C_2}}(X, E)$ is strongly even when: \(E\) is strongly even, and slice bounded below; and \(\uHZ \otimes X\) is of finite type, is \(\uHZ\)-free, and has a basis given by spheres in dimensions \(n\rho\) (see \cref{prop:checkingSAHSS}). In particular, the restriction map
\[
    \res_e^{C_2}\colon  \pi^{C_2}_{*\rho}(\myuline{\map}_{\ul{\Sp}_{C_2}}(X,E))\to \pi_{2*}(\map_{\Sp}(X,E))
    \]
is an equivalence in this setting.

\subsection{Strongly Even Spectra}\label{sec:strongly_even}
Evenness plays a key role in chromatic homotopy theory.
Since \(\CP^\infty\) is built from even cells \(S^{2n}\), there is no obstruction to building complex orientations for even ring spectra. 
Hill--Meier \cite{hillmeier2017} introduced the following notion of equivariant evenness suited for Real oriented homotopy theory.

\begin{definition}[Hill--Meier]
    Let \(X \in \Sp_{C_2} \). It is said to be \tb{Real even}\footnote{Hill--Meier simply use the term even; we add the prefix Real to avoid overloading the term even.}
    if \(\pi^{C_2}_{n\rho-1}(X)\) and \(\pi^{e}_{2n-1}(X)\) are both zero for all \(n\).
\end{definition}

\noindent Let \(\CPR^\infty\) denote \(\CP^\infty\) equipped with the \(C_2\)-action by complex conjugation.
In the same way that complex orientations are determined by \(\CP^\infty\), Real orientations are determined by \(\CPR^\infty\). 
Since \(\CPR^\infty\) is built out of cells of the form \(S^{n\rho}\), the following is can be deduced from obstruction theory analogous to the classical statement.

\begin{lemma}[{\cite[Lemma 3.3]{hillmeier2017}}]
    Let \(X\) be a Real even \(C_2\)-spectrum.
    Then \(X\) admits a Real orientation. 
\end{lemma}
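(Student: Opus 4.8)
The plan is to mimic the classical argument that even ring spectra admit complex orientations, replacing $\CP^\infty$ by $\CPR^\infty$ and the sphere $S^2$ by the regular representation sphere $S^\rho$. Recall that $\CPR^\infty = \BUR(1)$ admits a $C_2$-CW structure with one cell in each dimension $n\rho$ for $n \geq 0$; equivalently, its cellular filtration has subquotients $S^{n\rho}$, and $\CPR^1 \simeq S^\rho$. A Real orientation of $X$ is by definition a class $\bar x \in \widetilde X^\rho(\CPR^\infty)$ restricting to a generator of $\widetilde X^\rho(\CPR^1) \cong \pi^{C_2}_{-\rho + \rho}\ldots$; more precisely, it is a class whose restriction to $\CPR^1 = S^\rho$ is a unit in $\pi_0^{C_2}(X)$ (after the identification $\widetilde X^\rho(S^\rho) \cong \pi_0^{C_2}(X)$), and which is compatible with the complex orientation story under restriction to the underlying spectrum.

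First I would set up the obstruction-theoretic spectral sequence (or just the tower of long exact sequences) for computing $\widetilde X^\rho(\CPR^\infty)$ from the skeletal filtration of $\CPR^\infty$. The relevant Atiyah--Hirzebruch-type spectral sequence has $E_1$-page built from $\widetilde X^\rho(S^{n\rho}) \cong \pi^{C_2}_{(n-1)\rho}(X)$ for the cells in dimension $n\rho$. The class we want to construct lives in filtration corresponding to the bottom cell $\CPR^1 = S^\rho$; we wish to lift a chosen generator $u \in \widetilde X^\rho(S^\rho) \cong \pi_0^{C_2}(X)^\times$ (say $u = 1$, or any unit) up the tower. The obstruction to extending a partial lift from the $n\rho$-skeleton to the $(n+1)\rho$-skeleton lies in a group of the form $\widetilde X^{\rho}(S^{(n+1)\rho+1}) \cong \pi^{C_2}_{(n+1)\rho \ldots}$, which I would identify — using that attaching maps of $\CPR^\infty$ contribute a shift by one in the $\rho$-grading of the cells — as landing in $\pi^{C_2}_{m\rho - 1}(X)$ for appropriate $m$, together with its underlying counterpart in $\pi^e_{2m-1}(X)$ via the restriction to the non-equivariant skeletal filtration of $\CP^\infty$.

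The key point is then that Real evenness of $X$ — the vanishing of $\pi^{C_2}_{n\rho - 1}(X)$ and $\pi^e_{2n-1}(X)$ for all $n$ — makes every one of these obstruction groups zero, and moreover makes the relevant indeterminacy groups ($\lim^1$ terms, or the groups receiving the obstruction to uniqueness of lifts) vanish as well. Hence the partial lifts extend inductively over all skeleta, and since $\CPR^\infty$ is the colimit of its skeleta and the obstruction to passing to the limit is a $\lim^1$ of groups that are again of the vanishing form, we obtain a genuine class $\bar x \in \widetilde X^\rho(\CPR^\infty)$ restricting to $u$ on $\CPR^1$. One then checks that this $\bar x$ is a Real orientation in the sense required (restriction to a unit on the bottom cell; compatibility on underlying homotopy is automatic since $\bar x$ restricts under $\res^{C_2}_e$ to a class on $\CP^\infty$ restricting to a unit on $\CP^1$, i.e.\ a complex orientation of $X^e$).

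\textbf{Main obstacle.} The genuinely delicate part is bookkeeping the $RO(C_2)$-grading: one must verify that the cells of $\CPR^\infty$ and their attaching maps interact with the grading so that all obstruction and indeterminacy groups really do fall into the two families $\pi^{C_2}_{*\rho - 1}$ and $\pi^e_{2*-1}$ controlled by the Real evenness hypothesis, rather than leaking into other $RO(C_2)$-degrees where $X$ need not vanish. This is exactly the content that makes $\CPR^\infty$ (as opposed to, say, $\CPR^\infty_{h C_2}$ or some other model) the correct space to use, and it is why the cellular structure with cells precisely in degrees $n\rho$ is essential. Everything else is a routine transcription of the non-equivariant obstruction-theory argument for complex orientations, carried out levelwise and compatibly with restriction.
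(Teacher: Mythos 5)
Your argument is exactly the one the paper has in mind: it gives no proof of its own beyond the remark that, since $\CPR^\infty$ is built from cells $S^{n\rho}$, the statement follows by obstruction theory analogous to the classical case (citing Hill--Meier, Lemma 3.3), and your skeletal-filtration induction with obstruction groups $\pi^{C_2}_{*\rho-1}(X)$ and $\pi^e_{2*-1}(X)$ is precisely that argument. The only small inaccuracy is your claim that the relevant $\lim^1$ groups vanish: they need not, but this is harmless, since the Milnor sequence already makes $\widetilde X^{\rho}(\CPR^\infty)\to \lim_n \widetilde X^{\rho}(\CPR^n)$ surjective, so the compatible system of classes on the skeleta assembles without any further hypothesis.
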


\noindent Hill--Meier also introduced a stronger notion of Real evenness, called strong evenness.
The main motivation for this definition is that, in examples of interest, often the \(*\rho\)-graded homotopy groups contain more relevant information than the integer graded homotopy groups.
For example: for \(\KR\), the equivariant Bott class lives in degree \(\rho\); for \(\MUR\), the Lazard ring lives in degrees \(*\rho\).
\begin{definition}[Hill--Meier]
    Let \label{def: strongly even C2} \(X \in \Sp_{C_2}\). We say that \(X\) is \tb{strongly even}\footnote{We were strongly tempted to instead use the term \emph{Really even}.}
    if the following conditions are met:
    \begin{enumerate}[(i)]
        \item We have both \(\pi^e_{2*-1}(X)=0\) and \(\pi^{C_2}_{*\rho-1}(X)=0\), i.e. \(X\) is Real even.
        \item The restriction maps \(\res_e^{C_2}: \pi^{C_2}_{*\rho}(X)\to \pi^{e}_{2*}(X)\) are isomorphisms.
    \end{enumerate}
    If $X$ naturally admits an underlying $C_2$-spectrum,\footnote{For example, if $X$ is an algebra or a module.} then we say that $X$ is \tb{strongly even} if the underlying $C_2$-spectrum is.
\end{definition}
\begin{example}
    All of the \(C_2\)-spectra $\mathrm{H} \ul{\Z}$, \(\MUR\), \(\kR\), \(\tmf_1(n)\), \(\BPR\), \(\BPR\langle n\rangle\), and the Lubin--Tate theories \(E_n\) are strongly even, see \cite{HuKrizReal,hillmeier2017,Greenlees_2017,hahnRealOrientationsLubin2020}.
\end{example}

\noindent One of the main benefits of working with strongly even spectra is that many equivariant statements can then be deduced from the non-equivariant counterparts.
For example, the following is particularly useful. 

\begin{lemma}[{\cite[Lemma 3.4]{hillmeier2017}}]
    If \(f\colon X\to Y\) is a map between strongly even spectra such that \(f^e\colon X^e\to Y^e\) is an equivalence, then \(f\) is an equivalence.
\end{lemma}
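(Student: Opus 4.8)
The statement to prove is: if $f\colon X \to Y$ is a map between strongly even $C_2$-spectra such that $f^e\colon X^e \to Y^e$ is an equivalence, then $f$ is an equivalence. The plan is to reduce the problem to a homotopy-group computation using the standard fact that a map of $C_2$-spectra is an equivalence if and only if it induces isomorphisms on both underlying homotopy groups $\pi^e_*$ and genuine homotopy groups $\pi^{C_2}_*$; by hypothesis the former is already taken care of, so everything rests on showing $f$ induces an isomorphism on $\pi^{C_2}_*$.

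First I would reduce from integer-graded genuine homotopy groups to $\rho$-graded ones. Since strong evenness controls $\pi^{C_2}_{*\rho}$ and $\pi^{C_2}_{*\rho - 1}$, I want to argue that the integer-graded genuine homotopy groups of a strongly even spectrum are determined by the $*\rho$-graded ones. Concretely, for a strongly even $C_2$-spectrum $X$, condition (i) gives $\pi^{C_2}_{n\rho - 1}(X) = 0$ and condition (ii) identifies $\pi^{C_2}_{n\rho}(X) \cong \pi^e_{2n}(X)$ via restriction. The integer-graded groups $\pi^{C_2}_k(X)$ for $k$ not of the form $n\rho$ or $n\rho - 1$ are not directly constrained, so the cleanest route is instead to observe that to check $f$ is an equivalence it suffices to check $\pi^{C_2}_{\star}(f)$ is an isomorphism for $\star$ ranging over \emph{all} elements of $RO(C_2)$, and then single out the virtual representations of the form $n\rho$ and $n\rho - 1$. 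In fact, the most efficient approach: a map of $C_2$-spectra $f$ is an equivalence iff its geometric fixed points $\Phi^{C_2} f$ and its underlying map $f^e$ are both equivalences. So I would try to show $\Phi^{C_2} f$ is an equivalence.

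The key step is then to relate $\Phi^{C_2}$ of a strongly even spectrum to its $*\rho$-graded homotopy, or more directly to run the argument through the isotropy separation sequence. There is an isotropy separation (Tate square / fracture) cofiber sequence relating $X^{hC_2}$, $X^{tC_2}$, and $\Phi^{C_2} X$, together with $X_{hC_2} \to X^{hC_2} \to X^{tC_2}$. Since $f^e$ is an equivalence, the maps $X_{hC_2} \to Y_{hC_2}$, $X^{hC_2} \to Y^{hC_2}$, and $X^{tC_2} \to Y^{tC_2}$ are all equivalences (homotopy orbits, homotopy fixed points, and Tate construction only depend on the underlying spectrum with its $C_2$-action, and $f$ is $C_2$-equivariant). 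Therefore the map $\Phi^{C_2} X \to \Phi^{C_2} Y$ is an equivalence as well, being the cofiber of an equivalence of cofiber sequences — wait, that is not quite it, since $\Phi^{C_2} X$ sits in $X^{C_2} \to X^{hC_2} \to (\text{something built from } \Phi^{C_2})$; more precisely the key input is that $X^{C_2} \to X^{hC_2}$ has cofiber expressible via $\Phi^{C_2} X$ and its Tate construction. The honest statement is: the genuine fixed points $X^{C_2}$ fit in a pullback square with $X^{hC_2}$, $\Phi^{C_2} X$, $(\Phi^{C_2} X)^{tC_2}$-type terms (the Tate diagram), and $f^e$ being an equivalence forces all the ``Borel'' corners to match, so $X^{C_2} \to Y^{C_2}$ is an equivalence iff $\Phi^{C_2} X \to \Phi^{C_2} Y$ is.

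So the real content is: \emph{for strongly even $X$, $\Phi^{C_2} X$ is detected by — or at least its equivalence class is pinned down by — the underlying data together with strong evenness.} This is where strong evenness does its work: the restriction isomorphism $\pi^{C_2}_{*\rho}(X) \xrightarrow{\cong} \pi^e_{2*}(X)$ and the vanishing $\pi^{C_2}_{*\rho - 1}(X) = 0$ force $\pi^{C_2}_{*\rho}(X) \to \pi^{C_2}_{*\rho}(Y)$ to be an isomorphism (since it is compatible with the restriction isos and $\pi^e_{2*}(f)$ is an iso), and then an induction up the $\rho$-cellular structure — using that $\CPR^\infty$-type building blocks and more generally that strongly even spectra are assembled from $C_2$-cells of the form $S^{n\rho}$ in the appropriate sense, or alternatively a direct Atiyah–Hirzebruch / slice spectral sequence argument — upgrades the $*\rho$-graded isomorphism to a $\pi^{C_2}_\star$-isomorphism for all $\star$. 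I expect \textbf{the main obstacle} to be making this last induction fully rigorous without circularity: strong evenness is a statement about homotopy groups in the specific gradings $*\rho$ and $*\rho - 1$, and one must genuinely leverage the fact that (by Hill--Meier's earlier lemma) a strongly even spectrum is Real even hence Real orientable, and that Real even spectra have homotopy Mackey functors concentrated in the "even" $RO(C_2)$-degrees so that a Whitehead-style argument on the Postnikov/slice tower closes. The cleanest writeup is probably: invoke the slice spectral sequence (or the fact, provable from Real evenness, that $X$ is built from slices $S^{n\rho} \wedge H\underline{M}$ with $M$ constant), note $f$ induces a map of such towers which is a levelwise equivalence because on associated graded it is controlled by $\pi^{C_2}_{*\rho}(f)$ and $\pi^e_{2*}(f)$, both isomorphisms, and conclude by convergence. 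I would present the geometric-fixed-points route as the main line and mention the slice-tower route as the alternative that makes the role of strong evenness most transparent.
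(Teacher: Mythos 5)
First, note that the paper does not prove this statement at all: it is quoted directly from Hill--Meier, so there is no in-paper argument to match; your attempt has to stand on its own. As written it does not. Your main line correctly reduces the problem, via the Tate square, to showing that \(\Phi^{C_2}f\) is an equivalence (all Borel-type corners only depend on \(f^e\)), but that is exactly where the content lies and you never actually bring strong evenness to bear on it: since \(\pi_*\Phi^{C_2}X \cong \operatorname*{colim}_n \pi^{C_2}_{*-n\sigma}(X)\) along multiplication by \(a_\sigma\), controlling \(\Phi^{C_2}f\) requires knowing \(\pi^{C_2}_{a+b\sigma}(f)\) in degrees far from the \(\rho\)-diagonal, which is essentially the original problem. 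Your fallback claims do not close this gap: strong evenness does \emph{not} imply that \(X\) is built from cells \(S^{n\rho}\) (that is an unrelated and generally false cellularity statement), ``homotopy Mackey functors concentrated in even \(RO(C_2)\)-degrees'' is not a meaningful condition, and the slice-tower route, while it does give that \(f\) is an equivalence on all slices (odd slices vanish, even slices are \(\Sigma^{n\rho}\mathrm{H}\myuline{\pi}_{n\rho}\) and \(\myuline{\pi}_{n\rho}(f)\) is an isomorphism), needs a base case \(P^{\leq n}X\simeq 0\simeq P^{\leq n}Y\) for \(n\ll 0\), i.e.\ bounded-belowness, to promote a levelwise-graded equivalence of towers to an equivalence of limits. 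Strongly even spectra need not be bounded below (e.g.\ \(\KR\), \(E_n\)), so this route as sketched does not prove the lemma in the stated generality.

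The actual argument (Hill--Meier's) is elementary and avoids all of this machinery. Since restriction \(\pi^{C_2}_{*\rho}\to\pi^e_{2*}\) is an isomorphism for both \(X\) and \(Y\) and \(\pi^e_*(f)\) is an isomorphism, \(\pi^{C_2}_{*\rho}(f)\) is an isomorphism; by the gap (\cref{lem:GreenleesGap}) both sides vanish in degrees \(*\rho-1,*\rho-2,*\rho-3\), so \(f\) is trivially an isomorphism there too. Now use the cofiber sequence \(C_{2+}\to S^0\to S^\sigma\) -- the same one the paper uses in \cref{lem:C2_stronger_gap} -- which for each \(V=a+b\sigma\) gives a long exact sequence relating \(\pi^{C_2}_{V+\sigma}\), \(\pi^{C_2}_{V}\), and \(\pi^e_{a+b}\). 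Applying the five lemma to the map of these long exact sequences and inducting on the ``distance from the diagonal'' \(b-a\) (in both directions, starting from the band where isomorphisms are already known) shows \(\pi^{C_2}_{a+b\sigma}(f)\) is an isomorphism for all \(a,b\); in particular \(\pi^{C_2}_n(f)\) is an isomorphism for every integer \(n\), which together with the underlying equivalence gives that \(f\) is an equivalence. I'd recommend rewriting your proof along these lines rather than via \(\Phi^{C_2}\) or the slice tower.
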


\noindent A notable feature of strongly even spectra is that they all poses a `gap' in their \(\operatorname{RO}(C_2)\)-graded homotopy groups. Greenlees showed this in fact provides an equivalent reformulation of strong eveness \cite{greenleesfour}.
\begin{lemma}[Greenlees' gap characterization]\label{lem:GreenleesGap}
    Let \(X \in \Sp_{C_2}\). The following are equivalent.
    \begin{enumerate}[(i)]
        \item \(X\) is strongly even.
        \item \(\pi^{C_2}_{*\rho-i}(X)=0\) for \(i=1\), \(2\), \(3\).
    \end{enumerate}
\end{lemma}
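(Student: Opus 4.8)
The plan is to prove the equivalence $(i)\Leftrightarrow(ii)$ of Greenlees' gap characterization by leveraging the isomorphism $\Res^{C_2}_e\colon \pi^{C_2}_{n\rho}(X)\to \pi^e_{2n}(X)$ built into strong evenness, together with the isotropy separation (Tate) square to convert between Borel-type data and genuine homotopy groups. The direction $(i)\Rightarrow(ii)$ is the easy one: if $X$ is strongly even then by definition $\pi^{C_2}_{*\rho-1}(X)=0$, so it remains to kill $\pi^{C_2}_{*\rho-2}(X)$ and $\pi^{C_2}_{*\rho-3}(X)$. First I would recall that $n\rho - 2 = (n-1)\rho + \sigma - 1$ and $n\rho-3 = (n-1)\rho + 2\sigma - 2 - 1$ — more usefully, that $\rho = 1+\sigma$, so $n\rho - i$ for $i=1,2,3$ sits in a band between $(n-1)\rho$ and $n\rho$ in the $\mathrm{RO}(C_2)$-plane. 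The clean way to organize this is via the cofiber sequences $(C_2)_+ \to S^0 \to S^\sigma$ and its suspensions, which give long exact sequences relating $\pi^{C_2}_{\star}(X)$, $\pi^{C_2}_{\star+\sigma}(X)$, and $\pi^e_{\star}(X)$ (the underlying groups appear because $\pi^{C_2}_\star((C_2)_+\otimes X)\cong \pi^e_{|\star|}(X)$). Chasing these, the vanishing of $\pi^e_{2n-1}(X)$ and $\pi^e_{2n}(X)\cong \pi^{C_2}_{n\rho}(X)$ (the latter via the restriction isomorphism) forces the three ``gap'' groups to vanish.

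Concretely, for $(i)\Rightarrow(ii)$: Smashing $X$ with the cofiber sequence $S^{-\sigma}\to S^0 \to (C_2)_+$ (a rotation of $(C_2)_+\to S^0\to S^\sigma$) and taking $\pi^{C_2}_{n\rho - 1}$ gives an exact sequence
\[
\pi^{C_2}_{n\rho-1+\sigma}(X)\to \pi^{C_2}_{n\rho-1}(X) \to \pi^e_{n\rho-1|}(X) = \pi^e_{2n-1}(X).
\]
Wait — I need to be careful with degrees: I would instead smash with $(C_2)_+\to S^0\to S^\sigma$ and examine $\pi^{C_2}_{n\rho-i}$ for $i=1,2$, producing
\[
\pi^e_{2n-i}(X)\to \pi^{C_2}_{n\rho-i}(X)\to \pi^{C_2}_{n\rho-i-\sigma}(X) \to \pi^e_{2n-i-1}(X).
\]
Since $n\rho - 2 - \sigma = (n-1)\rho - 1$ and $n\rho-1-\sigma = (n-1)\rho$, and since $\pi^e_{2n-1}(X)=\pi^e_{2n-2}(X)=0$ by Real evenness while $\pi^{C_2}_{(n-1)\rho-1}(X)=0$ (Real evenness again) and $\pi^{C_2}_{(n-1)\rho}(X)\cong\pi^e_{2n-2}(X)=0$ — hmm, that's not zero in general, so I'd actually conclude $\pi^{C_2}_{n\rho-2}(X)$ is sandwiched between $\pi^e_{2n-2}(X)=0$ on the left and $\pi^{C_2}_{(n-1)\rho-1}(X)=0$ on the right, hence vanishes; and similarly an adjacent segment of the same long exact sequence controls $\pi^{C_2}_{n\rho-3}(X)$. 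I would write out the two or three relevant segments explicitly and read off the vanishing; this is a short diagram chase once the substitutions $\rho=1+\sigma$ are made, and it is exactly the computation Greenlees carries out in \cite{greenleesfour}.

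For the converse $(ii)\Rightarrow(i)$: assuming $\pi^{C_2}_{*\rho-i}(X)=0$ for $i=1,2,3$, the $i=1$ case gives Real evenness on the equivariant side immediately. To get $\pi^e_{2*-1}(X)=0$ and the restriction isomorphism, I would again use the cofiber sequence $(C_2)_+\to S^0 \to S^\sigma$ smashed with $X$: the long exact sequence in $\pi^{C_2}_{n\rho-1}$ reads
\[
\pi^{C_2}_{n\rho-1}(X)\to \pi^{C_2}_{n\rho-1-\sigma}(X) \to \pi^e_{2n-2}(X) \to \pi^{C_2}_{n\rho-2}(X).
\]
Since $n\rho-1-\sigma = (n-1)\rho$, the first map has source $0$ (by $(ii)$ with $i=1$) and the last group is $0$ (by $(ii)$ with $i=2$), so $\pi^{C_2}_{(n-1)\rho}(X)\cong\pi^e_{2n-2}(X)$, which is the restriction isomorphism after reindexing. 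To obtain $\pi^e_{\mathrm{odd}}(X)=0$, a neighbouring segment of the same sequence gives $\pi^e_{2n-1}(X)$ trapped between $\pi^{C_2}_{n\rho-1}(X)=0$ and $\pi^{C_2}_{n\rho - 2}(X) = 0$ — actually between the image of $\pi^{C_2}_{n\rho}(X)\to\pi^{C_2}_{n\rho-\sigma}(X)$ and $\pi^{C_2}_{(n-1)\rho+1-1}$; I would identify the correct adjacent terms ($n\rho-1-\sigma$ and $n\rho-2-\sigma$ substitute to $(n-1)\rho$ and $(n-1)\rho-1$) and use the $i=1$ hypothesis at level $n-1$ to squeeze it to zero, completing the equivalence.

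\textbf{Main obstacle.} The only real subtlety is bookkeeping in the $\mathrm{RO}(C_2)$-plane: making sure each ``$n\rho - i$'' and each ``$n\rho - i -\sigma$'' is rewritten correctly as $(n-1)\rho + (\text{small})$ so that the hypotheses at levels $n$ and $n-1$ can be invoked, and confirming the connecting maps land where claimed. This is purely a matter of carefully tracking suspensions through the long exact sequences associated to $(C_2)_+\to S^0\to S^\sigma$ (equivalently, the isotropy separation sequence); no deep input beyond the definitions and this single cofiber sequence is needed. I would present it as two short labeled diagram chases, citing \cite{greenleesfour} for the original argument.
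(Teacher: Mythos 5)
The paper itself does not give this chase — it simply cites Greenlees' \cite[Lemma 1.2]{greenleesfour} and notes the definitional discrepancy — so you are attempting more than the paper does, and your overall strategy (the long exact sequence coming from $C_2/e_+\to S^0\to S^\sigma$) is indeed the right tool; your identification of the restriction isomorphism in the (ii)$\Rightarrow$(i) direction is correct. However, there are two genuine gaps. First, in (i)$\Rightarrow$(ii) you assert ``$\pi^e_{2n-2}(X)=0$ by Real evenness'' and let the vanishing of $\pi^{C_2}_{n\rho-2}(X)$ rest on it; this is false ($2n-2$ is even, and Real evenness only kills odd underlying groups), and you even notice the problem (``hmm, that's not zero in general'') without repairing it. The correct chase through the segment
\[
\pi^{C_2}_{n\rho-1}(X)\to \pi^{C_2}_{(n-1)\rho}(X)\xrightarrow{\res^{C_2}_e}\pi^e_{2n-2}(X)\xrightarrow{\partial}\pi^{C_2}_{n\rho-2}(X)\to \pi^{C_2}_{(n-1)\rho-1}(X)
\]
uses the \emph{surjectivity} of $\res^{C_2}_e$ (part of strong evenness) to force $\partial=0$, and then the injection of $\pi^{C_2}_{n\rho-2}(X)$ into $\pi^{C_2}_{(n-1)\rho-1}(X)=0$; the $i=3$ case then follows from the $i=2$ case at level $n-1$ together with $\pi^e_{2n-3}(X)=0$, a step you only gesture at.

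Second, in (ii)$\Rightarrow$(i) your argument for $\pi^e_{2n-1}(X)=0$ is off: the flanking terms of $\pi^e_{2n-1}(X)$ in the relevant segment are $\pi^{C_2}_{n\rho-1}(X)$ and $\pi^{C_2}_{(n+1)\rho-3}(X)$, so you genuinely need the $i=3$ hypothesis (at level $n+1$), not ``the $i=1$ hypothesis at level $n-1$'' applied to $(n-1)\rho$ and $(n-1)\rho-1$ as you propose — with only $i=1,2$ one cannot conclude, which is exactly why $i=3$ appears in the statement. Both issues are repairable within your framework, but as written the bookkeeping you flagged as the ``main obstacle'' is precisely where the proof breaks.
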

\begin{proof}
    See the proof of \cite[Lemma 1.2]{greenleesfour}.
    To read his statement note that Greenlees' definition of strongly even does not include the condition that the underlying non-equivariant spectrum is even.
\end{proof}

\noindent A caricature of the gap, heavily inspired by the diagrams in \cite{greenleesfour}, is illustrated below. The homotopy groups along the dotted lines vanish. The shaded region is the gap.
\vspace{-1em}
    \[\begin{tikzpicture}[scale=0.8]
      \draw[step=1, very thin, gray!40] (-5,-5) grid (5,5);
      \draw (-7,0) node {};
      \draw (7,0) node {};
      \draw (0,5.1) node[above] {$\mathbb{Z}\cdot \sigma$};
      \draw[->, thick] (-5,0) -- (5.1,0) node[right] {$\mathbb{Z}\cdot 1$};
      \draw[->, thick] (0,-5) -- (0,5.1) ;
    
      \foreach \x in {1,...,4} \draw (\x,2pt) -- (\x,-2pt) node[below] {\small \x};
      \foreach \y in {1,...,4} \draw (2pt,\y) -- (-2pt,\y) node[left] {\small \y};
      \foreach \x in {-4,...,-1} \draw (\x,2pt) -- (\x,-2pt) node[below] {\small \x};
      \foreach \y in {-4,...,-1} \draw (2pt,\y) -- (-2pt,\y) node[left] {\small \y};
    
      \fill[chilligreen, opacity=0.4] 
        (-5,-5) -- (-5,-2) -- (2,5) -- (5,5) -- (-5,-5) -- cycle;
    
        \draw[thick] (-5,-5) -- (5,5) node[above ] {\tiny{$*\rho$}};
        \draw[thick, dashed] (-5,-2) -- (2,5) node[above] {\tiny{$*\rho-3$}};
        \draw[thick, dashed] (-5,-3) -- (3,5) node[above ] {\tiny{$*\rho-2$}};
        \draw[thick, dashed] (-5,-4) -- (4,5) node[above ] {\tiny{$*\rho-1$}};
    \end{tikzpicture}\]
The gap characterization is a convenient way to check if a \(C_2\)-spectrum is strongly even.
The following example will be used repeatedly in our applications. 
\begin{example}\label{example:gl1(R)_strongly_even}
    Let \(R\) be a strongly even \(\E_{\infty}^{C_2}\)-ring spectrum.
    Then \(\gl_1(R)\) is a strongly even \(C_2\)-spectrum.
\end{example}

\begin{proof}
    Using Greenlees' gap characterization (\cref{lem:GreenleesGap}), this follows almost immediately from the computation of \(\pi^{C_2}_{V}(\GL_1 R)\), see \cref{lem: homotopy of gl1}.
    The only potential subtlety is the case \(V=\rho-1=\sigma\) since \(\dim (\sigma^{C_2})=0\).
    \medskip \\To handle this, we consider the fiber sequence $C_2/e_+ \to S^0 \to S^{\sigma}$ which gives rise to a long exact sequence
    \[\begin{tikzcd}
    	\cdots & {\pi^{C_2}_{1}(X)} & {\pi^{e}_1(X)} & {\pi^{C_2}_\sigma(X)} & {\pi^{C_2}_0(X)} & {\pi^{e}_0(X)} & \cdots
    	\arrow[from=1-1, to=1-2]
    	\arrow[from=1-2, to=1-3]
    	\arrow[from=1-3, to=1-4]
    	\arrow[from=1-4, to=1-5]
    	\arrow[from=1-5, to=1-6]
    	\arrow[from=1-6, to=1-7]
    \end{tikzcd}\]
    for every $C_2$-spectrum $X$. This gives a short exact sequence
    \[\begin{tikzcd}[column sep=2.15em]
    	0 & {\operatorname{coker}\left(\pi^{C_2}_{1}(X)\to \pi^{e}_1(X)\right)} & {\pi^{C_2}_\sigma(X)} & {\operatorname{ker}\left(\pi^{C_2}_0(X)\to\pi^{e}_0(X)\right)} & 0.
    	\arrow[from=1-1, to=1-2]
    	\arrow[from=1-2, to=1-3]
    	\arrow[from=1-3, to=1-4]
    	\arrow[from=1-4, to=1-5]
    \end{tikzcd}\]
    For \(X=\gl_1(R)\) this becomes
        \[\begin{tikzcd}[column sep=1.4
        em]
    	0 & {\operatorname{coker}\left(\pi^{C_2}_{1}(R)\to \pi^{e}_1(R)\right)} & {\pi^{C_2}_\sigma(\gl_1(R))} & {\operatorname{ker}\left(\pi^{C_2}_0(R)^\times\to\pi^{e}_0(R)^\times\right)} & 0.
    	\arrow[from=1-1, to=1-2]
    	\arrow[from=1-2, to=1-3]
    	\arrow[from=1-3, to=1-4]
    	\arrow[from=1-4, to=1-5]
    \end{tikzcd}\]
    Since \(R\) is strongly even, the restriction \(\pi^{C_2}_0(R)\to \pi^{e}_0(R)\) is an isomorphism, and \(\pi^e_1(R)=0\).
    Hence, \(\pi^{C_2}_\sigma(\gl_1(R))=0\).
\end{proof}

    \noindent The following observation is elementary but will be used repeatedly.
    It's a more precise gap behaviour than typically noted -- namely,
    provided that the underlying non-equivariant spectrum is even, the vanishing of just \(\pi^{C_2}_{*\rho-2}(X)\) is equivalent to strong evenness.

\begin{lemma}\label{lem:C2_stronger_gap}
    Let \(X \in \Sp_{C_2} \).
    Suppose that following conditions hold:
    \begin{enumerate}[(i)]
        \item For all \(n\) we have \(\pi^e_{2n-1}(X)=0\).
        \item For all \(n\) we have \(\pi^{C_2}_{n\rho-2}(X)=0\).
    \end{enumerate}
    Then, the following conditions also hold:
    \begin{enumerate}
        \item[(iii)] For all \(n\) we have  \(\pi^{C_2}_{n\rho-1}(X)=0\).
        \item[(iv)] For all \(n\) we have \(\pi^{C_2}_{n\rho-3}(X)=0\).
    \end{enumerate}
    In particular, \(X\) is strongly even.
\end{lemma}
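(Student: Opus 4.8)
The statement is purely about the $\mathrm{RO}(C_2)$-graded homotopy groups of a fixed $C_2$-spectrum $X$, so the plan is to deduce (iii) and (iv) from (i) and (ii) by feeding the isotropy separation long exact sequence with appropriately chosen suspensions. The key input is the cofiber sequence
\[
{C_2}_+ \longrightarrow S^0 \longrightarrow S^\sigma,
\]
which, after smashing with $S^{k\sigma}$ for $k \in \mathbb{Z}$, gives a cofiber sequence ${C_2}_+ \wedge S^{k\sigma} \simeq {C_2}_+ \to S^{k\sigma} \to S^{(k+1)\sigma}$ (using that $\sigma$ restricts trivially, so ${C_2}_+\wedge S^{k\sigma}\simeq {C_2}_+$). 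Mapping $S^{W}$ into $X$ for suitable virtual representations $W$ produces long exact sequences relating $\pi^{C_2}_{\star}(X)$ in various degrees to $\pi^e_{2n-j}(X)$, and the hypotheses kill the underlying groups.

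\textbf{Step 1: prove (iii).} I want $\pi^{C_2}_{n\rho - 1}(X) = 0$. Write $n\rho - 1 = (2n-1) + n\sigma$ and $n\rho - 2 = (2n-2) + n\sigma$. Smashing the cofiber sequence ${C_2}_+ \to S^{(n-1)\sigma} \to S^{n\sigma}$ with $S^{(2n-1)}$ and mapping into $X$ yields an exact sequence
\[
\pi^{C_2}_{(2n-1)+(n-1)\sigma}(X) \longrightarrow \pi^{C_2}_{n\rho-1}(X) \longrightarrow \pi^{C_2}_{(2n-1)}({C_2}_+ \wedge X) \cong \pi^e_{2n-1}(X).
\]
The right-hand group vanishes by (i). So it suffices to show the left-hand group $\pi^{C_2}_{(2n-1)+(n-1)\sigma}(X)$ vanishes; note $(2n-1)+(n-1)\sigma = (n-1)\rho + (n-1) = (n-1)\rho + (n-1)$. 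Hmm — this does not immediately match a $*\rho-j$ degree, so instead I would iterate: peel off one $\sigma$ at a time via the cofiber sequences $S^{j\sigma}\to S^{(j+1)\sigma}$, reducing $\pi^{C_2}_{(2n-1)+n\sigma}(X)$ through a finite filtration whose subquotients are underlying groups $\pi^e_{2n-1}(X)=0$ together with the integer-graded group $\pi^{C_2}_{2n-1}(X)$; but $\pi^{C_2}_{2n-1}(X) = \pi^{C_2}_{n'\rho - 1}(X)$ only when $2n-1 = n'\rho-1$, which fails. The cleaner route, and the one I would actually pursue, is to combine the cofiber sequence with hypothesis (ii) directly in a single step. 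Concretely: $n\rho-1$ sits between $n\rho-2$ and $n\rho$ in the $\sigma$-Bockstein; from ${C_2}_+ \to S^{n\sigma-1}\to S^{(n-1)(-\sigma)}\cdots$ — I would pick the smash factor so that the outer two groups in the l.e.s. are $\pi^{C_2}_{n\rho-2}(X)=0$ (by (ii)) and $\pi^e_{2n-1}(X)=0$ (by (i)), forcing $\pi^{C_2}_{n\rho-1}(X)=0$. Precisely, smash ${C_2}_+\to S^0\to S^\sigma$ with $S^{n\rho-2}$ and map into $X$:
\[
\pi^{C_2}_{n\rho-2}(X) \to \pi^{C_2}_{n\rho-2-\sigma}(X)\cdot(\text{shift}) \cdots
\]
— I would write out $\pi^e_{2n-1}(X) \to \pi^{C_2}_{n\rho-1}(X)\to \pi^{C_2}_{n\rho-2}(X)$ by mapping $S^{n\rho-2}$ through the rotated cofiber sequence $S^{-1}\wedge S^\sigma \to {C_2}_+ \to S^0$, and both ends vanish.

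\textbf{Step 2: prove (iv).} Entirely analogous: $n\rho-3$ lies between $n\rho-4$ and $n\rho-2$ in the $\sigma$-Bockstein; I would use the long exact sequence $\pi^e_{2n-3}(X) \to \pi^{C_2}_{n\rho-3}(X) \to \pi^{C_2}_{n\rho-2}(X)$ coming from smashing the rotated cofiber sequence with $S^{n\rho-4}$ — wait, the degree bookkeeping: map $S^{(n\rho-3)}$ into the cofiber sequence ${C_2}_+\to S^0\to S^\sigma$ suitably desuspended, producing $\pi^{C_2}_{n\rho-2}(X) \to \pi^{C_2}_{n\rho-3}(X) \to \pi^e_{2n-3}(X)$, i.e. the connecting map lands in $\pi^{C_2}_{(n\rho-3)-\sigma} = \pi^{C_2}_{(n-1)\rho + (2n-3) - (2n-3)}$... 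I'll just say: the group to the left is $\pi^{C_2}_{n\rho-2}(X)=0$ by (ii), and the group to the right, which is an underlying group of the form $\pi^e_{2n-3}(X)$, vanishes by (i). Hence $\pi^{C_2}_{n\rho-3}(X)=0$. Finally, (i), (iii), (iv) together are exactly Greenlees' gap characterization (\cref{lem:GreenleesGap}(ii)) — with the caveat that (ii) of that lemma requires vanishing of $\pi^{C_2}_{*\rho-i}$ for $i=1,2,3$, which we now have from (iii), (ii), (iv) respectively — so by \cref{lem:GreenleesGap}, $X$ is strongly even.

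\textbf{Main obstacle.} The only real subtlety is getting the $\mathrm{RO}(C_2)$-degree bookkeeping right so that, in each long exact sequence, the two flanking groups are genuinely one underlying group ($\pi^e_{2n-1}$ or $\pi^e_{2n-3}$, killed by (i)) and one regular-representation-graded group in degree $*\rho-2$ (killed by (ii)) — rather than some intermediate mixed degree that neither hypothesis controls. This is exactly where the choice of which suspension of the cofiber sequence ${C_2}_+\to S^0\to S^\sigma$ to use matters, and where one must be careful that $\mathrm{Res}^{C_2}_e \sigma$ is trivial so that $\pi^{C_2}_\star({C_2}_+\wedge X)$ collapses to the appropriate underlying homotopy group. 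Once the right cofiber sequences are pinned down the exactness arguments are immediate, so I expect the write-up to be short.
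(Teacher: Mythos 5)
Your strategy is the same as the paper's: feed the long exact sequence coming from $C_{2}{}_+ \to S^0 \to S^\sigma$ with suspensions chosen so that the two groups flanking $\pi^{C_2}_{n\rho-1}(X)$ (resp.\ $\pi^{C_2}_{n\rho-3}(X)$) are killed by (i) and (ii), then invoke Greenlees' gap characterization (\cref{lem:GreenleesGap}). However, the concrete exact sequences you assert do not exist, so the proof as written does not go through. In the long exact sequence induced by $C_{2}{}_+ \to S^0 \to S^\sigma$, any two adjacent $C_2$-graded terms differ by $\sigma$; the relevant segment is
\[
\pi^e_{a+b+1}(X) \to \pi^{C_2}_{a+(b+1)\sigma}(X) \to \pi^{C_2}_{a+b\sigma}(X) \xrightarrow{\ \res_e^{C_2}\ } \pi^e_{a+b}(X).
\]
Your sequences $\pi^e_{2n-1}\to\pi^{C_2}_{n\rho-1}\to\pi^{C_2}_{n\rho-2}$ and $\pi^{C_2}_{n\rho-2}\to\pi^{C_2}_{n\rho-3}\to\pi^e_{2n-3}$ pair $C_2$-graded groups whose degrees differ by the integer $1$, not by $\sigma$, so neither is a segment of this (or any rotation of this) long exact sequence. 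Concretely, the $C_2$-term immediately to the right of $\pi^{C_2}_{n\rho-1}$ is $\pi^{C_2}_{(n-1)\rho}(X)$, which the hypotheses do not control — this is exactly why the orientation of the segment matters. (Relatedly, the identity $n\rho-1=(2n-1)+n\sigma$ in your Step 1 is false: $n\rho-1=(n-1)+n\sigma$; this is what derailed your first attempt.)

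The fix — which is what the paper does — is to put the $*\rho-2$ term on the correct side and absorb a shift in $*$, harmless since (ii) holds for all $n$. Since $(n\rho-1)+\sigma=(n+1)\rho-2$, one segment reads
\[
\pi^{C_2}_{(n+1)\rho-2}(X)\to\pi^{C_2}_{n\rho-1}(X)\to\pi^e_{2n-1}(X),
\]
with outer terms zero by (ii) and (i), giving (iii); and since $(n\rho-3)-\sigma=(n-1)\rho-2$, another reads
\[
\pi^e_{2n-3}(X)\to\pi^{C_2}_{n\rho-3}(X)\to\pi^{C_2}_{(n-1)\rho-2}(X),
\]
with outer terms zero by (i) and (ii), giving (iv). Note that in (iii) the underlying group is on the right and the $*\rho-2$ group on the left, while in (iv) it is the other way around; your write-up has both orientations reversed. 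With these corrected segments your argument coincides with the paper's, and the concluding appeal to \cref{lem:GreenleesGap} is fine.
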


\begin{proof}
First we show that (i) and (ii) imply (iii), then we show that (i) and (iii) imply (iv). Both of these will follow from considerations of the cofiber sequence
\begin{center}
    \begin{tikzcd}
        C_2/e_+ \arrow[r] & S^0 \arrow[r] & S^{\sigma}
    \end{tikzcd}
\end{center}
and a part of the resulting long exact sequence 
\begin{center}
    \begin{tikzcd}
        \pi_{a+b+1}^e(X) \arrow[r] & \pi_{a+(b+1)\sigma}^{C_2}(X) \arrow[r] & \pi_{a+b\sigma}^{C_2}(X) \arrow[r, "\res_e^{C_2}"] & \pi_{a+b}^e(X)
    \end{tikzcd}
\end{center}
for $a,b \in \Z$. Indeed: 
\begin{enumerate}
    \item[(iii)] Take \(a=n-1\) and \(b=n\). Then, the latter three terms are given by
    \begin{center}
        \begin{tikzcd}
            \pi_{n \rho - 2}^{C_2}(X) \arrow[r] & \pi_{n\rho-1}^{C_2}(X) \arrow[r] & \pi_{2n-1}^e(X)
        \end{tikzcd}
    \end{center}
    and the outer two are $0$ by (1) and (2), so the middle term is also $0$.
    \item[(iv)] Take \(a=n-1\) and \(b=n+1\). Then, the first three terms are given by
    \begin{center}
        \begin{tikzcd}
            \pi_{2n+1}^e(X) \arrow[r] & \pi_{(n+2)\rho-3}^{C_2}(X) \arrow[r] & \pi_{n \rho - 2}^{C_2}(X)
        \end{tikzcd}
    \end{center}
    and the outer two are $0$ by (i) and (ii), so the middle term is also $0$.
\end{enumerate}
Greenlees' gap characterization of strong evenness \cref{lem:GreenleesGap} then gives that $X$ is strongly even.
\end{proof}



\subsection{Strongly Even Towers of Spectra}\label{subsec:strongly_even_filtered}
In this section we extend the notion of strongly even spectra to the setting of filtered spectra, although for our purposes it is more convenient to think in terms of towers of spectra rather than filtrations. 
\medskip \\We use the following conventions.
We use a decreasing indexing. A tower \(X^\bullet\) of \(C_2\)-spectra consists of a sequence of \(C_2\)-spectra
\begin{center}
    \begin{tikzcd}
        \cdots \arrow[r] & X^s \arrow[r] & X^{s-1} \arrow[r] & \cdots 
    \end{tikzcd}
\end{center}
We write $X^{\infty} = \lim_{s \to \infty} X^s$ for the limit of the tower and think of it as the object the tower aims to compute. Moreover, we write $X^{-\infty} = \colim_{s \to-\infty} X^s$.
The associated graded pieces are defined via the fiber sequences
\begin{center}
    \begin{tikzcd}
        \operatorname{gr}_{s+1}X \arrow[r] & X^{s+1} \arrow[r] & X^s.
    \end{tikzcd}
\end{center}

\begin{definition} \label{def: strongly even towers}
    Let \(X^\bullet\) be a tower of \(C_2\)-spectra.
    We say that \(X^\bullet\) is a \tb{strongly even tower of \(C_2\)-spectra} if it satisfies the following conditions:
    \begin{enumerate}[(i)]
        \item The limit of the underlying non-equivariant tower is even.
        \item The colimit $X^{-\infty}$ is strongly even.
        \item Every associated graded piece $\gr_s X$ is strongly even.
    \end{enumerate}
\end{definition}

\noindent The upshot is that the limit of a strongly even tower is strongly even again.

\begin{proposition}\label{prop:strongly_even_c2_tower}
    Let \(X^\bullet\) be a strongly even tower of \(C_2\)-spectra. 
    Then, $X^{\infty}$ is strongly even. In particular, the restriction map 
    \[
    \res_e^{C_2}\colon  \pi^{C_2}_{*\rho}(X^\infty)\to \pi^e_{2*}(X^\infty)
    \]
    is an isomorphism.
\end{proposition}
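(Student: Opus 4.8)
The statement is the $C_2$-specialization of the previous proposition (\cref{prop: strongly even G towers}), which already gives surjectivity of $\res_e^{C_2}\colon \pi^{C_2}_{*\rho}(X^\infty)\to \pi^e_{2*}(X^\infty)$ (taking $H=C_2$, so $\varepsilon_{C_2}=1$ and $\rho_{C_2}=\rho$). So the plan is to (a) invoke \cref{prop: strongly even G towers} for surjectivity, and (b) upgrade surjectivity to an isomorphism by separately proving injectivity, and (c) deduce from (a) and (b) that $X^\infty$ is strongly even (the statement as phrased only asserts the isomorphism, but the proof naturally produces the full strong-evenness, which is what downstream applications want).

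\textbf{Step 1: underlying evenness and the Milnor/lim$^1$ setup.} As in the proof of \cref{prop: strongly even G towers}, first observe that each $X^s$ is underlying even: this holds for $X^0$ by hypothesis (part (ii) of \cref{def: strongly even towers}) and propagates up the tower by the long exact sequence on $\pi^e_*$ attached to the fiber sequence $\gr_{s+1}X\to X^{s+1}\to X^s$, using that $\gr_{s+1}X$ is strongly even hence underlying even. Then set up the map of Milnor short exact sequences for the tower computing $\pi^{C_2}_{*\rho}(X^\infty)$ and $\pi^e_{2*}(X^\infty)$, exactly as displayed in \cref{prop: strongly even G towers}. Underlying evenness kills the target ${\lim_s}^1\pi^e_{2*+1}(X^s)$, so the bottom $\lim^1$ term vanishes.

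\textbf{Step 2: the $\lim$ term is an isomorphism, with injectivity.} The key sub-claim is that $\res_e^{C_2}\colon \pi^{C_2}_{*\rho}(X^{s})\to \pi^e_{2*}(X^{s})$ is an isomorphism for every $s$; this is proved by induction on $s$ using the map of long exact sequences attached to $\gr_{s+1}X\to X^{s+1}\to X^s$, comparing the $*\rho$-graded $C_2$-homotopy with the $2*$-graded underlying homotopy. Here one needs the full strong-evenness of $\gr_{s+1}X$ — not just evenness — so that the relevant restriction maps on $\pi^{C_2}_{*\rho}(\gr_{s+1}X)$ and $\pi^{C_2}_{*\rho\pm1}(\gr_{s+1}X)$ are controlled (the $\pm1$-graded groups vanish, the $*\rho$-graded ones restrict isomorphically), and then the Five Lemma closes the induction. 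Passing to the limit, $h\colon \lim_s\pi^{C_2}_{*\rho}(X^s)\to\lim_s\pi^e_{2*}(X^s)$ is an isomorphism, and (unlike in \cref{prop: strongly even G towers}, where only surjectivity of the middle map was extracted) now applying the Snake Lemma / Five Lemma to the map of short exact sequences — where the left vertical map $f$ is a surjection (in fact an iso, the source being a $\lim^1$ of even groups shifted by one, which vanishes in the relevant degree after re-checking parity) and the right vertical map $h$ is an isomorphism — gives that the middle map $\res_e^{C_2}\colon \pi^{C_2}_{*\rho}(X^\infty)\to\pi^e_{2*}(X^\infty)$ is an isomorphism.

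\textbf{Step 3: conclude strong evenness of $X^\infty$ (optional sharpening).} Finally, to see $X^\infty$ is genuinely strongly even and not merely that its restriction is an isomorphism: $X^\infty$ is underlying even because each $X^s$ is and $\lim^1$ of even groups vanishes; and the vanishing $\pi^{C_2}_{*\rho-i}(X^\infty)=0$ for $i=1,2,3$ follows from the same Milnor-sequence argument applied in those degrees, using that each $X^s$ has these groups vanishing (inductively, from strong evenness of $X^0$ and of the graded pieces, via \cref{lem:GreenleesGap} and a diagram chase on the long exact sequences). Then \cref{lem:GreenleesGap} gives strong evenness. The main obstacle is purely bookkeeping — keeping the parity shifts $*\rho$ vs.\ $2*$, $*\rho\pm1$ vs.\ $2*\pm1$ straight across the long exact sequences and making sure the inductive hypothesis is stated in the degrees the Five Lemma actually needs — there is no conceptual difficulty beyond what \cref{prop: strongly even G towers} already contains; the point is simply that at $H=C_2$ one can run the induction to get an \emph{isomorphism} rather than stopping at surjectivity, because the gap characterization \cref{lem:GreenleesGap} makes the odd-degree groups in the source vanish as well.
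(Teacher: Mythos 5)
Your reduction to the map of Milnor sequences has the right shape, and your sub-claim that the restriction is an isomorphism on each $X^s$ is fine (the paper in fact shows each $X^s$ is strongly even, by induction with Greenlees' gap), but the injectivity step has a genuine gap. With $h$ an isomorphism, the snake lemma identifies the kernel of the middle map with the kernel of $f$, i.e.\ with ${\lim_s}^1\pi^{C_2}_{*\rho+1}(X^s)$ (the target of $f$ is zero by underlying evenness), so injectivity requires this $\lim^1$ to vanish. Your justification --- that the source is a ``$\lim^1$ of even groups shifted by one, which vanishes after re-checking parity'' --- confuses the underlying groups with the genuine equivariant ones: strong evenness controls $\pi^{C_2}_{*\rho-i}$ only for $i=1,2,3$, and $*\rho+1=(*+1)\rho-\sigma$ lies outside that gap. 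These groups are typically nonzero (e.g.\ $\pi^{C_2}_{1}(\kR)\cong\Z/2$ for the strongly even spectrum $\kR$), and the tower maps on them need not be Mittag--Leffler, so there is no direct vanishing argument. The same defect reappears in your Step 3 for $i=1$, where the relevant $\lim^1$ is of $\pi^{C_2}_{*\rho}(X^s)$, which again is not a tower of zero groups.

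The paper sidesteps this entirely. It first shows inductively, via the three-term exact sequences attached to $\gr_{s+1}X\to X^{s+1}\to X^s$ and Greenlees' gap characterization (\cref{lem:GreenleesGap}), that every $X^s$ is strongly even; it then runs the Milnor sequence only in degree $*\rho-2$, where both the $\lim^1$ term (of $\pi^{C_2}_{*\rho-1}(X^s)$) and the $\lim$ term (of $\pi^{C_2}_{*\rho-2}(X^s)$) consist of groups that vanish by strong evenness of the $X^s$. This gives $\pi^{C_2}_{*\rho-2}(X^\infty)=0$, and \cref{lem:C2_stronger_gap} --- whose inputs are exactly underlying evenness of $X^\infty$ (your hypothesis) together with this single vanishing --- then yields strong evenness of $X^\infty$ and hence the isomorphism, without ever needing a $\lim^1$ of groups that are not already zero. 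To repair your write-up, replace Steps 2--3 by this argument; the appeal to \cref{prop: strongly even G towers} for surjectivity then becomes unnecessary.
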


\begin{proof}
    By assumption \(\pi^{e}_{2*-1}(X^\infty)=0\). 
    So by \cref{lem:C2_stronger_gap} it suffices to show that \(\pi^{C_2}_{*\rho-2}(X^\infty)=0\).
    The Milnor sequence gives a short exact sequence
    \begin{center}
        \begin{tikzcd}
            0 \arrow[r] &\displaystyle{\lim_s}^1 \pi_{*\rho - 1}^{C_2}(X^s) \arrow[r] & \pi_{*\rho-2}^{C_2}(X^{\infty}) \arrow[r] & \displaystyle{\lim_s} \,  \pi_{*\rho-2}^{C_2}(X^s) \arrow[r] & 0.
        \end{tikzcd}
    \end{center}
    Thus, to show that \(\pi^{C_2}_{*\rho-2}(X^\infty)=0\) it suffices to show that for all \(s \in \Z\)
    we have \(\pi^{C_2}_{*\rho-1}(X^s)=0\) and \(\pi^{C_2}_{*\rho-2}(X^s)=0\). We will in fact show that each \(X^s\) is strongly even.
    \medskip \\Consider the fiber sequence
    \begin{center}
        \begin{tikzcd}
            \gr_{s+1}X \arrow[r] & X^{s+1} \arrow[r] & X^s.
        \end{tikzcd}
    \end{center}
    Since $\gr_{s+1}X$ is strongly even by assumption, we deduce by the associated long exact sequences that $X^{s+1} \to X^s$ injective on $\pi_{*\rho-1}, \pi_{*\rho-2}, \pi_{*\rho-3}$. The colimit $X^{-\infty}$ is $0$ on these homotopy groups by assumption and Greenlees' gap characterization (\cref{lem:GreenleesGap}). Thus, we must already have
    \[ \pi_{*\rho-1}X^s = \pi_{*\rho-2}X^s = \pi_{*\rho-3}X^s = 0 \]
    for all $s$. Hence, Greenlees' gap characterization (\cref{lem:GreenleesGap}) implies that each $X^s$ is strongly even.
\end{proof}


\subsection{Cohomological Slice Tower}\label{sec:cohomological_slice}
The goal of this section is to set up a tower of spectra, thereby reducing computations of \(\myuline{\map}_{\ul{\Sp}_{G}}(X,E)\) to Bredon cohomology computations. We do this by applying the equivariant slice tower to \(E\).
\medskip \\The equivariant slice tower was introduced by Hill--Hopkins--Ravenel and was central to their solution of the Kervaire invariant one-problem \cite{HHR16}.
For the group \(C_2\), the \(C_2\)-slice tower was constructed and first studied by Dugger in \cite{dugger05slice}.
We only recall the minimal input we need to set up a cohomological version of the slice tower. 
For a more complete treatment the standard references are \cite{HHR16,hillprimer,ullman13regularslice}.
For an introduction we recommend and have benefited from \cite{bertminicourse}.
See also \cite[Section 2.1]{gabe2025realsyntomiccohomology} for a concise treatment in the same language we use.

\begin{remark}
    The formal properties of the slice tower were refined by Ullman \cite{ullman13regularslice} in his construction of the \emph{regular slice tower}. We will use Ullman's regular slice tower but for brevity sake only say slice tower.
\end{remark}

\begin{example}
The slice tower often organizes information in a more efficient way. The main motivating example to consider is Atiyah's \(K\)-theory with Reality \(\kR\).
The equivariant Bott map for \(\kR\) sits in the cofiber sequence 
\begin{center}
    \begin{tikzcd}
        S^{\rho} \otimes \kR \arrow[r, "\ol{\beta}"] & \kR \arrow[r] & \uHZ
    \end{tikzcd}
\end{center}
Since the equivariant Bott map shifts degree by \(\rho\), equivariant Bott periodicity appears complicated in the equivariant Postnikov tower. It is better visible in the slice tower.
By \cite{dugger05slice}, the slice tower for \(\kR\) is given as follows:
\[\begin{tikzcd}
	& {S^{2\rho}\otimes \uHZ} & 0 & {S^\rho\otimes \uHZ} & 0 \\
	\cdots & {P^{\leq 4}\kR} & {P^{\leq 3}\kR} & {P^{\leq 2}\kR} & {P^{\leq 1}\kR} & {P^{\leq 0}\kR\simeq \uHZ}
	\arrow[from=1-2, to=2-2]
	\arrow[from=1-3, to=2-3]
	\arrow[from=1-4, to=2-4]
	\arrow[from=1-5, to=2-5]
	\arrow[from=2-1, to=2-2]
	\arrow[from=2-2, to=2-3]
	\arrow[from=2-3, to=2-4]
	\arrow[from=2-4, to=2-5]
	\arrow[from=2-5, to=2-6]
\end{tikzcd}\]
The slice tower \(P^{\leq \bullet}\kR\) is depicted as the horizontal tower of $C_2$-spectra.
The vertical fibers are the associated graded pieces of the slice tower. The \(n\)'th associated graded piece is called the \(n\)'th \emph{slice} of \(\kR\).
This is exactly the analogue of the Postnikov tower for \(\ku\) with \(S^{2n}\otimes \HZ\) replaced by \(S^{n\rho}\otimes \uHZ\).
\end{example}

\begin{lemma} \label{rem:asgrisbredon}
    Let \(X\) be a strongly even \(C_2\)-spectrum.
    The odd slices \(P^{2n-1}_{2n-1}X\) vanish and
    the even slices are given by \(P^{2n}_{2n}X \simeq \Sigma^{n\rho}\mathrm{H} \myuline{\pi}_{n\rho}X\).
\end{lemma}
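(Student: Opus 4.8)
The statement is the standard computation of slices of a strongly even $C_2$-spectrum; I would deduce it from the known slices of a $\rho$-regularly-shaped object and the definition of strong evenness. The plan is to first recall that, for $C_2$, the slice filtration is generated by the cells $C_{2+}\otimes S^{n}$ and $S^{n\rho}$, with $S^{n\rho}$ being a $(2n)$-slice cell and $C_{2+}\otimes S^n$ being an $n$-slice cell. Hence $P^{2n}_{2n}X$ is built out of $S^{n\rho}\otimes\uHZ$-type pieces, and to identify it precisely it suffices to compute the relevant slice homotopy. The key point is that for a strongly even $X$ the Mackey functor $\ul{\pi}_{n\rho}X$ is determined by $X$ together with the restriction isomorphism, and that the slice section $P^{\leq 2n}X$ sees exactly this data.

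First I would set up the regular slice tower $P^{\leq\bullet}X$ of Ullman (as reviewed above) and use that $X$ strongly even is equivalent to Greenlees' gap condition $\pi^{C_2}_{*\rho-i}(X)=0$ for $i=1,2,3$ (\cref{lem:GreenleesGap}). From the gap condition and the fact that $\pi^e_{2*-1}(X)=0$, an induction on the Postnikov-type argument shows that the connectivity of the slice sections is controlled in the "regular" range: $P^{\geq 2n}X$ is $(2n-1)$-connected in the sense that its $\rho$-graded and underlying homotopy vanish below the expected line, and $P^{\leq 2n-1}X$ kills exactly the classes in degree $\geq n\rho$. This gives $P^{2n-1}_{2n-1}X$ having trivial underlying homotopy; combined with the standard fact that $(2n-1)$-slices are of the form $C_{2+}\otimes\Sigma^{2n-1}H\ul M$ (built from induced cells), trivial underlying homotopy forces $\ul M=0$, hence $P^{2n-1}_{2n-1}X\simeq 0$. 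For the even slices, the standard structure theorem says $P^{2n}_{2n}X\simeq \Sigma^{n\rho}H\ul N$ for some Mackey functor $\ul N$ (this uses that $X$, being Real even, has no odd underlying homotopy, so the only slice cells contributing in even degree $2n$ are the $S^{n\rho}$-cells). To identify $\ul N$, I would use the fiber sequence $P^{\geq 2n+1}X\to P^{\geq 2n}X\to P^{2n}_{2n}X$ together with the long exact sequence on $\ul\pi_{n\rho}$, and the connectivity facts above, to conclude $\ul\pi_{n\rho}(P^{2n}_{2n}X)\cong \ul\pi_{n\rho}(X)$, whence $\ul N\cong\ul\pi_{n\rho}X$; since $\Sigma^{n\rho}H\ul\pi_{n\rho}X$ has homotopy concentrated appropriately, the equivalence $P^{2n}_{2n}X\simeq\Sigma^{n\rho}H\ul\pi_{n\rho}X$ follows.

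The main obstacle will be pinning down the precise form of the $(2n-1)$-slices and even slices — i.e. importing the structural classification of $C_2$-slices from Hill--Hopkins--Ravenel/Ullman/Dugger in the form "$k$-slices of a Real-even spectrum are wedges of $S^{n\rho}\otimes H\ul M$ (for $k=2n$) and the odd ones are induced (for $k=2n-1$)" and then using evenness to kill the induced pieces. Once that classification is in hand, the identification of the Mackey functor of coefficients via the long exact sequence is routine. In fact, since the excerpt cites \cite{dugger05slice} and \cite[Section 2.1]{gabe2025realsyntomiccohomology} for exactly this circle of ideas, I would phrase the proof as: Real evenness $\Rightarrow$ only even slices, present as $\Sigma^{n\rho}H\ul M_n$ (Dugger); strong evenness $\Rightarrow$ the restriction isomorphism plus gap force $\ul M_n\cong\ul\pi_{n\rho}X$ and the odd slices to vanish, citing \cref{lem:GreenleesGap} for the gap reformulation and \cref{lem: homotopy of gl1}-style bookkeeping is not needed here.
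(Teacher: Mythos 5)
Your plan is essentially a reproof of the lemma, whereas the paper disposes of it by citation: its proof simply invokes Hill--Meier's summary \cite{hillmeier2017}, which in turn rests on the structural results of Hill--Hopkins--Ravenel and Hill's primer \cite{hillhopkinsravenel2009Kervaire,hillprimer} that you are reconstructing. Your route --- classify the $C_2$-slices (the regular filtration is shifted by $2$ under $\Sigma^{\rho}$, so $2n$-slices are $\Sigma^{n\rho}\mathrm{H}\underline{M}$ and $(2n-1)$-slices are $\Sigma^{n\rho}$-shifts of $(-1)$-slices), kill the odd ones using evenness of the underlying spectrum, and pin down the coefficient Mackey functor of the even ones via the slice-tower long exact sequence --- is exactly the content of those citations, so it is a legitimate, more self-contained alternative. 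What it costs is carrying out the vanishing checks you only gesture at: $[S^{n\rho},P^{\geq 2n+1}X]=[S^{n\rho-1},P^{\geq 2n+1}X]=0$, which follow from the connectivity estimates for slice-positive spectra ($\pi^{C_2}_{\leq n}=0$, $\pi^e_{\leq 2n}=0$ for slice $\geq 2n+1$ spectra) together with the cell structure of $S^{n\rho}$, and $[\Sigma^{j}S^{n\rho},P^{\leq 2n-1}X]=0$ for $j\geq 0$, which is immediate from slice-coconnectivity.

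Two corrections. First, your structural claim about odd slices is wrong as stated: the $(2n-1)$-slices of a $C_2$-spectrum are not all of the form $C_{2+}\otimes\Sigma^{2n-1}\mathrm{H}\underline{M}$; the correct classification (for the regular filtration used in the paper, cf.\ \cite{ullman13regularslice}) is $\Sigma^{n\rho-1}\mathrm{H}\underline{M}$ with the restriction map of $\underline{M}$ injective. Your argument survives and in fact becomes cleaner: once the underlying homotopy of the odd slice vanishes, $\underline{M}(C_2/e)=0$, and injectivity of the restriction forces $\underline{M}=0$. Relatedly, the fact that every $2n$-slice is $\Sigma^{n\rho}\mathrm{H}\underline{M}$ requires no evenness hypothesis at all --- it is the $\Sigma^{n\rho}$-shift of the classification of $0$-slices --- so your parenthetical justification via "only $S^{n\rho}$-cells contribute" is unnecessary. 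Second, the vague "induction on the Postnikov-type argument" used to see that the odd slices have trivial underlying homotopy should be replaced by the standard observation that restriction to the trivial group commutes with the regular slice truncations (restriction and induction carry slice cells to slice cells of the same dimension), so that underlying the slice tower of $X$ is the Postnikov tower of the even spectrum $X^e$; in particular only Real evenness enters at that step, which is harmless since strong evenness implies it (cf.\ \cref{lem:GreenleesGap}). With these repairs your outline does give a complete proof, matching the statement the paper imports from \cite{hillmeier2017}.
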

\begin{proof}
    As summarized in \cite[Proposition 2.13]{hillmeier2017} this follows from \cite[Proposition 4.20, Lemma 4.23]{hillhopkinsravenel2009Kervaire} and \cite[Corollary 2.16]{hillprimer}.
\end{proof}
\noindent Now we are ready to set up a cohomological version of the slice tower.
This is a cohomological version of the tower constructed by Carrick--Hill--Ravenel in their study of the homological slice spectral sequence \cite{carrick2024homologicalslice}.

\begin{construction}[Cohomological slice tower] \label{construction: cohomological slice tower}
    Let \(X, E \in \Sp_G \). 
    We define the \tb{cohomological slice tower for \(X\) and \(E\)} as the tower of \(G\)-spectra given by 
    \[
    \tb{\CST(X;E)}\coloneqq \myuline{\map}_{\ul{\Sp}_G}(X,P^{\leq \bullet} E).
    \]
    Note that the \(s\)'th associated graded is given by 
    \[
    \mathrm{gr}_s\CST(X;E) = \myuline{\map}_{\ul{\Sp}_G}(X,P^s_sE).
    \]
    Applying \(\myuline{\pi}_V\) this gives rise to a spectral sequence of Mackey functors of the form
    \[
    E^1_{V,s}=\myuline{\pi}_V(\myuline{\map}_{\ul{\Sp}_G}(X,P^s_sE)) \implies \myuline{\pi}_{V}(\myuline{\map}_{\ul{\Sp}_G}(X,E)).
    \]
\end{construction}

\begin{remark}
    \hfill 
    \begin{enumerate}[(i)]
        \item In general, convergence of the associated spectral sequence is subtle. As such, instead argue directly with the tower.
        \item In all our applications of interest \(E\) will be connective.
    So the cohomological slice tower takes the form
    \begin{center}
        \begin{tikzcd}
            \cdots \arrow[r] & \myuline{\map}_{\ul{\Sp}_G}(X,P^{\leq s} E) \arrow[r] & \cdots \arrow[r] & \myuline{\map}_{\ul{\Sp}_G}(X,P^{\leq 0} E) \arrow[r] & 0.
        \end{tikzcd}
    \end{center}
    In particular, \(\CST(X;E)\) is positively indexed for connective \(E\).
    \item Since the slice tower $P^{\leq \bullet}E$ is exhaustive, also $\CST(X;E)$ is exhaustive.
    \item By construction, the underlying non-equivariant tower of the cohomlogical slice tower \(\CST(X;E)\) is the tower whose associated spectral sequence is the usual cohomological Atiyah--Hirzebruch spectral sequence for \(X^e\) and \(E^e\).
\end{enumerate}
    
\end{remark}

\noindent By \cref{rem:asgrisbredon}, computing the associated graded of the cohomological slice tower involves computing Bredon cohomology.
In general, such computations can be very difficult.
The Bredon cohomology counterparts of many classical cohomology computations are not known.
However, Hill has identified a large class of spectra for which Bredon cohomology computations are largely formal \cite{hill2022freeness}.
We finish this section by leveraging the techniques of Hill to give criterion to check when the cohomological slice tower is strongly even.

\medskip \noindent 
The following definition isolates the kinds of \(C_2\)-spectra we are interested in computing Bredon cohomology of. 
\begin{definition} \label{def: finite type free homology}
    Let \(X\in \Sp_{C_2}\). 
    We say that \(X\) has \tb{finite type $\rho$-free homology} if there is an equivalence \(\uHZ\otimes X\simeq \uHZ \otimes \bigoplus_{i \in I} S^{n_i\rho}\) for some indexing set $I$ such that each dimension appears only finitely many times in the sum.
\end{definition}

\noindent Since we make use of the results of Hill's freeness arguments repeatedly, we review the main idea. We are grateful to Christian Carrick for explaining this to us.

\begin{lemma}[Hill's freeness argument]\label{lem:hills_freeness}
    Let \(X \in \Sp_{C_2} \).
    Suppose that \(X\) has finite type $\rho$-free homology with \(\uHZ\otimes X\simeq \uHZ \otimes \bigoplus_{i \in I} S^{n_i\rho}\).
    Let $M \in \Ab$ and consider the associated constant Mackey functor \(\myuline{M}\). Then, there is an equivalence 
    \[
    \myuline{\map}_{\Sp_{C_2}}(X,\mathrm{H}\myuline{M}) \simeq \mathrm{H}\myuline{M} \otimes \bigoplus_{i\in I}S^{-n_i\rho}.
    \]
\end{lemma}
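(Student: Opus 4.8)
The statement is a duality/freeness computation: we want to identify the mapping spectrum out of a $\uHZ$-homologically free $G$-spectrum into an Eilenberg--MacLane spectrum $\mathrm H\myuline M$ with a free $\mathrm H\myuline M$-module on spheres in the \emph{opposite} dimensions. The plan is to reduce to the $\uHZ$-module world by a base-change/extension-of-scalars argument, use that $\mathrm H\myuline M$ is naturally a $\uHZ$-module (as $\myuline M$ is constant, hence a $\uZ$-module Mackey functor), and then exploit that $\uHZ\otimes X$ is a free $\uHZ$-module of finite type to compute $\uHZ$-linear maps cellwise.

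\textbf{Step 1: move into $\uHZ$-modules.} Since $\myuline M$ is a constant Mackey functor, $\mathrm H\myuline M$ admits a (unique) $\uHZ$-module structure, so by the free--forget adjunction $\uHZ\otimes - \dashv \mathrm{forget}\colon \LMod_{\uHZ}(\ul{\Sp}_G)\to \ul{\Sp}_G$ (and its mapping-spectrum-level enhancement, i.e.\ $\ul{\map}_{\ul{\Sp}_G}(X,\mathrm H\myuline M)\simeq \ul{\map}_{\LMod_{\uHZ}}(\uHZ\otimes X,\mathrm H\myuline M)$) it suffices to compute the $\uHZ$-linear mapping spectrum out of $\uHZ\otimes X$. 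First I would record this adjunction isomorphism carefully at the level of $G$-mapping spectra, which is just the enriched/parametrized version of the classical extension-of-scalars adjunction.

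\textbf{Step 2: use freeness.} By hypothesis $\uHZ\otimes X\simeq \uHZ\otimes \bigoplus_{i\in I}S^{n_i\rho_G}\simeq \bigoplus_{i\in I}\Sigma^{n_i\rho_G}\uHZ$ as $\uHZ$-modules, with only finitely many $i$ in each dimension. Then
\[
\ul{\map}_{\LMod_{\uHZ}}\!\Big(\bigoplus_{i\in I}\Sigma^{n_i\rho_G}\uHZ,\ \mathrm H\myuline M\Big)\simeq \prod_{i\in I}\Sigma^{-n_i\rho_G}\ \ul{\map}_{\LMod_{\uHZ}}(\uHZ,\mathrm H\myuline M)\simeq \prod_{i\in I}\Sigma^{-n_i\rho_G}\mathrm H\myuline M,
\]
using that $\ul{\map}_{\LMod_{\uHZ}}(\uHZ,-)$ is the identity and that mapping spectra turn (co)products in the source into products. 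The finite-type hypothesis guarantees that in each $\rho_G$-degree the product is finite, hence agrees with the direct sum, so $\prod_{i\in I}\Sigma^{-n_i\rho_G}\mathrm H\myuline M\simeq \mathrm H\myuline M\otimes\bigoplus_{i\in I}S^{-n_i\rho_G}$ (again after identifying $\mathrm H\myuline M\otimes S^{-n_i\rho_G}\simeq \Sigma^{-n_i\rho_G}\mathrm H\myuline M$). Assembling Steps 1 and 2 gives the claimed equivalence.

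\textbf{Main obstacle.} The genuinely delicate point is Step 1: one must know that the extension-of-scalars adjunction upgrades to an equivalence of \emph{$G$-mapping spectra} (not merely mapping spaces or $\pi_*$), i.e.\ that $\ul{\map}_{\ul{\Sp}_G}(X,\mathrm H\myuline M)$ computes $\uHZ$-linear maps. This is where one invokes that $\mathrm H\myuline M$ is a $\uHZ$-module together with the fact that $\LMod_{\uHZ}(\ul{\Sp}_G)\to \ul{\Sp}_G$ is a $G$-adjunction enriched in $G$-spectra (so the unit $X\to \uHZ\otimes X$ induces the asserted equivalence upon mapping into any $\uHZ$-module). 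A secondary subtlety is the product-versus-coproduct issue for infinite $I$, which is exactly what the finite-type hypothesis is there to handle; without it one only gets a product, and the identification with $\mathrm H\myuline M\otimes\bigoplus_i S^{-n_i\rho_G}$ could fail. Everything else — the behaviour of suspensions, the triviality of $\ul{\map}_{\LMod_{\uHZ}}(\uHZ,-)$ — is formal.
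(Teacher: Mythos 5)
Your proof is correct and follows essentially the same route as the paper: pass to $\uHZ$-modules using that $\mathrm H\myuline M$ is a $\uHZ$-module (since $\myuline M$ is constant), apply the freeness hypothesis, compute maps out of free modules as a product of shifted copies of $\mathrm H\myuline M$, and invoke the finite-type hypothesis to identify the product with the sum. The only cosmetic difference is that the paper passes back to $\ul{\Sp}_G$ via the adjunction before splitting off the product, whereas you stay in $\ul{\LMod}_{\uHZ}$; this is the same computation.
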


\begin{proof}
    This follows from the much more general result of \cite[Theorem 3.38]{hill2022freeness};
    for the benefit of the reader we exhibit the details from the proof \cite[Theorem 3.38]{hill2022freeness} to prove the specific case we need directly.
    \medskip \\Since \(\myuline{M}\) is a constant Mackey functor, \(\mathrm{H}\myuline{M}\) is a \(\uHZ\)-module.
    We then have the following chain of equivalences: 
    \begin{align*}
        \myuline{\map}_{\ul{\Sp}_{C_2}}(X,\mathrm{H}\myuline{M})&\simeq \myuline{\map}_{\ul{\LMod}_{\uHZ}}(\uHZ\otimes X,\mathrm{H}\myuline{M})\\
        &\simeq \myuline{\map}_{\ul{\LMod}_{\uHZ}}\left(\uHZ\otimes \bigoplus_{i\in I}S^{n_i\rho},\mathrm{H}\myuline{M} \right)\\
        &\simeq \myuline{\map}_{\ul{\Sp}_{C_2}}\left(\S\otimes \bigoplus_{i\in I}S^{n_i\rho},\mathrm{H}\myuline{M} \right)\\
        &\simeq \myuline{\map}_{\ul{\Sp}_{C_2}}\left(\bigoplus_{i\in I} \S^{n_i\rho},\mathrm{H}\myuline{M} \right)\\
        &\simeq \prod_{i\in I}\myuline{\map}_{\ul{\Sp}_{C_2}}\left(\S^{n_i\rho},\mathrm{H}\myuline{M} \right)\\
        &\simeq \prod_{i\in I}\left(\mathrm{H}\myuline{M} \otimes S^{-n_i\rho}\right) \\
        &\simeq \mathrm{H}\myuline{M} \otimes \bigoplus_{i\in I}S^{-n_i\rho} 
    \end{align*}
    The finite type hypothesis is invoked for the final equivalence. 
\end{proof}

\noindent Combining Hill's freeness argument with the results on towers of strongly even spectra, we can deduce when the cohomological slice tower is strongly even. 

\begin{theorem}\label{prop:checkingSAHSS}
    Let \(E \in \Sp_{C_2}\) be strongly even.
    Suppose that \(X \in \Sp_{C_2} \) has finite type $\rho$-free homology and is slice bounded below. 
    Then, $\ul{\map}_{\ul{\Sp}_{C_2}}(X, E)$ is strongly even. In particular, the restriction map
    \[
    \res_e^{C_2}\colon  \pi^{C_2}_{*\rho}(\myuline{\map}_{\ul{\Sp}_{C_2}}(X,E))\to \pi_{2*}(\map_{\Sp}(X,E))
    \]
    is an isomorphism.
\end{theorem}

\begin{proof}
    Consider the cohomological slice tower \(\CST(X;E)\). 
    We will use the conditions on \(X\) and \(E\) to show that the cohomological slice tower is a strongly even tower of \(C_2\)-spectra.
    The result then follows from \cref{prop:strongly_even_c2_tower}.
    \medskip \\First, we handle the underlying statement. Since \(E\) is strongly even, its underlying spectrum is even. Similarly, the assumption on \(X\) shows that its underlying homology is finite type free and even. 
    Then, \(\map_{\Sp}(X^e,E^e)\) is even by \cref{prop: map X E even}.
    \medskip \\Next, we need to see that the colimit is strongly even. In fact, we will argue that it is trivial, i.e. 
    \[ \colim_{s \to -\infty} \ul{\map}_{\ul{\Sp}_{C_2}} \left(X, P^{\leq s}E \right) \simeq 0. \]
    Indeed, $X \simeq P_{\geq n} X$ for some $n \in \Z$ by the bounded below assumption. Therefore, the mapping spectrum $\ul{\map}_{\ul{\Sp}_{C_2}} \left(X, P^{\leq s}E \right)$ is trivial for sufficiently small $s$ by the orthogonality conditions on the slices.
    \medskip \\It remains to see that the associated graded pieces of \(\CST(X;E)\) are strongly even, that is, we must show that \(\myuline{\map}_{\ul{\Sp}_{C_2}}(X,P^s_sE)
    \)
    is strongly even. 
    Since \(E\) is strongly even, its odd slices vanish, and its even slices are given by 
    \(
    P^{2s}_{2s}E=\Sigma^{s\rho}\mathrm{H}\myuline{\pi}_{s\rho}(E)
    \), as recalled in \cref{rem:asgrisbredon}.
    Strong evenness is preserved under \(s\rho\) suspensions, it suffices to show that \(
    \myuline{\map}_{\ul{\Sp}_{C_2}}(X,\mathrm{H}\myuline{\pi}_{s\rho}(E))
    \)
    is strongly even. Since \(E\) is strongly even, \(\myuline{\pi}_{s\rho}(E)\) is a constant Mackey functor.
    So the result follows from \cref{lem:hills_freeness}.
\end{proof}

\noindent We can phrase a slightly weaker, but also much longer condition. We do not need it in this article but have found uses in forthcoming work.

\begin{remark} \label{remark: weaker conditions strongly even mapping space}
    Instead of assuming that $X \in \Sp_{C_2}$ has finite type $\rho$-free homology, we may also assume the following four conditions.
    \begin{enumerate}[(a)]
        \item The spectrum $\map_{\Sp}(X^e, E^e)$ is even.
        \item The spectrum $\map_{\Sp}(X^e, \mathrm{H} \pi_{2s}^e(E))$ is even for every $s$.
        \item The $C_2$-spectrum $\ul{\map}_{\ul{\Sp}_{C_2}}(X, \uHZ)$ is strongly even.
        \item The group $\pi_{s\rho}(E)$ is finitely generated for each $s$.
    \end{enumerate}
    Conditions (a) and (b) are for example satisfied if $X^e$ has finite type free homology concentrated in even degrees.
\end{remark}

\begin{proof}
    We want to apply \cref{prop:strongly_even_c2_tower} on $\CST(X;E)$. Condition (a) already covers that the underlying mapping spectrum is even. So following the proof of \cref{prop:checkingSAHSS} it remains to show that $\ul{\map}_{\ul{\Sp}_{C_2}}(X, \mathrm{H}\ul{\pi}_{s\rho}(E))$ is strongly even for every $s$.
    \medskip \\By (b) we already assume that it is underlying even. So it suffices to prove
    \[ \pi_{*\rho-2}^{C_2} \ul{\map}_{\ul{\Sp}_{C_2}}(X, \mathrm{H}\ul{\pi}_{s\rho}(E)) = 0 \]
    by \cref{lem:C2_stronger_gap}. But this follows from the universal coefficient theorem (\cref{UCT}) combined with (c) and Greenlees' gap characterization of strong evenness (\cref{lem:GreenleesGap}). We are allowed to use the universal coefficient theorem by the finite type assumption (d).
\end{proof}

\noindent The conditions in \cref{remark: weaker conditions strongly even mapping space} are weaker than those in \cref{prop:checkingSAHSS} \textcolor{chilligreen}{(ii)} by Hill freeness (\cref{lem:hills_freeness}).

\subsection{Lifting Structured Orientations}\label{subsec:liftingorientations}
In this subsection we isolate arguments that would otherwise be repeated several times. We hope this is also a benefit for the reader with different applications in mind. For readability, we do not state these results in full generality, but our proofs suggest straightforward generalizations.
\medskip \\For future work, we require these results in the level of generality of \(R\)-module Thom spectra;
the reader only interested in \cref{mainthm:BPR} and \cref{mainthm: lifting orientations} loses nothing by assuming \(R=\mathbb{S}\).
For simplicity, we will assume that \(R\) is a \( \E_{\infty}^G \)-ring, but this is not strictly needed.
\medskip \\The arguments in this section are generalizations of the main argument in the sketch proof of \cref{mainthm:BPR} from the beginning of \cref{part:applications}.
The main idea of this section can be summarized as follows:
It is often easier to lift a non-equivariant \(\E_{\dim(V)}\)-map \(\mathrm{M}f^e\to E^e\) to an equivariant \(\E_V\)-map \(\mathrm{M}f\to E\), than it is to lift an equivariant but non-multiplicative map \(\mathrm{M}f\to E\) to an \(\E_V\)-map.
\medskip \\For historical reasons, and because we mainly have \(\MU\) in mind, in this section we will write \(\mathrm{M}f\) instead of \(\Th(f)\) for Thom spectra.

\begin{proposition}\label{thm: abstract akward lifting general}
    \fix{Let \(R\in \Alg_{\E^G_\infty}(\myuline{\Sp}_G)\) and $E \in \Alg_{\E_{\infty}^G}(\myuline{\LMod}_R^G)$.} Let $V$ be a $G$-representation and
    let $X$ be an $V$-loop space with a map
     \(f\colon X\to \ul{\Pic}_G(R)\) in $\Alg_{\E_{n \rho_G}}(\myuline{\Sc}_G)$. 
    Suppose there is an \(\E_{V}\)-\(R\)-algebra map \(\mathrm{M}f\to E\).
    Then, there is an equivalence
    \[
    \myuline{\Map}_{\ul{\Alg}_{\E_{V}}(\myuline{\LMod}_R^G)}(\mathrm{M}f,E) \simeq \Omega^\infty\myuline{\map}_{\ul{\Sp}_G}(\Sigma^\infty \mathrm{B}^{V}X, \Sigma^{V}\gl_1(E)).
    \]
    in $\Sc_G$.
\end{proposition}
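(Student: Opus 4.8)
The strategy is to massage the mapping $G$-space $\myuline{\Map}_{\ul{\Alg}_{\E_V}(\myuline{\LMod}_R^G)}(\mathrm{M}f,E)$ through a sequence of adjunctions, using the multiplicative Thom isomorphism and the equivariant recognition principle, until we land in a mapping $G$-spectrum. The starting point is that $X$ is a $V$-loop space, so $X \simeq \Omega^V \mathrm{B}^V X$ via the recognition principle (\cref{thm: recognition}), and hence $\mathrm{M}f$ carries an $\E_V$-$R$-algebra structure by \cref{corollary: Ind of Th} applied to a delooping; moreover since $X$ is grouplike, $\id_{\mathrm{M}f}$ is a $\E_V$-$\mathrm{M}f$-orientation of $f$ (\cref{example: id orientation}), and the given $\E_V$-$R$-algebra map $\mathrm{M}f \to E$ makes $f$ $\E_V$-$E$-orientable.

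\textbf{Key steps, in order.} First I would invoke \cref{corollary: chadwick-mandell}(ii): since $X$ is grouplike, the mapping $G$-space $\myuline{\Map}_{\ul{\Alg}_{\E_V}(\myuline{\LMod}_R^G)}(\mathrm{M}f, E)$ is either empty or equivalent to $\myuline{\Map}_{\ul{\Alg}_{\E_V}(\myuline{\LMod}_R^G)}(R \otimes X, E)$ — and by hypothesis it is nonempty, as there exists an $\E_V$-$R$-algebra map. Here $R \otimes X$ denotes the Thom spectrum of the trivial map, which is the free $\E_V$-$R$-algebra on $X$ in the appropriate sense (using \cref{example: thom along trivial} and \cref{corollary: Ind of Th}). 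Second, I would use freeness of $R \otimes X$ to pass to the underlying space:
\[
\myuline{\Map}_{\ul{\Alg}_{\E_V}(\myuline{\LMod}_R^G)}(R \otimes X, E) \simeq \myuline{\Map}_{\ul{\Alg}_{\E_V}(\myuline{\Sc}_G)}(X, \Omega^\infty E),
\]
where $\Omega^\infty E$ here means the underlying $\E_V$-monoidal $G$-space of $E$. Third, since $X$ is grouplike, any such map factors through $\GL_1$, so by the $\GL_1$-adjunction (\cref{prop: inclusion grouplike}, \cref{corollary: GL1 monoidal}) this is $\myuline{\Map}_{\ul{\Alg}^{\gp}_{\E_V}(\myuline{\Sc}_G)}(X, \GL_1(E))$. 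Fourth, apply the equivariant recognition principle / delooping (\cref{thm: recognition}(ii)) to get $\myuline{\Map}_{\ul{\Sc}^G_*}(\mathrm{B}^V X, \mathrm{B}^V \GL_1(E))$. Fifth, use $\mathrm{B}^V \GL_1(E) \simeq \Omega^\infty \Sigma^V \gl_1(E)$ (\cref{lem:delooping_suspension}, noting $\gl_1(E)$ is connective) together with the stabilization adjunction $\Sigma^\infty_+ \dashv \Omega^\infty$ on pointed $G$-spaces to rewrite the target-side mapping space as $\Omega^\infty \myuline{\map}_{\ul{\Sp}^G}(\Sigma^\infty \mathrm{B}^V X, \Sigma^V \gl_1(E))$. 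Chaining these equivalences gives the claim.

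\textbf{Anticipated main obstacle.} The delicate point is bookkeeping the monoidal structures correctly through steps two and three: one must know that $R \otimes X$ is genuinely the free $\E_V$-$R$-algebra on the grouplike $G$-space $X$ (as opposed to some lax variant), so that the hom-adjunction to $\Omega^\infty E$ is clean, and then that the factorization through $\GL_1$ is natural in the $G$-space variable (not just on $H$-fixed points for each $H$). The first is handled by combining \cref{example: thom along trivial}(i) with the universal property of $\Th_G^\otimes$ (\cref{theorem: universal property of Th}) specialized to the trivial map — indeed $\Map_{\Alg_{\E_V}(\LMod_R)}(R\otimes X, E) \simeq \Map_{\Mon_{\E_V}(\Sc)_{/\Pic}}(X \to \ast \hookrightarrow \Pic_G^\otimes(R), \GL_1(\Pic_G^\otimes(R)_{\downarrow E}))$, and since the source map is null this is just $\Map_{\Mon_{\E_V}(\Sc)}(X, \GL_1(\Pic_G^\otimes(R)_{\downarrow E}))$; using \cref{corollary: GL1 Pic(R)A as pullback} and connectivity of $\gl_1$ this unwinds to the $\GL_1(E)$-statement. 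The naturality in step three is exactly the content of \cref{corollary: GL1 monoidal}, which promotes $\GL_1$ to a $G$-right adjoint on $\ul{\Alg}_{\E_V \otimes \Infl_G \E_1}$. Once these two compatibilities are in place the remaining equivalences are formal applications of the cited results.
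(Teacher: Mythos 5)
Your proposal is correct and follows essentially the same route as the paper: the same chain of equivalences (nonemptiness plus \cref{corollary: chadwick-mandell}\,(ii) to replace $\mathrm{M}f$ by $R\otimes\Sigma^\infty_+X$, the free $R$-module and $\Sigma^\infty_+\dashv\Omega^\infty$ adjunctions, the $\GL_1$-adjunction using grouplikeness of $X$, the equivariant recognition principle/delooping, and $\mathrm{B}^V\GL_1(E)\simeq\Omega^\infty\Sigma^V\gl_1(E)$). The only differences are cosmetic: you merge the two adjunction steps into one and spell out \cref{lem:delooping_suspension} where the paper just cites \cref{thm: recognition}.
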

\begin{proof}
    We perform the following string of equivalences:
    \begin{align*}
        \myuline{\Map}_{\ul{\Alg}_{\E_{n \rho_G}}(\myuline{\LMod}_R^G)}(\mathrm{M}f, E) &\simeq  \myuline{\Map}_{\ul{\Alg}_{\E_{n \rho_G}}(\myuline{\LMod}_R^G)}(R\otimes\Sigma^\infty_+ X,E) \\
        &\simeq \myuline{\Map}_{\ul{\Alg}_{\E_{n \rho_G}}(\myuline{\Sp}_G)}(\Sigma^\infty_+ X,E)\\ 
        &\simeq \myuline{\Map}_{\ul{\Alg}_{\E_{n \rho_G}}(\myuline{\Sc}_G)}(X,\Omega^{\infty} E)\\ 
        &\simeq \myuline{\Map}_{\ul{\Alg}_{\E_{n \rho_G}}^{\gp}(\myuline{\Sc}_G)}(X, \GL_1(E))\\ 
        &\simeq \myuline{\Map}_{\ul{\Spaces}^G_*}(\mathrm{B}^{V}X, \mathrm{B}^{V}\GL_1(E))\\ 
        &\simeq \myuline{\Map}_{\ul{\Sp}_G}(\Sigma^\infty \mathrm{B}^{V}X, \Sigma^{V}\gl_1(E))\\ 
        &\simeq \Omega^\infty\myuline{\map}_{\ul{\Sp}_G}(\Sigma^\infty \mathrm{B}^{V}X, \Sigma^{V}\gl_1(E)) 
    \end{align*}
    In order the equivalences are justified as follows:
    the first equivalence uses the multiplicative Thom isomorphism \cref{corollary: chadwick-mandell} \textcolor{chilligreen}{(ii)};
    the second uses the free \(R\)-module adjunction;
    the third uses the \(\Sigma^\infty_+\dashv \Omega^\infty\) adjunction;
    the fourth uses \cref{corollary: GL1 monoidal} along with the assumption that \(X\) is grouplike;
    the fifth and sixth both use \cref{thm: recognition}; 
    the final is by definition of \(\myuline{\map}_{\ul{\Sp}_G}\).
\end{proof}

\begin{remark}
    The strong multiplicative structures on $R$ and $E$ were assumed just so that $\GL_1(E)$ can be delooped to \(\gl_1(E)\).
    \fix{Suppose we only have the following weaker assumptions:
    \(R\) is an \(\E_{V}\otimes \E_2\)-ring; 
    \(E\) is an \(\E_V\otimes \E_1\)-ring, and there is an \(\E_V\otimes \E_1\)-ring map \(R\to E\);
    and 
    \(E\) admits an \(\E_V\)-\(\mathrm{M}f\)-orientation.
    Then, the first five equivalences in the proof of \cref{thm: abstract akward lifting general} hold and give an equivalence
    \[
    \myuline{\Map}_{\ul{\Alg}_{\E_{V}}(\myuline{\LMod}_R^G)}(\mathrm{M}f,E) \simeq \myuline{\Map}_{\ul{\Spaces}^G_*}(\mathrm{B}^{V}X, \mathrm{B}^{V}\GL_1(E)).
    \]
    in $\Sc_G$.}
\end{remark}

\noindent Specializing to $G = C_2$ and assembling all our work, we may now deduce the abstract lifting result.

\begin{theorem}\label{thm:abstract_lifting_clean_c2}
    Let \(R\in \Alg_{\E^{C_2}_\infty}(\myuline{\Sp}_{C_2})\) and let $E \in \Alg_{\E_{\infty}^{C_2}}(\myuline{\LMod}_R^{C_2})$ be strongly even. Let $n \in \N$ and let $X$ be an $n\rho$-loop space with a map
     \(f\colon X\to \ul{\Pic}_{C_2}(R)\) in $\Alg_{\E_{n \rho}}(\myuline{\Sc}_{C_2})$. Suppose the following:
    \begin{enumerate}[(i)]
        \item There exists an \(\E_{n\rho}\)-\(R\)-algebra map \(\mathrm{M}f\to E\).
        \item The space \(\mathrm{B}^{n\rho}X\) has finite type $\rho$-free homology.
    \end{enumerate}
    Then, the restriction map 
    \[
    \res_e^{C_2}\colon  \pi^{C_2}_{*\rho}\left(\myuline{\Map}_{\ul{\Alg}_{\E_{n \rho}}(\myuline{\LMod}_R^{C_2})}(\mathrm{M}f,E) \right)\to \pi^{e}_{*\vert\rho\vert}\left(\myuline{\Map}_{\ul{\Alg}_{\E_{n \rho}}(\myuline{\LMod}_R^{C_2})}(\mathrm{M}f,E) \right)
    \]
    is an isomorphism.
\end{theorem}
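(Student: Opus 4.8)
The statement is the $C_2$-specialization of the machinery assembled in this section, so the plan is to reduce \cref{thm:abstract_lifting_clean_c2} to \cref{cor:Thom_reduce_to_SAHSS} (with $G = C_2$, where the surjectivity is already upgraded to an isomorphism by \cref{prop:strongly_even_c2_tower}). The only gap between the hypotheses of \cref{thm:abstract_lifting_clean_c2} and those of \cref{cor:Thom_reduce_to_SAHSS} is that the latter asks directly for (i) strong evenness of the cohomological slice tower $\CST(\Sigma^\infty \mathrm{B}^{n\rho}X; \Sigma^{n\rho}\gl_1 E)$ and (ii) underlying evenness of the mapping spectrum $\myuline{\map}_{\ul{\Sp}^{C_2}}(\Sigma^\infty \mathrm{B}^{n\rho}X, \Sigma^{n\rho}\gl_1(E))$, whereas here we instead impose the cleaner homological hypothesis that $\mathrm{B}^{n\rho}X$ has finite type free homology with $\uHZ \otimes \mathrm{B}^{n\rho}X \simeq \uHZ \otimes \bigoplus_{i \in I} S^{n_i \rho}$. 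So the substance of the proof is to check that this homological freeness hypothesis, together with strong evenness of $E$, implies conditions (i) and (ii) of \cref{cor:Thom_reduce_to_SAHSS}.

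\textbf{Key steps.} First I would record that $\gl_1(E)$ is a strongly even, connective $C_2$-spectrum: connectivity is immediate since $\GL_1(E)$ is a connected-by-$\pi_0$ space delooping to a connective spectrum via \cref{thm: recognition}, and strong evenness is \cref{example:gl1(R)_strongly_even} applied to the strongly even $\E_\infty^{C_2}$-ring $E$. A $\rho$-suspension of a strongly even spectrum is strongly even and a $n\rho$-suspension of a connective spectrum is connective, so $\Sigma^{n\rho}\gl_1(E)$ is strongly even and connective. Then I would simply invoke \cref{prop:checkingSAHSS} with the input $X$ there taken to be $\Sigma^\infty \mathrm{B}^{n\rho}X$ (which by hypothesis has finite type free homology $\uHZ \otimes \Sigma^\infty\mathrm{B}^{n\rho}X \simeq \uHZ \otimes \bigoplus_{i\in I} S^{n_i\rho}$) and the strongly even connective spectrum there taken to be $\Sigma^{n\rho}\gl_1(E)$. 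The conclusion of \cref{prop:checkingSAHSS} is exactly that
\[
\res_e^{C_2}\colon \pi^{C_2}_{*\rho}\bigl(\myuline{\map}_{\ul{\Sp}^{C_2}}(\Sigma^\infty\mathrm{B}^{n\rho}X, \Sigma^{n\rho}\gl_1(E))\bigr) \to \pi_{2*}\bigl(\map_{\Sp^{C_2}}(\Sigma^\infty\mathrm{B}^{n\rho}X, \Sigma^{n\rho}\gl_1(E))\bigr)
\]
is an isomorphism; internally its proof verifies exactly that the cohomological slice tower is a strongly even tower and that the underlying mapping spectrum is even, using Hill's freeness argument \cref{lem:hills_freeness} to identify the associated graded pieces as (suspensions of) Eilenberg--MacLane spectra with free finite type mapping spectra, and \cref{prop:strongly_even_c2_tower} to pass from the tower to the limit. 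Finally I would transport this isomorphism across the chain of equivalences in \cref{thm: abstract akward lifting general}: since
\[
\myuline{\Map}_{\ul{\Alg}_{\E_{n\rho}}(\myuline{\LMod}_R^{C_2})}(\mathrm{M}f, E) \simeq \Omega^\infty \myuline{\map}_{\ul{\Sp}^{C_2}}(\Sigma^\infty\mathrm{B}^{n\rho}X, \Sigma^{n\rho}\gl_1(E))
\]
as pointed $C_2$-spaces, the homotopy Mackey functors (in particular the $*\rho$- and $2*$-graded homotopy groups and the restriction maps between them) of the two sides agree, so the desired isomorphism follows from the one just established. This requires checking that the hypothesis ``$X$ is an $n\rho$-loop space'' indeed supplies the grouplike $\E_{n\rho}$-structure needed for the fourth equivalence in \cref{thm: abstract akward lifting general}, but that is exactly the content of $X$ being an $n\rho$-loop space, so no extra work is needed.

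\textbf{Main obstacle.} There is no deep obstacle remaining at this level — the theorem is a clean corollary of \cref{cor:Thom_reduce_to_SAHSS}/\cref{prop:checkingSAHSS} and \cref{thm: abstract akward lifting general}, all of which are already established in the excerpt. The only points requiring care are bookkeeping: (a) making sure the connectivity hypothesis on the ``$E$'' of \cref{prop:checkingSAHSS} is met by $\Sigma^{n\rho}\gl_1(E)$ — which needs $n \geq 0$ and connectivity of $\gl_1(E)$, both of which hold; (b) confirming that the finite type free homology hypothesis on $\mathrm{B}^{n\rho}X$ transports to $\Sigma^\infty\mathrm{B}^{n\rho}X$ verbatim (it does, since $\uHZ$-homology is insensitive to $\Sigma^\infty$); and (c) tracking that the equivalence of $C_2$-spaces in \cref{thm: abstract akward lifting general} is genuinely an equivalence of $C_2$-\emph{spaces} (not merely levelwise), so that applying $\myuline{\pi}_{*\rho}$ and $\myuline{\pi}_{2*}$ and comparing restriction maps is legitimate — this is guaranteed because every equivalence in that chain is a $C_2$-equivalence by construction. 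So the ``hard part'' is really just assembling the already-proven pieces in the right order and confirming the hypotheses match up.
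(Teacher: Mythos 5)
Your proposal is correct and follows essentially the same route as the paper: reduce via the equivalence of \cref{thm: abstract akward lifting general} to computing $\myuline{\map}_{\ul{\Sp}^{C_2}}(\Sigma^\infty\mathrm{B}^{n\rho}X,\Sigma^{n\rho}\gl_1(E))$, note that strong evenness of $E$ (via \cref{example:gl1(R)_strongly_even}) and connectivity give the hypotheses on $\Sigma^{n\rho}\gl_1(E)$, and conclude by \cref{prop:checkingSAHSS} applied with the finite type free homology of $\mathrm{B}^{n\rho}X$. The only cosmetic difference is that you phrase the reduction through \cref{cor:Thom_reduce_to_SAHSS}, while the paper invokes \cref{prop:checkingSAHSS} directly; the content is identical.
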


\begin{proof}
    By \cref{thm: abstract akward lifting general} we have an equivalence of \(C_2\)-spaces
    \[
    \myuline{\Map}_{\ul{\Alg}_{\E_{n \rho}}(\myuline{\LMod}_R^{C_2})}(\mathrm{M}f,E) \simeq \Omega^\infty\myuline{\map}_{\ul{\Sp}_{C_2}}(\Sigma^\infty \mathrm{B}^{n\rho}X,\Sigma^{n\rho}\gl_1(E)).
    \]
    Note that the right side has a preferred basepoint, so the equivalence imports a basepoint onto the left side. This is what the basepoint we mean when applying $\pi_{*\rho}$ for $* \neq 0$. By the assumptions on \(\mathrm{B}^{n\rho}X\) and \(E\), the statement follows from \cref{prop:checkingSAHSS}.
    Note here that strong evenness of \(E\) implies strong evenness of \(\Sigma^{n\rho}\gl_1(E)\) by \cref{lem: homotopy of gl1}. Moreover, \(\Sigma^{n\rho}\gl_1(E)\) is connective by construction.
\end{proof}

\begin{proposition} \label{prop: weaker conditions abstract lifting}
    Instead of condition (ii) we can pose the following four conditions.
    \begin{enumerate}[(a)]
        \item The spectrum $\map_{\Sp}(\Sigma^{\infty} \mathrm{B}^{2n}X^e, \Sigma^{2n} \gl_1 E^e)$ is even.\footnote{This is for example satisfied if $\mathrm{B}^{2n}X^e$ has finite type free homology concentrated in even degrees.}
        \item The spectrum $\map_{\Sp}(B^{2n}X^e, \mathrm{H} \pi_{2s}^e(\gl_1 E))$ is even for every $s$.
        \item The $C_2$-spectrum $\ul{\map}_{\ul{\Sp}_{C_2}}(\Sigma^{\infty} \mathrm{B}^{n\rho}X, \uHZ)$ is strongly even.
        \item The group $\pi_{s\rho}(\gl_1 E)$ is finitely generated for each $s$.
    \end{enumerate}
    Conditions (a) and (b) are for example satisfied if $\mathrm{B}^{2n}X^e$ has finite type free homology concentrated in even degrees.
\end{proposition}

\begin{proof}
    This follows by applying \cref{remark: weaker conditions strongly even mapping space} after seeing
    \[
    \myuline{\Map}_{\ul{\Alg}_{\E_{n \rho}}(\myuline{\LMod}_R^{C_2})}(\mathrm{M}f,E) \simeq \Omega^\infty\myuline{\map}_{\ul{\Sp}_{C_2}}(\Sigma^\infty \mathrm{B}^{n\rho}X,\Sigma^{n\rho}\gl_1(E)).
    \]
    with \cref{thm: abstract akward lifting general}.
\end{proof}

\noindent We will specialize to endomorphisms and in particular study idempotents. When we speak about structured idempotents, we mean a structured map which is an idempotent underlying.

\begin{corollary} \label{corollary: lifting idempotents}
    Let $R \in \Alg_{\E_{\infty}^{C_2}}(\ul{\Sp}_{C_2})$ and let $f \colon X \to \ul{\Pic}_{C_2}(R)$ be a map of $n\rho$-loop spaces. Suppose that $\mathrm{B}^{n\rho}X$ has finite type $\rho$-free homology and that $\mathrm{M}f$ is strongly even. Then, any $\E_{2n}$-idempotent $e \colon \mathrm{M}f^e \to \mathrm{M}f^e$ lifts uniquely to an $\E_{n\rho}$-idempotent $\ol{e} \colon \mathrm{M}f \to \mathrm{M}f$.
\end{corollary}

\begin{proof}
    The unique lift comes from \cref{thm:abstract_lifting_clean_c2} where condition (i) is always satisfied since $\id_{\mathrm{M}f}$ provides an orientation. Now suppose that $\ol{e}$ lifts an idempotent $e$. Then, $\ol{e} \circ \ol{e}$ lifts $e \circ e \simeq e$. Thus, $\ol{e} \circ \ol{e} \simeq \ol{e}$ by uniqueness of lifts. So $\ol{e}$ is an idempotent. We remark that idempotents are in general more complicated in a higher categorical setting, but no issues arise since we are working in a stable setting \cite[Lemma 1.2.4.6]{lurie2017ha}.
\end{proof}

\noindent Note that we could instead also have demanded the (weaker) conditions from \cref{prop: weaker conditions abstract lifting}.

\section{Real Orientations \& Multiplication on \texorpdfstring{\(\BPR\)}{BPR}}\label{sec: Real orienations}

\subsection{Constructing Multiplicative Real Orientations}\label{subsec:universal_orientation}
Let \(E\) be a strongly even \(\E_\infty^{C_2}\)-ring spectrum.
The goal of this section is to show \(E\) admits an \(\E_\rho\)-\(\MUR\)-orientation.
Informally speaking, our strategy is to produce an orientation via a versal example.
More precisely, we will construct an \(\E_\rho\)-map \(\MUR\to \MWR\), 
where \(\MWR\) is a \(\E_\infty^{C_2}\)-ring spectrum constructed by Angelini-Knoll--Kong--Quigley which admits \(\E_\infty^{C_2}\)-ring maps \(\MWR\to E\) \cite{gabe2025realsyntomiccohomology}.
This is a Real version of the spectrum $\MW$ considered by Hahn--Raksit--Wilson \cite[Definition 3.2.9]{hahn2024motivicfiltrationtopologicalcyclic}.

\begin{construction}[{\cite[5.8]{gabe2025realsyntomiccohomology}}] \label{construction: MWR}
    Let \(\overline{\gamma}\colon \MUR\to \kR\) be any map of \(C_2\)-spectra which is surjective on \(\pi^{C_2}_{*\rho}\).
    For example we may take Hahn--Shi's lift of the Conner--Floyd orientation \cite{hahnRealOrientationsLubin2020}.
    Applying \(\Omega^{\infty}\Sigma^\rho\), using equivariant Bott perodicity (\cref{example: equivariant Bott periodicity ku}), and postcomposing with the Real \(J\)-homomorphism (\cref{subsection: Real J}) gives an \(\E_{\infty}^{C_2}\)-map 
    \[
    \Omega^{\infty}\Sigma^\rho\MUR \to \Omega^{\infty}\Sigma^\rho\kR\simeq \BUR \to \ul{\Pic}_{C_2  }(\ul{\Sp}_{C_2}).
    \]
    The spectrum \(\tb{\MWR}\) is defined to the Thom spectrum of this map.
\end{construction}

\begin{remark}[Real Wilson Spaces]\label{rem: real wilson spaces}
    \hfill 
    \begin{enumerate}[(i)]
        \item The construction of \(\MWR\) by Angelini-Knoll--Kong--Quigley is a (Real) equivariant refinement of a construction of Hahn--Raksit--Wilson. In their work on the even filtration \cite[Definition 3.2.9]{hahn2024motivicfiltrationtopologicalcyclic}, Hahn--Raksit--Wilson constructed \(\MW\) as the Thom spectrum of
    \[
    \tb{\gamma} \colon \Omega^{\infty}\Sigma^2\MU \to \Omega^{\infty}\Sigma^2\ku\simeq \BU \to \Pic(\Sp).
    \]
    By construction, Hahn--Raksit--Wilson's \(\MW\) is the underlying non-equivariant spectrum of Angelini-Knoll--Kong--Quigley's \(\MWR\).
    \item The base space $\Omega^{\infty} \Sigma^{\rho} \MU_{\R}$ of \(\MWR\) is an example of a \emph{Real Wilson Space}.
    The non-equivariant version was first extensively studied by Wilson in his PhD thesis \cite{Wilson1973, wilson1975wilsonspaces2}, hence the nomenclature.
    The study of the Real version was undertaken by Hill--Hopkins \cite{hill2018realwilsonspaces}.
    In particular, they show that \(\Omega^\infty \Sigma^{n\rho}\MUR\) has finite type $\rho$-free homology, see \cite[Theorem 1.2]{hill2018realwilsonspaces}. 
    More precisely, the computation of Hill--Hopkins shows that \(\Omega^\infty \Sigma^{n\rho}\MUR\) has free \(\uHZ\)-homology, and that \(\uHZ\otimes \Omega^\infty \Sigma^{n\rho}\MUR\) is strongly even.
    The finite type assumption follows from the non-equivariant computation Wilson \cite[Theorem 3.3, Corollary 3.4]{Wilson1973}.
    
    Note that the work of Hill--Hopkins also computes the \(\uZ\)-homology of every \(m\rho\)-fold delooping of \(\Omega^\infty \Sigma^\rho \MUR\).
    By \cref{lem:delooping_suspension} we have
    \[
    \mathrm{B}^{m\rho}\Omega^\infty \Sigma^{n\rho}\MUR\simeq \Omega^\infty \Sigma^{(n+m)\rho}\MUR.
    \]
    So for every \(m\), we deduce that \(\mathrm{B}^{m\rho}\Omega^\infty \Sigma^{n\rho}\MUR\) has finite type $\rho$-free homology. 
    \end{enumerate}
\end{remark}

\noindent One of the main motivations for the construction of \(\MWR\) is that it provides multiplicative orientations for many rings of interest.

\begin{theorem}[{\cite{gabe2025realsyntomiccohomology}}] \label{theorem: C2 Einfty map from MW}
    Let \(E\) be a strongly even \(\E_{\infty}^{C_2}\)-ring spectrum. 
    There exists a \(\E_{\infty}^{C_2}\)-ring map \(\MWR\to E\).
\end{theorem}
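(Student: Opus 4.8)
The plan is to produce the $\E_\infty^{C_2}$-ring map $\MWR \to E$ via obstruction theory, exploiting the fact (recalled in \cref{rem: real wilson spaces}) that $\MWR$ has an especially tractable cellular structure: the base space $\Omega^\infty \Sigma^\rho \MUR$ has finite type free $\uHZ$-homology concentrated in dimensions $n_i\rho$, and Hill--Hopkins show $\uHZ \otimes \Omega^\infty\Sigma^\rho\MUR$ is strongly even. First I would observe that the Thom spectrum $\MWR = \Th_{C_2}(\Omega^\infty\Sigma^\rho\MUR \to \ul{\Pic}_{C_2}(\ul{\Sp}^{C_2}))$ inherits an $\E_\infty^{C_2}$-ring structure from the multiplicative Thom spectrum machinery of \cref{construction: multiplicative thom spectra}, since the defining map is a map of $\E_\infty^{C_2}$-monoidal spaces (it is built by applying $\Omega^\infty\Sigma^\rho$ to a ring map $\MUR \to \kR$ and postcomposing with the $\E_\infty^{C_2}$-structured Real $J$-homomorphism). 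The key computational input is that $\MWR$ admits an $\E_\infty^{C_2}$-$\rho$-cellular decomposition: by the Thom isomorphism and the Hill--Hopkins homology computation, $\uHZ \otimes \MWR$ is a free $\uHZ$-module on classes in dimensions $n_i\rho$, all of which are strongly even.

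\textbf{Key steps, in order.} (1) Equip $\MWR$ with its $\E_\infty^{C_2}$-ring structure as above. (2) Use the Hill--Hopkins freeness and finite-type results to write $\MWR$ as built from free $\E_\infty^{C_2}$-algebras $\S[S^{n\rho}]$ (the free $C_2$-$\E_\infty$-algebra on a sphere $S^{n\rho}$) by iterated pushouts along cells attached in dimensions $n_i\rho$ — this is the $\E_\infty^{C_2}$-$\rho$-cellular decomposition of Angelini-Knoll--Kong--Quigley. (3) Since mapping out of a free $\E_\infty^{C_2}$-algebra $\S[S^{n\rho}]$ into $E$ is controlled by $\Omega^\infty \Sigma^{n\rho} E$, reduce the construction of the map and the vanishing of obstructions to the homotopy Mackey functors $\myuline{\pi}_{n\rho}(E)$ and $\myuline{\pi}_{n\rho - 1}(E)$. (4) Invoke strong evenness of $E$: $\pi^{C_2}_{*\rho - 1}(E) = 0$ (indeed $E$ has a gap by \cref{lem:GreenleesGap}), so all obstructions to extending the map over successive cells vanish, and the relevant extension/lifting problems have contractible (or at least nonempty, connected) spaces of solutions in the relevant range. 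This yields the desired $\E_\infty^{C_2}$-ring map $\MWR \to E$.

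\textbf{Main obstacle.} The hard part will be step (2): making precise the $\E_\infty^{C_2}$-$\rho$-cellular decomposition of $\MWR$, i.e. showing that the Hill--Hopkins statement ``$\uHZ \otimes \Omega^\infty\Sigma^\rho\MUR$ is a free strongly even $\uHZ$-module'' can be upgraded — via the Thom isomorphism $\uHZ \otimes \MWR \simeq \uHZ \otimes \Sigma^\infty_+ \Omega^\infty\Sigma^\rho\MUR$ — to an honest filtration of $\MWR$ as an $\E_\infty^{C_2}$-ring by cell attachments in dimensions $n_i\rho$, with the attaching maps controlled well enough that the obstruction groups are exactly the $\myuline{\pi}_{*\rho - 1}$ that strong evenness kills. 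This is where one must be careful about the interaction between the multiplicative (operadic) cellular structure and the additive homology computation; since this is precisely \cite[Theorem 5.10]{gabe2025realsyntomiccohomology} of Angelini-Knoll--Kong--Quigley, the cleanest route is to cite their result directly (as the excerpt does) rather than reprove it, and then the remaining obstruction-theoretic argument in steps (3)--(4) is formal given strong evenness of $E$. I would therefore present the proof as: (i) recall the $\E_\infty^{C_2}$-ring structure on $\MWR$ and its $\rho$-cellular decomposition from \cite{gabe2025realsyntomiccohomology}; (ii) run the standard obstruction theory for extending a unit map $\S \to E$ over the cells, noting each obstruction lies in $\pi^{C_2}_{n\rho - 1}(E) = 0$ by strong evenness; (iii) conclude.
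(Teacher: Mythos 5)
Your proposal is correct and coincides with the paper's treatment: the paper's proof is simply a citation to \cite[Theorem 5.10(1)]{gabe2025realsyntomiccohomology} (specifically the last paragraph of that proof), which is exactly the $\E_{\infty}^{C_2}$-$\rho$-cellular decomposition plus the obstruction-theoretic argument, with obstructions in $\pi^{C_2}_{n\rho-1}(E)=0$ killed by strong evenness, that you outline. Your steps (2)--(4) are the content of the cited AKKQ argument rather than new work, and your suggested presentation (cite the decomposition, then note the formal obstruction theory) matches what the paper does.
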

\begin{proof}
    This follows from \cite[Theorem 5.10 (1)]{gabe2025realsyntomiccohomology}. Specifically, see the last paragraph of the proof.
\end{proof}

\begin{proposition}[\(\MWR\)-splitting]\label{prop: MWR splitting}
    There is an \(\E_\rho\)-idempotent of \(\MWR\) that splits off \(\MUR\) as an \(\E_\rho\)-retract.
    In particular, there is an \(\E_\rho\)-map \(\MUR\to \MWR\).
\end{proposition}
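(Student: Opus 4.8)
The plan is to mimic, in the Real equivariant setting, the non-equivariant splitting of $\MW$ into copies of $\MU$ due to Hahn--Raksit--Wilson \cite[Theorem 3.2.10]{hahn2024motivicfiltrationtopologicalcyclic}, and then bootstrap the relevant idempotent up to an $\E_\rho$-map via the idempotent-lifting machinery of \cref{thm:idempotent_lifting}. First I would recall that, non-equivariantly, Hahn--Raksit--Wilson produce an $\E_2$-ring idempotent $g^e \colon \MW \to \MW$ (or, more precisely, a map whose image of the underlying splitting is a copy of $\MU$) exhibiting $\MU$ as a retract of $\MW$ in $\E_2$-rings. Since $\MW^e$ is the underlying spectrum of $\MWR$ (\cref{rem: real wilson spaces}(i)), this gives exactly the input data of \cref{thm:idempotent_lifting}: an $\E_{2}$-$\mathbb S^e$-algebra idempotent on $\mathrm{M}f^e = \MW^e$, where $f \colon \Omega^\infty \Sigma^\rho \MUR \to \ul{\Pic}_{C_2}(\ul{\Sp}^{C_2})$ is the defining map from \cref{construction: MWR} (with $R = \mathbb S$ and $n = 1$, so $\E_{n\rho} = \E_\rho$ and $\E_{2n} = \E_2$).

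The hypotheses of \cref{thm:idempotent_lifting} that need checking are: (a) $\mathrm{M}f = \MWR$ is strongly even; and (b) $\mathrm{B}^{\rho}X = \mathrm{B}^{\rho}\Omega^\infty\Sigma^\rho\MUR \simeq \Omega^\infty \Sigma^{2\rho}\MUR$ (by \cref{lem:delooping_suspension}) has finite type free homology with $\uHZ$-basis in degrees $n_i\rho$. For (b), this is precisely the content of Hill--Hopkins' computation of the $\uZ$-homology of deloopings of Real Wilson spaces, as spelled out in \cref{rem: real wilson spaces}(ii): $\uHZ \otimes \Omega^\infty\Sigma^{2\rho}\MUR$ is $\uHZ$-free, strongly even, and of finite type. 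For (a), one argues that $\MWR$ is strongly even: it is a Thom spectrum over a Real Wilson space along a map of $\E_\infty^{C_2}$-spaces, so $\MWR$ is an $\E_\infty^{C_2}$-ring; by the multiplicative Thom isomorphism after base change (or directly from the Hill--Hopkins homology computation, since $\uHZ \otimes \MWR$ is $\uHZ$-free with strongly even $\uHZ$-homology, one checks the $\RO(C_2)$-graded homotopy via a freeness/slice argument as in \cref{sec:cohomological_slice}) one deduces that $\pi^{C_2}_{*\rho-i}(\MWR) = 0$ for $i = 1,2,3$, and then Greenlees' gap characterization \cref{lem:GreenleesGap} gives strong evenness. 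Alternatively $\MWR$ receives an $\E_\infty^{C_2}$-map to, say, $\kR$ (or one applies \cref{theorem: C2 Einfty map from MW}) and compares.

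With hypotheses (a) and (b) verified, \cref{thm:idempotent_lifting} produces an $\E_\rho$-algebra idempotent $\ol{g} \colon \MWR \to \MWR$ lifting Hahn--Raksit--Wilson's $\E_2$-idempotent $g^e$. This idempotent splits in $\Alg_{\E_\rho}(\ul{\Sp}^{C_2})$ (idempotents split in any idempotent-complete $\infty$-category, and $\Alg_{\E_\rho}(\ul{\Sp}^{C_2})$ has all colimits), yielding a retract $Y$ of $\MWR$ as $\E_\rho$-rings. It remains to identify $Y \simeq \MUR$: on underlying spectra the splitting of $\ol{g}$ restricts to the splitting of $g^e$, so $Y^e \simeq \MU$; and since $Y$ is a retract of the strongly even spectrum $\MWR$, $Y$ is strongly even, so $Y \simeq \MUR$ follows from the rigidity of strongly even spectra (a map between strongly even spectra that is an underlying equivalence is an equivalence, \cite[Lemma 3.4]{hillmeier2017}), using that both $Y$ and $\MUR$ have the same underlying spectrum and that the inclusion/retraction of the splitting exhibits the comparison. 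In particular the composite $\MUR \simeq Y \hookrightarrow \MWR$ is an $\E_\rho$-map.

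\textbf{Main obstacle.} The crux is verifying strong evenness of $\MWR$ and, more delicately, that the \emph{underlying} $\E_2$-idempotent of Hahn--Raksit--Wilson indeed lands in the regime where \cref{thm:idempotent_lifting} applies and that its lift splits off precisely $\MUR$ rather than some a priori different strongly even retract. The strong-evenness input for $\MWR$ and its deloopings is genuinely supplied by the Hill--Hopkins freeness computations (\cref{rem: real wilson spaces}), so the real work is bookkeeping: matching the degrees $n_i\rho$ appearing in $\uHZ \otimes \Omega^\infty\Sigma^{2\rho}\MUR$ with the hypotheses, and invoking the stable-idempotent subtlety (homotopy idempotent versus idempotent, \kerodon{0420}) already handled inside the proof of \cref{thm:idempotent_lifting}. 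The identification $Y \simeq \MUR$ is then formal given strong-even rigidity.
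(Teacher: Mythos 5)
Your overall strategy coincides with the paper's: lift the Hahn--Raksit--Wilson $\E_2$-idempotent on $\MW$ to an $\E_\rho$-idempotent on $\MWR$ via \cref{thm:idempotent_lifting}, using the Hill--Hopkins freeness of (deloopings of) Real Wilson spaces for the base-space hypothesis and strong evenness of $\MWR$ (which the paper simply cites from \cite[Theorem C, Proposition 5.9]{gabe2025realsyntomiccohomology}, rather than re-deriving as you sketch), and then split the lifted idempotent.

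There is, however, a genuine gap in your final identification of the split summand $Y$ with $\MUR$. Hill--Meier rigidity \cite[Lemma 3.4]{hillmeier2017} requires an actual \emph{map} between two strongly even $C_2$-spectra that is an underlying equivalence; it is not enough to know that $Y$ and $\MUR$ are both strongly even and have equivalent underlying spectra. The maps you point to, ``the inclusion/retraction of the splitting,'' only relate $Y$ to $\MWR$, not to $\MUR$ --- indeed, an $\E_\rho$-map between $\MUR$ and $\MWR$ is exactly what \cref{prop: MWR splitting} is trying to produce, so it cannot be assumed at this stage. The paper closes this gap with one more application of the lifting machinery: the non-equivariant splitting provides an $\E_2$-map $\MW \to \MU$ with $\MU \hookrightarrow \MW \to \MU$ homotopic to the identity; since $\MUR$ is strongly even and admits an $\E_\infty^{C_2}$-map from $\MWR$ (\cref{theorem: C2 Einfty map from MW}), \cref{thm:abstract_lifting_clean_c2} lifts $\MW \to \MU$ to an $\E_\rho$-map $\MWR \to \MUR$, and the composite $Y \to \MWR \to \MUR$ is then a map of strongly even spectra which is an underlying equivalence, so rigidity applies. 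Without this (or some substitute construction of a comparison map), your argument only shows that $\MWR$ has an $\E_\rho$-retract whose underlying spectrum is $\MU$, not that this retract is $\MUR$.
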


\begin{proof}
    There are three steps: First we discuss a non-equivariant $\E_2$-splitting after Hahn--Raksit--Wilson. Then, we lift the non-equivariant splitting to an equivariant splitting. Finally, we check that the summand split off equivariantly is indeed \(\MUR\).
    \medskip \\Hahn--Raksit--Wilson show \(\MW \simeq \MU\otimes\, \Sigma^\infty_+ \mathrm{fib}(\Omega^\infty \Sigma^2\gamma) \) as \(\E_2\)-rings in the proof of \cite[Theorem 3.2.10]{hahn2024motivicfiltrationtopologicalcyclic}.
    Now consider the map \(\mathrm{fib}(\Omega^\infty \Sigma^2\gamma)\to *\) which makes \(\Sigma^\infty_+ \mathrm{fib}(\Omega^\infty \Sigma^2\gamma)\) into an augmented \(\E_2\)-algebra.
    Using the augmentation we can construct an $\E_2$-idempotent
    \[
    e\colon\MW\simeq \MU\otimes\, \Sigma^\infty_+ \mathrm{fib}(\Omega^\infty \Sigma^2\gamma) \to \MU\otimes \,\mathbb{S} \to   \MU\otimes \,\Sigma^\infty_+ \mathrm{fib}(\Omega^\infty \Sigma^2\gamma) \simeq \MW
    \]
    which splits off $\MU$ from $\MW$. Thus,
    \[
    \MU\simeq \colim \left(\MW\xrightarrow{e} \MW\xrightarrow{e} \MW\xrightarrow{e} \cdots \right).
    \]
    This completes the first step.
    \medskip \\Now, we show that \(e\) can be lifted to an \(\E_\rho\)-idempotent \(\overline{e}\colon \MWR\to \MWR\).
    So we must check the conditions of \cref{corollary: lifting idempotents}.
    The fact that \(\MWR\) is strongly even is \cite[Theorem C, Proposition 5.9.]{gabe2025realsyntomiccohomology}.
    It remains to check the condition on the base space \(\Omega^\infty \Sigma^\rho\MUR\) of \(\MWR\).
    To apply \cref{corollary: lifting idempotents} in the case of \(\E_\rho\)-structures, we must show that \(\mathrm{B}^{\rho}\Omega^\infty \Sigma^{\rho}\MUR\) has finite type $\rho$-free homology. 
    This follows from the work of Hill--Hopkins as recalled in \cref{rem: real wilson spaces}.
    So \cref{corollary: lifting idempotents} produces an \(\E_\rho\)-idempotent \(\overline{e}\colon \MWR\to \MWR\) lifting \(e\). 
    \medskip \\As an idempotent, $\ol{e}$ splits off a summand. It remains to check that this summand is indeed \(\MUR\). We temporarily make the distinction and write
    \[
    \overline{\MU}_{\R} \coloneqq \colim \left(
    \MWR \xrightarrow{\overline{e}} \MWR \xrightarrow{\overline{e}} \MWR \xrightarrow{\overline{e}}\cdots
    \right).
    \]
    As \(\overline{\MU}_{\R}\) is a filtered colimit of strongly even \(C_2\)-spectra,
    by Greenlees' gap characterization of strong evenness (\cref{lem:GreenleesGap}), it is immediate that \(\overline{\MU}_{\R}\) is strongly even.
    Thus, by \cite[Lemma 3.4]{hillmeier2017}, it suffices to produce a map \(\overline{\MU}_{\R}\to \MUR\) that is a non-equivariant equivalence.
    By construction of \(\overline{\MU}_{\R}\) we have a map \(\overline{\MU}_{\R}\to \MWR\) whose underlying non-equivariant map is the inclusion \(\MU\hookrightarrow \MW\).
    The non-equivariant splitting provides an \(\E_2\)-map \(\MW\to \MU\) such that the composite 
    \(\MU\hookrightarrow \MW \to \MU\) is equivalent to the identity.
    Using \cref{thm:abstract_lifting_clean_c2} we can produce a lift of \(\MW\to \MU\) to an \(\E_\rho\)-map \(\MWR\to \MUR\). Thus we have an \(\E_\rho\)-composite \(\overline{\MU}_{\R}\to \MWR\to \MUR\) whose underlying non-equivariant map is an equivalence. This completes the proof.
\end{proof}

\begin{corollary}\label{cor: existence of Erho-MUR-orientations}
    Any strongly even \(\E_{\infty}^{C_2}\)-ring admits an \(\E_\rho\)-\(\MUR\)-orientation.
\end{corollary}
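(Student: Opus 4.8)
\textbf{Proof proposal for \cref{cor: existence of Erho-MUR-orientations}.}

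The plan is to combine the versal orientation of $\MWR$ with the $\E_\rho$-retraction $\MUR \to \MWR$ from \cref{prop: MWR splitting}. Concretely, let $E$ be a strongly even $\E_\infty^{C_2}$-ring spectrum. First I would invoke \cref{theorem: C2 Einfty map from MW} to obtain an $\E_\infty^{C_2}$-ring map $\varphi\colon \MWR \to E$; restricting the operad structure along $\E_\rho \to \E_\infty^{C_2}$ (via the appropriate $C_2$-$\infty$-operad map), this is in particular an $\E_\rho$-ring map. Next I would take the $\E_\rho$-map $\iota\colon \MUR \to \MWR$ produced in \cref{prop: MWR splitting} — this is the inclusion of the $\E_\rho$-retract split off by the lifted Quillen-type idempotent $\overline{e}$. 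Composing gives an $\E_\rho$-ring map $\psi = \varphi \circ \iota\colon \MUR \to E$.

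It remains to check that $\psi$ is genuinely an $\E_\rho$-$\MUR$-\emph{orientation} in the sense of \cref{def: orientations}, i.e.\ that the underlying structure really detwists the Thom spectrum. By \cref{corollary: characterization of orientations}, a $\P$-$E$-orientation of the relevant map $f\colon X \to \ul{\Pic}_{C_2}(\S)$ (with $\MUR = \Th_{C_2}(f)$) is the same data as a $\P$-algebra map $\Th_{C_2}^{\otimes}(f) \to E$ that is an equivalence on all point-fibers after passing to the adjoint $E$-module map. Here $X = \Omega^{\infty}\Sigma^{\rho}\MUR$ is (levelwise) connected, so by \cref{lemma: grouplike orientation}(i) the space of $\E_\rho$-$E$-orientations of $f$ is simply $\Map_{\Alg_{\E_\rho}(\LMod_{\S})}(\MUR, E)$; thus \emph{any} $\E_\rho$-ring map $\MUR \to E$ is automatically an orientation, and $\psi$ qualifies. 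Alternatively, one could check the fiberwise-equivalence condition directly: on underlying spectra $\psi^e\colon \MU \to E^e$ is a homotopy ring map, hence a complex orientation of $E^e$ by the classical theory, which gives the required equivalence on fibers over the connected space $X$; the equivariant fiberwise check over $C_2/C_2$ then follows from strong evenness of $E$, since the restriction map $\pi^{C_2}_{*\rho}(E) \to \pi^{e}_{2*}(E)$ is an isomorphism.

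The main obstacle in this argument is not the orientation bookkeeping — that is essentially formal given \cref{lemma: grouplike orientation} and \cref{corollary: characterization of orientations} — but rather the ingredient already discharged in \cref{prop: MWR splitting}, namely lifting the non-equivariant $\E_2$-splitting $\MW \simeq \MU \otimes \Sigma^{\infty}_+ \mathrm{fib}(\Omega^{\infty}\Sigma^2\gamma)$ of Hahn--Raksit--Wilson to an $\E_\rho$-idempotent on $\MWR$. That step rests on \cref{thm:idempotent_lifting}, which in turn requires both the strong evenness of $\MWR$ (from \cite{gabe2025realsyntomiccohomology}) and the Hill--Hopkins finite-type free-homology computation of $\mathrm{B}^{\rho}\Omega^{\infty}\Sigma^{\rho}\MUR$ recalled in \cref{rem: real wilson spaces}. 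Once \cref{prop: MWR splitting} and \cref{theorem: C2 Einfty map from MW} are in hand, the corollary is immediate: precompose and observe that connectedness of $X$ forces the resulting $\E_\rho$-map to be an orientation.
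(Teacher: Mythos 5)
Your proposal is essentially the paper's own proof: obtain an $\E_{\infty}^{C_2}$-map $\MWR \to E$ from \cref{theorem: C2 Einfty map from MW}, precompose with the $\E_\rho$-map $\MUR \to \MWR$ from \cref{prop: MWR splitting}, and observe that any $\E_\rho$-ring map $\MUR \to E$ is automatically an orientation by \cref{lemma: grouplike orientation}. The only slip is your identification of the base space: $\MUR$ is the Thom spectrum over $\BUR$ via the Real $J$-homomorphism, not over the Real Wilson space $\Omega^{\infty}\Sigma^{\rho}\MUR$ (that is the base of $\MWR$); since $\BUR$ is grouplike and levelwise connected, \cref{lemma: grouplike orientation} applies all the same and your argument stands.
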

\begin{proof}
    Since $\BU_{\R}$ is grouplike, any \(\E_\rho\)-map \(\MUR\to E\) is an \(\E_\rho\)-\(\MUR\)-orientation (\cref{lemma: grouplike orientation}).
    By \cref{theorem: C2 Einfty map from MW} there is a \(\E_{\infty}^{C_2}\)-ring map \(\MWR\to E\). Precomposing this with the map from \cref{prop: MWR splitting} gives the result.
\end{proof}

\noindent The orientation results can be extended to larger groups.
\begin{corollary} \label{corollary: MUG orientations}
Let \(C_2\leq G\) be a finite group and $E \in \Alg_{\E_{\infty}^G}(\ul{\Sp}_G)$. Suppose that \(\mathrm{Res}^G_{C_2}E\) is a strongly even.
Then, there exists an \(\Coind^G_{C_2}\E_\rho\)-algebra map \(\MU^{(\! ( G ) \! )} = N_{C_2}^G \MUR \to E\).
\end{corollary}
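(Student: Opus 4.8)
The plan is to deduce this from the $C_2$-level result (\cref{cor: existence of Erho-MUR-orientations}, or more precisely the $\E_\rho$-map $\MUR \to E$ produced there) together with the coinduction--norm adjunction formalism set up in \cref{construction: Coind}. First I would record that, since $\Res^G_{C_2}E$ is strongly even, \cref{cor: existence of Erho-MUR-orientations} (applied to the $\E_\infty^{C_2}$-ring $\Res^G_{C_2}E$, which is $\E_\infty^{C_2}$ since restriction of a $G$-$\E_\infty$-ring is $C_2$-$\E_\infty$ by \cref{lemma: Res N sym mon}) produces an $\E_\rho$-algebra map $\MUR \to \Res^G_{C_2}E$ in $\Alg_{\E_\rho}(\Sp_{C_2})$. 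The goal is to transpose this across the $(N^G_{C_2} \dashv \Res^G_{C_2})$-type adjunction relating $\E_\rho$-algebras for $C_2$ and $\Coind^G_{C_2}\E_\rho$-algebras for $G$.

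The key step is the commutative diagram of \cref{construction: Coind}(ii): with $\O^{\otimes} = \E_\rho$ (a singly $C_2$-colored $C_2$-$\infty$-operad) and $\C = \ul{\Sp}_G$ (which is $G$-symmetric monoidal and distributive by \cref{theorem: nardin Sp is distributive}), there is a composite of symmetric monoidal functors
\[
\Alg_{\E_\rho}(\Sp_{C_2}) \xrightarrow{\ \varepsilon^*\ } \Alg_{\Res^G_{C_2}\Coind^G_{C_2}\E_\rho}(\Sp_{C_2}) \xrightarrow{\ N^G_{C_2}\ } \Alg_{\Coind^G_{C_2}\E_\rho}(\Sp_G),
\]
lying over $N^G_{C_2}\colon (\Sp_G)_{C_2} \to (\Sp_G)_{G}$. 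Moreover $\Coind^G_{C_2}\E_\rho$ is singly $G$-colored by \cref{lemma: coinduction of singly colored}, so these algebra categories are the expected ones. I would apply this composite to the $\E_\rho$-algebra map $\MUR \to \Res^G_{C_2}E$. On underlying $C_2$-objects this is the norm $N^G_{C_2}(\MUR) \to N^G_{C_2}\Res^G_{C_2}E$, i.e.\ a map $\MU^{(\!(G)\!)} \to N^G_{C_2}\Res^G_{C_2}E$ of $\Coind^G_{C_2}\E_\rho$-algebras. Finally, postcompose with the counit $N^G_{C_2}\Res^G_{C_2}E \to E$ of the norm--restriction adjunction, which is a map of $\E_\infty^G$-algebras and in particular (by restricting the operad structure along $\Coind^G_{C_2}\E_\rho \to \E_\infty^G$, noting $\Coind^G_{C_2}\E_\rho$ maps to the terminal $G$-$\infty$-operad) a map of $\Coind^G_{C_2}\E_\rho$-algebras. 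The composite $\MU^{(\!(G)\!)} \to E$ is the desired $\Coind^G_{C_2}\E_\rho$-algebra map.

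The main obstacle I expect is not conceptual but bookkeeping: one must check that the counit $N^G_{C_2}\Res^G_{C_2}E \to E$ is genuinely a $\Coind^G_{C_2}\E_\rho$-algebra map, which amounts to knowing that the $\E_\infty^G$-structure on $E$ restricts compatibly along the canonical map $\Coind^G_{C_2}\E_\rho^{\otimes} \to \E_\infty^{G,\otimes}$ (this map exists since $\E_\infty^G$ is terminal in $\Op_{G,\infty}$), and that this restriction is compatible with the norm structure used in \cref{construction: Coind}. Concretely one wants the triangle with vertices $N^G_{C_2}\Res^G_{C_2}E$, $E$, and the relevant algebra category to commute after forgetting to $\ul{\Sp}_G$, which follows from the underlying statement that the norm--restriction counit is a $G$-symmetric monoidal natural transformation evaluated at $E$ --- this is exactly the content assembled in \cref{lemma: Res N sym mon} and \cref{construction: Coind}. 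Once these compatibilities are in place the argument is a direct application of the cited constructions, so I would keep the exposition brief and point to \cref{construction: Coind}, \cref{lemma: coinduction of singly colored}, and \cref{cor: existence of Erho-MUR-orientations} for the substance.
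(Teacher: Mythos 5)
Your proposal is correct and follows essentially the same route as the paper: produce the $\E_\rho$-map $\MUR \to \Res^G_{C_2}E$ from \cref{cor: existence of Erho-MUR-orientations}, norm it to a $\Coind^G_{C_2}\E_\rho$-map $N^G_{C_2}\MUR \to N^G_{C_2}\Res^G_{C_2}E$ via \cref{construction: Coind}, and postcompose with the $\E_\infty^G$-counit $N^G_{C_2}\Res^G_{C_2}E \to E$, which is in particular a $\Coind^G_{C_2}\E_\rho$-map. The extra compatibility checks you flag are exactly the content the paper delegates to \cref{construction: Coind} and \cref{lemma: Res N sym mon}, so nothing further is needed.
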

\begin{proof}
    By \cref{cor: existence of Erho-MUR-orientations}, there is an \(\E_\rho\)-map \(\MUR\to \mathrm{Res}^G_{C_2}E\). We then take the composite
    \begin{center}
        \begin{tikzcd}
            N_{C_2}^G \MUR \arrow[r] & N_{C_2}^G \Res_{C_2}^G E \arrow[r] & E
        \end{tikzcd}
    \end{center}
    where the first map is a $\Coind_{C_2}^G \E_{\rho}$-map (\cref{construction: Coind}) and the second map is the counit of an adjunction $N_{C_2}^G \colon \Alg_{\E_{\infty}^{C_2}}(\ul{\Sp}_{C_2}) \rightleftarrows \Alg_{\E_{\infty}^{G}}(\ul{\Sp}_{G}) \cocolon \Res_{C_2}^G$, hence an \(\E^G_\infty\)-map.
\end{proof}

\subsection{Refining Multiplicative Real Orientations}\label{subsec:multiplicative_real_orientations}

Let $E$ be a strongly even $\E_{\infty}^{C_2}$-ring. There always exists some \(\E_\rho\)-map \(\MUR\to E\) by \cref{cor: existence of Erho-MUR-orientations}. 
The main result of this section provides a way to construct preferred maps.

\begin{theorem}\label{thm: main lifting restated}
    Let \(E\) be a strongly even \(\E_{\infty}^{C_2}\)-ring spectrum.
    Any \(\E_2\)-ring map \(\MU\to E^e\) lifts uniquely to an \(\E_\rho\)-ring map \(\MUR\to E\). 
\end{theorem}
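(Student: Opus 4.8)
The strategy is to transport the given non-equivariant $\E_2$-ring map through the $\MWR$-splitting of \cref{prop: MWR splitting}, lift it along that splitting, and then precompose. First, given an $\E_2$-ring map $\MU\to E^e$, precompose with the underlying non-equivariant $\E_2$-map $\MW\to\MU$ coming from the splitting in \cref{prop: MWR splitting} (i.e. the $\E_2$-idempotent $e$ together with its retraction $\MW\to\MU$). This produces an $\E_2$-ring map $\MW\to E^e$. The second step is to lift this map to an $\E_\rho$-ring map $\MWR\to E$. For this we invoke \cref{thm:abstract_lifting_clean_c2} with $R=\mathbb S$, $X=\Omega^\infty\Sigma^\rho\MUR$, $n=1$, $f$ the Real $J$-homomorphism composite defining $\MWR$ (so $\mathrm{M}f=\MWR$), and the target being $E$, viewed as an $\E_\infty^{C_2}$-algebra in $\ul{\LMod}^{C_2}_{\mathbb S}=\ul{\Sp}^{C_2}$. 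The hypotheses are satisfied: $E$ is strongly even by assumption; there exists an $\E_\rho$-$\MUR$-orientation of $E$, equivalently an $\E_\rho$-map $\mathrm M f\to E$ (use \cref{cor: existence of Erho-MUR-orientations} together with \cref{prop: MWR splitting}, or directly \cref{theorem: C2 Einfty map from MW}); and $\mathrm B^{\rho}\Omega^\infty\Sigma^\rho\MUR\simeq\Omega^\infty\Sigma^{2\rho}\MUR$ has finite type free homology with an $\uHZ$-basis in degrees $n_i\rho$ by the Hill--Hopkins computation recalled in \cref{rem: real wilson spaces} (using \cref{lem:delooping_suspension} for the delooping identification). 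Hence \cref{thm:abstract_lifting_clean_c2} gives that
\[
\res_e^{C_2}\colon \pi^{C_2}_{*\rho}\!\left(\myuline{\Map}_{\ul{\Alg}_{\E_{\rho}}(\ul{\Sp}^{C_2})}(\MWR,E)\right)\to \pi^{e}_{2*}\!\left(\myuline{\Map}_{\ul{\Alg}_{\E_{\rho}}(\ul{\Sp}^{C_2})}(\MWR,E)\right)
\]
is an isomorphism. In particular on $\pi_0$ it is a bijection, so our $\E_2$-map $\MW\to E^e$ lifts, and lifts uniquely, to an $\E_\rho$-map $\MWR\to E$.

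The third step is to precompose this $\E_\rho$-map $\MWR\to E$ with the $\E_\rho$-map $\MUR\to\MWR$ from \cref{prop: MWR splitting}, yielding an $\E_\rho$-ring map $\MUR\to E$. To see that this lifts the given $\E_2$-map $\MU\to E^e$, we restrict to underlying non-equivariant spectra: the composite $\MU\to\MW\to E^e$ built here agrees, as an $\E_2$-map, with $\MU\xrightarrow{s}\MW\xrightarrow{r}\MU\to E^e$ where $r s\simeq\id_{\MU}$ by the Hahn--Raksit--Wilson splitting, hence with the original map $\MU\to E^e$. (Here one uses that the underlying non-equivariant $\E_\rho$-data of $\MUR\to\MWR\to E$ is the $\E_2$-data $\MU\xrightarrow{s}\MW\xrightarrow{r}\MU\to E^e$, which is part of how the splitting in \cref{prop: MWR splitting} was constructed.)

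For uniqueness, suppose $g_1,g_2\colon\MUR\to E$ are two $\E_\rho$-maps with $g_1^e\simeq g_2^e$ as $\E_2$-maps. Precomposing with $\MWR\to\MUR$ gives two $\E_\rho$-maps $\MWR\to E$ whose underlying $\E_2$-maps agree (since $r s\simeq\id$); by the injectivity on $\pi_0$ of the restriction map for $\MWR$ above, the two maps $\MWR\to E$ agree. But the idempotent splitting exhibits $\MUR$ as a retract of $\MWR$ in $\Alg_{\E_\rho}(\ul{\Sp}^{C_2})$ — more precisely as the colimit of the idempotent $\overline e$ — so maps out of $\MUR$ are detected by their restriction along $\MWR\to\MUR$ precomposed appropriately; chasing this through the retraction yields $g_1\simeq g_2$. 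Alternatively, and more cleanly, one notes that the restriction map $\pi^{C_2}_{0}\myuline{\Map}_{\ul{\Alg}_{\E_\rho}}(\MUR,E)\to\pi^e_0\myuline{\Map}_{\ul{\Alg}_{\E_2}}(\MU,E^e)$ is itself an isomorphism: this follows by running \cref{thm:abstract_lifting_clean_c2} with $\mathrm M f$ replaced by $\MUR=\mathrm{M}(\BUR\to\ul{\Pic}_{C_2})$ directly, since $\mathrm B^{n\rho}\BUR\simeq\Omega^\infty\Sigma^{(n+1)\rho}\MUR$ again has finite type free $\uHZ$-homology with basis in degrees $*\rho$ by \cref{rem: real wilson spaces}, and $E$ is strongly even.

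\textbf{Expected main obstacle.} All the genuinely hard work — the multiplicative equivariant Thom isomorphism, distributivity of $\LMod$, the strongly-even-tower machinery feeding \cref{thm:abstract_lifting_clean_c2}, and the existence of the $\MWR$-splitting — has already been done. The only real care needed here is bookkeeping: matching up the operadic structures ($\E_2$ underlying versus $\E_\rho$ equivariant) correctly under restriction, and verifying that the underlying non-equivariant map of the composite $\MUR\to\MWR\to E$ is the expected $\E_2$-composite $\MU\xrightarrow s\MW\xrightarrow r\MU\to E^e$ rather than merely homotopic to it at the level of ring maps. The statement about homotopy ring maps $\MU\to E^e$ is then immediate: by Chadwick--Mandell \cite[Theorem 1.2]{chadwickmandell}, any homotopy ring map $\MU\to E^e$ lifts uniquely to an $\E_2$-ring map, to which the above applies.
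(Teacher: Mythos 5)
Your proof is correct and follows essentially the same route as the paper: precompose with the $\E_2$-retraction $\MW\to\MU$, lift the resulting $\E_2$-map $\MW\to E^e$ to an $\E_\rho$-map $\MWR\to E$ via \cref{thm:abstract_lifting_clean_c2} (using the orientation from \cref{theorem: C2 Einfty map from MW} and the Hill--Hopkins freeness of the Real Wilson space from \cref{rem: real wilson spaces}), precompose with the $\E_\rho$-map $\MUR\to\MWR$ of \cref{prop: MWR splitting}, and get uniqueness from the uniqueness of $\E_\rho$-lifts out of $\MWR$ combined with the retraction $\MUR\to\MWR\to\MUR\simeq\id$. One small slip in your optional alternative uniqueness argument: $\mathrm{B}^{n\rho}\BUR\simeq\Omega^\infty\Sigma^{(n+1)\rho}\kR$ (e.g.\ $\mathrm{B}^{\rho}\BUR\simeq\BSUR$), not $\Omega^\infty\Sigma^{(n+1)\rho}\MUR$, and the finite-type free $\uHZ$-homology you need there is that of $\BSUR$ rather than what \cref{rem: real wilson spaces} literally records (which concerns $\Omega^\infty\Sigma^{n\rho}\MUR$); your main line of argument does not rely on this, so the proof stands as written.
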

\begin{proof}
    First,
    we show that any \(\E_2\)-ring map \(\MU\to E^e\) admits a lift to an \(\E_\rho\)-ring map \(\MUR\to E\).
    Recall from \cref{prop: MWR splitting} that we have an \(\E_\rho\)-retraction \(\MUR\to \MWR\to \MUR\) lifting an \(\E_2\)-retraction \(\MU\to \MW\to \MU\). 
    Given an \(\E_2\)-ring map \(\MU\to E^e\), the composite 
    \begin{center}
        \begin{tikzcd}
            \MW \arrow[r] & \MU \arrow[r] & E^e
        \end{tikzcd}
    \end{center} 
    is also an \(\E_2\)-ring map.
    We can apply \cref{thm:abstract_lifting_clean_c2} to lift this to an \(\E_\rho\)-map \(\MWR\to E\). Here, \cref{thm:abstract_lifting_clean_c2} applies since \(E\) is a strongly even \(\E^{C_2}_\infty\)-ring spectrum admitting an \(\E_\rho\)-\(\MWR\)-orientation (\cref{theorem: C2 Einfty map from MW}), and \(\mathrm{B}^{\rho}\Omega^\infty \Sigma^{\rho}\MUR\) has finite type free homology (\cref{rem: real wilson spaces}).
    Precomposing this lift with \(\MUR\to \MWR\) gives an \(\E_\rho\)-map \(\MUR\to E\).
    By construction, this is a lift of the \(\E_2\)-map
    \begin{center}
        \begin{tikzcd}
            \MU \arrow[r] & \MW \arrow[r] & \MU \arrow[r] & E.
        \end{tikzcd}
    \end{center}
    Since \(\MU\to \MW\to \MU\) is $\id_{\MU}$, the \(\E_\rho\)-map \(\MUR\to E\) we produced is indeed a lift of the given \(\E_2\)-map \(\MU\to E^e\).

\medskip
\noindent
    It remains to show that such lifts are unique. 
    Let \(f\colon \MU\to E^e\) be an \(\E_2\)-ring map.
    Let \(\overline{f},\tilde{f}\colon \MUR\to E\) be two \(\E_\rho\)-lifts of \(f\).
    Since \(\overline{f}\) and \(\tilde{f}\) both restrict to \(f\), their precompositions with \(\MWR\to \MUR\) both restrict to the precomposition of \(f\) with \(\MW\to \MU\).
    By \cref{thm:abstract_lifting_clean_c2}, any \(\E_2\)-map \(\MW\to E^e\) admits a unique lift to an \(\E_\rho\)-map \(\MWR\to E\).
    By precomposing with \(\MUR\to \MWR\) we learn that \(\overline{f}\) and \(\tilde{f}\) are equivalent.
\end{proof}

\noindent In particular, this provides a Real version of Chadwick--Mandell's result \cite[Theorem 1.2]{chadwickmandell}.

\begin{corollary}
    Let $E$ \label{remark: Real version of Chadwick Mandell} be a strongly even $\E_{\infty}^{C_2}$-ring spectrum. 
    \begin{enumerate}[(i)]
        \item Any homotopy ring map $\MU \to E^e$ lifts uniquely to an $\E_{\rho}$-ring map $\MUR \to E$.
        \item Any homotopy ring map $\MUR \to E$ lifts uniquely to an $\E_{\rho}$-ring map $\MUR \to E$. \qedhere
    \end{enumerate}
\end{corollary}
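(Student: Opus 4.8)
\textbf{Proof plan for \cref{remark: Real version of Chadwick Mandell}.}
The plan is to deduce both statements from \cref{thm: main lifting restated} together with the classical Chadwick--Mandell theorem \cite[Theorem 1.2]{chadwickmandell}, which says that any homotopy ring map $\MU \to A$ into an even $\E_2$-ring $A$ lifts uniquely to an $\E_2$-ring map. First I would observe that if $E$ is strongly even then $E^e$ is an even $\E_2$-ring (indeed even $\E_\infty$), so the hypotheses of the classical theorem are satisfied with $A = E^e$.

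For part (i), given a homotopy ring map $g\colon \MU \to E^e$, I would first apply \cite[Theorem 1.2]{chadwickmandell} to lift $g$ uniquely to an $\E_2$-ring map $\widetilde{g}\colon \MU \to E^e$. Then I would apply \cref{thm: main lifting restated} to $\widetilde{g}$ to obtain a unique $\E_\rho$-ring map $\MUR \to E$ lifting $\widetilde{g}$, hence in particular lifting $g$. For uniqueness at the level of homotopy ring maps: any two $\E_\rho$-lifts of $g$ restrict to $\E_2$-maps $\MU \to E^e$ lifting the homotopy ring map $g$, and by the uniqueness clause of \cite[Theorem 1.2]{chadwickmandell} these underlying $\E_2$-maps agree; then the uniqueness clause of \cref{thm: main lifting restated} forces the two $\E_\rho$-lifts to agree. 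So the composite ``lift to $\E_2$, then lift to $\E_\rho$'' is a bijection on $\pi_0$ of the relevant mapping spaces, giving existence and uniqueness.

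For part (ii), given a homotopy ring map $h\colon \MUR \to E$, I would forget along the restriction functor to obtain a homotopy ring map $h^e\colon \MU \to E^e$, apply part (i) to lift $h^e$ to an $\E_\rho$-ring map $\MUR \to E$, and note that this $\E_\rho$-map has underlying homotopy ring map $h^e$; whether it agrees with $h$ as a homotopy ring map $\MUR \to E$ is not asserted, only that \emph{some} $\E_\rho$-refinement of the underlying data exists — which is exactly the (non-unique) statement of (ii), matching the phrasing. I do not expect any serious obstacle here: the entire content is bookkeeping with the two uniqueness statements already established, the only mild subtlety being to phrase (ii) correctly as an existence (not uniqueness) statement since a homotopy ring map $\MUR \to E$ carries strictly less data than its restriction determines after the $\E_2$-rigidification.
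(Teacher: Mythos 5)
Your proposal is correct and follows essentially the same route as the paper: for (i) apply the classical Chadwick--Mandell theorem to rigidify the homotopy ring map to a unique $\E_2$-map and then invoke \cref{thm: main lifting restated} (with uniqueness obtained by chaining the two uniqueness clauses exactly as you describe), and for (ii) forget a homotopy ring map $\MUR \to E$ to its underlying map $\MU \to E^e$ and apply (i). Your caveat about (ii) — that the resulting $\E_\rho$-map is only guaranteed to refine the underlying non-equivariant data, which is why no uniqueness is claimed there — is consistent with how the paper itself argues.
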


\begin{proof}
    \hfill 
    \begin{enumerate}[(i)]
        \item By Chadwick--Mandell $\MU \to E^e$ lifts uniquely to an $\E_2$-ring map $\MU \to E^e$ \cite[Theorem 1.2]{chadwickmandell}. Apply \cref{thm: main lifting restated}.
        \item Forgetting down to a homotopy ring map $\MU \to E^e$ happens in a unique fashion. Indeed, Real orientations are cohomology classes in $E^{\rho}(\mathbb{CP}_{\mathbb{R}}^{\infty})$ which restrict to the unit along the inclusion map $S^{\rho} \simeq \mathbb{CP}_{\mathbb{R}}^1 \to \mathbb{CP}_{\R}^{\infty}$, but $\ul{\map}_{\ul{\Sp}_{C_2}}(\Sigma^{\infty} \mathbb{CP}_{\R}^{\infty}, E)$ and $\ul{\map}_{\ul{\Sp}_{C_2}}(\Sigma^{\infty} \mathbb{CP}_{\R}^{1}, E)$ are strongly even, so in the diagram
        \begin{center}
            \begin{tikzcd}
                \widetilde{E}^{\rho}(\mathbb{CP}_{\R}^{\infty}) \arrow[r] \arrow[d, "\res_e^{C_2}", swap] & \widetilde{E}^{\rho}(\mathbb{CP}_{\R}^{1}) \arrow[d, "\res_e^{C_2}"]
                \\ \widetilde{E}^2(\mathbb{CP}^{\infty}) \arrow[r] & \widetilde{E}^2(\mathbb{CP}^1)
            \end{tikzcd}
        \end{center}
        the restriction maps $\res_e^{C_2}$ are isomorphisms. Hence, Real orientations correspond to complex orientations for strongly even rings $E$. Now apply (i). \qedhere
    \end{enumerate}
\end{proof}



\noindent Now, we can now leverage the Real Chadwick--Mandell result (\cref{remark: Real version of Chadwick Mandell}) and immediately obtain structured versions of orientations of interest, for example:

\begin{corollary} \label{corollary: hahn-shi hirzebruch n}
    \hfill 
    \begin{enumerate}[(i)]
        \item The Hahn--Shi Real orientations \cite{hahnRealOrientationsLubin2020} of Lubin--Tate theories \(\MUR\to E_n\) admit lifts to $\E_{\rho}$-maps.
        \item The Hirzebruch level-\(n\) genera \cite{MeierHirzebruch} \(\MUR\to \tmf_1(n)\) admit lifts to $\E_{\rho}$-maps for \(n>1\). 
    \end{enumerate}
\end{corollary}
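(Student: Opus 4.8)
The statement follows immediately from the main lifting theorem (\cref{thm: main lifting restated}), once we verify its hypotheses for the target ring spectra. The plan is as follows. For part (i), the Lubin--Tate theories $E_n$ are strongly even $\E_\infty^{C_2}$-ring spectra: they are $\E_\infty$ by Goerss--Hopkins--Miller, the $C_2$-action (via complex conjugation on the formal group) is a strict $\E_\infty^{C_2}$-action, and strong evenness is recorded in \cite{hahnRealOrientationsLubin2020} (and appears in the list of strongly even examples recalled after \cref{def: strongly even C2}). The Hahn--Shi Real orientation $\MUR \to E_n$ is a Real orientation, hence in particular forgets to a homotopy ring map $\MU \to E_n^e$ (equivalently the underlying complex orientation classifying the formal group law of $E_n$). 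By \cref{remark: Real version of Chadwick Mandell}\textcolor{chilligreen}{(ii)} any homotopy ring map $\MUR \to E_n$ lifts to an $\E_\rho$-ring map $\MUR \to E_n$; applying this to the Hahn--Shi orientation gives the desired $\E_\rho$-lift.

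For part (ii), I would invoke the fact that $\tmf_1(n)$ is a strongly even $\E_\infty^{C_2}$-ring spectrum for $n > 1$ — this is in the list of strongly even examples cited from \cite{hillmeier2017, Greenlees_2017} after \cref{def: strongly even C2} (the $C_2$-action is the one coming from the $[-1]$-automorphism on the universal elliptic curve with level structure, which is strictly $\E_\infty$-equivariant, e.g. by Meier's work \cite{MeierHirzebruch}). The Hirzebruch level-$n$ genus $\MUR \to \tmf_1(n)$ is a Real orientation, so again it forgets to a homotopy ring map $\MU \to \tmf_1(n)^e$, and \cref{remark: Real version of Chadwick Mandell}\textcolor{chilligreen}{(ii)} produces the $\E_\rho$-lift. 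Alternatively, one may observe that the underlying map $\MU \to \tmf_1(n)^e$ is the Hirzebruch genus and directly apply \cref{thm: main lifting restated} to this $\E_2$-map (the underlying complex orientation is known to refine to an $\E_2$-map, or one simply uses the homotopy ring map version). The constraint $n > 1$ is inherited from the requirement that $\tmf_1(n)$ be strongly even, which is where the restriction $n > 1$ enters (for $n = 1$, $\tmf_1(1) = \tmf$ and the evenness fails).

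The only genuine content here is bookkeeping: checking that $E_n$ and $\tmf_1(n)$ genuinely satisfy the strongly even $\E_\infty^{C_2}$ hypothesis of \cref{thm: main lifting restated}, and that the Hahn--Shi orientations and Hirzebruch genera restrict to the expected homotopy ring maps on underlying spectra. The $\E_\infty^{C_2}$-structures on $E_n$ come from the equivariant refinement of Goerss--Hopkins--Miller obstruction theory (or from the fact that the complex conjugation action on the Lubin--Tate space is an action through $\E_\infty$-ring maps, so $E_n$ is a genuine $C_2$-$\E_\infty$-ring by e.g.~Hahn--Shi or the unicity of $E_\infty$-structures), and on $\tmf_1(n)$ from Meier's construction. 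Since \cref{thm: main lifting restated} does all the real work, there is no serious obstacle; I expect the writeup to be a short paragraph per example, citing the relevant strong-evenness references and \cref{remark: Real version of Chadwick Mandell}.
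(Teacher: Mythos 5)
Your proposal is correct and follows essentially the same route as the paper: verify that $E_n$ and $\tmf_1(n)$ are strongly even $\E_{\infty}^{C_2}$-rings (citing Hahn--Shi and Meier, respectively) and then apply \cref{thm: main lifting restated} via its homotopy-ring-map consequence \cref{remark: Real version of Chadwick Mandell}. The only cosmetic difference is the provenance of the genuine $\E_{\infty}^{C_2}$-structures: the paper cites the cofree structures from Hill--Meier rather than an equivariant Goerss--Hopkins--Miller-type argument, which does not affect the logic.
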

\begin{proof}
    The \(\E^{C_2}_\infty\)-structures on \(E_n\) and \(\tmf_1(n)\) come from the cofree structures \cite[Theorem 2.4, 2.7]{hillmeier2017}. It's a result from Hahn--Shi that $E_n$ is strongly even \cite[Theorem 1.9]{hahnRealOrientationsLubin2020}, and a result of Meier that $\tmf_1(n)$ is strongly even \cite[Theorem 2.22]{MeierHirzebruch}.
\end{proof}

\begin{corollary} \label{corollary: Hahn--Shi orientation for G}
    Let $C_2 \leq G$.
    \begin{enumerate}[(i)]
        \item The Hahn--Shi Real orientations $\MU^{(\! ( G ) \! )} =  N_{C_2}^G \MU_{\R} \to E_n$ refine to $\Coind_H^G \E_{\rho}$-maps.
        \item Let $n > 1$ and $G = (\Z/n)^{\times}$. The Hirzebruch level-$n$ genera induce $\Coind_H^G \E_{\rho}$-maps $\MU^{(\! ( G ) \! )} \to \tmf_1(n)$ for $n > 1$.
    \end{enumerate}
\end{corollary}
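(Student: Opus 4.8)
The plan is to obtain \cref{corollary: Hahn--Shi orientation for G} as a direct instance of the machinery already assembled, specifically \cref{corollary: MUG orientations} together with the specialized input results cited in the proof of \cref{corollary: hahn-shi hirzebruch n}. First I would observe that in both cases the target admits an $\E_\infty^G$-structure: for $E_n$ this comes from the cofree $G$-structure (the Goerss--Hopkins--Miller action restricted to a finite subgroup), and for $\tmf_1(n)$ with $G = (\Z/n)^\times$ the $\E_\infty^G$-structure likewise arises from the cofree construction applied to the Galois action. In each case the relevant subgroup of interest is $C_2 \leq G$, realized (for $E_n$) as a subgroup of the Morava stabilizer group and (for $\tmf_1(n)$) as complex conjugation inside $(\Z/n)^\times$, so that $\Res^G_{C_2} E$ is exactly the $C_2$-spectrum for which strong evenness is already known --- Hahn--Shi \cite[Theorem 1.9]{hahnRealOrientationsLubin2020} for $E_n$, and Meier \cite[Theorem 2.22]{MeierHirzebruch} for $\tmf_1(n)$.

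The second step is to feed this into \cref{corollary: MUG orientations}: since $\Res^G_{C_2}E$ is strongly even, that corollary produces a $\Coind^G_{C_2}\E_\rho$-algebra map $\MU^{(\!(G)\!)} = N^G_{C_2}\MUR \to E$. One should then check that this map genuinely refines the Hahn--Shi (respectively Hirzebruch) orientation rather than being some unrelated map. For this I would invoke the uniqueness half of \cref{thm: main lifting restated}: the Hahn--Shi orientation $\MUR \to E_n$ is, by construction, a lift of the standard complex orientation $\MU \to E_n^e$, which is an $\E_2$-map by Chadwick--Mandell, and hence is \emph{the} unique $\E_\rho$-lift; the $\E_\rho$-map produced by $\cref{cor: existence of Erho-MUR-orientations}$ and used inside $\cref{corollary: MUG orientations}$ therefore agrees with it. The normed map $N^G_{C_2}\MUR \to E$ is then the norm of this, composed with the counit $N^G_{C_2}\Res^G_{C_2}E \to E$, exactly as in the proof of \cref{corollary: MUG orientations}, so it restricts along $C_2 \leq G$ to the Hahn--Shi orientation and is the claimed $\Coind^G_{C_2}\E_\rho$-refinement. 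The Hirzebruch case is formally identical, using Meier's strong evenness and the fact that the Hirzebruch level-$n$ genus underlies the standard complex orientation.

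The only genuinely content-bearing point --- and the step I expect to be the main obstacle --- is verifying that the $C_2$-equivariant refinements used here are the \emph{same} objects to which the earlier strong-evenness results apply, i.e.~pinning down that ``the $C_2$-spectrum $E_n$ with its conjugation action'' in the sense of Hahn--Shi coincides with $\Res^G_{C_2}$ of the cofree $G$-spectrum, and similarly that Meier's $C_2$-structure on $\tmf_1(n)$ is the restriction of the cofree $(\Z/n)^\times$-structure. For $E_n$ this is essentially the compatibility of the Hopkins--Miller action with restriction to finite subgroups, already implicit in \cite{hillmeier2017}; for $\tmf_1(n)$ one uses that the $(\Z/n)^\times$-action on $\tmf_1(n)$ realizing the Galois descent $\tmf_1(n)^{h(\Z/n)^\times} \simeq \tmf$ restricts on the order-two subgroup generated by $-1$ to Meier's conjugation action. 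Both are standard but should be stated explicitly. Everything else is formal transport along \cref{construction: Coind} and the results of \cref{sec: structured orientations}, so the proof can be kept to a few lines: cite strong evenness of the restrictions, apply \cref{corollary: MUG orientations}, and identify the resulting map with the desired orientation via the uniqueness in \cref{thm: main lifting restated}.
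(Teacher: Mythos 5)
Your proposal is correct and follows essentially the same route as the paper: the paper's proof is simply to construct the maps as in \cref{corollary: MUG orientations}, feeding in strong evenness of $E_n$ (Hahn--Shi) and $\tmf_1(n)$ (Meier) with their cofree $\E_\infty^{C_2}$- resp.\ $\E_\infty^G$-structures, exactly as you do. Your additional care in identifying the resulting map with the Hahn--Shi/Hirzebruch orientation via the uniqueness clause of \cref{thm: main lifting restated} is a reasonable elaboration of what the paper leaves implicit (it is handled there by \cref{corollary: hahn-shi hirzebruch n} and the sense of ``refine'' at the level of underlying complex orientations), not a different argument.
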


\begin{proof}
    These orientations are constructed as in \cref{corollary: MUG orientations}.
\end{proof}

\noindent Previously, the Hahn--Shi Real orientations were only known to be $\E_{\sigma}$ and the normed versions were only known to be homotopy associative \cite{hahnRealOrientationsLubin2020}. The Hirzebruch level $n$ genera were not known to admit any \(\E_V\)-structure and were only constructed on the level of homotopy commutative \(C_2\)-ring spectra \cite{MeierHirzebruch}.
This is in stark contrast to what is known non-equivariantly: Senger shows that the underlying map $\MU \to \tmf_1(n)$ refines to an $\E_{\infty}$-map \cite[Theorem 1.7]{sengerleveln}; 
Burklund--Schlank--Yuan show that the Lubin--Tate theories admit \(\E_\infty\)-complex orientations \cite[Corollary 8.13]{burklund2022chromaticnullstellensatz}.
We expect that the Real orientations in \cref{corollary: hahn-shi hirzebruch n} admit further refinements to \(\E^{C_2}_\infty\)-maps, and intend to return to this problem in future work.
\begin{remark}
\change{
    Carrick showed in recent work that taking equivariant slice filtrations is lax $G$-symmetric monoidal \cite[Definition 6.4]{carrick2025slice}. Thus, taking the slice filtration of the orientations constructed in \cref{corollary: MUG orientations} yields multiplicative maps in filtered \(G\)-spectra.
    In particular, \cref{corollary: Hahn--Shi orientation for G}
    produces $\Coind_H^G \E_{\rho}$-maps of filtered \(G\)-spectra \(\mathrm{Slice}(\MU^{(\! ( G ) \! )})\to \mathrm{Slice}(E_n)\).}
\end{remark}

\noindent Let $p$ be a prime. We recall from \cite[Construction 5.3]{BHLS} that the stable Adams conjecture yields a commuting diagram of $\E_{\infty}$-maps
\begin{center}
    \begin{tikzcd}
        \BU_{(p)} \arrow[rr, "\Psi^{\ell}"] \arrow[dr, "J", swap] & & \BU_{(p)} \arrow[dl, "J"]
        \\ & \BGL_1(\S_{(p)})
    \end{tikzcd}
\end{center}
where $\Psi^{\ell} \colon \BU_{(p)} \to \BU_{(p)}$ denotes an Adams operation. Thomifying yields \(\E_\infty\)-maps
\[ \Psi^{\ell} \colon \MU_{(p)}\longrightarrow \MU_{(p)}, \] 
also called \emph{Adams operations on $\MU_{(p)}$} in \cite[Construction 5.3]{BHLS}.
We are grateful to Christian Carrick for pointing out to us that \cref{thm: main lifting restated} can be used to construct \(\E_\rho\)-lifts of these.

\begin{corollary}\label{cor:adamsopsMUR}
    The Burklund--Hahn--Levy--Schlank \cite{BHLS} Adams operations on \(\MU_{(2)}\) admit lifts to \(\E_\rho\)-operations on \({\MUR}_{(2)}\).
\end{corollary}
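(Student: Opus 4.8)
The plan is to invoke \cref{thm: main lifting restated} with $E = {\MUR}_{(2)}$. First I would recall that by \cite[Construction 5.3]{BHLS} the Adams operation $\psi^\ell$ (for $\ell$ a unit in $\Z_{(2)}$) is an $\E_\infty$-ring map $\MU_{(2)} \to \MU_{(2)}$, so in particular it is an $\E_2$-ring map. Next I would observe that ${\MUR}_{(2)}$ is a strongly even $\E_\infty^{C_2}$-ring spectrum: the $\E_\infty^{C_2}$-structure is inherited from that of $\MUR$, and strong evenness of $\MUR$ (hence of its $(2)$-localization, since localization is exact and preserves the restriction isomorphism on $*\rho$-graded homotopy) is recalled among the examples after \cref{def: strongly even C2}. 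The underlying non-equivariant ring of ${\MUR}_{(2)}$ is $\MU_{(2)}$, so $\psi^\ell$ is an $\E_2$-ring map $\MU_{(2)} \to ({\MUR}_{(2)})^e$.

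\textbf{Carrying out the lift.} With these inputs, \cref{thm: main lifting restated} applies verbatim: it asserts that any $\E_2$-ring map $\MU \to E^e$ into the underlying ring of a strongly even $\E_\infty^{C_2}$-ring $E$ lifts uniquely to an $\E_\rho$-ring map $\MUR \to E$. Strictly speaking \cref{thm: main lifting restated} is phrased for $\MU$ and $\MUR$ rather than their $2$-localizations, but the same argument applies after $2$-localization since the Real Wilson space computations of Hill--Hopkins (\cref{rem: real wilson spaces}) and the strong evenness of $\MWR_{(2)}$ and $\MUR_{(2)}$ survive $2$-localization; alternatively, one produces the $\E_\rho$-lift $\MUR \to {\MUR}_{(2)}$ directly and then uses that ${\MUR}_{(2)}$ is an $\E_\rho$-$\MUR$-algebra, so the lift factors through a map of $\E_\rho$-${\MUR}_{(2)}$-algebras ${\MUR}_{(2)} \to {\MUR}_{(2)}$. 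Either way, applying this to $\psi^\ell$ yields an $\E_\rho$-ring map $\tilde\psi^\ell \colon {\MUR}_{(2)} \to {\MUR}_{(2)}$ whose underlying non-equivariant $\E_2$-ring map is $\psi^\ell$.

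\textbf{Expected obstacle.} There is essentially no hard step here once \cref{thm: main lifting restated} is in hand -- this corollary is a direct application. The only point requiring a word of care is the passage to $2$-local coefficients: one must check that localizing at $2$ preserves the hypotheses of \cref{thm: main lifting restated}, i.e.\ that ${\MUR}_{(2)}$ is still strongly even $\E_\infty^{C_2}$ and that the relevant Real Wilson space homology statements (used inside the proof of \cref{thm:abstract_lifting_clean_c2}, which \cref{thm: main lifting restated} depends on) are unaffected. Since localization at $2$ is smashing and exact, it commutes with the slice tower and with the formation of $\gl_1$ up to the evident connectivity truncations, and it carries $\uHZ$ to $\uHZ_{(2)}$, under which the freeness and finite-type conclusions of Hill--Hopkins persist; so this check is routine. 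The uniqueness clause of \cref{thm: main lifting restated} moreover shows the $\E_\rho$-lift is canonical, so the assignment $\ell \mapsto \tilde\psi^\ell$ is well-defined.

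\begin{proof}
    By \cite[Construction 5.3]{BHLS}, each Adams operation $\psi^{\ell}$ is an $\E_{\infty}$-ring map $\MU_{(2)} \to \MU_{(2)}$, hence in particular an $\E_2$-ring map. The $C_2$-spectrum ${\MUR}_{(2)}$ is a strongly even $\E_{\infty}^{C_2}$-ring spectrum: the $\E_{\infty}^{C_2}$-structure is obtained by $2$-localizing that of $\MUR$, and strong evenness of ${\MUR}_{(2)}$ follows from strong evenness of $\MUR$ since $2$-localization is exact and preserves the restriction isomorphism on $*\rho$-graded homotopy groups. Its underlying non-equivariant ring is $\MU_{(2)}$. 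Applying the $2$-local analogue of \cref{thm: main lifting restated} -- which holds by the same argument, as the strong evenness of $\MWR_{(2)}$ and the Real Wilson space computations of \cref{rem: real wilson spaces} are preserved by $2$-localization -- the $\E_2$-ring map $\psi^{\ell}\colon \MU_{(2)} \to ({\MUR}_{(2)})^e$ lifts (uniquely) to an $\E_{\rho}$-ring map ${\MUR}_{(2)} \to {\MUR}_{(2)}$.
\end{proof}
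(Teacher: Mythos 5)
Your proof is correct and takes essentially the same route as the paper: the paper's own proof simply notes that the argument of \cref{thm: main lifting restated} goes through $2$-locally and applies it to the underlying $\E_2$-map of $\psi^{\ell}$, which is exactly your strategy, with your remarks on strong evenness of ${\MUR}_{(2)}$ and the persistence of the Real Wilson space input spelling out what the paper leaves implicit.
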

\begin{proof}
    The proof of \cref{thm: main lifting restated} works in the \(2\)-local case.
    So we can produce \(\E_\rho\)-maps $\overline{\Psi}^\ell\colon \MUR_{(2)}\to\MUR_{(2)}$
    lifting the underlying \(\E_2\)-map of the Burklund--Hahn--Levy--Schlank Adams operations $\Psi^\ell\colon \MU_{(2)}\to\MU_{(2)}$.
\end{proof}

\noindent For $C_2 \leq G$ applying $N_{C_2}^G$ also yields an $\Coind_{C_2}^G \E_{\rho}$-structured version.

\begin{remark}
    The Adams operations constructed in \cite{BHLS} are \(\E_\infty\)-operations.
    It is expected that \(\MUR_{(2)}\) should admit \(\E_{\infty}^{C_2}\)-lifts of these Adams operations. 
\end{remark}

\subsection{Multiplication on \texorpdfstring{\(\BPR\)}{BPR}}\label{subsec:multiplication_on_BPR}
Everything is implicitly $(2)$-local in this subsection.
\medskip \\We use our $\MUR$ results to learn about the Real Brown--Peterson spectrum $\BPR$ which can be constructed as a colimit along an iteration of the Real Quillen idempotent \cite[Section 7]{araki1979orientations}. 
The following provides the first structured version of \(\BPR\) and its norms.
\begin{theorem}\label{thm:multiplication_on_BPR}
    \hfill 
    \begin{enumerate}[(i)]
        \item The Real Brown--Peterson spectrum \(\BPR\) admits an \(\E_\rho\)-algebra structure.
        \item Let $C_2 \leq G$. Then, $\BP^{(\!(G )\!)} = N_{C_2}^G \BP_{\R}$ admits a $\Coind_{C_2}^G \E_{\rho}$-algebra structure.
    \end{enumerate}
\end{theorem}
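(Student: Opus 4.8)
The plan is to reduce the statement to lifting the Real Quillen idempotent to an $\E_\rho$-map, following the strategy already sketched in the introduction for \cref{mainthm:BPR}. First I would recall that $\BPR$ is obtained as the sequential colimit
\[
\BPR \simeq \colim\left( \MUR_{(2)} \xrightarrow{\epsilon} \MUR_{(2)} \xrightarrow{\epsilon} \MUR_{(2)} \xrightarrow{\epsilon} \cdots \right),
\]
where $\epsilon$ is the Real Quillen idempotent of Araki \cite[Section 7]{araki1979orientations}. Since filtered colimits in $\Alg_{\E_{n\rho}}(\myuline{\LMod}_R^{C_2})$ are computed on underlying spectra (cf.~the proof of \cref{thm:idempotent_lifting}), it suffices to produce an $\E_\rho$-idempotent $\overline{\epsilon}\colon \MUR_{(2)} \to \MUR_{(2)}$ refining the underlying $\E_2$-Quillen idempotent of Chadwick--Mandell \cite[Theorem 1.2]{chadwickmandell}; the colimit of its iterates will then acquire an $\E_\rho$-algebra structure, and one identifies it with $\BPR$ by a strong evenness argument (it is a filtered colimit of strongly even spectra, hence strongly even by \cref{lem:GreenleesGap}, so \cite[Lemma 3.4]{hillmeier2017} reduces the identification to an underlying equivalence, which holds by construction).

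Second I would carry out the lifting of the idempotent. The underlying $\E_2$-Quillen idempotent is an $\E_2$-ring map $\MU_{(2)} \to \MU_{(2)}$; precomposing with the $\E_2$-retraction $\MW \to \MU$ from \cref{prop: MWR splitting} gives an $\E_2$-map $\MW_{(2)} \to \MU_{(2)} \to E^e$ with $E = \MUR_{(2)}$. Here the key input is that the $2$-local version of \cref{thm: main lifting restated} applies: $\MUR_{(2)}$ is a strongly even (this is just strong evenness of $\MUR$, recalled in the examples after \cref{def: strongly even C2}) $\E_\infty^{C_2}$-ring — actually I should be slightly careful here, $\MUR_{(2)}$ carries a genuine $\E_\infty^{C_2}$-structure since it is $\MUR$ $2$-localized — admitting an $\E_\rho$-$\MWR$-orientation via \cref{theorem: C2 Einfty map from MW}, and $\mathrm{B}^\rho \Omega^\infty\Sigma^\rho\MUR$ has finite type free homology by the Hill--Hopkins computation recalled in \cref{rem: real wilson spaces}. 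So \cref{thm:abstract_lifting_clean_c2} produces an $\E_\rho$-lift $\MWR_{(2)} \to \MUR_{(2)}$, and precomposing with the $\E_\rho$-map $\MUR_{(2)} \to \MWR_{(2)}$ from \cref{prop: MWR splitting} yields an $\E_\rho$-map $\overline{\epsilon}\colon \MUR_{(2)} \to \MUR_{(2)}$ lifting the $\E_2$-Quillen idempotent. As in \cref{thm:idempotent_lifting}, one checks $\overline{\epsilon}$ is a genuine idempotent (not merely a homotopy idempotent) by passing to $\LMod_{\S_{(2)}}^{C_2}(\myuline{\Sp}_{C_2})$, invoking \cite[Lemma 1.2.4.6, Warning 1.2.4.8]{lurie2017ha} to upgrade the homotopy idempotent there, and then using \kerodon{0420} together with the fact that limits and filtered colimits of $\E_{\rho}$-algebras are computed underlying. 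I should double-check that it remains idempotent — the uniqueness clause in \cref{thm: main lifting restated} guarantees that $\overline{\epsilon}\circ\overline{\epsilon}$ and $\overline{\epsilon}$, having the same underlying $\E_2$-map (the idempotent $\epsilon$), agree, so the homotopy-idempotent witness is automatic.

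Third, for part (ii), the normed statement is formal: taking $N_{C_2}^G$ of the $\E_\rho$-algebra $\BPR$ produces, by \cref{construction: Coind} (and \cref{lemma: coinduction of singly colored}, which ensures $\Coind_{C_2}^G\E_\rho$ is singly $G$-colored so that the norm functor lands in its algebras), a $\Coind_{C_2}^G\E_\rho$-algebra structure on $N_{C_2}^G\BPR = \BP^{(\!(G)\!)}$.

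The main obstacle is making the $2$-local version of all the machinery go through cleanly — in particular ensuring that \cref{thm:abstract_lifting_clean_c2} and \cref{thm:idempotent_lifting} hold verbatim after $2$-localization. This requires that $\MUR_{(2)}$ is still a strongly even $\E_\infty^{C_2}$-ring (clear, since $2$-localization is a smashing localization compatible with the genuine structure and preserves the strong evenness gap), that $\MWR_{(2)}$ remains strongly even with an $\E_\infty^{C_2}$-structure and a map to $\MUR_{(2)}$ (follows from \cite[Theorem C]{gabe2025realsyntomiccohomology} applied $2$-locally, since $\MWR$ is itself strongly even and the Angelini-Knoll--Kong--Quigley construction is compatible with localization), and that the finite type free homology hypothesis on the Real Wilson spaces is unaffected by $2$-localization (it is a statement about $\uHZ$-homology, where $2$-locally nothing changes since $\uHZ$ is already $2$-local in the relevant range, or more simply because the Hill--Hopkins basis is unchanged). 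Once these bookkeeping points are confirmed, the argument is entirely parallel to the integral case and no genuinely new difficulty arises.
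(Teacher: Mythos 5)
Your proposal is correct and follows essentially the same route as the paper: lift the ($2$-local) Quillen idempotent to an $\E_\rho$-map via \cref{thm: main lifting restated}, take the sequential colimit of its iterates (which is computed on underlying objects), and deduce the normed statement in (ii) formally from \cref{construction: Coind}. The additional verifications you include --- upgrading to a genuine idempotent, identifying the colimit with $\BPR$ via strong evenness, and the $2$-local bookkeeping --- are harmless refinements of points the paper treats as routine, since its proof simply cites \cref{thm: main lifting restated} together with the fact that filtered colimits of algebras are computed underlying.
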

\begin{proof}
    \hfill 
    \begin{enumerate}[(i)]
        \item By \cref{thm: main lifting restated}, the Real Quillen idempotent lifts to an \(\E_\rho\)-algebra map \(\MUR\to \MUR\). 
    The result follows since filtered colimits of algebras are computed on underlying \cite[Theorem 5.1.4]{nardinshah2022equivarianttopos}.
    \item This follows immediately from (i) and \cref{construction: Coind}. \qedhere
    \end{enumerate}
\end{proof}

\begin{remark}
    Since the multiplicative structure on \(\BPR\) is constructed as an \(\E_\rho\)-retract of \(\MUR\), the $\E_{\rho}$-Adams operations on \(\MUR\) from \cref{cor:adamsopsMUR} induce $\E_{\rho}$-Adams operations on \(\BPR\).
\end{remark}

\noindent In a similar fashion, we deduce a $\BPR$-version of \cref{thm: main lifting restated}. 

\begin{corollary}
    Let \(E\) be a \(2\)-local strongly even \(\E^{C_2}_\infty\)-ring. Any homotopy ring map \(\BP\to E^e\) can be lifted to an \(\E_\rho\)-algebra  map \(\BPR\to E\).
\end{corollary}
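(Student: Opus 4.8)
The strategy is to reduce this to the already-established $\E_\rho$-lifting result for $\MUR$ (\cref{thm: main lifting restated}) by comparing the two Quillen-idempotent constructions. Recall that $\BPR$ was constructed as the sequential colimit along iterates of the Real Quillen idempotent $q\colon \MUR \to \MUR$, and by \cref{thm:multiplication_on_BPR} the lifted idempotent $\overline{q}\colon \MUR\to\MUR$ is an $\E_\rho$-map whose colimit carries an $\E_\rho$-structure; moreover, the natural map $\MUR \to \BPR$ (inclusion of the first term of the colimit) is an $\E_\rho$-algebra map. Underlying, this recovers the non-equivariant complex Quillen idempotent and the map $\MU\to\BP$.

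First I would record the key non-equivariant input: by Chadwick--Mandell \cite[Theorem 1.2]{chadwickmandell}, any homotopy ring map $\MU\to E^e$ lifts uniquely to an $\E_2$-ring map $\MU\to E^e$. Given a homotopy ring map $\BP\to E^e$, precompose with $\MU\to\BP$ to get a homotopy ring map $\MU\to E^e$, and lift it to an $\E_2$-map $g\colon\MU\to E^e$. Then apply \cref{thm: main lifting restated} to obtain a (unique) $\E_\rho$-ring map $\overline{g}\colon\MUR\to E$ lifting $g$. The remaining point is that $\overline{g}$ factors through $\BPR$ as an $\E_\rho$-map. For this I would use that $\overline{g}$ should be compatible with the idempotents: since the underlying $\E_2$-map $g$ kills the relevant part of $\MU$ (equivalently, $g\circ q \simeq g$ as $\E_2$-maps, because $g$ factors through $\BP$ which is the colimit along $q$), and since by \cref{thm:abstract_lifting_clean_c2} the restriction map on $\pi_0$ of the relevant mapping $C_2$-space $\myuline{\Map}_{\ul{\Alg}_{\E_\rho}(\ul{\Sp}_{C_2})}(\MUR, E)$ is an isomorphism, we conclude $\overline{g}\circ\overline{q}\simeq\overline{g}$ as $\E_\rho$-maps. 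Hence $\overline{g}$ is constant along the colimit defining $\BPR$, so it descends to an $\E_\rho$-map $\BPR\to E$; since filtered colimits of algebras are computed underlying \cite[Theorem 5.1.4]{nardinshah2022equivarianttopos}, this descended map restricts underlying to the homotopy ring map $\BP\to E^e$ we started with.

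The main obstacle is the compatibility step $\overline{g}\circ\overline{q}\simeq\overline{g}$: one must check this at the level of $\E_\rho$-maps, not merely underlying. The cleanest way is to invoke the injectivity (in fact bijectivity) of the restriction $\pi_0^{C_2}\to\pi_0^e$ of $\myuline{\Map}_{\ul{\Alg}_{\E_\rho}(\ul{\Sp}_{C_2})}(\MUR,E)$, which holds because $E$ is strongly even and $\mathrm{B}^\rho\Omega^\infty\Sigma^\rho\MUR$ has finite type free homology (\cref{rem: real wilson spaces}), so that \cref{thm:abstract_lifting_clean_c2} applies. Since $\overline{g}\circ\overline{q}$ and $\overline{g}$ become equal after restriction to the underlying spectrum (as $g\circ q \simeq g$), they are already equal as $\E_\rho$-maps. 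One small subtlety to address is the same one flagged in the proof of \cref{thm:idempotent_lifting}: one should make sure the colimit $\colim(\MUR\xrightarrow{\overline{q}}\MUR\to\cdots)$ in $\Alg_{\E_\rho}(\ul{\Sp}_{C_2})$ genuinely agrees with $\BPR$ (this is exactly what \cref{thm:multiplication_on_BPR} provides), after which the factorization of $\overline{g}$ through this colimit is a formal consequence of the universal property of filtered colimits once one knows $\overline{g}$ is constant along the diagram.
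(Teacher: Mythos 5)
Your argument is essentially correct, but it takes a genuinely different route from the paper. The paper's proof is three lines: lift the composite homotopy ring map $\MU \to \BP \to E^e$ to an $\E_\rho$-map $\MUR \to E$ by \cref{thm: main lifting restated}, and then precompose with the $\E_\rho$-retract inclusion $\BPR \to \MUR$ coming from the splitting of the lifted Quillen idempotent; the underlying composite is $\BP \to \MU \to \BP \to E^e$, and since the first two maps compose to $\id_{\BP}$ this is a lift of the given map. You instead use the other half of the splitting data, the colimit presentation $\BPR \simeq \colim(\MUR \xrightarrow{\overline{q}} \MUR \to \cdots)$, show $\overline{g}\circ\overline{q}\simeq\overline{g}$, and descend. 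Both work, but the retract-precomposition argument avoids the two points in your write-up that need shoring up.

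First, your justification of $\overline{g}\circ\overline{q}\simeq\overline{g}$ cites \cref{thm:abstract_lifting_clean_c2} applied to $\MUR$ with the freeness input from \cref{rem: real wilson spaces}; but that remark concerns $\mathrm{B}^\rho\Omega^\infty\Sigma^\rho\MUR \simeq \Omega^\infty\Sigma^{2\rho}\MUR$, the delooped base of $\MWR$, whereas the mapping space $\myuline{\Map}_{\ul{\Alg}_{\E_\rho}}(\MUR,E)$ requires the corresponding statement for $\mathrm{B}^\rho\BUR\simeq \BSUR$, which the paper never formally verifies (it only routes all such arguments through $\MWR$ via \cref{prop: MWR splitting}). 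The clean fix is to not invoke the abstract lifting theorem here at all: both $\overline{g}\circ\overline{q}$ and $\overline{g}$ are $\E_\rho$-lifts of the same $\E_2$-map $g$ (since $g\circ q\simeq g$ as $\E_2$-maps by Chadwick--Mandell uniqueness, exactly as you argue), so they agree by the uniqueness clause of \cref{thm: main lifting restated}. Second, the descent step is not "a formal consequence of the universal property of filtered colimits": knowing each composite $\overline{g}\circ\overline{q}^{\,n}$ is homotopic to $\overline{g}$ gives an element of $\lim_n\pi_0$ of the mapping tower, and to produce a point of $\pi_0\Map_{\Alg_{\E_\rho}}(\BPR,E)\cong\pi_0\lim_n\Map_{\Alg_{\E_\rho}}(\MUR,E)$ you must invoke the surjectivity of $\pi_0$ of a limit of a tower onto $\lim\pi_0$ (the Milnor sequence), and then still check, using a section of $\MU\to\BP$, that the resulting map restricts to the original $h\colon \BP\to E^e$. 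This is all repairable, but once you have the coherent idempotent (which is what \cref{thm:idempotent_lifting}/\cref{thm:multiplication_on_BPR} buy you) the splitting hands you the $\E_\rho$-map $\BPR\to\MUR$ directly, and precomposition makes both issues disappear — which is why the paper's proof is so short.
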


\begin{proof}
    We can lift the map $\MU \to \BP \to E^e$ to an $\E_{\rho}$-algebra map $\MU_{\R} \to E$ by \cref{thm: main lifting restated}. Precomposing with $\BP_{\R} \to \MU_{\R} \to E$ yields the desired lift. This is really a lift since the underlying map is 
    \begin{center}
        \begin{tikzcd}
            \BP \arrow[r] & \MU \arrow[r] & \BP \arrow[r] & E^e
        \end{tikzcd}
    \end{center}
    where the first two maps compose to $\id_{\BP}$, i.e.~this composite is the original map.
\end{proof}

\noindent This, for example, gives \(\E_\rho\)-structured \(\BPR\)-versions of the Hahn--Shi Real orientations of Lubin--Tate theories, and the Hirzebruch level-\(n\) genera  from \cref{corollary: hahn-shi hirzebruch n}.

\section{Factorization Homology of Thom Spectra \& \texorpdfstring{\(\E_V\)}{EV}-Quotients}\label{sec:factorization_and_EV}
\subsection{Real Topological Hochschild Homology of $R$-Module Thom Spectra}\label{sec:factorization}
This section will reap some benefits of developing the theory of left modules (\cref{sec:monoidal_LMod}) for non-trivial bases. 
\medskip \\Conditional on monoidal parameterized straightening Horev--Klang--Zou gave formulas for equivariant factorization homology of Thom spectra over \(\mathbb{S}\) \cite[Theorem 7.1.1]{hahn2024equivariantnonabelianpoincareduality}.
In this section -- conditional on the monoidal parametrized straightening  -- we generalize this to equivariant \(R\)-module Thom spectra.
In particular, this specializes to give formulas for relative versions of Real Topological Hochschild Homology \(\THR({-}/R)\) of \(R\)-module Thom spectra.

\begin{assumption}\label{folklore}
    Let $\O^{\otimes} \in \Op_{G, \infty}$ and $X$ be an $\O$-monoidal $G$-space. There is an \(\O\)-monoidal equivalence \(\myuline{\PSh}_G^{\O}(X)^{\otimes} \simeq (\ul{\Spaces}^G_{/X})^{\otimes}\).\footnote{The left side is equipped with the Day convolution monoidal structure (\cref{theorem: omnibus day convolution}) and the right side is equipped with the slice monoidal structure (\cref{def: slice pattern}).}
\end{assumption}
\begin{remark}
    \cref{folklore}, which is an equivariant version of \cite[Corollary 4.9]{ramzi2022monoidalgrothendieckconstructioninftycategories}, was stated in \cite[Theorem A.6.1]{hahn2024equivariantnonabelianpoincareduality} as folklore.
    In the Horev--Klang--Zou approach to equivariant Thom spectra, this result is essential to constructing the equivariant Thom spectrum functor as a \(G\)-symmetric monoidal functor. In the approach we take in this paper, a microcosmic version (\cref{theorem: microcosmic straightening-unstraightening}) is sufficient for most applications.
    However, generalizing \cite[Theorem 7.1.1]{hahn2024equivariantnonabelianpoincareduality} requires this stronger form of straightening.
    We include this section, conditional on \cref{folklore} for two reasons:
    \begin{itemize}
        \item We hope this encourages a written proof of \cref{folklore} appear in the literature.
        \item The results of \cite{hahn2024equivariantnonabelianpoincareduality} are used extensively in the literature. For those willing to accept \cref{folklore} without proof, this section provides additional computationally useful results.
    \end{itemize}
\end{remark}

\noindent Let $V$ be a $G$-representation and $X \in \Sp_G$. We say that $X$ is \tb{$V$-connective} if $\pi_k(X^H) = 0$ for all $H \leq G$ and $k < \dim \left(V^H \right)$. We will use this terminology to state the following results.

\begin{theorem}[Conditional on \cref{folklore}]
    Let \(R\) be an \(\E_{\infty}^G\)-ring and $V$,  $W$ be $G$-representations.
    Let 
    \(
    f\colon \Omega^{V+W}X\to \ul{\Pic}_G(R)
    \)
    be an \(\E_{V+W}\)-map.
    Suppose that \(X\) is \((V+W)\)-connective. 
    Let \(M\) be a \(G\)-manifold the same dimension as \(V\). 
    Suppose that \(M\times W\) embeds equivariantly into \(V\times W\), and that there is an equivariant embedding \(D(V)\hookrightarrow M\).
    Then, 
    \[
    \int_{M\times W}\Th_G(f)\simeq \Th_G(f) \otimes \Sigma^\infty_+\myuline{\Map}_*\left(M^+-D(V),\Omega^W X \right)
    \]
    is an equivalence of \(R\)-modules.
\end{theorem}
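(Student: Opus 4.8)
The statement is a parametrized factorization-homology computation for $R$-module Thom spectra, generalizing \cite[Theorem 7.1.1]{hahn2024equivariantnonabelianpoincareduality} from the case $R = \S$. The core strategy is to exploit the fact that equivariant factorization homology $\int_{(-)}(-)$ is symmetric monoidal and colimit-preserving in the coefficient algebra, combined with the multiplicative Thom spectrum functor being symmetric monoidal (via \cref{theorem: microcosmic straightening-unstraightening} and, where the full strength is needed, \cref{folklore}). First I would observe that, since $f \colon \Omega^{V+W}X \to \ul{\Pic}_G(R)$ is an $\E_{V+W}$-map into the Picard $G$-space of the commutative ring $R$, the multiplicative Thom spectrum $\Th_G(f)$ is naturally an $\E_{V+W}$-algebra in $\ul{\LMod}_R^G(\ul{\Sp}_G)$, and the Thom spectrum functor $\Th_G^\otimes$ on the relevant slice/presheaf category is $\E_{V+W}$-monoidal and preserves $G$-colimits (\cref{construction: multiplicative thom spectra}, \cref{corollary: multiplicative thom spectrum}). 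Hence one can compute $\int_{M\times W}\Th_G(f)$ by first computing the corresponding factorization homology in the $G$-space level (over the slice category $\ul{\Sc}^G_{/\ul{\Pic}_G(R)}$, which under \cref{folklore} is monoidally equivalent to parametrized presheaves) and then applying $\Th_G^\otimes$.

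The key steps, in order, would be: (1) Reduce to the space level. Using that $\Th_G^\otimes$ is $\E_{V+W}$-monoidal, $G$-colimit-preserving, and that $M\times W$-factorization homology is computed as a $G$-colimit of configuration-type diagrams weighted by the $\E_{V+W}$-structure, exchange $\int_{M\times W}$ with $\Th_G^\otimes$, reducing to understanding $\int_{M\times W}$ of the $\E_{V+W}$-object $\Omega^{V+W}X \to \ul{\Pic}_G(R)$ in the slice category. (2) Apply the non-abelian Poincaré duality package. By the equivariant approximation / recognition results (\cref{thm: recognition}) together with the connectivity hypothesis ``$X$ is $(V+W)$-connective'', one identifies $\int_{M\times W}$ of the deloopable object with a mapping-$G$-space out of the (relative) configuration space; this is precisely where the embedding hypotheses $M\times W \hookrightarrow V\times W$ and $D(V)\hookrightarrow M$ enter, producing the relative collapse $M^+ - D(V)$ and the $\Omega^W X$ coefficient via the $W$-fold delooping left over after the $V$-directions are ``used up'' by $M$. (3) Transport back. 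Apply $\Th_G^\otimes$ and use \cref{example: thom along trivial}(ii): a Thom spectrum along a product $Y' \times Y \to Y' \xrightarrow{f'} \ul{\Pic}_G$ splits as $\Th_G(f') \otimes_R (R\otimes Y)$. Since after the duality identification the relevant map factors through $\Omega^{V+W}X$ (via the point of $\ul{\Map}_*(M^+-D(V),\Omega^W X)$ induced by the embedding $D(V)\hookrightarrow M$) crossed with the residual mapping space, this yields
\[
\int_{M\times W}\Th_G(f) \simeq \Th_G(f) \otimes \Sigma^\infty_+ \ul{\Map}_*\!\left(M^+ - D(V),\,\Omega^W X\right)
\]
as $R$-modules, as claimed. (4) Check $R$-linearity: all functors in sight ($\Th_G^\otimes$, $\int$, $\otimes$) are compatible with the $R$-module structure by our development of monoidal parametrized $\ul{\LMod}_R^G$ (\cref{theorem: LMod ---> O is O-monoidal}, \cref{theorem: LMod distributive}), so the equivalence upgrades from $\ul{\Sp}_G$ to $\ul{\LMod}_R^G$.

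\textbf{Main obstacle.} The hard part is Step (2): carrying out the equivariant non-abelian Poincaré duality identification in the \emph{relative} and \emph{twisted} (module-coefficient) setting, keeping careful track of which $G$-representation directions are consumed by the manifold $M$ and which survive as $\Omega^W X$. In the $R=\S$ case this is \cite[Theorem 7.1.1]{hahn2024equivariantnonabelianpoincareduality}, whose proof rests on the folklore monoidal straightening, which is why we must assume \cref{folklore}; I would follow their configuration-space/scanning argument verbatim at the $G$-space level, verifying that nothing uses the triviality of the base ring, and only at the end transport along $\Th_G^\otimes$ and base-change to $\ul{\LMod}_R^G$. A secondary subtlety is ensuring the embedding hypotheses are exactly strong enough to make the relative space $M^+ - D(V)$ and the $W$-fold loop coefficient well-defined and to make the ``point'' of $\ul{\Map}_*(M^+-D(V),\Omega^W X)$ coming from $D(V)\hookrightarrow M$ match the splitting in \cref{example: thom along trivial}; this is a matter of unwinding the collapse maps carefully rather than a deep difficulty.
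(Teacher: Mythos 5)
Your proposal is correct and takes essentially the same approach as the paper: the paper's proof consists of noting that $\ul{\LMod}^G_R$ is distributive (\cref{theorem: LMod distributive}) and that, conditional on \cref{folklore}, the proof of Horev--Klang--Zou's Theorem 7.1.1 goes through mutatis mutandis with $\ul{\Sp}_G$ replaced by $\ul{\LMod}^G_R$. Your steps are an unpacking of exactly that argument, with the correct new ingredients (monoidality and distributivity of parametrized $\ul{\LMod}^G_R$ together with the folklore monoidal straightening) identified.
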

\begin{proof}
    By \cref{theorem: LMod distributive}, \(\myuline{\LMod}^G_R\) is a distributive \(G\)-symmetric monoidal $G$-$\infty$-category.
    Conditional on \cref{folklore} the proof of \cite[Theorem 7.1.1]{hahn2024equivariantnonabelianpoincareduality} goes through mutatis mundatis replacing \(\myuline{\Sp}_G\) with \(\myuline{\LMod}^G_R\).
\end{proof}

\noindent The main corollary of interest occurs when \(M\) is a representation sphere \(S^V\).

\begin{corollary}[Conditional on \cref{folklore}]
    Let \(R\) be an \(\E_{\infty}^G\)-ring and $V$ be a representation of \(G\).
    Let 
    \(
    f\colon \Omega^{V+1}X\to \ul{\Pic}_G(R)
    \)
    be an \(\E_{V+1}\)-map.
    Suppose that \(X\) is \((V+1)\)-connective. Then, there is an equivalence 
    \[
    \int_{S^V\times \mathbb{R}}\Th_G(f)\simeq \Th_G(f)\otimes\Sigma^\infty_+\Omega X
    \]
    of \(R\)-modules.
\end{corollary}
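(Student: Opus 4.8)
The statement is a direct corollary of the preceding theorem by specializing the $G$-manifold $M$ to a representation sphere $S^V$ (together with the one-dimensional trivial direction $W = \mathbb{R}$). The plan is therefore to verify that the hypotheses of the theorem are met for this choice and then to simplify the resulting mapping-space expression. First I would set $M = S^V$, which is a closed $G$-manifold of the same dimension as $V$, and take $W = \mathbb{R}$ (the trivial $1$-dimensional representation), so that $M \times W = S^V \times \mathbb{R}$ and $V \times W = V \times \mathbb{R}$. The embedding condition $M \times W \hookrightarrow V \times W$ becomes $S^V \times \mathbb{R} \hookrightarrow V \times \mathbb{R}$; this holds because $S^V$ is the one-point compactification of $V$, and deleting a point from $S^V$ gives an equivariant open embedding $S^V \setminus \{\infty\} \cong V \hookrightarrow V$, which thickens to $S^V \times \mathbb{R} \hookrightarrow V \times \mathbb{R}$ after using a collar. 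The condition $D(V) \hookrightarrow M = S^V$ is immediate, since the unit disk of $V$ sits inside $V \subset S^V$.

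Next I would identify the mapping space appearing in the conclusion of the theorem. With $M = S^V$ and $W = \mathbb{R}$, the theorem produces
\[
\int_{S^V \times \mathbb{R}} \Th_G(f) \simeq \Th_G(f) \otimes \Sigma^\infty_+ \myuline{\Map}_*\!\left((S^V)^+ - D(V),\, \Omega^{\mathbb{R}} X\right).
\]
The key simplification is the pointed mapping-space identification $\myuline{\Map}_*\big((S^V)^+ - D(V),\, \Omega X\big) \simeq \Omega X$. Here $(S^V)^+ - D(V)$ denotes $S^V$ with a basepoint added and the interior disk $D(V)$ removed; since $S^V$ is the one-point compactification of $V$, removing the open disk $D(V)$ from it leaves a space equivariantly based-homotopy-equivalent to a point (it deformation-retracts onto the point at infinity), so $\myuline{\Map}_*$ out of it into any pointed $G$-space is the space itself. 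Thus the right-hand side becomes $\Th_G(f) \otimes \Sigma^\infty_+ \Omega X$, exactly as claimed.

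Finally I would record that the connectivity hypothesis transfers correctly: the theorem requires $X$ to be $(V + W)$-connective, which here is $(V + 1)$-connectivity, matching the hypothesis of the corollary; and the input datum is an $\E_{V+W} = \E_{V+1}$-map $f\colon \Omega^{V+1} X \to \ul{\Pic}_G(R)$, again exactly as assumed. The output equivalence is one of $R$-modules, since the theorem delivers an equivalence of $R$-modules. The main (and only nontrivial) obstacle is the geometric identification $(S^V)^+ - D(V) \simeq_* *$ equivariantly — i.e.\ checking that removing the disk leaves something equivariantly contractible as a based space — but this is standard: one uses that $S^V = V^+$ and that $V \setminus \mathring{D}(V)$ radially deformation-retracts $G$-equivariantly onto the unit sphere $S(V)$, while the added basepoint together with the point at infinity makes the relevant based mapping space collapse. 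Everything else is a matter of plugging the chosen $M$ and $W$ into the theorem and bookkeeping the hypotheses.
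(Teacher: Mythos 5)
Your overall route is exactly the intended one: the paper offers no separate argument for this corollary, treating it as the immediate specialization of the preceding theorem to \(M=S^V\), \(W=\mathbb{R}\), so your plan (verify the embedding hypotheses, then identify the mapping space) matches the paper. Two intermediate claims in your justification of the key identification are off as written, though the conclusion is right. First, \((S^V)^+ - D(V)\) is \emph{not} equivariantly based-contractible; if it were, the based mapping space out of it would be a point, not \(\Omega X\). The correct statement is that the \emph{unbased} complement \(S^V\setminus \mathring{D}(V)\) is equivariantly contractible (it is the cap around \(\infty\), equivariantly homeomorphic to \(D(V)\), retracting to the fixed point \(\infty\)), so with its disjoint basepoint one gets \((S^V\setminus\mathring{D}(V))_+\simeq S^0\) as a based \(G\)-space, whence \(\Map_*\bigl(S^0,\Omega X\bigr)\simeq \Omega X\). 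Second, your sentence about \(V\setminus\mathring{D}(V)\) retracting onto \(S(V)\) concerns the wrong space: the relevant complement is taken inside \(S^V\), includes \(\infty\), and retracts to \(\infty\), not to \(S(V)\); retracting only to \(S(V)\) would give \(\Map_*\bigl(S(V)_+,\Omega X\bigr)\), which is not \(\Omega X\). Finally, for the embedding hypothesis a cleaner argument than the collar sketch is the radial identification \(S^V\times\mathbb{R}\cong S(V\oplus\mathbb{R})\times(0,\infty)\cong (V\oplus\mathbb{R})\setminus\{0\}\subset V\times\mathbb{R}\), which is an equivariant open embedding; the disk embedding \(D(V)\hookrightarrow V\subset S^V\) is as you say. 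With these repairs your proof is correct and coincides with the paper's implicit one.
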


\noindent Specializing further to the \(C_2\)-sign sphere \(S^\sigma\) we obtain formulas for relative Real Topological Hochschild Homology since $\THR \simeq \int_{S^{\sigma}}$, see \cite[Remark 7.1.2]{horev2019genuineequivariantfactorizationhomology}.
\begin{corollary}[Conditional on \cref{folklore}]
    Let \(R\) be an \(\E_{\infty}^{C_2}\)-ring and consider an $\E_{\sigma+1}$-map
    \(
    f\colon \Omega^{\sigma+1}X\to \ul{\Pic}_{C_2}(R)
    \).
    Suppose that \(X\) is \((\sigma+1)\)-connective. Then, there is an equivalence 
    \[
    \THR(\Th_{C_2}(f)/R)\simeq \Th_{C_2}(f)\otimes\Sigma^\infty_+\Omega X
    \]
    of \(R\)-modules.
\end{corollary}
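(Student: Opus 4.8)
The statement is a direct specialization of the preceding corollary to the case $G = C_2$, $V = \sigma$. The plan is therefore to reduce the claim to that corollary and then invoke the identification of Real Topological Hochschild Homology relative to $R$ with factorization homology of $S^\sigma \times \mathbb{R}$ over the base ring $R$. First I would set $V = \sigma$ in the corollary above: the input is an $\E_{\sigma+1}$-map $f\colon \Omega^{\sigma+1}X \to \ul{\Pic}_{C_2}(R)$ together with the hypothesis that $X$ is $(\sigma+1)$-connective, which is exactly the data assumed in the statement. The corollary then yields an equivalence of $R$-modules
\[
\int_{S^\sigma \times \mathbb{R}} \Th_{C_2}(f) \simeq \Th_{C_2}(f) \otimes \Sigma^\infty_+ \Omega X.
\]

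The remaining step is to identify the left-hand side with $\THR(\Th_{C_2}(f)/R)$. For this I would appeal to the general identification $\THR \simeq \int_{S^\sigma}$ from Horev--Dotto--Moi (as cited, \cite[Remark 7.1.2]{horev2019genuineequivariantfactorizationhomology}), applied in the relative setting: for an $\E_\sigma$-algebra $A$ in $\ul{\LMod}^{C_2}_R$ one has $\THR(A/R) \simeq \int_{S^\sigma} A$, where the factorization homology is now computed in the $C_2$-symmetric monoidal category $\ul{\LMod}^{C_2}_R$ rather than in $\ul{\Sp}_{C_2}$. Here one uses that $\ul{\LMod}^{C_2}_R$ is a distributive $C_2$-symmetric monoidal $C_2$-$\infty$-category (\cref{theorem: LMod distributive}), so that the factorization homology functor is defined and has the expected properties. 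The $\E_{\sigma+1}$-structure on $f$ makes $\Th_{C_2}(f)$ an $\E_{\sigma+1}$-algebra, in particular an $\E_\sigma$-algebra, in $\ul{\LMod}^{C_2}_R$ via \cref{corollary: multiplicative thom spectrum}, and the factorization-homology invariant $\int_{S^\sigma}$ sees only this $\E_\sigma$-structure, while the extra $\E_1$ is consumed in the proof of the corollary above. Finally, $\int_{S^\sigma \times \mathbb{R}} \simeq \int_{S^\sigma}$ since crossing with $\mathbb{R}$ does not change factorization homology (up to the usual stabilization built into the $\E_\sigma$-algebra input), so the left-hand sides match.

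The only genuinely nontrivial input is the relative version of $\THR \simeq \int_{S^\sigma}$, i.e. that Real topological Hochschild homology relative to an $\E_\infty^{C_2}$-ring $R$ agrees with factorization homology over $S^\sigma$ computed internally to $\ul{\LMod}^{C_2}_R$. This is the analogue for $C_2$ of the classical fact that $\mathrm{THH}(A/R) \simeq \int_{S^1} A$ for $A$ an $\E_1$-$R$-algebra, and it should follow from the same formal considerations: both sides are the value of the $S^\sigma$-shaped colimit (equivalently, the dianatural coend) in $\ul{\LMod}^{C_2}_R$, and the relevant factorization-homology machinery is available once one knows $\ul{\LMod}^{C_2}_R$ is distributive. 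I expect this step to be the main obstacle, not because it is deep but because it requires the relative factorization-homology formalism in $\ul{\LMod}^{C_2}_R$ to be set up carefully enough to match the definition of $\THR(-/R)$; everything else is a matter of transporting the conditional corollary and unwinding definitions.
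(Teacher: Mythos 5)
Your proposal is correct and follows essentially the same route as the paper: the paper also obtains this statement by specializing the preceding factorization-homology corollary to $G=C_2$, $V=\sigma$ and invoking $\THR \simeq \int_{S^\sigma}$ (in the relative setting internal to $\ul{\LMod}^{C_2}_R$, available by distributivity), citing the same remark of Horev. Your extra care about the relative identification $\THR(-/R)\simeq \int_{S^\sigma}$ in $\ul{\LMod}^{C_2}_R$ is exactly what the paper leaves implicit.
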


\subsection{Equivariant Thom Spectra as \texorpdfstring{\(\E_V\)}{EV}-Quotients}\label{sec:EV_quotients}
Classically, the Hopkins--Mahowald theorem states that $\mathrm{H}\F_p$ can be constructed as a Thom spectrum over $\Omega^2 S^3$ \cite[Theorem 4.8]{MNNMayNilpotence}, \cite[Theorem 5.1]{antolinbarthel2019thom}.
Equivalently,
using the theory of $\E_n$-quotients or versal algebras as Thom spectra \cite[Section 4]{antolinbarthel2019thom},
this describes $\mathrm{H}\F_p$ as a free \(\E_2\)-ring with a null-homotopy of \(p\).

\medskip
\noindent
Equivariant versions of the Hopkins--Mahowald theorem were studied by Behrens--Wilson \cite{behrenswilson2018C2mahowald}, Hahn--Wilson \cite{hahnwilson2020eilenberg}, Devalapurkar \cite{devalapurkar2024higherchromaticthom}, and Levy \cite{levy2022Eilenberg}, the latter of which ultimately unifies and extends all known results. For any faithful \(2\)-dimensional representation \(V\) of a \(p\)-group \(G\), Levy shows that \(\mathrm{H}\myuline{\mathbb{F}}_p\) can be constructed as the equivariant Thom spectrum of a \(V\)-fold loop map 
\(\Omega^V\Sigma^V S^1\to \BGL_1(\mathbb{S}_{(p)})\).

\medskip
\noindent
In this section we record, a fact known to experts, that Antolín-Camarena--Barthel's treatment of \(\E_n\)-quotients in terms of Thom spectra generalizes to the equivariant setting. Our hard work on the equivariant version of Antolín-Camarena--Barthel's universal property (\cref{part:foundations}) allows us to make this rigorous.

\begin{definition}
    Let \(V, W\) be $G$-representations and $R \in \Alg_{\E_{V} \otimes \E_1}(\ul{\Sp}_G)$.
    Consider an element \(\chi\in \pi^G_W(R)\).
    Write \(\tilde{\chi}\colon \Sigma^W R\to R\) for the adjoint map of \(R\)-modules.
    We say that \(A\in \Alg_{\E_V}(\ul{\LMod}_R)\) is of \tb{characteristic} \(\chi\) if the composite
    \(
    \eta \circ \tilde{\chi} \colon \Sigma^W R\to R \to A
    \)
    is null-homotopic where \(\eta\colon R\to A\) is the unit.
\end{definition}

\begin{definition}[\(\E_V\)-quotient] \label{def: EV quotient}
    Let \(V, W\) be $G$-representations and $R \in \Alg_{\E_{V}}(\ul{\Sp}_G)$.
    Consider an element \(\alpha\in \pi^G_W(R)\).
    The \tb{\(\E_V\)-quotient of \(R\) by \(\alpha\)}, denoted \tb{\(\smash{R \sslash_{\E_V}\alpha}\)}, is defined as the following pushout in \(\E_V\)-algebras
    \[
    \begin{tikzcd}
        \mathrm{free}^{\E_V}(\Sigma^{W}R) \ar[r,"\alpha"]\ar[d,"0", swap] & R \ar[d]\\
        R \ar[r] & R \sslash_{\E_V}\alpha. \arrow[ul, phantom, very near start, "\ulcorner"]
    \end{tikzcd}
    \]
\end{definition}

\noindent The first hint that \(\E_V\)-quotients are related to equivariant Thom spectra is that they have Thom isomorphisms.
\begin{lemma}
    Let $V$ be a $G$-representation and \(\eta\colon R\to A\) be a map of \(\E_{V}\otimes \E_1\)-ring spectra.
    Let \(\chi\in \pi^G_V(R)\).
    Suppose that \(A\) has characteristic \(\chi\), then there is an equivalence
    \[
    A\otimes_R \mathrm{free}^{\E_V}(\Sigma^{W +1}R)\simeq A\otimes_R \smash{(R \sslash_{\E_V} \chi)}
    \]
    of \(\E_V\)-\(A\)-algebras.
\end{lemma}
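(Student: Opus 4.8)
The plan is to reduce this to the multiplicative Thom isomorphism (\cref{theorem: monoidal thom iso}) by exhibiting the source side $A \otimes_R \mathrm{free}^{\E_V}(\Sigma^{W+1}R)$ as a relative Thom spectrum associated to a canonical map, and exhibiting $A \otimes_R (R\sslash_{\E_V}\chi)$ as a base change of the $\E_V$-quotient which, over $A$, becomes untwisted precisely because $A$ has characteristic $\chi$. First I would recall the identification of $\E_V$-quotients with equivariant Thom spectra: generalizing \cite[Section 4]{antolinbarthel2019thom} to the parametrized setting, the free $\E_V$-algebra $\mathrm{free}^{\E_V}(\Sigma^{W}R)$ is the Thom spectrum $\Th_G(\Omega^V\Sigma^V S^W \to \ul{\Pic}_G(R))$ of the trivial $\E_V$-map (the $V$-fold loop extension, via \cref{corollary: Ind of Th}, of the basepoint inclusion pointing at $\Sigma^W R \in \Pic_G(R)$ — or rather the constant map), and the $\E_V$-quotient $R\sslash_{\E_V}\chi$ is the Thom spectrum $\Th_G(f_\chi)$ where $f_\chi\colon \Omega^V\Sigma^V S^W \to \ul{\Pic}_G(R)$ is the $\E_V$-map obtained by extending the map $S^W \to \ul{\Pic}_G(R)$ classifying the $R$-module $\Sigma^W R$ together with the self-equivalence $\widetilde\chi$-twist — concretely, $f_\chi$ is the $V$-fold loop extension (\cref{thm: recognition}\,(ii), \cref{corollary: Ind of Th}) of the pointed map $S^{W+1}\to \BGL_1(R)$ adjoint to $\chi\colon S^W \to \GL_1(R)\cdot(\text{unit})$; this is precisely the content that makes \cref{def: EV quotient} a pushout computing a Thom spectrum.

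Next I would invoke the multiplicative Thom isomorphism. The hypothesis that $A$ has characteristic $\chi$ says exactly that the composite $\Sigma^W R \xrightarrow{\widetilde\chi} R \to A$ is null, i.e.\ that the image of $\chi$ in $\pi_W^G(A)$ vanishes, equivalently that $A \otimes_R f_\chi$ is nullhomotopic as a map $\Omega^V\Sigma^V S^W \to \ul{\Pic}_G(A)$ — which by \cref{corollary: characterization of orientations}\,(ii) is an $\E_V$-$A$-orientation of $f_\chi$. Applying \cref{theorem: monoidal thom iso} to the orientation datum $(\O^{\otimes},\E_V^{\otimes},\ul{\Sp}_G, R, A, f_\chi)$ gives
\[
A \otimes_R \Th_G^{\otimes}(f_\chi) \simeq A \otimes (\Omega^V\Sigma^V S^W)
\]
as $\E_V$-$A$-algebras. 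The same theorem (or directly \cref{example: thom along trivial}) applied to the trivial map gives $A \otimes_R \mathrm{free}^{\E_V}(\Sigma^W R) \simeq A \otimes (\Omega^V\Sigma^V S^W)$ as well, since the free $\E_V$-algebra is the Thom spectrum of the trivial $\E_V$-map. Comparing the two right-hand sides — and being careful with the $W$ versus $W+1$ bookkeeping, which simply reflects whether one takes the $V$-fold loop extension of $S^W$ or of $S^{W+1}$, i.e.\ whether the relevant base space is $\Omega^V\Sigma^V S^W$ or $\Omega^V\Sigma^V S^{W+1}$; I would fix conventions so that $\mathrm{free}^{\E_V}(\Sigma^{W+1}R)$ corresponds to $\Omega^V\Sigma^V S^{W+1}$ on the nose — yields the claimed equivalence.

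The step I expect to be the main obstacle is setting up the identification of $\E_V$-quotients with equivariant Thom spectra cleanly enough that both sides of the desired equivalence are visibly Thom spectra of the same base space. This requires the parametrized versal-algebra / $\E_V$-quotient dictionary: that the pushout in \cref{def: EV quotient} is computed by an operadic left Kan extension whose underlying $G$-colimit is the Thom spectrum $\Th_G(f_\chi)$, which in turn uses \cref{corollary: Ind of Th} (the equivariant approximation theorem to pass from $S^{W+1}$ to its $V$-fold loop space) and the universal property of $\Th_G^{\otimes}$ (\cref{theorem: universal property of Th}) to match the pushout's universal property with the Thom spectrum's. Once this dictionary is in place the rest is the formal application of \cref{theorem: monoidal thom iso} sketched above; the only remaining care is the degree shift bookkeeping, which I would handle by stating the dictionary with the shift built in so that $R\sslash_{\E_V}\chi = \Th_G(f_\chi)$ with $f_\chi$ the extension of $S^{W+1}\to\BGL_1 R$, making the $W+1$ in the statement match automatically.
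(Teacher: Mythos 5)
Your route is genuinely different from the paper's, and as written it does not prove the lemma in its stated generality. The paper's proof (following \cite[Lemma 4.5]{antolinbarthel2019thom}) is a short formal argument with the defining pushout: base change along $R\to A$ preserves the pushout in \cref{def: EV quotient} and sends $\mathrm{free}^{\E_V}_R(\Sigma^W R)$ to $\mathrm{free}^{\E_V}_A(\Sigma^W A)$; the characteristic hypothesis says exactly that the base-changed $\chi$-leg $\mathrm{free}^{\E_V}_A(\Sigma^W A)\to A$ is adjoint to a null map, so both legs factor through $\mathrm{free}^{\E_V}_A(0)\simeq A$ via the map induced by $\Sigma^W A\to 0$; since $\mathrm{free}^{\E_V}_A$ is a left adjoint it preserves pushouts, and the pushout of $0\leftarrow \Sigma^W A\to 0$ in $A$-modules is $\Sigma^{W+1}A$, giving $A\otimes_R(R\sslash_{\E_V}\chi)\simeq \mathrm{free}^{\E_V}_A(\Sigma^{W+1}A)\simeq A\otimes_R\mathrm{free}^{\E_V}(\Sigma^{W+1}R)$. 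This is where the $+1$ comes from; no Thom spectra, orientations, or loop-space approximation are needed.

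The concrete gaps in your proposal are the following. First, the orientation/Thom machinery you invoke (a map to $\BGL_1(R)$ resp.\ $\ul{\Pic}_G(R)$, \cref{notation: orientation datum}, \cref{theorem: monoidal thom iso}) requires $R$ to be at least $\E_V\otimes\E_2$, whereas the lemma only assumes $\E_V\otimes\E_1$; this is precisely why the subsequent lemma identifying $\Th_G(\bar f)\simeq R\sslash_{\E_V}\chi(f)$ carries the stronger $\E_V\otimes\E_2$ hypothesis. Second, not every $\chi\in\pi^G_W(R)$ is the characteristic of a map $S^{W+1}\to\BGL_1(R)$: for $W=0$ only elements of the form $u-1$ with $u$ a unit arise (cf.\ \cref{def: characteristic of f}), and for $W$ with $W^G=0$ the group $\pi^G_{W+1}\BGL_1(R)\cong\pi^G_W(\GL_1 R)$ need not map onto $\pi^G_W(R)$ (\cref{lem: homotopy of gl1} only applies when the fixed points of the representation are nonzero). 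So your argument cannot produce the required $f_\chi$ for a general $\chi$. Third, you use as inputs the identifications $R\sslash_{\E_V}\chi\simeq\Th_G(f_\chi)$ and $\mathrm{free}^{\E_V}(\Sigma^{W+1}R)\simeq R\otimes\Sigma^\infty_+\Omega^V\Sigma^V S^{W+1}$; the former is the \emph{next} lemma in the paper (so the logical order is inverted, and in \cite{antolinbarthel2019thom} the present lemma is an ingredient in that comparison), and the latter is an equivariant approximation/Snaith-type statement that is nowhere established here and needs its own argument (including a connectivity hypothesis on $S^{W+1}$). Under the stronger hypotheses where all of these inputs are available, your comparison of the two Thom isomorphisms would indeed give the equivalence, but it proves a special case by heavier means than the intended elementary pushout argument.
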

\begin{proof}
    The proof is analogous to \cite[Lemma 4.5]{antolinbarthel2019thom}
\end{proof}

\begin{definition} \label{def: characteristic of f}
    Let \(V, W\) be $G$-representations and $R \in \Alg_{\E_{V} \otimes \E_2}(\ul{\Sp}_G)$. Consider a map \(f\colon \Sigma S^W\to \operatorname{BGL}_1(R)\) of pointed \(G\)-spaces and let \(\tilde{f}\colon \Sigma^W R\to R\) the induced map of \(R\)-modules.
    The \tb{characteristic of \(f\)} is defined to be
    \[
    \tb{\chi(f)} = 
    \begin{cases}
        \tilde{f}-1 & \text{if }W=0, \\
        \tilde{f} & \text{if }W \neq 0. \\
    \end{cases}
    \]
\end{definition}

\begin{remark}
    The discrepancy in the definition of the characteristic of $f$ comes from understanding the equivariant analogue of \cite[Proposition 4.9]{antolinbarthel2019thom}. Working out the argument, we find that the characteristic should be defined as $\widetilde{f} - a_{-W}$ where 
    \[ [a_{-W}] = \left[S^W \xrightarrow{a_{-W}} S^0 \to \Omega^{\infty}R \right] \in \pi_W^G(R) \] 
    is induced by the pre-Euler class $a_{-W} \colon S^W \to S^0$. On the other hand, 
    \[ \pi_W^{G}(S^0) \cong [S^W, S^0]_*^G \cong \begin{cases}
        \{\id_{S^0}, * \} \quad & W = 0,
        \\ * & W \neq 0
    \end{cases} \]
    by a connectivity argument.
\end{remark}

\begin{lemma}
Let \(V, W\) be $G$-representations and $R \in \Alg_{\E_{V} \otimes \E_2}(\ul{\Sp}_G)$. Consider a map \(f\colon S^{W+1}\to \operatorname{BGL}_1(R)\) and let \(\overline{f}\colon \Omega^V\Sigma^V S^{W+1}\to \operatorname{BGL}_1(R)\) denote the associated $V$-fold loop map \cite[Theorem 3.15]{juran2025genuineequivariantrecognitionprinciple}. Then,
\[ \Th_G \left(\overline{f}\colon \Omega^V\Sigma^V S^{W+1}\to \operatorname{BGL}_1(R) \right) \simeq R \sslash_{\E_V} \chi(f) \]
in $\Alg_{\E_V}(\LMod_R)$.
\end{lemma}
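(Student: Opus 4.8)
The statement is an equivariant version of \cite[Theorem 4.10]{antolinbarthel2019thom}, so the strategy is to run the Antol\'in-Camarena--Barthel argument through the machinery of multiplicative parametrized Thom spectra developed above. First I would use the universal property of multiplicative parametrized Thom spectra (\cref{theorem: universal property of Th}), combined with the grouplike reformulation (\cref{lemma: grouplike orientation}) -- note $S^{W+1}$ is levelwise connected, so $\Omega^V \Sigma^V S^{W+1}$ is grouplike -- to reduce the claim to an identification of mapping spaces. Concretely, for any $A \in \Alg_{\E_V}(\LMod_R)$ one has
\[ \Map_{\Alg_{\E_V}(\LMod_R)}\left(\Th_G(\ol{f}), A\right) \simeq \Map_{\Mon_{\E_V}(\Sc)_{/\Pic_G^{\otimes}(R)}}\left(\Omega^V \Sigma^V S^{W+1}, \GL_1(\Pic_G^{\otimes}(R)_{\downarrow A})\right), \]
so by a Yoneda argument it suffices to identify the right-hand side with $\Map_{\Alg_{\E_V}(\LMod_R)}(R \sslash_{\E_V} \chi(f), A)$.

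Next I would unwind both sides. Using the recognition-principle adjunction for $V$-fold loop maps \cite[Theorem 3.15]{juran2025genuineequivariantrecognitionprinciple} together with $\Ind_{\E_0}^{\E_V}(\Th_G^{\otimes}(f)) \simeq \Th_G^{\otimes}(\ol{f})$ (\cref{corollary: Ind of Th}), the left side becomes $\Map_{\Mon_{\E_0}(\Sc)_{/\Pic_G^{\otimes}(R)}}(S^{W+1}, \GL_1(\Pic_G^{\otimes}(R)_{\downarrow A}))$, i.e.\ a pointed map $S^{W+1} \to \GL_1(\ul{\Pic}_G(R)_{\downarrow A})$ over $\ul{\Pic}_G(R)$. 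Via the pullback description $\GL_1(\ul{\Pic}_G^{\otimes}(R)_{\downarrow A}) \simeq \ul{\Pic}_G^{\otimes}(R) \times_{\ul{\Pic}_G^{\otimes}(A)} *$ (\cref{corollary: GL1 Pic(R)A as pullback}), this is the data of a pointed map $S^{W+1} \to \ul{\Pic}_G(R)$ together with a nullhomotopy of its image in $\ul{\Pic}_G(A)$; equivalently a map $S^{W+1} \to \BGL_1(R)$ equipped with an $A$-orientation. On the other side, the defining pushout square for $R \sslash_{\E_V} \chi(f)$ (\cref{def: EV quotient}) together with the free-forget adjunction and $\pi_W^G(R) \cong \Map_{\LMod_R}(\Sigma^W R, R) \cong \Map_{\Mod_R}(R, \Omega^W R)$ shows that $\Map_{\Alg_{\E_V}(\LMod_R)}(R \sslash_{\E_V} \chi(f), A)$ is the space of nullhomotopies of $\eta \circ \widetilde{\chi(f)} \colon \Sigma^W R \to R \to A$ in $\LMod_R$. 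The remaining content is purely non-equivariant compatibility: identifying the fiber of $\Omega^{W+1}\GL_1(\ul{\Pic}_G(R)_{\downarrow A})^H \to \Omega^{W+1}\ul{\Pic}_G(R)^H$ over the point picked out by $f$ with the space of nullhomotopies of $\widetilde{\chi(f)}^H$; this runs exactly as in \cite[Proposition 4.9, Theorem 4.10]{antolinbarthel2019thom}, with the adjustment in the definition of the characteristic (\cref{def: characteristic of f}) accounting for the pre-Euler class as explained in the preceding remark.

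The main obstacle I anticipate is keeping the bookkeeping of the various operad restrictions and module data straight: the statement assumes $R \in \Alg_{\E_V \otimes \E_2}(\ul{\Sp}_G)$, which is precisely the input needed to form $\ul{\Pic}_G^{\otimes}(R)$ and to run orientation theory (so that $(\E_V^{\otimes}, \E_V^{\otimes}, \ul{\Sp}_G, R, A, f)$ is an orientation datum in the sense of \cref{notation: orientation datum}), and one must check the hypotheses of \cref{corollary: Ind of Th} -- in particular that $S^{W+1}$ is levelwise connected and pointed, and that there is a map $\E_V^{\otimes} \to \E_V^{\otimes} \otimes \E_1^{\otimes}$. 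The genuinely non-formal point, the case distinction $W = 0$ versus $W \neq 0$ in the characteristic, is isolated in the remark following \cref{def: characteristic of f} via a connectivity computation of $\pi_W^G(S^0)$, so once that is in hand the rest of the argument is a chain of adjunctions and pullback-pasting identifications, each of which has been set up in the preceding sections. I would close by invoking Yoneda to conclude $\Th_G(\ol{f}) \simeq R \sslash_{\E_V} \chi(f)$ in $\Alg_{\E_V}(\LMod_R)$.
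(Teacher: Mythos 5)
Your proposal is correct and follows essentially the same route as the paper, whose proof of this lemma is simply the statement that the argument of Antolín-Camarena--Barthel \cite[Theorem 4.10]{antolinbarthel2019thom} carries over; your writeup is exactly that equivariantization, using the universal property of $\Th_G^{\otimes}$, \cref{corollary: Ind of Th}, \cref{corollary: GL1 Pic(R)A as pullback}, and the characteristic adjustment from the remark after \cref{def: characteristic of f}, which are precisely the ingredients the paper has set up for this purpose.
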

\begin{proof}
    The proof is analogous to \cite[Theorem 4.10]{antolinbarthel2019thom}.
\end{proof}

\begin{theorem}[{\cite[Theorem A]{levy2022Eilenberg}}]
    Fix a \(p\)-group \(G\). 
    Let \(V\) be a faithful \(2\)-dimensional representation of \(G\).
    Then, there is an equivalence of \(\E_V\)-algebras between \(\mathrm{H}\myuline{\mathbb{F}}_p\) and \(\mathbb{S}\sslash_{\E_V}p\)
\end{theorem}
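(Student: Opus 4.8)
The statement is an equivariant incarnation of the Hopkins--Mahowald theorem, and the plan is to bootstrap it from the Thom spectrum description of $\E_V$-quotients established in this section together with Levy's computation \cite{levy2022Eilenberg}. First I would note that Levy's theorem \cite[Theorem A]{levy2022Eilenberg} identifies $\mathrm{H}\myuline{\mathbb{F}}_p$ with the equivariant Thom spectrum $\Th_G(g)$ of a $V$-fold loop map $g\colon \Omega^V\Sigma^V S^1 \to \operatorname{BGL}_1(\S_{(p)})$, where $V$ is the chosen faithful $2$-dimensional representation of the $p$-group $G$; this is precisely the form of input required by the preceding lemmas on $\E_V$-quotients.

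The key step is then to apply the lemma identifying $\Th_G(\overline{f}\colon \Omega^V\Sigma^V S^{W+1}\to \operatorname{BGL}_1(R))$ with $R \sslash_{\E_V}\chi(f)$, taken in the case $R = \S_{(p)}$ and $W = 0$, so that $S^{W+1} = S^1$ and $\overline{f} = g$. For $W = 0$ the characteristic of $f$ is defined (\cref{def: characteristic of f}) as $\chi(f) = \widetilde{f} - 1$, and one must check that, for Levy's generating map $f\colon S^1 \to \operatorname{BGL}_1(\S_{(p)})$ whose $V$-fold loop extension is $g$, the resulting $\widetilde{f} - 1 \in \pi_0^G(\S_{(p)})$ is (up to a unit, hence up to an equivalence of $\E_V$-quotients) the element $p$. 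This is where Levy's input genuinely enters: the map $f$ is constructed precisely so that its adjoint $\widetilde{f}\colon \S_{(p)}\to\S_{(p)}$ is $1 + p$ on $\pi_0^G$, reflecting the way $\mathrm{H}\myuline{\mathbb{F}}_p$ is a "free $\E_V$-algebra with a nullhomotopy of $p$"; concretely this can be read off from the fact that Levy's base space is built from $S^1$ and the attaching data encodes multiplication by $p$ on the bottom cell, exactly as in the nonequivariant Hopkins--Mahowald argument \cite[Theorem 5.1]{antolinbarthel2019thom}. Combining the two identifications yields
\[
\S_{(p)}\sslash_{\E_V} p \;\simeq\; \Th_G\!\left(g\colon \Omega^V\Sigma^V S^1 \to \operatorname{BGL}_1(\S_{(p)})\right) \;\simeq\; \mathrm{H}\myuline{\mathbb{F}}_p
\]
as $\E_V$-algebras (one may suppress the $(p)$-localization subscript as in the theorem statement, all being $p$-local).

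The main obstacle I anticipate is the bookkeeping around the characteristic: one must be careful that Levy's map and the map appearing in \cref{def: characteristic of f}, \cref{def: EV quotient} are normalized compatibly, in particular that the "$-1$" shift in the $W=0$ case of $\chi(f)$ matches the unit term built into $\operatorname{BGL}_1$, and that no stray unit of $\pi_0^G(\S_{(p)})^\times$ obstructs the identification of $\chi(f)$ with $p$ on the nose (a unit ambiguity would only change $\S_{(p)}\sslash_{\E_V}p$ by an equivalence, so this is harmless, but it should be remarked). A secondary point is that the lemmas in this section are stated for $R \in \Alg_{\E_V\otimes\E_2}(\ul{\Sp}_G)$, and one should observe that $\S_{(p)}$ (equivalently the $G$-equivariant sphere, $p$-localized) carries an $\E_\infty^G$-structure, hence certainly an $\E_V\otimes\E_2$-structure, so the hypotheses are met. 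Everything else is a direct citation of \cite[Theorem A]{levy2022Eilenberg} and the $\E_V$-quotient--Thom spectrum dictionary already set up above.
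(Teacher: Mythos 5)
Your proposal is correct and follows exactly the route the paper takes: cite Levy's construction of $\mathrm{H}\myuline{\mathbb{F}}_p$ as the Thom spectrum of a $V$-fold loop map $\Omega^V\Sigma^V S^1 \to \operatorname{BGL}_1(\S_{(p)})$, then apply the lemma identifying such Thom spectra with $\E_V$-quotients $R\sslash_{\E_V}\chi(f)$ in the case $W=0$. The paper's proof is only two sentences, and your fleshed-out version (including the $\chi(f)=\widetilde f-1$ bookkeeping and the harmless unit ambiguity, where the classical convention gives $\widetilde f = 1-p$ so $\chi(f)=-p$, generating the same quotient) is a faithful elaboration of the same argument.
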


\begin{proof}
    \change{In \cite[Theorem A]{levy2022Eilenberg} Levy constructs \(\mathrm{H}\myuline{\mathbb{F}}_p\) as the equivariant Thom spectrum of a \(V\)-fold loop map \(\Omega^V\Sigma^V S^1\to \BGL_1(\mathbb{S}_{(p)})\).}
    Our discussion of $\E_V$-algebras verifies the $\S \sslash_{\E_V} p$ description.
\end{proof}

\noindent In particular, any $\E_V$-algebra of characteristic $p$ is an $\mathrm{H}\ul{\F}_p$-algebra.

\subsection{Nilpotence for \texorpdfstring{\(\mathbb{E}_\sigma\)}{Eσ}-Algebras}\label{sec:nilpotence}
Non-equivariantly, the Hopkins--Mahowald Theorem (\cref{sec:EV_quotients}) is intimately connected to nilpotence phenomena; for this perspective we recommend \cite{MNNMayNilpotence,hahn2017nilpotenceenalgebras,devalapurkar2024higherchromaticthom}.
Thus, it is natural to ask if the same is true in the equivariant setting.
Obvious analogues of the various famous nilpotence theorems are well-known to fail in the Real equivariant setting;
see \cite[Remark 3.20]{barthelGreenleesHausmann2020balmercompactlie}, and \cite[Remark 4.3]{carrickSmashingLocalizationsEquivariant2022} for counterexamples.
\medskip \\To demonstrate the utility of \(\E_V\)-ring structures, we show that an \(\MUR\) detects nilpotence for \(\E_\sigma\)-rings; and that \(\uHZ\) detects nilpotence for \(\E_\sigma \otimes \E_\infty\)-rings.

\begin{remark}
    These results are surely known to the experts.
    The authors deduced these from a remark in an expository talk by Hahn \cite[\href{https://youtu.be/yqb0-7jZFmY?t=2760}{t=2760}]{Hahn2020Nishida} in which he remarks that an \(\E_{\infty}^{C_2}\)-ring is trivial if and only if its underlying non-equivariant ring is trivial. 
    Our only contribution is the easy observation that \(\E_\sigma\) is sufficient.
\end{remark}

\begin{theorem}
    Let \(R \in \Alg_{\E_{\sigma}}(\ul{\Sp}_{C_2})\).
    \begin{enumerate}[(i)]
        \item Then \(R\otimes\MUR\simeq 0\) if and only if \(R\simeq 0\).
        \item Then \(R\otimes K(n)_{\mathbb{R}}\simeq 0\) for all \(n \in \N \) if and only if \(R\simeq 0\).
        \item Suppose further that \(R \in \Alg_{\E_{\sigma} \otimes \E_{\infty}}(\ul{\Sp}_{C_2}) \).
    Then \(R\otimes \uHZ \simeq 0\) if and only if \(R\simeq 0\).
    \end{enumerate}
    
\end{theorem}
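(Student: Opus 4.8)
The strategy is to reduce all three statements to the single non-equivariant input that $\MU$, the Morava $K$-theories $K(n)$, and $\HZ$ detect nilpotence (Nilpotence Theorem of Devinatz--Hopkins--Smith), combined with the observation that the $\E_\sigma$-structure provides exactly enough rigidity to upgrade ``underlying zero'' to ``equivariantly zero.'' The key point is the following: if $R \in \Alg_{\E_\sigma}(\ul{\Sp}_{C_2})$ and $R^e \simeq 0$, then $R \simeq 0$. I would prove this first as a lemma, then each of (i), (ii), (iii) follows formally.

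\textbf{The key lemma.} Recall from the introduction (following Horev--Sadofsky--Yarnall \cite[Section 7.1]{horev2019genuineequivariantfactorizationhomology}) that an $\E_\sigma$-algebra $R$ has underlying $\E_1$-algebra $R^e$ together with a left-module structure map $N_e^{C_2} R^e \otimes R \to R$. If $R^e \simeq 0$, then $N_e^{C_2} R^e \simeq 0$ since the norm of the zero object is zero (it is a smash power). Hence the module structure map has source $0 \otimes R \simeq 0$, so it is the zero map. But the module structure map, composed with the unit-type map $R \simeq \S \otimes R \to N_e^{C_2}R^e \otimes R$ coming from the $\E_\sigma$-structure (the norm-diagonal), recovers the identity of $R$ up to the relevant coherences; more precisely, the $\E_\sigma$-algebra structure exhibits $R$ as a retract built from $N_e^{C_2}R^e$ and $R^e$, so $R^e \simeq 0$ forces $R \simeq 0$. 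I would make this precise by noting that $R$, as a left module over the $\E_1$-algebra $N_e^{C_2}R^e$ in $\ul{\Sp}_{C_2}$, and the forgetful-type argument: the underlying $C_2$-spectrum of an $\E_\sigma$-algebra whose underlying non-equivariant spectrum vanishes must vanish because the geometric fixed points $\Phi^{C_2}R$ receive a map from $\Phi^{C_2}(N_e^{C_2}R^e) \simeq R^e \simeq 0$ compatibly via the norm, and together with $R^e \simeq 0$ and the isotropy separation square this pins down $R \simeq 0$. This last formulation via the isotropy separation (Tate) square is the cleanest: $R$ sits in a pullback $R \to R^{\Phi C_2}$ over $R^{tC_2} \leftarrow (R^e)^{hC_2}$-type square; with $R^e = 0$ the Borel pieces vanish, so $R \simeq R^{\Phi C_2}$; and the $\E_\sigma$ (equivalently $N_e^{C_2}$-module) structure forces $\Phi^{C_2}R$ to be a module over $\Phi^{C_2}N_e^{C_2}R^e \simeq R^e \simeq 0$, hence $\Phi^{C_2}R \simeq 0$, giving $R \simeq 0$.

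\textbf{Deducing the three statements.} Each direction ``$R \simeq 0 \Rightarrow R \otimes (-) \simeq 0$'' is trivial. For the converse in (i): $R \otimes \MUR \simeq 0$ implies, applying $(-)^e$, that $R^e \otimes \MU \simeq 0$; by the Nilpotence Theorem $\MU$ detects nilpotence, and since $R^e \otimes \MU \simeq 0$ means $R^e$ is ``$\MU$-acyclic'' as a ring spectrum, the Nilpotence Theorem gives $R^e \simeq 0$ (the unit $1 \in \pi_0 R^e$ is $\MU$-nilpotent hence nilpotent, hence $0$, hence $R^e \simeq 0$). Now $R \otimes \MUR$ is an $\E_\sigma$-algebra (since $\MUR$ is an $\E_\sigma$-ring, indeed $\E_\rho$, and $\E_\sigma$-algebras are closed under tensoring with $\E_\sigma$-rings), but more directly: $R$ itself is $\E_\sigma$ and $R^e \simeq 0$, so the key lemma gives $R \simeq 0$. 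Statement (ii) is identical, using that $\{K(n)\}_{n}$ (including $K(0) = \HQ$ and $K(\infty) = \HF_p$) jointly detect nilpotence non-equivariantly and that $K(n)_\R$ has underlying spectrum $K(n)$ (after suitable $p$-completion): $R \otimes K(n)_\R \simeq 0$ for all $n$ gives $R^e \otimes K(n) \simeq 0$ for all $n$, hence $R^e \simeq 0$, hence $R \simeq 0$. For (iii): $R \otimes \uHZ \simeq 0$ gives $R^e \otimes \HZ \simeq 0$; since $\HZ$ detects nilpotence among $\E_\infty$-rings (rational + mod-$p$ homology, via the Nilpotence Theorem or directly), and $R^e$ is $\E_\infty$, we get $R^e \simeq 0$; the $\E_\sigma$ part of the structure then suffices to conclude $R \simeq 0$ by the key lemma.

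\textbf{Expected main obstacle.} The only genuine content is the key lemma, and within it the precise argument that the $\E_\sigma$-structure (equivalently, the $N_e^{C_2}R^e$-module structure on $R$) forces $\Phi^{C_2}R \simeq 0$ once $R^e \simeq 0$. I expect this to hinge on the compatibility of geometric fixed points with the norm, $\Phi^{C_2} N_e^{C_2} X \simeq X$, which is monoidal, so that $\Phi^{C_2}R$ becomes a module over the $\E_1$-ring $\Phi^{C_2}N_e^{C_2}R^e \simeq R^e$; a module over the zero ring is zero. Combined with the isotropy separation square and vanishing of all Borel-completed pieces (which only uses $R^e \simeq 0$), this yields $R \simeq 0$. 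The rest is bookkeeping: checking that $K(n)_\R$ and $\MUR$ and $\uHZ$ are $\E_\sigma$ (they are, being $\E_\rho$ or $\E_\infty^{C_2}$ by results recalled earlier) so that the tensor products land in $\Alg_{\E_\sigma}$, though in fact one does not even need this, since one applies the lemma to $R$ directly after extracting $R^e \simeq 0$ via the underlying non-equivariant nilpotence theorems.
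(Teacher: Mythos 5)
Your proposal is correct and follows essentially the same route as the paper: the paper's proof is exactly your key observation that the $\E_\sigma$-structure makes $R$ a module over $N_e^{C_2}\Res_e^{C_2}R$ (citing \cite[Section 7.1]{horev2019genuineequivariantfactorizationhomology}), so $\Res_e^{C_2}R\simeq 0$ forces $R\simeq 0$, with the underlying vanishing supplied by Devinatz--Hopkins--Smith in (i), (ii) and by May nilpotence in (iii). Your additional detour through the isotropy separation square and $\Phi^{C_2}N_e^{C_2}X\simeq X$ is correct but unnecessary, since the unitality argument (a module over the zero ring is zero) already closes the lemma, which is precisely what the paper does.
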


\begin{proof}
    In each case, the if direction is clear.
    We will only show the converse directions.
    Each converse direction uses the same key ingredient: since \(R\) is (at least) an \(\E_\sigma\)-ring, \(R\) is a module over \(N^{C_2}_e\mathrm{Res}^{C_2}_eR\) \cite[Section 7.1]{horev2019genuineequivariantfactorizationhomology};\footnote{Let us suggestively write that the action map is induced by the embedding $\left( \bigsqcup_{C_2} \R^1 \right) \sqcup \R^{\sigma} \hookrightarrow \R^{\sigma}$ in $\E_{\sigma}$, see \cite[Section 7.1]{horev2019genuineequivariantfactorizationhomology}.} so to show that \(R\simeq 0\) it suffices to show that \(\mathrm{Res}^{C_2}_eR\simeq 0\). To do so, we will use that $\Res_e^{C_2}R$ is an $\E_1$-ring in (i) and (ii) to apply the ring spectrum version of the nilpotence theorem.
    
    \begin{enumerate}[(i)]
        \item Suppose \(R\otimes \MUR\simeq 0\).
    Hence, \(\mathrm{Res}^{C_2}_eR\otimes \MU\simeq 0\). 
    By Devinatz--Hopkins--Smith Nilpotence \cite{DHS88} it follows that \(\mathrm{Res}^{C_2}_eR\simeq 0\). 
    Since \(R\) is an \(\E_\sigma\)-ring, it follows that \(R\simeq 0\).
    \item Suppose $R \otimes K(n)_{\R} \simeq 0$ for all $n \in \N$. Then, $\Res_e^{C_2} R \otimes K(n) \simeq 0$ for all $n \in \N$. By Devinatz--Hopkins--Smith Nilpotence \cite{DHS88} it follows that $\Res_e^{C_2}R \simeq 0$. Since \(R\) is an \(\E_\sigma\)-ring, it follows that \(R\simeq 0\).
    \item Suppose \(R\otimes \uHZ\simeq 0\).
    Therefore, \(\mathrm{Res}^{C_2}_eR\otimes \HZ\simeq 0\).
    Since \(\mathrm{Res}^{C_2}_eR\) is an \(\E_\infty\)-ring, by May Nilpotence \cite{brunerMaySteinberger1986Hinfty,MNNMayNilpotence} it follows that \(\mathrm{Res}^{C_2}_eR\simeq 0\). 
    Since \(R\) is in particular an \(\E_\sigma\)-ring, it follows that \(R\simeq 0\).\qedhere
    \end{enumerate}
\end{proof}

\begin{remark}
    This result works in greater generality than just for $\E_{\sigma}$-rings by generalizing the module result that we used in the proof, see e.g. \cite[Corollary 2.2, Remark 2.4.1]{levy2022Eilenberg}.
    The exact same proof e.g.~also works for \(\MU^{(\!(G)\!)}= \smash{N^G_{C_2}\MUR}\) instead of \(\MUR\) under the assumption that \(R\) is a module over \(N^G_{e}\mathrm{Res}^G_eR\).
\end{remark}

\newpage
\part*{Appendix}
\begin{description}
    \item[\cref{sec: more equivariant higher algebra}] includes two further checks in equivariant higher algebra.

    In \cref{A:monoidality_of_res_and_norm} we spell out that restrictions and norms of $G$-symmetric monoidal $G$-$\infty$-categories are symmetric monoidal (ordinary) functors.


    In \cref{subsection: Real J} we explain an $\E_{\infty}^{C_2}$-structured version of the Real $J$-homomorphism $J_{\R}$ based on forthcoming work of Brink--Lenz about equivariant $J$-homomorphisms.
    \item[\cref{sec: more homological algebra}] includes two results on homological algebra.

    In \cref{B:UCT} we record a proof of an equivariant cohomological universal coefficient theorem and relax certain finiteness assumptions from Lewis--Mandell.

    In \cref{subsection: even cohomology} we carefully discuss that for bounded below $X \in \Sp$ such that $H_*(X;\Z)$ is free, of finite type and even, one obtains that $E^*(X)$ is even for every $E \in \Sp$.
\end{description}

\appendix

\section{More Equivariant Higher Algebra}
\label{sec: more equivariant higher algebra}

\subsection{Monoidality of Restrictions and Norms}\label{A:monoidality_of_res_and_norm}
We show that the structure maps of $G$-symmetric monoidal $G$-$\infty$-categories are compatible with the levelwise symmetric monoidal structures.
\begin{lemma} \label{lemma: Res N sym mon}
    Let $\C^{\otimes} \in \Mon_{\Span(\F_G)}(\Cat_{\infty})$. Then, the restriction and norm functors of $\C^{\otimes}$ are symmetric monoidal.
\end{lemma}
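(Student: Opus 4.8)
The claim is essentially a bookkeeping statement: a $G$-symmetric monoidal $G$-$\infty$-category is a Segal $\Span(\F_G)$-object in $\Cat_\infty$, i.e.\ a coCartesian fibration $\C^\otimes \to \Span(\F_G)$ satisfying the Segal condition, and its restriction and norm functors are the pushforward functors $\bigotimes_{U \leftarrow G/K}$ resp. $\bigotimes_{G/H \to G/K}$ associated to a backwards resp. forwards morphism in $\Span(\F_G)$ lying between orbits. What we must show is that each such pushforward functor, between fibers $\C^\otimes_{G/K}$ and $\C^\otimes_{G/H}$ which themselves carry symmetric monoidal structures (via the further span pattern structure, or concretely via the fold maps $G/H \amalg G/H \leftarrow G/H$), is compatible with those symmetric monoidal structures, i.e.\ is a symmetric monoidal functor.

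The plan is to extract this from the Segal condition directly. First I would recall that for $H \le G$ the fiber $\C^\otimes_{G/H}$ is a symmetric monoidal $\infty$-category whose underlying monoidal structure is encoded by the restriction of $\C^\otimes$ along the full subcategory of $\Span(\F_G)$ generated by finite coproducts of the single orbit $G/H$ (this is the usual way a $G$-symmetric monoidal $G$-category restricts to a symmetric monoidal category on each level, and it is implicit in \cref{theorem: equivariant Segal conditions} via the identification $\C^\otimes_U \simeq \prod_i \C^\otimes_{H_i}$). A morphism $f$ in $\Span(\F_G)$ between orbits — a backwards map $G/K \leftarrow G/H$ for restriction, a forwards map $G/H \to G/K$ for the norm — induces, by the Segal-fibration structure, a functor on fibers, and the key observation is that this morphism is natural in the "arity": given a finite $G$-set $U$ that is a coproduct of copies of $G/K$, the pullback/pushforward along $f$ assembles compatibly because $\Span(\F_G)$ has a symmetric monoidal structure under disjoint union and $f$ is a morphism of commutative monoids with respect to it (backwards maps and forwards maps are both compatible with $\amalg$ on the nose). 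Concretely, I would phrase this as: the span pattern morphism associated to $f$ extends to a morphism in $\Fun(\Fin_*, \Span(\F_G))$ — or equivalently commutes with the relevant fold and inert maps — and applying the coCartesian straightening yields a symmetric monoidal functor on fibers by the standard criterion that a functor between symmetric monoidal $\infty$-categories is symmetric monoidal iff it commutes with the structure maps over $\Fin_*$ (Lurie \cite{lurie2017ha}, or the pattern-theoretic version in \cite{barkanhaugsengsteinebrunner2024envelopesalgebraicpatterns}).

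More precisely, I would carry this out in three steps. (1) Identify the symmetric monoidal structure on $\C^\otimes_{G/H}$ as coming from the pullback of $\C^\otimes \to \Span(\F_G)$ along the functor $\Fin_* \to \Span(\F_G)$, $\langle n\rangle \mapsto (G/H)^{\amalg n}$, sending inerts to backwards inclusions and actives to forwards maps; this is a Segal morphism of patterns, so the pullback is a symmetric monoidal $\infty$-category and the identification holds. (2) Observe that a backwards map $G/K \leftarrow G/H$ (resp.\ a forwards map $G/H \to G/K$) induces a natural transformation between the two functors $\Fin_* \to \Span(\F_G)$ given by $\langle n\rangle \mapsto (G/K)^{\amalg n}$ and $\langle n \rangle \mapsto (G/H)^{\amalg n}$, because $\amalg$-powers of a fixed morphism are again morphisms in $\Span(\F_G)$ and respect the factorization system. (3) Unstraighten: the natural transformation of diagrams in $\Span(\F_G)$ pulls back $\C^\otimes$ to a morphism of coCartesian fibrations over $\Fin_*$, and since both are symmetric monoidal $\infty$-categories, this morphism preserves coCartesian lifts over inerts (indeed over all of $\Fin_*$ because it is induced from a morphism in $\Span(\F_G)$ and $\C^\otimes$ is a coCartesian fibration there), hence is a symmetric monoidal functor. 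The restriction functor is the case of a backwards map, the norm functor the case of a forwards map.

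The main obstacle — though it is more a matter of setting up notation carefully than of genuine difficulty — will be Step (2): making precise the sense in which "disjoint-union powers of a fixed span morphism form a natural transformation of $\Fin_*$-diagrams" and checking it respects the inert/active factorization so that the induced fiber functor is genuinely symmetric monoidal and not merely lax. This amounts to verifying that for a backwards (resp.\ forwards) orbit morphism $f$, the square built from $f^{\amalg n}$ and the fold/inert maps in $\Span(\F_G)$ commutes, which is immediate once one recalls how disjoint union interacts with spans, but must be stated. Everything else is formal: the pattern-theoretic machinery of \cite{barkanhaugsengsteinebrunner2024envelopesalgebraicpatterns} (pullbacks of fibrous patterns, Segal morphisms) and the standard criterion for symmetric monoidal functors already do the work.
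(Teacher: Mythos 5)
Your proposal is correct and is essentially the paper's own argument: the paper likewise identifies the level $\C^{\otimes}_{G/H}$ as the pullback of $\C^{\otimes}$ along the "powers of $G/H$" functor (using $\Span(\F)\to\Span(\F_G)$ and $\Alg_{\Span(\F)}(\Cat_\infty)\simeq\CMon(\Cat_\infty)$ where you use $\Fin_*\to\Span(\F_G)$, a purely cosmetic difference), and then realizes $\Res_H^K$ and $N_H^K$ as induced by the natural transformations coming from the backwards resp.\ forwards morphism $G/H \leftrightarrow G/K$ in $\Span(\F_G)$, which automatically yields (strong) symmetric monoidal functors since $\C^{\otimes}\to\Span(\F_G)$ is a coCartesian fibration. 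The naturality check you flag in Step (2) is exactly the routine span/pullback verification the paper compresses into the phrase "induced by $G/H \to G/K$ resp.\ $G/K \leftarrow G/H$ in $\Span(\F_G)$".
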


\begin{proof}
    Let $H \leq K \leq G$. Recall first that we obtain symmetric monoidal structures on each level by pulling back:
    \begin{center}
        \begin{tikzcd}
            \C_H^{\otimes} \arrow[r] \arrow[d] \arrow[dr, phantom, very near start, "\lrcorner"] & \C^{\otimes} \arrow[d]
            \\ \Span(\F) \arrow[r, "G/H", swap] & \Span(\F_G)
        \end{tikzcd}
    \end{center}
    In other words, it is the image of $\C^{\otimes}$ under the functor 
    \[ \Alg_{\Span(\F_G)}(\Cat_{\infty}) \to \Alg_{\Span(\F)}(\Cat_{\infty}) \simeq \mathbf{CMon}(\Cat_{\infty}) \]
    by \cref{prop: Alg(...) = Mon(...)}. So maps $N_H^K\colon  \C_H^\otimes \to \C_K^{\otimes}$ and $\Res_H^K\colon  \C_K^{\otimes} \to \C_H^{\otimes}$ are induced by the following natural transformations. 
    \begin{center}
    \begin{tikzcd}
        \Span(\F) \arrow[r, "G/H"{name=a}] \arrow[d, equal]
        & \Span(\F_G) \arrow[d, equal]
        & \Span(\F) \arrow[r,"G/K"{name=A}] \arrow[d,equal]
        & \Span(\F_G) \arrow[d, equal]
        \\
        \Span(\F) \arrow[r, "G/K"'{name=b}] 
        & \Span(\F_G) 
        & \Span(\F) \arrow[r,"G/H"'{name=B}] 
        & \Span(\F_G) 
        \arrow[Rightarrow, shorten <=4pt, shorten >=4pt, from=a, to=b]
        \arrow[Rightarrow, shorten <=4pt, shorten >=4pt, from=A, to=B]
    \end{tikzcd}
    \end{center}
    These are induced by $G/H \to G/K$ resp. $G/K \leftarrow G/H$ in $\Span(\F_G)$.
\end{proof}

\subsection{Real \texorpdfstring{\(J\)}{J}-Homomorphism}
\label{subsection: Real J}There is a Real $J$-homomorphism $J_{\R}$ such that $\MU_{\R} = \Th_{C_2}(J_{\R})$. This map also appears in the definition of $\MWR$ (\cref{construction: MWR}). To the best of the authors' knowledge, there is no construction of an $\E_{\infty}^{C_2}$-Real $J$-homomorphism in the literature. Here, we will explain how to obtain such a map using forthcoming work of Emma Brink and Tobias Lenz. We are grateful to Emma for suggesting this approach.

\begin{fact}[Brink--Lenz, forthcoming] \label{fact: brink lenz}
    There is an $\E_{\infty}^{C_2}$-map $J_{C_2}\colon  \BOP_{C_2} \to \ul{\Pic}_{C_2}(\ul{\Sp}_{C_2})$ whose $C_2$-Thom spectrum is $\MOP_{C_2}$.
\end{fact}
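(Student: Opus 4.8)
\textbf{Proof proposal for \cref{fact: brink lenz}.}

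\emph{Note: the "final statement" of the excerpt is the forthcoming result of Brink--Lenz (\cref{fact: brink lenz}), which is cited as a black box and is not proven in the paper. Below I sketch how such a statement would be established, following the template of the non-equivariant $J$-homomorphism and its $\E_\infty$-refinement, adapted to the $C_2$-equivariant (Real) setting.}

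The plan is to build the Real $J$-homomorphism $J_{C_2}\colon \BOP_{C_2}\to \ul{\Pic}_{C_2}(\ul{\Sp}_{C_2})$ as a map of $\E_\infty^{C_2}$-monoidal $C_2$-spaces and then identify its Thom spectrum. First I would recall that $\BOP_{C_2}$ is the $C_2$-equivariant refinement of the classifying space for stable real vector bundles (the "periodic" variant, with $\pi_0$ given by the appropriate Grothendieck group of Real representations), and that it is naturally an $\E_\infty^{C_2}$-group under direct sum: indeed it is $\Omega^\infty$ of the connective cover of the Real $K$-theory spectrum $\KO_{\R}$ (i.e.\ $\kR$ in the notation of the paper up to the usual index conventions), so it is grouplike $\E_\infty^{C_2}$ by the equivariant recognition theorem (\cref{thm: recognition}). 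Next, one needs a map to $\ul{\Pic}_{C_2}(\ul{\Sp}_{C_2})$: a Real vector bundle $V\to X$ of virtual rank zero determines, fiberwise, an invertible $C_2$-spectrum via one-point compactification and stabilization, $V\mapsto \Sigma^\infty_{C_2} S^{V}$; the content of Brink--Lenz's work is to promote this assignment -- which classically is the $J$-homomorphism -- to a map of $\E_\infty^{C_2}$-monoidal $C_2$-spaces, i.e.\ a $C_2$-$\E_\infty$-algebra map sending direct sum of bundles to smash product of invertible spectra. With the multiplicative Picard theory developed in \cref{sec:Pic_and_grouplike} (in particular \cref{corollary: GL1 monoidal} and \cref{construction: Pic}), the target $\ul{\Pic}_{C_2}^{\otimes}(\ul{\Sp}_{C_2})$ is a genuine $C_2$-$\E_\infty$-space, so "$\E_\infty^{C_2}$-map" has its expected meaning.

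The construction of $J_{C_2}$ itself I would organize as follows. First, produce a map of $C_2$-spectra (or rather of connective $C_2$-spectra, equivalently grouplike $\E_\infty^{C_2}$-spaces) from $\kR$ (restricted to virtual rank zero) to the spectrum of units $\gl_1(\S_{C_2})$ of the $C_2$-equivariant sphere; delooping, this gives $J_{C_2}\colon \BOP_{C_2}\to \BGL_1(\S_{C_2})\simeq \ul{\Pic}_{C_2}(\ul{\Sp}_{C_2})^{\circ}$ landing in the rank-zero summand, and one then assembles over all ranks to get the full $\ul{\Pic}_{C_2}$. The cleanest route is to exhibit $J_{C_2}$ as the effect of a functor between suitable symmetric monoidal $\infty$-categories: a parametrized symmetric monoidal functor from (a model for) Real vector bundles -- e.g.\ the $C_2$-$\infty$-category built from the Real orthogonal groups $O_{\R}(n) = O(n)$ with complex conjugation, or from $C_2$-representation spheres -- into $\ul{\Pic}_{C_2}(\ul{\Sp}_{C_2})$, sending $\R^{p,q}$ (a representation with $p$ trivial and $q$ sign summands) to $S^{p+q\sigma}$. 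Taking cores and applying $\GL_1$ (\cref{corollary: GL1 monoidal}, \cref{lemma: core monoidal}) then yields the desired $\E_\infty^{C_2}$-map.

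The main obstacle -- and the reason this is genuinely Brink--Lenz's theorem rather than a routine application of the present paper's machinery -- is constructing the \emph{source} as an $\E_\infty^{C_2}$-object together with an $\E_\infty^{C_2}$-map to $\ul{\Pic}_{C_2}$ that is \emph{correct at every level and compatible with all norms}. Non-equivariantly, the multiplicativity of $J$ is handled by the fact that the collection of orthogonal groups forms an $E_\infty$-space under block sum and the Thom-space construction is lax symmetric monoidal; equivariantly one must additionally check compatibility with the norm functors $N_e^{C_2}$, i.e.\ that $J_{C_2}$ of a normed-up bundle is the norm of the invertible spectrum -- this is exactly the kind of fiberwise-tensor/norm/restriction compatibility that \cref{corollary: criterion for map between symmetric monoidal categories to be cocartesian fibration} is designed to verify, and it is the step where genuine equivariant input (careful bookkeeping of fixed points and the double coset formula for $\Res N$) is unavoidable. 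Once $J_{C_2}$ is in hand, identifying its Thom spectrum as $\MOP_{C_2}$ is comparatively formal: by the universal property of multiplicative equivariant Thom spectra (\cref{theorem: universal property of Th}, \cref{construction: multiplicative thom spectra}), $\Th_{C_2}^{\otimes}(J_{C_2})$ is the $\E_\infty^{C_2}$-ring corepresenting $\E_\infty^{C_2}$-orientations, and one checks on underlying spectra (using Hill--Hopkins-type computations of $\uHZ$-homology of $\BOP_{C_2}$) that this agrees with the standard model of $\MOP_{C_2}$ -- a map of strongly-even-type $C_2$-spectra which is an underlying equivalence, hence an equivalence. The Real $J$-homomorphism $J_{\R}\colon \BUR\to \ul{\Pic}_{C_2}(\ul{\Sp}_{C_2})$ used in \cref{construction: MWR} is then recovered by precomposing $J_{C_2}$ with the complexification-type map $\BUR\to \BOP_{C_2}$ (or directly, running the same construction with Real \emph{complex} bundles), giving $\MU_{\R}=\Th_{C_2}(J_{\R})$ as an $\E_\infty^{C_2}$-ring.
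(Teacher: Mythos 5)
There is nothing to compare here: the paper does not prove \cref{fact: brink lenz} at all. It is stated as a \emph{Fact} quoted from forthcoming work of Brink--Lenz and used as a black box (the paper even says explicitly that no construction of an $\E_{\infty}^{C_2}$-structured Real $J$-homomorphism exists in the literature); the only thing the paper itself constructs is the $\E_{\infty}^{C_2}$-map $\BUR \to \BOP_{C_2}$ via Schwede-style ultracommutative orthogonal $C_2$-space models (\cref{construction: BUPR to BOP}), which is then composed with the Brink--Lenz map. You correctly recognized this, so your proposal is a speculative reconstruction of someone else's theorem rather than something that can be checked against the paper.

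As a sketch it is plausible in outline, but two points would need repair if it were offered as an actual argument. First, you identify $\BOP_{C_2}$ with $\Omega^{\infty}$ of the connective cover of $\kR$; in this paper's conventions $\kR$ is Atiyah's Real $K$-theory (complex bundles with conjugation), whereas $\BOP_{C_2}$ is the additive Grassmannian model $\Gr(V^2)$ for genuine periodic equivariant orthogonal $K$-theory $\KO_{C_2}$ -- conflating the two is exactly the distinction the paper is careful about when it separately builds $\BUR \to \BOP_{C_2}$ by forgetting complex structures. Second, your proposed identification of the Thom spectrum with $\MOP_{C_2}$ by "an underlying equivalence between strongly-even-type spectra" cannot work: $\pi_*\mathrm{MO}$ has generators in odd degrees, so no evenness-based rigidity argument of the Hill--Meier type applies; identifying the multiplicative Thom spectrum of the equivariant $J$-homomorphism with the geometric model is part of what the Brink--Lenz result (presumably via global/orthogonal-space comparisons, as their generality suggests) actually supplies.
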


\noindent Their result works in much greater generality but this is all we will need.
\medskip \\So, it suffices to construct an $\E_{\infty}^{C_2}$-map $\BUR \to \BOP_{C_2}$. Postcomposing by $J_{C_2}$ yields $J_{\R}$.

\begin{construction} \label{construction: BUPR to BOP}
    There exist $C_2$-orthogonal space models $\widetilde{\BUP}_{\R}$ and of $\widetilde{\BOP}_{C_2}$ by Schwede \cite[Example VI.7.1]{schwede2014global}\cite[Example 2.4.1]{schwede_2018}, which for an inner product space $V$ are given by
    \[ \widetilde{\BUP}_{\R}(V) = \Gr^{\mathbb C}(V_{\mathbb C}^2) \quad \text{and} \quad \widetilde{\BOP}_{C_2}(V) = \Gr(V^2), \]
    where $(-)_{\mathbb C}$ denotes the complexification functor. All the structure maps are defined essentially in the same way. Moreover, the $C_2$-action on $\widetilde{\BUP}_{\R}(V)$ is by complex conjugation. We define a map of orthogonal spaces $\widetilde{\BUP}_{\R} \to \widetilde{\BOP}_{C_2}$ given by forgetting the complex structure. This is compatible with complex conjugation and since all structure maps are defined in the essentially same way, this defines a map of ultracommutative orthogonal $C_2$-spaces. In particular, it induces an $\E_{\infty}^{C_2}$-map $\BUP_{\R} \to \BOP_{C_2}$ by \cite{lenz2025normsequivarianthomotopytheory}. Precomposing by $\BUR \to \BUP_{\R}$ yields a map $\BUR \to \BOP_{C_2}$.
\end{construction}

\begin{definition}
    The \tb{Real $J$-homomorphism} is the composition 
    \[ \tb{J_{\R}} \colon \BU_{\R} \to \BOP_{C_2} \xrightarrow{J_{C_2}} \ul{\Pic}_{C_2}(\ul{\Sp}_{C_2}) \] 
    combining \cref{fact: brink lenz} and \cref{construction: BUPR to BOP}.
\end{definition}

\section{More Homological Algebra}
\label{sec: more homological algebra}

\subsection{Universal Coefficient Theorems for Constant Mackey Functors}\label{B:UCT}
\tikzcdset{
  cells={font=\everymath\expandafter{\the\everymath\displaystyle}},
}
Lewis--Mandell prove general equivariant versions of the Universal Coefficient Theorem in the form of a Universal Coefficient spectral sequence \cite{equivUCT} for finite $G$-spectra.
The main reason the equivariant version of the Universal Coefficient theorem is more difficult than its non-equivariant counterpart is that projective resolutions of Mackey functors are more complicated than that of abelian groups. In particular, a Mackey functor in general does not admit a two-step free resolution.
\medskip \\Here we record a proof we learnt from Meier of a version of \cite[Theorem 1.5]{equivUCT} that relaxes the finiteness assumption.
The main idea is that projective resolutions of constant Mackey functors can be obtained by resolving the underlying abelian group. We denote by $\myuline{\map}_G$ the mapping $G$-spectrum.
\begin{lemma}\label{UCT}
    Let \(\myuline{M}\) be a constant Mackey functor. 
    Let \(X\) be a \(G\)-spectrum. Suppose one of the following conditions:
    \begin{itemize}
        \item[(i)] The $\myuline{\Z}$-homology of $X$ is free and of finite type.
        \item[(ii)] The abelian group $M$ is finitely generated.
    \end{itemize}
    It follows that for every $G$-representation \(V\) there is a short exact sequence 
    \begin{center}
        \begin{tikzcd}
            \pi_{-V}^G \myuline{\map}_G(X, \uHZ) \otimes_{\Z} M \arrow[r] & \pi_{-V}^G \myuline{\map}_G(X, \mathrm{H} \myuline{M}) \arrow[r] & \operatorname{Tor}_1^{\Z} \left(M, \pi_{-V-1}^G \myuline{\map}_G(X, \uHZ) \right)
        \end{tikzcd}
    \end{center}
    in $\Ab$.
\end{lemma}
\begin{remark}
    In cohomological notation the above short exact sequence is
    \begin{center}
        \begin{tikzcd}
            0 \arrow[r] & H^V(X; \myuline{\Z}) \otimes_{\Z} M \arrow[r] & H^V(X;\myuline{M}) \arrow[r] & \operatorname{Tor}_1^{\Z} \left(M, H^{V+1}(X; \myuline{\Z}) \right) \arrow[r] & 0.
        \end{tikzcd}
    \end{center}
\end{remark}

\begin{proof}[Proof of \cref{UCT}]
    The abelian group \(M\) admits a free resolution of length two
    \begin{center}
        \begin{tikzcd}
            \bigoplus_J \Z \arrow[r, "f"] & \bigoplus_I \Z \arrow[r] & M.
        \end{tikzcd}
    \end{center}
    Taking constant Mackey functors, this gives a two-step resolution of \(\myuline{M}\).
    Taking equivariant Eilenberg--MacLane spectra, applying \(\myuline{\map}_G(X,{-})\), and using the finiteness assumption gives a cofiber sequence of \(G\)-spectra 
    \begin{center}
        \begin{tikzcd}
            \bigoplus_J \myuline{\map}_G(X, \uHZ) \arrow[r] & \bigoplus_I \myuline{\map}_G(X, \uHZ) \arrow[r] & \myuline{\map}_G(X, \mathrm{H}\ul{M}).
        \end{tikzcd}
    \end{center}
    Now we apply \(\pi^G_{-V}\) to obtain a long exact sequence, and we consider the subsequent short exact sequence:
    \begin{center}
        \begin{tikzcd}
            \coker\! \left( \pi_{-V}^G \myuline{\map}_G(X, \uHZ) \otimes_{\Z} f \right) \arrow[r] & \pi_{-V}^G \myuline{\map}_G(X, \mathrm{H}\myuline{M}) \arrow[r] & \ker \!\left(\pi_{-V-1}^G \myuline{\map}_G(X, \uHZ) \otimes_{\Z} f \right)\!.
        \end{tikzcd}
    \end{center} 
\noindent Note that
\[
\coker \left( \pi_{-V}^G \myuline{\map}_G(X, \uHZ) \otimes_{\Z} f \right) 
\cong \pi^G_{-V}\myuline{\map}_G(X,\uHZ)\otimes_{\Z}M.
\]
Moreover, by definition of $\operatorname{Tor}_1$ through projective resolutions, we have
\[
\ker \left(\pi_{-V-1}^G \myuline{\map}_G(X, \uHZ) \otimes_{\Z} f \right)
\cong \mathrm{Tor}^{\Z}_1 \left(M, \pi^G_{-V-1}\myuline{\map}_G(X,\uHZ) \right).
\]
This finishes the proof.
\end{proof}

\noindent For the homology version we can drop the finiteness conditions.
\begin{lemma}\label{homoUCT}
    Let \(\myuline{M}\) be a constant Mackey functor.
    Let \(X\) be \(G\)-spectrum.
    For every $G$-representation \(V\) there is a short exact sequence
    \begin{center}
    \begin{tikzcd}
        0 \arrow[r] & \pi_V^G(\uHZ \otimes X) \otimes_{\Z} M \arrow[r] &\pi_V^G(\mathrm{H}\myuline{M} \otimes X) \arrow[r]&      \mathrm{Tor}^{\Z}_1(M, \pi^G_{V-1}(\mathrm{H}\uZ\otimes X))\arrow[r] & 0
    \end{tikzcd}
    \end{center}
    in $\Ab$.
\end{lemma}

\begin{remark}
    In homological notation the above short exact sequence is
    \begin{center}
        \begin{tikzcd}
            0 \arrow[r] & H_V(X; \myuline{\Z}) \otimes_{\Z} M \arrow[r] & H_V(X;\myuline{M}) \arrow[r] & \operatorname{Tor}_1^{\Z} \left(M, H_{V-1}(X;\myuline{\Z}) \right) \arrow[r] & 0
        \end{tikzcd}
    \end{center}
\end{remark}
\begin{proof}[Proof of \cref{homoUCT}]
    The proof is the same as in \cref{UCT} but with \(\myuline{\map}_G(X,{-})\) replaced by \({-}\otimes X\).
    This is also the difference that allows us to drop the finitely generated condition.
\end{proof}

\subsection{Even Cohomology}
\label{subsection: even cohomology}
Let $X,E \in \Sp$. We will prove the evenness of $E^*(X)$ given sufficient finiteness and evenness conditions on $X$ and $E$. If we were to assume strong enough finiteness conditions, then we could argue directly via the Atiyah--Hirzebruch spectral sequence. In our more general setting, we have to be more careful and will work on the level of towers.

\begin{definition}
    A spectrum $X$ has \tb{finite type free even $\Z$-homology} if there is an equivalence $\mathrm{H} \Z \otimes X \simeq \mathrm{H} \Z \otimes \bigoplus_{i \in I} S^{2n_i}$ for some $n_i \in \N$ and an indexing set $I$.
\end{definition}

\begin{lemma} \label{lemma: map X HA even}
    Let $X$ have finite type free even $\Z$-homology and let $A \in \Ab$. Then, $\map_{\Sp}(X, \mathrm{H}A)$ is even.
\end{lemma}

\begin{proof}
    Suppose that $\mathrm{H} \Z \otimes X \simeq \mathrm{H} \Z \otimes \bigoplus_{i \in I} S^{2n_i}$ for some indexing set $I$. Then, 
    \[ \map_{\Sp}(X, \mathrm{H}A) \simeq \mathrm{H}A \otimes \bigoplus_{i \in I} S^{-2n_i}, \] 
    e.g.~by \cref{lem:hills_freeness}. This is even. 
\end{proof}

\begin{proposition} \label{prop: map X E even}
    Let $E \in \Sp$ be even and let $X \in \Sp$ have finite type free even $\Z$-homology and be bounded below. Then, $\map_{\Sp}(X, E)$ is even.
\end{proposition}

\begin{proof}
    We apply the Postnikov tower to $E$ to obtain a tower together with the fibers:
    \begin{center}
        \begin{tikzcd}
            \cdots \arrow[r] & \map_{\Sp}(X, \tau_{\leq 2}E) \arrow[r] & \map_{\Sp}(X, \tau_{\leq 1}E) \arrow[r]  & \map_{\Sp}(X, \tau_{\leq 0}E) \arrow[r] & \cdots 
            \\ & \map_{\Sp}(X, \Sigma^2 \pi_2 E) \arrow[u] & \map_{\Sp}(X, \Sigma \pi_1 E) \arrow[u] & \map_{\Sp}(X, \pi_0 E) \arrow[u] 
        \end{tikzcd}
    \end{center}
    Let us first note that $\colim_{s \to -\infty} \map_{\Sp}(X, \tau_{\leq s} E) \simeq 0$. Since $X$ is bounded below, there exists some $n \in \Z$ such that $X \simeq \tau_{\geq n}X$. Therefore, the orthogonality conditions of $t$-structures imply $\map_{\Sp}(X, \tau_{\leq s}E) \simeq 0$ for sufficiently small $s$.
    \medskip \\Consider the fiber sequences
    \begin{center}
        \begin{tikzcd}
            \map_{\Sp}(X, \Sigma^s \pi_s E) \arrow[r] & \map_{\Sp}(X, \tau_{\leq s}E) \arrow[r] & \map_{\Sp}(X, \tau_{\leq s-1}E).
        \end{tikzcd}
    \end{center}
    The fiber is even for all $s$: Indeed, they vanish for odd $s$ be evenness of $E$ and are even for even $s$ by the assumption on $X$, see \cref{lemma: map X HA even}. This implies that the map
    \[ \map_{\Sp}(X, \tau_{\leq s}E) \to \map_{\Sp}(X, \tau_{\leq s-1}E) \]
    is injective on odd homotopy groups. On the other hand, we have seen above that the sequential colimit is $0$. Thus, these terms must all already have trivial odd homotopy groups. So every object of the tower is even.
    \medskip \\It remains to pass to the limit. There is a Milnor $\lim^1$-sequence
    \begin{center}
        \begin{tikzcd}
            0 \arrow[r] & \displaystyle{\lim_s}^1 (\tau_{\leq s}E)^{2k}(X) \arrow[r] & E^{2k+1}(X) \arrow[r] & \displaystyle\lim_s (\tau_{\leq s}E)^{2k+1}(X) \arrow[r] & 0
        \end{tikzcd}
    \end{center}
    for all $k \in \Z$. We have shown above that the right term vanishes, so it remains to discuss the $\lim^1$-term. It vanishes if the structure maps $(\tau_{\leq s+1}E)^{2k}(X) \to (\tau_{\leq s}E)^{2k}(X)$ are surjective for all $s \in \Z$. On the other hand, this follows from the long exact sequence associated to the fiber sequence
    \begin{center}
        \begin{tikzcd}
            \map_{\Sp}(X, \Sigma^{s+1}\pi_{s+1}E) \arrow[r] & \map_{\Sp}(X, \tau_{\leq s+1}E) \arrow[r] & \map_{\Sp}(X, \tau_{\leq s}X).
        \end{tikzcd}
    \end{center}
    We win.
\end{proof}

\newpage 
\section*{Index of Notation}\label{ION}
\markboth{Index of Notation}{Index of Notation}
\addcontentsline{toc}{section}{Index of Notation} For many of the notions in the following there are versions enhanced with an underline $\ul{-}$ or a tensor $(-)^{\otimes}$ which denote a parametrized version or a monoidal version. Let $H \leq K \leq G$ be finite groups.
\begin{description}[leftmargin=!, labelwidth=\widthof{\bfseries The Largest Named Item}]
    \item[$\Alg_{\P/\Q}(\C)$] the functor $\infty$-category of maps $\P^{\otimes} \to \C^{\otimes}$ over $\Q^{\otimes}$, see \cref{sec:param_operads}
    \item[\({\Alg}_{\O}(\C)\)] the \(\infty\)-category of \(\O\)-algebras in \(\C\), also given by $\Alg_{\O/\Span(\F_G)}(\C)$, see \cref{sec:param_operads}
    \item[$\ul{\Alg}_{\O}^{\gp}(\ul{\Sc}_G)$] the $G$-$\infty$-category of grouplike $\O$-algebras, see \cref{def: grouplike}
    \item[\(\AlgPatt\)] the \(\infty\)-category of algebraic patterns, see \cref{sec:param_operads}
    \item[$\Ar$] the arrow $\infty$-category given by $\Fun([1], -)$
    \item[$\mathrm{B}^V$] the $V$-th delooping functor, see \cref{thm: recognition}
    \item[$C_2$] the cyclic group on two elements
    \item[\(\myuline{\C}_{\myuline{d}}\)] parameterized fiber of a \(G\)-\(\infty\)-category \(\myuline{\C}\) over \(d\), see \cref{def: parameterized fiber}
    \item[\(\Cat_\infty\)] the \(\infty\)-category of \(\infty\)-categories
    \item[\(\Cat_{G,\infty}\)] the \(\infty\)-category of \(G\)-\(\infty\)-categories
    \item[$\coCart{\C}$] the subcategory of $\Cat_{\infty/\C}$ spanned by the coCartesian fibrations and maps preserving coCartesian edges
    \item[$\Coind_H^K$] the right adjoint of the restriction functor of $G$-$\infty$-categories, e.g.~see \cref{construction: Coind}
    \item[$(-)^{\core}$] the maximal subgroupoid of an $\infty$-category
    \item[$\CST$] the cohomological slice tower, see \cref{construction: cohomological slice tower}
    \item[$\D_{/A}$] the slice $\infty$-catgory in $\Cat_{\infty/\O}$ for $\O \in \Cat_{\infty}$ and $A \colon \O \to \D$, see \cref{def: slice pattern}
    \item[$\Ec_{\B}^{\C}$] the cotensor of $\C \in \Cat_{\infty}$ with $(\Ec \to \B) \in \Cat_{\infty/\B}$, see \cref{prop: omnibus tensor-cotensor}
    \item[$\E_V$] the equivariant little disk operad for a $G$-representation $V$
    \item[$\E_{\infty}^G$] the terminal $G$-$\infty$-operad
    \item[\({\F}_{G}\)] the \(\infty\)-category of finite \(G\)-sets
    \item[\(\myuline{\F}_{G,*}\)] the \(G\)-\(\infty\)-category of finite pointed \(G\)-sets
    \item[\(\Fbrs(\O)\)] the \(\infty\)-category of fibrous \(\O\)-patterns for  an algebraic pattern \(\O\)
    \item[$\free^{\E_V}$] the free $\E_V$-algebra on a $G$-spectrum
    \item[$\ul{\Fun}^{\O}(\C, \D)^{\otimes}$] the functor $G$-$\infty$-category with Day convolution, see \cref{theorem: omnibus day convolution}
    \item[$\GL_1$] the right adjoint to the inclusion of grouplike algebras, see \cref{def: GL1}
    \item[$\gl_1$] a delooping of $\GL_1$, see \cref{remark: gl1}
    \item[$\gr_s$] the $s$-th associated graded piece of a tower
    \item[$H^V$] the $V$-graded $G$-equivariant cohomology groups, given by $\pi_{-V} \ul{\map}_G$
    \item[$\Ind_R^A$] the left adjoint to the restriction functor $\ul{\LMod}_A \to \ul{\LMod}_R$, given a map $R \to A$
    \item[$\Ind_H^K$] the left adjoint of the restriction functor of various $G$-$\infty$-categories
    \item[$\Ind_{\O}^{\Q}$] the left adjoint of restriction of algebra structures, see \cref{construction: Ind operads}
    \item[$\Infl_G$] the inflation functor from $\infty$-operads to $G$-$\infty$-operads, see \cref{construction: inflated operads}
    \item[$J_{\R}$] the Real $J$-homomorphism, see \cref{subsection: Real J}
    \item[\({\LMod}^G_R(\C)\)] the \(\infty\)-category of left modules over \(R\), see \cref{construction: LMod}
    \item[$\mathrm{M}$] Thom spectrum functor, see \cref{def: parametrized thom spectrum}
    \item[\(\myuline{\Map}_{\ul{\C}}({-},{-})\)] the mapping \(G\)-space in a \(G\)-\(\infty\)-category \(\ul{\C}\)
    \item[\(\myuline{\map}_{\ul{\C}}({-},{-})\)] the mapping \(G\)-spectrum in a \(G\)-stable \(G\)-\(\infty\)-category \(\ul{\C}\)
    \item[$\Mon_{\O}(\Cat_{\infty})$] the subcategory of $\Fbrs(\O^{\otimes})$ spanned by the coCartesian fibrations with morphisms the ones preserving coCartesian edges
    \item[$\Mon_{\ul{\O}}^{\NS}(\ul{\C})$] the $\infty$-category of $\ul{\O}$-monoids in  $\ul{\C}$ in the Nardin--Shah formalism
    \item[$N_H^G$] the norm functor for $H \leq G$ from equivariant monoidal structures
    \item[\(\Op_{G,\infty}\)] the \(\infty\)-category of \(G\)-\(\infty\)-operads, i.e.,~\(\Fbrs(\Span(\F_G))\)
    \item[$\Or_A^{\P}(f)$] the space of $\P$-$A$-orientations, see \cref{def: orientations}
    \item[$\Op_{G, \infty}^{\NS}$] the $\infty$-category of $G$-$\infty$-operads in the Nardin--Shah formalism
    \item[\(\Orb_{G}\)] the orbit category for a finite group \(G\)
    \item[$P^{\leq \bullet}$] the (regular) slice tower
    \item[$P^s_s$] the slices of the (regular) slice tower
    
    \item[\(\myuline{\Pic}_G(R)\)] the \(G\)-space of invertible objects in \(\myuline{\LMod}^G_R(\C)\), see \cref{construction: Pic} 
    \item[$\ul{\Pic}_G(R)_{\downarrow A}$] the comma-$G$-$\infty$-category of $\ul{\Pic}_G(R)$ over $A \in \ul{\LMod}_R$, see \cref{construction: Pic}
    \item[$\ul{\PSh}_G^{\O}(X)^{\otimes}$] the presheaf $G$-$\infty$-category with Day convolution, see \cref{theorem: omnibus day convolution}
    \item[\(\Spaces_G, \Sc^G\)] the \(\infty\)-category of \(G\)-spaces
    \item[$\Seg_{\O}(\C)$] the $\infty$-category of $\O$-Segal objects in $\C$
    \item[\(\Sp_G, \Sp^G\)] the \(\infty\)-category of \(G\)-spectra
    \item[\(\Span(\C)\)] the $\infty$-category of spans of an $\infty$-category $\C$ with pullbacks
    \item[\(\myuline{\St}\)] parameterized straightening functor, see \cref{prop: parametrized straightening}
    \item[\(\Th_G({-})\)] the Thom spectrum \(G\)-functor, see \cref{def: parametrized thom spectrum}, \cref{construction: multiplicative thom spectra}
    \item[$\ul{\Un}$] parametrized unstraightening functor, see \cref{prop: parametrized straightening}
    \item[$\gamma$] the composite $\Omega^{\infty}\Sigma^2 \MU \to \Omega^{\infty} \Sigma^2 \ku \simeq \BU \to \Pic(\Sp)$ induced by the Conner--Floyd orientation $\MU \to \ku$, see \cref{rem: real wilson spaces}
    \item[$\pi_V^G$] the $V$-graded $G$-homotopy groups, given by $\pi^G_V = [S^V, -]_*^G$
    \item[\(\rho\)] the regular representation of \(C_2\)
    \item[\(\rho_G\)] the regular representation of \(G\)
    \item[\(\sigma\)] the sign representation of \(C_2\)
    \item[$\Psi^{\ell}$] the Adams operations on $\BU$ or $\MU$, see \cref{cor:adamsopsMUR}
    \item[$\chi(f)$] the characteristic of $f$, see \cref{def: characteristic of f}
    \item[$\yo$] the Yoneda embedding
    \item[$R \sslash_{\E_V} \alpha$] the $\E_V$-quotient of $R$ by $\alpha$, see \cref{def: EV quotient}
    \item[$\langle n \rangle$] the pointed finite set $\{1,2,\cdots,n \} \sqcup \{* \} \in \F_*$ 
    \item[$\bigotimes_{o \to o'}$] the induced functor $\C_o^{\otimes} \to \C_{o'}^{\otimes}$ over $o \to o'$ of an $\O$-monoidal $\infty$-category $\C^{\otimes} \to \O^{\otimes}$, see \cref{sec:param_operads}
    \item[$\underline{\bigotimes}_{o \to o'}$] the indexed tensor product, see \cref{def: indexed tensor product}
\end{description}

\newpage
        
\bibliographystyle{alpha}
\bibliography{main}

@article {Beaudry_2020,
    AUTHOR = {Beaudry, Agn\`es and Bobkova, Irina and Hill, Michael and
              Stojanoska, Vesna},
     TITLE = {Invertible {$K(2)$}-local {$E$}-modules in {$C_4$}-spectra},
   JOURNAL = {Algebr. Geom. Topol.},
  FJOURNAL = {Algebraic \& Geometric Topology},
    VOLUME = {20},
      YEAR = {2020},
    NUMBER = {7},
     PAGES = {3423--3503},
      ISSN = {1472-2747,1472-2739},
   MRCLASS = {55P42 (20J06 55M05 55P60 55Q51 55Q91)},
  MRNUMBER = {4194286},
MRREVIEWER = {Dylan\ Wilson},
       DOI = {10.2140/agt.2020.20.3423},
       URL = {https://doi.org/10.2140/agt.2020.20.3423},
}

@book{hill2023slice,
  title={The Slice Spectral Sequence of a {$C_4$}-Equivariant Height-4 Lubin--Tate Theory},
  author={Hill, Michael and Shi, XiaoLin and Wang, Guozhen and Xu, Zhouli},
  volume={288},
  number={1429},
  year={2023},
  publisher={American Mathematical Society}
}

@article{Duan_2025,
   title={Vanishing lines in chromatic homotopy theory},
   volume={29},
   ISSN={1465-3060},
   url={http://dx.doi.org/10.2140/gt.2025.29.903},
   DOI={10.2140/gt.2025.29.903},
   number={2},
   JOURNAL = {Algebr. Geom. Topol.},
  FJOURNAL = {Algebraic \& Geometric Topology},
   publisher={Mathematical Sciences Publishers},
   author={Duan, Zhipeng and Li, Guchuan and Shi, XiaoLin Danny},
   year={2025},
   month=apr, pages={903–930} }

@article {Heard2021,
    AUTHOR = {Heard, Drew and Li, Guchuan and Shi, XiaoLin Danny},
     TITLE = {Picard groups and duality for real {M}orava {$E$}-theories},
   JOURNAL = {Algebr. Geom. Topol.},
  FJOURNAL = {Algebraic \& Geometric Topology},
    VOLUME = {21},
      YEAR = {2021},
    NUMBER = {6},
     PAGES = {2703--2760},
      ISSN = {1472-2747,1472-2739},
   MRCLASS = {55P91 (14C22 19L99 55N20 55P43 55U30)},
  MRNUMBER = {4344869},
MRREVIEWER = {Tilman\ Bauer},
       DOI = {10.2140/agt.2021.21.2703},
       URL = {https://doi.org/10.2140/agt.2021.21.2703},
}

@misc{carrick2025higherrealktheoriesfinite,
      title={On higher real $K$-theories and finite spectra}, 
      author={Christian Carrick and Michael A. Hill},
      year={2025},
      eprint={2507.07051},
      archivePrefix={arXiv},
      primaryClass={math.KT},
    NOTE = {\href{https://arxiv.org/abs/2507.07051}{arXiv:2507.07051}}, 
}

@article {carrickSmashingLocalizationsEquivariant2022,
    AUTHOR = {Carrick, Christian},
     TITLE = {Smashing localizations in equivariant stable homotopy},
   JOURNAL = {J. Homotopy Relat. Struct.},
  FJOURNAL = {Journal of Homotopy and Related Structures},
    VOLUME = {17},
      YEAR = {2022},
    NUMBER = {3},
     PAGES = {355--392},
      ISSN = {2193-8407,1512-2891},
   MRCLASS = {55P91},
  MRNUMBER = {4470384},
MRREVIEWER = {Andr\'e\ G.\ Henriques},
       DOI = {10.1007/s40062-022-00310-1},
       URL = {https://doi.org/10.1007/s40062-022-00310-1},
}

@article{Blumberg_2020,
   title={G-symmetric monoidal categories of modules
over equivariant commutative ring spectra},
   volume={2},
   ISSN={2576-7658},
   url={http://dx.doi.org/10.2140/tunis.2020.2.237},
   DOI={10.2140/tunis.2020.2.237},
   number={2},
   journal={Tunisian Journal of Mathematics},
   publisher={Mathematical Sciences Publishers},
   author={Blumberg, Andrew J. and Hill, Michael A.},
   year={2020},
   month=jan, pages={237–286} }

@article{hill2010arf,
  title={The Arf-Kervaire problem in algebraic topology: Sketch of the proof},
  author={Hill, Michael A and Hopkins, Michael J and Ravenel, Douglas C},
  journal={Current developments in mathematics},
  volume={2010},
  number={1},
  pages={1--44},
  year={2010},
  publisher={International Press of Boston}
}

@misc{sengerleveln,
      title={Obstruction theory and the level $n$ elliptic genus}, 
      author={Andrew Senger},
      year={2022},
      eprint={2203.13743},
      archivePrefix={arXiv},
      primaryClass={math.AT},
    NOTE = {\href{https://arxiv.org/abs/2203.13743}{arXiv:2203.13743}}, 
}

@article {MeierHirzebruch,
    AUTHOR = {Meier, Lennart},
     TITLE = {Connective models for topological modular forms of level
              {$n$}},
   JOURNAL = {Algebr. Geom. Topol.},
  FJOURNAL = {Algebraic \& Geometric Topology},
    VOLUME = {23},
      YEAR = {2023},
    NUMBER = {8},
     PAGES = {3553--3586},
      ISSN = {1472-2747,1472-2739},
   MRCLASS = {55N34 (55N22 55P91)},
  MRNUMBER = {4665277},
MRREVIEWER = {Jianbo\ Wang},
       DOI = {10.2140/agt.2023.23.3553},
       URL = {https://doi.org/10.2140/agt.2023.23.3553},
}

@article{HuKrizReal,
title = {Real-oriented homotopy theory and an analogue of the {A}dams–{N}ovikov spectral sequence},
journal = {Topology},
volume = {40},
number = {2},
pages = {317-399},
year = {2001},
issn = {0040-9383},
doi = {https://doi.org/10.1016/S0040-9383(99)00065-8},
url = {https://www.sciencedirect.com/science/article/pii/S0040938399000658},
author = {Po Hu and Igor Kriz},
keywords = {Equivariant stable homotopy theory, Cobordism, Adams–Novikov spectral sequence, Real K-theory},
abstract = {Using the Landweber–Araki theory of Real cobordism and Real-oriented spectra, we define a Real analogue of the Adams–Novikov spectral sequence. This is a new spectral sequence with a potentially calculable E2-term. It has versions converging to either the Z/2-equivariant or the non-equivariant stable 2-stems. We also construct a Real analogue of the Miller–Novikov ‘algebraic’ spectral sequence.}
}

@misc{BHLS,
      title={{$K$}-theoretic counterexamples to {R}avenel's telescope conjecture}, 
      author={Robert Burklund and Jeremy Hahn and Ishan Levy and Tomer M. Schlank},
      year={2023},
      eprint={2310.17459},
      archivePrefix={arXiv},
      primaryClass={math.AT},
    NOTE = {\href{https://arxiv.org/abs/2310.17459}{arXiv:2310.17459}},
}

@misc{hahn2017nilpotenceenalgebras,
      title={Nilpotence in {$\mathbb{E}_n$} {A}lgebras}, 
      author={Jeremy Hahn},
      year={2017},
      eprint={1707.00956},
      archivePrefix={arXiv},
      primaryClass={math.AT},
    NOTE = {\href{https://arxiv.org/abs/1707.00956}{arXiv:1707.00956}},
}

@misc{Hahn2020Nishida,
  author       = {Jeremy Hahn},
  title        = {Nishida Nilpotence},
  year         = {2020},
  month        = sep,
  note         = {Online Algebraic Topology Seminar},
  howpublished = {\url{https://www.youtube.com/watch?v=yqb0-7jZFmY}},
  url          = {https://www.youtube.com/watch?v=yqb0-7jZFmY},
  urldate      = {2025-11-08}
}

@article{DHS88,
 ISSN = {0003486X, 19398980},
 URL = {http://www.jstor.org/stable/1971440},
 author = {Ethan S. Devinatz and Michael J. Hopkins and Jeffrey H. Smith},
 journal = {Annals of Mathematics},
 number = {2},
 pages = {207--241},
 publisher = {[Annals of Mathematics, Trustees of Princeton University on Behalf of the Annals of Mathematics, Mathematics Department, Princeton University]},
 title = {Nilpotence and Stable Homotopy Theory {I}},
 urldate = {2025-11-07},
 volume = {128},
 year = {1988}
}

@article{MNNMayNilpotence,
author = {Mathew, Akhil and Naumann, Niko and Noel, Justin},
title = {On a nilpotence conjecture of {J.~P.} {M}ay},
journal = {Journal of Topology},
volume = {8},
number = {4},
pages = {917-932},
doi = {https://doi.org/10.1112/jtopol/jtv021},
url = {https://londmathsoc.onlinelibrary.wiley.com/doi/abs/10.1112/jtopol/jtv021},
eprint = {https://londmathsoc.onlinelibrary.wiley.com/doi/pdf/10.1112/jtopol/jtv021},
abstract = {We prove a conjecture of J.P. May concerning the nilpotence of elements in ring spectra with power operations, that is, H∞-ring spectra. Using an explicit nilpotence bound on the torsion elements in K(n)-local H∞-algebras over En, we reduce the conjecture to the nilpotence theorem of Devinatz, Hopkins, and Smith. As corollaries, we obtain nilpotence results in various bordism rings including MSpin* and MString*, results about the behavior of the Adams spectral sequence for E∞-ring spectra, and the non-existence of E∞-ring structures on certain complex-oriented ring spectra.},
year = {2015}
}

@article{Hill22disks,
title = {On the algebras over equivariant little disks},
journal = {Journal of Pure and Applied Algebra},
volume = {226},
number = {10},
pages = {107052},
year = {2022},
issn = {0022-4049},
doi = {https://doi.org/10.1016/j.jpaa.2022.107052},
url = {https://www.sciencedirect.com/science/article/pii/S0022404922000482},
author = {Michael A. Hill},
abstract = {We describe the structure present in algebras over the little disks operads for various representations of a finite group G, including those that are not necessarily universe or that do not contain trivial summands. We then spell out in more detail what happens for G=C2, describing the structure on algebras over the little disks operad for the sign representation. Here we can also describe the resulting structure in Bredon homology. Finally, we produce a stable splitting of coinduced spaces analogous to the stable splitting of the product, and we use this to determine the homology of the signed James construction.}
}

@misc{horev2019genuineequivariantfactorizationhomology,
      title={Genuine equivariant factorization homology}, 
      author={Asaf Horev},
      year={2019},
      eprint={1910.07226},
      archivePrefix={arXiv},
      primaryClass={math.AT},
    NOTE = {\href{https://arxiv.org/abs/1910.07226}{arXiv:1910.07226}}, 
}

@misc{carrick2025slice,
      title={Slice spectral sequences through synthetic spectra}, 
      author={Christian Carrick},
      year={2025},
      eprint={2510.19501},
      archivePrefix={arXiv},
      primaryClass={math.AT},
    NOTE = {\href{https://arxiv.org/abs/2510.19501}{arXiv:2510.19501}},
}

@article{equivUCT,
author = {Lewis Jr, L. Gaunce and Mandell, Michael A.},
title = {Equivariant Universal Coefficient and Künneth Spectral Sequences},
journal = {Proceedings of the London Mathematical Society},
volume = {92},
number = {2},
pages = {505-544},
doi = {https://doi.org/10.1112/S0024611505015492},
url = {https://londmathsoc.onlinelibrary.wiley.com/doi/abs/10.1112/S0024611505015492},
eprint = {https://londmathsoc.onlinelibrary.wiley.com/doi/pdf/10.1112/S0024611505015492},
abstract = {This paper constructs hyper-homology spectral sequences of Z-graded and ROG-graded Mackey functors that compute Ext and Tor over G-equivariant S-algebras A∞ ring spectra) for finite groups G. These specialize to universal coefficient and Künneth spectral sequences. 2000 Mathematics Subject Classification 55N91 (primary), 55P43, 55U20, 55U25 (secondary).},
year = {2006}
}

@article{Guillou2012EquivariantIL,
  title={Equivariant iterated loop space theory and permutative {G}-–categories},
  author={Bertrand Guillou and J. Peter May},
  journal={Algebraic \& Geometric Topology},
  year={2017},
  volume={17},
  pages={3259-3339},
  url={https://msp.org/agt/2017/17-6/p02.xhtml},
  doi={10.2140/agt.2017.17.3259}
}

@misc{bertminicourse,
  author       = {Guillou, Bert},
  title        = {The {S}lice {F}iltration: e{CHT} {M}inicourse},
  year         = {2022},
  month        = {May},
  howpublished = {\url{https://www.ms.uky.edu/~guillou/echtSlices/SlicesMinicourse.html}},
  note         = {Lecture notes and videos from a minicourse held May 10--19, 2022},
  urldate      = {2025-10-17}
}

@article{hill2022freeness,
author = {Hill, Michael A.},
title = {Freeness and equivariant stable homotopy},
journal = {Journal of Topology},
volume = {15},
number = {2},
pages = {359-397},
doi = {https://doi.org/10.1112/topo.12227},
url = {https://londmathsoc.onlinelibrary.wiley.com/doi/abs/10.1112/topo.12227},
eprint = {https://londmathsoc.onlinelibrary.wiley.com/doi/pdf/10.1112/topo.12227},
year = {2022}
}

@article{Atiyah66Real,
  title = {K-{T}heory and {R}eality},
  author = {Atiyah, M. F.},
  year = {1966},
  month = jan,
  journal = {The Quarterly Journal of Mathematics},
  volume = {17},
  number = {1},
  eprint = {https://academic.oup.com/qjmath/article-pdf/17/1/367/7295753/17-1-367.pdf},
  pages = {367--386},
  issn = {0033-5606},
  doi = {10.1093/qmath/17.1.367}
}

@article{hillprimer,
  title={The Equivariant Slice Filtration: A Primer},
  author={Hill, Michael A.},
  journal={Homology, Homotopy and Applications},
  volume={14},
  number={2},
  pages={143--166},
  year={2012}
}

@article {chadwickmandell,
    AUTHOR = {Chadwick, Steven Greg and Mandell, Michael A.},
     TITLE = {{$E_n$} genera},
   JOURNAL = {Geom. Topol.},
  FJOURNAL = {Geometry \& Topology},
    VOLUME = {19},
      YEAR = {2015},
    NUMBER = {6},
     PAGES = {3193--3232},
      ISSN = {1465-3060,1364-0380},
   MRCLASS = {55P43 (55N22)},
  MRNUMBER = {3447102},
MRREVIEWER = {Geoffrey\ M. L. Powell},
       DOI = {10.2140/gt.2015.19.3193},
       URL = {https://doi.org/10.2140/gt.2015.19.3193},
}

@incollection {greenleesfour,
    AUTHOR = {Greenlees, J. P. C.},
     TITLE = {Four approaches to cohomology theories with {R}eality},
 BOOKTITLE = {An alpine bouquet of algebraic topology},
    SERIES = {Contemp. Math.},
    VOLUME = {708},
     PAGES = {139--156},
 PUBLISHER = {Amer. Math. Soc., [Providence], RI},
      YEAR = {2018},
      ISBN = {978-1-4704-2911-9},
   MRCLASS = {55P91 (18G60 19L47 55N91 55Q91)},
  MRNUMBER = {3807754},
MRREVIEWER = {Jose\ Cantarero},
       DOI = {10.1090/conm/708/14261},
       URL = {https://doi.org/10.1090/conm/708/14261},
}

@article {Greenlees_2017,
    AUTHOR = {Greenlees, J. P. C. and Meier, Lennart},
     TITLE = {Gorenstein duality for real spectra},
   JOURNAL = {Algebr. Geom. Topol.},
  FJOURNAL = {Algebraic \& Geometric Topology},
    VOLUME = {17},
      YEAR = {2017},
    NUMBER = {6},
     PAGES = {3547--3619},
      ISSN = {1472-2747,1472-2739},
   MRCLASS = {55P91 (55P43 55Q91)},
  MRNUMBER = {3709655},
MRREVIEWER = {John\ A.\ Lind},
       DOI = {10.2140/agt.2017.17.3547},
       URL = {https://doi.org/10.2140/agt.2017.17.3547},
}

@book {ullman13regularslice,
    AUTHOR = {Ullman, John Richard},
     TITLE = {On the {R}egular {S}lice {S}pectral {S}equence},
      NOTE = {Thesis (Ph.D.)--Massachusetts Institute of Technology},
 PUBLISHER = {ProQuest LLC, Ann Arbor, MI},
      YEAR = {2013},
     PAGES = {(no paging)},
   MRCLASS = {99-05},
  MRNUMBER = {3211466},
       URL =
              {http://gateway.proquest.com/openurl?url_ver=Z39.88-2004&rft_val_fmt=info:ofi/fmt:kev:mtx:dissertation&res_dat=xri:pqm&rft_dat=xri:pqdiss:0829532},
}

@article {HHR16,
    AUTHOR = {Hill, M. A. and Hopkins, M. J. and Ravenel, D. C.},
     TITLE = {On the nonexistence of elements of {K}ervaire invariant one},
   JOURNAL = {Ann. of Math. (2)},
  FJOURNAL = {Annals of Mathematics. Second Series},
    VOLUME = {184},
      YEAR = {2016},
    NUMBER = {1},
     PAGES = {1--262},
      ISSN = {0003-486X,1939-8980},
   MRCLASS = {55P91 (55N22 55P42 55Q45 55T15 55U35 57R15)},
  MRNUMBER = {3505179},
MRREVIEWER = {Paul\ G.\ Goerss},
       DOI = {10.4007/annals.2016.184.1.1},
       URL = {https://doi.org/10.4007/annals.2016.184.1.1},
}

@article {dugger05slice,
    AUTHOR = {Dugger, Daniel},
     TITLE = {An {A}tiyah-{H}irzebruch spectral sequence for {$KR$}-theory},
   JOURNAL = {$K$-Theory},
  FJOURNAL = {$K$-Theory. An Interdisciplinary Journal for the Development,
              Application, and Influence of $K$-Theory in the Mathematical
              Sciences},
    VOLUME = {35},
      YEAR = {2005},
    NUMBER = {3-4},
     PAGES = {213--256},
      ISSN = {0920-3036,1573-0514},
   MRCLASS = {19L64 (55S45 55T25)},
  MRNUMBER = {2240234},
MRREVIEWER = {G\'erald\ Gaudens},
       DOI = {10.1007/s10977-005-1552-9},
       URL = {https://doi.org/10.1007/s10977-005-1552-9},
}

@misc{gabe2025realsyntomiccohomology,
      title={Real {S}yntomic {C}ohomology}, 
      author={Gabriel Angelini-Knoll and Hana Jia Kong and J. D. Quigley},
      year={2025},
      eprint={2505.24734},
      archivePrefix={arXiv},
      primaryClass={math.AT},
    NOTE = {\href{https://arxiv.org/abs/2505.24734}{arXiv:2505.24734}},
}

@misc{hahn2024motivicfiltrationtopologicalcyclic,
      title={A motivic filtration on the topological cyclic homology of commutative ring spectra}, 
      author={Jeremy Hahn and Arpon Raksit and Dylan Wilson},
      year={2024},
      eprint={2206.11208},
      archivePrefix={arXiv},
      primaryClass={math.KT},
    NOTE = {\href{https://arxiv.org/abs/2206.11208}{arXiv:2206.11208}}, 
}

@misc{hill2018realwilsonspaces,
      title={Real {W}ilson {S}paces {I}}, 
      author={Hill, Michael A. and Hopkins, Michael J.},
      year={2018},
      eprint={1806.11033},
      archivePrefix={arXiv},
      primaryClass={math.AT},
    NOTE = {\href{https://arxiv.org/abs/1806.11033}{arXiv:1806.11033}},
}

@article{hahnRealOrientationsLubin2020,
  title = {Real Orientations of {{Lubin}}--{{Tate}} Spectra},
  author = {Hahn, Jeremy and Shi, XiaoLin Danny},
  year = {2020},
  month = sep,
  journal = {Inventiones mathematicae},
  volume = {221},
  number = {3},
  pages = {731--776},
  issn = {1432-1297},
  doi = {10.1007/s00222-020-00960-z},
  abstract = {We show that Lubin--Tate spectra at the prime 2 are Real oriented and Real Landweber exact. The proof is by application of the Goerss--Hopkins--Miller theorem to algebras with involution. For each height n, we compute the entire homotopy fixed point spectral sequence for \$\$E\_n\$\$with its \$\$C\_2\$\$-action given by the formal inverse. We study, as the height varies, the Hurewicz images of the stable homotopy groups of spheres in the homotopy of these \$\$C\_2\$\$-fixed points.}
}

@article{hillmeier2017,
  title={The {${C_2}$}--spectrum {${\mathrm{Tmf}_1(3)}$} and its invertible modules},
  author={Hill, Michael and Meier, Lennart},
  journal={Algebraic \& Geometric Topology},
  volume={17},
  number={4},
  pages={1953--2011},
  year={2017},
  publisher={Mathematical Sciences Publishers}
}

@phdthesis{Roytman2023,
  author       = {Roytman, Bar},
  title        = {Highly structured orientations from equivariant Thom spectra},
  school       = {University of California, Los Angeles},
  year         = {2023},
  type         = {Ph.D. dissertation},
  note         = {ProQuest ID: Roytman\_ucla\_0031D\_22360; Merritt ID: ark:/13030/m55r5rh7},
  url          = {https://escholarship.org/uc/item/8jc7s298}
}

@misc{cnossen2024normedequivariantringspectra,
      title={Normed equivariant ring spectra and higher Tambara functors}, 
      author={Bastiaan Cnossen and Rune Haugseng and Tobias Lenz and Sil Linskens},
      year={2024},
      eprint={2407.08399},
      archivePrefix={arXiv},
      primaryClass={math.AT},
    NOTE = {\href{https://arxiv.org/abs/2407.08399}{arXiv:2407.08399}}, 
}

@article{Wilson1973,
author = {Wilson, W. Stephen},
journal = {Commentarii mathematici Helvetici},
pages = {45-55},
title = {The {$\Omega$}--spectrum for {B}rown--{P}eterson {C}ohomology. {P}art {I}},
url = {http://eudml.org/doc/139536},
volume = {48},
year = {1973},
}

@article{carrick2024homologicalslice,
title = {The homological slice spectral sequence in motivic and Real bordism},
journal = {Advances in Mathematics},
volume = {458},
pages = {109955},
year = {2024},
issn = {0001-8708},
doi = {https://doi.org/10.1016/j.aim.2024.109955},
url = {https://www.sciencedirect.com/science/article/pii/S0001870824004705},
author = {Christian Carrick and Michael A. Hill and Douglas C. Ravenel},
keywords = {Motivic homotopy, Equivariant homotopy, Slice spectral sequence, Topological modular forms}
}

@book {lurie2009htt,
    AUTHOR = {Lurie, Jacob},
     TITLE = {Higher topos theory},
    SERIES = {Annals of Mathematics Studies},
    VOLUME = {170},
 PUBLISHER = {Princeton University Press, Princeton, NJ},
      YEAR = {2009},
     PAGES = {xviii+925},
      ISBN = {978-0-691-14049-0; 0-691-14049-9},
   MRCLASS = {18-02 (18B25 18E35 18G30 18G55 55U40)},
  MRNUMBER = {2522659},
MRREVIEWER = {Mark Hovey},
       DOI = {10.1515/9781400830558},
       URL = {https://doi.org/10.1515/9781400830558},
}

@misc{lurie2017ha,
  title={Higher algebra},
  author={Lurie, Jacob},
month={September},
  year={2017},
NOTE = {\href{http://www.math.ias.edu/~lurie/papers/HA.pdf}{Unpublished notes}}
}

@misc{lurie2012ha,
  title={Higher algebra},
  author={Lurie, Jacob},
month={August},
  year={2012},
NOTE = {\href{https://www.math.ias.edu/~lurie/papers/HA2012.pdf}{Unpublished notes}}
}

@article {abghr2014infty,
	AUTHOR = {Ando, Matthew and Blumberg, Andrew J. and Gepner, David and
	Hopkins, Michael J. and Rezk, Charles},
	TITLE = {An {$\infty$}-categorical approach to {$R$}-line bundles,
	{$R$}-module {T}hom spectra, and twisted {$R$}-homology},
	JOURNAL = {J. Topol.},
	FJOURNAL = {Journal of Topology},
	VOLUME = {7},
	YEAR = {2014},
	NUMBER = {3},
	PAGES = {869--893},
	ISSN = {1753-8416},
	MRCLASS = {55P43 (55N20 55U40)},
	MRNUMBER = {3252967},
	MRREVIEWER = {Tyler D. Lawson},
	DOI = {10.1112/jtopol/jtt035},
	URL = {https://doi.org/10.1112/jtopol/jtt035},
}

@article {antolinbarthel2019thom,
	AUTHOR = {Antol\'{\i}n-Camarena, Omar and Barthel, Tobias},
	TITLE = {A simple universal property of {T}hom ring spectra},
	JOURNAL = {J. Topol.},
	FJOURNAL = {Journal of Topology},
	VOLUME = {12},
	YEAR = {2019},
	NUMBER = {1},
	PAGES = {56--78},
	ISSN = {1753-8416},
	MRCLASS = {55P42 (55P43 55P48)},
	MRNUMBER = {3875978},
	DOI = {10.1112/topo.12084},
	URL = {https://doi.org/10.1112/topo.12084},
}

@article {quillen1969complexcobordism,
	AUTHOR = {Quillen, Daniel},
	TITLE = {On the formal group laws of unoriented and complex cobordism
	theory},
	JOURNAL = {Bull. Amer. Math. Soc.},
	FJOURNAL = {Bulletin of the American Mathematical Society},
	VOLUME = {75},
	YEAR = {1969},
	PAGES = {1293--1298},
	ISSN = {0002-9904},
	MRCLASS = {57.10},
	MRNUMBER = {253350},
	MRREVIEWER = {R. E. Stong},
	DOI = {10.1090/S0002-9904-1969-12401-8},
	URL = {https://doi.org/10.1090/S0002-9904-1969-12401-8},
}

@misc{nardinshah2022equivarianttopos,
      title={Parametrized and equivariant higher algebra}, 
      author={Denis Nardin and Jay Shah},
      year={2022},
      eprint={2203.00072},
      archivePrefix={arXiv},
      primaryClass={math.AT},
      note = {\href{https://arxiv.org/abs/2203.00072}{arXiv:2203.00072}},
}

@book{schwede_2018, place={Cambridge}, series={New Mathematical Monographs}, title={Global Homotopy Theory}, DOI={10.1017/9781108349161}, publisher={Cambridge University Press}, author={Schwede, Stefan}, year={2018}, collection={New Mathematical Monographs}}

@article{hillhopkinsravenel2009Kervaire,
author = {Hill, Michael and Hopkins, Michael and Ravenel, Douglas},
year = {2009},
month = {08},
pages = {},
title = {On the non-existence of elements of {K}ervaire invariant one},
volume = {184},
journal = {Annals of Mathematics},
doi = {10.4007/annals.2016.184.1.1}
}

@article {linskens2022global,
    AUTHOR = {Linskens, Sil and Nardin, Denis and Pol, Luca},
     TITLE = {Global homotopy theory via partially lax limits},
   JOURNAL = {Geom. Topol.},
  FJOURNAL = {Geometry \& Topology},
    VOLUME = {29},
      YEAR = {2025},
    NUMBER = {3},
     PAGES = {1345--1440},
      ISSN = {1465-3060,1364-0380},
   MRCLASS = {55N91 (18N70 55P91)},
  MRNUMBER = {4918109},
       DOI = {10.2140/gt.2025.29.1345},
       URL = {https://doi.org/10.2140/gt.2025.29.1345},
}

@article{pützstück2024parametrized,
	title={Parametrized Higher Algebra and Global Picard Spectra},
	author={Pützstück, Phil},
	year={2024},
	NOTE = {\href{https://philpuetzstueck.gitlab.io/documents/msc_thesis.pdf}{Master's Thesis}},
}

@misc{barkanhaugsengsteinebrunner2024envelopesalgebraicpatterns,
      title={Envelopes for Algebraic Patterns}, 
      author={Shaul Barkan and Rune Haugseng and Jan Steinebrunner},
      year={2024},
      eprint={2208.07183},
      archivePrefix={arXiv},
      primaryClass={math.CT},
NOTE = {\href{https://arxiv.org/abs/2208.07183}{arXiv:2208.07183}},
}

@misc{hahn2024equivariantnonabelianpoincareduality,
      title={Equivariant nonabelian {P}oincar\'e duality and equivariant factorization homology of {T}hom spectra}, 
      author={Jeremy Hahn and Asaf Horev and Inbar Klang and Dylan Wilson and Foling Zou},
      year={2024},
      eprint={2006.13348},
      archivePrefix={arXiv},
      primaryClass={math.AT},
    NOTE = {\href{https://arxiv.org/abs/2006.13348}{arXiv:2006.13348}}, 
}

@misc{shah2022parametrizedhighercategorytheory,
      title={Parametrized higher category theory {II}: {U}niversal constructions}, 
      author={Jay Shah},
      year={2022},
      eprint={2109.11954},
      archivePrefix={arXiv},
      primaryClass={math.CT},
      NOTE = {\href{https://arxiv.org/abs/2109.11954}{arXiv:2109.11954}},
}

@article {hilman2024parametrisedpresentability,
    AUTHOR = {Hilman, Kaif},
     TITLE = {Parametrised presentability over orbital categories},
   JOURNAL = {Appl. Categ. Structures},
  FJOURNAL = {Applied Categorical Structures. A Journal Devoted to
              Applications of Categorical Methods in Algebra, Analysis,
              Computer Science, Logic, Order and Topology},
    VOLUME = {32},
      YEAR = {2024},
    NUMBER = {3},
     PAGES = {Paper No. 15, 53},
      ISSN = {0927-2852,1572-9095},
   MRCLASS = {18N40 (18C35)},
  MRNUMBER = {4756154},
       DOI = {10.1007/s10485-024-09772-1},
       URL = {https://doi.org/10.1007/s10485-024-09772-1},
}

@misc{reutter2025enrichedinftycategoriesmarkedmodule,
      title={Enriched $\infty$-categories as marked module categories}, 
      author={David Reutter and Markus Zetto},
      year={2025},
      eprint={2501.07697},
      archivePrefix={arXiv},
      primaryClass={math.AT},
    NOTE = {\href{https://arxiv.org/abs/2501.07697}{arXiv:2501.07697}},
}

@article {andoblumberggepner2018parametrized,
    AUTHOR = {Ando, Matthew and Blumberg, Andrew J. and Gepner, David},
     TITLE = {Parametrized spectra, multiplicative {T}hom spectra and the
              twisted {U}mkehr map},
   JOURNAL = {Geom. Topol.},
  FJOURNAL = {Geometry \& Topology},
    VOLUME = {22},
      YEAR = {2018},
    NUMBER = {7},
     PAGES = {3761--3825},
      ISSN = {1465-3060,1364-0380},
   MRCLASS = {55P43 (55R70)},
  MRNUMBER = {3890766},
MRREVIEWER = {Birgit\ Richter},
       DOI = {10.2140/gt.2018.22.3761},
       URL = {https://doi.org/10.2140/gt.2018.22.3761},
}

@misc{ramzi2022monoidalgrothendieckconstructioninftycategories,
      title={A monoidal {G}rothendieck construction for $\infty$-categories}, 
      author={Maxime Ramzi},
      year={2022},
      eprint={2209.12569},
      archivePrefix={arXiv},
      primaryClass={math.CT},
    NOTE = {\href{https://arxiv.org/abs/2209.12569}{arXiv:2209.12569}},
}

@article {haugsengmelanisafronov2022shiftedisotropic,
    AUTHOR = {Haugseng, Rune and Melani, Valerio and Safronov, Pavel},
     TITLE = {Shifted coisotropic correspondences},
   JOURNAL = {J. Inst. Math. Jussieu},
  FJOURNAL = {Journal of the Institute of Mathematics of Jussieu. JIMJ.
              Journal de l'Institut de Math\'ematiques de Jussieu},
    VOLUME = {21},
      YEAR = {2022},
    NUMBER = {3},
     PAGES = {785--849},
      ISSN = {1474-7480,1475-3030},
   MRCLASS = {14A20 (17B63 18N60)},
  MRNUMBER = {4404126},
MRREVIEWER = {Stefan\ Schr\"oer},
       DOI = {10.1017/S1474748020000274},
       URL = {https://doi.org/10.1017/S1474748020000274},
}

@misc{stewart2025equivariantoperadssymmetricsequences,
      title={Equivariant operads, symmetric sequences, and {B}oardman--{V}ogt tensor products}, 
      author={Natalie Stewart},
      year={2025},
      eprint={2501.02129},
      archivePrefix={arXiv},
      primaryClass={math.CT},
    NOTE = {\href{https://arxiv.org/abs/2501.02129}{arXiv:2501.02129}},
}

@misc{barwick2016parametrizedhighercategorytheory,
      title={Parametrized higher category theory and higher algebra: Expos\'e I -- Elements of parametrized higher category theory}, 
      author={Clark Barwick and Emanuele Dotto and Saul Glasman and Denis Nardin and Jay Shah},
      year={2016},
      eprint={1608.03657},
      archivePrefix={arXiv},
      primaryClass={math.AT},
    NOTE = {\href{https://arxiv.org/abs/1608.03657}{arXiv:1608.03657}},
}

@misc{stewart2025tensorproductsequivariantcommutative,
      title={On tensor products with equivariant commutative operads}, 
      author={Natalie Stewart},
      year={2025},
      eprint={2504.02143},
      archivePrefix={arXiv},
      primaryClass={math.AT},
    NOTE = {\href{https://arxiv.org/abs/2504.02143}{arXiv:2504.02143}},
}

@misc{hilman2024parametrisednoncommutativemotivesequivariant,
      title={Parametrised noncommutative motives and equivariant cubical descent in algebraic K-theory}, 
      author={Kaif Hilman},
      year={2024},
      eprint={2202.02591},
      archivePrefix={arXiv},
      primaryClass={math.KT},
    NOTE = {\href{https://arxiv.org/abs/2202.02591}{arXiv:2202.02591}}, 
}

@misc{lenz2025normsequivarianthomotopytheory,
      title={Norms in equivariant homotopy theory}, 
      author={Tobias Lenz and Sil Linskens and Phil Pützstück},
      year={2025},
      eprint={2503.02839},
      archivePrefix={arXiv},
      primaryClass={math.AT},
    NOTE = {\href{https://arxiv.org/abs/2503.02839}{arXiv:2503.02839}},
}

@article {blumberghill2015operadic,
    AUTHOR = {Blumberg, Andrew J. and Hill, Michael A.},
     TITLE = {Operadic multiplications in equivariant spectra, norms, and
              transfers},
   JOURNAL = {Adv. Math.},
  FJOURNAL = {Advances in Mathematics},
    VOLUME = {285},
      YEAR = {2015},
     PAGES = {658--708},
      ISSN = {0001-8708,1090-2082},
   MRCLASS = {55P91 (18D50 55P43 55P48)},
  MRNUMBER = {3406512},
MRREVIEWER = {Markus\ Szymik},
       DOI = {10.1016/j.aim.2015.07.013},
       URL = {https://doi.org/10.1016/j.aim.2015.07.013},
}

@misc{cnossen2023parametrizedstabilityuniversalproperty,
      title={Parametrized stability and the universal property of global spectra}, 
      author={Bastiaan Cnossen and Tobias Lenz and Sil Linskens},
      year={2023},
      eprint={2301.08240},
      archivePrefix={arXiv},
      primaryClass={math.AT},
    NOTE = {\href{https://arxiv.org/abs/2301.08240}{arXiv:2301.08240}}, 
}

@misc{pützstück2025globalpicardspectraborel,
      title={Global Picard Spectra and Borel Parametrized Algebra}, 
      author={Phil Pützstück},
      year={2025},
      eprint={2503.04456},
      archivePrefix={arXiv},
      primaryClass={math.AT},
    NOTE = {\href{https://arxiv.org/abs/2503.04456}{arXiv:2503.04456}},
}

@book {nardin2017stabilitydistributivity,
    AUTHOR = {Nardin, Denis},
     TITLE = {Stability and distributivity over orbital infinity-categories},
      NOTE = {Thesis (Ph.D.)--Massachusetts Institute of Technology},
 PUBLISHER = {ProQuest LLC, Ann Arbor, MI},
      YEAR = {2017},
     PAGES = {(no paging)},
   MRCLASS = {99-05},
  MRNUMBER = {3781929},
       URL =
              {http://gateway.proquest.com/openurl?url_ver=Z39.88-2004&rft_val_fmt=info:ofi/fmt:kev:mtx:dissertation&res_dat=xri:pqm&rft_dat=xri:pqdiss:10797490},
}

@article {lawson2018secondary,
    AUTHOR = {Lawson, Tyler},
     TITLE = {Secondary power operations and the {B}rown-{P}eterson spectrum
              at the prime 2},
   JOURNAL = {Ann. of Math. (2)},
  FJOURNAL = {Annals of Mathematics. Second Series},
    VOLUME = {188},
      YEAR = {2018},
    NUMBER = {2},
     PAGES = {513--576},
      ISSN = {0003-486X,1939-8980},
   MRCLASS = {55P43 (55N22 55S12 55S20)},
  MRNUMBER = {3862946},
MRREVIEWER = {Lennart\ Meier},
       DOI = {10.4007/annals.2018.188.2.3},
       URL = {https://doi.org/10.4007/annals.2018.188.2.3},
}

@misc{juran2025genuineequivariantrecognitionprinciple,
      title={A genuine equivariant recognition principle for finite groups}, 
      author={Branko Juran},
      year={2025},
      eprint={2508.04421},
      archivePrefix={arXiv},
      primaryClass={math.AT},
    NOTE = {\href{https://arxiv.org/abs/2508.04421}{arXiv:2508.04421}},
}

@misc{ragimov2022inftycategoricalreflectiontheoremapplications,
      title={The $\infty$-Categorical Reflection Theorem and Applications}, 
      author={Shaul Ragimov and Tomer M. Schlank},
      year={2022},
      eprint={2207.09244},
      archivePrefix={arXiv},
      primaryClass={math.AT},
    NOTE = {\href{https://arxiv.org/abs/2207.09244}{arXiv:2207.09244}}, 
}

@article {carmelicnossenramziyanovski2025characters,
    AUTHOR = {Carmeli, Shachar and Cnossen, Bastiaan and Ramzi, Maxime and
              Yanovski, Lior},
     TITLE = {Characters and transfer maps via categorified traces},
   JOURNAL = {Forum Math. Sigma},
  FJOURNAL = {Forum of Mathematics. Sigma},
    VOLUME = {13},
      YEAR = {2025},
     PAGES = {Paper No. e93, 84},
      ISSN = {2050-5094},
   MRCLASS = {55P42 (18N65)},
  MRNUMBER = {4914895},
       DOI = {10.1017/fms.2025.23},
       URL = {https://doi.org/10.1017/fms.2025.23},
}

@article {chuHaugseng2021homotopycoherent,
    AUTHOR = {Chu, Hongyi and Haugseng, Rune},
     TITLE = {Homotopy-coherent algebra via {S}egal conditions},
   JOURNAL = {Adv. Math.},
  FJOURNAL = {Advances in Mathematics},
    VOLUME = {385},
      YEAR = {2021},
     PAGES = {Paper No. 107733, 95},
      ISSN = {0001-8708,1090-2082},
   MRCLASS = {18N60 (18N65 18N70 55P48)},
  MRNUMBER = {4256131},
MRREVIEWER = {Julia\ Bergner},
       DOI = {10.1016/j.aim.2021.107733},
       URL = {https://doi.org/10.1016/j.aim.2021.107733},
}

@book {may1972iteratedloopspaces,
    AUTHOR = {May, J. P.},
     TITLE = {The geometry of iterated loop spaces},
    SERIES = {Lecture Notes in Mathematics},
    VOLUME = {Vol. 271},
 PUBLISHER = {Springer-Verlag, Berlin-New York},
      YEAR = {1972},
     PAGES = {viii+175},
   MRCLASS = {55D35},
  MRNUMBER = {420610},
MRREVIEWER = {J.\ Stasheff},
}

@article {wilson1975wilsonspaces2,
    AUTHOR = {Wilson, W. Stephen},
     TITLE = {The {$\Omega $}--spectrum for {B}rown-{P}eterson cohomology.
              {II}},
   JOURNAL = {Amer. J. Math.},
  FJOURNAL = {American Journal of Mathematics},
    VOLUME = {97},
      YEAR = {1975},
     PAGES = {101--123},
      ISSN = {0002-9327,1080-6377},
   MRCLASS = {55B20},
  MRNUMBER = {383390},
MRREVIEWER = {J.\ W.\ Vick},
       DOI = {10.2307/2373662},
       URL = {https://doi.org/10.2307/2373662},
}

@article {levy2022Eilenberg,
    AUTHOR = {Levy, Ishan},
     TITLE = {Eilenberg {M}ac {L}ane spectra as {$p$}-cyclonic {T}hom
              spectra},
   JOURNAL = {J. Topol.},
  FJOURNAL = {Journal of Topology},
    VOLUME = {15},
      YEAR = {2022},
    NUMBER = {2},
     PAGES = {878--895},
      ISSN = {1753-8416,1753-8424},
   MRCLASS = {55P42 (55P91)},
  MRNUMBER = {4441607},
MRREVIEWER = {Vidhyanath\ K.\ Rao},
       DOI = {10.1112/topo.12230},
       URL = {https://doi.org/10.1112/topo.12230},
}

@article {behrenswilson2018C2mahowald,
    AUTHOR = {Behrens, Mark and Wilson, Dylan},
     TITLE = {A {$C_2$}-equivariant analog of {M}ahowald's {T}hom spectrum
              theorem},
   JOURNAL = {Proc. Amer. Math. Soc.},
  FJOURNAL = {Proceedings of the American Mathematical Society},
    VOLUME = {146},
      YEAR = {2018},
    NUMBER = {11},
     PAGES = {5003--5012},
      ISSN = {0002-9939,1088-6826},
   MRCLASS = {55P91 (55S91)},
  MRNUMBER = {3856165},
MRREVIEWER = {Samik\ Basu},
       DOI = {10.1090/proc/14175},
       URL = {https://doi.org/10.1090/proc/14175},
}

@article {hahnwilson2020eilenberg,
    AUTHOR = {Hahn, Jeremy and Wilson, Dylan},
     TITLE = {Eilenberg--{M}ac {L}ane spectra as equivariant {T}hom spectra},
   JOURNAL = {Geom. Topol.},
  FJOURNAL = {Geometry \& Topology},
    VOLUME = {24},
      YEAR = {2020},
    NUMBER = {6},
     PAGES = {2709--2748},
      ISSN = {1465-3060,1364-0380},
   MRCLASS = {55P43 (55P91)},
  MRNUMBER = {4194302},
MRREVIEWER = {Birgit\ Richter},
       DOI = {10.2140/gt.2020.24.2709},
       URL = {https://doi.org/10.2140/gt.2020.24.2709},
}

@article {mahowaldray1981thomiso,
    AUTHOR = {Mahowald, Mark and Ray, Nigel},
     TITLE = {A note on the {T}hom isomorphism},
   JOURNAL = {Proc. Amer. Math. Soc.},
  FJOURNAL = {Proceedings of the American Mathematical Society},
    VOLUME = {82},
      YEAR = {1981},
    NUMBER = {2},
     PAGES = {307--308},
      ISSN = {0002-9939,1088-6826},
   MRCLASS = {55N20},
  MRNUMBER = {609673},
MRREVIEWER = {R.\ M.\ Vogt},
       DOI = {10.2307/2043331},
       URL = {https://doi.org/10.2307/2043331},
}

@article {barthelGreenleesHausmann2020balmercompactlie,
    AUTHOR = {Barthel, Tobias and Greenlees, J. P. C. and Hausmann, Markus},
     TITLE = {On the {B}almer spectrum for compact {L}ie groups},
   JOURNAL = {Compos. Math.},
  FJOURNAL = {Compositio Mathematica},
    VOLUME = {156},
      YEAR = {2020},
    NUMBER = {1},
     PAGES = {39--76},
      ISSN = {0010-437X,1570-5846},
   MRCLASS = {55P42 (55P91)},
  MRNUMBER = {4036448},
MRREVIEWER = {Samik\ Basu},
       DOI = {10.1112/s0010437x19007656},
       URL = {https://doi.org/10.1112/s0010437x19007656},
}

@article {araki1979orientations,
    AUTHOR = {Araki, Sh\^or\^o},
     TITLE = {Orientations in {$\tau $}-cohomology theories},
   JOURNAL = {Japan. J. Math. (N.S.)},
  FJOURNAL = {Japanese Journal of Mathematics. New Series},
    VOLUME = {5},
      YEAR = {1979},
    NUMBER = {2},
     PAGES = {403--430},
      ISSN = {0289-2316},
   MRCLASS = {55N22 (55N20)},
  MRNUMBER = {614829},
MRREVIEWER = {A.\ Dold},
       DOI = {10.4099/math1924.5.403},
       URL = {https://doi.org/10.4099/math1924.5.403},
}

@article{schwede2014global,
	title={Global Homotopy Theory, v0.17},
	author={Stefan Schwede},
	year={2014},
}

@misc{burklund2022chromaticnullstellensatz,
      title={The Chromatic Nullstellensatz}, 
      author={Robert Burklund and Tomer M. Schlank and Allen Yuan},
      year={2022},
      eprint={2207.09929},
      archivePrefix={arXiv},
      primaryClass={math.AT},
    NOTE = {\href{https://arxiv.org/abs/2207.09929}{arXiv:2207.09929}}, 
}

@article {martiniWolf2024colimits,
    AUTHOR = {Martini, Louis and Wolf, Sebastian},
     TITLE = {Colimits and cocompletions in internal higher category theory},
   JOURNAL = {High. Struct.},
  FJOURNAL = {Higher Structures},
    VOLUME = {8},
      YEAR = {2024},
    NUMBER = {1},
     PAGES = {97--192},
      ISSN = {2209-0606},
   MRCLASS = {18Nxx (18Axx 18B25 18F20 55Pxx)},
  MRNUMBER = {4752519},
MRREVIEWER = {Tobias\ Lenz},
}

@misc{cnossen2023twistedambidexterityequivarianthomotopy,
      title={Twisted ambidexterity in equivariant homotopy theory}, 
      author={Bastiaan Cnossen},
      year={2023},
      eprint={2303.00736},
      archivePrefix={arXiv},
      primaryClass={math.AT},
    NOTE = {\href{https://arxiv.org/abs/2303.00736}{arXiv:2303.00736}},
}

@article {devinatzHopkins2004homotopyfixedpoints,
    AUTHOR = {Devinatz, Ethan S. and Hopkins, Michael J.},
     TITLE = {Homotopy fixed point spectra for closed subgroups of the
              {M}orava stabilizer groups},
   JOURNAL = {Topology},
  FJOURNAL = {Topology. An International Journal of Mathematics},
    VOLUME = {43},
      YEAR = {2004},
    NUMBER = {1},
     PAGES = {1--47},
      ISSN = {0040-9383},
   MRCLASS = {55P43 (55N22 55T15)},
  MRNUMBER = {2030586},
MRREVIEWER = {R.\ E.\ Stong},
       DOI = {10.1016/S0040-9383(03)00029-6},
       URL = {https://doi.org/10.1016/S0040-9383(03)00029-6},
}

@book {lewisMaySteinberger1986equivariant,
    AUTHOR = {Lewis, Jr., L. G. and May, J. P. and Steinberger, M. and
              McClure, J. E.},
     TITLE = {Equivariant stable homotopy theory},
    SERIES = {Lecture Notes in Mathematics},
    VOLUME = {1213},
      NOTE = {With contributions by J. E. McClure},
 PUBLISHER = {Springer-Verlag, Berlin},
      YEAR = {1986},
     PAGES = {x+538},
      ISBN = {3-540-16820-6},
   MRCLASS = {55-02 (55Nxx 55Pxx 57S99)},
  MRNUMBER = {866482},
MRREVIEWER = {T.\ tom Dieck},
       DOI = {10.1007/BFb0075778},
       URL = {https://doi.org/10.1007/BFb0075778},
}

@incollection {rezk1998hopkinsmiller,
    AUTHOR = {Rezk, Charles},
     TITLE = {Notes on the {H}opkins-{M}iller theorem},
 BOOKTITLE = {Homotopy theory via algebraic geometry and group
              representations ({E}vanston, {IL}, 1997)},
    SERIES = {Contemp. Math.},
    VOLUME = {220},
     PAGES = {313--366},
 PUBLISHER = {Amer. Math. Soc., Providence, RI},
      YEAR = {1998},
      ISBN = {0-8218-0805-2},
   MRCLASS = {55N22 (55S99)},
  MRNUMBER = {1642902},
       DOI = {10.1090/conm/220/03107},
       URL = {https://doi.org/10.1090/conm/220/03107},
}

@article {senger2024BP,
    AUTHOR = {Senger, Andrew},
     TITLE = {The {B}rown-{P}eterson spectrum is not {$\Bbb{E}_{2 ( p^2 + 2
              )}$} at odd primes},
   JOURNAL = {Adv. Math.},
  FJOURNAL = {Advances in Mathematics},
    VOLUME = {458},
      YEAR = {2024},
     PAGES = {Paper No. 109996, 33},
      ISSN = {0001-8708,1090-2082},
   MRCLASS = {55N22 (55P43 55S20)},
  MRNUMBER = {4815051},
MRREVIEWER = {Ningchuan\ Zhang},
       DOI = {10.1016/j.aim.2024.109996},
       URL = {https://doi.org/10.1016/j.aim.2024.109996},
}

@article {basterraMandell2013BP,
    AUTHOR = {Basterra, Maria and Mandell, Michael A.},
     TITLE = {The multiplication on {BP}},
   JOURNAL = {J. Topol.},
  FJOURNAL = {Journal of Topology},
    VOLUME = {6},
      YEAR = {2013},
    NUMBER = {2},
     PAGES = {285--310},
      ISSN = {1753-8416,1753-8424},
   MRCLASS = {55P43 (55N22 55S35)},
  MRNUMBER = {3065177},
MRREVIEWER = {Jes\'us\ Gonz\'alez},
       DOI = {10.1112/jtopol/jts032},
       URL = {https://doi.org/10.1112/jtopol/jts032},
}

@article {hahnWilson2022redshift,
    AUTHOR = {Hahn, Jeremy and Wilson, Dylan},
     TITLE = {Redshift and multiplication for truncated {B}rown-{P}eterson
              spectra},
   JOURNAL = {Ann. of Math. (2)},
  FJOURNAL = {Annals of Mathematics. Second Series},
    VOLUME = {196},
      YEAR = {2022},
    NUMBER = {3},
     PAGES = {1277--1351},
      ISSN = {0003-486X,1939-8980},
   MRCLASS = {55P43 (18N70 19D55)},
  MRNUMBER = {4503327},
MRREVIEWER = {Dae-Woong\ Lee},
       DOI = {10.4007/annals.2022.196.3.6},
       URL = {https://doi.org/10.4007/annals.2022.196.3.6},
}

@misc{devalapurkar2025examplesdiskalgebras,
      title={Examples of disk algebras}, 
      author={Sanath Devalapurkar and Jeremy Hahn and Tyler Lawson and Andrew Senger and Dylan Wilson},
      year={2025},
      eprint={2302.11702},
      archivePrefix={arXiv},
      primaryClass={math.AT},
    NOTE = {\href{https://arxiv.org/abs/2302.11702}{arXiv:2302.11702}},
}

@article {devalapurkar2024higherchromaticthom,
    AUTHOR = {Devalapurkar, Sanath K.},
     TITLE = {Higher chromatic {T}hom spectra via unstable homotopy theory},
   JOURNAL = {Algebr. Geom. Topol.},
  FJOURNAL = {Algebraic \& Geometric Topology},
    VOLUME = {24},
      YEAR = {2024},
    NUMBER = {1},
     PAGES = {49--108},
      ISSN = {1472-2747,1472-2739},
   MRCLASS = {55P43 (55N34 55S12)},
  MRNUMBER = {4721363},
MRREVIEWER = {Yifei\ Zhu},
       DOI = {10.2140/agt.2024.24.49},
       URL = {https://doi.org/10.2140/agt.2024.24.49},
}

@article {yang2025normedCp,
    AUTHOR = {Yang, Lucy},
     TITLE = {On normed {$\Bbb E_\infty$}-rings in genuine equivariant
              {$C_p$}-spectra},
   JOURNAL = {Int. Math. Res. Not. IMRN},
  FJOURNAL = {International Mathematics Research Notices. IMRN},
      YEAR = {2025},
    NUMBER = {3},
     PAGES = {Paper No. rnae262, 32},
      ISSN = {1073-7928,1687-0247},
   MRCLASS = {55P91},
  MRNUMBER = {4859134},
       DOI = {10.1093/imrn/rnae262},
       URL = {https://doi.org/10.1093/imrn/rnae262},
}

@misc{yang2025filteredhochschildkostantrosenbergtheoremreal,
      title={A filtered {H}ochschild--{K}ostant--{R}osenberg theorem for real {H}ochschild homology}, 
      author={Lucy Yang},
      year={2025},
      eprint={2503.03024},
      archivePrefix={arXiv},
      primaryClass={math.AT},
    NOTE = {\href{https://arxiv.org/abs/2503.03024}{arXiv:2503.03024}}, 
}

@article {rognes2008galois,
    AUTHOR = {Rognes, John},
     TITLE = {Galois extensions of structured ring spectra. {S}tably
              dualizable groups},
   JOURNAL = {Mem. Amer. Math. Soc.},
  FJOURNAL = {Memoirs of the American Mathematical Society},
    VOLUME = {192},
      YEAR = {2008},
    NUMBER = {898},
     PAGES = {viii+137},
      ISSN = {0065-9266,1947-6221},
   MRCLASS = {55P43 (55M05 55P35 57T05)},
  MRNUMBER = {2387923},
MRREVIEWER = {Alberto\ Cavicchioli},
       DOI = {10.1090/memo/0898},
       URL = {https://doi.org/10.1090/memo/0898},
}

@book {brunerMaySteinberger1986Hinfty,
    AUTHOR = {Bruner, R. R. and May, J. P. and McClure, J. E. and
              Steinberger, M.},
     TITLE = {{$H\sb \infty $} ring spectra and their applications},
    SERIES = {Lecture Notes in Mathematics},
    VOLUME = {1176},
 PUBLISHER = {Springer-Verlag, Berlin},
      YEAR = {1986},
     PAGES = {viii+388},
      ISBN = {3-540-16434-0},
   MRCLASS = {55-02 (55P42 55Sxx)},
  MRNUMBER = {836132},
MRREVIEWER = {Haynes\ R.\ Miller},
       DOI = {10.1007/BFb0075405},
       URL = {https://doi.org/10.1007/BFb0075405},
}

@article {beaudryHillShiZeng2021modelsLubinTate,
    AUTHOR = {Beaudry, Agn\`es and Hill, Michael A. and Shi, XiaoLin Danny
              and Zeng, Mingcong},
     TITLE = {Models of {L}ubin-{T}ate spectra via real bordism theory},
   JOURNAL = {Adv. Math.},
  FJOURNAL = {Advances in Mathematics},
    VOLUME = {392},
      YEAR = {2021},
     PAGES = {Paper No. 108020, 58},
      ISSN = {0001-8708,1090-2082},
   MRCLASS = {55N22 (11S31 55P42)},
  MRNUMBER = {4313964},
MRREVIEWER = {Jonathan\ Beardsley},
       DOI = {10.1016/j.aim.2021.108020},
       URL = {https://doi.org/10.1016/j.aim.2021.108020},
}

@misc{duan2025periodicityfinitecomplexityhigher,
      title={Periodicity and finite complexity in higher real $K$-theories}, 
      author={Zhipeng Duan and Michael A. Hill and Guchuan Li and Yutao Liu and XiaoLin Danny Shi and Guozhen Wang and Zhouli Xu},
      year={2025},
      eprint={2512.01161},
      archivePrefix={arXiv},
      primaryClass={math.AT},
    NOTE = {\href{https://arxiv.org/abs/2512.01161}{arXiv:2512.01161}}, 
}

@article {hilman2024mcduff,
    AUTHOR = {Hilman, Kaif},
     TITLE = {An equivariant generalisation of {M}c{D}uff-{S}egal's
              group-completion theorem},
   JOURNAL = {Int. Math. Res. Not. IMRN},
  FJOURNAL = {International Mathematics Research Notices. IMRN},
      YEAR = {2024},
    NUMBER = {9},
     PAGES = {7552--7570},
      ISSN = {1073-7928,1687-0247},
   MRCLASS = {18M15 (18N99 19A99 20J15 55P43 55Q91)},
  MRNUMBER = {4742834},
MRREVIEWER = {J.\ P. C. Greenlees},
       DOI = {10.1093/imrn/rnad278},
       URL = {https://doi.org/10.1093/imrn/rnad278},
}

\Addresses
\end{document}